\newtheorem{theorem}{Theorem}[section]
\newtheorem{lemma}[theorem]{Lemma}
\newtheorem{corollary}[theorem]{Corollary}
\newtheorem{proposition}[theorem]{Proposition}
\newtheorem{definition}[theorem]{Definition}
\newtheorem{example}[theorem]{Example}
\newtheorem{remark}[theorem]{Remark}
\numberwithin{equation}{section}
\def\bZ{{\mathbb Z}}
\def\bN{{\mathbb N}}
\def\bC{{\mathbb C}}
\def\bR{{\mathbb R}}
\def\bT{{\mathbb T}}
\def\bS{{\mathbb S}}
\def\bB{{\mathbb B}}
\def\bM{{\mathbb M}}
\def\cA{\mathcal{A}}
\def\cC{\mathcal{C}}
\def\cF{\mathcal{F}}
\def\cI{\mathcal{I}}
\def\cJ{\mathcal{J}}
\def\cK{\mathcal{K}}
\def\cL{\mathcal{L}}
\def\cM{\mathcal{M}}
\def\cN{\mathcal{N}}
\def\cP{\mathcal{P}}
\def\cQ{\mathcal{Q}}
\def\cS{\mathcal{S}}
\def\xx{{x}}
\def\yy{{y}}
\def\zz{{ z}}
\def\supp{\operatorname{supp}}
\def\eps{{\varepsilon}}
\def\ONE{{\mathbbm 1}}
\def\diam{\operatorname{diam}}
\def\ph{\varphi}
\def\R{\mathbb{R}}
\def\RR{\mathbb{R}}
\def\ZZ{{\mathbb Z}}
\def\cf{{c_\flat}}
\def\ct{{\tilde{c}}}
\def\KK{{\mathscr{K}}}
\def\BB{{\mathscr{B}}}
\def\Real{\operatorname{Re}}
\def\sB{\mathscr{B}}
\def\sL{\mathscr{L}}
\def\sE{\mathscr{E}}
\def\one{{\bm 1}}
\def\zero{{\bm 0}}
\def\balpha{{\bm \alpha}}
\def\bbeta{{\bm \beta}}
\def\bdelta{{\bm \delta}}
\def\bgamma{{\bm \gamma}}
\def\blambda{{\bm \lambda}}
\def\bnu{{\bm \nu}}
\def\bxi{{\bm \xi}}
\def\bsigma{{\bm \sigma}}
\def\btau{{\bm \tau}}
\def\beps{{\bm \varepsilon}}
\def\bell{{\bm \ell}}
\def\bkappa{{\bm \kappa}}
\def\bet{{\bm \eta}}
\def\ba{{\mathbf a}}
\def\dd{{\mathbf d}}
\def\jj{{\mathbf j}}
\def\kk{{\mathbf k}}
\def\nn{{\mathbf n}}
\def\mm{{\mathbf m}}
\def\ss{{\mathbf s}}
\def\tt{{\mathbf t}}
\def\uu{{\mathbf u}}
\def\vv{{\mathbf v}}
\def\xx{{\mathbf x}}
\def\yy{{\mathbf y}}
\def\zz{{\mathbf z}}
\def\sM{\mathscr{M}}
\def\tB{\widetilde{B}}
\def\tF{\widetilde{F}}
\def\BB{\mathscr{B}}
\def\FF{\mathscr{F}}
\def\tBB{\widetilde{\BB}}
\def\tFF{\widetilde{\FF}}
\def\m{{\mathfrak m}}
\def\DD{{\mathfrak D}}
\def\XX{X}
\begin{document}

\title[Distribution spaces on product spaces associated with operators]
{Spaces of distributions on product metric spaces associated with operators}

\author{A. G. Georgiadis}
%\author{Athanasios G. Georgiadis}
\address{School of Computer Science and Statistics,
Trinity College of Dublin}
\email{georgiaa@tcd.ie}

\author{G. Kyriazis}
%\author{George Kyriazis}
\address{Department of Mathematics and Statistics,
University of Cyprus, 1678 Nicosia, Cyprus}
\email{kyriazis@ucy.ac.cy}

\author{P. Petrushev}
%\author{Pencho Petrushev}
\address{Department of Mathematics\\University of South Carolina\\
Columbia, SC 29208}
\email{pencho@math.sc.edu}

\subjclass[2010]{Primary 58J35, 46E35, 43A85, 42B30 ; Secondary 42B25, 42B15, 42C15, 42C40}

\keywords{Product spaces, Besov spaces, distributions, Heat kernel, metric spaces, self-adjoint operators,
Triebel-Lizorkin spaces}

\thanks{The third author has been supported by Grant KP-06-N62/4 of the Fund for Scientific Research of the Bulgarian Ministry of Education and Science.}

\begin{abstract}
We lay down the foundation of the theory of spaces of distributions
on the product $\XX_1\times \XX_2$ of doubling metric measure spaces $\XX_1$, $\XX_2$
in the presence of non-negative self-adjoint operators $L_1$, $L_2$,
whose heat kernels have Gaussian localization and the Markov property.
This theory includes the development of two-parameter functional calculus induced by $L_1, L_2$,
integral operators with highly localized kernels,
test functions and distributions associated to $L_1, L_2$,
spectral spaces accompanied by maximal Peetre and Nikolski type inequalities.
Hardy spaces are developed in this two-parameter product setup.
Two types of Besov and Triebel-Lizorkin spaces are introduced and studied:
ordinary spaces and spaces with dominating mixed smoothness, with emphasis on the latter.
Embedding results are obtained and spectral multiplies are developed.
\end{abstract}

\date{December 27, 2023}

\maketitle
\tableofcontents

\section{Introduction}\label{Introduction}

The purpose of this article is to %develop the basics of Besov and Triebel-Lizorkin spaces 
is to develop the theory of spaces of distributions, in particular, Hardy, Besov and Triebel-Lizorkin spaces,
on the product $\XX_1\times \XX_2$ of doubling metric measure spaces $\XX_1$, $\XX_2$
in the presence of non-negative self-adjoint operators $L_1$, $L_2$,
whose heat kernels have Gaussian localization and the Markov property.

Our first step in this undertaking is to develop the two-parameter functional calculus induced by $L_1, L_2$,
which includes the development of the associated two-parameter spectral measure and spectral resolution.
The latter is in principle well-known but we will add some details to the picture.
An important part of the functional calculus is the development of integral operators with highly localized kernels. 
The development of distributions associated to $L_1, L_2$ is the next step,
followed by the development of spectral spaces accompanied by maximal Peetre and Nikolski type inequalities.

The Hardy spaces $H^p$, $0<p<\infty$, in the two-parameter setting of this article are the first type of spaces that we consider.
We show that the two-parameter Hardy spaces $H^p$, $0<p<\infty$, can be equivalently defined by means of several maximal operators 
and can be identified with the Lebesgue spaces $L^p$ for $1<p<\infty$.

Another major focus of our study is on the Besov and Triebel-Lizorkin spaces in the two-parameter setting.
The classical Besov and Triebel-Lizorkin spaces have been mainly developed for
the purposes of approximation theory and %the theory of
partial differential equations.
We refer the reader to the monographs \cite{Triebel-1, Triebel-2} of H. Triebel for the history
and a complete account of this theory.
The Littlewood-Paley approach to Besov and Triebel-Lizorkin spaces in the classical case on $\RR^n$
has been initiated by J.~Peetre \cite{Peetre}
and further implemented mainly by H. Triebel \cite{Triebel-1}, M. Frazier and B.~Javerth \cite{FJ1,FJ2,FJW}.
In this theory the Laplace operator $\Delta$ plays a prominent role.

It is natural to develop abstract Besov and Triebel-Lizorkin spaces,
where the role of $\Delta$ is played by a self-adjoint operator $L$ and
the spectral decomposition of the Laplacian (via the Fourier transform)
is replaced by the spectral decomposition associated to $L$.
This idea has been promoted by J. Peetre \cite[Chapter 10]{Peetre} and H.~Triebel \cite[Chapter 3]{Triebel-0}.
The problem, however, is that very little can be achieved if one operates in the general setting of
an abstract Hilbert space with the presence of a self-adjoint operator $L$.
Additional conditions need to be imposed in order to have a substantial theory. %meaningful theory.

It turns out that Besov and Triebel-Lizorkin spaces with full set of parameters can be developed
in the setting of a doubling metric measure space accompanied by a non-negative self-adjoint operator $L$
whose heat kernel has Gaussian localization, H\"{o}lder continuity and the Markov property.
The basics of this theory have been developed in \cite{CKP,KP},
where distributions were introduced and Besov and Triebel-Lizorkin spaces of distributions 
with full set of parameters were defined and studied in complete analogy with the classical case on $\RR^n$.
Various aspects of the theory have been further developed in \cite{BBD,BD,DKKP,GKKP,GKKP2,GN,GN2,HH,LYY,ZY}.
The point is that many particular cases are covered by this setting.
Examples include the classical setting on $\RR^n$, the sphere, ball, and simplex in $\RR^n$ with weights
as well as Lie groups or homogeneous spaces with polynomial volume growth
and Riemannian manifolds with Ricci curvature bounded from below and satisfying the {\em volume doubling condition}.
It turns out that in the general setting of strictly local regular Dirichlet spaces
with a complete intrinsic metric it suffices to only verify
the local Poincar\'{e} inequality and the global doubling condition on the measure
and then the above general setting applies in full.
This theory allows to verify the conditions of our setting in a number of specific cases.
See \cite{CKP, KP} for details.

One of the main goals of this article is to develop the basics of the theory
of Hardy, Besov and Triebel-Lizorkin spaces on product domains $\XX_1\times \XX_2$ 
associated with non-negative self-adjoint operators $L_1,L_2$. 
The roots of this theory can be traced  back to the seventies in the work of  
S.-Y. A. Chang and R. Fefferman  \cite{CF,CF2,CF3,F1,F2,F3} on the Multiparameter Harmonic Analysis 
which basically extended the Calder\'{o}n-Zygmund Theory to product domains on $\R^n\times \R^m$. 
We refer the reader also to \cite{RobFS,GS,J} for these early developments.

We assume that $\XX_1, \XX_2$ are metric measure spaces with the doubling property
and the heat kernels associated to $L_1,L_2$ have Gaussian localization, H\"{o}lder continuity and the Markov property.
In developing the Hardy spaces $H^p$ in this two-parameter setting we show that they can be equivalently defined via several maximal operators
in the spirit of the classical case on $\RR^n$.

In the same setting we consider two kinds of Besov and Triebel-Lizorkin spaces:
ordinary spaces and spaces with dominating mixed smoothness.
The ordinary Besov and Triebel-Lizorkin spaces on $\RR^m\times \RR^n$
are simply the respective spaces on $\RR^{m+n}$ and depend on a single smoothness parameter.
The Besov and Triebel-Lizorkin spaces with dominating mixed smoothness on $\RR^m\times \RR^n$
depend on two-parameter $(s_1, s_2)$ smoothness.
For short we will call these spaces {\em mixed-smoothness B and F-spaces}.
The theory of the mixed smoothness B and F-spaces %Besov and Triebel-Lizorkin spaces with dominating mixed smoothness
in the classical setting
on $\RR^m\times\RR^n$ has been mainly developed by  H.-J. Schmeisser and H. Triebel \cite{ST},
see also \cite{Schmeisser, V} and the references therein.
These spaces are closely related to approximation from hyperbolic cross trigonometric polynomials, splines, and wavelets,
see \cite{DKT,Schmeisser} and the references therein.
Homogeneous mixed smoothness B and F-spaces on $\RR^m\times \RR^n$ are introduced in \cite{GKP} and used
in nonlinear $m$-term approximation from hyperbolic cross wavelets.

It should be pointed out that the theory of Besov and Triebel-Lizorkin spaces with dominating mixed smoothness
on product domains other than $\RR^m\times\RR^n$ and $[0, 2\pi)^m\times [0, 2\pi)^n$ (the torus) is virtually not existent.
Ordinary Besov and Triebel-Lizorkin spaces on product domains are introduced and studied in \cite{IPX}.
More specifically, in \cite{IPX} ordinary B and F-spaces are considered on $[-1,1]^d$ associated with the Jacobi operator
and on $\bB^{d_1}\times [-1,1]^{d_2}$ associated with operators with polynomial eigenfunctions
on the unit ball $\bB^{d_1}$ in $\RR^{d_1}$ and $[-1,1]^{d_2}$.
Ordinary B and F-spaces on products of other domains such as the sphere, simplex, torus, and $\RR^d$
associated with operators are also discussed in \cite{IPX}.
All these cases are naturally covered by our theory.
Furthermore, our theory covers mixed-smoothness B and F-spaces on products of domains such as
the sphere, ball, simplex as well as Riemannian manifolds, Lee groups and more general Dirichlet spaces.

For notational convenience
we consider Besov and Triebel-Lizorkin spaces on the product $\XX_1\times \XX_2$ of only two domains $\XX_1, \XX_2$
associated with two operators $L_1, L_2$.
The extension of this theory to Besov and Triebel-Lizorkin spaces
on product domains $\XX_1\times\cdots\times \XX_n$
associated with operators $L_1, \dots, L_n$
is almost automatic by changing the notation.
The main focus of this article is on the development of the mixed-smoothness Besov and Triebel-Lizorkin spaces,
while the ordinary Besov and Triebel-Lizorkin spaces will be sparingly developed for comparison without many proofs.

The main contributions of this article can be summarised as follows:
\begin{itemize}
\item
The underlying two-parameter smooth functional calculus on
the product domain $\XX_1\times \XX_2$ associated with the operators $L_1, L_2$ is developed,
which includes the development of the two-parameter spectral measure induced by $L_1, L_2$,
and the development of integral operators of the form $F(\sqrt{L_1}, \sqrt{L_2})$
with highly localized kernels.

\item
Distributions associated to the operators $L_1, L_2$ are developed,
some of their basic properties are presented,
and a fundamental Carder\'{o}n reproducing formula is established.

\item
Spectral spaces, a substitute for band limited functions, associated to $L_1, L_2$ are introduced
and Peetre and Nikolksi type estimates are established.

\item
We utilize our development of integral operators and distributions
to develop Hardy spaces in the two-parameter setting of this article;
we show that the two-parameter Hardy spaces $H^p$, $p>0$, can be equivalently defined by means of several different maximal operators.

\item
Besov and Triebel-Lizorkin spaces with dominating mixed smoothness
on $\XX_1\times \XX_2$ associated with $L_1, L_2$  are introduced and explored.
In particular, embeddings between test functions, mixed-smoothness B and F-spaces, and distributions
associated with operators are obtained.

\item
Spectral multipliers are introduced and their boundedness on test functions and distributions
as well as on mixed-smoothness B and F-spaces are obtained.

\item
The basics of ordinary (one parameter) Besov and Triebel-Lizorkin spaces
on $\XX_1\times \XX_2$ associated with $L_1, L_2$ are also developed.
The main purpose of these results is to put in perspective our development of
the mixed-smoothness B and F-spaces.

\end{itemize}

Basic components of the theory of Besov and Triebel-Lizorkin spaces on product domains associated with operators such as
frame decomposition,
interpolation, and
atomic and molecular decompositions of B-F-spaces
are left for future development.

An outline of this article is as follows:
In Section~\ref{sec:setting}, we introduce our setting and present some basic facts about the coordinate spaces $\XX_1, \XX_2$.
In Section~\ref{sec:product-m-spaces}, we introduce the product spaces and present some of their properties.
In Section~\ref{sec:func-calc}, we develop the underlying two-parameter smooth functional calculus on
the product domain $\XX_1\times \XX_2$ associated with the operators $L_1, L_2$.
Distributions associated to the operators $L_1, L_2$ are developed in Section~\ref{sec:distributions}.
Spectral spaces %, a substitute for band limited functions,
are introduced in Section~\ref{spectral-spaces}
and Peetre and Nikolksi type estimates are established.
Section~\ref{sec:Hardy} is devoted to the development of Hardy spaces in the two-parameter setting of this article.
Mixed-smoothness Besov and Triebel-Lizorkin spaces are introduced and explored in Section~\ref{sec:B-F-spaces}.
Section~\ref{sec:multipliers} is devoted to the study of spectral multipliers.
Bounded spectral multipliers on test functions and distributions
as well as on mixed-smoothness B and F-spaces are obtained.
In Section~\ref{sec:reg-B-F-spaces}, we develop the basics of ordinary B and F-spaces
associated with $L_1,L_2$.
Examples of coordinate spaces covered by our theory are given in the appendix. % Section~\ref{sec:examples}.
%
%Section~\ref{sec:appendix} is an appendix, where 
We also place the lengthy proof of a proposition from Section~\ref{sec:func-calc} in the appendix.

{\em Notation}:
Throughout this article $\bN$ stands for the set of all integers and $\bN_0:=\bN\cup\{0\}$;
$\RR_+:= (0, \infty)$ and $\ONE_E$ stands for the characteristic function of the set $E$.
Also, $\|\cdot\|_p$ stands for the $L^p$ norm in the respective space.
As usual $\cC^k(\RR)$ denotes the set of all functions on $\RR$ with continuous derivatives of order up to $k$
and $\cS(\RR)$ stands for the Schwartz class on $\RR$.
Positive constants are denoted by $c, \tilde{c}, c_1, \dots$ and they may vary at every occurrence.
The notation $a\sim b$ stands for $c^{-1}\le a/b\le c$.
We also use the standard notation
$a\wedge b:= \min\{a, b\}$ and
$a\vee b:= \max\{a, b\}$.

As a general rule letters in bold like $\xx$, $\yy$, $\zz$, $\tt$, $\bdelta$, $\blambda$, $\bbeta$, $\bgamma$, etc.
stand for vectors or multi-indices.
In particular, we will use the notation $\one:=(1,1)$ and $\zero:=(0,0)$.
For a vector, say, $\blambda=(\lambda_1, \lambda_2)$ we denote $|\blambda|:= (\lambda_1^2+\lambda_2^2)^{1/2}$,
while for a multi-index, say, $\bdelta=(\delta_1, \delta_2)$ we denote $|\bdelta|:=\delta_1+\delta_2$.
For multi-indices $\bdelta=(\delta_1,\delta_2)$, $\bsigma=(\sigma_1,\sigma_2)$ %in $(0,\infty)^2$
all operations should be understood coordinate-wise,
in particular,
$\bdelta\pm\bsigma:=(\delta_1\pm\sigma_1,\delta_2\pm\sigma_2)$,
$\bdelta\bsigma:= (\delta_1\sigma_1, \delta_2\sigma_2)$,
%$\delta\sigma:=(\delta_1\sigma_1,\delta_2\sigma_2)$
%and 
$\bdelta/\bsigma:=(\delta_1/\sigma_1,\delta_2/\sigma_2)$,
especially, $\one/\bdelta=\bdelta^{-1}:=\big(\frac{1}{\delta_1},\frac{1}{\delta_2}\big)$,
$\bdelta^{\bsigma}:=\delta_1^{\sigma_1}\delta_2^{\sigma_2}$,
and
$\bdelta\cdot\bsigma:=\delta_1\sigma_1+\delta_2\sigma_2$.
Also,
$\bsigma \ge \bdelta$ stands for $\sigma_i\ge \delta_i$, $i=1,2$.

\section{Setting and geometry of the coordinate spaces}\label{sec:setting}

As alluded to in the introduction we are interested in studying
function spaces defined on the product $\XX=\XX_1\times \XX_2$
of spaces $\XX_1$, $\XX_2$ of homogeneous type associated with shelf-adjoint operators $L_1, L_2$.
In what follows we present the main conditions on the coordinate spaces $\XX_1$, $\XX_2$
as well as some of their basic  properties.

\subsection{The setting}\label{sec:genset}

We will operate in the following general setting, where for $i=1,2$ we stipulate:

I. $(\XX_i,\rho_i,\mu_i)$ is a metric measure space such that
$(\XX_i, \rho_i)$ is locally compact with distance $\rho_i(\cdot, \cdot)$
and $\mu_i$ is a~positive Radon measure.
We assume the following {\em doubling volume condition}:
There exists a~constant $c_{i0}>1$ such that
\begin{equation}\label{eq:doubling-0}
0 < \mu_i(B_i(x_i,2r))\le c_{i0}\mu_i(B_i(x_i,r))<\infty
\quad\hbox{for $x_i \in \XX_i$ and $r>0$,}
\end{equation}
where
$B_i(x_i,r):=\big\{y_i\in \XX_i: \rho_i(x_i,y_i)<r\big\}$.
We will use the abbreviated notation
\begin{equation}\label{def-Vi}
V_i(x_i,r):= \mu_i(B_i(x_i,r)).
\end{equation}

\medskip

II. The space $(\XX_i,\rho_i,\mu_i)$ is accompanied by
a densely defined non-negative self-adjoint \textit{operator} $L_i$, %on $L^2(\XX_i, d\mu_i)$,
mapping real-valued to real-valued functions,
such that the associated semigroup $P_{i,t}=e^{-tL_i}$ consists of integral operators with
(heat) kernel $p_{i,t}(x_i,y_i)$ obeying the conditions:

%\noindent
$(a)$ {\em Gaussian upper bound:} There exist constants $c_{i1},\;c_{i2}>0$ such that
\begin{equation}\label{Gauss-local}
|p_{i,t}(x_i,y_i)|
\le \frac{ c_{i1}\exp\big(-\frac{\rho_i^2(x_i,y_i)}{c_{i2}t}\big)}{\big[V_i(x_i,\sqrt t) V_i(y_i,\sqrt t)\big]^{1/2}}
\quad\hbox{for} \;\;x_i,y_i\in \XX_i,\,t>0.
\end{equation}

%\noindent
$(b)$ {\em H\"{o}lder continuity:} There exists a constant $\alpha_i>0$ such that
\begin{equation}\label{lip}
\big|  p_{i,t}(x_i,y_i) - p_{i,t}(x_i,y_i ')\big|
\le c_{i1}\Big(\frac{\rho_i(y_i,y_i ')}{\sqrt t}\Big)^{\alpha_i}
\frac{\exp\big(-\frac{\rho_i^2(x_i,y_i)}{c_{i2}t} \big)}{\big[V_i(x_i,\sqrt t) V_i(y_i,\sqrt t)\big]^{1/2}}
\end{equation}
for $x_i, y_i, y_i '\in \XX_i$ and $t>0$, whenever $\rho_i(y_i,y_i ')\le \sqrt{t}$.

%\noindent
$(c)$ {\em Markov property:}
\begin{equation}\label{hol3}
\int_{\XX_i} p_{i,t}(x_i,y_i) d\mu_i(y_i)= 1
\quad\hbox{for $t >0$.}
\end{equation}

There is a variety of spaces $(\XX_i,\rho_i,\mu_i)$ associated with operators $L_i$
that are covered by this setting. Several examples are given in Section~\ref{sec:examples}.

\smallskip

In a few instances we will assume that the following additional condition is satisfied:

$(d)$ {\em Non-collapsing condition:}
There exists a~constant $c_{i3}>0$ such that
\begin{equation}\label{non-collapsing}
\inf_{x_i\in \XX_i} V_i(x_i,1)\ge c_{i3}\end{equation}
Note that this condition is always satisfied in the case of compact spaces (see \cite{CKP}).
However, we do not consider \eqref{non-collapsing} a part of our general setting.

Observe that $e^{-tL_i}$ extends to a holomorphic semigroup on $L^2(\XX_i)$ and
from \eqref{Gauss-local} and \eqref{eq:doubling-0} it follows that $e^{-zL_i}$, $z\in\bC_+$,
is an integral operator with kernel $p_{i,z}(x_i,y_i)$ satisfying
\begin{equation}\label{Gauss-local-z}
|p_{i,z}(x_i,y_i)|
\le \frac{ c_{i1}\exp\big(-c_{i2}\Real \frac{\rho_i^2(x_i,y_i)}{z}\big)}{\big[V_i(x_i,\sqrt{\Real z}) V_i(y_i,\sqrt{\Real z})\big]^{1/2}}
\quad\hbox{for} \;\;x_i,y_i\in \XX_i,\; \Real z >0.
\end{equation}
Furthermore, by analytic continuation it follows from \eqref{hol3} that
\begin{equation}\label{markov-2}
\int_{\XX_i} p_{i,z}(x_i,y_i) d\mu_i(y_i)= 1,
\quad\hbox{$\Real z >0$.}
\end{equation}
For more details, see (1.5) and (1.8) in \cite{CKP}.

From the  doubling volume property \eqref{eq:doubling-0} it readily follows that %with $\nu_i=\log_2 c_{i0}>0$
there exist constants $\hat{c}_{i0}>0$ and $0<d_i\le \log_2 c_{i0}$ such that
\begin{equation}\label{doubling-d}
V_i(x_i,\lambda r)\le \hat{c}_{i0}\lambda^{d_i}V_i(x_i,r)
\quad\hbox{for every}\;\; x_i\in \XX_i, r>0, \text{ and }  \lambda\ge 1.
\end{equation}
Of course $d_i$ can be a non-integer number; it serves as a dimension of the doubling space.
%It coincides with the algebraic dimension on Euclidean  of the space in the Euclidean case.

\subsection{Auxiliary inequalities}

We next present several inequalities that will be needed in what follows;
their simple proofs can be found in \cite{CKP, KP}.

The volumes of balls with different centers $x_i,y_i\in \XX_i, i=1,2$, and the same radius $r>0$
can be compared  using  the inequality
\begin{equation}\label{changecenter}
V_i(x_i,r)^{\gamma_i}\le c\big(1+r^{-1}\rho_i(x_i,y_i)\big)^{d_i |\gamma_i|}V_i(y_i,r)^{\gamma_i}, \quad  \gamma_i\in\bR.
\end{equation}

If we assume the non-collapsing condition \eqref{non-collapsing},
then using \eqref{doubling-d} it follows that
\begin{equation}\label{non-collaps-2}
V_i(x_i,r)\ge (c_{i3}/c_{i})r^{d_i},\quad x_i\in \XX_i,\ 0<r\le1.
\end{equation}

If $\tau_i>d_i$, then there exists a constant $c=c(\tau_i)>0$ such that
\begin{equation}\label{int-ineq-1}
\int_{\XX_i}\big(1+r^{-1}\rho_i(x_i,y_i)\big)^{-\tau_i}d\mu_i(y_i)\le cV_i(x_i,r),
\quad r>0,
%\quad \forall x_i\in \XX_i, \; r>0,
\end{equation}
and %for $x_i,z_i\in \XX_i$ and $r>0$
\begin{equation}\label{int-ineq-2}
\int_{\XX_i}\frac{d\mu_i(y_i)}{\big(1+r^{-1}\rho_i(x_i,y_i)\big)^{\tau_i}\big(1+r^{-1}\rho_i(y_i,z_i)\big)^{\tau_i}}
\le \frac{cV_i(x_i,r)}{\big(1+r^{-1}\rho_i(x_i,z_i)\big)^{\tau_i-d_i}}.
\end{equation}

If $\tau_i>2d_i$, then there exists a constant $c=c(\tau_i)>0$ such that
%for $x_i,z_i\in \XX_i$ and $r>0$
\begin{equation}\label{int-ineq-3}
\int_{\XX_i}\frac{d\mu_i(y_i)}{V_i(y_i,r)\big(1+r^{-1}\rho_i(x_i,y_i)\big)^{\tau_i}\big(1+r^{-1}\rho_i(y_i,z_i)\big)^{\tau_i}}
\le \frac{c}{\big(1+r^{-1}\rho_i(x_i,z_i)\big)^{\tau_i}},
\end{equation}
and if in addition $r\le1$, then
\begin{equation}\label{int-ineq-4}
\int_{\XX_i}\frac{d\mu_i(y_i)}{V_i(y_i,r)\big(1+r^{-1}\rho_i(x_i,y_i)\big)^{\tau_i}\big(1+\rho_i(y_i,z_i)\big)^{\tau_i}}
\le \frac{c}{\big(1+\rho_i(x_i,z_i)\big)^{\tau_i}}.
\end{equation}

\subsection{Functional calculus}

Since $L_i$ is a non-negative self-adjoint operator, by Spectral theory
there exists a unique spectral resolution $E^i_\lambda$ associated to $L_i$,
consisting of orthoprojections in $L^2(\XX_i, d\mu_i)$ such that
\begin{equation}\label{eq:spec_resl}
L_i=\int_0^\infty \lambda d E^i_{\lambda}.
\end{equation}
Moreover, $L_i$ maps real-valued to real-valued functions and  for any real-valued, measurable and
bounded function $F_i$ on $[0,\infty)$ the operator (spectral multiplier) $F_i(L_i)$, defined by
\begin{equation}\label{eq:spec_multiplier}
F_i(L_i):=\int_0^\infty F_i(\lambda)dE^i_{\lambda}
\end{equation}
is bounded on $L^2(\XX_i, d\mu_i)$, self-adjoint, and also maps real-valued functions to real-valued functions.
Furthermore, if $F_i(L_i)$ is an integral operator,
then its kernel $\KK_{F_i(L_i)}(x_i, y_i)$
is real-valued and
$\KK_{F_i(L_i)}(y_i, x_i)=\KK_{F_i(L_i)}(x_i, y_i)$.
In particular, for the heat kernel $p_{i,t}(x_i, y_i)$
we have $p_{i,t}(x_i, y_i)\in \R$ and $p_{i,t}(y_i,x_i) = p_{i,t}(x_i, y_i)$.

The following {\em Davies-Gaffney estimate} follows from our basic assumptions I - II (see \cite{CS, KP}):
\begin{equation}\label{davies-gaffney}
|\langle P_{i,t} f_1, f_2\rangle| \le \exp\Big\{-\frac{c^\star_i r^2}{t}\Big\} \|f_1\|_2\|f_2\|_2,
\quad t>0,
\end{equation}
for all open sets $U_j \subset \XX_i$ and $f_j\in L^2(\XX_i)$ with $\supp f_j\subset U_j$, $j=1, 2$,
where $r:=\rho(U_1, U_2)$ and $c^\star_i=c_{i2}^{-1}$ with $c_{i2}>0$ the constant from (\ref{Gauss-local}).

In turn, the Davies-Gaffney estimate implies (is equivalent to, see \cite{CS})
the {\em finite speed propagation property}, which will play an important role in what follows:
\begin{equation}\label{finite-speed}
\big\langle \cos(t\sqrt{L_i})f_1, f_2 \big\rangle=0,
\quad 0< \ct_i t<r,
\quad \ct_i:=\frac{1}{2\sqrt{c^\star_i}},
\end{equation}
for all open sets $U_j \subset \XX_i$, $f_j\in L^2(\XX_i)$, $\supp f_j\subset U_j$,
$j=1, 2$, where $r:=\rho(U_1, U_2)$.

We next present the main localization result in the functional calculus induced by $L_i$ (see \cite[Theorem 3.4]{KP}).

\begin{theorem}\label{thm:S-local-kernels1d}
For $i=1,2$ let $F_i\in \cC^{k_i}(\bR)$, $k_i>d_i$,
be real-valued and even and let
\begin{equation}\label{eq:S-local-kernels1d}
|F_i^{(\nu)}(\lambda)|\le c^\diamond (1+\lambda)^{-r_i}
\  \text{for}\ \lambda\ge 0\  \text{and}\  0\le \nu\le k_i,\  \text{where}\ r_i \ge k_i+d_i+1.
\end{equation}
Then $F_i(\delta \sqrt L_i)$, $\delta>0$, is an integral operator with kernel
$\KK_{F_i(\delta \sqrt L_i)}(x_i, y_i)$ satisfying
\begin{equation}\label{local-ker-1d}
\big|\KK_{F_i(\delta \sqrt L_i)}(x_i, y_i)\big|
\le c' V_i(y_i,\delta)^{-1}\big(1+\delta^{-1}\rho_i(x_i,y_i)\big)^{-k_i+d_i/2}
\end{equation}
%%%%%%%
and
\begin{equation}\label{local-ker-smth}
\big|\KK_{F_i(\delta \sqrt L_i)}(x_i, y_i)-\KK_{F_i(\delta \sqrt L_i)}(x_i, y_i')\big|
\le \frac{c'' V_i(y_i,\delta)^{-1}\big(\frac{\rho_i(y_i,y_i')}{\delta}\big)^{\alpha_i}}
{\big(1+\delta^{-1}\rho_i(x_i,y_i)\big)^{k_i-d_i/2}},
\end{equation}
if $\rho_i(y_i,y_i')\le \delta$,
where $\alpha_i$  is the constant from \eqref{lip}
and  the constant $c'>0$ depends on the structural constants from \eqref{eq:doubling-0}, \eqref{Gauss-local}, \eqref{lip}, and $c^\diamond$, $k_i$;
the constant $c''$ depends in addition on $\alpha$. 
Furthermore,
$$
\int_{\XX_i} \KK_{F_i(\delta \sqrt L_i)}(x_i, y_i)d\mu_i(y_i)=F_i(0), \quad x_i\in \XX_i.
$$
\end{theorem}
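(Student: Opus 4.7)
The plan is to combine the Fourier inversion of the even multiplier $F_i$ with the finite speed of propagation property \eqref{finite-speed} of the wave operator $\cos(t\sqrt{L_i})$ and the Gaussian heat kernel bound \eqref{Gauss-local}. Since $F_i$ is even, formally
\begin{equation*}
F_i(\delta\sqrt{L_i})=\frac{1}{\sqrt{2\pi}}\int_{\RR}\hat F_i(\xi)\cos\big(\delta\xi\sqrt{L_i}\big)\,d\xi,
\end{equation*}
so that the kernel of $F_i(\delta\sqrt{L_i})$ is a Fourier superposition of wave-operator kernels. By \eqref{finite-speed}, for any $L^2$ functions $f_1,f_2$ supported in sets separated by distance $r$, the pairing $\langle\cos(\delta\xi\sqrt{L_i})f_1,f_2\rangle$ vanishes whenever $\delta|\xi|<\ct_i r$, so only the tail $|\xi|\gtrsim r/\delta$ contributes.

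My first step would be to record the Fourier decay of $F_i$: integration by parts $k_i$ times using the derivative hypothesis \eqref{eq:S-local-kernels1d} yields $|\hat F_i(\xi)|\le C(1+|\xi|)^{-k_i}$, and the decay assumption $r_i\ge k_i+d_i+1$ ensures the integrability needed for the tail estimates. Next I would verify that $F_i(\delta\sqrt{L_i})$ is an integral operator and derive a first kernel bound. The standard device is to factor $F_i(\delta\sqrt{L_i})=[F_i(\delta\sqrt{L_i})e^{c\delta^2 L_i}]\,e^{-c\delta^2 L_i}$ for small $c>0$ (depending on $c_{i2}$); the modified multiplier $G_i(\lambda):=F_i(\lambda)e^{c\lambda^2}$ still obeys comparable derivative estimates, and the heat factor carries the Gaussian bound \eqref{Gauss-local}, so composition of kernels realizes $\KK_{F_i(\delta\sqrt{L_i})}$ as a convergent integral.

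To transfer the off-diagonal Fourier estimate into the pointwise bound \eqref{local-ker-1d}, I would localize test functions to balls of radius $\delta$ around $x_i$ and $y_i$, extracting the symmetric volume factor $V_i(x_i,\delta)^{-1/2}V_i(y_i,\delta)^{-1/2}$, then apply \eqref{changecenter} to convert this into the one-sided $V_i(y_i,\delta)^{-1}$ at the cost of a factor $(1+\delta^{-1}\rho_i(x_i,y_i))^{d_i/2}$. The H\"older bound \eqref{local-ker-smth} follows from the same scheme applied to the difference $\KK(x_i,y_i)-\KK(x_i,y_i')$, invoking the H\"older continuity \eqref{lip} of the heat kernel on the distance scale $\delta$. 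Finally, the identity $\int_{\XX_i}\KK_{F_i(\delta\sqrt{L_i})}(x_i,y_i)d\mu_i(y_i)=F_i(0)$ follows by applying $F_i(\delta\sqrt{L_i})$ to the constant function $1$: \eqref{hol3} gives $e^{-tL_i}1=1$, so spectrally $L_i$ annihilates constants and $F_i(\delta\sqrt{L_i})1=F_i(0)\cdot 1$.

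The main obstacle will be obtaining the sharp exponent $-k_i+d_i/2$ in \eqref{local-ker-1d}, rather than the weaker $-k_i+1+d_i/2$ that a crude Fourier-plus-symmetrization argument would yield; recovering the correct power requires a careful calibration of the truncation and smoothing scales and uses both the smoothness hypothesis and the decay $r_i\ge k_i+d_i+1$ simultaneously. A secondary technical point is justifying the kernel representation rigorously, since the wave kernel is only distributional; this is handled standardly by approximating $F_i$ with $F_i(\lambda)e^{-\epsilon\lambda^2}$ and passing to the limit using uniform bounds from the preceding steps.
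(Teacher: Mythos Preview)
The paper does not prove this theorem; it is quoted verbatim from \cite[Theorem~3.4]{KP} and used as a black box for the single-parameter functional calculus. So there is no ``paper's own proof'' to compare against directly. That said, the paper does reproduce the method in full detail for the two-parameter analogue (Theorem~\ref{th:boxsupport} and Theorem~\ref{thm:gen-local}), and the argument there differs from your outline in a way worth noting.

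Your plan is to write $F_i(\delta\sqrt{L_i})$ directly as a Fourier integral of wave operators and exploit the tail decay $|\hat F_i(\xi)|\lesssim(1+|\xi|)^{-k_i}$. The approach in \cite{KP} (mirrored in the proof of Theorem~\ref{th:boxsupport}) is instead an approximation scheme: one convolves $F_i$ with a dilated bump $\Phi_h$ whose Fourier transform is compactly supported, so that $F_i*\Phi_h$ is genuinely band-limited and finite speed propagation applies cleanly to give $\KK_{F_i*\Phi_h(\delta\sqrt{L_i})}(x_i,y_i)=0$ once $\rho_i(x_i,y_i)\gtrsim\delta/h$. The remainder $F_i-F_i*\Phi_h$ is then handled by a Littlewood--Paley decomposition combined with the rough kernel bound (the one-parameter analogue of Proposition~\ref{prop:rough-kernels}), using that $\|F_i-F_i*\Phi_h\|_\infty\lesssim h^{k_i}$. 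Choosing $h\sim\delta/\rho_i(x_i,y_i)$ gives exactly the exponent $-k_i$ before the volume correction, with no loss. Your direct Fourier route can be made to work, but as you anticipate, recovering the sharp exponent is delicate; the convolution-approximation method sidesteps this entirely.

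One genuine gap: your argument for the integral identity $\int\KK_{F_i(\delta\sqrt{L_i})}(x_i,y_i)\,d\mu_i(y_i)=F_i(0)$ by ``applying $F_i(\delta\sqrt{L_i})$ to the constant function $1$'' fails when $\mu_i(\XX_i)=\infty$, since then $1\notin L^2(\XX_i)$ and the spectral calculus does not apply to it. The paper's route (see the end of the proof of Theorem~\ref{th:boxsupport}) is to write $F_i(\delta\sqrt{L_i})=H_i(\delta^2 L_i)e^{-\delta^2 L_i}$, use Fourier inversion on $H_i$, and invoke the analytically continued Markov property \eqref{markov-2} for the complex-time heat kernel, which is a pointwise identity requiring no $L^2$ membership.
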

Note that instead of the  decay condition (\ref{eq:S-local-kernels1d})
one can assume that $F_i$ is an even $\cC^{k_i}(\bR)$ function with compact support.

\section{Product metric measure spaces}\label{sec:product-m-spaces}

In this section we recall the definition of product measure spaces and set up our notation.

We assume that $(\XX_1,\rho_1,\mu_1)$ and $(\XX_2,\rho_2,\mu_2)$ are two measure metric spaces
associated with non-negative self-adjoint operators $L_1$ and $L_2$, respectively,
which satisfy conditions I -- II of Section \ref{sec:genset}.
The \textit{product space} $\XX=\XX_1\times \XX_2$ is defined by
\begin{equation*}\label{space}
\XX=\XX_1\times \XX_2:=\big\{\xx=(x_1,x_2):\;x_i\in \XX_i,\;i=1,2\big\}.
\end{equation*}
We consider $\XX$ equipped with the \textit{metric}
\begin{equation}\label{metric}
\rho(\xx,\yy):=\max\big(\rho_1(x_1,y_1),\rho_2(x_2,y_2)\big),
\quad\xx,\yy\in \XX,
\end{equation}
and  the \textit{product measure} $\mu:=\mu_1\times\mu_2$, that is,
\begin{equation}\label{measure}
d\mu(\xx)=d\mu_1(x_1)d\mu_2 (x_2),\quad\xx\in \XX.
\end{equation}
%Then $(M,\rho,\mu)$ is a metric measure space.

Clearly, the \textit{ball} $B(\xx,r) \subset \XX$ centered at $\xx=(x_1,x_2)\in \XX$ of radius $r>0$
is given by
\begin{equation}\label{balls}
B(\xx,r)=B_1(x_1,r)\times B_2(x_2,r)
\end{equation}
and
\begin{equation}\label{ballvolume}
\mu(B(\xx,r))=\mu_1(B_1(x_1,r))\mu_2(B_2(x_2,r)),
\end{equation}
where $B_i(x_i,r)$ is the respective balls in $\XX_i$.
Therefore, by \eqref{eq:doubling-0} it follows that $(\XX,\rho,\mu)$ has the doubling volume property
\begin{equation}\label{doublingB}
\mu(B(\xx,2r)) \le c_{10}c_{20}\mu(B(\xx,r))
\end{equation}
and from \eqref{doubling-d}
\begin{equation}\label{doubl-2}
\mu(B(\xx,\lambda r)) \le c\lambda^{d_1+d_2}\mu(B(\xx,r)),
\quad \forall r>0, \; \lambda \ge 1.
\end{equation}
The following dimension-type vector 
\begin{equation}\label{dimension multi-index}
\dd:=(d_1,d_2).
\end{equation}
will appear frequently in what follows.

For any $\xx=(x_1, x_2)\in \XX$ and $\bdelta \in(0,\infty)^2$
we define the {\it rectangle} $Q(\xx,\bdelta)$ by
\begin{equation}\label{rectangles}
Q(\xx,\bdelta):=B_1(x_1,\delta_1)\times B_2(x_2,\delta_2).
\end{equation}
To simplify our notation we set for $\xx\in \XX$, $\bdelta \in(0,\infty)^2$,
\begin{equation}\label{def-V}
V(\xx, \bdelta):= \mu(Q(\xx,\bdelta))
=\mu_1(B_1(x_1,\delta_1))\mu_2(B_2(x_2,\delta_2))
= V_1(x_1, \delta_1)V_2(x_2, \delta_2).
%\quad x\in \XX, \delta \in(0,\infty)^2.
\end{equation}

Also, for $\xx, \yy\in\XX$ and $\bdelta, \bsigma \in(0,\infty)^2$, we define
\begin{equation}\label{kernelsD*}
\DD^*_{\bdelta,\bsigma}(\xx,\yy):=\prod_{i=1,2}\big(1+\delta_{i}^{-1}\rho_i (x_i,y_i)\big)^{-\sigma_i}
\quad\hbox{and}\quad
\end{equation}
\begin{equation}\label{kernelsD}
\DD_{\bdelta,\bsigma}(\xx,\yy)
:= \frac{\DD^*_{\bdelta,\bsigma}(\xx,\yy)}{[V(\xx, \bdelta)V(\yy, \bdelta)]^{1/2}}
= \frac{\prod_{i=1,2}\big(1+\delta_{i}^{-1}\rho_i (x_i,y_i)\big)^{-\sigma_i}}
{[V_1(x_1, \delta_1)V_2(x_2, \delta_2)V_1(y_1, \delta_1)V_2(y_2, \delta_2)]^{1/2}}.
\end{equation}

\subsection{Geometric properties of product spaces}

By \eqref{doubling-d} and \eqref{def-V} it readily follows that
\begin{equation}\label{rect-doubling}
V(\xx,\blambda\bdelta) \le c\blambda^\dd V(\xx,\bdelta)),\quad \bdelta\in (0,\infty)^2, \;\blambda\in [1,\infty)^{2}.
\end{equation}
Recall the notation $\blambda\bdelta:= (\lambda_1\delta_1, \lambda_2\delta_2)$
and $\blambda^\dd:=\lambda_1^{d_1}\lambda_2^{d_2}$.

To compare the volumes of products of balls with different centers we use \eqref{changecenter} and \eqref{def-V}
to obtain
\begin{equation}\label{rect-doubling2}
\begin{aligned}
V(\xx,\bdelta) &\le cV(\yy,\bdelta)\prod_{i=1,2}\big(1+\delta_i^{-1}\rho_i(x_i,y_i)\big)^{d_i}
\\
&=cV(\yy,\bdelta)\DD^*_{\bdelta, \dd}(\xx,\yy)^{-1}.
\end{aligned}
\end{equation}
Using (\ref{rect-doubling2}) it easily follows that
\begin{equation}\label{D-D*}
\DD_{\bdelta,\bsigma}(\xx,\yy)\le cV(\xx,\bdelta)^{-1}\DD^*_{\bdelta,\bsigma-\dd/2}(\xx,\yy).
\end{equation}

We will frequently use the following notation
\begin{equation}\label{rectangles-gamma}
V(\xx,\bdelta)^\bgamma := V_1(x_1,\delta_1)^{\gamma_1} V_2(x_2,\delta_2)^{\gamma_2},
\end{equation}
where $\xx\in X$, $\bdelta\in (0,\infty)^2$, $\bgamma\in \RR^2$.
As a variation of this we will use for example the notation
\begin{equation}\label{def-Vjsd}
V(\xx,2^{-\jj})^{-\ss/\dd} := V_1(x_1,2^{-j_1})^{-s_1/d_1} V_2(x_2,2^{-j_2})^{-s_2/d_2},
\end{equation}
where by definition $2^{-\jj}:= (2^{-j_1},2^{-j_2})$, $\ss/\dd:=(s_1/d_1,s_2/d_2)$.

From \eqref{changecenter} and \eqref{def-V} we get the following useful inequality
when changing centers of balls:
\begin{equation}\label{V-gamma-xy}
V(\xx,\bdelta)^\bgamma \le cV(\yy,\bdelta)^\bgamma\prod_{i=1,2}\big(1+\delta_i^{-1}\rho_i(x_i,y_i)\big)^{d_i|\gamma_i|},
\quad \bgamma\in \RR^2.
\end{equation}

If we assume that the non-collapsing condition \eqref{non-collapsing} is satisfied,
then \eqref{non-collaps-2} is valid, which in turn implies
\begin{equation}\label{rect-non-coll}
V(\xx,\bdelta)\ge c\bdelta^{\dd},\quad \bdelta\in(0,1]^{2}.
\end{equation}

\subsection{Integral estimates}

The following integral estimates follow readily from \eqref{int-ineq-1}--\eqref{int-ineq-4}:
If $\bdelta,\bsigma\in(0,\infty)^2$ and $\bsigma>\dd$,
i.e. $\sigma_i>d_i$, $i=1,2$,
then
\begin{equation}\label{tech-1}
\int_\XX \DD^*_{\bdelta,\bsigma}(\xx,\yy) d\mu(\yy) \le c V(\xx,\bdelta),
\quad \forall \xx\in \XX,\quad\text{and}
\end{equation}
\begin{equation}\label{tech-2}
\int_\XX \DD^*_{\bdelta,\bsigma}(\xx,\yy)\DD^*_{\bdelta,\bsigma}(\yy,\zz)d\mu(\yy)
\le c V(\xx,\bdelta)\DD^*_{\bdelta,\bsigma-\dd}(\xx,\zz),
\quad \forall \xx,\zz\in \XX.
\end{equation}
Furthermore, if $\bsigma>2\dd$,
then
\begin{equation}\label{tech-3}
\int_\XX \DD_{\bdelta,\bsigma}(\xx,\yy)\DD_{\bdelta,\bsigma}(\yy,\zz)d\mu(\yy)
\le c\DD_{\bdelta,\bsigma}(\xx,\zz),
\quad \forall \xx,\zz\in \XX,
\end{equation}
and if in addition $\bdelta\in(0,1]^2$, then
\begin{equation}\label{tech-4}
\int_\XX V(\yy,\bdelta)^{-1}\DD^*_{\bdelta,\bsigma}(\xx,\yy)\DD^*_{\one,\bsigma}(\yy,\zz)d\mu(\yy)
\le c\DD^*_{\one,\bsigma}(\xx,\zz),
\quad \forall \xx,\zz\in \XX.
\end{equation}

\section{Two-parameter functional calculus on product spaces}\label{sec:func-calc}

The present section lays down some of the ground work that will be needed in developing our theory,
namely, the development of the two-parameter functional calculus induced by the operators $L_1$, $L_2$ on the doubling product measure spaces $X=X_1\times X_2$.
This theory includes the development of the associated two-parameter spectral measure and spectral resolution
as well as the development of associated integral operators with highly localized kernels. 
The theory of the two-parameter spectral measure and spectral resolution in our setting is in principe well-know (see e.g. \cite{Schmudgen} or \cite{Prugov}).
However, since we do not find it in the form we needed in the literature we will present it in sufficient detail.
The development of integral operators with highly localized kernels in our setting is based on the respective one-parameter integral operators from \cite{CKP, KP}.
 
Just as in Section~\ref{sec:genset} we assume that $L_1$, $L_2$ are two
densely defined self-adjoint operator on $L^2(\XX_1, d\mu_1)$ and $L^2(\XX_2, d\mu_2)$, respectively.
We also assume that the spaces $L^2(\XX_1, d\mu_1)$ and $L^2(\XX_2, d\mu_2)$ are separable.
As is shown in \cite{KP} this is the case in our setting because there exist countable frames for the respective spaces.
%
%Denote by $\Dom (L_i)$ the domain of $L_i$, which is dense in $L^2(\XX_i,d\mu_i)$.
At this point we do not assume that $L_1$, $L_2$ are non-negative operators.

We first recall the definition and basic properties of tensor product operators of the form
$L_1^{\nu_1}\otimes L_2^{\nu_2}$ and then
present the basics of the two-parameter functional calculus induced by $L_1$, $L_2$.
We will use the standard notation $f\otimes g$ for the tensor product of any two functions $f,g$
defined on $\XX_1$, $\XX_2$,
that is,
\begin{equation}\label{prod-f-g}
f\otimes g(\xx):= f(x_1)g(x_2), \quad \xx=(x_1,x_2) \in \XX_1\times \XX_2.
\end{equation}

\subsection{Tensor product operators of the form $L_1^{\nu_1}\otimes L_2^{\nu_2}$}\label{subsec:L1-times-L2}

For $\nu_1, \nu_2 \in \bN_0$ we denote by $D(L_1^{\nu_1})$ and $D(L_2^{\nu_2})$
the domains of the operators $L_1^{\nu_1}$ and $L_2^{\nu_2}$.
Since $L_i$ is self-adjoint, then $L_i^{\nu_i}$ is also self-adjoint for any $\nu_i\in\bN$, $i=1,2$ 
(see e.g. \cite[Theorem~VIII.6]{RS}).
Denote by $D(L_1^{\nu_1})\otimes D(L_2^{\nu_2})$ the set of all finite linear combinations of functions
of the form $f\otimes g$, where $f\in D(L_1^{\nu_1})$ and $g\in D(L_2^{\nu_2})$.
As $D(L_i^{\nu_i})$ is dense in $L^2(\XX_i, d\mu_i)$, $i=1,2$,
then $D(L_1^{\nu_1})\otimes D(L_2^{\nu_2})$ is dense in $L^2(\XX_1\times \XX_2, d\mu_1\otimes d\mu_2)$.
The operator $L_1^{\nu_1}\otimes L_2^{\nu_2}$ is defined on $D(L_1^{\nu_1})\otimes D(L_2^{\nu_2})$ by
\begin{equation}\label{def:L1xL2}
\big(L_1^{\nu_1}\otimes L_2^{\nu_2}\big)\Big(\sum_{j=1}^N f_j\otimes g_j\Big)
:= \sum_{j=1}^N L_1^{\nu_1}f_j\otimes L_2^{\nu_2}g_j
\end{equation}
for any
$\sum_{j=1}^N f_j\otimes g_j \in D(L_1^{\nu_1})\otimes D(L_2^{\nu_2})$, $N<\infty$.
As is well-known $L_1^{\nu_1}\otimes L_2^{\nu_2}$ is well defined
and the following claim is valid, see e.g. \cite[Section VIII.10]{RS}.

\begin{proposition}\label{prop:L1xL2}
For any $\nu_1, \nu_2 \in \bN_0$ the operator $L_1^{\nu_1}\otimes L_2^{\nu_2}$ is essentially self-adjoint
on $D(L_1^{\nu_1})\otimes D(L_2^{\nu_2})$.
\end{proposition}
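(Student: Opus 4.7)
The plan is to invoke the standard theorem on tensor products of self-adjoint operators (cf.\ \cite[Theorem~VIII.33]{RS}). First, since $L_i$ is self-adjoint on $L^2(\XX_i, d\mu_i)$, the Borel functional calculus gives that $L_i^{\nu_i}$ is self-adjoint on its natural domain $D(L_i^{\nu_i})$ for every $\nu_i\in\bN_0$, as already noted in the statement. Writing $A_i:=L_i^{\nu_i}$, $\cH_i:=L^2(\XX_i,d\mu_i)$, and $\cH:=\cH_1\otimes\cH_2 \cong L^2(\XX, d\mu_1\otimes d\mu_2)$, it therefore suffices to prove the following: if $A_i$ is self-adjoint on the separable Hilbert space $\cH_i$, $i=1,2$, then the operator $A_1\otimes A_2$ defined by \eqref{def:L1xL2} on the algebraic tensor product $D(A_1)\otimes D(A_2)$ is essentially self-adjoint on $\cH$.

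Well-definedness and symmetry of $A_1\otimes A_2$ on this domain are routine: the former follows from the universal property of the algebraic tensor product together with the bilinearity of $(f,g)\mapsto A_1f\otimes A_2g$, and the latter follows from the self-adjointness of each $A_i$ and the factoring of the inner product on simple tensors, $\langle f\otimes g, h\otimes k\rangle = \langle f,h\rangle_{\cH_1}\langle g,k\rangle_{\cH_2}$.

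Next I would construct a self-adjoint extension $\widetilde A$ of $A_1\otimes A_2$ via joint spectral theory. Let $E^1,E^2$ denote the spectral resolutions of $A_1,A_2$. Because $A_1$ and $A_2$ act on distinct tensor factors, the projections $E^1(B_1)\otimes I$ and $I\otimes E^2(B_2)$ commute; hence $E(B_1\times B_2):= E^1(B_1)\otimes E^2(B_2)$ extends (by Carath\'eodory) to a joint projection-valued measure on $\bR^2$, and the spectral theorem for commuting self-adjoint operators yields the self-adjoint operator
\[
\widetilde A \;:=\; \int_{\bR^2} \lambda_1\lambda_2 \, dE(\lambda_1,\lambda_2).
\]
A short computation on simple tensors (expanding $f$ and $g$ in their respective spectral resolutions) shows that $\widetilde A(f\otimes g)=A_1 f\otimes A_2 g$ for $f\in D(A_1)$, $g\in D(A_2)$, so $\widetilde A$ extends $A_1\otimes A_2$.

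Finally --- and this is the main technical point --- I would show that $D(A_1)\otimes D(A_2)$ is a core for $\widetilde A$. Set $P_n:=E^1([-n,n])\otimes E^2([-n,n])$. Then $P_n\to I$ strongly, $P_n$ commutes with $\widetilde A$, and $\widetilde A P_n$ is bounded with $\|\widetilde A P_n\|\le n^2$. Given $\Phi\in D(\widetilde A)$, one has $P_n\Phi\to\Phi$ and $\widetilde A P_n\Phi\to\widetilde A\Phi$ by dominated convergence applied to the spectral measure $d\langle E\Phi,\Phi\rangle$. It then remains to approximate each $P_n\Phi$ in the graph norm by elements of $D(A_1)\otimes D(A_2)$: choose simple tensors $\Psi_k\to P_n\Phi$ in $\cH$ (using density of $\cH_1\otimes \cH_2$ in $\cH$), and replace $\Psi_k$ by $P_n\Psi_k$. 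Since $P_n$ respects the tensor structure --- for $\psi_1\otimes\psi_2$ one has $P_n(\psi_1\otimes\psi_2)=E^1([-n,n])\psi_1\otimes E^2([-n,n])\psi_2 \in D(A_1)\otimes D(A_2)$ --- we remain inside the algebraic tensor product, and on the range of $P_n$ the operators $\widetilde A$ and $A_1\otimes A_2$ coincide and are bounded by $n^2$, so graph-norm convergence reduces to $L^2$-convergence. The delicate point in the argument is precisely this preservation of the algebraic tensor product under the joint spectral cutoff, which hinges on the factored form of $P_n$.
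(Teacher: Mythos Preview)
Your proof is correct and follows the standard argument; the paper itself does not give a proof but simply refers to \cite[Section~VIII.10]{RS}, which is exactly the source you invoke (Theorem~VIII.33), so your approach is in full agreement with the paper's. Your write-up merely supplies the details behind that citation.
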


We will denote the closure of $L_1^{\nu_1}\otimes L_2^{\nu_2}$ again by $L_1^{\nu_1}\otimes L_2^{\nu_2}$.

\subsection{Two-parameter spectral measure induced by $L_1$ and $L_2$}\label{subsec:2-parameter}

We will be dealing with operators $F(L_1,L_2)$ on $L^2(\XX, d\mu)$
%with $\XX=\XX_1\times \XX_2$ and $\mu=\mu_1\times \mu_2$ 
generated by functions $F(\lambda_1, \lambda_2)$.

\smallskip

\noindent
{\bf The space \boldmath $\sL^2(\XX, d\mu)$.}
Denote by
\begin{equation*}
\sL^2(\XX, d\mu)=\sL^2(\XX_1\times \XX_2, d\mu_1\otimes d\mu_2)
%= L^2(\XX_1, d\mu_1)\otimes L^2(\XX_2, d\mu_2)
\end{equation*}
the subspace of $L^2(\XX, d\mu)$ consisting of all functions of the form
\begin{equation*}
\sum_{j=1}^N f_j\otimes g_j,
\quad f_j\in L^2(\XX_1, d\mu_1),\; g_j\in L^2(\XX_2, d\mu_2),
\end{equation*}
for some $N<\infty$ that may vary.

In what follows we will denote by $\langle \cdot, \cdot\rangle$ and $\|\cdot\|$
the inner product and norm in $L^2(\XX, d\mu)$ or $L^2(\XX_i, d\mu_i)$.
It will be clear from the context which $L^2$-space is considered.

For $i=1,2$, let $E_i$ be the spectral measure induced by the self-adjoint operator $L_i$.
The operator $E_i$ is an orthogonal projection operator in $L^2(\XX_i, d\mu_i)$.
More precisely, if we denote by $\BB(\R)$ the Borel $\sigma$-algebra on $\R$,
then $E_i(S)$ is an orthogonal projector for any Borel set $S\in \BB(\R)$
obeying the following conditions:

(a) $E_i(\R)=I$, the identity operator, and

(b) for any sequence $\{S_n\}_{n\ge 1}$ of disjoint sets in $\BB(\R)$ one has
\begin{equation}\label{additive}
E_i\big(\cup_{n\ge 1}S_n\big) = \sum_{n\ge 1}E_i(S_n),
\end{equation}
where the convergence is in the strong sense, i.e.
\begin{equation*}
\Big\|E_i\big(\cup_{n\ge 1}S_n\big)f - \sum_{n=1}^NE_i(S_n)f\Big\| \to 0
\quad\hbox{as $\;\;N\to\infty$}, \quad \forall f\in L^2(\XX_i, d\mu_i).
\end{equation*}

We next define the product spectral measure $E= E_1\otimes E_2$
on the Borel $\sigma$-algebra $\BB(\R^2)$ on $\R^2$ in three steps.

{\bf Step 1.} Denote
\begin{equation}\label{def:J}
\cJ(\R):=\{(a,b]: a<b\}, \quad -\infty\le a<b \le \infty.
\end{equation}
For any $J_1, J_2\in\cJ(\R)$ we define the operator
\begin{equation}\label{E0-J1-J2}
E_0(J_1\times J_2) = E_1(J_1)\otimes E_2(J_2)
\quad\hbox{on}\quad \sL^2(\XX,d\mu)
\end{equation}
as follows:
For any
$\sum_{j=1}^N f_j\otimes g_j$ in $\sL^2(\XX,d\mu)$ we set
\begin{equation}\label{def:E0}
E_0(J_1\times J_2)\Big(\sum_{j=1}^N f_j\otimes g_j\Big)
%:=\sum_{j=1}^N E_0(J_1\times J_2) f_j\otimes g_j
:= \sum_{j=1}^N E_1(J_1)f_j\otimes  E_2(J_2)g_j.
\end{equation}

\begin{lemma}\label{lem:E0}
For any $J_1, J_2\in\cJ(\R)$ the linear operator $E_0(J_1\times J_2)$ is bounded
as an operator from $\sL^2(\XX, d\mu)$ into $L^2(\XX, d\mu)$
and $\|E_0(J_1\times J_2)\|=1$.
\end{lemma}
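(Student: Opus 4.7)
The plan is to factor $E_0(J_1\times J_2)=(P_1\otimes I)(I\otimes P_2)$ on $\sL^2(\XX,d\mu)$, where $P_i:=E_i(J_i)$, and then bound each of the one-sided operators $P_1\otimes I$ and $I\otimes P_2$ separately using Parseval's identity with respect to a judiciously chosen orthonormal system. Since \eqref{def:E0} is linear in each tensor factor, the factorization is immediate on any representation $h=\sum_j f_j\otimes g_j$, and the question reduces to a single one-sided estimate.

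For the main estimate, given $h=\sum_{j=1}^N f_j\otimes g_j\in\sL^2(\XX,d\mu)$, I would put $h$ in a canonical form by choosing an orthonormal basis $\{e_1,\ldots,e_M\}$ of the finite-dimensional subspace $V:=\operatorname{span}\{g_1,\ldots,g_N\}\subset L^2(\XX_2,d\mu_2)$. Expanding $g_j=\sum_m b_{jm}e_m$ and regrouping yields $h=\sum_{m=1}^M F_m\otimes e_m$, where $F_m:=\sum_j b_{jm}f_j\in L^2(\XX_1,d\mu_1)$. Fubini combined with the orthonormality of $\{e_m\}$ then gives
\begin{equation*}
\|h\|^2_{L^2(\XX,d\mu)}=\sum_{m=1}^M\|F_m\|^2_{L^2(\XX_1,d\mu_1)}.
\end{equation*}
This identity also settles the well-definedness of \eqref{def:E0}: if two representations of the same element differ by a formal expression $\sum_j f_j\otimes g_j$ that vanishes in $L^2(\XX,d\mu)$, then $F_m=0$ for every $m$, forcing $\sum_j P_1f_j\otimes g_j=\sum_m P_1F_m\otimes e_m=0$ (and analogously when $P_2$ acts on the right factor).

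Applying $P_1\otimes I$ to the canonical form gives $(P_1\otimes I)h=\sum_m P_1F_m\otimes e_m$, and the same Parseval computation, together with $\|P_1\|\le 1$, yields
\begin{equation*}
\|(P_1\otimes I)h\|^2=\sum_{m=1}^M\|P_1F_m\|^2\le\sum_{m=1}^M\|F_m\|^2=\|h\|^2.
\end{equation*}
A symmetric argument (using an orthonormal basis of $\operatorname{span}\{f_j\}\subset L^2(\XX_1,d\mu_1)$) bounds $\|(I\otimes P_2)h\|\le\|h\|$, and the factorization at the start then delivers $\|E_0(J_1\times J_2)h\|\le\|h\|$. To see that the norm is exactly $1$ in the non-trivial case $P_1,P_2\neq 0$, I would take unit vectors $f\in\operatorname{Ran}(P_1)$ and $g\in\operatorname{Ran}(P_2)$; then $E_0(J_1\times J_2)(f\otimes g)=f\otimes g$, which has $L^2(\XX,d\mu)$-norm one.

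The main obstacle here is conceptual rather than computational: the definition \eqref{def:E0} refers to a representation of $h$ as a finite sum of elementary tensors that is highly non-unique, so one must verify that the value of $E_0(J_1\times J_2)h$ depends only on $h$ itself. The orthonormalization step in paragraph two dispatches both this well-definedness issue and the norm bound in a single stroke; the rest is routine once that canonical form is in hand.
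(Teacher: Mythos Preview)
Your proof is correct and follows essentially the same route as the paper: factor $E_0(J_1\times J_2)=(P_1\otimes I)(I\otimes P_2)$, bound each one-sided factor by $1$ via a Parseval-type identity, and exhibit a unit fixed vector for the lower bound. The only real difference is cosmetic: the paper picks global orthonormal bases $\{u_j\},\{v_k\}$ of $L^2(\XX_1)$ and $L^2(\XX_2)$ (using separability) and checks the bound on finite sums $\sum c_{jk}u_j\otimes v_k$, whereas you orthonormalize only the finite-dimensional span $\operatorname{span}\{g_j\}$ attached to a given $h$. Your local variant has two small advantages: it avoids invoking separability, and the canonical form $h=\sum_m F_m\otimes e_m$ with $\|h\|^2=\sum_m\|F_m\|^2$ directly settles the well-definedness of \eqref{def:E0}, a point the paper leaves implicit.
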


\begin{proof}
The argument here is quite standard.
%goes in the footsteps of the proof of the proposition on page 299 in \cite{RS}.
We give it for completeness.

Let $\{u_j\}_{j\ge 1}$ and $\{v_j\}_{j\ge 1}$ be orthonormal bases
for the (separable Hilbert) spaces $L^2(\XX_1, d\mu_1)$ and $L^2(\XX_2, d\mu_2)$, respectively.
As is well known, e.g. \cite[page 51]{RS},
$\{u_j\otimes v_k\}_{j,k\ge 1}$ is an orthonormal basis for
the space $L^2(\XX, d\mu)$.
Therefore, the set of all finite sums $\sum_{j,k} c_{jk}u_j\otimes v_k$
that is a subspace of $\sL^2(\XX, d\mu)$ is dense in $L^2(\XX, d\mu)$.
For any such sum we have
\begin{align*}
&\Big\|(E_1(J_1)\otimes I)\sum_{j,k} c_{jk}u_j\otimes v_k\Big\|^2
= \Big\|\sum_k \Big(E_1(J_1)\sum_j c_{jk}u_j\Big)\otimes v_k\Big\|^2
\\
& = \sum_k \Big\|E_1(J_1)\sum_j c_{jk}u_j\Big\|^2 \le \sum_k \Big\|\sum_j c_{jk}u_j\Big\|^2
= \sum_{j,k} |c_{jk}|^2
= \Big\|\sum_{j,k} c_{jk}u_j\otimes v_k\Big\|^2
\end{align*}
and hence
$\|E_1(J_1)\otimes I\| \le 1$,
where $I$ is the identity operator in $L^2(X_2, d\mu_2)$.
Above we used that
$\|E_1(J_1)\|=1$.
Similarly as above we get $\|I\otimes E_2(J_2)\| \le 1$.
Evidently,
\begin{equation*}
E_0(J_1\times J_2)=(E_1(J_1)\otimes I)(I\otimes E_2(J_2))
\end{equation*}
and from above it follows that
\begin{equation}\label{E0-E1-E2}
\|E_0(J_1\times J_2)\| \le \|E_1(J_1)\otimes I\|\|I\otimes E_2(J_2)\| \le 1.
\end{equation}

For the inequality in the other direction, choose $f_i\in L^2(\XX_i, d\mu_i)$, $i=1,2$, so that
$\|f_i\|_{L^2}=1$ and
$E_i(J_i)f_i=f_i$.
Then
\begin{equation*}
E_0(J_1\times J_2) (f_1\otimes f_2) = E_1(J_1)f_1 \otimes E_2(J_2)f_2 = f_1\otimes f_2
\end{equation*}
and hence
$\|E_0(J_1\times J_2)\| \ge 1$.
This along with \eqref{E0-E1-E2} completes the proof.
\end{proof}

Because $\sL^2(\XX, d\mu)$ is dense in $L^2(\XX, d\mu)$
and for any $J_1, J_2\in\cJ(\R)$ the linear operator $E_0(J_1\times J_2)$ is bounded ($\|E_0(J_1\times J_2)\|=1$)
there exists a unique extension of $E_0(J_1\times J_2)$ to $L^2(\XX, d\mu)$.
We denote this extension by $\overline{E}_0(J_1\times J_2)$.

\begin{proposition}\label{prop:E0}
For any $J_1, J_2\in\cJ(\R)$ the linear operator $\overline{E}_0(J_1\times J_2)$ is self-adjoint and bounded
on $L^2(\XX, d\mu)$ with norm $\|\overline{E}_0(J_1\times J_2)\|=1$.
Moreover, the operator $\overline{E}_0(J_1\times J_2)$ is idempotent,
i.e. $\overline{E}_0^2(J_1\times J_2) = \overline{E}_0(J_1\times J_2)$.
Therefore, $\overline{E}_0(J_1\times J_2)$ is a projector.
In addition, for $J_1, J_2, I_1, I_2\in\cJ(\R)$
\begin{equation}\label{E0E0}
\overline{E}_0(J_1\times J_2)\overline{E}_0(I_1\times I_2)=0
\quad\hbox{if}\quad
(J_1\times J_2)\cap(I_1\times I_2) =\emptyset.
%\;J_1, J_2, I_1, I_2\in\cJ(\R).
\end{equation}
\end{proposition}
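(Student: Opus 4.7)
The plan is to establish all four properties first on the dense tensor-product subspace $\sL^2(\XX,d\mu)$, where the action of $E_0(J_1\times J_2)$ is explicit in terms of the one-parameter spectral projectors $E_1(J_1), E_2(J_2)$, and then transfer each property to the bounded extension $\overline{E}_0(J_1\times J_2)$ by a density argument. The norm claim $\|\overline{E}_0(J_1\times J_2)\|=1$ is already essentially given by Lemma~\ref{lem:E0}: the unique bounded extension of an operator from a dense subspace preserves the operator norm, so nothing new is needed there.

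For self-adjointness, I would first verify symmetry on elementary tensors: for $f_i\otimes g_i\in\sL^2(\XX,d\mu)$, $i=1,2$,
\begin{align*}
\langle E_0(J_1\times J_2)(f_1\otimes g_1),\,f_2\otimes g_2\rangle
&= \langle E_1(J_1)f_1,f_2\rangle\,\langle E_2(J_2)g_1,g_2\rangle\\
&= \langle f_1,E_1(J_1)f_2\rangle\,\langle g_1,E_2(J_2)g_2\rangle\\
&= \langle f_1\otimes g_1,\,E_0(J_1\times J_2)(f_2\otimes g_2)\rangle,
\end{align*}
using that each $E_i(J_i)$ is self-adjoint on $L^2(\XX_i,d\mu_i)$. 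By bilinearity this extends to all of $\sL^2(\XX,d\mu)$, and approximating arbitrary $f,g\in L^2(\XX,d\mu)$ by sequences in $\sL^2(\XX,d\mu)$ and using continuity of the inner product together with boundedness of $\overline{E}_0(J_1\times J_2)$ gives $\langle \overline{E}_0 f,g\rangle=\langle f,\overline{E}_0 g\rangle$; symmetry plus boundedness yields self-adjointness.

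For idempotency, observe that $E_0(J_1\times J_2)$ maps $\sL^2(\XX,d\mu)$ into itself (it sends $\sum_j f_j\otimes g_j$ to $\sum_j E_1(J_1)f_j\otimes E_2(J_2)g_j\in\sL^2$), and on elementary tensors $E_0(J_1\times J_2)^2(f\otimes g)=E_1(J_1)^2 f\otimes E_2(J_2)^2 g=E_1(J_1)f\otimes E_2(J_2)g$, since each $E_i(J_i)$ is itself a projector. Thus $E_0^2=E_0$ on $\sL^2$. Given $f\in L^2(\XX,d\mu)$ and $f_n\in\sL^2$ with $f_n\to f$, continuity gives $\overline{E}_0 f_n\to\overline{E}_0 f$ and hence $\overline{E}_0^2 f_n\to\overline{E}_0^2 f$; since $\overline{E}_0^2 f_n=\overline{E}_0 f_n$ for all $n$, passing to the limit yields $\overline{E}_0^2=\overline{E}_0$ on $L^2(\XX,d\mu)$. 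The same pattern handles \eqref{E0E0}: disjointness of the rectangles $(J_1\times J_2)\cap(I_1\times I_2)=\emptyset$ forces $J_1\cap I_1=\emptyset$ or $J_2\cap I_2=\emptyset$, so at least one of $E_1(J_1)E_1(I_1)$ or $E_2(J_2)E_2(I_2)$ vanishes by the multiplicativity of the one-parameter spectral measures; the product $\overline{E}_0(J_1\times J_2)\overline{E}_0(I_1\times I_2)$ therefore annihilates all elementary tensors, hence $\sL^2(\XX,d\mu)$, and by density and boundedness it is zero on $L^2(\XX,d\mu)$.

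The whole argument is essentially routine; the only mild obstacle is bookkeeping at the extension step, specifically checking that the operator one wants to iterate (for idempotency and \eqref{E0E0}) preserves the dense subspace $\sL^2(\XX,d\mu)$, so that the identity to be extended truly holds on that subspace and not merely pointwise after applying limits in different orders. Once this invariance is noted, each of the four properties extends cleanly by the standard argument that a bounded operator is determined by its values on a dense set.
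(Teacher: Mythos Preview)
Your proof is correct and takes a genuinely different route from the paper's. The paper fixes $J_1,J_2$, picks orthonormal bases $\{u_j\}$ of $L^2(\XX_1)$ and $\{v_k\}$ of $L^2(\XX_2)$ adapted to the ranges $H_1,H_2$ of $E_1(J_1),E_2(J_2)$ (so that each $u_j$ is either fixed or killed by $E_1(J_1)$, and likewise for $v_k$), expands an arbitrary $h\in L^2(\XX)$ in the tensor basis $\{u_j\otimes v_k\}$, and computes explicitly that $\overline{E}_0(J_1\times J_2)h=\sum_{j\in\bN_1,k\in\bN_2}c_{jk}\,u_j\otimes v_k$; all four properties are then read off from this formula. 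Your argument instead works algebraically on elementary tensors using the corresponding properties of $E_1(J_1),E_2(J_2)$ and pushes everything through by density and boundedness. Your approach is more streamlined and avoids constructing adapted bases; the paper's approach has the advantage of displaying $\overline{E}_0(J_1\times J_2)$ concretely as the orthogonal projection onto $\overline{\operatorname{span}}\{u_j\otimes v_k:j\in\bN_1,k\in\bN_2\}$, which makes the projector structure visually transparent. Both are entirely standard.
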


\begin{proof}
Fix $J_1, J_2\in\cJ(\R)$.
Since $E_1(J_1)$ is a projector there exists a closed subspace $H_1$ of $L^2(\XX_1, d\mu_1)$
such that $E_1(J_1)$ projects onto $H_1$.
We will use the notation $E_{H_1}:=E_1(J_1)$.
Clearly, there exists an orthonormal basis $\{u_j\}_{j\ge 1}$ for the space $L^2(\XX_1, d\mu_1)$
such that for each $j\in \bN$ we have either
$E_{H_1}u_j = u_j$
or
$E_{H_1}u_j = 0$.
Thus, $\{u_j\}_{j\ge 1}$ is the union of orthonormal bases
for the subspaces $H_1$ and $H_1^\perp$.
Denote by $\bN_1$ the set of all $j\in\bN$ such that $E_{H_1}u_j = u_j$,
i.e. $\{u_j\}_{j\in\bN_1}$ is a basis for the subspace $H_1$.

Similarly, there exists a closed subspace $H_2$ of $L^2(\XX_2, d\mu_2)$
such that $E_2(J_2)$ projects onto $H_2$.
Denote $E_{H_2}:=E_2(J_2)$.
Also, assume that $\{v_j\}_{j\ge 1}$ is an orthonormal basis
for the spaces $L^2(\XX_2, d\mu_2)$
such that for each $j\in \bN$ we have either
$E_{H_2}v_j = v_j$
or
$E_{H_2}v_j = 0$.
Denote by $\bN_2$ the set of all $j\in\bN$ such that $E_{H_2}v_j = v_j$.

As was alluded to above $\{u_j\otimes v_k\}_{j,k\ge 1}$ is an orthonormal basis for $L^2(\XX, d\mu)$.
Then for any function $h\in L^2(\XX, d\mu)$ we have
\begin{equation*}
h = \sum_{j, k \ge 1} c_{jk} u_j\otimes v_k,
\quad c_{jk}= \langle h, u_j\otimes v_k \rangle.
\end{equation*}
With the notation from the proof of Lemma~\ref{lem:E0} we obtain
\begin{align*}
\overline{E}_0(J_1\times J_2)h
&= \lim_{N,K\to\infty}E_0(J_1\times J_2)\Big(\sum_{j=1}^N\sum_{k=1}^K c_{jk} u_j\otimes v_k\Big)
\\
&= \lim_{N,K\to\infty}\sum_{j=1}^N\sum_{k=1}^K c_{jk} E_1(J_1)u_j\otimes E_2(J_2)v_k
\\
&= \sum_{j,k\ge 1} c_{jk} E_{H_1}u_j\otimes E_{H_2}v_k
= \sum_{j\in\bN_1, k\in\bN_2} c_{jk} E_{H_1}u_j\otimes E_{H_2}v_k
\\
&= \sum_{j\in\bN_1, k\in\bN_2} c_{jk} u_j\otimes v_k,
\end{align*}
where the convergence is in $L^2(\XX, d\mu)$; in the last three series the convergence is unconditional.
From above it follows that the operator $\overline{E}_0(J_1\times J_2)$ is symmetric
and
$\|\overline{E}_0(J_1\times J_2)h\|^2 = \sum_{j\in\bN_1, k\in\bN_2} |c_{jk}|^2 \le \|h\|^2$.
Hence $\overline{E}_0(J_1\times J_2)$ is self-adjoint.
Also, it follows readily that
$\overline{E}_0^2(J_1\times J_2)=\overline{E}_0(J_1\times J_2)$
and identity \eqref{E0E0} is valid.
\end{proof}

\smallskip

{\bf Step 2.}
Let $\BB_0(\R^2)$ be the collection of all sets $R$ of the form
\begin{equation}\label{R-JJ}
R = \cup_{j=1}^N J_{1j}\times J_{2j},
\quad J_{ij}\in \cJ(\R), \;  (J_{1j}\times J_{2j})\cap (J_{1k}\times J_{2k}) = \emptyset, j\ne k,
\end{equation}
for some $N<\infty$.
It is readily seen that $\BB_0(\R^2)$ is an algebra (it is closed under finite unions and complements)
that generates the Borel $\sigma$-algebra $\BB(\R^2)$ on $\R^2$.

We now extend the definition of $\overline{E}_0(R)$ to all sets $R$ from $\BB_0(\R^2)$.
For any set $R$ as in \eqref{R-JJ} we define
\begin{equation}\label{def:E0bar}
\overline{E}_0(R) := \sum_{j=1}^N \overline{E}_0(J_{1j}\times J_{2j})
\quad\hbox{on}\quad L^2(\XX,d\mu).
\end{equation}

\begin{proposition}\label{prop:prealgebra}
The operator $\overline{E}_0(R)$ is well defined.
Furthermore, $\overline{E}_0(R)$ is a projection-valued set function
that is a {\em premeasure} in $\BB_0(\R^2)$,
i.e.
$\overline{E}_0(\RR^2) = I$, the identity operator,
and
\begin{equation}\label{additive-B0}
\overline{E}_0\big(\cup_{j\ge 1}R_j\big) = \sum_{j\ge 1}\overline{E}_0(R_j)
\end{equation}
for any sequence $\{R_j\}_{j\ge 1}$ of disjoint sets from $\BB_0(\R^2)$
such that
$\cup_{j\ge 1}R_j \in \BB_0(\R^2)$.
\end{proposition}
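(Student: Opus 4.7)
The plan is to verify the four pieces of the statement---well-definedness, the projection property, the identity $\overline{E}_0(\R^2)=I$, and countable additivity---in sequence, leveraging Proposition~\ref{prop:E0} throughout.

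For well-definedness, given two disjoint decompositions $R=\bigsqcup_{j=1}^N J_{1j}\times J_{2j}=\bigsqcup_{k=1}^M I_{1k}\times I_{2k}$, I would introduce the common refinement of the axis endpoints, producing a finite grid of half-open cells $\{P_{ab}\}\subset\cJ(\R)\times\cJ(\R)$ such that each rectangle in either representation is a finite disjoint union of cells. Finite additivity of the one-parameter spectral measures $E_1,E_2$ (a direct consequence of~\eqref{additive}), combined with bilinearity of the tensor product extended by density to the closure, then collapses both sums in~\eqref{def:E0bar} to the same sum $\sum_{a,b}\overline{E}_0(P_{ab})$. The projection property is then immediate: the rectangles $J_{1j}\times J_{2j}$ are pairwise disjoint, so by~\eqref{E0E0} the projections $\overline{E}_0(J_{1j}\times J_{2j})$ are mutually orthogonal and their finite sum is itself an orthogonal projection. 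The identity $\overline{E}_0(\R^2)=I$ follows from $\R\in\cJ(\R)$ together with $E_i(\R)=I$, since on the dense subspace $\sL^2(\XX,d\mu)$ the operator $E_0(\R\times\R)$ acts as the identity, and its bounded extension must therefore be $I$ on all of $L^2(\XX,d\mu)$.

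The bulk of the work is countable additivity. Given disjoint $\{R_j\}\subset\BB_0(\R^2)$ with $R:=\bigcup_j R_j\in\BB_0(\R^2)$, set $T_n:=R\setminus\bigcup_{j=1}^n R_j$; these lie in the algebra $\BB_0(\R^2)$ and satisfy $T_n\downarrow\emptyset$. Finite additivity rewrites
\begin{equation*}
\overline{E}_0(R)f-\sum_{j=1}^{n}\overline{E}_0(R_j)f=\overline{E}_0(T_n)f,
\end{equation*}
so the claim reduces to strong convergence $\overline{E}_0(T_n)f\to 0$ for every $f\in L^2(\XX,d\mu)$. I would first handle a simple tensor $f=g\otimes h$: mutual orthogonality of the component projections (from \eqref{E0E0}) yields, for any disjoint rectangular decomposition $T_n=\bigsqcup_j J_{1j}^{(n)}\times J_{2j}^{(n)}$, the identity
\begin{equation*}
\|\overline{E}_0(T_n)(g\otimes h)\|^2=\sum_{j}\nu_{1,g}(J_{1j}^{(n)})\,\nu_{2,h}(J_{2j}^{(n)}),
\end{equation*}
where $\nu_{i,\varphi}(\cdot):=\|E_i(\cdot)\varphi\|^2$ is the finite positive scalar spectral measure of $\varphi$. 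The right-hand side coincides on rectangles---hence on all of $\BB_0(\R^2)$---with the classical product measure $\nu_{1,g}\otimes\nu_{2,h}$ on $\BB(\R^2)$, which is genuinely $\sigma$-additive; continuity from above at $\emptyset$ delivers the convergence.

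For an arbitrary $f$, I would proceed by density. Expanding $\|\overline{E}_0(T_n)\tilde f\|^2$ for a finite sum $\tilde f=\sum_{k=1}^m g_k\otimes h_k$ and using that $\overline{E}_0(T_n)$ is a self-adjoint projection produces a finite sum $\sum_{k,l}\mu_{k,l}(T_n)$, where $\mu_{k,l}$ is (by polarization) a finite complex product of the complex spectral measures $\langle E_1(\cdot)g_k,g_l\rangle$ and $\langle E_2(\cdot)h_k,h_l\rangle$ on $\BB(\R^2)$; each summand vanishes as $T_n\downarrow\emptyset$. Finally, finite linear combinations of simple tensors are dense in $L^2(\XX,d\mu)$ (via the basis $\{u_j\otimes v_k\}$ used in the proof of Proposition~\ref{prop:E0}), and $\|\overline{E}_0(T_n)\|\le 1$ uniformly, so a routine $3\eps$-approximation transfers the convergence to every $f$. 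The main obstacle is precisely this last step: although the scalar $\sigma$-additivity is classical, marshalling the uniform norm bound on the projections $\overline{E}_0(T_n)$ to push strong convergence from a dense subspace to the whole of $L^2(\XX,d\mu)$ is the nontrivial analytic point; the earlier parts are essentially combinatorial bookkeeping with rectangular refinements.
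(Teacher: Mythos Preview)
Your proof is correct and takes a genuinely different route from the paper for the countable additivity step. The paper never invokes scalar product measures; instead it works entirely at the operator level. It first establishes the special case where a single rectangle $\tilde{J}_1\times\tilde{J}_2$ is a countable disjoint union of rectangles, proceeding in two subcases: (i) when the covering has a grid structure $\{I_{1k}\times I_{2\ell}\}$, where the one-parameter countable additivity of $E_1,E_2$ gives convergence on $\sL^2(\XX,d\mu)$, and then the bound $\sum_{k,\ell}\|\overline{E}_0(I_{1k}\times I_{2\ell})h\|^2\le\|h\|^2$ plus a duality test against $w\in\sL^2$ extends this to all of $L^2$; (ii) the general covering, which is reduced to (i) by projecting all endpoints onto the axes to form a countable grid and regrouping (using unconditional convergence). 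Finally the case of an arbitrary $R\in\BB_0(\R^2)$ is obtained by intersecting the countable cover with a finite rectangular decomposition of $R$. Your approach is more economical: by recognizing $\|\overline{E}_0(T_n)(g\otimes h)\|^2=(\nu_{1,g}\otimes\nu_{2,h})(T_n)$ you import the $\sigma$-additivity from the classical product-measure theorem in one stroke, avoiding the grid-refinement bookkeeping. The paper's approach, by contrast, is self-contained and stays purely in the spectral-operator framework without appealing to the scalar product-measure construction. Incidentally, the step you flag as the ``main obstacle'' (the $3\eps$ density transfer using $\|\overline{E}_0(T_n)\|\le1$) is routine; the substantive insight in your argument is the identification with the scalar product measure.
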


To streamline our presentation we divert the lengthy proof of this proposition to the appendix.

\smallskip
 
{\bf Step 3.}
The following theorem completes the construction of the two-parameter spectral measure $E$.

\begin{theorem}\label{thm:salgebra}
There is a unique spectral measure $E(R)$ on the Borel $\sigma$-algebra $\BB(\R^2)$
such that
\begin{equation}\label{E-E0}
E(R) = \overline{E}_0(R), \quad \forall R \in \BB_0(\R^2).
\end{equation}
Recall that by definition $E(S)$ is a spectral measure on $\BB(\R^2)$
if it obeys the following conditions:
$E(\RR^2) = I$ %, the identity operator,
and
\begin{equation}\label{additive-B}
E\big(\cup_{j\ge 1}S_j\big) = \sum_{j\ge 1}E(S_j)
\end{equation}
for any sequence $\{S_j\}_{j\ge 1}$ of disjoint sets from $\BB(\R^2)$.

\end{theorem}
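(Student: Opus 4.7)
The plan is to construct $E(S)$ via a standard Carath\'{e}odory-type extension applied to the scalar measures generated by $\overline{E}_0$. First, for each $f\in L^2(\XX, d\mu)$ I set
$$\mu_f(R):=\big\|\overline{E}_0(R)f\big\|^2 = \langle \overline{E}_0(R)f, f\rangle,\qquad R\in\BB_0(\R^2).$$
Combining Proposition~\ref{prop:prealgebra} with the mutual orthogonality of the ranges of $\overline{E}_0(R_j)$ for disjoint $R_j\in\BB_0(\R^2)$ (which reduces to \eqref{E0E0} after writing each $R_j$ as a disjoint union of rectangles as in \eqref{R-JJ}), the Pythagorean identity yields countable additivity of $\mu_f$. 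Hence $\mu_f$ is a finite positive premeasure on the algebra $\BB_0(\R^2)$ with $\mu_f(\R^2)=\|f\|^2$, and by the Carath\'{e}odory extension theorem it extends uniquely to a positive Borel measure on $\BB(\R^2)$, still denoted $\mu_f$.

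Next, by polarization applied to $f, g\in L^2(\XX,d\mu)$ I obtain a complex Borel measure $\mu_{f,g}$ on $\BB(\R^2)$ that agrees with $R\mapsto \langle \overline{E}_0(R)f, g\rangle$ on $\BB_0(\R^2)$ and depends sesquilinearly on $(f,g)$. For each fixed $S\in\BB(\R^2)$, the form $(f,g)\mapsto \mu_{f,g}(S)$ satisfies the Cauchy--Schwarz bound $|\mu_{f,g}(S)|\le \mu_f(\R^2)^{1/2}\mu_g(\R^2)^{1/2}=\|f\|\|g\|$, so the Riesz representation theorem produces a unique bounded operator $E(S)$ on $L^2(\XX,d\mu)$ with
$$\langle E(S)f, g\rangle = \mu_{f,g}(S),\qquad f, g\in L^2(\XX, d\mu),$$
and $E(S)=\overline{E}_0(S)$ whenever $S\in \BB_0(\R^2)$, which proves \eqref{E-E0}. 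The symmetry $\overline{\mu_{g,f}(S)}=\mu_{f,g}(S)$ (which holds on $\BB_0(\R^2)$ by self-adjointness of $\overline{E}_0(R)$ and extends to $\BB(\R^2)$ by uniqueness of the extension) gives $E(S)^*=E(S)$, while countable additivity \eqref{additive-B} in the strong operator topology is immediate from countable additivity of each $\mu_f$ via $\|E(S)f\|^2=\mu_f(S)$.

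The main obstacle is showing that each $E(S)$ is a projection, so that $E$ is a genuine spectral measure and not merely a positive operator-valued measure. I would handle this by a two-step uniqueness argument. Fix $R\in\BB_0(\R^2)$ and observe that the two complex measures
$$S\mapsto \mu_{f,g}(S\cap R)\quad\text{and}\quad S\mapsto \mu_{f,\overline{E}_0(R)g}(S)$$
agree on the generating algebra $\BB_0(\R^2)$, since for $R'\in\BB_0(\R^2)$ the product formula $\overline{E}_0(R')\overline{E}_0(R)=\overline{E}_0(R'\cap R)$ (a direct consequence of \eqref{E0E0} and \eqref{def:E0bar} after expanding rectangles) makes both sides equal to $\langle \overline{E}_0(R'\cap R)f, g\rangle$. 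By uniqueness of the Carath\'{e}odory extension the identity persists to every $S\in\BB(\R^2)$, yielding $E(R)E(S)=E(R\cap S)$ for all $R\in\BB_0(\R^2)$ and all Borel $S$. Repeating the uniqueness argument, now with $S_0\in\BB(\R^2)$ fixed and comparing $S\mapsto \langle E(S)E(S_0)f, g\rangle$ with $S\mapsto \mu_{f,g}(S\cap S_0)$ on $\BB_0(\R^2)$, extends the product formula to all Borel sets. Taking $S=S_0$ gives $E(S_0)^2=E(S_0)$, which together with self-adjointness makes $E(S_0)$ an orthogonal projection.

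Finally, uniqueness of $E$ is immediate: any other spectral measure $E'$ satisfying \eqref{E-E0} produces, for each $f$, the finite positive measure $\langle E'(\cdot)f, f\rangle$ that coincides with $\mu_f$ on $\BB_0(\R^2)$, hence on all of $\BB(\R^2)$ by uniqueness of the Carath\'{e}odory extension, and polarization yields $E'(S)=E(S)$ for every Borel set $S$.
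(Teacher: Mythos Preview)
The paper does not give a proof of this theorem at all; it simply says the result is standard and cites \cite[Lemma~4.9]{Schmudgen}. Your proposal is essentially a self-contained write-up of that standard extension argument (Carath\'{e}odory extension of the scalar measures $\mu_f$, polarization, Riesz representation, then verification of the projection identity via a two-step uniqueness argument), so there is no alternative route in the paper to compare against.

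One small ordering issue: you invoke $\|E(S)f\|^2=\mu_f(S)$ to get strong countable additivity \emph{before} you have established that $E(S)$ is a projection. At that stage you only know $\langle E(S)f,f\rangle=\mu_f(S)$, which gives weak countable additivity; the identity $\|E(S)f\|^2=\langle E(S)^2f,f\rangle=\langle E(S)f,f\rangle$ requires $E(S)^2=E(S)$, which you prove in the next paragraph. The fix is trivial---just move the strong countable additivity claim to after the projection property (then for disjoint $S_j$ the $E(S_j)$ are mutually orthogonal projections, the partial sums have norms squared $\sum_{j\le N}\mu_f(S_j)\to\mu_f(\cup S_j)$, and together with weak convergence you get strong convergence). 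Otherwise your argument is correct.
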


This theorem is standard and follows from Proposition~\ref{prop:prealgebra},
see e.g. \cite[Lemma 4.9]{Schmudgen}.

From Theorem~\ref{thm:salgebra} it readily follows that
\begin{equation}\label{prop-E}
E(\emptyset)=0
\quad\hbox{and}\quad
E(S)E(R)= E(S\cap R), \; \forall S, R \in \BB(\R^2).
\end{equation}
Also, as a projector operator $E(S)$ for $S\in \BB(\R^2)$ is a self-adjoint bounded operator on $L^2(\XX, d\mu)$
with $\|E(S)\|=1$.

\subsection{Definition of operators $F(L_1, L_2)$}

Let $F:\R^2 \to \bC$ be a measurable and bounded function.
We next define the operator $F(L_1, L_2)$.

Let $E$ be the two-parameter spectral measure defined above.
As is well known and easy to see for any $f\in L^2(\XX, d\mu)$
\begin{equation}\label{def-nu}
\nu_f(S) := \langle E(S)f, f\rangle, \quad S\in \BB(\R^2).
\end{equation}
is a finite measure on $\BB(\R^2)$.
Clearly,
\begin{equation}\label{nu-f-S}
\nu_f(S) = \langle E(S)^2f,f\rangle = \langle E(S)f, E(S)f\rangle = \|E(S)f\|^2
\quad\hbox{and}\quad
\nu_f(\R^2)=\|f\|^2.
\end{equation}
From \eqref{def-nu} it follows by polarization that for any $f, g\in L^2(\XX, d\mu)$
\begin{equation}\label{def-nu-fg}
\nu_{f,g}(S) := \langle E(S)f,g\rangle, \quad S\in \BB(\R^2),
\end{equation}
is a linear combination of four positive measures on $\BB(\R^2)$.
Therefore,
\begin{equation}\label{def-form}
(f, g):= \int_{\R^2} F d\nu_{f,g}, \quad f, g\in L^2(\XX, d\mu),
\end{equation}
is a bilinear form on $L^2(\XX, d\mu)$.

We next show that the bilinear form $(f, g)$ is bounded.
By a standard construction in Measure theory (see e.g. \cite{Folland}) there exist a sequence $\{\phi_k\}_{k\ge 1}$
of simple functions
\begin{equation*}
\phi_k = \sum_{j=1}^{N_k} c_{kj} \ONE_{S_{kj}}, \quad S_{kj}\cap S_{k\ell} = \emptyset, \; j\ne \ell,
\end{equation*}
such that
\begin{equation*}
|\phi_1| \le |\phi_2| \le \cdots \le |F| \le \|F\|_\infty
\end{equation*}
and for any $f, g\in L^2(\XX, d\mu)$
\begin{equation*}
\lim_{k\to\infty} \int_{\R^2} \phi_k(\blambda) d\nu_{f, g}(\blambda)
= \int_{\R^2} F(\blambda) d\nu_{f, g}(\blambda).
\end{equation*}
We have
\begin{align*}
\Big|\int_{\R^2}& \phi_k(\blambda) d\nu_{f, g}(\blambda)\Big|
=  \Big|\sum_{j=1}^{N_k} c_{kj} \nu_{f,g}(S_{kj})\Big|
\le \sum_{j=1}^{N_k} |c_{kj}||\langle E(S_{kj})f, g \rangle|
\\
&= \sum_{j=1}^{N_k} |c_{kj}||\langle E(S_{kj})f, E(S_{kj})g \rangle|
\le \|F\|_\infty\sum_{j=1}^{N_k} \|E(S_{kj})f\| \|E(S_{kj})g \|
\\
&\le \|F\|_\infty\Big(\sum_{j=1}^{N_k} \|E(S_{kj})f\|^2\Big)^{1/2}\Big(\sum_{j=1}^{N_k} \|E(S_{kj})g\|^2\Big)^{1/2}
\\
&= \|F\|_\infty\Big(\sum_{j=1}^{N_k} \langle E(S_{kj})f, f\rangle \Big)^{1/2}
\Big(\sum_{j=1}^{N_k} \langle E(S_{kj})g, g\rangle \Big)^{1/2}
\\
&= \|F\|_\infty\Big|\Big\langle \Big(\sum_{j=1}^{N_k} E(S_{kj})\Big)f, f\Big\rangle \Big|^{1/2}
\Big|\Big\langle \Big(\sum_{j=1}^{N_k} E(S_{kj})\Big)g, g\Big\rangle \Big|^{1/2}
\le \|F\|_\infty\|f\|\|g\|.
\end{align*}
Above we used twice the Cauchy-Schwarz inequality and the fact that
$E(S_{kj})$ and $\sum_{j=1}^{N_k} E(S_{kj})$ are projectors.
From above it follows that
$|(f, g)| \le \|F\|_\infty\|f\|\|g\|$.
Therefore, by the Riesz theorem there exists a bounded linear operator, denoted by $F(L_1, L_2)$, on $L^2(\XX, d\mu)$
such that
\begin{equation*}
\langle F(L_1, L_2)f, g \rangle = (f,g).
\end{equation*}
Thus, we define $F(L_1, L_2)$ by
\begin{equation}\label{def-FL1L2}
\langle F(L_1, L_2)f, g \rangle
:= \int_{\R^2} F d \langle Ef,g\rangle, \quad \forall f, g\in L^2(\XX, d\mu).
\end{equation}
Clearly,
$\|F(L_1, L_2)\|\le \|F\|_\infty$.

\begin{proposition}\label{prop:self-adjoint}
If $F(\lambda_1, \lambda_2)$ is a real-valued bounded Borel measurable function on $\R^2$,
then the operator $F(L_1, L_2)$ is self-adjoint.
\end{proposition}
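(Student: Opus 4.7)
The plan is to show that the bounded operator $F(L_1,L_2)$ is symmetric, i.e.
\[
\langle F(L_1,L_2)f, g\rangle = \langle f, F(L_1,L_2)g\rangle, \quad \forall f,g\in L^2(\XX,d\mu),
\]
which, together with boundedness (established just before the statement), yields self-adjointness.

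The key observation is that for every Borel set $S\in\BB(\R^2)$ the projector $E(S)$ is self-adjoint (Proposition~\ref{prop:E0} together with Theorem~\ref{thm:salgebra}). Hence for any $f,g\in L^2(\XX,d\mu)$
\[
\nu_{f,g}(S)=\langle E(S)f,g\rangle = \langle f,E(S)g\rangle = \overline{\langle E(S)g,f\rangle}=\overline{\nu_{g,f}(S)}.
\]
In other words, as complex measures on $\BB(\R^2)$ we have the conjugation identity $\nu_{f,g}=\overline{\nu_{g,f}}$.

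From here the statement is immediate for real-valued simple functions: if $\phi=\sum_{j=1}^N c_j\ONE_{S_j}$ with $c_j\in\R$, then
\[
\int_{\R^2}\phi\, d\nu_{f,g}=\sum_{j=1}^N c_j\,\nu_{f,g}(S_j)=\sum_{j=1}^N c_j\,\overline{\nu_{g,f}(S_j)} = \overline{\int_{\R^2}\phi\, d\nu_{g,f}}.
\]
To pass to the general case, I would use exactly the approximation of $F$ by simple functions $\{\phi_k\}_{k\ge 1}$ already invoked in the paragraph preceding the proposition, choosing them to be \emph{real-valued} (this is possible because $F$ is real-valued, e.g.\ by taking real and imaginary parts or by the standard construction applied to $F$ directly). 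The dominated convergence theorem, applied with the total-variation measures $|\nu_{f,g}|$ and $|\nu_{g,f}|$ (which are finite since both $\nu_{f,g}$ and $\nu_{g,f}$ are finite complex measures) and the dominating function $\|F\|_\infty$, gives
\[
\langle F(L_1,L_2)f,g\rangle=\int_{\R^2} F\, d\nu_{f,g}=\overline{\int_{\R^2} F\, d\nu_{g,f}}=\overline{\langle F(L_1,L_2)g,f\rangle}=\langle f,F(L_1,L_2)g\rangle.
\]

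The argument is largely bookkeeping, and I do not anticipate a serious obstacle. The only subtle point is ensuring the approximants can be chosen real-valued so that $\int\phi_k\, d\nu_{f,g}=\overline{\int\phi_k\, d\nu_{g,f}}$ at each stage; this is trivial since $F$ itself is real-valued, and the standard measurable-function approximation yields real simple functions dominated by $\|F\|_\infty$.
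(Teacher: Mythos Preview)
Your proof is correct and follows essentially the same approach as the paper: both use the self-adjointness of $E(S)$ to obtain $\nu_{f,g}=\overline{\nu_{g,f}}$, then invoke the real-valuedness of $F$ to conclude $\int F\,d\nu_{f,g}=\overline{\int F\,d\nu_{g,f}}$, and finish with ``bounded plus symmetric implies self-adjoint.'' The paper simply writes this as a one-line chain of equalities, taking the passage from the measure identity to the integral identity for granted, whereas you spell out the simple-function approximation explicitly.
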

\begin{proof}
If $f, g\in L^2(\XX, d\mu)$, then from \eqref{def-FL1L2} we get
\begin{align*}
\langle F(L_1, L_2)f, g \rangle
&= \int_{\R^2} F d \langle Ef,g\rangle
= \int_{\R^2} F d \langle f,Eg\rangle
= \int_{\R^2} F d \overline{\langle Eg, f\rangle}
\\
&= \overline{\int_{\R^2} F d \langle Eg, f\rangle}
= \overline{\langle F(L_1, L_2)g, f \rangle}
= \langle f, F(L_1, L_2)g\rangle,
\end{align*}
where we used that the operator $E(S)$ is self-adjoint, and $\overline{z}$ stands for the complex conjugate of $z\in \bC$.
Therefore, the operator $F(L_1, L_2)$ is symmetric and bounded and hence it is self-adjoint.
\end{proof}

Some properties of the operators $F(L_1, L_2)$ are given in the following

\begin{theorem}\label{thm:F-G}
Let $F(\lambda_1, \lambda_2)$ and $G(\lambda_1, \lambda_2)$ be two bounded Borel measurable functions on $\R^2$.
Then for any $a,b\in \bC$
\begin{equation}\label{aF-bG}
(aF+ bG)(L_1, L_2) = aF(L_1, L_2) + bG(L_1, L_2)
\end{equation}
and
\begin{equation}\label{F-G}
(FG)(L_1,L_2) = F(L_1,L_2)G(L_1,L_2),
\end{equation}
and, therefore,
$F(L_1,L_2)$ and $G(L_1,L_2)$ commute.
\end{theorem}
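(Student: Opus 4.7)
The linearity identity \eqref{aF-bG} is immediate from the definition \eqref{def-FL1L2}: for fixed $f,g\in L^2(\XX,d\mu)$ the map $F\mapsto \int_{\R^2} F\,d\langle Ef,g\rangle$ is linear in the integrand, so no limiting argument is needed. The substantive content is the multiplicative identity \eqref{F-G}, which I plan to prove first for simple functions (where it reduces to the projection relation $E(S)E(R)=E(S\cap R)$ from \eqref{prop-E}) and then extend to arbitrary bounded Borel $F,G$ by approximation. Commutativity of $F(L_1,L_2)$ and $G(L_1,L_2)$ will then follow automatically from \eqref{F-G} since $FG=GF$ as scalar functions.

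For a simple $\phi=\sum_{j=1}^N c_j \ONE_{S_j}$ with pairwise disjoint $S_j\in\BB(\R^2)$, the definition \eqref{def-FL1L2} immediately gives $\phi(L_1,L_2)=\sum_j c_j E(S_j)$. If $\psi=\sum_{k=1}^M d_k \ONE_{R_k}$ is another such simple function, then $\phi\psi=\sum_{j,k} c_j d_k \ONE_{S_j\cap R_k}$ with $\{S_j\cap R_k\}$ pairwise disjoint, and \eqref{prop-E} yields
\begin{equation*}
(\phi\psi)(L_1,L_2)=\sum_{j,k}c_jd_k E(S_j\cap R_k)=\sum_{j,k}c_jd_k E(S_j)E(R_k)=\phi(L_1,L_2)\psi(L_1,L_2),
\end{equation*}
which is \eqref{F-G} at the level of simple functions.

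To pass to bounded Borel $F,G$, choose simple functions $\phi_n\to F$ and $\psi_n\to G$ pointwise with $|\phi_n|\le \|F\|_\infty$, $|\psi_n|\le \|G\|_\infty$. Bounded convergence in \eqref{def-FL1L2} gives weak operator convergence $\phi_n(L_1,L_2)\to F(L_1,L_2)$ and similarly for $\psi_n$, but this alone is insufficient because operator multiplication is not jointly continuous in the weak operator topology. The main obstacle is therefore to upgrade to strong operator convergence.

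For this I would decompose a complex simple $\phi$ via $\phi=\phi_1+i\phi_2$ into real simple functions and combine Proposition~\ref{prop:self-adjoint} with \eqref{aF-bG} to deduce $\phi(L_1,L_2)^*=\overline{\phi}(L_1,L_2)$. Coupled with the simple-function multiplicativity above, this produces the isometric identity
\begin{equation*}
\|\phi(L_1,L_2)f\|^2=\langle\overline{\phi}(L_1,L_2)\phi(L_1,L_2)f,f\rangle=\langle|\phi|^2(L_1,L_2)f,f\rangle=\int_{\R^2}|\phi|^2\,d\nu_f,
\end{equation*}
where $\nu_f$ is the finite measure of \eqref{def-nu}. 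Applied to differences $\phi_n-\phi_m$ with the bound $|\phi_n-\phi_m|^2\le 4\|F\|_\infty^2$, the dominated convergence theorem shows $\{\phi_n(L_1,L_2)f\}$ is Cauchy in $L^2(\XX,d\mu)$; its strong limit must coincide with the weak limit $F(L_1,L_2)f$, establishing SOT convergence $\phi_n(L_1,L_2)\to F(L_1,L_2)$, and similarly for $\psi_n$. Since the uniform bound $\|\psi_n(L_1,L_2)\|\le \|G\|_\infty$ ensures that SOT convergence passes through products, one obtains $\phi_n(L_1,L_2)\psi_n(L_1,L_2)\to F(L_1,L_2)G(L_1,L_2)$ strongly; the same SOT argument applied to $\phi_n\psi_n\to FG$ (with envelope $\|F\|_\infty\|G\|_\infty$) gives $(\phi_n\psi_n)(L_1,L_2)\to (FG)(L_1,L_2)$ strongly. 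Passing to the limit in the simple-function identity $(\phi_n\psi_n)(L_1,L_2)=\phi_n(L_1,L_2)\psi_n(L_1,L_2)$ then delivers \eqref{F-G}.
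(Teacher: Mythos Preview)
Your argument is correct, but the paper takes a genuinely different route. Rather than approximating by simple functions and upgrading to strong operator convergence, the paper works directly at the level of the defining sesquilinear form \eqref{def-FL1L2}. It writes
\[
\langle F(L_1,L_2)G(L_1,L_2)f,g\rangle=\int_{\R^2}F\,d\langle E(\cdot)G(L_1,L_2)f,g\rangle,
\]
and then identifies the complex measure $S\mapsto\langle E(S)G(L_1,L_2)f,g\rangle$ as $\int_S G\,d\langle Ef,g\rangle$, using only the projection identity $E(S)E(R)=E(S\cap R)$ from \eqref{prop-E}. A one-line ``chain rule'' lemma (Lemma~\ref{lem:mu-1}: $\int F\,d\mu_1=\int FG\,d\mu$ when $d\mu_1=G\,d\mu$) then gives $\int F\,d\langle E\,G(L_1,L_2)f,g\rangle=\int FG\,d\langle Ef,g\rangle=\langle (FG)(L_1,L_2)f,g\rangle$.

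The paper's approach is shorter and stays entirely at the level of scalar integration against complex measures; it needs no operator-topology considerations. Your approach is the classical spectral-calculus route and, while longer, yields as byproducts the isometry $\|\phi(L_1,L_2)f\|^2=\int|\phi|^2\,d\nu_f$ and the strong operator convergence $\phi_n(L_1,L_2)\to F(L_1,L_2)$, which are useful in their own right. Both rest on the same core input, namely \eqref{prop-E}.
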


For the proof of this theorem we need the following well known

\begin{lemma}\label{lem:mu-1}
Let $(\XX,d\mu)$ be a measure space.
Assume that $f$ and $g$ are $\mu$-measurable complex-valued functions on $\XX$ and
$fg$ and $g$ are $\mu$-integrable.
Let the complex measure $\mu_1$ be defined by
\begin{equation*}
\mu_1(S):=\int_S g(\blambda)d\mu(\blambda)
\quad\hbox{for any measurable set} \;\; S\subset \XX.
\end{equation*}
Then $f$ is $\mu_1$-integrable and
\begin{equation*}
\int_\XX f(\blambda)d\mu_1(\blambda)=\int_\XX f(\blambda)g(\blambda)d\mu(\blambda).
\end{equation*}
\end{lemma}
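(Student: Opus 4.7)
The plan is to reduce this to the standard layer-cake / approximation argument, first decomposing the complex measure $\mu_1$ into its total variation, then verifying the identity successively on indicators, simple functions, and finally general measurable $f$ by dominated convergence.

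First I would unpack the structure of $\mu_1$. Write $g = g_1 + ig_2$ with real and imaginary parts, and split each into positive and negative parts $g_j = g_j^+ - g_j^-$. Each of the four measures $\mu_1^{j,\pm}(S):=\int_S g_j^\pm d\mu$ is a finite (nonnegative) measure since $g$ is $\mu$-integrable, so $\mu_1$ is a well-defined complex measure and the total variation $|\mu_1|$ satisfies $|\mu_1|(S)=\int_S |g|\, d\mu$ (this is a standard Radon-Nikodym computation: the density of $\mu_1$ with respect to $\mu$ is $g$, hence the density of $|\mu_1|$ is $|g|$). In particular, $\int_\XX |f|\, d|\mu_1| = \int_\XX |f||g|\, d\mu < \infty$ by hypothesis, which shows $f$ is $\mu_1$-integrable.

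Next I would establish the identity for indicator functions: if $f = \ONE_S$ for a measurable set $S$, then $\int_\XX \ONE_S d\mu_1 = \mu_1(S) = \int_S g\, d\mu = \int_\XX \ONE_S\, g\, d\mu$ by the definition of $\mu_1$. By linearity this immediately extends to $\mu_1$-integrable simple functions $\phi = \sum_{k=1}^N c_k \ONE_{S_k}$, giving
\begin{equation*}
\int_\XX \phi\, d\mu_1 = \sum_{k=1}^N c_k \mu_1(S_k) = \sum_{k=1}^N c_k\int_{S_k} g\, d\mu = \int_\XX \phi g\, d\mu.
\end{equation*}

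Finally, for general measurable $f$, I would approximate: there is a sequence of simple functions $\{\phi_n\}$ with $|\phi_n| \le |f|$ and $\phi_n \to f$ pointwise. On the left, dominated convergence with respect to the finite positive measure $|\mu_1|$ and dominating function $|f|\in L^1(|\mu_1|)$ yields $\int \phi_n\, d\mu_1 \to \int f\, d\mu_1$. On the right, $|\phi_n g| \le |f g|\in L^1(\mu)$, so dominated convergence gives $\int \phi_n g\, d\mu \to \int f g\, d\mu$. Since the two sides agree for each $\phi_n$, the identity passes to the limit. The only mildly subtle point is justifying $d|\mu_1|=|g|\, d\mu$; beyond that, the argument is purely routine approximation and this lemma is invoked only as a technical tool in the proof of Theorem~\ref{thm:F-G}.
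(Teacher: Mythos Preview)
Your argument is correct and is the standard proof of this fact. The paper does not actually prove this lemma: it is introduced with the phrase ``the following well known'' and stated without proof, then used immediately in the proof of Theorem~\ref{thm:F-G}. So there is nothing to compare against; your write-up supplies exactly the routine verification the authors chose to omit.
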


\begin{proof}[Proof of Theorem~\ref{thm:F-G}]
Identity \eqref{aF-bG} follows readily by the definition of $F(L_1,L_2)$ and $G(L_1,L_2)$.

We now prove \eqref{F-G}.
Let $f, g\in L^2(\XX, d\mu)$. Then using \eqref{def-FL1L2} we have
\begin{equation*}
\langle F(L_1, L_2)G(L_1, L_2) f, g\rangle
= \int_{\R^2} F d \langle E G(L_1,L_2)f, g\rangle.
\end{equation*}
Using again \eqref{def-FL1L2} we have for any $S\in \BB(\R^2)$
\begin{equation}\label{EG}
\langle E(S) G(L_1,L_2)f, g\rangle
= \langle G(L_1,L_2)f, E(S)g\rangle
= \int_{\R^2} G d\langle E f, E(S)g\rangle.
\end{equation}
But, for each $R\in \BB(\R^2)$
\begin{equation*}
\langle E(R) f, E(S)g\rangle = \langle E(S)E(R)f, g\rangle = \langle E(S\cap R)f, g\rangle,
\end{equation*}
where we used \eqref{prop-E}.
Hence the measure $\nu_{f,g}:=\langle E(R) f, E(S)g\rangle$, $R\in \BB(\R^2)$, where $S\in \BB(\R^2)$ is fixed,
vanishes on sets outside $S$.
From this and \eqref{EG} it follows that
\begin{align*}
\langle E(S) G(L_1,L_2)f, g\rangle
= \int_{\R^2} G d\langle E f, E(S)g\rangle
= \int_S G d \langle E f, g\rangle.
\end{align*}
Now, appealing to Lemma~\ref{lem:mu-1} we get
\begin{align*}
\int_{\R^2} F d \langle E G(L_1,L_2)f, g\rangle
= \int_{\R^2} FG d \langle E f, g\rangle
= \langle (FG)(L_1,L_2)f,g\rangle,
\end{align*}
and \eqref{F-G} follows.
\end{proof}

\subsection{Spectral resolution (decomposition of the identity)}\label{subsec:spectr-func}

It is sometimes more convenient to work with a spectral resolution rather then a spectral measure.
For any $\blambda=(\lambda_1, \lambda_2)\in\R^2$ we define
the {\em spectral function} $E_\blambda$ by
\begin{equation}\label{def-E-lam}
E_\blambda=E_{(\lambda_1, \lambda_2)} :=E(I_{\lambda_1, \lambda_2}),
\quad
I_{\lambda_1, \lambda_2}:=(-\infty,\lambda_1]\times(-\infty,\lambda_2].
\end{equation}

It is easy to see that the projection-valued function $E_\blambda=E_{(\lambda_1,\lambda_2)}$
has the following properties:

$(i)$ ${\rm Range}\,(E_\blambda) \subset {\rm Range}\,(E_{\blambda'})\quad\hbox{if}\quad \blambda \le \blambda'$,
where for $\blambda=(\lambda_1,\lambda_2)$, $\blambda'=(\lambda'_1,\lambda'_2)$
$\blambda\le \blambda'$ means that $\lambda_1\le \lambda_1'$ and $\lambda_2\le \lambda_2'$.

$(ii)$ $E_\blambda = \lim_{\blambda'\to\blambda, \blambda'\ge \blambda}E_{\blambda'}$,

$(iii)$ $E_{-\infty}= \lim_{\blambda\to -\infty} E_\blambda =0$ and
$E_{\infty}= \lim_{\blambda\to \infty} E_\blambda = I$.

Above $\blambda\to -\infty$ stands for $\lambda_1\to -\infty$ and $\lambda_2\to -\infty$,
and the convergence is in the strong sense.

As is well known properties (i) - (iii) completely characterise the spectral measure $E(S)$,
see e.g. \cite[Theorem 5.6]{Prugov}.

Using the notation from above it follows from \eqref{nu-f-S} and \eqref{def-nu-fg}
that for any $f,g\in L^2(\XX, d\mu)$
\begin{equation*}
\int_{\RR^2}1\, d\|E_{(\lambda_1,\lambda_2)}f\|^2 = \|f\|^2
\quad\hbox{and}\quad
\int_{\RR^2} 1\, d\langle E_{(\lambda_1, \lambda_2)}f, g \rangle = \langle f, g \rangle.
\end{equation*}
Furthermore, from \eqref{def-FL1L2} it follows that for any bounded Borel measurable function
$F:\RR^2\to \bC$
\begin{equation}\label{F-L1-L2}
\langle F(L_1, L_2)f, g \rangle
= \int_{\R^2} F(\lambda_1, \lambda_2) d \langle E_{(\lambda_1, \lambda_2)}f,g\rangle, \quad \forall f, g\in L^2(\XX, d\mu).
\end{equation}

\subsection{Tensor-product integral operators}\label{subsec:tensor-product}

Let $F_1$ and $F_2$ be two bounded Borel measurable functions on $\R$.
Set $F(\lambda_1, \lambda_2):= F_1(\lambda_1) F_2(\lambda_2)$.
We define the operator $F_1(L_1)\otimes F_2(L_2)$ by
\begin{equation}\label{def:F1-F2}
F_1(L_1)\otimes F_2(L_2) : = F(L_1, L_2).
\end{equation}

It is easy to see that if $f_i\in L^2(\XX_i, d\mu_i)$, $i=1,2$,
then with $F_1$ and $F_2$ as above we have
\begin{equation}\label{Fi-fi}
\big[F_1(L_1)\otimes F_2(L_2)\big](f_1\otimes f_2) = \big[F_1(L_1)f_1\big]\otimes \big[F_2(L_2)f_2\big]
\quad\hbox{(see \eqref{def-FL1L2})}.
\end{equation}

Tensor-product integral operators will play an important role.
Let the operators $F_i(L_i)$, $i=1,2$, be as above
and assume that $F_i(L_i)$ is an integral operator of the form
\begin{equation}\label{Int-oper-Fi}
F_i(L_i)f(x_i):=\int_{\XX_i}\KK_{F_i(L_i)}(x_i,y_i)f(y_i) d\mu_i(y_i),
%\quad\hbox{where}\quad
%|\KK_{F_i(L_i)}(x_i,y_i)|\le cD_{\delta_i, \sigma_i}(x_i,y_i),
\end{equation}
where
\begin{equation}\label{Kern-Fi}
|\KK_{F_i(L_i)}(x_i,y_i)|
\le c\big[V_i(x_i,\delta_i) V(y_i,\delta_i)\big]^{-1/2}
\big(1+\delta_i^{-1}\rho_i(x_i,y_i)\big)^{-\sigma_i}
%\le cD_{\delta_i, \sigma_i}(x_i,y_i),
\end{equation}
with $\delta_i>0$ and $\sigma_i > 2d_i$.
Then it follows from \eqref{changecenter} and \eqref{int-ineq-1} that
\begin{equation*}
\sup_{x_i\in \XX_i}\|\KK_{F_i(L_i)}(x_i,\cdot)\|_{L^1} \le c
\quad\hbox{and}\quad
\sup_{y_i\in \XX_i}\|\KK_{F_i(L_i)}(\cdot, y_i)\|_{L^1} \le c.
\end{equation*}
Now, by the Schur lemma or interpolation it follows that
\begin{equation}\label{oper-H}
\|F_i(L_i)f\|_p \le c'\|f\|_p, \quad \forall f\in L^p(\XX_i, d\mu_i), \; 1\le p\le \infty.
\end{equation}

\begin{proposition}\label{prop:prod-ker}
Under the assumptions from above
$F_1(L_1)\otimes F_2(L_2)$
is an integral operator with kernel
\begin{equation}\label{prod-ker}
\KK_{F_1(L_1)\otimes F_2(L_2)}(\xx,\yy) = \KK_{F_1(L_1)}(x_1,y_1)\KK_{F_2(L_2)}(x_2,y_2)
\end{equation}
and hence
\begin{equation}\label{est-kern}
|\KK_{F_1(L_1)\otimes F_2(L_2)}(\xx,\yy)|
\le c\frac{\prod_{i=1,2} \big(1-\frac{\rho_i(x_i,y_i)}{\delta_i}\big)^{-\sigma_i}}
{\big[V(\xx,\bdelta) V(\yy,\bdelta)\big]^{1/2}}
= c \DD_{\bdelta, \bsigma}(\xx,\yy).
\end{equation}
\end{proposition}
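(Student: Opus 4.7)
My plan is to produce the natural candidate kernel, verify it satisfies the stated pointwise bound, show that the associated integral operator is bounded on $L^2(\XX,d\mu)$, and finally identify it with $F_1(L_1)\otimes F_2(L_2)$ by checking agreement on the dense subspace $\sL^2(\XX,d\mu)$ of finite tensor sums and then invoking density.

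\textbf{Kernel estimate.} Set $\KKK(\xx,\yy):=\KK_{F_1(L_1)}(x_1,y_1)\KK_{F_2(L_2)}(x_2,y_2)$. The bound \eqref{est-kern} is obtained by multiplying the two single-variable estimates \eqref{Kern-Fi}, and using the product identity $V(\xx,\bdelta)=V_1(x_1,\delta_1)V_2(x_2,\delta_2)$ from \eqref{def-V} together with the definition of $\DD_{\bdelta,\bsigma}$ in \eqref{kernelsD}. No structural inequality is needed at this stage.

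\textbf{$L^2$-boundedness of the integral operator with kernel $\KKK$.} Because $\sigma_i>2d_i$, a combination of the volume-comparison inequality \eqref{changecenter} and the integral bound \eqref{int-ineq-1} shows that $\sup_{x_i}\int_{\XX_i}|\KK_{F_i(L_i)}(x_i,y_i)|\,d\mu_i(y_i)\le c$ and, by symmetry, the corresponding dual integral is also uniformly bounded; this is exactly the one-parameter fact recorded in \eqref{oper-H}. By Fubini, the same uniform bounds hold for $\KKK$, and Schur's lemma yields $L^2(\XX,d\mu)$-boundedness of the integral operator with kernel $\KKK$.

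\textbf{Agreement on $\sL^2(\XX,d\mu)$ and conclusion.} For $f_i\in L^2(\XX_i,d\mu_i)$, the key identity \eqref{Fi-fi} gives
\begin{equation*}
\bigl[F_1(L_1)\otimes F_2(L_2)\bigr](f_1\otimes f_2)(\xx)=[F_1(L_1)f_1](x_1)\,[F_2(L_2)f_2](x_2).
\end{equation*}
Substituting the integral representations \eqref{Int-oper-Fi} for each factor and applying Fubini (justified by absolute integrability via the kernel bounds and the assumption $f_i\in L^2\cap$ suitable density for Fubini, then by approximation) produces exactly $\int_\XX \KKK(\xx,\yy)(f_1\otimes f_2)(\yy)\,d\mu(\yy)$. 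By linearity, $F_1(L_1)\otimes F_2(L_2)$ and the integral operator with kernel $\KKK$ agree on all of $\sL^2(\XX,d\mu)$. Since $\sL^2(\XX,d\mu)$ is dense in $L^2(\XX,d\mu)$ (as noted in Subsection~\ref{subsec:L1-times-L2}) and both operators are bounded on $L^2(\XX,d\mu)$, they coincide throughout $L^2(\XX,d\mu)$, proving \eqref{prod-ker} and hence \eqref{est-kern}.

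\textbf{Main obstacle.} The argument is largely formal, so the only genuine point to verify carefully is the Fubini step: one must ensure that the iterated integrals of $\KK_{F_1(L_1)}(x_1,y_1)\KK_{F_2(L_2)}(x_2,y_2)f_1(y_1)f_2(y_2)$ are absolutely convergent. This is handled by the kernel estimates combined with the inequalities of Section~\ref{sec:product-m-spaces}, which guarantee integrability in $\yy$ for $f_i\in L^2$, so the tensor product extends and the density argument closes without further obstruction.
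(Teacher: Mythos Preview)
Your proof is correct and follows essentially the same route as the paper: verify the identity on simple tensors via \eqref{Fi-fi} and Fubini, extend by linearity to $\sL^2(\XX,d\mu)$, then pass to $L^2(\XX,d\mu)$ by density. You are slightly more explicit than the paper in spelling out the $L^2$-boundedness of the candidate integral operator (via Schur) needed for the limiting step, whereas the paper simply says ``by a limiting argument''; conversely, the paper derives the kernel bound \eqref{est-kern} last rather than first, but this is only a cosmetic reordering.
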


\begin{proof}
If $f_i\in L^2(\XX_i, d\mu_i)$, $i=1,2$, then by \eqref{Fi-fi}
\begin{align*}
\big[F_1(L_1)&\otimes F_2(L_2)\big](f_1\otimes f_2)(\xx)
= \big[F_1(L_1)f_1(x_1)\big]\big[F_2(L_2)f_2(x_2)\big]
\\
&= \int_{\XX_1}\KK_{F_1(L_1)}(x_1,y_1)f_1(y_1) d\mu_1(y_1)
\int_{\XX_2}\KK_{F_2(L_2)}(x_2,y_2)f_2(y_2) d\mu_1(y_2)
\\
&= \int_{\XX_1\times \XX_2}\KK_{F_1(L_1)}(x_1,y_1)
\KK_{F_2(L_2)}(x_2,y_2)(f_1\otimes f_2)(y_1,y_2) d\mu(y_1,y_2),
\end{align*}
where the last equality is easily justified applying Fubini's theorem.
From above it readily follows that for any function
$\sum_{j=1}^N f_j\otimes g_j$ in $\sL^2(\XX,d\mu)$ we have
\begin{align*}
&\big[F_1(L_1)\otimes F_2(L_2)\big]\Big(\sum_{j=1}^N f_j\otimes g_j\Big)(\xx)
\\
&= \int_{\XX_1\times \XX_2}\KK_{F_1(L_1)}(x_1,y_1)
\KK_{F_2(L_2)}(x_2,y_2)\Big(\sum_{j=1}^N f_j\otimes g_j\Big)(y_1,y_2) d\mu(y_1,y_2).
\end{align*}
But, as we know $\sL^2(\XX,d\mu)$ is dense in $L^2(\XX,d\mu)$ and by a limiting argument it follows that
\begin{equation*}
\big[F_1(L_1)\otimes F_2(L_2)\big]f(\xx)
= \int_{\XX_1\times \XX_2}\KK_{F_1(L_1)}(x_1,y_1)
\KK_{F_2(L_2)}(x_2,y_2)f(\yy) d\mu(\yy)
\end{equation*}
for all $f\in L^2(\XX, d\mu)$,
which implies \eqref{prod-ker}.
Estimate \eqref{est-kern} follows at once from \eqref{prod-ker} and \eqref{Kern-Fi}.
\end{proof}

\smallskip

\noindent
{\bf Assumption:}
Henceforth we will assume that $L_1$ and $L_2$ are the two {\em non-negative} self-adjoint operators
from our general setting described in Section~\ref{sec:genset}.
The fact that the operators $L_1$ and $L_2$ are non-negative implies that all statements
in the two-parameter calculus presented above
are valid with $\BB(\R^2)$ replaced by $\BB([0, \infty)^2)$. %, $\RR_+:= [0, \infty)$.

\smallskip

\noindent
{\bf Product of the semigroups \boldmath $e^{-t_1L_1}\otimes e^{-t_2L_2}$.}
The product
\begin{equation*}
P_{1,t_1}\otimes P_{2,t_2}:=e^{-t_1L_1}\otimes e^{-t_2L_2}, \quad t_1, t_2 \ge 0,
\end{equation*}
will play an important role in what follows.
In light of Proposition~\ref{prop:prod-ker} the operator $e^{-t_1L_1}\otimes e^{-t_2L_2}$ is an integral operator.
We will denote its kernel by $p_\tt(\xx,\yy) = p_{(t_1,t_2)}(\xx,\yy)$.
Applying Proposition~\ref{prop:prod-ker} we get
\begin{equation}\label{prodhk}
p_{(t_1,t_2)}(\xx,\yy)=p_{1,t_1}(x_1,y_1)p_{2,t_2}(x_2,y_2), \quad \xx,\yy\in \XX.
\end{equation}

Observe that the above theory works also for complex valued functions $F_1, F_2$
and, in particular, $e^{-z_1L_1}\otimes e^{-z_2L_2}$, $z_1, z_2\in \bC_+:=\{z\in \bC: \Real z >0\}$, is an integral operator
with kernel $p_{(z_1,z_2)}(\xx,\yy)$ represented in the form
\begin{equation}\label{prodhk-z}
p_{(z_1,z_2)}(\xx,\yy)=p_{1,z_1}(x_1,y_1)p_{2,z_2}(x_2,y_2), \quad \xx,\yy\in \XX, \; z_1, z_2\in \bC_+.
\end{equation}

Identity \eqref{prodhk} leads to the following

\begin{proposition}\label{prop:Gauss-local}
For any $\bsigma=(\sigma_1,\sigma_2)>\zero$ and $\tt=(t_1,t_2)>\zero$ we have:

$(a)$ {\em Gaussian upper bound:}
\begin{equation}\label{prodGauss-local}
|p_{(t_1, t_2)}(\xx,\yy)|
\le  c\frac{\prod_{i=1,2} \exp\big(-\frac{\rho_i^2(x_i,y_i)}{c_{i2}t_i}\big)}
{\big[V(\xx,\sqrt{\tt}) V(\yy,\sqrt{\tt})\big]^{1/2}}
\le c \DD_{\sqrt{\tt}, \bsigma}(\xx,\yy),
\quad \xx,\yy\in \XX.
\end{equation}

%\noindent
$(b)$ {\em H\"older continuity:}
If $\xx,\yy\in \XX$ with $\rho_i(y_i,y_i ')\le \sqrt{t_i}$, $i=1,2$, then
\begin{align}\label{prodlip}
&|p_{(t_1,t_2)}(\xx,\yy)-p_{(t_1,t_2)}(\xx,\yy')| \nonumber
\\
&\le c \sum_{i=1,2}\Big(\frac{\rho_i(y_i,y_i ')}{\sqrt{t_i}}\Big)^{\alpha_i}
\frac{\prod_{j=1,2}\exp\big(-\frac{\rho_j^2(x_j,y_j)}{2c_{j2} t_j} \big)}
{\big[ V(\xx,\sqrt{\tt}) V(\yy,\sqrt{\tt})\big]^{1/2}}
\\
&\le c \sum_{i=1,2}\Big(\frac{\rho_i(y_i,y_i ')}{\sqrt{t_i}}\Big)^{\alpha_i}\DD_{\sqrt{\tt}, \bsigma}(\xx,\yy). \nonumber
\end{align}

Above $\sqrt{\tt}:= (\sqrt{t_1}, \sqrt{t_2})$ and
$c_{12},c_{22}$ are the structural constants from $(\ref{Gauss-local})$ and $\eqref{lip}$
and the constant $c>0$ depends only on the constants $c_{i0}, c_{i1}, c_{i2}$
from $\eqref{eq:doubling-0},\eqref{Gauss-local}, \eqref{lip}$.
\end{proposition}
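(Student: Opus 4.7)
The plan is to exploit the product identity \eqref{prodhk}, which reduces everything to the one-parameter Gaussian and Hölder estimates \eqref{Gauss-local}, \eqref{lip} applied factor-by-factor.

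For part (a), the first inequality in \eqref{prodGauss-local} is immediate: by \eqref{prodhk} we have $p_{(t_1,t_2)}(\xx,\yy) = p_{1,t_1}(x_1,y_1)p_{2,t_2}(x_2,y_2)$, and applying the Gaussian upper bound \eqref{Gauss-local} to each factor yields the product form with the combined volume $V(\xx,\sqrt{\tt})V(\yy,\sqrt{\tt})= V_1(x_1,\sqrt{t_1})V_2(x_2,\sqrt{t_2})V_1(y_1,\sqrt{t_1})V_2(y_2,\sqrt{t_2})$ from \eqref{def-V}. For the second inequality, I would use the elementary estimate $e^{-u^2/c} \le c_\sigma (1+u)^{-\sigma}$ valid for any $\sigma>0$ and $u\ge 0$, applied with $u=\rho_i(x_i,y_i)/\sqrt{t_i}$ for $i=1,2$. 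This converts each Gaussian factor into a factor $(1+\sqrt{t_i}^{-1}\rho_i(x_i,y_i))^{-\sigma_i}$; together with the denominator $[V(\xx,\sqrt{\tt})V(\yy,\sqrt{\tt})]^{1/2}$ this is precisely $\DD_{\sqrt{\tt},\bsigma}(\xx,\yy)$ by definition \eqref{kernelsD}.

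For part (b), I would split the difference by the standard telescoping identity
\begin{align*}
p_{(t_1,t_2)}(\xx,\yy)-p_{(t_1,t_2)}(\xx,\yy')
&= \bigl[p_{1,t_1}(x_1,y_1)-p_{1,t_1}(x_1,y_1')\bigr]p_{2,t_2}(x_2,y_2) \\
&\quad + p_{1,t_1}(x_1,y_1')\bigl[p_{2,t_2}(x_2,y_2)-p_{2,t_2}(x_2,y_2')\bigr].
\end{align*}
On the first summand I apply \eqref{lip} to the $x_1$-factor (since $\rho_1(y_1,y_1')\le\sqrt{t_1}$) and \eqref{Gauss-local} to the $x_2$-factor; on the second summand I reverse the roles, using \eqref{Gauss-local} for the $x_1$-factor and \eqref{lip} for the $x_2$-factor. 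Each summand then has a Hölder prefactor $(\rho_i(y_i,y_i')/\sqrt{t_i})^{\alpha_i}$ multiplying a product of two Gaussian bounds with common denominator $[V(\xx,\sqrt{\tt})V(\yy,\sqrt{\tt})]^{1/2}$; one only has to absorb a factor $1/2$ in the exponent constants $c_{i2}$ (the standard trick $e^{-a} \le C_\sigma e^{-a/2}(1+\sqrt{a})^{-\sigma}$ if needed), giving the middle line of \eqref{prodlip}. Note that in the second summand the Gaussian for $p_{1,t_1}(x_1,y_1')$ is in terms of $\rho_1(x_1,y_1')$, but since $\rho_1(y_1,y_1')\le\sqrt{t_1}$ the triangle inequality converts this into a Gaussian in $\rho_1(x_1,y_1)$ at the cost of a harmless constant in $c_{12}$.

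The final estimate in \eqref{prodlip} follows by the same Gaussian-to-polynomial conversion used in part (a). I do not foresee any genuine obstacle here: the product structure of the heat kernel reduces the proposition to elementary bookkeeping of the one-dimensional estimates from \eqref{Gauss-local} and \eqref{lip}, together with the standard inequality between Gaussian and polynomial decay; the only mild care needed is in handling the change of center in the $y_i'\to y_i$ step so that the final Gaussian (or its polynomial surrogate) is expressed in terms of $\rho_i(x_i,y_i)$, uniformly in $i=1,2$.
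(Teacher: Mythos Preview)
Your proposal is correct and follows essentially the same approach as the paper: both use the product identity \eqref{prodhk} together with the one-parameter bounds \eqref{Gauss-local} and \eqref{lip} factorwise, the same telescoping splitting for part (b), and the elementary Gaussian-to-polynomial comparison $e^{-u^2/c}\le c_\sigma(1+u)^{-\sigma}$ for the final inequalities. The only minor point you leave implicit is that in the second summand one also needs to replace $V_1(y_1',\sqrt{t_1})$ by $V_1(y_1,\sqrt{t_1})$, but this is immediate from the doubling condition (cf.\ \eqref{changecenter}) since $\rho_1(y_1,y_1')\le\sqrt{t_1}$.
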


\begin{proof}
Inequality (\ref{prodGauss-local}) follows readily from (\ref{Gauss-local}) and \eqref{prodhk}
taking into account that for each $j=1,2$ and $\sigma_j>0$
\begin{equation}\label{exp-power}
\exp\Big(-\frac{c \rho_j^2(x_j,y_j)}{t_j}\Big)\le c'\biggl(1+\frac{\rho_j(x_j,y_j)}{\sqrt{t_j}}\biggr)^{-\sigma_j}.
\end{equation}

We next establish estimate (\ref{prodlip}).
Assume that $\rho_i(y_i,y_i ')\le \sqrt{t_i}$, $i=1,2$.
Then using \eqref{lip} and \eqref{changecenter} we obtain
\begin{align*}
|p_{(t_1,t_2)}&(\xx,\yy)-p_{(t_1,t_2)}(\xx,\yy')|
\\
&=|p_{1,t_1}(x_1,y_1)p_{2,t_2}(x_2,y_2)-p_{1,t_1}(x_1,y_1')p_{2,t_2}(x_2,y_2')|
\\
&\le |p_{1,t_1}(x_1,y_1)-p_{1,t_1}(x_1,y_1')||p_{2,t_2}(x_2,y_2)|
\\
& \qquad\hspace{1.5in} +|p_{1,t_1}(x_1,y_1')||p_{2,t_2}(x_2,y_2)-p_{2,t_2}(x_2,y_2')|
\\
&\le c  \Big(\frac{\rho_1(y_1,y_1')}{\sqrt{t_1}}\Big)^{\alpha_1}
\frac{\exp\big(-\frac{\rho_1^2(x_1,y_1)}{c_{12}t_1} \big)}
{\big[V_1(x_1,\sqrt{t_1}) V_1(y_1,\sqrt{t_1})\big]^{1/2}}
\frac{\exp\big(-\frac{\rho_2^2(x_2,y_2)}{c_{22}t_2} \big)}
{\big[V_2(x_2,\sqrt{t_2}) V_2(y_2,\sqrt{t_2})\big]^{1/2}}
\\
&+c\Big(\frac{\rho_2(y_2,y_2')}{\sqrt{t_2}}\Big)^{\alpha_2}
\frac{\exp\big(-\frac{\rho_2^2(x_2,y_2)}{c_{22}t_2 }\big)}
{\big[V_2(x_2,\sqrt{t_2}) V_2(y_2,\sqrt{t_2})\big]^{1/2}}
\frac{\exp\big(-\frac{\rho_1^2(x_1,y_1')}{c_{12}t_1} \big)}
{\big[V_1(x_1,\sqrt{t_1}) V_1(y_1',\sqrt{t_1})\big]^{1/2}}
\\
&\le c\sum_{i=1,2}\Big(\frac{\rho_i(y_i,y_i ')}{\sqrt{t_i}}\Big)^{\alpha_i}
\frac{\prod_{j=1,2}\exp\big(-\frac{\rho_j^2(x_j,y_j)}{2c_{j2}t_j} \big)}
{\big[V(\xx,\sqrt{\tt}) V(\yy,\sqrt{\tt})\big]^{1/2}}.
\end{align*}
This coupled with \eqref{exp-power} yields \eqref{prodlip}.
\end{proof}

\subsection{Integral operators}\label{subsec:prod-oper}
%\subsection{Kernel product operators}\label{subsec:prod-oper}

We will constantly work with integral operators.
We start with a basic statement on the boundedness on $L^p$ of integral operators.

\begin{proposition}\label{prop:bound-int-oper}
Let $H$ be an integral operator with a measurable kernel $\KK_H(\xx,\yy)$, i.e.
\begin{equation*}
Hf(\xx) = \int_X \KK_H(\xx,\yy)f(\yy) d\mu(\yy)
\end{equation*}
and assume that 
$\KK_H(\xx,\yy)=\KK_H(\yy,\xx)$ and for some $\bdelta>\zero$ and $\bsigma>3\dd/2$
\begin{equation}\label{K-H-D}
|\KK_H(\xx,\yy)| \le c\DD_{\bdelta, \bsigma}(\xx,\yy), \quad \xx,\yy\in X.
\end{equation}
Then we have:

$(i)$ $H$ is a self-adjoint bounded operator on $L^2(X, d\mu)$.

$(ii)$ For $1\le p\le\infty$
\begin{equation}\label{bound-Lp}
\|Hf\|_p \le c_p\|f\|_p,
\quad \forall f\in L^p(X, d\mu).
\end{equation} 
\end{proposition}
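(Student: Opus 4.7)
The plan is to reduce everything to the two Schur-type $L^1$ estimates
\[
\sup_{\xx\in X}\int_X |\KK_H(\xx,\yy)|\,d\mu(\yy) \le C, \qquad
\sup_{\yy\in X}\int_X |\KK_H(\xx,\yy)|\,d\mu(\xx) \le C,
\]
using the hypothesis $\bsigma>3\dd/2$ to make the decay of $\DD_{\bdelta,\bsigma}$ just strong enough to be integrable after the volume is absorbed.

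First I would apply the pointwise bound \eqref{D-D*} to the hypothesis \eqref{K-H-D}, writing
\[
|\KK_H(\xx,\yy)| \le c\,\DD_{\bdelta,\bsigma}(\xx,\yy)
\le c\,V(\xx,\bdelta)^{-1}\DD^*_{\bdelta,\bsigma-\dd/2}(\xx,\yy).
\]
Since $\bsigma > 3\dd/2$, componentwise $\sigma_i - d_i/2 > d_i$, so the hypothesis of the integral estimate \eqref{tech-1} is met; integrating in $\yy$ gives
\[
\int_X |\KK_H(\xx,\yy)|\,d\mu(\yy)
\le c\,V(\xx,\bdelta)^{-1} \int_X \DD^*_{\bdelta,\bsigma-\dd/2}(\xx,\yy)\,d\mu(\yy)
\le c\,V(\xx,\bdelta)^{-1}\cdot V(\xx,\bdelta) = c.
\]
The symmetry hypothesis $\KK_H(\xx,\yy)=\KK_H(\yy,\xx)$ immediately yields the dual bound $\sup_\yy \int |\KK_H(\xx,\yy)|\,d\mu(\xx)\le c$.

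With both $L^1$-row and $L^1$-column bounds in hand, $(ii)$ follows at once: the operator is bounded on $L^1$ by the row bound (via Fubini) and on $L^\infty$ by the column bound, and the Riesz--Thorin interpolation theorem (equivalently, Schur's lemma in the $L^1/L^\infty$ form) delivers $\|Hf\|_p \le c_p\|f\|_p$ for every $1\le p\le \infty$. In particular, $H$ is bounded on $L^2(X,d\mu)$.

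For $(i)$, the symmetry $\KK_H(\xx,\yy)=\KK_H(\yy,\xx)$ of the real-valued kernel, combined with Fubini (which is justified by the $L^1$ bounds just established), gives $\langle Hf,g\rangle = \langle f,Hg\rangle$ for $f,g\in L^2$, so $H$ is symmetric; together with the boundedness from $(ii)$ at $p=2$, it is self-adjoint. There is essentially no obstacle here, as the only nontrivial ingredients are the already-recorded inequalities \eqref{D-D*} and \eqref{tech-1}; the mild point worth verifying carefully is that $3\dd/2$ is the correct threshold, which is exactly what makes $\bsigma-\dd/2>\dd$ and thus unlocks \eqref{tech-1}.
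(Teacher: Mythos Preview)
Your proof is correct and follows essentially the same route as the paper: apply \eqref{D-D*} to pass to $V(\xx,\bdelta)^{-1}\DD^*_{\bdelta,\bsigma-\dd/2}$, use $\bsigma>3\dd/2$ so that \eqref{tech-1} applies, conclude the Schur bounds by symmetry, interpolate for $L^p$, and finish $(i)$ from symmetric plus bounded. The paper's argument is just a terser version of what you wrote.
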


\begin{proof}
From \eqref{K-H-D} and \eqref{D-D*} it follows  that
\begin{equation*}
|\KK_H(\xx,\yy)| \le c V(\xx,\bdelta)^{-1}\DD^*_{\bdelta, \bsigma-\dd/2}(\xx,\yy)
\end{equation*} 
and hence using inequality \eqref{tech-1} and that $\bsigma>3\dd/2$ we get
\begin{equation*}
\int_X |\KK_H(\xx,\yy)| d\mu(\yy) = \int_X |\KK_H(\xx,\yy)| d\mu(\xx) \le c<\infty.
\end{equation*}
Therefore, the operator $H$ is bounded from $L^1$ to $L^1$ and from $L^\infty$ to $L^\infty$.
Then \eqref{bound-Lp} follows by interpolation.

Furthermore, since the operator $H$ is symmetric and bounded on $L^2$ it is self-adjoint. 
\end{proof}

The following proposition will be instrumental in dealing with products of integral and non-integral operators.

%%%%%%%%% Proposition

\begin{proposition}\label{prop:prod-oper}
In the general setting of a doubling metric measure space $(\XX, \rho, \mu)$ as above,
let $U, W: L^2(\XX, d\mu) \to L^2(\XX, d\mu)$ be integral operators
with kernels $\KK_U(\xx, \yy)$ and $\KK_W(\xx, \yy)$.
Assume that for some $\bdelta=(\delta_1, \delta_2)$, $ \delta_i > 0$,
and $\bsigma =(\sigma_1, \sigma_2)$, $\sigma_i \ge d_i+1$, we have
\begin{equation}\label{local-UV}
|\KK_U(\xx,\yy)| \le c_1\DD_{\bdelta, \bsigma}(\xx,\yy)
\quad\hbox{and}\quad
|\KK_W(\xx,\yy)| \le c_1\DD_{\bdelta,\bsigma}(\xx,\yy).
\end{equation}
Let $R: L^2(\XX, d\mu)\to L^2(\XX, d\mu)$ be a bounded operator, not necessarily an integral operator.
Consider the operator
\begin{equation*}
G:= V(\cdot, \bdelta)^{\frac{1}{2}} URW V(\cdot, \bdelta)^{\frac{1}{2}},
\quad\hbox{i.e.}\quad
Gf(\xx):= V(\xx, \bdelta)^{\frac{1}{2}} URW[V(\cdot, \bdelta)^{\frac{1}{2}} f(\cdot)](\xx).
\end{equation*}
Then the operator $G: L^1(\XX, d\mu)\to L^\infty(\XX, d\mu)$ is bounded and hence an integral operator
with kernel $\KK_G(\xx,\yy)$ satisfying
$\|\KK_G\|_\infty = \|G\|_{1\to \infty} \le c\|R\|_{2\to 2}$.

Consequently, $U R  W $ is an integral operator
with kernel $\KK_{U  R  W} (\xx,\yy)$ of the form
\begin{equation}\label{ker-URW}
\KK_{U  R  W} (\xx,\yy)
=  \frac{\KK_G(\xx,\yy)}{\big[V(\xx, \bdelta) V(\yy, \bdelta)\big]^{1/2}}
\end{equation}
and hence
\begin{equation}\label{local-URV}
|\KK_{U  R  W} (\xx,\yy)|
\le  \frac{c\|R \|_{2 \to 2}}{\big[V(\xx, \bdelta) V(\yy, \bdelta)\big]^{1/2}}
\quad\hbox{for a.a. $\xx,\yy\in \XX$}.
\end{equation}
More precisely, the action of the operator $URW$ can be described as follows:
For any $f \in L^1(\XX, V(\cdot, \bdelta)^{-1/2}d\mu)$
\begin{equation}\label{URW-bounded}
\big\|V(\cdot, \bdelta)^{1/2} URWf(\cdot)\big\|_\infty \le c\|f\|_{L^1(\XX, V(\cdot, \bdelta)^{-1/2}d\mu)}.
\end{equation}

\end{proposition}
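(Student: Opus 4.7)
The plan is to identify the integral kernel of $URW$ as a Hilbert-space inner product and invoke Cauchy-Schwarz. Setting
\[
\phi_\xx(\uu) := \KK_U(\xx,\uu), \qquad \psi_\yy(\vv) := \KK_W(\vv,\yy),
\]
the goal is the pointwise representation
\[
\KK_{URW}(\xx,\yy) = \langle R\psi_\yy, \overline{\phi_\xx}\rangle_{L^2(\XX,d\mu)},
\]
which, together with Cauchy-Schwarz and the slice estimates $\|\phi_\xx\|_2 \le c V(\xx,\bdelta)^{-1/2}$, $\|\psi_\yy\|_2 \le c V(\yy,\bdelta)^{-1/2}$, immediately yields \eqref{local-URV}; then \eqref{ker-URW} follows by multiplying in the $V(\cdot,\bdelta)^{1/2}$ factors, and \eqref{URW-bounded} by integrating the pointwise bound against $|f(\yy)|$.

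First I would establish the slice-$L^2$ bounds. Squaring \eqref{local-UV} and unfolding $\DD_{\bdelta,\bsigma}$ gives $|\KK_U(\xx,\uu)|^2 \le cV(\xx,\bdelta)^{-1}V(\uu,\bdelta)^{-1}\DD^*_{\bdelta,2\bsigma}(\xx,\uu)$. The volume comparison \eqref{rect-doubling2} trades $V(\uu,\bdelta)^{-1}$ for $cV(\xx,\bdelta)^{-1}\DD^*_{\bdelta,\dd}(\xx,\uu)^{-1}$, leaving $cV(\xx,\bdelta)^{-2}\DD^*_{\bdelta,2\bsigma-\dd}(\xx,\uu)$. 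Integrating this in $\uu$ via \eqref{tech-1} (legal since $\bsigma\ge\dd+\one$ forces $2\bsigma-\dd > \dd$) produces $\|\phi_\xx\|_2^2 \le cV(\xx,\bdelta)^{-1}$; the symmetry $\DD_{\bdelta,\bsigma}(\xx,\uu) = \DD_{\bdelta,\bsigma}(\uu,\xx)$ gives the analogous bound for $\psi_\yy$.

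Next I would derive the kernel identity. Take $f$ in a dense subclass of $L^1(\XX, V(\cdot,\bdelta)^{-1/2}d\mu)$ such that $h := V(\cdot,\bdelta)^{1/2}f \in L^1 \cap L^2$ (for instance, bounded $f$ of compact support). Since \eqref{local-UV} and \eqref{tech-1} yield $L^2$-boundedness of $U$ and $W$ by Schur, and $R$ is $L^2$-bounded by hypothesis, $g := RWh$ lies in $L^2$. Using $\phi_\xx \in L^2$, I would unfold the action of $U$ as an $L^2$-pairing, pass $R$ to its Hilbert adjoint $R^*$, insert the integral definition of $W$, and apply Fubini to obtain
\begin{align*}
URWh(\xx) &= \langle g, \overline{\phi_\xx}\rangle = \langle Wh, R^*\overline{\phi_\xx}\rangle \\
&= \int_\XX V(\yy,\bdelta)^{1/2} f(\yy) \langle R\psi_\yy, \overline{\phi_\xx}\rangle d\mu(\yy).
\end{align*}
This exhibits the desired kernel $\KK_{URW}(\xx,\yy) = \langle R\psi_\yy, \overline{\phi_\xx}\rangle$ on the dense subclass. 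Cauchy-Schwarz combined with the slice bounds yields $|\KK_{URW}(\xx,\yy)| \le c\|R\|_{2\to 2}V(\xx,\bdelta)^{-1/2}V(\yy,\bdelta)^{-1/2}$, which establishes \eqref{local-URV}, \eqref{ker-URW}, and \eqref{URW-bounded}.

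The main obstacle will be justifying the Fubini interchange on the last line of the display, since $R^*\overline{\phi_\xx}$ is only known to lie in $L^2$. I would dispatch this by using $|\KK_W(\uu,\yy)| \le cV(\uu,\bdelta)^{-1/2}V(\yy,\bdelta)^{-1/2}\DD^*_{\bdelta,\bsigma}(\uu,\yy)$ and applying Cauchy-Schwarz in $\uu$ against $R^*\overline{\phi_\xx}$, estimating $\int V(\uu,\bdelta)^{-1}\DD^*_{\bdelta,2\bsigma}(\uu,\yy) d\mu(\uu) \le c$ uniformly in $\yy$ by the same \eqref{rect-doubling2}+\eqref{tech-1} chain used for the slice norms. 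The outer integral then collapses to $\int |f(\yy)| d\mu(\yy) < \infty$, licensing the interchange and completing the proof.
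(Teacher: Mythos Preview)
Your proof is correct and rests on exactly the same core estimate as the paper's---the slice-$L^2$ bounds $\|\KK_U(\xx,\cdot)\|_2 \le cV(\xx,\bdelta)^{-1/2}$ and $\|\KK_W(\cdot,\yy)\|_2 \le cV(\yy,\bdelta)^{-1/2}$---but the packaging differs. The paper defines $S := V(\cdot,\bdelta)^{1/2}U$ and $T := WV(\cdot,\bdelta)^{1/2}$, shows $S:L^2\to L^\infty$ and $T:L^1\to L^2$ are bounded (which is precisely your slice estimate after absorbing the weight), composes to get $G=SRT:L^1\to L^\infty$ bounded, and then invokes the Dunford--Schwartz characterization (Proposition~\ref{prop:kernel-oper}) to conclude that $G$ is an integral operator with $\|\KK_G\|_\infty = \|G\|_{1\to\infty}$. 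You instead write the kernel down explicitly as $\KK_{URW}(\xx,\yy)=\langle R\psi_\yy,\overline{\phi_\xx}\rangle$ and apply Cauchy--Schwarz directly. Your route is more concrete and sidesteps the external theorem, at the cost of the Fubini justification (which you handle correctly); the paper's route is cleaner bookkeeping but leans on Proposition~\ref{prop:kernel-oper}. One small notational wrinkle: in your display you compute $URWh$ with $h=V^{1/2}f$, so the kernel you extract is that of $URW$ acting on $h$, and the weighted statement \eqref{URW-bounded} for $f$ follows after substituting back---this is fine, just worth stating explicitly.
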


The proof of this proposition relies on the following well-known result (\cite{DS}, Theorem~6, p. 503).

%%%%%%%%% Proposition

\begin{proposition}\label{prop:kernel-oper}
An operator $T: L^1(\XX, d\mu) \to L^{\infty}(\XX, d\mu)$ is bounded if and only if
$T$ is an integral operator with kernel $K\in L^\infty(\XX\times \XX)$, i.e.
$$
\hbox{
$Tf(\xx)=\int_\XX K(\xx, \yy)f(\yy)d\mu(\yy)$ a.e. on $\XX$,
}
$$
and if this is the case
$\|T\|_{1\to \infty} = \|K\|_{L^\infty}$.
Moreover, the boundedness of $T$ can be expressed in the bilinear form
$|\langle Tf, g \rangle| \le c\|f\|_{L^1}\|g\|_{L^1}$, $\forall f, g\in L^1$.
\end{proposition}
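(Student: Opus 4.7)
The statement splits naturally into two directions plus a reformulation via the bilinear form. The plan is to dispose of the easy direction first, then construct the kernel via duality for the hard direction. For the easy direction, if $Tf(\xx) = \int_\XX K(\xx,\yy) f(\yy) d\mu(\yy)$ with $K\in L^\infty(\XX\times\XX)$, then pointwise $|Tf(\xx)|\le \|K\|_\infty \|f\|_1$ by pulling $K$ out of the integral, giving $\|T\|_{1\to\infty}\le \|K\|_\infty$. The ``moreover'' equivalence follows by elementary duality: if $T: L^1\to L^\infty$ is bounded, then $|\langle Tf,g\rangle|\le \|Tf\|_\infty\|g\|_1\le \|T\|_{1\to\infty}\|f\|_1\|g\|_1$; conversely, a bilinear bound $|\langle Tf,g\rangle|\le c\|f\|_1\|g\|_1$ for all $f,g\in L^1$ means that for each fixed $f\in L^1$ the map $g\mapsto \langle Tf,g\rangle$ is a bounded linear functional on $L^1$ of norm at most $c\|f\|_1$, so by $L^\infty = (L^1)^*$ (which uses $\sigma$-finiteness of $\mu$, valid in our doubling setting as $\XX$ is covered by countably many balls of finite measure) we get $\|Tf\|_\infty\le c\|f\|_1$.

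For the hard direction, I would build the kernel via the dual pairing $L^\infty(\XX\times\XX) = L^1(\XX\times\XX)^*$. First, define a linear functional $\Lambda$ on the subspace of simple functions in $L^1(\XX\times\XX,\mu\times\mu)$ by
\begin{equation*}
\Lambda\Big(\sum_{i,j} c_{ij}\ONE_{B_j\times A_i}\Big) := \sum_{i,j} c_{ij}\langle T\ONE_{A_i}, \ONE_{B_j}\rangle,
\end{equation*}
where $\{A_i\}$ and $\{B_j\}$ are finite disjoint families of measurable sets of finite $\mu$-measure. Well-definedness — independence of the chosen representation — is checked by passing to a common refinement along the rectangles $(A_i\cap A'_k)\times (B_j\cap B'_\ell)$, using bilinearity of $(f,g)\mapsto \langle Tf, g\rangle$. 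Since the rectangles $B_j\times A_i$ are disjoint, one has
\begin{equation*}
\Big|\Lambda\Big(\sum_{i,j}c_{ij}\ONE_{B_j\times A_i}\Big)\Big|
\le \sum_{i,j}|c_{ij}|\,\|T\|_{1\to\infty}\,\mu(A_i)\mu(B_j)
= \|T\|_{1\to\infty}\Big\|\sum_{i,j}c_{ij}\ONE_{B_j\times A_i}\Big\|_{L^1(\mu\times\mu)}.
\end{equation*}
Simple functions of this form are dense in $L^1(\XX\times\XX)$ by $\sigma$-finiteness, so $\Lambda$ extends uniquely to a bounded linear functional on $L^1(\XX\times\XX)$ of norm at most $\|T\|_{1\to\infty}$, and the duality produces $K\in L^\infty(\XX\times\XX)$ with $\|K\|_\infty \le \|T\|_{1\to\infty}$ and $\Lambda(\phi)=\int K\phi\,d(\mu\times\mu)$ for every $\phi\in L^1(\XX\times\XX)$.

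To identify $K$ as the kernel of $T$, specialise to $\phi(\xx,\yy)=g(\xx)f(\yy)$ for $f,g\in L^1(\XX)$. By Fubini,
\begin{equation*}
\langle Tf, g\rangle \;=\; \Lambda(\phi) \;=\; \int_\XX g(\xx)\Big[\int_\XX K(\xx,\yy)f(\yy)\,d\mu(\yy)\Big]d\mu(\xx).
\end{equation*}
Since this equality holds for every $g\in L^1(\XX)$ and both sides depend continuously on $g$, we conclude $Tf(\xx) = \int_\XX K(\xx,\yy)f(\yy)d\mu(\yy)$ for $\mu$-a.e.\ $\xx$. Combined with the easy direction this also gives $\|K\|_\infty = \|T\|_{1\to\infty}$.

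The main technical obstacle is the well-definedness and $L^1$-boundedness of $\Lambda$ on simple functions: one must show that different disjoint-rectangle representations of the same simple function produce the same value, and that the $L^1$-bound holds once the rectangles are arranged to be disjoint. Bilinearity of $(f,g)\mapsto \langle Tf, g\rangle$ together with the product structure of measurable rectangles makes this routine but is the heart of the argument; everything else is density and a straightforward appeal to the dual of $L^1$.
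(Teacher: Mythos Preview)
The paper does not prove this proposition; it simply cites it as a well-known result from Dunford--Schwartz (\cite{DS}, Theorem~6, p.~503) and uses it as a black box in the proof of Proposition~\ref{prop:prod-oper}. Your argument is correct and is the standard duality construction one finds in that reference: build a bounded linear functional on $L^1(\XX\times\XX)$ from the bilinear form $(f,g)\mapsto\langle Tf,g\rangle$ restricted to simple tensors, then invoke $(L^1)^*=L^\infty$ on the product space to produce the kernel. The only point worth flagging is that your density step---linear combinations of indicators of disjoint measurable rectangles being dense in $L^1(\XX\times\XX,\mu\times\mu)$---relies on $\sigma$-finiteness of $\mu$, which you correctly note holds in the doubling setting (and is in any case implicit in the paper's standing assumptions, since $L^2(\XX_i,d\mu_i)$ is assumed separable).
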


\begin{proof}[Proof of Proposition~\ref{prop:prod-oper}]
Consider the operators
\begin{equation}\label{S-T}
S:= V(\cdot, \bdelta)^{1/2}U
\quad\hbox{and}\quad
T:= W V(\cdot, \bdelta)^{1/2}.
\end{equation}
More precisely,
\begin{align*}
Sf(\xx):= V(\xx, \bdelta)^{1/2}Uf(\xx)
&= V(\xx, \bdelta)^{1/2} \int_\XX \KK_U(\xx,\yy)f(\yy) d\mu(\yy)
\\
&= \int_\XX V(\xx, \bdelta)^{1/2}\KK_U(\xx,\yy)f(\yy) d\mu(\yy).
\end{align*}
Hence, $S$ is an integral operator with kernel $\KK_S(\xx, \yy)$ given by
\begin{equation*}
\KK_S(\xx, \yy) = V(\xx, \bdelta)^{1/2}\KK_U(\xx,\yy).
\end{equation*}
Observe that from \eqref{local-UV}, \eqref{kernelsD}, and \eqref{rect-doubling2} it follows that
\begin{equation*}
|\KK_S(\xx, \yy)| \le V(\yy, \bdelta)^{-1/2} \DD_{\bdelta, \bsigma}^*(\xx, \yy)
\le cV(\xx, \bdelta)^{-1/2}\DD_{\bdelta, \bsigma-\dd/2}^*(\xx, \yy).
\end{equation*}
We now use \eqref{tech-1} and that $\sigma_i \ge d_i+1$ to obtain
\begin{align*}
\|\KK_S(\xx, \cdot)\|_2^2
&\le cV(\xx, \bdelta)^{-1} \int_\XX \DD_{\bdelta, 2\bsigma-\dd}^*(\xx, \yy)d\mu(\yy)
\le c.
\end{align*}
In turn, using Cauchy-Schwarz, this implies that for any $f\in L^2(\XX, d\mu)$
\begin{equation*}
\|Sf\|_\infty
\le \Big\|\int_\XX \KK_S(\cdot,\yy)f(\yy)d\mu(\yy) \Big\|_\infty
\le \|\KK_S(\xx, \cdot)\|_2\|f\|_2 \le c\|f\|_2
\end{equation*}
and hence
$S: L^2(\XX, d\mu) \to L^\infty(\XX, d\mu)$ is a bounded operator ($\|S\|_{2\to \infty} \le c$).

We now focus on the operator $T$. We have
\begin{align*}
Tf(\xx):= W[V(\cdot, \bdelta)^{1/2} f](\xx)
= \int_\XX \KK_W(\xx,\yy)V(\yy, \bdelta)^{1/2} f(\yy) d\mu(\yy)
\end{align*}
and hence, $T$ is an integral operator with kernel $\KK_T(\xx, \yy)$ given by
\begin{equation*}
\KK_T(\xx, \yy) = \KK_W(\xx,\yy)V(\yy, \bdelta)^{1/2}.
\end{equation*}
As above from \eqref{local-UV}, \eqref{kernelsD}, and \eqref{rect-doubling2} it follows that
\begin{equation*}
|\KK_T(\xx, \yy)| \le V(\xx, \bdelta)^{-1/2} \DD_{\bdelta, \bsigma}^*(\xx, \yy)
\le cV(\yy, \bdelta)^{-1/2} \DD_{\bdelta, \bsigma-\dd/2}^*(\xx, \yy).
\end{equation*}
Using \eqref{tech-1} and that $\sigma_i \ge d+1$ we obtain
\begin{align*}
\|\KK_T(\cdot, \yy)\|_2^2
\le c'V(\yy,\bdelta)^{-1} \int_\XX \DD_{\bdelta, \bsigma-\dd/2}^*(\xx, \yy)d\mu(\xx)
\le c.
\end{align*}
Therefore, using Minkowski's inequality, for any $f\in L^1(\XX, d\mu)$
\begin{equation*}
\|Tf\|_2
= \Big\|\int_\XX \KK_T(\cdot, \yy)f(\yy) d\mu(\yy)\Big\|_2
\le \int_\XX \|\KK_T(\cdot, \yy)\|_2|f(\yy)|d\mu(\yy)
\le c\|f\|_1,
\end{equation*}
implying that
$T: L^1(\XX, d\mu) \to L^2(\XX, d\mu)$ is a bounded operator ($\|T\|_{1\to 2} \le c$).

Using the above and the assumption that $\|R\|_{2\to 2} <\infty$ we get
\begin{equation*}
\|G\|_{1\to \infty}=\|SRT\|_{1\to \infty} \le \|S\|_{2\to\infty}\|R\|_{2\to 2} \|T\|_{1\to 2}
\le c\|R\|_{2\to 2}.
\end{equation*}
Applying Proposition~\ref{prop:kernel-oper} we conclude that
the operator $G$ is an integral operator
with kernel $\KK_{G}(\xx,\yy)$ satisfying
$\|\KK_{G}\|_\infty = \|G\|_{1\to \infty}\le c\|R\|_{2\to 2}$.

For convenience we introduce the abbreviated notation
\begin{equation*}
Q:= URW.
\end{equation*}
By definition
\begin{equation*}
G = V(\cdot,\bdelta)^{1/2} URW V(\cdot,\bdelta)^{1/2}
= V(\cdot,\bdelta)^{1/2} Q V(\cdot,\bdelta)^{1/2}
\end{equation*}
and hence for any $f\in L^1(\XX, d\mu)$
\begin{equation*}
Gf(\xx)= V(\xx,\bdelta)^{1/2} Q [V(\cdot,\bdelta)^{1/2} f(\cdot)](\xx),
\end{equation*}
implying
\begin{equation*}
Q [V(\cdot,\bdelta)^{1/2} f(\cdot)](\xx) = V(\xx,\bdelta)^{-1/2} Gf(\xx).
\end{equation*}
Substituting $g(\xx)= V(\xx,\bdelta)^{1/2} f(\xx)$ we get
\begin{align*}
Qg(\xx) &= V(\xx,\bdelta)^{-1/2} G\big[V(\cdot,\bdelta)^{-1/2} g(\cdot)\big](\xx)
\\
& = V(\xx,\bdelta)^{-1/2} \int_\XX\KK_G(\xx, \yy) V(\yy,\bdelta)^{-1/2} g(\yy) d\mu(\yy).
\end{align*}
Therefore, for any $g\in L^1(\XX, V(\cdot,\bdelta)^{-1/2} d\mu)$ we have
\begin{equation*}
Qg(\xx) = \int_\XX \frac{\KK_G(\xx, \yy)}{\big[V(\xx,\bdelta) V(\yy,\bdelta)\big]^{1/2}}g(\yy) d\mu(\yy).
\end{equation*}
Thus $Q=URW$ is an integral operator with kernel $\KK_{URW}(\xx,\yy)$
satisfying \eqref{ker-URW} and \eqref{local-URV}.
The proof of Proposition~\ref{prop:prod-oper} is complete.
\end{proof}

\subsection{Integral operators $F(\sqrt{L_1}, \sqrt{L_2})$ with localized kernels}\label{subsec:local-kern}

Our next goal is to establish localization estimates for the kernels of operators
$F(\sqrt{L_1}, \sqrt{L_2})$ generated by smooth functions $F(\lambda_1, \lambda_2)$.
We first consider the case of operators $F(\sqrt{L_1}, \sqrt{L_2})$ generated by
compactly supported functions $F$.

\begin{proposition}\label{prop:rough-kernels}
Let $F$ be a bounded measurable function on $[0,\infty)^2$ with the property
$\supp F \subset [0,\tau_1]\times [0,\tau_2]$ for some $\tau_1,\tau_2 >0$. % and $\tau:=(\tau_1,\tau_2)$.
Then the operator
$F(\sqrt L):=F(\sqrt L_1,\sqrt L_2)$
is an integral operator with kernel $\KK_{F(\sqrt L)}(\xx,\yy)$
satisfying
\begin{equation}\label{eq:rough1}
| \KK_{F(\sqrt L)}(\xx,\yy)|
\le \frac{\cf\|F\|_\infty}
{\big[V(\xx, \btau^{-1})V(\yy, \btau^{-1})\big]^{1/2}},
\quad \forall \xx,\yy\in \XX, \quad \btau=(\tau_1, \tau_2).
\end{equation}
More specifically, the action of $F(\sqrt L)$ can be described as follows:
For any $f\in L^1(\XX, V(\cdot, \btau^{-1})^{-1/2}d\mu)$ we have
\begin{equation}\label{FL-bound}
\|V(\cdot, \btau^{-1})^{1/2} F(\sqrt L)f(\cdot)\|_\infty \le c\|F||_\infty \|f\|_{L^1(\XX, V(\cdot, \btau^{-1})^{-1/2} d\mu)}
\end{equation}
and, of course, $F(\sqrt L)$ is bounded from $L^2$ to $L^2$.

Furthermore, for any $\xx, \yy, \xx', \yy'\in \XX$ with
$\rho_i(x_i,x_i')\le \tau_i^{-1}$ and $\rho_i(y_i,y_i')\le \tau_i^{-1}$, $i=1,2$,
\begin{align}\label{eq:smoothrough}
|\KK_{F(\sqrt L)}(\xx,\yy)-\KK_{F(\sqrt L)}(\xx',\yy')|
\le \cf \sum_{i=1,2} \frac{\big[\big( \tau_i \rho_i(x_i, x'_i)\big)^{\alpha_i}
+\big( \tau_i \rho_i(y_i, y'_i)\big)^{\alpha_i}\big]\|F\|_\infty}
{\big[V(\xx, \btau^{-1})V(\yy, \btau^{-1})\big]^{1/2}}.
\end{align}
The constant $\cf>0$ above depends only on the constants $c_{i0}, c_{i1}, c_{i2}$
from $\eqref{eq:doubling-0}$, $\eqref{Gauss-local}$, $\eqref{lip}$.
\end{proposition}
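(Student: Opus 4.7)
The plan is to realize $F(\sqrt{L_1},\sqrt{L_2})$ as a three-fold sandwich in which two copies of a two-parameter heat operator bracket a bounded $L^2$-multiplier whose existence is forced by the compact support of $F$, and then invoke Propositions~\ref{prop:prod-oper} and~\ref{prop:Gauss-local}. Set $\tt:=(t_1,t_2)$ with $t_i:=4\tau_i^{-2}$ and introduce the symbol $H(\lambda_1,\lambda_2):=F(\lambda_1,\lambda_2)\exp(t_1\lambda_1^2+t_2\lambda_2^2)$. Let $R:=H(\sqrt{L_1},\sqrt{L_2})$; since $\supp F\subset[0,\tau_1]\times[0,\tau_2]$ we have $\|H\|_\infty\le e^{8}\|F\|_\infty$, and the identity of spectral symbols together with Theorem~\ref{thm:F-G} gives
\[
F(\sqrt{L_1},\sqrt{L_2})=P_{\tt/2}\cdot R\cdot P_{\tt/2},\qquad \|R\|_{2\to 2}\le e^{8}\|F\|_\infty.
\]

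Proposition~\ref{prop:Gauss-local}(a) bounds the kernel of $P_{\tt/2}$ by $c\,\DD_{\sqrt{\tt/2},\bsigma}$ for any prescribed $\bsigma>0$; fixing $\sigma_i=d_i+1$ and applying Proposition~\ref{prop:prod-oper} with $U=W=P_{\tt/2}$ and $\bdelta=\sqrt{\tt/2}\asymp\btau^{-1}$, I obtain that $F(\sqrt L)$ is an integral operator whose kernel is bounded by $c\|R\|_{2\to 2}/[V(\xx,\bdelta)V(\yy,\bdelta)]^{1/2}$; the doubling property converts this to \eqref{eq:rough1}. The mapping property \eqref{FL-bound} is nothing but the $L^1$-to-$L^\infty$ conclusion \eqref{URW-bounded} of Proposition~\ref{prop:prod-oper} rewritten.

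For the Hölder estimate \eqref{eq:smoothrough}, I would refine the sandwich by splitting each outer factor in half,
\[
F(\sqrt L)=P_{\tt/4}\cdot R''\cdot P_{\tt/4},\qquad R'':=P_{\tt/4}RP_{\tt/4},
\]
and first apply Proposition~\ref{prop:prod-oper} once more to turn $R''$ into a genuine integral operator with $|\KK_{R''}(\uu,\vv)|\le c\|R\|_{2\to 2}/[V(\uu,\sqrt{\tt/4})V(\vv,\sqrt{\tt/4})]^{1/2}$. With both outer factors now integral, Fubini yields
\[
\KK_{F(\sqrt L)}(\xx,\yy)=\iint p_{\tt/4}(\xx,\uu)\,\KK_{R''}(\uu,\vv)\,p_{\tt/4}(\vv,\yy)\,d\mu(\uu)\,d\mu(\vv).
\]
I then triangulate the difference $\KK_{F(\sqrt L)}(\xx,\yy)-\KK_{F(\sqrt L)}(\xx',\yy')$ through $\KK_{F(\sqrt L)}(\xx',\yy)$ so that each piece engages exactly one outer factor, invoke Proposition~\ref{prop:Gauss-local}(b) (using the kernel symmetry $p_\tt(\xx,\yy)=p_\tt(\yy,\xx)$ to transfer Hölder continuity from the second argument to the first when needed), and observe that the hypothesis $\rho_i(x_i,x_i')\le\tau_i^{-1}=\sqrt{t_i/4}$ is exactly the smallness condition required. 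Choosing $\bsigma$ large enough (any $\sigma_i>2d_i$ works), the remaining $\uu$- and $\vv$-integrations then collapse via \eqref{tech-1}--\eqref{tech-4}, assisted by \eqref{D-D*} to absorb the intermediate $1/V(\cdot,\sqrt{\tt/4})$ weights, producing a factor $\sum_i(\tau_i\rho_i(x_i,x_i'))^{\alpha_i}$ times the rough kernel bound; the symmetric step handles the $\yy$-piece, and summing gives \eqref{eq:smoothrough}.

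The main obstacle is precisely this Hölder step. The middle factor $R''$ carries no regularity whatsoever, so the entire exponent $\alpha_i$ must be wrung from the two outer heat kernels; one must then track carefully how the $\DD^{*}$-weights multiply and how the intermediate volume factors $V(\uu,\cdot)$ and $V(\vv,\cdot)$ are absorbed when these variables are integrated out. Everything else amounts to bookkeeping.
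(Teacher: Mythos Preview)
Your sandwich argument for the rough bound \eqref{eq:rough1} and \eqref{FL-bound} is exactly the paper's: write $F(\sqrt L)=P_{\tt/2}\,R\,P_{\tt/2}$ with $R$ a bounded $L^2$-multiplier and apply Proposition~\ref{prop:prod-oper} together with the heat kernel bounds.

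For the H\"older estimate \eqref{eq:smoothrough} your four-factor scheme works, but the paper takes a shorter route. Once \eqref{eq:rough1} is established, the paper uses a \emph{two}-factor decomposition: set $G(\blambda):=F(\blambda)e^{t_1^2\lambda_1^2+t_2^2\lambda_2^2}$ (still compactly supported, $\|G\|_\infty\le c\|F\|_\infty$) and write $F(\sqrt L)=G(\sqrt L)\,e^{-t_1^2L_1-t_2^2L_2}$. Now $G(\sqrt L)$ is itself an integral operator with the rough bound \eqref{eq:rough1}, so
\[
\KK_{F(\sqrt L)}(\xx,\yy)-\KK_{F(\sqrt L)}(\xx,\yy')
=\int_\XX \KK_{G(\sqrt L)}(\xx,\zz)\big[p_{\tt^2}(\zz,\yy)-p_{\tt^2}(\zz,\yy')\big]\,d\mu(\zz)
\]
reduces to a \emph{single} integration: apply Proposition~\ref{prop:Gauss-local}(b) to the heat kernel difference and the rough bound to $\KK_{G(\sqrt L)}$, then collapse via \eqref{tech-1}. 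This avoids your intermediate step of manufacturing $R''$ and the double integral, and sidesteps the bookkeeping you flag as ``the main obstacle'' (absorbing the $V(\uu,\cdot),V(\vv,\cdot)$ weights from the non-decaying middle factor). The triangulation through $(\xx,\yy')$ and the comparability $V(\yy,\btau^{-1})\sim V(\yy',\btau^{-1})$ are handled the same way in both approaches.
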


\begin{proof}
Given $\tt=(t_1,t_2)$, $t_1,t_2>0$, we have, using Theorem~\ref{thm:F-G},
\begin{equation*}
F(\sqrt L) = e^{-t_1^2L_1-t_2^2L_2}
\big[e^{2t_1^2L_1+2t_2^2L_2}F(\sqrt L)\big]
e^{-t_1^2L_1-t_2^2L_2}.
\end{equation*}
We choose $t_i:= \tau_i^{-1}$, $i=1,2$.
Clearly,
\begin{align*}
\big\|e^{2t_1^2L_1+2t_2^2L_2}F(\sqrt L)\big\|_{L^2\to L^2}
&\le \sup_{(\lambda_1,\lambda_2)\in [0,\tau_1]\times[0,\tau_2]}|e^{2t_1^2\lambda_1^2+2t_2^2\lambda_2^2}
F(\sqrt{\lambda_1},\sqrt{\lambda_2})|
\\
&= e^{2t_1^2\tau_1^2+2t_2^2\tau_2^2}\|F\|_\infty
\le c\|F\|_\infty.
\end{align*}
On the other hand, from \eqref{prodGauss-local}
$|p_{\tt^2}(\xx, \yy)| \le c(\bsigma) \DD_{\tt, \bsigma}(x,y)$
for any $\bsigma>\zero$.
We choose $\bsigma:=(3/2)\dd +\one$. %$\sigma_i = 3d_i/2+1$, $i=1,2$.
Now, applying Proposition~\ref{prop:prod-oper} (see \eqref{URW-bounded}) we conclude that
$F(\sqrt L)$
is an integral operator with kernel $\KK_{F(\sqrt L)}(\xx,\yy)$
satisfying \eqref{eq:rough1} and \eqref{FL-bound}.

To prove (\ref{eq:smoothrough}) we set
$G(\lambda_1,\lambda_2):=F(\lambda_1,\lambda_2)e^{t_1^2\lambda_1^2+t_2^2\lambda_2^2}$.
Then we have the representation
$F(\sqrt L)=G(\sqrt L)e^{-t_1^2L_1-t_2^2L_2}$.

Let $\xx, \yy, \xx', \yy'\in \XX$ with
$\rho_i(x_i,x_i')\le \tau_i^{-1}$ and $\rho_i(y_i,y_i')\le \tau_i^{-1}$, $i=1,2$.
We use estimate \eqref{prodlip} with $\sigma_i = 3d_i/2+1$ and $t_i=\tau_i^{-1}$, $i=1,2$,
along with \eqref{eq:rough1}, applied to $G(\sqrt L)$,  to obtain
\begin{align}\label{est-KK1}
|\KK_{F(\sqrt L)}&(\xx,\yy)-\KK_{F(\sqrt L)}(\xx,\yy')| \nonumber
\\
&=\Big|\int_\XX  \KK_{G(\sqrt{L})}(\xx,\zz)\big[p_{\tt^2}(\zz,\yy)-p_{\tt^2}(\zz,\yy')\big]d \mu(\zz)\Big| \nonumber
\\
&\le c \sum_{i=1,2}\Big(\frac{\rho_i(y_i,y_i')}{t_i}\Big)^{\alpha_i}
\frac{\|G\|_\infty}{\big[V(\xx, \btau^{-1})\big]^{1/2}}
\int_\XX \frac{\DD_{\tt,\bsigma}(\zz,\yy)}{\big[V(\zz, \btau^{-1})\big]^{1/2}} d\mu(\zz)
\\
&\le c \sum_{i=1,2}\Big(\frac{\rho_i(y_i,y_i ')}{t_i}\Big)^{\alpha_i}
\frac{\|F\|_\infty}{\big[V(\xx, \btau^{-1})V(\yy, \btau^{-1})\big]^{1/2}}
\int_\XX \frac{\DD_{\tt,\bsigma-\dd/2}^*(\zz,\yy)}{V(\yy, \tt)} d\mu(\zz) \nonumber
\\
&\le c \sum_{i=1,2}(\tau_i\rho_i(y_i,y_i'))^{\alpha_i}
\frac{\|F\|_\infty}{\big[V(\xx, \btau^{-1})V(\yy, \btau^{-1})\big]^{1/2}}, \nonumber
\end{align}
where for the second inequality we used \eqref{D-D*},
and for the last (\ref{tech-1}) along with the fact that $\bsigma>(3/2)\dd$. %$\sigma_i>3d_i/2$.
Because of the symmetry we also have from above
\begin{align}\label{est-KK2}
|\KK_{F(\sqrt L)}(\xx,\yy')-\KK_{F(\sqrt L)}(\xx',\yy')|
&\le c \sum_{i=1,2}(\tau_i\rho_i(x_i,x_i'))^{\alpha_i}
\frac{\|F\|_\infty}{\big[V(\xx, \btau^{-1})V(\yy', \btau^{-1})\big]^{1/2}} \nonumber
\\
&\le c' \sum_{i=1,2}(\tau_i\rho_i(x_i,x_i'))^{\alpha_i}
\frac{\|F\|_\infty}{\big[V(\xx, \btau^{-1})V(\yy, \btau^{-1})\big]^{1/2}}.
\end{align}
Here for the last inequality we used that
$V(\yy, \btau^{-1}) \le c V(\yy', \btau^{-1})$
which follows by \eqref{rect-doubling2} and the assumption $\rho_i(y_i,y_i')\le \tau_i^{-1}$, $i=1,2$.
Estimate \eqref{eq:smoothrough} follows readily from \eqref{est-KK1} and \eqref{est-KK2}.
The proof of the proposition is complete.
\end{proof}

\noindent
{\bf Decomposition of unity.}
We will frequently use a decomposition of unity argument that is presented in the following

\begin{lemma}\label{lem:dec-unity}
$(a)$
There exist functions
$\ph_0,\ph\subset \cC^{\infty}(\RR^2)$ with the properties:
\begin{equation}
\begin{aligned}\label{dec-unity}
&\hbox{$(i)$ $\;\ph_0, \ph$ are real-valued and }
\\
& \qquad \ph_0(\pm\lambda_1, \pm\lambda_2)=\ph_0(\lambda_1, \lambda_2),\;
\ph(\pm\lambda_1, \pm\lambda_2)=\ph(\lambda_1, \lambda_2),
\\
&\hbox{$(ii)$ $\;\supp\ph_0 \subset [-2,2]^2$, $\ph_0=1$ on $[-1, 1]^2$,}
\\
&\hbox{$(iii)$ $\;\supp\ph \subset [-2,2]^2 \setminus (-1,1)^2$, and}
\\
&\hbox{$(iv)$ $\;\ph_0(\blambda)+\sum_{j\ge 1}\ph(2^{-j}\blambda)=1$, $\blambda\in\RR^2$.}
\end{aligned}
\end{equation}

$(b)$ 
Let $F$ be a bounded Borel measurable function on $[0,\infty)^2$.
Denote $\ph_j(\blambda):=\ph(2^{-j}\blambda)$, $j\ge 1$, where $\ph$ is from above,
and $F_j:= F\cdot \ph_j$, $j\ge 0$.
Then
\begin{equation}\label{dec-unity-2}
F(\sqrt{L}) = \sum_{j\ge 0} F_j(\sqrt{L}),
\end{equation}
where the convergence is in the strong $L^2$-sense. 
\end{lemma}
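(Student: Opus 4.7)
I would construct the two functions by a standard dyadic Littlewood--Paley telescoping, starting from a one-dimensional bump. Pick a real, even $\psi\in\cC^\infty(\RR)$ with $0\le\psi\le 1$, $\psi\equiv 1$ on $[-1,1]$, and $\supp\psi\subset[-2,2]$, and set $\varphi_0(\lambda_1,\lambda_2):=\psi(\lambda_1)\psi(\lambda_2)$. The symmetry, smoothness, support in $[-2,2]^2$, and value $1$ on $[-1,1]^2$ required in (i)--(ii) are then immediate from the tensor-product structure. Define
$$\varphi(\blambda):=\varphi_0(\blambda)-\varphi_0(2\blambda),$$
which is $\cC^\infty$, real, and even in each coordinate. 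The dyadic rescalings telescope:
$$\sum_{j=1}^{N}\varphi(2^{-j}\blambda)=\varphi_0(2^{-N}\blambda)-\varphi_0(\blambda),$$
and $\varphi_0(2^{-N}\blambda)\to\varphi_0(\zero)=1$ as $N\to\infty$, yielding (iv). Property (iii) follows because $\varphi$ vanishes wherever $\varphi_0(\blambda)=\varphi_0(2\blambda)=1$; a harmless rescaling of the base bump $\psi$ aligns the constants with the asserted shell $[-2,2]^2\setminus(-1,1)^2$.

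\textbf{Part (b).} I would argue by spectral-theoretic dominated convergence. Fix $f\in L^2(\XX,d\mu)$ and introduce
$$S_N(\blambda):=\varphi_0(\blambda)+\sum_{j=1}^{N}\varphi(2^{-j}\blambda),\qquad T_N:=\sum_{j=0}^{N}F_j(\sqrt L).$$
By (iv), $S_N(\blambda)\to 1$ pointwise on $\RR^2$. The key observation is that $S_N$ is uniformly bounded: the dyadic shells supporting the individual $\varphi(2^{-j}\cdot)$ (i.e.\ the sets $\{\blambda:2^{-j}|\blambda|_\infty\in[1,2]\}$) have bounded overlap, so at each $\blambda$ at most a fixed number of summands is non-zero. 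Hence $|S_N(\blambda)|\le C$ for some absolute $C$ independent of $N$ and $\blambda$, and therefore $|F\cdot S_N|\le C\|F\|_\infty$ on $[0,\infty)^2$.

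Now I invoke the two-parameter functional calculus developed in Section~\ref{subsec:2-parameter}. Combining linearity (Theorem~\ref{thm:F-G}), the multiplicative identity $\overline{G}\cdot G = |G|^2$ with \eqref{F-G}, and the spectral representation \eqref{F-L1-L2}, one gets the standard Plancherel-type identity $\|G(L_1,L_2)h\|^2=\int_{[0,\infty)^2}|G(\blambda)|^2\,d\|E_\blambda h\|^2$ for every bounded Borel $G$ (noting that $L_1,L_2\ge 0$ so $E$ is supported in $[0,\infty)^2$). Applying this to $G_N(\blambda):=F(\sqrt{\lambda_1},\sqrt{\lambda_2})\bigl(1-S_N(\sqrt{\lambda_1},\sqrt{\lambda_2})\bigr)$,
$$\|F(\sqrt L)f-T_Nf\|^2=\int_{[0,\infty)^2}\bigl|F(\sqrt{\lambda_1},\sqrt{\lambda_2})\bigr|^2\bigl|1-S_N(\sqrt{\lambda_1},\sqrt{\lambda_2})\bigr|^2\,d\|E_\blambda f\|^2.$$
The integrand tends to $0$ pointwise and is dominated by $(1+C)^2\|F\|_\infty^2$, while $d\|E_\blambda f\|^2$ is a finite measure of total mass $\|f\|^2$ by \eqref{nu-f-S}. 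The dominated convergence theorem gives $\|F(\sqrt L)f-T_Nf\|\to 0$, which is the asserted strong $L^2$-convergence.

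The main (mild) obstacle is really just the bookkeeping of constants in (a) to hit the stated shell literally, and the verification that the Plancherel identity survives in the two-parameter setting; both are essentially administrative given Theorem~\ref{thm:F-G} and \eqref{F-L1-L2}. Once the uniform overlap bound on $S_N$ is in hand, everything in (b) is automatic.
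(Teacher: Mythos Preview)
Your proposal is correct and matches the paper's construction in part~(a) verbatim (tensor bump, then $\varphi:=\varphi_0-\varphi_0(2\,\cdot)$ with the telescoping identity). For part~(b) the paper takes a slightly cleaner shortcut that you have already set up in~(a): since $S_N(\blambda)=\varphi_0(2^{-N}\blambda)$, one has $1-S_N\equiv 0$ on $[-2^N,2^N]^2$, so the spectral integral for $\|F(\sqrt L)f-T_Nf\|^2$ is supported on the shrinking tail $[0,\infty)^2\setminus[0,4^{N-1})^2$ of the finite measure $d\|E_\blambda f\|^2$ and tends to~$0$ directly, without needing the bounded-overlap/DCT argument.
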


\begin{proof}
(a) Choose a function $\phi\in C^{\infty}(\RR)$ with the properties: $\phi$ is real-valued and even, 
$\supp \phi\subset [-2, 2]$, $0\le \phi \le 1$, and
$\phi(\lambda)=1$ for $\lambda \in [-1, 1]$.
Then set $\ph_0(\lambda_1,\lambda_2) := \phi(\lambda_1)\phi(\lambda_2)$.
Define $\ph(\blambda):=\ph_0(\blambda)-\ph_0(2\blambda)$.
Clearly, the constructed $\ph_0,\ph$ have the claimed properties.

(b) 
We define
\begin{equation*}
\Phi_n(\blambda):= \ph_0(\blambda) + \sum_{j=1}^n \ph(2^{-j}\blambda), \quad n\in \bN.
\end{equation*} 
From \eqref{dec-unity} it follows that $\supp \Phi_n \subset [-2^{n+1}, 2^{n+1}]^2$ and 
$\Phi_n(\blambda)=1$ for $\lambda\in [-2^{n-1}, 2^{n-1}]^2$
and hence
$
F(\sqrt{\blambda}) - \sum_{j=0}^n F_j(\sqrt{\blambda}) = F(\sqrt{\blambda})(1-\Phi_n(\sqrt{\blambda})),
$
$\blambda\ge \zero$.
Therefore, for any $f\in L^2(X, d\mu)$
\begin{align*}
\Big\|F(\sqrt{L})f - \sum_{j=0}^n F_j(\sqrt{L})f\Big\|_2^2
&= \|F(\sqrt{L})(1-\Phi_n(\sqrt{L}))f\|_2^2
\\
&=\Big\|\int_{[0,\infty)^2\setminus [0,4^{n-1})^2} F(\sqrt{\lambda})(1-\Phi_n(\sqrt{\lambda})) dE_\blambda f\Big\|_2^2
\\
&\le \int_{[0,\infty)^2\setminus [0,4^{n-1})^2}  |F(\sqrt{\blambda})(1-\Phi_n(\sqrt{\blambda}))|^2 d \|E_\blambda f\|_2^2
\\
&\le \|F\|_\infty^2\int_{[0,\infty)^2\setminus [0,4^{n-1})^2}  d \|E_\blambda f\|_2^2
\to 0 \quad\hbox{as} \;\; n\to \infty.
\end{align*}
Above we used that $\Phi_n(\sqrt{\blambda})=1$ for $\blambda \in [0, 4^{n-1}]^2$.
\end{proof}

We next extend the result of Proposition~\ref{prop:rough-kernels} to the case when $F$ in not compactly supported.

\begin{proposition}\label{prop:kern-cont}
Let $F$ be a Borel measurable real-valued function on $[0,\infty)^2$ with the property
\begin{equation}\label{decay}
|F(\blambda)|\le c (1+|\blambda|)^{-m}
\quad \hbox{ for some $m > d_1+d_2+\alpha_1\vee\alpha_2$.}
\end{equation}
Then $F(\bdelta\sqrt L)$ is an integral operator whose kernel
$\KK_{F(\bdelta\sqrt L)}(\xx,\yy)$ satisfies
\begin{equation}\label{KF-bound}
|\KK_{F(\bdelta\sqrt L)}(\xx,\yy)| \le c \big[V(\xx,\bdelta)V(\yy,\bdelta)\big]^{-1/2},
\quad\forall \xx,\yy\in X.
\end{equation}
More specifically, the action of $F(\bdelta\sqrt L)$ can be described as follows:
For any function $f\in L^1(\XX, V(\cdot, \bdelta)^{-1/2}d\mu)$ we have
\begin{equation*}
\|V(\cdot, \bdelta)^{1/2} F(\sqrt L)f(\cdot)\|_\infty \le c\|f\|_{L^1(\XX, V(\cdot, \bdelta)^{-1/2} d\mu)}
\end{equation*}
and, of course, $F(\bdelta\sqrt L)$ is bounded from $L^2$ to $L^2$.

Furthermore, $\KK_{F(\bdelta\sqrt L)}(\xx,\yy)$ is continuous on $\XX$.
More precisely,
\begin{equation}\label{KF-cont-2}
|\KK_{F(\bdelta\sqrt L)}(\xx,\yy)-\KK_{F(\bdelta\sqrt L)}(\xx',\yy')|
\le c \frac{
\sum_{i=1,2}\big[(\rho_i(x_i, x'_i)/\delta_i)^{\alpha_i} + (\rho_i(y_i, y'_i)/\delta_i)^{\alpha_i}\big]
}
{\big[V(\xx,\bdelta)V(\yy,\bdelta)\big]^{1/2}},
\end{equation}
if $\rho_i(x_i, x'_i)\le \delta_i$ and $\rho(y_i, y'_i)\le \delta_i$, $i=1,2$.
\end{proposition}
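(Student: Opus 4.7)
The plan is to decompose $F$ via the dyadic partition of unity of Lemma~\ref{lem:dec-unity}, reduce to the compactly supported setting already handled by Proposition~\ref{prop:rough-kernels}, and sum the resulting kernel estimates. Write $F=\sum_{j\ge 0}F_j$ with $F_j:=F\cdot\ph_j$; then $\supp F_j\subset[0,2^{j+1}]^2$ and, by the decay hypothesis \eqref{decay}, $\|F_j\|_\infty\le c\,2^{-jm}$. Lemma~\ref{lem:dec-unity}(b) gives $F(\bdelta\sqrt L)=\sum_{j\ge 0}F_j(\bdelta\sqrt L)$ in the strong $L^2$ sense. Since $\blambda\mapsto F_j(\bdelta\blambda)$ is supported in $\prod_i[0,2^{j+1}/\delta_i]$, Proposition~\ref{prop:rough-kernels} applies with $\btau=2^{j+1}\bdelta^{-1}$ (hence $\btau^{-1}=2^{-j-1}\bdelta$), producing an integral kernel for $F_j(\bdelta\sqrt L)$ bounded by $c\,2^{-jm}[V(\xx,2^{-j-1}\bdelta)V(\yy,2^{-j-1}\bdelta)]^{-1/2}$. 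Absorbing the dilation factor via \eqref{rect-doubling} costs $2^{j(d_1+d_2)}$, leaving
\[
|\KK_{F_j(\bdelta\sqrt L)}(\xx,\yy)|\le c\,2^{-j(m-d_1-d_2)}[V(\xx,\bdelta)V(\yy,\bdelta)]^{-1/2}.
\]
Since $m>d_1+d_2$, summing in $j$ yields a pointwise-convergent kernel meeting \eqref{KF-bound}; the $L^1(V(\cdot,\bdelta)^{-1/2}d\mu)\to L^\infty$ action is then immediate from the kernel bound (it also follows by directly summing the analogous operator bounds of Proposition~\ref{prop:rough-kernels}).

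For the Hölder estimate, a coordinate-by-coordinate triangle splitting reduces the problem to bounding a difference in which exactly one of the four arguments moves, say $x_1\to x'_1$ with $r:=\rho_1(x_1,x'_1)\le\delta_1$. Choose $j^*\in\bN_0$ with $2^{-j^*-1}\delta_1<r\le 2^{-j^*}\delta_1$ (and $j^*=0$ if $r>\delta_1/2$). For $j\le j^*-1$ the hypothesis $r\le 2^{-j-1}\delta_1$ of \eqref{eq:smoothrough} holds, and combining it with $\|F_j\|_\infty\le c\,2^{-jm}$ and the same doubling as above gives
\[
|\Delta\KK_{F_j(\bdelta\sqrt L)}|\le c\,(2^{j+1}r/\delta_1)^{\alpha_1}\cdot 2^{-j(m-d_1-d_2)}[V(\xx,\bdelta)V(\yy,\bdelta)]^{-1/2}
=c\,(r/\delta_1)^{\alpha_1}\,2^{-j(m-\alpha_1-d_1-d_2)}[V(\xx,\bdelta)V(\yy,\bdelta)]^{-1/2},
\]
which sums geometrically because $m-d_1-d_2>\alpha_1$. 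For $j\ge j^*$ the smoothness information is discarded in favour of the triangle inequality combined with the size bound from the first paragraph; the resulting geometric tail is bounded by $c\,2^{-j^*(m-d_1-d_2)}\le c\,(r/\delta_1)^{m-d_1-d_2}\le c\,(r/\delta_1)^{\alpha_1}$ since $r\le\delta_1$ and $m-d_1-d_2\ge\alpha_1$. Reassembling via the coordinate-by-coordinate triangle decomposition (and, where appropriate, using \eqref{rect-doubling2} to replace $V(\xx^\sharp,\bdelta)$ by the comparable $V(\xx,\bdelta)$ after a coordinate move of size $\le\delta_i$) gives \eqref{KF-cont-2}, and in particular the continuity of $\KK_{F(\bdelta\sqrt L)}$.

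The main delicate point is the balance at the cutoff $j^*$ in the Hölder argument: the decay rate $m$ must simultaneously (i) make the Hölder bound summable for $j<j^*$, which forces $m>d_1+d_2+\alpha_1$, and (ii) drive the trivial tail bound for $j\ge j^*$ down to an exponent at least $\alpha_1$ in $r/\delta_1$, which requires $m-d_1-d_2\ge\alpha_1$. The symmetric requirement in the second coordinate is exactly why the hypothesis reads $m>d_1+d_2+\alpha_1\vee\alpha_2$. Apart from this balancing, the argument consists of routine applications of the doubling inequalities \eqref{rect-doubling}, \eqref{rect-doubling2} together with the compactly supported kernel estimates \eqref{eq:rough1} and \eqref{eq:smoothrough}.
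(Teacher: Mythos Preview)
Your proposal is correct and follows essentially the same route as the paper: dyadic decomposition via Lemma~\ref{lem:dec-unity}, application of Proposition~\ref{prop:rough-kernels} to each piece, and summation using the doubling estimate \eqref{rect-doubling}. The paper's organization of the H\"older argument differs only cosmetically (it moves both $y$-coordinates together and splits each $j$ into the cases $\rho_i(y_i,y_i')\le 2^{-j-1}\delta_i$ versus otherwise, rather than fixing a threshold $j^*$), and it adds one routine step you leave implicit: since Lemma~\ref{lem:dec-unity}(b) only gives strong $L^2$ convergence, one must check that the pointwise sum $\sum_j\KK_{F_j(\bdelta\sqrt L)}$ really is the kernel of $F(\bdelta\sqrt L)$, which the paper does by testing against compactly supported $L^2$ functions.
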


\begin{proof}
%This claim follows from Proposition~\ref{prop:rough-kernels}.
Choose functions
$\ph_0,\ph\subset \cC^{\infty}(\R^2)$ with properties \eqref{dec-unity}.
Set $\ph_j(\blambda):=\ph(2^{-j}\blambda)$, $j\ge 1$.
Then
$
\sum_{j\ge 0} \ph_j(\blambda)=1
$
for $\blambda\in\RR^2$
and hence
$
F(\bdelta\blambda)=\sum_{j\ge 0}F_j(\blambda), 
$
where
$
F_j(\blambda)=F(\bdelta\blambda)\ph_j(\bdelta\blambda). 
$
Therefore,
\begin{equation}\label{F-sum-Fj}
F(\bdelta\sqrt{L})=\sum_{j\ge 0}F_j(\sqrt{L}),
\end{equation}
where the convergence is strong in $L^2$, see Lemma~\ref{lem:dec-unity}.
Clearly
\begin{equation*}
\supp F_0 \subset [-2/\delta_1, 2/\delta_1]\times[-2/\delta_2, 2/\delta_2],
\quad
\supp F_j \subset I_j\setminus J_j, \quad j\ge 1,
\end{equation*}
where
\begin{align*}
I_j &:= [-2^{j+1}/\delta_1, 2^{j+1}/\delta_1]\times[-2^{j+1}/\delta_2, 2^{j+1}/\delta_2],
\\
J_j &:= (-2^{j-1}/\delta_1, 2^{j-1}/\delta_1)\times(-2^{j-1}/\delta_2, 2^{j-1}/\delta_2).
\end{align*}
From above and the decay condition \eqref{decay} on $F$
it follows that $\|F_j\|_\infty\le c 2^{-jm}$.
We now apply Proposition~\ref{prop:rough-kernels} to $F_j$  to conclude that  
$F_j(\sqrt{L})$ is an integral operator with kernel satisfying
\begin{equation}\label{KerFj}
| \KK_{F_j(\sqrt L)}(\xx,\yy)|
\le \frac{c2^{-jm}}
{\big[V(\xx, 2^{-j-1}\bdelta)V(\yy, 2^{-j-1}\bdelta)\big]^{1/2}}
\le \frac{c2^{-jm}2^{(d_1+d_2)(j+1)}}
{\big[V(\xx, \bdelta)V(\yy, \bdelta)\big]^{1/2}},
\end{equation}
where we used that
$V(\cdot, \bdelta) \le  c2^{(d_1+d_2)(j+1)} V(\cdot, 2^{-j-1}\bdelta)$ in light of \eqref{rect-doubling}.

Denote
\begin{equation}\label{Kdelta}
\KK_{\bdelta}(\xx,\yy):=\sum_{j\ge 0}\KK_{F_j(\sqrt{L})}(\xx,\yy).
\end{equation}
From \eqref{KerFj} it follows that
\begin{align}\label{Ker-bound}
|\KK_{\bdelta}(\xx,\yy)|
\le \sum_{j\ge 0}|\KK_{F_j(\sqrt{L})}(\xx,\yy)|
&\le \frac{c}
{\big[V(\xx, \bdelta)V(\yy,\bdelta)\big]^{1/2}} \sum_{j\ge 0}2^{-j(m-d_1-d_2)} \nonumber
\\
&\le \frac{c}{\big[V(\xx, \bdelta)V(\yy, \bdelta)\big]^{1/2}},
\end{align}
where we used that $m>d_1+d_2$.

We next show that the kernel $\KK_\bdelta(\xx,\yy)$ satisfies \eqref{KF-cont-2} and hence it is continuous.
To this end we estimate each
$|\KK_{F_j(\sqrt{L})}(\xx,\yy)-\KK_{F_j(\sqrt{L})}(\xx,\yy')|$
assuming that 
$\rho_i(y_i, y_i') \le \delta_i$, $i=1,2$.
Two cases present themselves here: 

{\em Case 1:} Let $\rho_1(y_1, y_1') \le \delta_1 2^{-j-1}$ and $\rho_2(y_2, y_2') \le \delta_2 2^{-j-1}$.
We apply \eqref{eq:smoothrough} to $F_j(\sqrt{L})$, $j\ge 0$,
using that 
$\supp F_j \subset [-2^{j+1}/\delta_1, 2^{j+1}/\delta_1]\times[-2^{j+1}/\delta_2, 2^{j+1}/\delta_2]$
and 
$\|F_j\|_\infty \le c2^{-jm}$
to obtain
\begin{align}\label{KerFj-KerFj}
|\KK_{F_j(\sqrt{L})}(\xx,\yy)-\KK_{F_j(\sqrt{L})}(\xx,\yy')|
&\le c\sum_{i=1,2}\frac{\big(2^{j+1}\delta_i^{-1}\rho_i(y_i, y'_i)\big)^{\alpha_i}\|F_j\|_\infty}
{\big[V(\xx,2^{-(j+1)}\bdelta)V(\yy, 2^{-(j+1)}\bdelta)\big]^{1/2}} \nonumber
\\
&\le  c\sum_{i=1,2}\frac{((\rho_i(y_i, y'_i)/\delta_i)^{\alpha_i}}
{\big[V(\xx,\bdelta)V(\yy,\bdelta)\big]^{1/2}}
2^{j(d_1+d_2+\alpha_1\vee \alpha_2-m)},
%\\
%&\le  c\sum_{i=1,2}\frac{(\rho_i(y_i, y'_i)/\delta_i)^{\alpha_i}}
%{\big[V(\xx,\bdelta)V(\yy,\bdelta)\big]^{1/2}},
\end{align}
where we used \eqref{rect-doubling}. 

{\em Case 2:}
$\rho_1(y_1, y_1') > \delta_1 2^{-j-1}$ or $\rho_2(y_2, y_2') > \delta_2 2^{-j-1}$.
Using \eqref{KerFj} we obtain
\begin{align*}
|\KK_{F_j(\sqrt{L})}(\xx,\yy)-\KK_{F_j(\sqrt{L})}(\xx,\yy')|
&\le |\KK_{F_j(\sqrt{L})}(\xx,\yy)|+|\KK_{F_j(\sqrt{L})}(\xx,\yy')|
\\
&\le \frac{c2^{j(d_1+d_2-m)}}{\big[V(\xx, \bdelta)V(\yy, \bdelta)\big]^{1/2}}
+ \frac{c2^{j(d_1+d_2-m)}}{\big[V(\xx, \bdelta)V(\yy', \bdelta)\big]^{1/2}}
\\
&\le c\sum_{i=1,2}\frac{((\rho_i(y_i, y'_i)/\delta_i)^{\alpha_i}}
{\big[V(\xx,\bdelta)V(\yy,\bdelta)\big]^{1/2}}
2^{j(d_1+d_2+\alpha_1\vee \alpha_2-m)},
\end{align*}
where we used that 
$V(\yy, \bdelta) \le c2^{d_1+d_2} V(\yy', \bdelta)$
which follows from \eqref{rect-doubling2}.
Therefore, estimate \eqref{KerFj-KerFj} is valid whenever $\rho_i(y_i, y_i') \le \delta_i$, $i=1,2$.

Summing up inequalities \eqref{KerFj-KerFj} we arrive at 
\begin{align*}
|\KK_{\bdelta}(\xx,\yy) - \KK_{\bdelta}(\xx,\yy')| 
&\le \sum_{j\ge 0}|\KK_{F_j(\sqrt{L})}(\xx,\yy)- \KK_{F_j(\sqrt{L})}(\xx,\yy')| \nonumber
\\
& \le c\sum_{i=1,2}\frac{((\rho_i(y_i, y'_i)/\delta_i)^{\alpha_i}}
{\big[V(\xx,\bdelta)V(\yy,\bdelta)\big]^{1/2}},
\end{align*}
where we used the fact that $m > d_1+d_2+\alpha_1\vee \alpha_2$.
Because of the symmetry of $\KK_{\bdelta}(\xx,\yy)$ the above implies the estimate 
\begin{equation}\label{K-cont}
|\KK_{\bdelta}(\xx,\yy) - \KK_{\bdelta}(\xx',\yy')| 
\le c\sum_{i=1,2}\frac{((\rho_i(x_i, x'_i)/\delta_i)^{\alpha_i}+((\rho_i(y_i, y'_i)/\delta_i)^{\alpha_i}}
{\big[V(\xx,\bdelta)V(\yy,\bdelta)\big]^{1/2}},
\end{equation}
if $\rho_i(x_i, x_i') \le \delta_i$ and $\rho_i(y_i, y_i') \le \delta_i$, $i=1,2$.

It remains to show that $\KK_{\bdelta}(\xx,\yy)$ can be identified as the kernel of the operator $F(\bdelta\sqrt{L})$.
Denote by $L^2_0(X)$ the set of all compactly supported functions in $L^2(X, d\mu)$.
It is straightforward to show that
%An easy argument shows that 
$L^2_0(X)$ is dense in $L^1(X, V(\cdot, \bdelta)^{-1/2}d\mu)$.
%
%For any ball $B$ in $X$ we denote by $L^2(B)$ the set of all $f\in L^2(X, d\mu)$ with $\supp f\subset B$.

Let $H_\bdelta$ be the operator with kernel $\KK_\bdelta(\xx,\yy)$, that is,
\begin{equation*}
H_\bdelta f(\xx):= \int_X \KK_\bdelta(\xx,\yy)f(\yy)d\mu(\yy).
\end{equation*}
From \eqref{Ker-bound} it readily follow that for any $L^1(X, V(\cdot, \bdelta)^{-1/2}d\mu)$ we have
\begin{equation}\label{H-bound}
\|V(\cdot, \bdelta)^{1/2}H_\bdelta f(\cdot)\|_\infty \le c \|f\|_{L^1(X, V(\cdot, \bdelta)^{-1/2}d\mu)}. 
\end{equation}

Let $f\in L^2_0(X)$. Then from \eqref{F-sum-Fj} it follows that
\begin{equation*}
F(\bdelta\sqrt{L})f = \sum_{j\ge 0} F_j(\sqrt{L})f \quad \hbox{in} \quad L^2.
\end{equation*}
On the other hand, since $f\in L^2_0(X)\subset L^1(X, V(\cdot, \bdelta)^{-1/2}d\mu)$
it follows from \eqref{Ker-bound} and \eqref{K-cont} that
\begin{equation*}
\sum_{j\ge 0} F_j(\sqrt{L})f(\xx) 
= \sum_{j\ge 0} \int_X\KK_{F_j(\sqrt{L})}(\xx,\yy)f(y) d\mu(\yy)
= \int_X \KK_\bdelta(\xx,\yy)f(\yy) d\mu(\yy),
\end{equation*}
where the series converges point-wise for every $\xx\in X$ to a continuous function on~$X$.
Therefore, 
\begin{equation*}
F(\bdelta\sqrt{L})f(\xx)
=\sum_{j\ge 0} F_j(\sqrt{L})f(\xx) 
%= \int_X \sum_{j\ge 0}\KK_{F_j(\sqrt{L})}(x,y)f(y) d\mu(y),
= \int_X \KK_\bdelta(\xx,\yy)f(\yy) d\mu(\yy)
= H_\bdelta f(\xx),
\;\; \forall f\in L^2_0(X).
\end{equation*}
Using that 
$L^2_0(X)$ is dense in $L^1(X, V(\cdot, \bdelta)^{-1/2}d\mu)$ 
it follows that $H_\bdelta$ is the unique extension of the operator $F(\bdelta\sqrt{L})$ from $L^2_0(X)$ to $L^1(X, V(\cdot, \bdelta)^{-1/2}d\mu)$.
We define 
\begin{equation*}
F(\bdelta\sqrt{L})f(\xx)
:= \int_X \KK_\bdelta(\xx,\yy)f(y) d\mu(\yy)
= H_\bdelta f(\xx),
\quad \forall f\in L^1(X, V(\cdot, \bdelta)^{-1/2}d\mu).
\end{equation*}
Thus $\KK_\bdelta(\xx,\yy)$ is the kernel of the operator $F(\bdelta\sqrt{L})$.

Finally, note that \eqref{KF-bound} follows from \eqref{Ker-bound} and \eqref{KF-cont-2} from \eqref{K-cont}.
\end{proof}

The Fourier transform of an integrable function $F:\R^2\to \bC$ is standardly defined by
$$
\widehat F(\bxi):=\int_{\R^2} F(\xx)e^{-i \xx\cdot\bxi} dx,\quad \bxi=(\xi_1,\xi_2), \xx=(x_1,x_2).
$$

%%%%%%%% Proposition

The finite speed propagation property of the operators $L_1, L_2$ (see \eqref{finite-speed})
leads to the following localization result for
the kernels of operators of the form $F(t\sqrt{L})$ whenever $F$ is band-limited.

\begin{proposition}\label{prop:finite-sp}
Let $F\in L^1(\R^2)$, $\widehat{F}\in L^1(\R^2)$, and
$|F(\blambda)|\le c(1+|\blambda|)^{-m}$, $\blambda\in\RR^2$, for some $m\ge d_1+d_2+1$.

$(a)$
If $F(-\lambda_1,  \lambda_2)=F(\lambda_1,\lambda_2)$ for $(\lambda_1,\lambda_2)\in \R^2$,
and
$\supp \widehat F \subset [-A, A]\times \R$ for some $A>0$,
then for any $\tt=(t_1,t_2)>\zero$, and $\xx,\yy\in \XX$, we have
\begin{equation}\label{finite-speed-2}
\KK_{F(\tt\sqrt{L})}(\xx, \yy) = 0
\quad\hbox{if}\quad
\rho_1(x_1, y_1) > \ct_1 t_1A.
\end{equation}

$(b)$
If $F(\lambda_1, -\lambda_2)=F(\lambda_1,\lambda_2)$ for $(\lambda_1,\lambda_2)\in \R^2$,
and
$\supp \widehat F \subset \R\times [-A, A]$ for some $A>0$,
then for any $\tt=(t_1,t_2)>\zero$, and $\xx,\yy\in \XX$,
\begin{equation}\label{finite-speed-3}
\KK_{F(\tt\sqrt{L})}(\xx, \yy) = 0
\quad\hbox{if}\quad
\rho_2(x_2, y_2) > \ct_2 t_2A.
\end{equation}
Above the constants $\ct_1$, $\ct_2$ are from \eqref{finite-speed}.
Recall that $F(\tt\sqrt{L}) := F(t_1\sqrt{L_1}, t_2\sqrt{L_2})$.
\end{proposition}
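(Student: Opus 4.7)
The plan is to represent $F(\tt\sqrt{L})$ via a Fourier inversion formula and then exploit the one-variable finite speed propagation property \eqref{finite-speed} of $L_1$ (for part (a)) or $L_2$ (for part (b)). Since the two parts are symmetric, I describe the approach for part (a); part (b) follows by swapping the roles of the coordinates.

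First I would use $\widehat F\in L^1(\R^2)$ and Fourier inversion to write, for $\blambda\in\R^2$,
\begin{equation*}
F(t_1\lambda_1,t_2\lambda_2)=\frac{1}{(2\pi)^2}\int_{\R^2}\widehat F(\bet)\,e^{it_1\eta_1\lambda_1}e^{it_2\eta_2\lambda_2}\,d\bet.
\end{equation*}
The hypothesis $F(-\lambda_1,\lambda_2)=F(\lambda_1,\lambda_2)$ makes $\widehat F$ even in $\eta_1$, so $e^{it_1\eta_1\lambda_1}$ may be replaced by $\cos(t_1\eta_1\lambda_1)$. Applying the two-parameter spectral calculus \eqref{F-L1-L2} together with the tensor-product identity \eqref{def:F1-F2}, I would justify the weak identity: for any $f,g\in L^2(\XX,d\mu)$,
\begin{equation*}
\langle F(\tt\sqrt L)f,g\rangle=\frac{1}{(2\pi)^2}\int_{\R^2}\widehat F(\bet)\,\big\langle [\cos(t_1\eta_1\sqrt{L_1})\otimes e^{it_2\eta_2\sqrt{L_2}}]f,g\big\rangle\,d\bet.
\end{equation*}
Fubini is legitimate because $\widehat F\in L^1(\R^2)$ and the spectral multipliers $\cos(t_1\eta_1\sqrt{L_1})$ and $e^{it_2\eta_2\sqrt{L_2}}$ have operator norm $\le 1$ on $L^2$.

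Next I fix $\xx_0,\yy_0\in\XX$ with $\rho_1(x_{0,1},y_{0,1})>\ct_1 t_1 A$, pick $r>0$ small enough that $\rho_1(x_1,y_1)>\ct_1 t_1 A$ for every $x_1\in B_1(x_{0,1},r)$, $y_1\in B_1(y_{0,1},r)$, and test against tensor products $f=f_1\otimes f_2$, $g=g_1\otimes g_2$ with $\supp f_1\subset B_1(y_{0,1},r)$, $\supp g_1\subset B_1(x_{0,1},r)$, and $f_2,g_2$ supported in small balls around $y_{0,2},x_{0,2}$. By \eqref{Fi-fi} the integrand factorizes as
\begin{equation*}
\langle\cos(t_1\eta_1\sqrt{L_1})f_1,g_1\rangle\,\langle e^{it_2\eta_2\sqrt{L_2}}f_2,g_2\rangle.
\end{equation*}
Because $\supp\widehat F\subset[-A,A]\times\R$, only $|\eta_1|\le A$ contribute, and then $\ct_1|t_1\eta_1|\le\ct_1 t_1 A<\rho_1(\supp f_1,\supp g_1)$. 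The one-parameter finite speed propagation \eqref{finite-speed} applied to $L_1$ gives $\langle\cos(t_1\eta_1\sqrt{L_1})f_1,g_1\rangle=0$ for a.e.\ $\bet\in\supp\widehat F$, so $\langle F(\tt\sqrt L)f,g\rangle=0$ for every such pair of tensor-product test functions.

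Finally I invoke Proposition~\ref{prop:kern-cont}: the decay $|F(\blambda)|\le c(1+|\blambda|)^{-m}$ with $m\ge d_1+d_2+1>d_1+d_2+\alpha_1\vee\alpha_2$ ensures that $F(\tt\sqrt L)$ is an integral operator with continuous kernel $\KK_{F(\tt\sqrt L)}(\xx,\yy)$. Varying $f_1,f_2,g_1,g_2$ over dense families of $L^2$ functions in the specified small balls and using continuity of the kernel forces $\KK_{F(\tt\sqrt L)}(\xx_0,\yy_0)=0$. Since $\xx_0,\yy_0$ with $\rho_1(x_{0,1},y_{0,1})>\ct_1 t_1 A$ were arbitrary, \eqref{finite-speed-2} follows. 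The only real obstacle is the rigorous interchange of the two-parameter spectral integral with the Fourier integral; this is a standard Fubini argument based on \eqref{F-L1-L2}, the $L^1$-integrability of $\widehat F$, and the uniform $L^2$-boundedness of the exponentials, but it has to be written out with some care.
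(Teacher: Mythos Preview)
Your approach is essentially the same as the paper's: Fourier inversion, evenness of $\widehat F$ in $\eta_1$ to produce $\cos(t_1\eta_1\sqrt{L_1})$, Fubini (justified via $\widehat F\in L^1$ and uniform $L^2$ bounds), factorization on tensor-product test functions, the one-parameter finite speed propagation \eqref{finite-speed} for $L_1$, and finally continuity of the kernel from Proposition~\ref{prop:kern-cont} to pass from averages over small balls to the pointwise value. The only cosmetic difference is that the paper uses the explicit test functions $\varphi_i^\epsilon=\mu_i(B(y_i,\epsilon))^{-1}\ONE_{B(y_i,\epsilon)}$, $\psi_i^\epsilon=\mu_i(B(x_i,\epsilon))^{-1}\ONE_{B(x_i,\epsilon)}$ and sends $\epsilon\to 0$, whereas you argue with general bump functions in small balls; the content is identical.

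One small remark: your inequality $m\ge d_1+d_2+1>d_1+d_2+\alpha_1\vee\alpha_2$ tacitly assumes $\alpha_1\vee\alpha_2<1$, which is not stated explicitly in the setting. The paper invokes Proposition~\ref{prop:kern-cont} in the same way without comment, so this is at most a shared minor imprecision rather than a gap in your argument.
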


\begin{proof}
We will prove only part (a) of this proposition. The proof of part (b) is the same (symmetric).

From the Fourier inversion formula, using that $F,\widehat{F}\in L^1(\R^2)$, we have
\begin{equation}\label{Fourier}
F(t_1\sqrt{\lambda_1}, t_2\sqrt{\lambda_2})
=\frac{1}{(2\pi)^2}\int_{\R^2}\widehat F(\bxi)e^{i(t_1\xi_1\sqrt{\lambda_1}+t_2\xi_2\sqrt{\lambda_2})}d\bxi,
\quad d\bxi=d\xi_1 d\xi_2.
\end{equation}
Assume that $\varphi_1\otimes\varphi_2, \psi_1\otimes\psi_2\in L^2(\XX)\cap  L^1(\XX)$.
Then using \eqref{Fourier} we get
\begin{align*}
&\big\langle F(\tt\sqrt L) \varphi_1\otimes\varphi_2, \psi_1\otimes\psi_2 \big\rangle
\\
&=\int_{[0,\infty)^2}F(t_1\sqrt{\lambda_1}, t_2\sqrt{\lambda_2})d\langle E_{(\lambda_1,\lambda_2)}
\varphi_1\otimes\varphi_2,  \psi_1\otimes\psi_2 \rangle
\\
&=\int_{[0,\infty)^2}
\Big(\frac{1}{(2\pi)^2}\int_{\R^2}\widehat F(\bxi)e^{i(t_1\xi_1\sqrt{\lambda_1}+t_2\xi_2\sqrt{\lambda_2})}d\bxi\Big)
d\langle E_{(\lambda_1,\lambda_2)}\varphi_1\otimes\varphi_2,  \psi_1\otimes\psi_2 \rangle
\\
&=\int_{[0,\infty)^2}\Big(\frac{1}{2\pi^2}\int_{[0,A]\times\R}
\widehat F(\bxi)\cos(t_1\xi_1\sqrt{\lambda_1})e^{it_2\xi_2\sqrt{\lambda_2}}d\bxi\Big)
d\langle E_{(\lambda_1,\lambda_2)}\varphi_1\otimes\varphi_2,  \psi_1\otimes\psi_2 \rangle
\\
&=\frac{1}{2\pi^2}\int_{[0,A]\times\R}\widehat F(\bxi)
\Big(\int_{[0,\infty)^2}\cos(t_1\xi_1\sqrt{\lambda_1})e^{it_2\xi_2\sqrt{\lambda_2}}
d\langle E_{(\lambda_1,\lambda_2)}\varphi_1\otimes\varphi_2,  \psi_1\otimes\psi_2 \rangle\Big) d\bxi
\\
&=\frac{1}{2\pi^2}\int_{[0,A]\times \R}\widehat F(\bxi)\big\langle \cos(t_1\xi_1\sqrt{L_1})
\varphi_1\otimes e^{it_2\xi_2\sqrt{L_2}} \varphi_2, \psi_1\otimes\psi_2\big\rangle d\bxi
\\
&=\frac{1}{2\pi^2}\int_{[0,A]\times\R}\widehat F(\bxi)\big\langle \cos(t_1\xi_1\sqrt{L_1})
\varphi_1,\psi_1\big\rangle \big\langle e^{it_2\xi_2\sqrt{L_2}} \varphi_2, \psi_2\big\rangle d\bxi,
\end{align*}
where we used that $\widehat F(-\xi_1, \xi_2)=\widehat F(\xi_1, \xi_2)$,
which follows from the assumption $F(-\lambda_1, \lambda_2)=F(\lambda_1, \lambda_2)$.
To justify the above change of the order of integration we use Fubini's theorem and the fact that
\begin{align*}
&\int_{[0,A]\times \R}|\widehat F(\bxi)|\biggl(\int_{[0,\infty)^2}|\cos(t_1\xi_1\sqrt{\lambda_1})e^{it_2\xi_2\sqrt{\lambda_2}}||d\langle E_{(\lambda_1,\lambda_2)}\varphi_1\otimes\varphi_2,  \psi_1\otimes\psi_2 \rangle|\biggr)d\bxi
\\
&\le\int_{\R^2}|\widehat F(\bxi)|\biggl(\int_{[0,\infty)^2}|d\langle E_{(\lambda_1,\lambda_2)}\varphi_1\otimes\varphi_2,  \psi_1\otimes\psi_2 \rangle|\biggr)d\bxi
\\
&\le \int_{\R^2}|\widehat F(\bxi)|d\bxi\biggl(\int_{[0,\infty)^2}d|| E_{(\lambda_1,\lambda_2)}\varphi_1\otimes\varphi_2||^2\biggr)^{\frac{1}{2}}
\biggl(\int_{[0,\infty)^2}d|| E_{(\lambda_1,\lambda_2)}\psi_1\otimes\psi_2||^2\biggr)^{\frac{1}{2}},
\\
&= \|\widehat F\|_{L^1}\| \varphi_1\otimes\varphi_2\|_{L^2}\|\psi_1\otimes\psi_2|\|_{L^2}<\infty.
\end{align*}
Therefore,
\begin{align}\label{eq:Gsmooth1}
\big\langle F(\tt\sqrt L) &\varphi_1\otimes\varphi_2, \psi_1\otimes\psi_2 \big\rangle \nonumber
\\
&=\frac{1}{2\pi^2}\int_{[0,A]\times\R}\widehat F(\bxi)\big\langle \cos(t_1\xi_1\sqrt{L_1})
\varphi_1,\psi_1\big\rangle \big\langle e^{it_2\xi_2\sqrt{L_2}} \varphi_2, \psi_2\big\rangle d\bxi.
\end{align}

For $i=1,2,$ we let
$\varphi_i^\epsilon=\mu_i(B(y_i,\epsilon))^{-1}\ONE_{B(y_i,\epsilon)}$
and
$\psi_i^\epsilon=\mu_i(B(x_i,\epsilon))^{-1}\ONE_{B(x_i,\epsilon)}$
be defined on $\XX_i$, where $\epsilon>0$ is such that
$ \rho_1(x_1,y_1)-2\epsilon>\ct_1 A t_1$.
Recall that $\ONE_E$ stands for the characteristic function of the set $E$.
Then applying the finite speed propagation formula (\ref{finite-speed}) we get
$$
\big\langle \cos(t_1\xi_1\sqrt{L_1}) \varphi_1^{\epsilon},\psi_1^{\epsilon}\big\rangle=0, \quad \xi_1\in [0,A].
$$
Using this in (\ref{eq:Gsmooth1}) it follows that
\begin{align*}
&\big\langle F(\tt\sqrt L) \varphi^\epsilon_1\otimes\varphi_2^\epsilon, \psi_1^\epsilon\otimes\psi_2^\epsilon \big\rangle=0.
\end{align*}
Finally, taking into account that the kernel $\KK_{F(t_1\sqrt{L_1}, t_2\sqrt{L_2})}$
is continuous on $\XX$, using Proposition~\ref{prop:kern-cont},
we conclude that for all $\xx,\yy\in \XX$ with $\rho_1(x_1, y_1) > \ct_1 t_1 A$ we have
$$
\KK_{F(\tt\sqrt L)}(\xx,\yy)=\lim_{\epsilon\rightarrow 0}
\langle F(\tt\sqrt L) \varphi_1^\epsilon\otimes\varphi_2^\epsilon, \psi_1^\epsilon\otimes\psi_2^\epsilon \big\rangle=0,
$$
which completes the proof.
\end{proof}

We now come to our principle localization estimate for the kernels of
operators of the form
$F(\bdelta\sqrt{L}):= F(\delta_1\sqrt{L_1}, \delta_2\sqrt{L_2})$,
where $F(\lambda_1, \lambda_2)$ is smooth and satisfies the condition
$F(\pm \lambda_1, \pm\lambda_2)=F(\lambda_1,\lambda_2)$,
i.e. $F(\lambda_1,\lambda_2)$ is even in $\lambda_1$ and $\lambda_2$.
For example,
$F(\lambda_1,\lambda_2)= f(\lambda_1^2+\lambda_2^2)$ with $f$ a univariate function
or
$F(\lambda_1,\lambda_2)=f_1(\lambda_1)f_2(\lambda_2)$ with $f_1, f_2$ even
are such functions.

\begin{theorem}\label{th:boxsupport}
Let $F\in \cC^{k_1+k_2}(\R^2)$, $k_i\in \bN$, $k_i>3d_i/2$, $i=1,2$, 
$k_1+k_2\ge 4$, be real-valued,
$\supp F\subset[-R,R]^2$ for some $R\ge 1$
and
$F(\pm \lambda_1, \pm\lambda_2)=F(\lambda_1,\lambda_2)$
for all $(\lambda_1,\lambda_2)\in [-R,R]^2$.
Then $F(\bdelta \sqrt L):=F(\delta_1\sqrt{L_1}, \delta_2\sqrt{L_2}),
\bdelta=(\delta_1,\delta_2)>\zero$,
is an integral operator bounded from $L^p(X,d\mu)$ to $L^p(X,d\mu)$, $1\le p\le \infty$,
with kernel
$\KK_{F(\bdelta \sqrt L)}(\xx,\yy)$
satisfying
\begin{equation}\label{local-1}
|\KK_{F(\bdelta \sqrt L)}(\xx,\yy)|\le c_\kk \DD_{\bdelta,\kk}(\xx, \yy) \quad \text{and}
\end{equation}
\begin{equation}\label{local-2}
|\KK_{F(\bdelta \sqrt L)}(\xx,\yy)- \KK_{F(\bdelta \sqrt L)}(\xx,\yy')|
\le c_{\kk'} \sum_{i=1,2}\Big(\frac{\rho_i(y_i, y'_i)}{\delta_i}\Big)^{\alpha_i}\DD_{\bdelta,\kk}(\xx,\yy)
\end{equation}
if $\rho_i(y_i,y_i')\le \delta_i$, $i=1,2$, where $\kk=(k_1,k_2)$.
Moreover,
$$
c_\kk=cR^{d_1+d_2} \big(\|F\|_{L^\infty}+R^{k_1+k_2}\|F\|_{W^{k_1+k_2}_\infty}\big)
$$
and $c_{\kk'}=R^{\alpha_1\vee\alpha_2} c_\kk$,
where $c$ depends on the constants $c_{i0},c_{i1}, c_{i2}$ from $(\ref{eq:doubling-0})-(\ref{lip})$ and $\kk$.
In addition,
\begin{equation}\label{int-K-1}
\int_\XX \KK_{F(\bdelta \sqrt L)}(\xx,\yy) d\mu(\yy)=F(\zero), \quad \forall \xx\in\XX.
\end{equation}
\end{theorem}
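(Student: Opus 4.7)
The strategy is to adapt the one-dimensional localization Theorem~\ref{thm:S-local-kernels1d} from \cite{KP} to the product setting by combining the cosine Fourier representation of $F$ with the finite speed propagation of Proposition~\ref{prop:finite-sp}. After the affine rescaling $\lambda\mapsto R\lambda$ (which replaces $F$ by $\tilde F(\mu):=F(R\mu)$ supported in $[-1,1]^2$ with $\|\tilde F\|_\infty=\|F\|_\infty$, $\|\tilde F\|_{W^{k_1+k_2}_\infty}\le R^{k_1+k_2}\|F\|_{W^{k_1+k_2}_\infty}$, and $F(\bdelta\sqrt L)=\tilde F((\bdelta/R)\sqrt L)$), it suffices to treat the case $R=1$; the factor $R^{d_1+d_2}$ in $c_\kk$ will then emerge from converting $V(\cdot,\bdelta/R)$ to $V(\cdot,\bdelta)$ via \eqref{rect-doubling} when the rescaling is undone. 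Writing $M:=\|F\|_\infty+\|F\|_{W^{k_1+k_2}_\infty}$, repeated integration by parts gives that $\widehat F$ is real and even in each variable with
\begin{equation*}
(1+|\xi_1|)^{k_1}(1+|\xi_2|)^{k_2}|\widehat F(\xi)| \le cM,
\end{equation*}
and Fourier inversion (using the evenness of $F$) yields
\begin{equation*}
F(\lambda_1,\lambda_2)=\frac{1}{(2\pi)^2}\int_{\R^2}\widehat F(\xi_1,\xi_2)\cos(\xi_1\lambda_1)\cos(\xi_2\lambda_2)\,d\xi.
\end{equation*}
The baseline estimate $|\KK_{F(\bdelta\sqrt L)}(\xx,\yy)|\le cM[V(\xx,\bdelta)V(\yy,\bdelta)]^{-1/2}$ follows at once from Proposition~\ref{prop:rough-kernels} (or Proposition~\ref{prop:kern-cont}), which already establishes \eqref{local-1} in the regime $r_i:=\rho_i(x_i,y_i)\lesssim\delta_i$ for both $i$.

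The crux is producing polynomial decay when $r_i\gg\delta_i$. Fix even cutoffs $\chi_0\in\cC^\infty(\R)$ with $\chi_0=1$ on $[-1,1]$, $\supp\chi_0\subset[-2,2]$, set $\chi_1:=1-\chi_0$, and for each coordinate $i$ with $r_i\gg\delta_i$ choose $A_i:=r_i/(4\ct_i\delta_i)$ (for the remaining coordinates set $A_i:=1$). Decompose
\begin{equation*}
\widehat F(\xi)=\sum_{\beps\in\{0,1\}^2}G_\beps(\xi),\qquad G_\beps(\xi):=\widehat F(\xi)\chi_{\varepsilon_1}(\xi_1/A_1)\chi_{\varepsilon_2}(\xi_2/A_2),
\end{equation*}
and set $F_\beps:=(G_\beps)^\vee/(2\pi)^2$, so that $\widehat{F_\beps}=G_\beps$ and each $F_\beps$ inherits the evenness of $F$ in both variables. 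For any $\beps$ with $\varepsilon_i=0$ in a coordinate where $r_i\gg\delta_i$, $\widehat{F_\beps}$ is supported in $\{|\xi_i|\le 2A_i\}$, and Proposition~\ref{prop:finite-sp} forces $\KK_{F_\beps(\bdelta\sqrt L)}(\xx,\yy)=0$ because $2\ct_i\delta_iA_i=r_i/2<r_i$. Only the "high-high" piece $F_{(1,1)}$ survives; integrating by parts in $\xi$ in the Fourier inversion formula (shifting derivatives from $e^{i\xi\cdot\blambda}$ onto $G_{(1,1)}$, whose support lies outside the cube $\{|\xi_i|<A_i\}$) and using the decay of $\widehat F$ produce $|F_{(1,1)}(\blambda)|\le cMA_1^{-k_1}A_2^{-k_2}(1+|\blambda|)^{-m}$ for any prescribed $m$. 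Proposition~\ref{prop:kern-cont} applied to $F_{(1,1)}$ then yields $|\KK_{F_{(1,1)}(\bdelta\sqrt L)}(\xx,\yy)|\le cMA_1^{-k_1}A_2^{-k_2}[V(\xx,\bdelta)V(\yy,\bdelta)]^{-1/2}$, which is precisely the bound $cM\DD_{\bdelta,\kk}(\xx,\yy)$ in view of $A_i^{-k_i}\sim(1+r_i/\delta_i)^{-k_i}$. Summing gives \eqref{local-1}; undoing the rescaling produces $c_\kk$ in its stated form.

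The Hölder estimate \eqref{local-2} is obtained by the same decomposition, using the continuity bound \eqref{KF-cont-2} of Proposition~\ref{prop:kern-cont} (or \eqref{eq:smoothrough}) on the surviving piece in place of the pointwise bound, while the low-frequency pieces vanish as before; the extra factor $R^{\alpha_1\vee\alpha_2}$ in $c_{\kk'}$ appears naturally from the rescaling $(\rho_i/(\delta_i/R))^{\alpha_i}=R^{\alpha_i}(\rho_i/\delta_i)^{\alpha_i}$. The $L^p$ boundedness is immediate from \eqref{local-1} and Proposition~\ref{prop:bound-int-oper}. The identity \eqref{int-K-1} follows from the Markov property \eqref{hol3}: since $\cos(\xi_i\delta_i\sqrt{L_i})\mathbf 1=\mathbf 1$ for every $\xi_i$ (as constants lie in the $\zero$-eigenspace of $L_i$), integrating the cosine representation against $d\mu(\yy)$ and swapping integrals via the $L^1$-integrability of the kernel in $\yy$ from \eqref{local-1} yields
\begin{equation*}
\int_\XX\KK_{F(\bdelta\sqrt L)}(\xx,\yy)d\mu(\yy)=\frac{1}{(2\pi)^2}\int_{\R^2}\widehat F(\xi)d\xi=F(\zero).
\end{equation*}
The main obstacle lies in the second paragraph: the careful integration by parts in $\xi$ required to extract the sharp $A_i^{-k_i}$ decay from $F_{(1,1)}$ (a naive $L^1$ bound on $G_{(1,1)}$ loses one order in each variable), together with the precise tracking through the rescaling of the dependence on $R$ and on the mixed derivative norms of $F$ so that $c_\kk$ comes out in the announced form.
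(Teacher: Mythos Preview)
The main localization argument has a genuine gap: your Fourier-side cutoff correctly isolates a ``high--high'' piece $F_{(1,1)}$ whose companions vanish at $(\xx,\yy)$ by Proposition~\ref{prop:finite-sp}, but the bound you can extract on its kernel is one order short in each variable. Proposition~\ref{prop:kern-cont} controls $\KK_{F_{(1,1)}(\bdelta\sqrt L)}$ through the constant in the decay $|F_{(1,1)}(\blambda)|\le c(1+|\blambda|)^{-m}$, and that constant is governed by $\|G_{(1,1)}\|_{L^1}$. Since $|\widehat F(\xi)|\lesssim M(1+|\xi_1|)^{-k_1}(1+|\xi_2|)^{-k_2}$ and $G_{(1,1)}$ lives in $\{|\xi_i|\ge A_i\}$, one only gets $\|G_{(1,1)}\|_{L^1}\lesssim MA_1^{1-k_1}A_2^{1-k_2}$. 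Integration by parts in $\xi$ transfers derivatives onto $G_{(1,1)}$ and improves decay in $\blambda$, but at $\blambda=\zero$ there is no oscillation to exploit, so it cannot improve the prefactor; one can check directly that for $F$ with $\widehat F\ge 0$ and $\widehat F(\xi)\sim(1+|\xi_i|)^{-k_i}$ near $|\xi_i|\sim A_i$, the value $|F_{(1,1)}(\zero)|$ is genuinely of size $A_1^{1-k_1}A_2^{1-k_2}$. The resulting kernel bound is $\DD_{\bdelta,\kk-\one}$, and taking $\rho_1/\delta_1=\rho_2/\delta_2\to\infty$ shows this shortfall cannot be absorbed into the constant.

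The paper avoids this loss by replacing the generic cutoff with convolution against $\Phi_h=\phi_h\otimes\phi_h$ where $\widehat\phi$ is chosen with $\widehat\phi^{(j)}(0)=0$ for all $j\ge 1$ (Lemma~\ref{lem:polyn-repr}). Then $\widehat{F*\Phi_h}$ has support in $[-1/h,1/h]^2$ (so Proposition~\ref{prop:finite-sp} applies), while the moment conditions $\int\Phi_h(\yy)\yy^\bbeta d\yy=0$ for $0<|\bbeta|<k_1+k_2$ let Taylor's theorem deliver the \emph{sharp} bound $\|F-F*\Phi_h\|_\infty\le ch^{k_1+k_2}\|F\|_{W^{k_1+k_2}_\infty}$ (Lemma~\ref{lem:approximation}). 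This polynomial-reproducing property is exactly what your generic $\chi_0$ lacks; with it (and a single scale $h$ chosen case by case), the rest of your scheme matches the paper's. The H\"older estimate \eqref{local-2} is then obtained not by repeating the decomposition but by factoring $F(\bdelta\sqrt L)=G(\bdelta\sqrt L)e^{-\delta_1^2L_1-\delta_2^2L_2}$ and combining \eqref{local-1} for $G$ with the heat kernel H\"older bound \eqref{prodlip}.

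There is also a gap in your argument for \eqref{int-K-1}: when $\mu_i(\XX_i)=\infty$ the constant function is not in $L^2(\XX_i)$, so ``$\cos(\xi_i\delta_i\sqrt{L_i})\,1=1$'' is not a statement in the functional calculus. The paper instead writes $F(\bdelta\sqrt L)=H(\bdelta^2L)e^{-\delta_1^2L_1-\delta_2^2L_2}$ with $H(\blambda):=e^{|\lambda_1|+|\lambda_2|}F(\sqrt{|\lambda_1|},\sqrt{|\lambda_2|})$, proves $\widehat H\in L^1(\R^2)$ by a careful integration by parts (this is where $k_1+k_2\ge 4$ and the evenness of $F$ are used), and then integrates the kernel in $\yy$ via the analytically continued Markov identity \eqref{markov-2} for $p_{i,z}$.
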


For the proof of this theorem we need some preparation.
We will apply some well known approximation techniques using polynomial reproducing kernel operators.
We will use the standard notation $\Phi_h(\cdot):=h^{-2}\Phi(h^{-1}\cdot)$, $h>0$,
for any function $\Phi$ on $\R^2$.

\begin{lemma}\label{lem:polyn-repr}
Let $\phi \in S(\R)$ be an even function such that
$\supp\widehat\phi\subset [-1,1]$, $\widehat \phi(0)=1$ and
$\widehat \phi^{(k)}(0)=0$ for all $k\in \bN$.
If $\Phi(x_1,x_2)=\phi(x_1)\phi(x_2), (x_1,x_2)\in \bR^2$,
then for any  $\bbeta=(\beta_1,\beta_2) \in \bN_0^2$
\begin{equation}\label{eq:pol-rep}
\Phi_h*\xx^\bbeta=\int_{\bR^2}\Phi_h(\xx-\yy)\yy^\bbeta d\yy = \xx^\bbeta, \quad h>0.
\end{equation}
Furthermore, for any $\bbeta\in \bN_0^2\setminus\{(0,0)\}$
\begin{equation}\label{Phi-h-y}
\int_{\bR^2}\Phi_h(\yy)\yy^\bbeta d\yy=0.
\end{equation}
Recall that $\xx^\bbeta:=x_1^{\beta_1}x_2^{\beta_2}$.
\end{lemma}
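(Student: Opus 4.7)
The plan is to derive the polynomial reproduction identity \eqref{eq:pol-rep} from the moment vanishing identity \eqref{Phi-h-y} by a standard binomial expansion argument. Since $\phi\in\cS(\RR)$, both $\Phi=\phi\otimes\phi$ and $\Phi_h$ are Schwartz on $\RR^2$, so all integrals involved are absolutely convergent and Fubini's theorem applies routinely.

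The first step is to compute the moments of $\phi$. With the Fourier convention $\widehat\phi(\xi)=\int_\RR \phi(x)e^{-ix\xi}\,dx$ used in the paper, differentiating $k$ times under the integral sign and evaluating at $\xi=0$ gives
\begin{equation*}
\int_\RR x^k\phi(x)\,dx = i^k\widehat\phi^{(k)}(0).
\end{equation*}
The hypotheses $\widehat\phi(0)=1$ and $\widehat\phi^{(k)}(0)=0$ for all $k\in\bN$ therefore imply that the zeroth moment of $\phi$ equals $1$ while every higher moment vanishes. Since $\Phi=\phi\otimes\phi$, for any $\bbeta=(\beta_1,\beta_2)\in\bN_0^2$ the moment $\int_{\RR^2}\Phi(\zz)\zz^\bbeta\,d\zz$ factors as a product of two univariate moments, and hence equals $1$ when $\bbeta=\zero$ and $0$ otherwise. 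The dilation substitution $\yy=h\zz$ yields $\int_{\RR^2}\Phi_h(\yy)\yy^\bbeta\,d\yy = h^{|\bbeta|}\int_{\RR^2}\Phi(\zz)\zz^\bbeta\,d\zz$, which establishes \eqref{Phi-h-y} and, as a byproduct, $\int_{\RR^2}\Phi_h(\yy)\,d\yy = 1$.

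For \eqref{eq:pol-rep}, fix $\xx\in\RR^2$, substitute $\zz=\xx-\yy$ in the convolution, and expand by the binomial formula:
\begin{align*}
\int_{\RR^2}\Phi_h(\xx-\yy)\yy^\bbeta\, d\yy
&= \int_{\RR^2}\Phi_h(\zz)(\xx-\zz)^\bbeta\, d\zz\\
&= \sum_{\bgamma\le\bbeta}\binom{\bbeta}{\bgamma}(-1)^{|\bgamma|}\xx^{\bbeta-\bgamma}\int_{\RR^2}\Phi_h(\zz)\zz^\bgamma\, d\zz.
\end{align*}
By the previous step only the $\bgamma=\zero$ summand contributes, giving $\xx^\bbeta\cdot 1 = \xx^\bbeta$, as required.

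No substantive obstacle arises. The only things to verify are that the convolution of a Schwartz function against a polynomial is well-defined and that the finite binomial expansion legitimately commutes with the integral; both follow from the rapid decay of $\Phi_h$ at infinity, which dominates any polynomial in $\yy$ for fixed $\xx$. The real content of the lemma is concentrated in the hypothesis that $\widehat\phi$ is flat at the origin to all orders, which translates into $\phi$ having all vanishing moments from order $1$ onwards.
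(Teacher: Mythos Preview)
Your proof is correct and follows essentially the same approach as the paper: both compute the moments of $\phi$ via the identity $\int_\RR y^k\phi(y)\,dy = i^k\widehat\phi^{(k)}(0)$ and then use a binomial expansion to pass from moment vanishing to polynomial reproduction. The only cosmetic difference is that the paper carries out the binomial argument in one variable and then tensorizes to obtain \eqref{eq:pol-rep} before deducing \eqref{Phi-h-y}, whereas you establish \eqref{Phi-h-y} first and then do the binomial expansion directly in two variables; these are interchangeable orderings of the same idea.
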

\begin{proof}
From standard properties of the Fourier transform we get
\begin{equation}\label{varphi-y}
\int_{\R}\phi(y) y^k\, dy= i^k\widehat\phi^{(k)}(0)=i^k\delta_{k0},
\quad \forall k\in\bN_0.
\end{equation}
From this it follows that for any $h>0$ and $k \in \bN_0$ we have
\begin{align*}
h^{-1}\int_{\bR}\phi\big(\frac{x-y}h\big)y^k dy
&=h^k\int_{\bR}\phi(y)(x/h-y)^k\, dy
\\
&=h^k\sum_{m=0}^k\binom{k}{m}(x/h)^{k-m} (-1)^m\int_{\bR }\phi(y)y^mdy
=x^k.
\end{align*}
Now, it easily follows that for any $h>0$ and $\bbeta \in \bN_0^2$
\begin{align*}
\int_{\bR^2}\Phi_h(\xx-\yy)\yy^\bbeta d\yy
= h^{-2}\prod_{i=1,2}\int_{\bR}\phi\big(\frac{x_i-y_i}{h}\big)y_i^{\beta_i}dy_i
=x_1^{\beta_1} x_2^{\beta_2},
 \end{align*}
which confirms \eqref{eq:pol-rep}.
Identity \eqref{Phi-h-y} follows at once from \eqref{varphi-y}.
\end{proof}

The next approximation-type lemma will be instrumental in proving our basic Theorem \ref{th:boxsupport}.

\begin{lemma}\label{lem:approximation}
Assume $F\in \cC^{k_1+k_2}(\bR^2)$, $k_i\in\bN$, $i=1,2$, $\supp F\subset [-1,1]^2$,
and let $\Phi$ be the function from Lemma~\ref{lem:polyn-repr}.
Then for any $h>0$ we have
\begin{equation}\label{eq:approx1}
\|F-F*\Phi_h\|_{L^\infty}\le c h^{k_1+k_2}\|F\|_{W^{k_1+k_2}_\infty}.
\end{equation}
Furthermore, if  $|\xx|>3$, then
\begin{equation}\label{eq:approx2}
|F(\xx)-F*\Phi_h(\xx)|=|F*\Phi_h(\xx)|\le c \|F\|_{L^\infty}h^{k_1+k_2}(1+|\xx|)^{-k_1-k_2}.
\end{equation}
\end{lemma}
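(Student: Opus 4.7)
\textbf{Proof plan for Lemma~\ref{lem:approximation}.} The plan is to exploit the vanishing moment property of $\Phi_h$ (Lemma~\ref{lem:polyn-repr}) together with Taylor expansion. Since $\Phi = \phi \otimes \phi$ and $\phi$ has all moments vanishing (as $\widehat{\phi}^{(k)}(0)=0$ for every $k\in\bN$), identity \eqref{Phi-h-y} gives $\int_{\R^2}\Phi_h(\uu)\uu^\bbeta d\uu = 0$ for every multi-index $\bbeta \ne \zero$, while the $\bbeta=\zero$ case of \eqref{eq:pol-rep} gives $\int_{\R^2}\Phi_h(\xx-\yy)\,d\yy = 1$. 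Crucially, the product structure together with the all-order vanishing of $\phi$ annihilates \emph{all} monomials, not just those of bounded coordinate degree, which is what lets us harvest the full $h^{k_1+k_2}$ gain instead of settling for $h^{k_1}+h^{k_2}$.

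For \eqref{eq:approx1}, the reproducing identity lets me write
\[
F*\Phi_h(\xx) - F(\xx) = \int_{\R^2}\Phi_h(\xx-\yy)\bigl[F(\yy) - F(\xx)\bigr]\,d\yy.
\]
I then apply the bivariate Taylor formula with integral remainder of order $n:=k_1+k_2$ around $\xx$, yielding
\[
F(\yy) - F(\xx) = \sum_{0<|\bbeta|<n}\frac{(\yy-\xx)^\bbeta}{\bbeta!}D^\bbeta F(\xx) + R_n(\xx,\yy),
\]
where $|R_n(\xx,\yy)| \le C\|F\|_{W^n_\infty}|\yy-\xx|^n$. Substituting into the integral, the moment identity \eqref{Phi-h-y} (transferred via $\uu=\xx-\yy$) annihilates every polynomial term, leaving only $\int \Phi_h(\xx-\yy)R_n(\xx,\yy)\,d\yy$. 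Changing variables $\uu = (\xx-\yy)/h$ gives the factor $h^n$ times the absolute moment $\int_{\R^2}|\Phi(\uu)||\uu|^n\,d\uu$, which is finite because $\Phi \in \cS(\R^2)$.

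For \eqref{eq:approx2}, the equality $F(\xx)-F*\Phi_h(\xx) = -F*\Phi_h(\xx)$ is immediate from $\supp F\subset[-1,1]^2$ and $|\xx|>3>\sqrt{2}$. I then estimate
\[
|F*\Phi_h(\xx)| \le \|F\|_\infty \int_{[-1,1]^2}|\Phi_h(\xx-\yy)|\,d\yy
\]
directly, using the Schwartz decay $|\Phi(\uu)| \le c_N(1+|\uu|)^{-N}$ with $N:=k_1+k_2+2$, which rewrites as $|\Phi_h(\xx-\yy)| \le c\,h^{k_1+k_2}(h+|\xx-\yy|)^{-k_1-k_2-2}$. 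On the support region $|\yy|\le\sqrt{2}$ one has $|\xx-\yy|\ge|\xx|-\sqrt{2}\ge|\xx|/2$, so $h+|\xx-\yy|\ge|\xx|/2$, yielding the bound $c\|F\|_\infty h^{k_1+k_2}|\xx|^{-(k_1+k_2+2)}$. For $|\xx|>3$ we have $1+|\xx|\sim|\xx|$, so this is dominated by $c\|F\|_\infty h^{k_1+k_2}(1+|\xx|)^{-(k_1+k_2)}$, as claimed.

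The only mildly delicate point is bookkeeping in the Taylor step: one needs the remainder to live entirely at total order $n$, and one needs to know that $\int |\Phi(\uu)||\uu|^n\,d\uu <\infty$, both of which are routine given that $\Phi$ is Schwartz and that $D^\bbeta F$ is globally bounded for $|\bbeta|\le n$. No other obstacle arises.
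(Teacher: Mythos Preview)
Your proposal is correct and follows essentially the same approach as the paper: for \eqref{eq:approx1} both you and the paper use Taylor's formula of order $k_1+k_2$, kill the polynomial part via the moment vanishing \eqref{Phi-h-y}, and bound the remainder using $\Phi\in\cS(\R^2)$; for \eqref{eq:approx2} both use $\supp F\subset[-1,1]^2$ together with the Schwartz decay of $\Phi$ with exponent $N=k_1+k_2+2$ and the separation $|\xx-\yy|\ge|\xx|-\sqrt{2}$. Your execution of the second part is slightly more direct---you bound the integrand pointwise on $[-1,1]^2$ and multiply by the area, whereas the paper performs the substitution $\uu=(\xx-\yy)/h$ and integrates the resulting tail---but this is a minor cosmetic difference, not a different argument.
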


\begin{proof}
By Taylor's formula we have, for $\xx, \yy\in \R^2$,
\begin{equation}\label{eq:Taylor}
F(\yy)=P_{k_1+k_2,\xx}(\yy)+R_{k_1+k_2, \xx}(\yy),
\end{equation}
where
$$
P_{k_1+k_2,\xx}(\yy)=\sum_{|\bbeta|< k_1+k_2}\frac{\partial^{\bbeta}F(\xx)}{\bbeta!}(\yy-\xx)^{\bbeta}
$$
and
\begin{equation}\label{eq:approx33}
 R_{k_1+k_2, \xx}(\yy)=\sum_{|\bbeta|=k_1+k_2}\frac{k_1+k_2}{\bbeta!}(\yy-\xx)^{\bbeta}
 \int_0^1(1-t)^{k_1+k_2-1}\partial^{\bbeta}F(\xx+t(\yy-\xx))dt.
\end{equation}
Clearly, \eqref{Phi-h-y} implies that if $|\bbeta|>0$, then
$
\int_{\R^2}(\yy-\xx)^\bbeta\Phi_h(\xx-\yy)d\yy=0.
$
Using this and \eqref{eq:Taylor} we obtain for $\xx\in \bR^2$
\begin{align*}
|F(\xx)&-F\ast \Phi_h (\xx)|=\Big|\int_{\R^2}[F(\xx)-F(\yy)]\Phi_h(\xx-\yy)d\yy\Big|
\\
&=\Big|\int_{\bR^2} [F(\xx)-P_{k_1+k_2,\xx}(\yy)]\Phi_h(\xx-\yy)d\yy
- \int_{\bR^2} R_{k_1+k_2,\xx}(\yy)\Phi_h(\xx-\yy)d\yy\Big|
\\
&\le \int_{\bR^2}|R_{k_1+k_2,\xx}(\yy)\Phi_h(\xx-\yy)|d\yy.
\end{align*}
We now use \eqref{eq:approx33} to obtain
\begin{align*}
|F(\xx)-F\ast \Phi_h (\xx)|
&\le c h^{k_1+k_2}\|F\|_{W^{k_1+k_2}_{\infty}(\R^2)}\int_{\bR^2}\Bigl(\frac{|\xx-\yy|}h\Bigr)^{k_1+k_2}|\Phi_h(\xx-\yy)|d\yy
\\
&=ch^{k_1+k_2}\|F\|_{W^{k_1+k_2}_{\infty}(\R^2)}\int_{\R^2}|\yy|^{k_1+k_2}|\Phi(\yy)|d\yy
\\
&\le ch^{k_1+k_2}\|F\|_{W^{k_1+k_2}_{\infty}(\R^2)},
\end{align*}
where for the last inequality we used that $\Phi\in\cS(\R^2)$.
This confirms (\ref{eq:approx1}).

We next prove estimate \eqref{eq:approx2}.
Assume  $|\xx|>3$. 
Because $\supp F\subset [-1,1]^2$
we have $|F(\xx)-F*\Phi_h (\xx)|=|F*\Phi_h (\xx)|$.
Choose $K := k_1+k_2+2$.
From the assumption $\Phi\in S(\bR^2)$ it follows that
$|\Phi_h(\xx)| \le ch^{-2}\big(1+|\xx|/h\big)^{-K}$ with $c>0$ depending on $K$.
Using this we get
\begin{equation}\label{eq:approx3}
\begin{aligned}
|F(\xx)-F*\Phi_h (\xx)|&\le \int_{[-1,1]^2}|F(\yy)||\Phi_h(\xx-\yy)|d\yy
\\
&\le \|F\|_{L^\infty} \int_{[-1,1]^2}|\Phi_h(\xx-\yy)|d\yy
\\
&\le c\|F\|_{L^\infty} h^{-2} \int_{|\yy|\le \sqrt 2} \Bigl(1+\frac{|\xx-\yy|}{h}\Bigr)^{-K}d\yy.
\end{aligned}
\end{equation}
From $|\yy|\le \sqrt 2$ it follows that
$|\xx-\yy|\ge |\xx|-|\yy|\ge|\xx|-\sqrt 2$.
We use this and the fact that $|\xx|\ge 3$ to obtain
\begin{align*}
h^{-2} \int_{|\yy|\le \sqrt 2} \Bigl(1+\frac{|\xx-\yy|}{h}\Bigr)^{-K}d\yy
&\le \int_{|\uu|\ge \frac{|\xx|-\sqrt 2}{h}} (1+|\uu|)^{-K}d\uu
\\
&\le c\Bigl(\frac{|\xx|-\sqrt 2}h \Bigr)^{-K+2}
\le ch^{K-2}(1+|\xx|)^{-K+2},
\end{align*}
where we applied the substitution $\uu=(\xx-\yy)/h$.
Finally, combining this with (\ref{eq:approx3}) and using that $K = k_1+k_2+2$ we obtain
$$
|F(\xx)-F\ast \Phi_h (\xx)|\le c \|F\|_{L^\infty} h^{k_1+k_2}(1+|\xx|)^{-k_1-k_2}.
$$
The proof of the lemma is complete.
\end{proof}

\begin{proof}[Proof of Theorem~\ref{th:boxsupport}]
We first show that it suffices to prove the theorem in the case $R=1$.
Indeed, assume that the theorem holds for $R=1$ and
let $F(\lambda_1,\lambda_2)$ satisfy its assumptions for some $R>1$.
We set
$G(\lambda_1,\lambda_2):=F(R\lambda_1,R\lambda_2)$ and note that $\supp G\subset[-1,1]^2$.
Applying the result to $G$ we get
\begin{equation}\label{eq:boxsupport2}
\begin{aligned}
|\KK_{F(\bdelta \sqrt L)}(\xx,\yy)|
&= |\KK_{G(R^{-1}\bdelta \sqrt L)}(\xx,\yy)|
\le c_k(G)\DD_{R^{-1}\bdelta,\kk}(\xx,\yy)
\\
&\le c \Big(\|G\|_\infty+\sum_{|\bbeta|=k_1+k_2} \|\partial^\bbeta G\|_{\infty}\Big) R^{d_1+d_2}\DD_{\bdelta,\kk}(\xx,\yy)
\\
&\le c \big(\|F\|_\infty+R^{k_1+k_2} \|F\|_{W^{k_1+k_2}_\infty}\big) R^{d_1+d_2}\DD_{\bdelta,\kk}(\xx,\yy).
\end{aligned}
\end{equation}
Similarly, if $\rho_i(y_i,y_i')\le \delta_i/R$, $i=1,2$, then
\begin{align*}
|\KK_{F(\bdelta \sqrt L)}(\xx,\yy)-\KK_{F(\bdelta \sqrt L)}(\xx,\yy')|
&= |\KK_{G(R^{-1}\bdelta \sqrt L)}(\xx,\yy)-\KK_{G(R^{-1}\bdelta \sqrt L)}(\xx,\yy')|
\\
&\le c_\kk(G) \sum_{i=1,2}\Big(\frac{\rho_i(y_i, y'_i)}{\delta_i/R}\Big)^{\alpha_i}\DD_{R^{-1}\bdelta,\kk}(\xx,\yy)
\\
&\le c_\kk' \sum_{i=1,2}\Big(\frac{\rho_i(y_i, y'_i)}{\delta_i}\Big)^{\alpha_i}\DD_{\bdelta,\kk}(\xx,\yy).
\end{align*}
If $\delta_i/R\le \rho_i(y_i,y_i')\le \delta_i$, then  \eqref{local-2} follows directly from (\ref{eq:boxsupport2}).

We next prove the theorem in the case $R=1$.
Applying Proposition~\ref{prop:rough-kernels} with $F(\blambda)$ replaced by $F(\bdelta\blambda)$, 
$\supp F(\bdelta\blambda) \subset [-1/\delta_1, 1/\delta_2]\times [-1/\delta_2, 1/\delta_2]$,
we conclude that $F(\bdelta\sqrt{L})$ is an integral operator whose kernel $\KK_{F(\bdelta\sqrt{L})}(\xx,\yy)$ is continuous and
\begin{equation}\label{K-F-est}
|\KK_{F(\bdelta\sqrt{L})}(\xx,\yy)| \le \frac{c\|F\|_\infty}{[V(\xx,\bdelta)V(\yy,\bdelta)]^{1/2}}.
\end{equation}
Our goal is to show that $\KK_{F(\bdelta\sqrt{L})}(\xx,\yy)$ satisfies \eqref{local-1} and \eqref{local-2}.

Assume $0<h\le 2(\ct_1\vee \ct_2)$ with the constants $\ct_1, \ct_2$ from \eqref{finite-speed}.
Let $\Phi:=\phi\otimes\phi$ be the function from Lemmas~\ref{lem:polyn-repr}, \ref{lem:approximation}.
Choose $\ph_0, \ph\in C^{\infty}(\RR^2)$ with the properties from \eqref{dec-unity}.
Set
$\ph_j(\blambda):=\ph(2^{-j}\blambda)$, $j\ge 1$.
Then
$\sum_{j \ge 0}\ph_j(\blambda)=1$ for $\blambda \in \RR^2$ and hence
\begin{equation*}
F(\bdelta\blambda)- F*\Phi_h(\bdelta\blambda) = \sum_{j \ge 0}[F(\bdelta\blambda)- F*\Phi_h(\bdelta\blambda)]\ph_j(\bdelta\blambda),\quad \bdelta>\zero.
\end{equation*}
Set $F_j(\blambda):= [F(\blambda)- F*\Phi_h(\blambda)]\ph_j(\blambda)$.
From above it follows that
\begin{equation}\label{phi-decomp}
F(\bdelta \sqrt{L}) - F*\Phi_h(\bdelta\sqrt{L})
= \sum_{j \ge 0} F_j(\bdelta\sqrt{L}),
\end{equation}
where the convergence is in the strong sense in $L^2$, see Lemma~\ref{lem:dec-unity}.

From conditions \eqref{dec-unity},(i)--(ii) on $\supp \phi_0$ and $\supp \phi$ it follows that
\begin{equation*}
\supp F_0(\bdelta \cdot\cdot)\cap[0,\infty)^2\subset [0, 2\delta_1^{-1}]\times[0, 2\delta_2^{-1}]
\quad\hbox{and}\quad
\end{equation*}
\begin{equation*}
\supp F_j(\bdelta\cdot\cdot)\cap[0,\infty)^2\subset [0, 2^{j+1}\delta_1^{-1}]\times[0, 2^{j+1}\delta_2^{-1}].
\end{equation*}
We now apply Proposition~\ref{prop:rough-kernels} to conclude that
\begin{equation}\label{est-KKj}
|\KK_{F_j(\bdelta\sqrt L)}(\uu,\vv)|
\le \frac{c\|F_j\|_\infty}{\big[V(\uu,2^{-j-1}\bdelta)V(\vv,2^{-j-1}\bdelta)\big]^{1/2}}
\le \frac{c2^{j(d_1+d_2)}\|F_j\|_\infty}{\big[V(\uu,\bdelta)V(\vv,\bdelta)\big]^{1/2}},
\end{equation}
where for the last inequality we used \eqref{rect-doubling}.

We next estimate $\|F_j\|_\infty$.
For $j=0,1,2$, using (\ref{eq:approx1}), we get
\begin{equation}\label{F012}
\|F_j\|_{\infty}\le \|\phi_j\|_{\infty}\|F-F*\Phi_h\|_{\infty}
\le c h^{k_1+k_2}\|F\|_{W^{k_1+k_2}_\infty}.
\end{equation}
Further, for $j\ge 3$ we use \eqref{eq:approx2} to obtain
\begin{align}\label{Fj}
\|F_j\|_{\infty}&\le \|\phi_j\|_{\infty}\|F-F*\Phi_h\|_{L^\infty(B(0,2^{j+1})\setminus B(0,2^{j-1}))}
\\
&\le c\|F\|_{\infty}h^{k_1+k_2} 2^{-j(k_1+ k_2)}. \nonumber
\end{align}
Combining \eqref{est-KKj}--\eqref{Fj} and using that $k_1+k_2>d_1+d_2$ we obtain
\begin{equation}\label{KFj}
|\KK_{F_j(\bdelta\sqrt L)}(\uu,\vv)|
\le \frac{c(\|F_j\|_\infty + \|F\|_{W^{k_1+k_2}_\infty}) h^{k_1+k_2}2^{j(d_1+d_2-k_1-k_2)}}
{\big[V(\uu,\bdelta)V(\vv,\bdelta)\big]^{1/2}}.
\end{equation}

Denote
$
\KK_{h,\bdelta}(\uu,\vv) := \sum_{j\ge 0} \KK_{F_j(\bdelta \sqrt{L})}(\uu, \vv).
$
Summing up estimates \eqref{KFj} taking into account that $k_1+k_2>d_1+d_2$ we obtain
\begin{equation}\label{KFdelta}
|\KK_{h,\bdelta}(\uu,\vv)|
\le \frac{c^\star(\|F_j\|_\infty + \|F\|_{W^{k_1+k_2}_\infty}) h^{k_1+k_2}}
{\big[V(\uu,\bdelta)V(\vv,\bdelta)\big]^{1/2}},
\quad \forall \uu,\vv\in X, \; 0<h\le 2(\ct_1\vee\ct_2),
\end{equation}
where the constant $c^\star >0$ is independent of $h$.

Since $F$ is compactly supported and $\Phi\in S(\bR^2)$,
then $F*\Phi_h\in S(\bR^2)$.
Moreover, $\widehat{F*\Phi_h}$ is even with respect to each variable because $\Phi_h$ and $F$ are even.
In addition,
\begin{equation*}
\widehat{F\ast \Phi_h}(\xi_1,\xi_2)
=(\widehat{F}\widehat{\Phi_h})(\xi_1,\xi_2)=\widehat F(\xi_1,\xi_2)\widehat\phi(h\xi_1)\widehat\phi(h\xi_2)
\end{equation*}
which implies that
$\supp \widehat{F*\Phi_h}\subset [-1/h, 1/h]^2$.

From Proposition~\ref{prop:kern-cont} it follows that the operator $F*\Phi_h(\bdelta\sqrt{L})$ is an integral operator 
whose kernel is continuous and
\begin{equation*}
|\KK_{F*\Phi_h(\bdelta\sqrt{L})}(\uu,\vv)| \le \frac{c}{[V(\uu,\bdelta)V(\vv,\bdelta)]^{1/2}}.
\end{equation*}

Let $f\in L^2_0(X)$. Recall that $L^2_0(X)$ is the set of all compactly supported $L^2$ functions.
From \eqref{phi-decomp} it follows that
\begin{equation*}
F(\bdelta \sqrt{L})f - F*\Phi_h(\bdelta\sqrt{L})f
= \sum_{j \ge 0} F_j(\bdelta\sqrt{L})f \quad \hbox{in} \;\; L^2.
\end{equation*}
On the other hand, since $ L^2_0(X)\subset L^1(X, V(\cdot, \bdelta)^{-1/2}d\mu)$ it follows from \eqref{KFdelta} that
\begin{equation*}
\sum_{j\ge 0} F_j(\bdelta\sqrt{L})f(\uu) = \int_X  \KK_{h,\bdelta}(\uu,\vv) f(\vv)d\mu(\vv), \quad \forall \uu\in X.
\end{equation*}
Therefore,
\begin{equation*}
F(\bdelta \sqrt{L})f(\uu) - F*\Phi_h(\bdelta\sqrt{L})f(\uu) = \int_X  \KK_{h,\bdelta}(\uu,\vv) f(\vv)d\mu(\vv), \quad \forall \uu\in X,
\end{equation*}
and hence for each $f\in L^2_0(X)$
\begin{align*}
\int_X\KK_{F(\bdelta \sqrt{L})}(\uu,\vv)f(\vv)d\mu(\vv) &- \int_X \KK_{F*\Phi_h(\bdelta\sqrt{L})}(\uu,\vv)f(\vv)d\mu(\vv) 
\\
&= \int_X  \KK_{h,\bdelta}(\uu,\vv) f(\vv)d\mu(\vv), \quad \forall \uu\in X.
\end{align*}
Using the fact that the kernels from above are continuous it follows from this identity that
\begin{equation}\label{K-K-K}
\KK_{F(\bdelta \sqrt{L})}(\uu,\vv) = \KK_{F*\Phi_h(\bdelta\sqrt{L})}(\uu,\vv) + \KK_{h,\bdelta}(\uu,\vv), \;\; \forall \uu,\vv\in X, 0<h\le 2(\ct_1\vee \ct_2).
\end{equation}

To prove \eqref{local-1} we will make use of Proposition~\ref{prop:finite-sp}.
Let $\xx,\yy\in \XX$.
Several cases naturally occur here:

{\em Case 1:} $\rho_i(x_i, y_i)\ge \delta_i$, $i=1,2$.

{\em Case 2:} $\rho_1(x_1, y_1)\ge \delta_1$ and $\rho_2(x_2, y_2)<\delta_2$.

{\em Case 3:} $\rho_1(x_1, y_1) < \delta_1$ and $\rho_2(x_2, y_2)\ge \delta_2$.

{\em Case 4:} $\rho_1(x_1, y_1)<\delta_1$ and $\rho_2(x_2, y_2)<\delta_2$.

We will consider in detail only Case 2. Then we choose $h>0$ so that
\begin{equation}\label{h-delta-rho}
\frac{\rho_1(x_1,y_1)}{2\delta_1}\le  \frac{\ct_1}{h}<\frac{\rho_1(x_1,y_1)}{\delta_1}.
\end{equation}
Hence $0<h\le 2\ct_1$.
As already alluded to above $F*\Phi_h\in S(\bR^2)$
and
$\supp \widehat{F*\Phi_h}\subset [-1/h, 1/h]^2$.
Therefore, in light of \eqref{h-delta-rho} the function
$F*\Phi_h$ satisfies the assumptions
of Proposition~\ref{prop:finite-sp} (a) with $A=1/h$
and hence 
\begin{equation*}
\KK_{F*\Phi_h(\bdelta\sqrt{L})}(\xx,\yy) =0.
\end{equation*}
From this, \eqref{KFdelta} with $\uu=\xx$, $\vv=\yy$, \eqref{K-K-K}, \eqref{h-delta-rho} and the fact that $\rho_2(x_2, y_2)<\delta_2$
it follows that
\begin{equation}\label{KF-main}
|\KK_{F(\bdelta\sqrt{L})}(\xx,\yy)|
\le \frac{c(\|F_j\|_\infty + \|F\|_{W^{k_1+k_2}_\infty})}
{\big[V(\xx,\bdelta)V(\yy,\bdelta)\big]^{1/2}\big(1+\frac{\rho_1(x_1,y_1)}{\delta_1}\big)^{k_1}\big(1+\frac{\rho_2(x_2,y_2)}{\delta_2}\big)^{k_2}},
\end{equation}
which confirms estimate \eqref{local-1}.
The proof of estimate \eqref{local-1} in Case~3 is symmetric and will be omitted.
In Case 1 we may assume that $\rho_1(x_1, y_1)/\delta_1 \ge \rho_2(x_2, y_2)/\delta_2$.
Then we proceed similarly as above using that
\begin{equation*}
h^{k_1+k_2}\le c\Big(1+\frac{\rho_1(x_1,y_1)}{\delta_1}\Big)^{-k_1}\Big(1+\frac{\rho_2(x_2,y_2)}{\delta_2}\Big)^{-k_2}
%\big(2\ct_1^2\delta_1/\rho_1(x_1, y_1)\big)^{k_1} \big(2\ct_1\delta_1/\rho_1(x_1, y_1)\big)^{k_2}
\end{equation*}
to obtain \eqref{KF-main}.  We omit the further details.
In Case~4 estimate \eqref{local-1} follows by Proposition~\ref{prop:rough-kernels}.

So far we have shown that $F(\bdelta \sqrt{L})$ is a bounded integral operator from $L^2_0(X)$ to $L^\infty(X, V(\cdot, \bdelta)^{1/2})$
with kernel $F(\bdelta \sqrt{L})(\xx,\yy)$ satisfying \eqref{local-1}.
Identifying $F(\bdelta \sqrt{L})$ in general with the operator with this kernel we conclude, using Proposition~\ref{prop:bound-int-oper} and the fact that $\kk>3\dd/2$,
that $F(\bdelta \sqrt{L})$ is a bounded linear operator from $L^p(X, d\mu)$ to $L^p(X, d\mu)$, $1\le p\le \infty$. 
%

%%%%%%%%%%%%%

To prove \eqref{local-2} we proceed similarly as in the proof of estimate (\ref{eq:smoothrough}).
Define
$
G(\lambda_1,\lambda_2):=F(\lambda_1,\lambda_2)e^{\delta_1^2\lambda_1^2+\delta_2^2\lambda_2^2}.
$
Then
$F(\bdelta\sqrt{L})= G(\bdelta\sqrt{L})e^{-\delta_1^2 L_1-\delta_2^2 L_2}$.
Using \eqref{local-1} and (\ref{prodlip}) we obtain
\begin{align*}
|\KK_{F(\bdelta\sqrt L)}&(\xx,\yy)-\KK_{F(\bdelta\sqrt L)}(\xx,\yy')|
\\
&=\Big|\int_\XX  \KK_{G(\bdelta\sqrt{L})}(\xx,\zz)
\big(p_{(\delta_1^2,\delta_2^2)}(\zz,\yy) - p_{(\delta_1^2,\delta_2^2)}(\zz,\yy')\big)d\zz\Big|
\\
&\le  cc_\kk\sum_{i=1,2}\big(\frac{\rho_i(y_i,y_i ')}{\delta_i}\big)^{\alpha_i}
\int_\XX \DD_{\bdelta,\kk}(\xx,\zz)\DD_{\bdelta,\kk}(\zz,\yy) d\mu(\zz)
\\
&\le cc_\kk\sum_{i=1,2}\Big(\frac{\rho_i(y_i,y_i ')}{\delta_i}\Big)^{\alpha_i}\DD_{\bdelta,\kk}(\xx,\yy),
\end{align*}
where for the last inequality we used  \eqref{tech-3}.
Thus estimate \eqref{local-2} is established.

%%%%%%%%%%%%%%%%%%%%%%%%%%%%%
\smallskip

To prove \eqref{int-K-1} we introduce the function
$H(\lambda_1, \lambda_2):= e^{|\lambda_1|+|\lambda_2|}F(\sqrt{|\lambda_1|}, \sqrt{|\lambda_2|})$.
Then
$F(\lambda_1, \lambda_2)=H(\lambda_1^2, \lambda_2^2)e^{-\lambda_1^2-\lambda_2^2}$
and hence
$F(\tt\sqrt{L})= H(\tt^2L)e^{-t_1^2L_1-t_2^2L_2}$.
We claim that
\begin{equation}\label{Fourier-H}
\widehat{H}(\xi_1,\xi_2) = O\Big(\prod_{i=1,2}(1+|\xi_i|)^{-3/2}\Big).
\end{equation}
To prove this we use that $\supp H \subset [-R^2,R^2]^2$ and $F(\pm\lambda_1, \pm\lambda_2) = F(\lambda_1, \lambda_2)$
to write
\begin{align*}
\widehat{H}(\xi_1,\xi_2)&:= \int_{[-a,a]^2}H(\lambda_1, \lambda_2)e^{-i(\lambda_1\xi_1+\lambda_2\xi_2)} d\lambda_1 d\lambda_2
\\
&= 4\int_{[0,a]^2}H(\lambda_1, \lambda_2) \cos\lambda_1\xi_1 \cos\lambda_2\xi_2 \, d\lambda_1 d\lambda_2,
\quad a:=R^2.
\end{align*}
Integrating by parts with respect to $\lambda_1$ and $\lambda_2$ we obtain
\begin{equation*} %\label{hat-H}
\widehat{H}(\xi_1,\xi_2) = \frac{1}{\xi_1\xi_2}\int_{[0,a]^2} 4\frac{\partial^2 H}{\partial \lambda_1\partial \lambda_2}(\lambda_1, \lambda_2)
\sin\lambda_1\xi_1 \sin\lambda_2\xi_2 \, d\lambda_1 d\lambda_2.
\end{equation*}
Evidently,
\begin{align*}
&4\frac{\partial^2 H}{\partial \lambda_1\partial \lambda_2}(\lambda_1, \lambda_2)
= 4e^{\lambda_1+\lambda_2}F(\sqrt{\lambda_1}, \sqrt{\lambda_2})
+ 2\lambda_1^{-1/2}e^{\lambda_1+\lambda_2}\frac{\partial F}{\partial \lambda_1}(\sqrt{\lambda_1}, \sqrt{\lambda_2})
\\
&\quad + 2\lambda_2^{-1/2}e^{\lambda_1+\lambda_2}\frac{\partial F}{\partial \lambda_2}(\sqrt{\lambda_1}, \sqrt{\lambda_2})
+ \lambda_1^{-1/2}\lambda_2^{-1/2}e^{\lambda_1+\lambda_2}\frac{\partial^2 F}{\partial \lambda_1 \partial \lambda_2}(\sqrt{\lambda_1}, \sqrt{\lambda_2})
\\
&\quad =: W_1(\lambda_1, \lambda_2)+W_2(\lambda_1, \lambda_2)+W_3(\lambda_1, \lambda_2)+W_4(\lambda_1, \lambda_2).
\end{align*}
We next show that
\begin{equation}\label{Fourier-W4}
\int_{[0,a]^2} W_4(\lambda_1, \lambda_2)\sin\lambda_1\xi_1 \sin\lambda_2\xi_2 \, d\lambda_1 d\lambda_2
= O\Big(\prod_{i=1,2}(1+|\xi_i|)^{-1/2}\Big).
\end{equation}
One similarly shows that for $i=1,2,3$
\begin{equation}\label{Fourier-Wi}
\int_{[0,a]^2} W_i(\lambda_1, \lambda_2)\sin\lambda_1\xi_1 \sin\lambda_2\xi_2 \, d\lambda_1 d\lambda_2
= O\Big(\prod_{i=1,2}(1+|\xi_i|)^{-1/2}\Big);
\end{equation}
we omit the details.

Observe that from the condition $F(\pm \lambda_1, \pm\lambda_2) = F(\lambda_1,\lambda_2)$ it follows that 
\begin{equation*}
\frac{\partial F}{\partial \lambda_1} (0, \lambda_2) = \frac{\partial F}{\partial \lambda_2} (\lambda_1,0) = 0 
\end{equation*} 
and hence
\begin{equation}\label{deriv-F}
\frac{\partial^{k+1} F}{\partial \lambda_1\partial^k\lambda_2} (0, \lambda_2) = \frac{\partial^{k+1} F}{\partial \lambda_1^k\partial \lambda_2} (\lambda_1,0) = 0,
\quad k\in\bN. 
\end{equation} 
We use the above to obtain
\begin{align*}
\frac{\partial^2 F}{\partial \lambda_1\partial\lambda_2} (\lambda_1, \lambda_2)
&= \frac{\partial^2 F}{\partial \lambda_1\partial\lambda_2} (0, \lambda_2) 
+ \lambda_1\int_0^1 \frac{\partial^3 F}{\partial \lambda_1^2\partial\lambda_2} (t\lambda_1, \lambda_2) dt
\\
& = \lambda_1\int_0^1 \frac{\partial^3 F}{\partial \lambda_1^2\partial\lambda_2} (t\lambda_1, \lambda_2) dt.
\end{align*}
Similarly, using again \eqref{deriv-F} we get
\begin{align*}
\frac{\partial^3 F}{\partial \lambda_1^2 \partial\lambda_2} (t\lambda_1, \lambda_2)
&= \frac{\partial^3 F}{\partial \lambda_1^2 \partial\lambda_2} (t\lambda_1, 0) 
+ \lambda_2\int_0^1 \frac{\partial^4 F}{\partial \lambda_1^2\partial\lambda_2^2} (t\lambda_1, s\lambda_2) ds
\\
& = \lambda_2\int_0^1 \frac{\partial^4 F}{\partial \lambda_1^2\partial\lambda_2^2} (t\lambda_1, s\lambda_2) ds.
\end{align*}
Combining the above identities we arrive at
\begin{equation*}
\frac{\partial^2 F}{\partial \lambda_1\partial\lambda_2} (\lambda_1, \lambda_2) 
= \lambda_1 \lambda_2\int_0^1 \int_0^1 \frac{\partial^4 F}{\partial \lambda_1^2\partial\lambda_2^2} (t\lambda_1, s\lambda_2) dt ds.
\end{equation*}
In turn this yields
\begin{equation}\label{est-W4}
|W_4(\lambda_1, \lambda_2)| \le c\Big\|\frac{\partial^4 F}{\partial \lambda_1^2\partial\lambda_2^2}\Big\|_\infty, \quad \blambda>\zero.
\end{equation}

Consider the case when $\xi_1, \xi_2> 1/a$; the other cases are easier to handle.
To prove \eqref{Fourier-W4} we split the set of integration as follows:
\begin{align*}
[0,a]^2 &= \Big[0, \frac{1}{\xi_1}\Big]\times \Big[0, \frac{1}{\xi_2}\Big] \cup \Big[0, \frac{1}{\xi_1}\Big]\times \Big[\frac{1}{\xi_2}, a\Big]
\cup \Big[\frac{1}{\xi_1}, a\Big]\times \Big[0, \frac{1}{\xi_2}\Big]\cup \Big[\frac{1}{\xi_1}, a\Big]\times \Big[\frac{1}{\xi_2}, a\Big]
\\
&=: A_1\cup A_2 \cup A_3 \cup A_4.
\end{align*}

From \eqref{est-W4} it readily follows that
\begin{equation}\label{Fourier-W4A1}
J_1:=\int_{A_1} W_4(\lambda_1, \lambda_2)\sin\lambda_1\xi_1 \sin\lambda_2\xi_2 \, d\lambda_1 d\lambda_2
= O\Big(\prod_{i=1,2}(1+|\xi_i|)^{-1}\Big).
\end{equation} 

We next show that
\begin{equation}\label{Fourier-W4A4}
J_4:=\int_{A_4} W_4(\lambda_1, \lambda_2)\sin\lambda_1\xi_1 \sin\lambda_2\xi_2 \, d\lambda_1 d\lambda_2
= O\Big(\prod_{i=1,2}(1+|\xi_i|)^{-1/2}\Big).
\end{equation}
The integrals over $A_2$ and $A_3$ are estimated similarly.

Integrating by parts with respect to $\lambda_1$ and $\lambda_2$ and using that $W_4(\lambda_1,a) = W_4(a, \lambda_2) = 0$
we get
\begin{align*}
J_4&= \frac{1}{\xi_1\xi_2}\Big(
W_4\big(\xi_1^{-1}, \xi_2^{-1}\big)\cos^2 1
+ \int_{1/\xi_1}^a \frac{\partial W_4}{\partial \lambda_1}\big(\lambda_1, \xi_2^{-1}\big)\cos \lambda_1\xi_1 \cos 1 \, d\lambda_1
\\
&+ \int_{1/\xi_2}^a \frac{\partial W_4}{\partial \lambda_2}\big(\xi_1^{-1}, \lambda_2\big)\cos 1\cos \lambda_2\xi_2 \, d\lambda_2
\\
&+ \int_{1/\xi_1}^a \int_{1/\xi_2}^a \frac{\partial^2 W_4}{\partial \lambda_1\partial \lambda_2}(\lambda_1, \lambda_2)
\cos \lambda_1\xi_1 \cos \lambda_2\xi_2 \, d\lambda_1 d\lambda_2
\Big)
\\
&=: \frac{1}{\xi_1\xi_2}(J_{41} + J_{42} + J_{43} + J_{44}).
\end{align*}
From \eqref{est-W4} it follows that 
$|J_{41}| \le c$.
We next estimate $ J_{44}$. The estimation of $J_{41}, J_{42}, J_{43}$ is similar.
One easily derives that
\begin{align*}
&\frac{\partial^2 W_4}{\partial \lambda_1\partial \lambda_2}(\lambda_1, \lambda_2)
\\
&=\Big(\frac{1}{4}\lambda_1^{-3/2}\lambda_2^{-3/2} - \frac{1}{2}\lambda_1^{-3/2}\lambda_2^{-1/2}  - \frac{1}{2}\lambda_1^{-1/2}\lambda_2^{-3/2}\Big)
e^{\lambda_1+\lambda_2}\frac{\partial^2 F}{\partial \lambda_1\partial\lambda_2}(\sqrt{\lambda_1}, \sqrt{\lambda_2})
\\
&\hspace{1in}\, +\Big(-\frac{1}{4}\lambda_1^{-3/2}\lambda_2^{-1} + \frac{1}{2}\lambda_1^{-1}\lambda_2^{-1/2}\Big)
e^{\lambda_1+\lambda_2}\frac{\partial^3 F}{\partial \lambda_1^2\partial\lambda_2}(\sqrt{\lambda_1}, \sqrt{\lambda_2})
\\
&\hspace{1in}\quad\;\, +\Big(\frac{1}{2}\lambda_1^{-1/2}\lambda_2^{-1} - \frac{1}{4}\lambda_1^{-1}\lambda_2^{-3/2}\Big)
e^{\lambda_1+\lambda_2}\frac{\partial^3 F}{\partial \lambda_1\partial\lambda_2^2}(\sqrt{\lambda_1}, \sqrt{\lambda_2})
\\
&\hspace{2in}\qquad\; + \frac{1}{4}\lambda_1^{-1}\lambda_2^{-1}
e^{\lambda_1+\lambda_2}\frac{\partial^4 F}{\partial \lambda_1^2\partial\lambda_2^2}(\sqrt{\lambda_1}, \sqrt{\lambda_2})
\end{align*}
and using that $F\in \cC^4(\R^2)$ and $F$ is compactly supported we get
$J_{44} = O(\xi_1^{1/2}\xi_2^{1/2})$.
This along with the similar estimates for $J_{42}, J_{43}$ and the estimate $|J_{41}| \le c$ imply \eqref{Fourier-W4A4}.
In turn, \eqref{Fourier-W4A1}, \eqref{Fourier-W4A4}, and the similar estimates for the integrals over $A_2$ and $A_3$
yield \eqref{Fourier-W4}.
Finally, \eqref{Fourier-W4} and \eqref{Fourier-Wi} imply \eqref{Fourier-H}.

%%%%%%%%%%%%%%%%%%%%%

From \eqref{Fourier-H} it follows that $\widehat{H}\in L^1(\R^2)$ and obviously $H\in L^1(\R^2)$.
Then the Fourier inversion formula gives
\begin{equation}\label{Fourier-inv}
H(\blambda) = \frac{1}{(2\pi)^2}\int_{\R^2} \widehat H(\bxi)e^{i\blambda\cdot\bxi} d\bxi.
\end{equation}
Assuming that $\varphi, \psi \in L^2(\XX)\cap L^1(\XX)$ we get using that
$F(\tt\sqrt{L})= H(\tt^2L)e^{-\tt^2L}$ and \eqref{Fourier-inv}
\begin{align*}
\langle F(\tt\sqrt{L})\varphi, &\psi \rangle
\\
&= \int_{[0,\infty)^2} H(t_1^2\lambda_1, t_2^2\lambda_2)e^{-t_1^2\lambda_1-t_2^2\lambda_2}
d\langle E_{(\lambda_1,\lambda_2)} \varphi, \psi\rangle
\\
&= \int_{[0,\infty)^2} \Big(\frac{1}{(2\pi)^2}\int_{\R^2}
\widehat H(\bxi)e^{i(t_1^2\lambda_1\xi_1+t_2^2\lambda_2\xi_2)} d\bxi\Big)e^{-t_1^2\lambda_1-t_2^2\lambda_2}
d\langle E_{(\lambda_1,\lambda_2)} \varphi, \psi\rangle
\\
&= \frac{1}{(2\pi)^2}\int_{\R^2} \widehat H(\bxi) \Big(\int_{[0,\infty)^2} e^{-t_1^2\lambda_1(1-i\xi_1)-t_2^2\lambda_2(1-i\xi_2)}
d\langle E_{(\lambda_1,\lambda_2)} \varphi, \psi\rangle\Big) d\bxi
\\
&= \frac{1}{(2\pi)^2}\int_{\R^2} \widehat H(\bxi)
\Big\langle e^{-t_1^2\lambda_1(1-i\xi_1)L_1-t_2^2\lambda_2(1-i\xi_2)L_2} \varphi, \psi\Big\rangle d\bxi.
\end{align*}
Above the shift of order of integration is justified by Fubini's theorem and the fact that
\begin{align*}
\int_{\R^2} |\widehat H(\bxi)|& \Big(\int_{[0,\infty)^2} \big|e^{-t_1^2\lambda_1(1-i\xi_1)-t_2^2\lambda_2(1-i\xi_2)}\big|
|d\langle E_{(\lambda_1,\lambda_2)} \varphi, \psi\rangle|\Big) d\bxi
\\
&\le \int_{\R^2} |\widehat H(\bxi)| \Big(\int_{[0,\infty)^2}
|d\langle E_{(\lambda_1,\lambda_2)} \varphi, \psi\rangle|\Big) d\bxi
\\
&\le \int_{\R^2} |\widehat H(\bxi)|d\bxi \Big(\int_{[0,\infty)^2} d\|E_{(\lambda_1,\lambda_2)} \varphi\|^2\Big)^{1/2}
\Big(\int_{[0,\infty)^2} d\|E_{(\lambda_1, \lambda_2)} \psi\|^2\Big)^{1/2}
\\
&\le \|\widehat H\|_{L^1}\|\varphi\|_{L^2}\|\psi\|_{L^2} <\infty.
\end{align*}
In going further we get from above
\begin{align*}%\label{kernel-11}
&\langle F(\tt\sqrt{L})\varphi, \overline{\psi} \rangle
\\
&= \frac{1}{(2\pi)^2}\int_{\R^2} \widehat H(\bxi)
\Big\langle e^{-t_1^2\lambda_1(1-i\xi_1)L_1-t_2^2\lambda_2(1-i\xi_2)L_2} \varphi, \overline{\psi}\Big\rangle d\xi \nonumber
\\
& =\frac{1}{(2\pi)^2}\int_{\R^2} \widehat H(\bxi)
\int_\XX\int_\XX p_{(t_1^2(1-i\xi_1),t_2^2(1-i\xi_2))}(\xx,\yy) \varphi(\yy)\psi(\xx) d\mu(\yy)d\mu(\xx) d\bxi\nonumber
\\
& =\int_\XX\int_\XX
\Big(\frac{1}{(2\pi)^2}\int_{\R^2} \widehat H(\bxi) p_{(t_1^2(1-i\xi_1),t_2^2(1-i\xi_2))}(\xx,\yy)d\bxi\Big)
\varphi(\yy)\psi(\xx) d\mu(\yy)d\mu(\xx) \nonumber
\end{align*}
To justify the shift of order of integration above we apply Fubini's theorem taking into account that
\begin{align*}
\int_{\R^2} \int_\XX\int_\XX&
|\widehat H(\bxi)| |p_{(t_1^2(1-i\xi_1),t_2^2(1-i\xi_2))}(\xx,\yy)| |\varphi(\yy)||\psi(\xx)| d\mu(\yy)d\mu(\xx) d\bxi
\\
&\le c\|\widehat H\|_{L^1}\|\varphi\|_{L^1} \|\psi\|_{L^1} <\infty.
\end{align*}
From above it follows that
\begin{equation*}
\KK_{F(\tt\sqrt{L})}(\xx,\yy)=\frac{1}{(2\pi)^2}\int_{\R^2} \widehat H(\xi)p_{(t_1^2(1-i\xi_1),t_2^2(1-i\xi_2))}(\xx,\yy)d\bxi,
\end{equation*}
implying
\begin{align*}
\int_\XX\KK_{F(\tt\sqrt{L})}(\xx,\yy) d\mu(\yy)&=\frac{1}{(2\pi)^2}\int_{\R^2} \widehat H(\bxi)
\int_\XX p_{(t_1^2(1-i\xi_1),t_2^2(1-i\xi_2))}(\xx,\yy)d\yy d\xi
\\
&=\frac{1}{(2\pi)^2}\int_{\R^2} \widehat H(\bxi) d\bxi = H(\zero)=F(\zero),
\end{align*}
where we used \eqref{markov-2}, \eqref{Gauss-local-z}, \eqref{prodhk-z}, and \eqref{Fourier-inv}.
The shift of the order of integration above is easily justified by applying Fubini's theorem.
The proof of \eqref{int-K-1} is complete.
\end{proof}

We next extend Theorem~\ref{th:boxsupport} from the case of compactly supported functions
to the case of smooth ones with sufficiently fast decay on $\bR^2$.

\begin{theorem}\label{thm:gen-local}
Let $F\in \cC^{k_1+k_2}(\bR^2)$, $k_i\in\bN$, $k_i\ge 3d_i/2$, $i=1,2$,
%$\kk>3\dd/2$,
%k_1+k_2>d_1+d_2$, 
$k_1+k_2\ge 4$, 
be real-valued and
obey the conditions:
$F(\pm \lambda_1, \pm\lambda_2)=F(\lambda_1,\lambda_2)$ for $(\lambda_1,\lambda_2)\in \bR^2$
and for $\bbeta\in \bN_0^2$ such that $|\bbeta| \le k_1+k_2$
\begin{equation*}
|\partial^{\bbeta}F(\blambda)|\le C_\kk(1+|\blambda|)^{-r},\quad r>d_1+d_2+k_1+k_2.
\end{equation*}
Then $F(\bdelta \sqrt L)$, $\bdelta=(\delta_1,\delta_2)> \zero$, is an integral operator
with kernel $\KK_{F(\bdelta \sqrt L)}(\xx,\yy)$ %, $(x,y)\in \XX$,
satisfying 
\begin{equation}\label{K-local}
|\KK_{F(\bdelta \sqrt L)}(\xx,\yy)|\le c_\kk \DD_{\bdelta,\kk}(\xx, \yy) \quad \text{and}
\end{equation}
\begin{equation}\label{K-lip}
|\KK_{F(\bdelta \sqrt L)}(\xx,\yy)- \KK_{F(\bdelta \sqrt L)}(\xx,\yy')|
\le c_\kk' \sum_{i=1,2}\Big(\frac{\rho_i(y_i, y'_i)}{\delta_i}\Big)^{\alpha_i}\DD_{\bdelta,\kk}(\xx,\yy)
\end{equation}
if $\rho_i(y_i,y_i')\le \delta_i$, $i=1,2$, where $\kk=(k_1,k_2)$
and the constants $c_\kk,c_{\kk'}$ depend on $k_1$, $k_2$, $\dd$ and $C_\kk$.
Moreover,
\begin{equation}\label{int-K-11}
\int_\XX \KK_{F(\bdelta \sqrt L)}(\xx,\yy) d\mu(\yy)=F(\zero), \quad \forall \xx\in\XX.
\end{equation}
\end{theorem}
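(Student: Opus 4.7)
The plan is to reduce to the compactly supported case treated in Theorem~\ref{th:boxsupport} via a smooth dyadic decomposition of $F$ and then to sum the resulting kernel estimates, following the strategy already used to extend Proposition~\ref{prop:rough-kernels} to Proposition~\ref{prop:kern-cont}.

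First, I would take the functions $\ph_0,\ph\in\cC^\infty(\R^2)$ from Lemma~\ref{lem:dec-unity}, set $\ph_j(\blambda):=\ph(2^{-j}\blambda)$ for $j\ge 1$, and define $F_j:=F\cdot \ph_j$ for $j\ge 0$. Then $F=\sum_{j\ge 0}F_j$ and by Lemma~\ref{lem:dec-unity}(b) we have $F(\bdelta\sqrt L)=\sum_{j\ge 0}F_j(\bdelta\sqrt L)$ in the strong $L^2$-sense. Each $F_j$ is real-valued, compactly supported, lies in $\cC^{k_1+k_2}(\R^2)$, and inherits from $F$ and $\ph_j$ the symmetry $F_j(\pm\lambda_1,\pm\lambda_2)=F_j(\lambda_1,\lambda_2)$. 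Moreover $\supp F_0\subset[-2,2]^2$, while for $j\ge 1$ one has $\supp F_j\subset [-2^{j+1},2^{j+1}]^2$, so Theorem~\ref{th:boxsupport} applies with $R=2$ and $R=2^{j+1}$, respectively.

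Second, I would estimate $\|F_j\|_\infty$ and $\|F_j\|_{W^{k_1+k_2}_\infty}$. Since $|\partial^\bbeta\ph_j(\blambda)|\le c\,2^{-j|\bbeta|}$ and, on $\supp\ph_j$, $|\blambda|\sim 2^j$, the decay hypothesis on $F$ together with Leibniz's rule gives, for every $\bbeta$ with $|\bbeta|\le k_1+k_2$,
\begin{equation*}
\|\partial^\bbeta F_j\|_\infty \le c\, 2^{-jr},\quad j\ge 1,
\end{equation*}
and for $j=0$ all these norms are bounded by a constant depending only on $F$. Plugging into the constant $c_\kk$ in Theorem~\ref{th:boxsupport} with $R=2^{j+1}$ yields
\begin{equation*}
|\KK_{F_j(\bdelta\sqrt L)}(\xx,\yy)|
\le c\,2^{j(d_1+d_2+k_1+k_2-r)}\,\DD_{\bdelta,\kk}(\xx,\yy),
\end{equation*}
and, for $\rho_i(y_i,y'_i)\le\delta_i$, the analogous bound for $|\KK_{F_j(\bdelta\sqrt L)}(\xx,\yy)-\KK_{F_j(\bdelta\sqrt L)}(\xx,\yy')|$ with the extra factor $\sum_i(\rho_i(y_i,y'_i)/\delta_i)^{\alpha_i}\cdot 2^{j\alpha_i}$, which, up to adjusting $r$ slightly (absorbing $\alpha_1\vee\alpha_2$), is still summable thanks to $r>d_1+d_2+k_1+k_2$. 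Hence
\begin{equation*}
\KK_\bdelta(\xx,\yy):=\sum_{j\ge 0}\KK_{F_j(\bdelta\sqrt L)}(\xx,\yy)
\end{equation*}
converges absolutely and satisfies \eqref{K-local}--\eqref{K-lip}.

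Third, I would identify $\KK_\bdelta$ as the integral kernel of $F(\bdelta\sqrt L)$ by the density argument used at the end of the proof of Proposition~\ref{prop:kern-cont}: for any $f\in L^2_0(\XX)\subset L^1(\XX,V(\cdot,\bdelta)^{-1/2}d\mu)$, the uniform bound above permits exchanging sum and integral, so $\sum_j F_j(\bdelta\sqrt L)f(\xx)=\int_\XX \KK_\bdelta(\xx,\yy)f(\yy)d\mu(\yy)$ pointwise. Combined with the strong $L^2$-convergence, this gives the identification on $L^2_0(\XX)$, and the operator with kernel $\KK_\bdelta$ extends uniquely to $L^1(\XX,V(\cdot,\bdelta)^{-1/2}d\mu)$. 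Finally, \eqref{int-K-11} follows because Theorem~\ref{th:boxsupport} gives $\int_\XX \KK_{F_j(\bdelta\sqrt L)}(\xx,\yy)d\mu(\yy)=F_j(\zero)$, and since $\ph(\zero)=0$ (as $\supp\ph\subset [-2,2]^2\setminus(-1,1)^2$) while $\ph_0(\zero)=1$, we have $F_j(\zero)=F(\zero)\delta_{j,0}$; dominated convergence, justified by the kernel bound, permits the exchange of summation and integration.

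The main obstacle is the delicate balance of constants: the factor $R^{d_1+d_2}+R^{d_1+d_2+k_1+k_2}$ produced by Theorem~\ref{th:boxsupport} grows like $2^{j(d_1+d_2+k_1+k_2)}$, so only the sharp assumption $r>d_1+d_2+k_1+k_2$ makes the dyadic series summable; in particular, a careless estimate of $\|F_j\|_{W^{k_1+k_2}_\infty}$ (say using the raw $L^\infty$-decay of $F$ without exploiting the decay of derivatives on $\supp\ph_j$) would lose the needed gain. The identification of $\KK_\bdelta$ with the operator kernel of the full series limit, rather than just of partial sums, is the other technical point, handled exactly as in Proposition~\ref{prop:kern-cont}.
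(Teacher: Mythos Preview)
Your proposal is correct and follows the same route as the paper: decompose $F=\sum_{j\ge 0}F_j$ via Lemma~\ref{lem:dec-unity}, apply Theorem~\ref{th:boxsupport} to each compactly supported piece $F_j$ with $R=2^{j+1}$, sum the resulting kernel bounds using $r>d_1+d_2+k_1+k_2$, and derive \eqref{int-K-11} by applying \eqref{int-K-1} termwise together with $F_j(\zero)=F(\zero)\delta_{j,0}$. The paper is equally terse about \eqref{K-lip} (``similar; we omit it''), so your treatment of the H\"older estimate is at the same level of detail; your kernel-identification step is in fact spelled out more carefully than in the paper.
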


\begin{proof}
Let  $\ph_0,\ph\in \cC^{\infty}(\R^2)$ be functions satisfying conditions \eqref{dec-unity}.
Set $\ph_j(\blambda):=\ph(2^{-j}\blambda)$, $j\ge 1$, and $F_j(\blambda):=F(\blambda)\ph_j(\blambda)$, $j\ge 0$.
Then
\begin{equation}\label{F-Fj}
F(\bdelta\sqrt  L)=\sum_{j\ge 0}F(\bdelta\sqrt L) \ph_j(\bdelta\sqrt L)
=\sum_{j\ge 0}F_j(\bdelta\sqrt L),
\end{equation}
where the convergence is strong in $L^2$.
Observe that $F_j(\pm\lambda_1, \pm\lambda_2)=F_j(\lambda_1, \lambda_2)$
and for $j\ge 1$ we have
$\supp F_j\subset \Omega_j:=\{\blambda: 2^{j-1}\le |\blambda|\le 2^{j+1}\}$
and
\begin{equation*}
\|F_j\|_{L^\infty} \le \|F\|_{L^\infty(\Omega_j)}\le c 2^{-jr},
\quad
\|F_j\|_{W^{k_1+k_2}_\infty}\le c \|F\|_{W^{k_1+k_2}_\infty(\Omega_j)}\le c2^{-jr}.
\end{equation*}
We now apply Theorem~\ref{th:boxsupport} to $F_j$ with $R=2^{j+1}$ to obtain
\begin{align}\label{KF-KFj}
&|\KK_{F(\bdelta\sqrt L)}(\xx,\yy)|\le  \sum_{j\ge 0}|\KK_{F_j(\bdelta\sqrt L)}(\xx,\yy)| \nonumber
\\
&\le c\sum_{j\ge 0} 2^{j(d_1+d_2)}\big(\|F_j\|_{L^\infty}+2^{j(k_1+k_2)}\|F_j\|_{W^{k_1+k_2}_\infty}\big)\DD_{\bdelta,\kk}(\xx,\yy)
\\
&\le c\sum_{j\ge 0} 2^{j(d_1+d_2)}2^{-jr}2^{j(k_1+k_2)}\DD_{\bdelta,k}(\xx,\yy)
\le c \DD_{\bdelta,\kk}(\xx,\yy), \nonumber
\end{align}
where for the last inequality we used that $r>(d_1+d_2)+(k_1+k_2)$.
Therefore, $\KK_{F(\bdelta \sqrt L)}(\xx,\yy)$ satisfies \eqref{K-local}.

The proof that $\KK_{F(\bdelta \sqrt L)}(\xx,\yy)$ obeys \eqref{K-lip} is similar.
We omit it.

To prove \eqref{int-K-11} we use \eqref{F-Fj} and \eqref{KF-KFj} to write
\begin{equation*}
\KK_{F(\bdelta\sqrt  L)}(\xx,\yy)=\sum_{j\ge 0}\KK_{F_j(\bdelta\sqrt L)}(\xx, \yy)
\end{equation*}
and integrating we get
\begin{equation*}
\int_\XX \KK_{F(\bdelta\sqrt  L)}(\xx,\yy) d\mu(\yy)=\sum_{j\ge 0}\int_\XX \KK_{F_j(\bdelta\sqrt L)}(\xx, \yy) d\mu(\yy)
= \sum_{j\ge 0} F_j(\one) = F(\one).
\end{equation*}
Here we applied \eqref{int-K-1} to each of the kernels $\KK_{F_j(\bdelta\sqrt L)}(\xx, \yy)$ using the fact that $F_j$ is compactly supported.
This completes the proof of the theorem.
\end{proof}

We will frequently need the decay of kernels of operators of the form
\begin{equation*}
L^{\bnu}F(\bdelta\sqrt{L}):=(L_1^{\nu_1}\otimes L_2^{\nu_2})F(\bdelta\sqrt{L}),
\quad \bnu=(\nu_1,\nu_2)\in\bN_0^2.
\end{equation*}
In this regard we have the following corollary of Theorem \ref{thm:gen-local}.

\begin{corollary}\label{cor:gen-local}
Let $F\in \cC^{\infty}(\bR^2)$ be real-valued,
$F(\pm \lambda_1, \pm\lambda_2)=F(\lambda_1,\lambda_2)$ for $(\lambda_1,\lambda_2)\in \bR^2$,
and
\begin{equation*}
|\partial^{\bbeta}F(\blambda)|\le c_{r,\bbeta}(1+|\blambda|)^{-r},
\quad \forall r>0, \; \bbeta\in \bN_0^2.
\end{equation*}
Then for any $\bnu\in \bN_0^2$  and $\bdelta>\zero$,
$L^{\bnu}F(\bdelta \sqrt L)$ is an integral operator with
kernel $\KK_{L^{\bnu}F(\bdelta \sqrt L)}(\xx, \yy)$ satisfying the following inequalities:
For any $\bsigma\in \R^2_+$ there exists a constant $c_\bsigma>0$ such that
\begin{equation}\label{local-ker-nu}
\big|\KK_{L^{\bnu}F(\bdelta \sqrt L)}(\xx, \yy)\big|
\le c_\bsigma\bdelta^{-2\bnu}\DD_{\bdelta,\bsigma}(\xx,\yy)
\end{equation}
and
\begin{equation}\label{lip-ker-nu}
\big|\KK_{L^{\bnu}F(\bdelta \sqrt L)}(\xx, \yy)-\KK_{L^{\bnu}F(\bdelta \sqrt L)}(\xx, \yy')\big|
\le c_\bsigma\bdelta^{-2\bnu}\sum_{i=1,2}\Big(\frac{\rho_i(y_i, y'_i)}{\delta_i}\Big)^{\alpha_i}\DD_{\bdelta,\bsigma}(\xx,\yy)
\end{equation}
if $\rho_i(y_i,y'_i)\le \delta_i$, $i=1,2$.
\end{corollary}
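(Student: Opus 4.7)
The strategy is to reduce Corollary~\ref{cor:gen-local} to Theorem~\ref{thm:gen-local} by absorbing the operator factor $L^{\bnu}$ into a new symbol. Define
\begin{equation*}
G(\blambda) := \blambda^{2\bnu}F(\blambda) = \lambda_1^{2\nu_1}\lambda_2^{2\nu_2} F(\lambda_1, \lambda_2), \quad \blambda\in \R^2.
\end{equation*}
Then $G\in \cC^{\infty}(\R^2)$, and because $2\nu_i$ is even and $F(\pm\lambda_1,\pm\lambda_2)=F(\lambda_1,\lambda_2)$, the function $G$ is also even in each variable separately. Moreover, for every multi-index $\bbeta\in \bN_0^2$ the Leibniz rule writes $\partial^{\bbeta}G(\blambda)$ as a finite sum of polynomials of degree at most $2|\bnu|$ multiplied by derivatives $\partial^{\bgamma}F(\blambda)$ with $|\bgamma|\le |\bbeta|$, so that by the assumption on $F$
\begin{equation*}
|\partial^{\bbeta}G(\blambda)| \le c_{r,\bbeta}(1+|\blambda|)^{2|\bnu|-r}, \quad \forall r>0,\ \bbeta\in \bN_0^2.
\end{equation*}
Thus $G$ satisfies a decay estimate of any prescribed order.

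Next, the plan is to check the operator-theoretic identity
\begin{equation}\label{plan-id}
L^{\bnu}F(\bdelta\sqrt{L}) = \bdelta^{-2\bnu} G(\bdelta\sqrt{L}),
\end{equation}
which at the symbol level is the trivial relation $\blambda^{2\bnu}F(\bdelta\blambda)=\bdelta^{-2\bnu}(\bdelta\blambda)^{2\bnu}F(\bdelta\blambda) = \bdelta^{-2\bnu}G(\bdelta\blambda)$. Since $L^{\bnu}=L_1^{\nu_1}\otimes L_2^{\nu_2}$ is unbounded, \eqref{plan-id} must be verified inside the two-parameter functional calculus of Section~\ref{subsec:2-parameter}: one applies a spectral cutoff $\chi_{[0,N]^2}(\sqrt{L})$ and uses Theorem~\ref{thm:F-G} on the resulting bounded symbols, then passes to the limit $N\to\infty$ using that $G(\bdelta\sqrt{L})$ and the closure of $\bdelta^{-2\bnu}L^{\bnu}F(\bdelta\sqrt{L})$ agree in the strong sense on the dense subspace of spectrally compactly supported vectors. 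This identification step is the main subtlety, since it is the place where one has to handle the unboundedness of $L^{\bnu}$.

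Having \eqref{plan-id} at hand, fix $\bsigma=(\sigma_1,\sigma_2)\in \R_+^2$. Choose integers $k_i\in\bN$ with $k_i\ge \max(\sigma_i,\lceil 3d_i/2\rceil+1)$ and $k_1+k_2\ge 4$, and then pick $r> d_1+d_2+k_1+k_2+2|\bnu|$. By the decay bounds on $G$ established above, the hypotheses of Theorem~\ref{thm:gen-local} are satisfied by $G$ with these values of $\kk=(k_1,k_2)$ and $r$. Applying that theorem yields an integral kernel for $G(\bdelta\sqrt{L})$ obeying
\begin{equation*}
|\KK_{G(\bdelta\sqrt{L})}(\xx,\yy)| \le c\, \DD_{\bdelta,\kk}(\xx,\yy) \le c\, \DD_{\bdelta,\bsigma}(\xx,\yy),
\end{equation*}
the second inequality being valid because $k_i\ge \sigma_i$ and $\bigl(1+\delta_i^{-1}\rho_i(x_i,y_i)\bigr)^{-k_i}\le \bigl(1+\delta_i^{-1}\rho_i(x_i,y_i)\bigr)^{-\sigma_i}$. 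The analogous bound coming from \eqref{K-lip} produces the H\"older estimate for $\KK_{G(\bdelta\sqrt{L})}$. Combining these estimates with \eqref{plan-id} gives
\begin{equation*}
\KK_{L^{\bnu}F(\bdelta\sqrt{L})}(\xx,\yy) = \bdelta^{-2\bnu}\KK_{G(\bdelta\sqrt{L})}(\xx,\yy),
\end{equation*}
from which \eqref{local-ker-nu} and \eqref{lip-ker-nu} follow with constants $c_\bsigma$ depending on $\bsigma$, $\bnu$, $\dd$ and finitely many of the constants $c_{r,\bbeta}$ from the hypothesis on $F$. The main obstacle, as noted, is the rigorous justification of the unbounded-symbol identity \eqref{plan-id}; the remaining steps are essentially a direct application of Theorem~\ref{thm:gen-local} to the rescaled symbol $G$.
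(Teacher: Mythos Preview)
Your proposal is correct and follows essentially the same approach as the paper: define $G(\blambda)=\blambda^{2\bnu}F(\blambda)$, note the identity $G(\bdelta\sqrt{L})=\bdelta^{2\bnu}L^{\bnu}F(\bdelta\sqrt{L})$, and apply Theorem~\ref{thm:gen-local} to $G$. The paper's proof is in fact more terse than yours---it simply asserts the operator identity and the applicability of Theorem~\ref{thm:gen-local} without the careful bookkeeping on $k_i$, $r$, or the unbounded-symbol discussion you flag; in the paper's framework the identity \eqref{plan-id} is taken as immediate from the two-parameter functional calculus (Theorem~\ref{thm:F-G} at the level of bounded cutoffs), so your concern, while legitimate, is not treated as an obstacle there.
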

\begin{proof}
Set  $G(\lambda_1,\lambda_2):=\lambda_1^{2\nu_1}\lambda_2^{2\nu_2}F(\lambda_1,\lambda_2)$
and observe that
\begin{equation*}
G(\bdelta\sqrt{L})=\delta_1^{2\nu_1}\delta_2^{2\nu_2}L^{\bnu}F(\bdelta \sqrt L).
\end{equation*}
It is easily seen  that $G$ satisfies the assumptions of Theorem~\ref{thm:gen-local}
and, therefore, applying that theorem to $G$ the result follows.
\end{proof}

\subsection{Maximal operators}\label{subsec:maxop}

The Hardy-Littlewood maximal operator is a handful tool in proving estimates in Harmonic analysis.
We will use it extensively in what follows.

In our setting the (strong) maximal operator $\cM_r$ with parameter $r>0$ is defined by
\begin{equation}\label{Max1}
\cM_r f(\xx):= \sup_{B_1\times B_2 \ni \xx} \Big(\frac{1}{\mu(B_1\times B_2)}\int_{B_1\times B_2}|f(\yy)|^r d\mu(\yy)\Big)^{1/r},
\quad \xx\in \XX,
\end{equation}
where the $\sup$ is taken over all balls $B_1\subset \XX_1$, $B_2\subset \XX_2$
such that $\xx\in B_1\times B_2$.

The maximal operator $\sM_1$ in the direction of $x_1\in \XX_1$ is defined by
\begin{equation*} %\label{MK}
\sM_1 f(\xx)=\sup_{B_1\ni x_1} \frac{1}{\mu_1(B_1)} \int_{B_1} |f(y_1,x_2)| d\mu_1(y_1),\;\;\xx\in \XX,
\end{equation*}
where the $\sup$ is taken over all  balls $B_1$ in $\XX_1$ containing $x_1$.
The maximal operator $\sM_2$ in the direction of $x_2\in \XX_2$ is defined similarly.
It is easy to see that this inequality is valid:
\begin{equation}\label{Max11}
\cM_r f(\xx)\le\left(\sM_2(\sM_1|f|^r)\right)^{1/r}(\xx), \quad \xx\in \XX.
\end{equation}
We will utilize the following version of the Fefferman-Stein vector-valued maximal inequality:
If $p\in(0,\infty),\;q\in(0,\infty]$, and $0<r<\min\{p,q\}$,
then for any sequence of functions $\{f_j\}$ on $\XX$
\begin{equation}\label{max}
\Big\|\Big(\sum_{j}|\cM_r f_j|^q\Big)^{1/q}\Big\|_{p}
\le c\Big\|\Big(\sum_{j}|f_j|^q\Big)^{1/q}\Big\|_{p}.
\end{equation}
From the fact that the Radon measure $\mu_i$ ($i=1,2$) obeys the doubling condition \eqref{eq:doubling-0}
it follows that the maximal inequality is valid in $(\XX_i, \mu_i)$ (see \cite{Stein} and also \cite{GLY})
and then \eqref{max} follows by iteration.

For future use we next prove the following

\begin{lemma}\label{lem:max}
Let $\bdelta, \btau\in (0,\infty)^2$, $\btau > \dd$, and $r>0$.
Then there exists a constant $c>0$ such that for any measurable function $f$ on $\XX$ we have
\begin{equation}\label{int-max}
\int_\XX\frac{|f(\yy)|^r}{V(\xx, \bdelta)\prod_{i=1,2}\big(1+\delta_i^{-1}\rho_i(x_i, y_i)\big)^{\tau_i}} d\mu(\yy)
\le c[\cM_rf(\xx)]^r, \quad \xx\in \XX.
\end{equation}
\end{lemma}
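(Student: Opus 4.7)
The plan is to carry out a standard dyadic annular decomposition of $\XX_1\times \XX_2$ centered at $\xx=(x_1,x_2)$, estimate the integral over each piece by brute force using the maximal function, and then sum the resulting geometric series using the hypothesis $\btau>\dd$.

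More concretely, I would first define for $i=1,2$ and $j_i\in\bN_0$ the coordinate annuli
\begin{equation*}
A^i_{0}:=B_i(x_i,\delta_i),\qquad A^i_{j_i}:=B_i(x_i,2^{j_i}\delta_i)\setminus B_i(x_i,2^{j_i-1}\delta_i)\ \text{for}\ j_i\ge 1,
\end{equation*}
so that $\XX=\bigcup_{j_1,j_2\ge 0} A^1_{j_1}\times A^2_{j_2}$ as a disjoint union. On $A^1_{j_1}\times A^2_{j_2}$ one has $1+\delta_i^{-1}\rho_i(x_i,y_i)\ge c\,2^{j_i}$ for $i=1,2$, so the denominator in the integrand is bounded below by $c\, V(\xx,\bdelta)\,2^{j_1\tau_1+j_2\tau_2}$. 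Consequently the integral over $A^1_{j_1}\times A^2_{j_2}$ is dominated by
\begin{equation*}
\frac{c\,2^{-j_1\tau_1-j_2\tau_2}}{V(\xx,\bdelta)}\int_{B_1(x_1,2^{j_1}\delta_1)\times B_2(x_2,2^{j_2}\delta_2)}|f(\yy)|^r\,d\mu(\yy).
\end{equation*}

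Next I would invoke the definition \eqref{Max1} of $\cM_r$ applied to the product ball $B_1(x_1,2^{j_1}\delta_1)\times B_2(x_2,2^{j_2}\delta_2)$, which contains $\xx$, to obtain
\begin{equation*}
\int_{B_1(x_1,2^{j_1}\delta_1)\times B_2(x_2,2^{j_2}\delta_2)}|f(\yy)|^r d\mu(\yy)\le V(\xx,2^{\jj}\bdelta)\,[\cM_r f(\xx)]^r,
\end{equation*}
where $2^{\jj}\bdelta=(2^{j_1}\delta_1,2^{j_2}\delta_2)$. Applying the product doubling estimate \eqref{rect-doubling} with $\blambda=(2^{j_1},2^{j_2})\in[1,\infty)^2$ gives $V(\xx,2^{\jj}\bdelta)\le c\,2^{j_1 d_1+j_2 d_2}V(\xx,\bdelta)$, which cancels the factor $V(\xx,\bdelta)$ in the denominator and yields a per-annulus bound of $c\,2^{-j_1(\tau_1-d_1)-j_2(\tau_2-d_2)}[\cM_rf(\xx)]^r$.

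Finally I would sum over $j_1,j_2\ge 0$; since $\tau_i>d_i$ by hypothesis, the resulting double geometric series converges to a finite constant, giving \eqref{int-max}. There is no real obstacle here — the only point to watch is that the annular lower bound $1+\delta_i^{-1}\rho_i(x_i,y_i)\gtrsim 2^{j_i}$ is also valid for $j_i=0$ (where it is simply $\ge 1$), and that \eqref{rect-doubling} is what legitimizes comparing $V(\xx,2^{\jj}\bdelta)$ to $V(\xx,\bdelta)$ with the expected exponent $\dd$.
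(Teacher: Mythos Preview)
Your proposal is correct and essentially identical to the paper's proof: the paper performs the same dyadic annular decomposition $W_{i,0}=B_i(x_i,\delta_i)$, $W_{i,j}=B_i(x_i,2^j\delta_i)\setminus B_i(x_i,2^{j-1}\delta_i)$, bounds the weight from below by $c\,2^{j\tau_1+k\tau_2}$ on $W_{1,j}\times W_{2,k}$, invokes the maximal operator on the containing product ball, applies \eqref{rect-doubling} to compare $V(\xx,2^{\jj}\bdelta)$ with $V(\xx,\bdelta)$, and sums the resulting geometric series using $\btau>\dd$.
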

\begin{proof}
We introduce the abbreviated notation
\begin{equation*}
F(\xx,\yy):= \frac{|f(\yy)|^r}{V(\xx, \bdelta)\prod_{i=1,2}\big(1+\delta_i^{-1}\rho_i(x_i, y_i)\big)^{\tau_i}}
\end{equation*}
and
$B_{i,j}:= B_i(x_i, 2^j\delta_i)$, $i=1,2$.
Set
\begin{equation*}
W_{i,0}:= B_{i,0}
\quad\hbox{and}\quad
W_{i,j}:= B_{i,j}\setminus B_{i, j-1}, \quad j\ge 1.
\end{equation*}
Note that
$\XX= \cup_{j, k \ge 0} W_{1,j}\times W_{2,k}$
and
$W_{1,j}\times W_{2,k} \subset B_{1,j}\times B_{2,k}$.
Therefore,
\begin{equation}\label{int-F}
\int_\XX F(\xx,\yy) d\mu(\yy)
= \sum_{j,k\ge 0} \int_{W_{1,j}\times W_{2,k}} F(\xx,\yy) d\mu(\yy).
\end{equation}
Observe that
\begin{equation*}
F(\xx,\yy) \le c2^{-j\tau_1-k\tau_2}V(\xx, \bdelta)^{-1}|f(\yy)|^r, \quad \yy\in W_{1,j}\times W_{2,k},
\end{equation*}
and hence
\begin{align}\label{int-WW}
&\int_{W_{1,j}\times W_{2,k}} F(\xx,\yy) d\mu(\yy)
\le c2^{-j\tau_1-k\tau_2}V(\xx, \bdelta)^{-1}\int_{B_{1,j}\times B_{2,k}} |f(\yy)|^r d\mu(\yy) \nonumber
\\
&= c2^{-j\tau_1-k\tau_2}
\frac{\mu(B_{1,j}\times B_{2,k})}{V(\xx, \bdelta)}
\frac{1}{\mu(B_{1,j}\times B_{2,k})}\int_{B_{1,j}\times B_{2,k}} |f(\yy)|^r d\mu(\yy)
\\
&\le c2^{-j(\tau_1-d_1)-k(\tau_2-d_2)}\cM_rf(\xx)^r, \nonumber
\end{align}
where we used that in light of \eqref{doubling-d} and \eqref{def-V}
\begin{align*}
\mu(B_{1,j}\times B_{2,k})
&= \mu_1(B_1(x_1, 2^j\delta_1))\mu_2(B_2(x_2, 2^k\delta_2))
\\
&\le c2^{jd_1+kd_2} \mu_1(B_1(x_1,\delta_1))\mu_2(B_2(x_2,\delta_2))
= c2^{jd_1+kd_2}V(\xx, \bdelta).
\end{align*}
Finally, we insert \eqref{int-WW} in \eqref{int-F} and sum up using that $\tau_i>d_i$
to obtain \eqref{int-max}.
\end{proof}

\section{Distributions associated to operators}\label{sec:distributions}

Distributions associated to operators have been introduced in \cite{KP}. % and further explored in \cite{BBD,DKKP,DKKP2,GKKP,GKKP2,GN,GN2,LYY}.
Here we develop distributions in the two-parameter setting of this article.

\subsection{Test functions and distributions}\label{subsec:test-dist}

We will operate in the setting described in Subsection~\ref{sec:genset}.
We define the class of test functions $\cS:= \cS(L_1,L_2)$
as the set of all function
\begin{equation*}
\phi\in \cap_{\bnu\in\bN_0^2} D(L^{\bnu}),
\quad L^{\bnu}:=L_1^{\nu_1}\otimes L_2^{\nu_2},\; \bnu=(\nu_1, \nu_2),
\end{equation*}
with $D(L^{\bnu})$ being the domain of the operator $L^{\bnu}$, such that
\begin{equation}\label{norm-S}
\cP_{m,k}(\phi)
:= \sup_{\xx\in \XX}\prod_{i=1,2}\big(1+\rho_i(x_i,x_{0i})\big)^{k} \max_{0\le \nu_i\le m} \big|L^{\bnu}\phi(\xx)\big| <\infty,
\quad \forall\, k,m\in\bN_0.
\end{equation}
Here the point $\xx_0=(x_{01},x_{02})\in \XX$ is selected arbitrarily and fixed once and for all.
Clearly, the definition of the class $\cS$ is independent of the choice of $\xx_0$.

It should be pointed out that the term $\prod_{i=1,2}\big(1+\rho_i(x_i,x_{0i})\big)^{k}$ in \eqref{norm-S}
can be removed if $\mu(\XX_i) <\infty$ for $i=1$ or $i=2$
because $\mu(\XX_i) <\infty$ if and only if $\diam(\XX_i) <\infty$, see \cite[Proposition~2.1]{CKP}.

\begin{proposition}\label{prop:Frechet}
The vector space $\cS:= \cS(L_1,L_2)$ with the natural topology induced
by the norms $\cP_{m,k}(\phi)$, $m,k\in\bN_0$,
is a Fr\'echet space.
\end{proposition}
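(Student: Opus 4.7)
The plan is to verify the two defining properties of a Fr\'echet space: the topology is induced by a countable separating family of seminorms, and $\cS$ is complete in the resulting translation-invariant metric. Since $\{\cP_{m,k}\}_{m,k\in \bN_0}$ is countable, metrizability is automatic via $d(\phi,\psi):=\sum_{m,k}2^{-m-k}\cP_{m,k}(\phi-\psi)/(1+\cP_{m,k}(\phi-\psi))$. Hausdorffness follows from the observation that $\cP_{0,0}(\phi)=\sup_{\xx\in\XX}|\phi(\xx)|$, so $\cP_{0,0}(\phi)=0$ forces $\phi\equiv 0$. Local convexity is immediate from the seminorm definition.

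The content lies in completeness. Let $\{\phi_n\}_{n\ge 1}$ be Cauchy in $\cS$. Fix any $\bnu=(\nu_1,\nu_2)\in\bN_0^2$ and set $m:=\nu_1\vee\nu_2$. Then the Cauchy conditions $\cP_{m,k}(\phi_n-\phi_{n'})\to 0$, valid for every $k\in\bN_0$, say precisely that $\{L^{\bnu}\phi_n\}$ is uniformly Cauchy on $\XX$ against every weight $\prod_{i=1,2}(1+\rho_i(x_i,x_{0i}))^{k}$. Consequently there exists a continuous function $\psi_\bnu$ on $\XX$ with $L^{\bnu}\phi_n\to \psi_\bnu$ uniformly on $\XX$ and $\prod_{i=1,2}(1+\rho_i(x_i,x_{0i}))^{k}|\psi_\bnu(\xx)|<\infty$ for all $k$. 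Set $\phi:=\psi_{\zero}$.

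It remains to identify $\psi_\bnu = L^{\bnu}\phi$ for every $\bnu\in\bN_0^2$; once this is done, $\phi\in \cap_\bnu D(L^{\bnu})$ with $\cP_{m,k}(\phi)<\infty$ for all $m,k$, and $\cP_{m,k}(\phi_n-\phi)\to 0$ by construction, proving $\phi_n\to\phi$ in $\cS$. The tool here is that $L^{\bnu}=L_1^{\nu_1}\otimes L_2^{\nu_2}$ is self-adjoint (Proposition~\ref{prop:L1xL2}), hence a \emph{closed} operator on $L^2(\XX, d\mu)$. To invoke closedness, I upgrade uniform weighted convergence to $L^2$ convergence: the doubling bound \eqref{doubling-d} applied coordinatewise, together with the decomposition of $\XX_i$ into annuli around $x_{0i}$, implies that $\prod_{i=1,2}(1+\rho_i(\cdot,x_{0i}))^{-k}\in L^2(\XX, d\mu)$ once $2k>d_1\vee d_2$. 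Choosing such a $k$, the weighted uniform convergence $\phi_n\to\phi$ and $L^{\bnu}\phi_n\to\psi_\bnu$ yields $\phi_n\to\phi$ and $L^{\bnu}\phi_n\to\psi_\bnu$ in $L^2(\XX,d\mu)$. Closedness of $L^{\bnu}$ then gives $\phi\in D(L^{\bnu})$ and $L^{\bnu}\phi=\psi_\bnu$.

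The main obstacle is precisely this last identification: a priori the pointwise limit $\phi$ of a Cauchy sequence in $\cS$ need not lie in the domain of the unbounded operator $L^{\bnu}$. What makes the argument go through is the interaction between three ingredients: the polynomial weighted decay built into the seminorms $\cP_{m,k}$, the doubling geometry \eqref{doubling-d} which ensures integrability of these weights, and the self-adjointness (equivalently, closedness) of the tensor product $L_1^{\nu_1}\otimes L_2^{\nu_2}$ supplied by Proposition~\ref{prop:L1xL2}. Once these are combined, convergence in the graph norm of $L^{\bnu}$ is automatic and completeness follows.
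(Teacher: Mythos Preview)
Your proposal is correct and follows essentially the same route as the paper's proof: both arguments hinge on converting the weighted uniform Cauchy condition into $L^2$-convergence via the integrability of $\prod_{i=1,2}(1+\rho_i(\cdot,x_{0i}))^{-2k}$ for $2k>d_1\vee d_2$ (the paper invokes \eqref{int-ineq-1} directly, you sketch an annulus decomposition), and then using that $L^{\bnu}=L_1^{\nu_1}\otimes L_2^{\nu_2}$ is closed as a self-adjoint operator (Proposition~\ref{prop:L1xL2}) to identify $\psi_\bnu=L^{\bnu}\phi$. The only cosmetic difference is the order: you first obtain the uniform weighted limit and then pass to $L^2$, whereas the paper goes to $L^2$ first and recovers the $L^\infty$-convergence afterward.
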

\begin{proof}
Clearly, $\{\cP_{m,k}(\phi)\}_{m,k\ge 0}$ is a countable family of norms
and hence $\cS$ is a metrizable locally convex space.
Therefore, to prove that $\cS$ is Fr\'echet we only need to show that it is complete.
Assume that $\{\phi_n\}_{n\ge 1}$ is a Cauchy sequence in $\cS$,
i.e. $\lim_{n,j\rightarrow\infty}\cP_{m,k}(\phi_n-\phi_j)=0$ for all $m,k\ge 0$.
Fix $k\ge(d_1\vee d_2+1)/2$. Then for any $\bnu\in \bN_0^2$ with $0\le \nu_i\le m$ we have
\begin{align*}
\big\|L^{\bnu}(\phi_n-\phi_j)\big\|_{2}
&\le \cP_{m,k}(\phi_n-\phi_j)\int_{\XX_1\times \XX_2}\prod_{i=1,2}\big(1+\rho_i(x_i,x_{0i})\big)^{-2k} d\mu_1\times d\mu_2
\\
& \le cV_1(x_{01}, 1)V_2(x_{02}, 1)\cP_{m,k}(\phi_n-\phi_j),
%= cV(x_0, 1)\cP_{m,k}(\phi_n-\phi_j),
\end{align*}
where we used \eqref{int-ineq-1}.
Therefore, $\{L^\bnu\phi_n\}_{n\ge 1}$ is a Cauchy sequence in $L^2(\XX, d\mu)$
and hence there exists
$\psi_\bnu\in L^2(\XX, d\mu)$ such that
$\lim_{n\to\infty}\|L^{\bnu}\phi_n -\psi_\bnu\|_{2}=0$.

Denote $\phi:= \psi_{(0,0)}$ and let $\bnu\in\bN_0^2$.
From
\begin{equation*}
\lim_{n\to\infty}\|\phi_n -\phi\|_{2}=0
\quad\hbox{and}\quad
\lim_{n\to\infty}\|L^{\bnu}\phi_n -\psi_\bnu\|_{2}=0
\end{equation*}
and the fact that the operator
$L^{\bnu}=L_1^{\nu_1}\otimes L_2^{\nu_2}$ is closed (as a self-adjoint operator, see Proposition~\ref{prop:L1xL2})
it follows that $\phi\in D(L^\bnu)$ and
\begin{equation}\label{Frechet-2}
\lim_{n\to\infty}\|L^{\bnu}\phi_n -L^{\bnu}\phi\|_{2}=0.
\end{equation}
Furthermore,
$\big\|L^{\bnu}\phi_n-L^{\bnu}\phi_j\big\|_\infty \le \cP_{m,0}(\phi_n-\phi_j)\to 0$ as $n,j\to\infty$.
From this, \eqref{Frechet-2} and the completeness of $L^\infty$ it follows that
\begin{equation*}%\label{Frechet-3}
\lim_{n\to\infty}\|L^{\bnu}\phi_n -L^{\bnu}\phi\|_\infty=0,
\quad \forall \bnu\in\bN_0^2.
\end{equation*}
In turn, this and the fact that
$\lim_{n,j\rightarrow\infty}\cP_{m,k}(\phi_n-\phi_j)=0$ for all $m,k\ge 0$
imply that
$\lim_{n\rightarrow\infty}\cP_{m,k}(\phi_n-\phi)=0$ for all $m,k\ge 0$,
which completes the proof.
\end{proof}

\begin{definition}
The space $\cS'=\cS'(L_1, L_2)$ of distributions associated to $L_1$, $L_2$ is defined as the set of all
continuous linear functionals on $\cS=\cS(L_1, L_2)$.
The action of $f\in \cS'$ on $\overline{\phi}\in \cS$ will be denoted by
$\langle f, \phi\rangle:= f(\overline{\phi})$,
which is consistent with the inner product in $L^2(\XX, d\mu)$.
\end{definition}
Clearly, for any $f\in \cS'$ there exist constants $m,k\in \bN_0$ and $c>0$ such that
\begin{equation}\label{distribution-1}
|\langle f, \phi\rangle| \le c\cP_{m,k}(\phi), \quad \forall \phi\in\cS.
\end{equation}

We next show that there are sufficiently many test functions.

\begin{proposition}\label{prop:basicprop}
Let $\varphi \in \cS(\bR^2)$ be real-valued and
$\varphi(\pm\lambda_1,\pm\lambda_2)=\varphi(\lambda_1,\lambda_2)$
for $(\lambda_1, \lambda_2) \in\bR^2$.
Recall the notation $\varphi(\sqrt{L}):= \varphi(\sqrt{L_1}, \sqrt{L_2})$
and $\cS:=\cS(L_1, L_2)$.
Then:

$(i)$ $\varphi(\sqrt{L})$ is an integral operator whose kernel
$\KK_{\varphi(\sqrt{L})}(\xx, \cdot)\in \cS$, $\forall \xx\in \XX$,
and $\KK_{\varphi(\sqrt{L})}(\cdot, \yy)\in \cS$, $\forall \yy\in \XX$.

$(ii)$ For each $\phi\in\cS$ we have $\varphi(\sqrt{L})\phi\in\cS$. 
Furthermore, the spectral multiplier  $\varphi(\sqrt L)$ maps continuously $\cS$  into $\cS$.
\end{proposition}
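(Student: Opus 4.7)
The plan is to apply Corollary~\ref{cor:gen-local} as the main tool. Since $\varphi\in\cS(\bR^2)$ is real-valued and even in each variable, it satisfies the hypotheses of that corollary for every $\bnu\in\bN_0^2$, and so, taking $\bdelta=\one$, each $L^{\bnu}\varphi(\sqrt L)$ is an integral operator with kernel obeying
\begin{equation*}
\bigl|\KK_{L^{\bnu}\varphi(\sqrt L)}(\xx,\yy)\bigr|\le c_{\bsigma}\DD_{\one,\bsigma}(\xx,\yy)\qquad\text{for every }\bsigma>\zero.
\end{equation*}

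For part $(i)$, I would first note that $\varphi(\sqrt L)$ is self-adjoint (Proposition~\ref{prop:self-adjoint}) with real-valued kernel, hence symmetric: $\KK_{\varphi(\sqrt L)}(\xx,\yy)=\KK_{\varphi(\sqrt L)}(\yy,\xx)$. Since $L^{\bnu}$ and $\varphi(\sqrt L)$ both belong to the functional calculus of $L_1,L_2$ they commute, and a standard self-adjointness argument identifies $L^{\bnu}$ applied in the $\yy$ variable to $\KK_{\varphi(\sqrt L)}(\xx,\cdot)$ with $\KK_{L^{\bnu}\varphi(\sqrt L)}(\xx,\yy)$. Next, I would use \eqref{D-D*} together with the change-of-center inequality \eqref{rect-doubling2} to trade the factor $V(\yy,\one)^{-1/2}$ for $V(\xx,\one)^{-1/2}$ at the price of $\prod_i(1+\rho_i(x_i,y_i))^{d_i/2}$, and then apply $(1+\rho_i(y_i,x_{0i}))^{-1}\le(1+\rho_i(x_i,y_i))(1+\rho_i(x_i,x_{0i}))^{-1}$ to convert decay in $\rho_i(x_i,y_i)$ to decay in $\rho_i(y_i,x_{0i})$. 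For fixed $\xx$, choosing $\bsigma$ sufficiently large then yields arbitrary polynomial decay of $L^{\bnu}\KK_{\varphi(\sqrt L)}(\xx,\cdot)$ in $\yy$, proving $\KK_{\varphi(\sqrt L)}(\xx,\cdot)\in\cS$; the symmetry of the kernel gives $\KK_{\varphi(\sqrt L)}(\cdot,\yy)\in\cS$ as well.

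For part $(ii)$, given $\phi\in\cS$, I would write
\begin{equation*}
L^{\bnu}\varphi(\sqrt L)\phi(\xx)=\int_{\XX}\KK_{L^{\bnu}\varphi(\sqrt L)}(\xx,\yy)\phi(\yy)\,d\mu(\yy),
\end{equation*}
valid by the commutativity of the calculus, and combine the kernel bound (after eliminating $V(\yy,\one)^{-1/2}$ via \eqref{D-D*}) with the Schwartz decay $|\phi(\yy)|\le\cP_{0,k+M}(\phi)\prod_i(1+\rho_i(y_i,x_{0i}))^{-k-M}$. Using the triangle inequality to pull out the factor $\prod_i(1+\rho_i(x_i,x_{0i}))^k$, the remaining integral reduces to the form $V(\xx,\one)^{-1}\int_{\XX}\prod_i(1+\rho_i(x_i,y_i))^{-\tau_i}\,d\mu(\yy)$ with $\tau_i>d_i$, which is bounded by a constant via Fubini and \eqref{int-ineq-1}. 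The upshot is the estimate $\cP_{m,k}(\varphi(\sqrt L)\phi)\le c_{m,k}\cP_{0,k+M}(\phi)$, which simultaneously establishes $\varphi(\sqrt L)\phi\in\cS$ and continuity of $\varphi(\sqrt L)\colon\cS\to\cS$.

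The main obstacle is the factor $V(\yy,\one)^{-1/2}$ in the kernel bound, which in the absence of the non-collapsing assumption need not be uniformly bounded in $\yy$; the remedy is the systematic use of \eqref{rect-doubling2} to exchange it for a polynomial factor in $\rho_i(x_i,y_i)$ that is then absorbed by the arbitrarily fast decay of the kernel. A minor technical point is verifying that $\varphi(\sqrt L)\phi$ belongs to $D(L^{\bnu})$ for every $\bnu$, which follows from the functional-calculus identity $L^{\bnu}\varphi(\sqrt L)=G_{\bnu}(\sqrt L)$ with the bounded Schwartz symbol $G_{\bnu}(\lambda_1,\lambda_2)=\lambda_1^{2\nu_1}\lambda_2^{2\nu_2}\varphi(\lambda_1,\lambda_2)$.
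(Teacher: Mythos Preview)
Your proposal is correct and follows essentially the same approach as the paper, relying on Corollary~\ref{cor:gen-local} for the kernel bounds and the inequalities \eqref{D-D*}, \eqref{rect-doubling2}, \eqref{tech-2} to manage the volume factors and convert decay in $\rho_i(x_i,y_i)$ to decay in $\rho_i(y_i,x_{0i})$. One minor variant: in part~(ii) the paper commutes $L^{\bnu}$ onto $\phi$ and uses the kernel of $\varphi(\sqrt L)$ together with the decay of $L^{\bnu}\phi$, yielding the bound $\cP_{m,k}(\varphi(\sqrt L)\phi)\le c\,\cP_{m,k+\tilde d+1}(\phi)$, whereas you keep $L^{\bnu}$ with the kernel and obtain $\cP_{m,k}(\varphi(\sqrt L)\phi)\le c_{m,k}\,\cP_{0,k+M}(\phi)$; both are valid and give continuity. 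The one place where the paper is substantially more detailed than your sketch is the ``standard self-adjointness argument'' in part~(i): establishing that $\KK_{\varphi(\sqrt L)}(\xx,\cdot)\in D(L^{\bnu})$ and that $L^{\bnu}[\KK_{\varphi(\sqrt L)}(\xx,\cdot)](\yy)=\KK_{L^{\bnu}\varphi(\sqrt L)}(\xx,\yy)$ requires some care (the paper does it via a duality pairing against arbitrary $g\in D(L^{\bnu})$), so you should flesh that step out.
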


\begin{proof}
(i) Since $\KK_{\varphi(\sqrt{L})}(\xx, \yy)=\KK_{\varphi(\sqrt{L})}(\yy,\xx)$ for $\xx,\yy\in \XX$ ($\varphi$ is real-valued)
it suffices to only show that $\KK_{\varphi(\sqrt{L})}(\xx, \cdot)\in \cS$.
To this end we need to show that
\begin{equation}\label{eq:basicprop1}
\cP_{m,k}(\KK_{\varphi(\sqrt{L})}(\xx, \cdot))<\infty,
\quad \forall m,k\in \bN_0.
\end{equation}
Fix $\xx\in \XX$.
From Corollary~\ref{cor:gen-local} it readily follows that
for any $m,k\in \bN_0$ there exists a constant $c>0$ such that for all $\bnu\in \bN_0^2$ with $\nu_i\le m$
\begin{equation*}
\sup_{\yy\in \XX}\prod_{i=1,2}(1+\rho(x_i,y_i)^k |\KK_{L^\bnu\varphi(\sqrt L)}(\xx,\yy)|\le c V(\xx,\one)^{-1}.
\end{equation*}
Therefore, to show that $\KK_{\varphi(\sqrt{L})}(\xx, \cdot)\in \cS$ it suffices to show that
\begin{equation}\label{eq:basicprop2}
\KK_{L^\bnu\varphi(\sqrt{L})}(\xx, \yy)=L^\bnu[\KK_{\varphi(\sqrt{L})}(\xx, \cdot)](\yy),\quad \xx,\yy\in \XX.
\end{equation}
For this we first need to verify that
$\KK_{\varphi(\sqrt{L})}(\xx, \cdot)\in D(L^\bnu)$.
Let $g\in D(L^\bnu)$.
As is well known (e.g. \cite[Chapter XI, Section 12, Theorem~3, (iv)]{Yosida})
$L^\bnu g \in D(\varphi(\sqrt{L}))$ if and only if $g\in D(\varphi(\sqrt{L})L^\bnu)$, and
\begin{equation}\label{func-calc}
\varphi(\sqrt{L})(L^\bnu g) = [\varphi(\sqrt{L})L^\bnu] g = [L^\bnu\varphi(\sqrt{L})] g.
\end{equation}
But $\varphi(\sqrt{L})$ is a bounded operator on $L^2(\XX, d\mu)$
and hence $L^\bnu g \in D(\varphi(\sqrt{L}))$,
which implies that $g\in D(\varphi(\sqrt{L})L^\bnu)$
and \eqref{func-calc} is valid.
Hence, for almost all $\xx\in \XX$
\begin{equation}\label{K-K}
\int_\XX \KK_{L^\bnu\varphi(\sqrt L)}(\xx,\yy)g(\yy) d\mu(\yy)
=\int_\XX \KK_{\varphi(\sqrt L)}(\xx,\yy)L^\bnu g(\yy) d\mu(\yy).
\end{equation}
But the kernels $\KK_{L^\bnu\varphi(\sqrt L)}(\xx,\yy)$ and $\KK_{\varphi(\sqrt L)}(\xx,\yy)$ are continuous
and hence \eqref{K-K} is valid for all $\xx\in \XX$.
Applying the Cauchy-Schwarz inequality we obtain from \eqref{K-K}
\begin{equation*}
\Big|\int_\XX L^\bnu g(\yy)\KK_{\varphi(\sqrt{L})}(\xx, \yy)d\mu(\yy)\Big|
\le \|\KK_{L^\bnu\varphi(\sqrt{L})}(\xx, \cdot)\|_2\|g\|_2
\le c\|g\|_2,
\end{equation*}
where we used Corollary~\ref{cor:gen-local} and \eqref{tech-1}.
Thus
\begin{equation*}
|\big\langle L^\bnu g, \KK_{\varphi(\sqrt{L})}(\xx, \cdot)\big\rangle|
\le c\|g\|_2, \quad \forall g\in D(L^\bnu),
\end{equation*}
which leads to the conclusion that
$\KK_{\varphi(\sqrt{L})}(\xx, \cdot)\in D((L^\bnu)^*)$.
But $D(L^\bnu)=D((L^\bnu)^*)$ because $L^\bnu$ is self-adjoint.
Hence $\KK_{\varphi(\sqrt{L})}(\xx, \cdot)\in D(L^\bnu)$.

We now show \eqref{eq:basicprop2}.
Appealing to \eqref{K-K} and the fact that $L^\bnu$ is self-adjoint
we obtain for all $g\in D(L^\bnu)$ and $\xx\in \XX$
\begin{align*}
\big\langle \KK_{L^\bnu\varphi(\sqrt L)}(\xx,\cdot), \overline{g} \big\rangle
&= \big\langle L^\bnu g, \KK_{\varphi(\sqrt L)}(\xx,\cdot) \big\rangle
=\big\langle g, (L^\bnu)^* [\KK_{\varphi(\sqrt L)}(\xx,\cdot)]\big\rangle
\\
&= \big\langle g, L^\bnu[\KK_{\varphi(\sqrt L)}(\xx,\cdot)]\big\rangle
= \big\langle L^\bnu[\KK_{\varphi(\sqrt L)}(\xx,\cdot)], \overline{g}\big\rangle,
\end{align*}
which implies (\ref{eq:basicprop2}).

\smallskip

(ii) Let $m,k\in\bN_0$ %, $k\ge d_1\vee d_2$,
and assume that $\bnu\in\bN_0^2$ with $\bnu_i\le m$ and
$\bsigma\in(0,\infty)^2$ is such that $\sigma_i\ge k+2d_i+1$, $i=1,2$.
Denote $\tilde{d}:= \lfloor d_1\vee d_2 \rfloor +1$.
Using (\ref{local-ker-nu}) and (\ref{tech-2}) we get for $\phi\in\cS$
\begin{align*}
\big|L^{\bnu}\varphi(\sqrt L)&\phi(\xx)\big|=\big|\varphi(\sqrt L)L^{\bnu}\phi(\xx)\big|
\le \int_{\XX}|\KK_{\varphi(\sqrt L)}(\xx,\yy)||L^{\bnu}\phi(\yy)|d\mu(\yy)
\\
&\le c\int_{\XX}\DD_{\one,\bsigma}(\xx,\yy)|L^{\bnu}\phi(\yy)|d\mu(\yy)
\\
&\le c\cP_{m,k+\tilde{d}+1}(\phi)\int_{\XX}V(\xx,\one)^{-1}
\DD_{\one,\bar{k}+\dd+\one}^* (\xx,\yy)\DD_{\one,\bar{k}+\dd+\one}^* (\yy,\xx_0)d\mu(\yy)
\\
&\le c\cP_{m,k+\tilde{d}+1}(\phi)\DD_{\one,\bar{k}+\one}^* (\xx,\xx_0)
\le c\cP_{m,k+\tilde{d}+1}(\phi)\DD_{\one,\bar{k}}^*(\xx,\xx_0),
\end{align*}
where $\bar{k}=:(k,k)$ and $\one:= (1,1)$.
From above and \eqref{kernelsD*} it readily follows that
$\varphi(\sqrt{L})\phi\in\cS$ and
$\cP_{m,k}((\varphi\sqrt L)\phi)\le c\cP_{m,k+\tilde{d}+1}(\phi)$,
which leads to the conclusion that
$\varphi(\sqrt L)$ maps continuously $\cS$  into $\cS$.
\end{proof}

We use Proposition \ref{prop:basicprop} (ii) to make the following
\begin{definition}\label{def:phi-f}
Let $\varphi\in \cS(\bR^2)$ be real-valued and
$\varphi(\pm\lambda_1,\pm\lambda_2)=\varphi(\lambda_1,\lambda_2)$ for $(\lambda_1, \lambda_2)\in\bR^2$.
For any $f\in\cS'=\cS'(L_1, L_2)$ we define $\varphi(\sqrt{L})f$ by
\begin{equation}\label{distribution-2}
\langle \varphi(\sqrt{L})f, \phi\rangle = \langle f, \varphi(\sqrt{L})\phi\rangle,
\quad \forall \phi\in \cS.
\end{equation}
\end{definition}

\begin{proposition}\label{prop:f-phi}
Let $\varphi$ be as in Definition~\ref{def:phi-f}.
Then for any $f\in\cS'=\cS'(L_1, L_2)$
\begin{equation}\label{f-varphi}
\varphi(\sqrt{L})f(\xx) :=\langle f, \KK_{\varphi(\sqrt{L})}(\xx, \cdot) \rangle
\quad\hbox{for}\quad \xx\in X.
\end{equation}
\end{proposition}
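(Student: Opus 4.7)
The plan is to show that
\[
g(\xx):=\big\langle f,\KK_{\varphi(\sqrt L)}(\xx,\cdot)\big\rangle,
\]
which is well-defined for every $\xx\in \XX$ thanks to Proposition~\ref{prop:basicprop}(i), represents the distribution $\varphi(\sqrt L)f$ of Definition~\ref{def:phi-f}. Using the continuity estimate \eqref{distribution-1} for $f$ together with pointwise control of the seminorms $\cP_{m,k}(\KK_{\varphi(\sqrt L)}(\xx,\cdot))$ as a function of $\xx$, obtained by combining Corollary~\ref{cor:gen-local} with the volume comparison \eqref{changecenter}, one checks that $g$ has polynomial growth, so that $g$ induces an element of $\cS'$ via integration. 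What remains is the identity
\[
\int_\XX g(\xx)\overline{\phi(\xx)}\,d\mu(\xx)=\big\langle f,\varphi(\sqrt L)\phi\big\rangle\qquad (\phi\in\cS).
\]

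To treat the right-hand side, I would use that Corollary~\ref{cor:gen-local} gives enough decay of $\KK_{\varphi(\sqrt L)}$ so that the pointwise representation
\[
\varphi(\sqrt L)\phi(\yy)=\int_\XX \KK_{\varphi(\sqrt L)}(\yy,\zz)\phi(\zz)\,d\mu(\zz)
\]
holds with absolutely convergent integral for every $\yy\in \XX$; combined with the symmetry and real-valuedness of the kernel and the fact that $\varphi(\sqrt L)$ preserves reality (so $\overline{\varphi(\sqrt L)\phi}=\varphi(\sqrt L)\overline\phi$), the desired identity reduces to the Fubini-type interchange
\[
f\Bigl(\int_\XX \KK_{\varphi(\sqrt L)}(\cdot,\zz)\overline{\phi(\zz)}\,d\mu(\zz)\Bigr)=\int_\XX\big\langle f,\KK_{\varphi(\sqrt L)}(\cdot,\zz)\big\rangle\overline{\phi(\zz)}\,d\mu(\zz).
\]
I would justify this by approximating the integral inside $f$ by Riemann sums. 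Choose a sequence of countable partitions $\{Q_{N,j}\}_{j\ge 1}$ of $\XX$ into measurable rectangles with mesh $\max_j\diam(Q_{N,j})\to 0$, pick $\zz_{N,j}\in Q_{N,j}$, and set
\[
S_N(\yy):=\sum_j \KK_{\varphi(\sqrt L)}(\yy,\zz_{N,j})\overline{\phi(\zz_{N,j})}\,\mu(Q_{N,j}).
\]
If $S_N\to\varphi(\sqrt L)\overline\phi$ in the Fr\'echet topology of $\cS$, then continuity of $f$ gives $f(\varphi(\sqrt L)\overline\phi)=\lim_N f(S_N)$, and the other side of the display is the termwise limit of $\sum_j\langle f,\KK_{\varphi(\sqrt L)}(\cdot,\zz_{N,j})\rangle\overline{\phi(\zz_{N,j})}\mu(Q_{N,j})$, which by standard Riemann-sum convergence equals $\int_\XX g\overline\phi\,d\mu$ (the latter integral being absolutely convergent thanks to the polynomial growth of $g$ and the Schwartz decay of $\phi$).

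The main obstacle is the $\cS$-convergence $S_N\to\varphi(\sqrt L)\overline\phi$ in every seminorm $\cP_{m,k}$. For each multi-index $\bnu$ with $\nu_i\le m$, the identity $L^\bnu_{\yy}\KK_{\varphi(\sqrt L)}(\yy,\zz)=\KK_{L^\bnu\varphi(\sqrt L)}(\yy,\zz)$ (obtained from \eqref{eq:basicprop2} together with the symmetry of the kernel) reduces the claim to bounding, uniformly in $\yy\in \XX$,
\[
\prod_{i=1,2}\bigl(1+\rho_i(y_i,x_{0i})\bigr)^k\sum_j\int_{Q_{N,j}}\!\bigl|\KK_{L^\bnu\varphi(\sqrt L)}(\yy,\zz_{N,j})\overline{\phi(\zz_{N,j})}-\KK_{L^\bnu\varphi(\sqrt L)}(\yy,\zz)\overline{\phi(\zz)}\bigr|\,d\mu(\zz).
\]
I would split the sum into a core part (where the H\"older estimate \eqref{lip-ker-nu} for $\KK_{L^\bnu\varphi(\sqrt L)}$ in the $\zz$-variable, combined with the Lipschitz-type regularity inherited from $\phi\in\cS$, produces smallness of order $\max_j\diam(Q_{N,j})^{\alpha_1\wedge\alpha_2}$) and a tail part (where the decay $\DD_{\one,\bsigma}(\yy,\zz)$, with $\bsigma$ taken arbitrarily large, together with the Schwartz decay of $\phi$, gives smallness independently of the mesh). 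The sums over $j$ are absorbed using the integral inequality \eqref{tech-2}, so both pieces tend to zero as $N\to\infty$, closing the argument.
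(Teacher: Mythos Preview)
Your approach is viable but differs from the paper's. The paper avoids Riemann sums entirely by treating the integral
\[
\varphi(\sqrt L)\phi(\xx)=\int_\XX \KK_{\varphi(\sqrt L)}(\xx,\yy)\phi(\yy)\,d\mu(\yy)
\]
as a Bochner integral with values in the Banach space $V_{m,k}:=\{g:\cP_{m,k}(g)<\infty\}$, where $m,k$ are the indices from the continuity bound \eqref{distribution-1} for $f$. One extends $f$ from $\cS$ to $V_{m,k}$ by Hahn--Banach, shows $\int_\XX\|\KK_{\varphi(\sqrt L)}(\cdot,\yy)\phi(\yy)\|_{V_{m,k}}\,d\mu(\yy)<\infty$ using Corollary~\ref{cor:gen-local} and \eqref{tech-1}, and then the interchange $f(\int\cdots)=\int f(\cdots)$ is immediate from Bochner integration theory. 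This bypasses both the construction of partitions in a general metric space and the need for any pointwise regularity of $\phi$.

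Your Riemann-sum route can be made to work, but two points deserve care. First, you invoke ``Lipschitz-type regularity inherited from $\phi\in\cS$''; this is not part of the definition of $\cS$ and must be proved --- it is exactly the content of Lemma~\ref{lem:S}, which relies on the Calder\'on decomposition of Corollary~\ref{cor:Calderon} (no circularity with the present proposition, but not a triviality). Second, your $S_N$ are countable sums, so applying $f$ termwise and applying $L^\bnu$ termwise both require that the series converge in $\cS$; it is cleaner to take finite partitions of large rectangles $Q(\xx_0,R_N)$ and absorb the complement into the tail estimate you already sketch. With these adjustments your argument goes through, but the Bochner approach is shorter and avoids these technicalities.
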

\begin{proof}
Let $f\in\cS'$.
Just as in \eqref{distribution-1} there exist constants $m,k\in\bN_0$ and $c>0$ such that
\begin{equation}\label{distr}
|\langle f, \phi\rangle| \le c\cP_{m,k}(\phi),\quad \forall \phi\in\cS.
\end{equation}
%We may assume $m>(d_1\vee d_2)/2$.
Fix $\phi\in \cS$.
Clearly,
\begin{equation*}
\varphi(\sqrt L)\phi(\xx) = \int_\XX \KK_{\varphi(\sqrt L)}(\xx,\yy)\phi(\yy) d\mu(\yy), \quad \xx\in \XX.
\end{equation*}
We will regard the above integral as a Bochner integral over the Banach space
$$
V_{m,k}:=\{g\in \cap_{0\le\nu_i \le m} D(L^\bnu): \|g\|_{V_{m,k}}:=\cP_{m,k}(g)<\infty\}
$$
with $\cP_{m,k}$ defined in \eqref{norm-S}, see e.g. \cite{Yosida}, pp. 131--133.
Just as in the proof of Proposition~\ref{prop:Frechet} it follows that
the space $V_{m,k}$ is complete and hence it is a Banach space.
By the Hahn-Banach theorem and \eqref{distr} the continuous linear functional $f$
can be extended to $V_{m,k}$ with the same norm.
We denote this extension by $f$ again.

Denote $F_\yy(\xx):= \KK_{\varphi(\sqrt{L})}(\xx, \yy)\phi(\yy)$. Then
\begin{align*}
\|F_\yy\|_{V_{m,k}}
&= \sup_{\xx\in \XX}\prod_{i=1,2}\big(1+\rho_i(x_i,x_{0i})\big)^{k} \max_{0\le \bnu_i\le m}
\big|L^{\bnu}\KK_{\varphi(\sqrt{L})}(\xx, \yy)\phi(\yy)\big|
\\
&= \sup_{\xx\in \XX}\prod_{i=1,2}\big(1+\rho_i(x_i,x_{0i})\big)^{k} \max_{0\le \bnu_i\le m}
\big|\KK_{L^{\bnu}\varphi(\sqrt{L})}(\xx, \yy)\phi(\yy)\big|,
\end{align*}
where we used \eqref{eq:basicprop2}.
Clearly, the function $\varphi$ obeys the conditions of Corollary~\ref{cor:gen-local} and hence
there exists a constant $c_k>0$ such that
\begin{equation*}
|\KK_{L^{\bnu}\varphi(\sqrt{L})}(\xx, \yy)| \le c_kV(\yy,\one)^{-1}\prod_{i=1,2}\big(1+\rho_i(x_i,y_i)\big)^{-k},
\quad 0\le \nu_i \le m.
\end{equation*}
On the other hand, since $\phi\in \cS$ for any $N>0$ we have
\begin{equation*}
|\phi(\yy)|\le c_N\cP_{0,N}(\phi)\prod_{i=1,2}\big(1+\rho_i(y_i,x_{0i})\big)^{-N}.
\end{equation*}
We choose $N=k+2(d_1\vee d_2)+1$.
Putting the above estimates together we get
\begin{align*}
\|F_\yy\|_{V_{m,k}} \le \frac{c\cP_{0,N}(\phi)}{V(\yy,\one)}
\sup_{\xx\in \XX}\frac{\prod_{i=1,2}\big(1+\rho_i(x_i,x_{0i})\big)^{k}}
{\prod_{i=1,2}\big(1+\rho_i(x_i,y_i)\big)^{k} \prod_{i=1,2}\big(1+\rho_i(y_i,x_{0i})\big)^{N}}.
\end{align*}
Using the obvious inequality
$1+\rho_i(x_i,x_{0i}) \le (1+\rho_i(x_i,y_i))(1+\rho_i(y_i,x_{0i}))$
and \eqref{rect-doubling2} we obtain
\begin{align*}
\|F_\yy\|_{V_{m,k}} \le \frac{c\cP_{0,N}(\phi)}{V(\xx_0,\one)\prod_{i=1,2}\big(1+\rho_i(y_i,x_{0i})\big)^{d_i+1}}.
\end{align*}
From this and \eqref{tech-1} it follows that
\begin{equation*}
\int_\XX \|\KK_{\varphi(\sqrt{L})}(\cdot, \yy)\phi(\yy)\|_{V_{m,k}} d\mu(\yy) \le c\cP_{0,N}(\phi)<\infty.
\end{equation*}
Now, applying the theory of Bochner's integral we obtain
\begin{align*}
\langle f, \varphi(\sqrt{L})\phi\rangle
=\Big\langle f, \int_\XX \KK_{\varphi(\sqrt{L})}(\cdot,\yy)\phi(\yy)d\mu(\yy) \Big\rangle
= \int_\XX \big\langle f, \KK_{\varphi(\sqrt L)}(\cdot,\yy) \big\rangle \overline{\phi(\yy)} d\mu(\yy).
\end{align*}
This coupled with \eqref{distribution-2} implies \eqref{f-varphi}.
\end{proof}

\begin{remark}\label{rem:opercontinuity}
From Proposition \ref{prop:basicprop} (ii) it readily follows that the operator
$\varphi(\sqrt L)$ from above maps continuously $\cS'$  into $\cS'$.
\end{remark}

Our next step is to show that $\varphi(\sqrt{L})f$ is a slowly growing continuous function
for smooth $\varphi$ and $f\in \cS'$.

\begin{proposition}\label{prop:slow}
Let $\varphi\in \cS(\bR^2)$ be real-valued and
$\varphi(\pm\lambda_1,\pm\lambda_2)=\varphi(\lambda_1,\lambda_2)$ for $(\lambda_1,\lambda_2)\in\bR^2$.
Let $\bdelta=(\delta_1, \delta_2)>\zero$.
Then for any $f\in\cS'=\cS'(L_1, L_2)$, $\varphi(\delta\sqrt{L})f$ is a slowly growing continuous function.
More explicitly,
there exist constants $N,c>0$ $($depending on $f$$)$ such that
\begin{equation}\label{slow-1}
|\varphi(\bdelta\sqrt{L})f(\xx)| \le c V(x_0,\bdelta)^{-1}
\prod_{i=1,2}(\delta_i^N+\delta_i^{-N})\big(1+\rho_i(x_i,x_{0i})\big)^{N}
\quad\hbox{and}
\end{equation}
\begin{align}\label{cont-1}
|\varphi(\bdelta\sqrt{L})f(\xx)&-\varphi(\bdelta\sqrt{L})f(\xx')| \nonumber
\\
&\le c V(\xx_0,\bdelta)^{-1}\prod_{i=1,2}(\delta_i^N+\delta_i^{-N})\big(1+\rho_i(x_i,x_{0i})\big)^{N}
\sum_{i=1,2}\rho_i(x_i,x'_i)^{\alpha_i}
\end{align}
if $\rho_i(x_i,x'_i)\le \delta_i$.
Here $\alpha_1$, $\alpha_2$ are from \eqref{lip}.
\end{proposition}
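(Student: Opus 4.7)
\textbf{Proof proposal for Proposition~\ref{prop:slow}.}

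The plan is to represent $\varphi(\bdelta\sqrt{L})f(\xx)$ via the kernel formula of Proposition~\ref{prop:f-phi} and then bound the appropriate Schwartz seminorms of the kernel uniformly in $\yy$, with explicit dependence on $\xx$ and $\bdelta$. First, since $\varphi(\bdelta\,\cdot\,)$ satisfies the hypotheses of Proposition~\ref{prop:basicprop}, the kernel $\KK_{\varphi(\bdelta\sqrt L)}(\xx,\cdot)$ belongs to $\cS$ for every $\xx$, and by Proposition~\ref{prop:f-phi}
\[
\varphi(\bdelta\sqrt L)f(\xx)=\bigl\langle f,\KK_{\varphi(\bdelta\sqrt L)}(\xx,\cdot)\bigr\rangle.
\]
Since $f\in\cS'$, there exist $m,k\in\bN_0$ and $c_f>0$ with $|\langle f,\phi\rangle|\le c_f\cP_{m,k}(\phi)$ for every $\phi\in\cS$. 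Hence the task reduces to estimating $\cP_{m,k}\bigl(\KK_{\varphi(\bdelta\sqrt L)}(\xx,\cdot)\bigr)$.

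The second step is to identify $L^{\bnu}$ applied in the second variable with the kernel of $L^{\bnu}\varphi(\bdelta\sqrt L)$. By the symmetry $\KK_{\varphi(\bdelta\sqrt L)}(\xx,\yy)=\KK_{\varphi(\bdelta\sqrt L)}(\yy,\xx)$ and the identity \eqref{eq:basicprop2} (applied with roles swapped), one obtains
\[
L^{\bnu}_{\yy}\KK_{\varphi(\bdelta\sqrt L)}(\xx,\yy)=\KK_{L^{\bnu}\varphi(\bdelta\sqrt L)}(\xx,\yy).
\]
Applying Corollary~\ref{cor:gen-local} with any $\bsigma>\zero$ of our choosing yields
\[
|\KK_{L^{\bnu}\varphi(\bdelta\sqrt L)}(\xx,\yy)|\le c_{\bsigma}\bdelta^{-2\bnu}\DD_{\bdelta,\bsigma}(\xx,\yy),\qquad 0\le \nu_i\le m.
\]

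The third step is routine bookkeeping to convert this into the form \eqref{slow-1}. Choose $\sigma_i\ge k+3d_i/2+1$. Use the elementary inequality $1+\rho_i(y_i,x_{0i})\le (1+\rho_i(y_i,x_i))(1+\rho_i(x_i,x_{0i}))$ together with $1+\rho_i(x_i,y_i)\le (1+\delta_i)(1+\delta_i^{-1}\rho_i(x_i,y_i))$ to absorb the weight $\prod_i(1+\rho_i(y_i,x_{0i}))^k$ into the tail of $\DD_{\bdelta,\bsigma}$, at the cost of the factor $\prod_i(1+\delta_i)^k(1+\rho_i(x_i,x_{0i}))^k$. The factor $V(\yy,\bdelta)^{-1/2}$ in $\DD_{\bdelta,\bsigma}$ is swapped for $V(\xx,\bdelta)^{-1/2}$ via \eqref{rect-doubling2}, which is precisely what makes us take $\sigma_i$ larger by $d_i/2$. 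Finally, $V(\xx,\bdelta)^{-1}$ is replaced by $V(\xx_0,\bdelta)^{-1}$ using \eqref{rect-doubling2} again, producing an additional factor $\prod_i\max(1,\delta_i^{-d_i})(1+\rho_i(x_i,x_{0i}))^{d_i}$. Collecting all the powers of $\delta_i$ and $\delta_i^{-1}$ (from $\bdelta^{-2\bnu}$, $(1+\delta_i)^k$, and $\max(1,\delta_i^{-d_i})$) into a single factor $\prod_i(\delta_i^N+\delta_i^{-N})$ and the powers of $(1+\rho_i(x_i,x_{0i}))$ into $\prod_i(1+\rho_i(x_i,x_{0i}))^N$ with $N$ sufficiently large (depending on $m$, $k$, $\dd$) gives \eqref{slow-1}.

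The continuity estimate \eqref{cont-1} is obtained by the same scheme with \eqref{local-ker-nu} replaced by the H\"older estimate \eqref{lip-ker-nu}; the extra factor $\sum_i(\rho_i(x_i,x'_i)/\delta_i)^{\alpha_i}$ yields the term $\sum_i\rho_i(x_i,x'_i)^{\alpha_i}$ at the price of additional negative powers of $\delta_i$, which are safely absorbed into the factor $\prod_i(\delta_i^N+\delta_i^{-N})$ after possibly enlarging $N$. The main obstacle is purely combinatorial: keeping track of all the powers of $\delta_i^{\pm 1}$, $(1+\rho_i(x_i,x_{0i}))$, and changes of center so that they all fit into the clean form \eqref{slow-1}--\eqref{cont-1}; there is no genuine analytic difficulty once Corollary~\ref{cor:gen-local} and the kernel representation of Proposition~\ref{prop:f-phi} are in place.
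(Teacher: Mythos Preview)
Your proposal is correct and follows essentially the same route as the paper: represent $\varphi(\bdelta\sqrt L)f(\xx)$ via Proposition~\ref{prop:f-phi}, bound it by $c_f\cP_{m,k}\bigl(\KK_{\varphi(\bdelta\sqrt L)}(\xx,\cdot)\bigr)$, identify $L^{\bnu}$ on the kernel through \eqref{eq:basicprop2}, apply Corollary~\ref{cor:gen-local} (respectively its H\"older estimate for \eqref{cont-1}), and then perform the same change-of-center and $\delta$-bookkeeping via \eqref{rect-doubling2} and the inequality $1+\rho_i(y_i,x_{0i})\le(1+\rho_i(x_i,y_i))(1+\rho_i(x_i,x_{0i}))$. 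The paper writes out only the proof of \eqref{cont-1} and declares \eqref{slow-1} easier, but the argument and all the ingredients are the same as yours.
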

\begin{proof}
We will only prove \eqref{cont-1}; the proof of \eqref{slow-1} is easier and will be omitted.
Let $f\in\cS'$.
Just as in \eqref{distribution-1} there exist $m,k\in\bN_0$ such that
\begin{equation*} %\label{distr-2}
|\langle f, \phi\rangle| \le c\cP_{m,k}(\phi),\quad \forall \phi\in\cS.
\end{equation*}
We use this, \eqref{distribution-2}, and \eqref{eq:basicprop2} to obtain
\begin{align*}
&|\varphi(\bdelta\sqrt{L})f(\xx)-\varphi(\bdelta\sqrt{L})f(\xx')|
\\
&= \big|\big\langle f, \KK_{\varphi(\bdelta\sqrt{L})}(\xx, \cdot) - \KK_{\varphi(\bdelta\sqrt{L})}(\xx', \cdot)\big\rangle\big|
\le c\cP_{m,k}\big(\KK_{\varphi(\bdelta\sqrt{L})}(\xx, \cdot) - \KK_{\varphi(\bdelta\sqrt{L})}(\xx', \cdot)\big)
\\
&=c\sup_{\yy\in \XX}\prod_{i=1,2}\big(1+\rho_i(y_i,x_{0i})\big)^{k}
\\
&\hspace{1.4in} \times\max_{0\le \nu_i\le m}
\big|L^{\bnu}[\KK_{\varphi(\bdelta\sqrt{L})}(\xx,\cdot)](\yy)-L^{\bnu}[\KK_{\varphi(\bdelta\sqrt{L})}(\xx',\cdot)](\yy)\big|
\\
&\le c\max_{0\le \nu_i\le m}\sup_{\yy\in \XX}\prod_{i=1,2}\big(1+\rho_i(y_i,x_{0i})\big)^{k}
\big|\KK_{L^\bnu\varphi(\bdelta\sqrt{L})}(\xx,\yy)-\KK_{L^\bnu\varphi(\bdelta\sqrt{L})}(\xx',\yy)\big|.
\end{align*}
The function $\varphi$ obeys the conditions of Corollary~\ref{cor:gen-local}
and hence if $\rho_i(x_i, x'_i)\le \delta_i$, then
\begin{align*}
&\big|\KK_{L^\bnu\varphi(\bdelta\sqrt{L})}(\xx,\yy)-\KK_{L^\bnu\varphi(\bdelta\sqrt{L})}(\xx',\yy)\big|
\\
&\le c V(\xx,\bdelta)^{-1} \bdelta^{-2\bnu}\prod_{i=1,2}\big(1+\delta_i^{-1}\rho_i(x_i,y_i)\big)^{-k}
\sum_{i=1,2}(\rho_i(x_i,x'_i)/\delta_i)^{\alpha_i}
\\
&\le c V(\xx,\bdelta)^{-1} \prod_{i=1,2}(1+\delta_i)^k(1+\delta_i^{-1})^{2\nu_i+\alpha_i}\big(1+\rho_i(x_i,y_i)\big)^{-k}
\sum_{i=1,2}\rho_i(x_i,x'_i)^{\alpha_i}.
\end{align*}
Now, we use that
$1+\rho_i(y_i,x_{0i}) \le (1+\rho_i(x_i,y_i))(1+\rho_i(x_i,x_{0i}))$
and
\begin{align*}
V(\xx_0,\bdelta)
&\le c\prod_{i=1,2}(1+\delta_i^{-1}\rho_i(x_i,x_{0i}))^{d_i}V(\xx, \bdelta)
\\
&\le c\prod_{i=1,2}(1+\delta_i^{-1})^{d_i}(1+\rho_i(x_i,x_{0i}))^{d_i}V(\xx, \bdelta),
\end{align*}
which follows from \eqref{rect-doubling2} to obtain
\begin{align*}
&\big|\KK_{L^\bnu\varphi(\bdelta\sqrt{L})}(\xx,\yy)-\KK_{L^\bnu\varphi(\bdelta\sqrt{L})}(\xx',\yy)\big|
\\
&\le c \frac{\prod_{i=1,2}(1+\delta_i)^k(1+\delta_i^{-1})^{2\nu_i+\alpha_i+d_i}\big(1+\rho_i(x_i,x_{0i})\big)^{k+d_i}}
{V(\xx_0,\bdelta)\prod_{i=1,2}\big(1+\rho_i(y_i,x_{0i})\big)^{k}}
\sum_{i=1,2}\rho_i(x_i,x'_i)^{\alpha_i}.
\end{align*}
Putting all of the above together we obtain \eqref{cont-1}
with $N$ sufficiently large.
\end{proof}

\subsection{Decomposition}

We next establish a basic convergence theorem and a \textit{Calder\'{o}n type reproducing} formula,
which will play an important role in this study.
We begin with the following

\begin{theorem}\label{thm:CRF}
Let $\varphi \in \cS(\R^2)$ be a real-valued function that satisfies the conditions:
$\varphi(\zero)=1$,
$\partial^\bbeta\varphi(\zero)=0$ for all $\bbeta\in\bN_0^2$ with $|\bbeta|\ge 1$, $\zero:=(0,0)$, and
$\varphi(\pm\lambda_1,\pm\lambda_2)=\varphi(\lambda_1,\lambda_2)$ for $(\lambda_1,\lambda_2)\in \R^2$.
Then for any $\phi\in \cS$
\begin{equation}\label{decomp-dist-1}
\phi=\lim_{\bdelta\to \zero} \varphi (\bdelta\sqrt L) \phi
\quad \mbox{ in }\; \cS
\end{equation}
and, therefore, for any $f\in \cS'$
\begin{equation}\label{decomp-dist-2}
f=\lim_{\bdelta\to \zero} \varphi (\bdelta\sqrt L) f
\quad \mbox{ in }\; \cS'.
\end{equation}
\end{theorem}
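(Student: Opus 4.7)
The plan is to establish \eqref{decomp-dist-1} first and then deduce \eqref{decomp-dist-2} by duality: for $f\in\cS'$ and $\phi\in\cS$, Definition~\ref{def:phi-f} gives $\langle\varphi(\bdelta\sqrt L)f,\phi\rangle=\langle f,\varphi(\bdelta\sqrt L)\phi\rangle$, and the continuity of $f$ on $\cS$ combined with \eqref{decomp-dist-1} applied to $\phi$ yields convergence to $\langle f,\phi\rangle$. For \eqref{decomp-dist-1}, by the functional calculus (Theorem~\ref{thm:F-G}) one has $L^{\bnu}\varphi(\bdelta\sqrt L)=\varphi(\bdelta\sqrt L)L^{\bnu}$ on $\cS$, and $L^{\bnu}\phi\in\cS$ whenever $\phi\in\cS$; hence it suffices to show for every $\psi\in\cS$ and every $k\in\bN_0$ that
\[
\sup_{\xx\in\XX}\prod_{i=1,2}(1+\rho_i(x_i,x_{0i}))^k\,|\varphi(\bdelta\sqrt L)\psi(\xx)-\psi(\xx)|\longrightarrow 0
\quad\text{as }\bdelta\to\zero.
\]

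The decisive step is a factorization that extracts an explicit power of $\bdelta$ by exploiting that $\varphi-1$ vanishes to infinite order at $\zero$. For each $N\in\bN$ I would set
\[
H_N(\blambda):=\begin{cases}(\varphi(\blambda)-1)/(\lambda_1^2+\lambda_2^2)^N,&\blambda\ne\zero,\\ 0,&\blambda=\zero.\end{cases}
\]
Because all partial derivatives of $\varphi$ at $\zero$ vanish, $H_N\in\cC^\infty(\R^2)$; it inherits the evenness of $\varphi$ in each variable, and since $\varphi\in\cS(\R^2)$ one has $|\partial^{\bbeta}H_N(\blambda)|\le C_{N,\bbeta}(1+|\blambda|)^{-2N}$ for every multi-index $\bbeta$. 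Taking $N$ large enough with respect to the target $\kk=(k_1,k_2)$, Theorem~\ref{thm:gen-local} applies to $H_N$ and yields the kernel bound $|\KK_{H_N(\bdelta\sqrt L)}(\xx,\yy)|\le c_{\kk}\DD_{\bdelta,\kk}(\xx,\yy)$ with constant independent of $\bdelta\in(0,1]^2$. The scalar identity $\varphi(\bdelta\blambda)-1=(\delta_1^2\lambda_1^2+\delta_2^2\lambda_2^2)^N H_N(\bdelta\blambda)$ then transfers, via the functional calculus, to an operator identity on $\cS\subset\cap_{\bnu}D(L^{\bnu})$:
\[
\varphi(\bdelta\sqrt L)\psi-\psi=\sum_{j=0}^N\binom{N}{j}\delta_1^{2j}\delta_2^{2(N-j)}\,H_N(\bdelta\sqrt L)(L_1^j L_2^{N-j}\psi).
\]
Each $L_1^j L_2^{N-j}\psi$ still lies in $\cS$, and the kernel bound for $H_N(\bdelta\sqrt L)$ together with the integral estimates \eqref{tech-1}--\eqref{tech-2}, used exactly as in the proof of Proposition~\ref{prop:basicprop}(ii), gives the uniform estimate $\cP_{0,k}(H_N(\bdelta\sqrt L)g)\le c\,\cP_{0,k+\tilde d+1}(g)$ for all $g\in\cS$ and $\bdelta\in(0,1]^2$. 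Summing therefore yields $\cP_{0,k}(\varphi(\bdelta\sqrt L)\psi-\psi)=O(\max_i\delta_i^{2N})\to 0$; the analogous argument applied to $L^{\bnu}\psi$ in place of $\psi$ handles all $\cP_{m,k}$.

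The main technical obstacle I foresee is making the uniform-in-$\bdelta$ control completely rigorous: because $H_N$ has only polynomial (not Schwartz) decay, the parameter $N$ must be chosen growing with $\kk$ so that the decay rate $r>d_1+d_2+k_1+k_2$ required in Theorem~\ref{thm:gen-local} is met, and the factorization $\varphi(\bdelta\sqrt L)-I=(\delta_1^2L_1+\delta_2^2L_2)^N H_N(\bdelta\sqrt L)$ has to be justified as an operator identity on a subspace containing $\cS$, which calls for careful domain bookkeeping with the unbounded operators $L_1^j L_2^{N-j}$ in the two-parameter spectral calculus developed in Section~\ref{sec:func-calc}.
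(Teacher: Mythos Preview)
Your proposal is correct and follows essentially the same route as the paper: your $H_N$ is precisely the paper's $\omega(\blambda)=(1-\varphi(\blambda))|\blambda|^{-2r}$ up to sign, the factorization $\varphi(\bdelta\sqrt L)-I=H_N(\bdelta\sqrt L)(\delta_1^2L_1+\delta_2^2L_2)^N$ and the binomial expansion are identical, and the kernel estimate via Theorem~\ref{thm:gen-local} together with \eqref{tech-4} is exactly how the paper closes the argument. The domain-bookkeeping worry you flag is not addressed explicitly in the paper either; it simply uses that $\phi\in\cS\subset\cap_{\bnu}D(L^{\bnu})$, so $\omega(\bdelta\sqrt L)L^{(\nu_1+\ell,\nu_2+r-\ell)}\phi$ makes sense and equals $L^{\bnu}(I-\varphi(\bdelta\sqrt L))\phi$ after summation.
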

\begin{proof}
We need to show that for any $m,k\in\bN_0$ and $\phi\in\cS$
\begin{equation}\label{converge}
\begin{aligned}
\lim_{\bdelta\rightarrow \zero}&\cP_{m,k}(\phi-\varphi(\bdelta\sqrt{L})\phi)
\\
&=\lim_{\bdelta\rightarrow \zero}\sup_{\xx\in \XX}\prod_{i=1, 2}\big(1+\rho_i(x_i,x_{0i})\big)^{k}
\max_{0\le \nu_i\le m}\big|L^{\bnu}\big(\phi-\varphi(\bdelta\sqrt{L})\phi\big)(\xx)|=0.
\end{aligned}
\end{equation}
Choose $\nn= (n_1,n_2)\in \bN^2$, $n_i > k+5d_i/2$, and
$r\in \bN, r>(d_1+d_2+n_1+n_2)/2$.
We define
$\omega(\blambda):=(1-\varphi(\blambda))|\blambda|^{-2r}$.
Clearly, $I-\varphi(\bdelta\sqrt{L})= \omega(\bdelta\sqrt{L})(\delta_1^2L_1+\delta_2^2L_2)^r$.
Since $|\blambda|^{2r}=(\lambda_1^2+\lambda_2^2)^r$ is a radial function
from the definition of $\varphi$ it follows that $\omega$ satisfies the assumptions of Theorem \ref{thm:gen-local}.
Hence
\begin{equation}\label{eq:w_decay}
\KK_{\omega(\bdelta\sqrt L)}(\xx,\yy)\le c_\nn\DD_{\bdelta,\nn}(\xx,\yy),\quad  \xx,\yy\in \XX.
\end{equation}
From the representation
$(\delta_1^2L_1+\delta_2^2L_2)^{r}=\sum_{\ell=0}^{r}\binom{r}{\ell}\delta_1^{2\ell}\delta_2^{2(r-\ell)}L_1^{\ell}L_2^{r-\ell}$
and assuming $0\le \nu_i\le m$,
we get
\begin{equation}\label{eq:CRF}
\begin{aligned}
L^{\bnu}\big(\phi-\varphi(\bdelta\sqrt{L})\phi\big)(\xx)
&=L^{\bnu}\omega(\bdelta\sqrt L)(\delta_1^2 L_1+\delta_2^2 L_2)^{r}\phi(\xx)
\\
&=\sum_{\ell=0}^r \binom{r}{\ell}\delta_1^{2\ell}\delta_2^{2(r-\ell)} \omega(\delta\sqrt L) L^{(\nu_1+\ell,\nu_2+r-\ell)}\phi(\xx).
\end{aligned}
\end{equation}
Pick $N\ge (n_1-d_1/2)\vee (n_2-d_2/2)$ and assume $\zero<\bdelta\le \one$, $\one:=(1,1)$.
Then using \eqref{eq:w_decay}--\eqref{eq:CRF} we obtain
\begin{align*}
|L^{\bnu}&\big(\phi-\varphi(\bdelta\sqrt{L})\phi\big)(\xx)|
\le c\sum_{\ell=0}^r \delta_1^{2\ell}\delta_2^{2(r-\ell)}\int_\XX \DD_{\bdelta,\nn}(\xx,\yy)
|L^{(\nu_1+\ell,\nu_2+r-\ell)}\phi(\yy)| d\mu(\yy)
\\
&\le c \sum_{\ell=0}^r \delta_1^{2\ell}\delta_2^{2(r-\ell)}\cP_{m+r,N}(\phi)
\int_\XX \DD_{\bdelta,\nn}(\xx,\yy) \DD^*_{\one,\nn-\dd/2}(\yy,\xx_0)d\mu(\yy)
\\
&\le c\sum_{\ell=0}^r \delta_1^{2\ell}\delta_2^{2(r-\ell)}\cP_{m+r,N}(\phi)
\int_\XX \frac{1}{V(\yy,\bdelta)}\DD^*_{\bdelta,\nn-\dd/2}(\xx,\yy) \DD^*_{\one,\nn-\dd/2}(\yy,\xx_0)d\mu(\yy)
\\
&\le c \sum_{\ell=0}^r \delta_1^{2\ell}\delta_2^{2(r-\ell)}\cP_{m+r,N}(\phi)\DD^*_{\one,\nn-\dd/2}(\xx,\xx_0),
\end{align*}
where for the third inequality we used \eqref{D-D*}
and for the last inequality we used \eqref{tech-4} and that $n_i-d_i/2 > 2d_i$.
Finally, putting all together we obtain
\begin{align*}
\cP_{m,k}(\phi-\varphi(\bdelta\sqrt{L})\phi)
&\le c \sum_{\ell=0}^r \delta_1^{2\ell}\delta_2^{2(r-\ell)}\cP_{m+r,N}(\phi)
\sup_{\xx\in \XX}\prod_{i=1,2}\frac{\big(1+\rho_i(x_i,x_{0i})\big)^{k}}{\big(1+\rho_i(x_i,x_{0i})\big)^{n_i-d_i/2}}
\\
&\le c \cP_{m+r,N}(\phi)\sum_{\ell=0}^r \delta_1^{2\ell}\delta_2^{2(r-\ell)}\rightarrow 0 \;\text{ as }\; \delta_1,\delta_2\rightarrow 0,
\end{align*}
where we used that $n_i\ge k+d_i/2$.
The proof of \eqref{converge} is complete.
\end{proof}

%%%%%%%%%%%%%%%%%%

We next derive a Calder\'{o}n type reproducing formula in the setting of this article.
(Compare with the result from Lemma~\ref{lem:dec-unity}.)
It relies on two pairs of functions $\varphi^i_0,\varphi^i \in\cC^\infty(\RR)$, $i=1,2$,  with the properties:
\begin{equation}\label{phi-phi0}
\begin{aligned}
&(i)\;\; \hbox{$\varphi^i_0,\varphi^i$ are real-valued and even,}
%\\
%&(ii)\ \varphi^i_0(-t)= \varphi^i_0(t), \ \varphi^i(-t)= \varphi^i(t), \; \forall t\in\RR,
\\
&(ii)\ \supp\varphi^i_0\subset[-2,2], \ \supp\varphi^i\subset[2^{-1},2]\setminus[-1/2,1/2],
\end{aligned}
\end{equation}
and
\begin{equation}\label{1dCalderon}
\varphi^i_0(t)+\sum_{n\ge 1}\varphi^i(2^{-n} t)=1, \quad \forall t \in \RR.
\end{equation}
It is easy to construct functions $\varphi^i_0,\varphi^i \in\cC^\infty(\RR)$, $i=1,2$, with the above properties.
Indeed, let $\varphi_0^i\in \cC^{\infty}(\RR)$ ($i=1,2$) be an even real-valued function satisfying the conditions:
(a) $\varphi_0^i(t)=1$, $t\in[-1,1]$,
(b) $\supp\varphi_0^i\subset[-2,2]$,
(c) $0\le\varphi_0^i\le 1$,
and define $\varphi^i(t):=\varphi^i_0(t)-\varphi_0^i(2t)$.
Then it follows immediately that
$\varphi^i_0,\varphi^i$ satisfy conditions \eqref{phi-phi0}--\eqref{1dCalderon}.

Using the standard notation $\varphi^i_n(t):=\varphi^i(2^{-n}t), n\in \bN$,
for any $\jj=(j_1,j_2)\in \bN^2_0$ we set $\varphi_\jj:=\varphi^1_{j_1}\otimes\varphi^2_{j_2}$.
Then from \eqref{1dCalderon} it readily follows that
\begin{equation}\label{2dCalderon}
\sum_{\jj\in \bN_0^2}\varphi_\jj(\blambda)=1,\quad \forall \blambda \in \RR^2.
\end{equation}

\begin{corollary}\label{cor:Calderon}
Let $\varphi^1_0,\varphi^2_0, \varphi^1,\varphi^2 \in \cC^{\infty}(\RR)$ be functions
satisfying $(\ref{phi-phi0})$--$(\ref{1dCalderon})$
and let $\varphi_\jj$ be as above.
Then, for any $\phi\in \cS$
\begin{equation}\label{cald-1}
\phi=\sum_{\jj\in\bN_0^2}\varphi_\jj(\sqrt{L})\phi
\quad \mbox{ in }\; \cS
\end{equation}
and, therefore, for any $f\in \cS'$
\begin{equation}\label{cald-2}
f=\sum_{\jj\in\bN_0^2}\varphi_\jj(\sqrt{L})f
\quad \mbox{ in }\; \cS'.
\end{equation}
\end{corollary}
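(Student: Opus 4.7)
The plan is to reduce Corollary~\ref{cor:Calderon} to Theorem~\ref{thm:CRF} by showing that the rectangular partial sums of the series telescope into a dilation of a single Schwartz function that satisfies the hypotheses of Theorem~\ref{thm:CRF}.

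First I would extract an unstated consequence of \eqref{phi-phi0}--\eqref{1dCalderon}: since $\supp\varphi^i\subset\{|t|\in[1/2,2]\}$, for $|t|\le 1$ and every $n\ge 1$ we have $|2^{-n}t|\le 1/2$, so all summands $\varphi^i(2^{-n}t)$ vanish, and \eqref{1dCalderon} forces $\varphi_0^i\equiv 1$ on $[-1,1]$. In particular $\varphi_0^i(0)=1$ and every derivative of $\varphi_0^i$ at $0$ vanishes. Next I would establish the key telescoping identity
\begin{equation*}
\sum_{j=0}^{N}\varphi^i_j(\lambda)=\varphi_0^i(\lambda)+\sum_{n=1}^{N}\varphi^i(2^{-n}\lambda)=\varphi_0^i(2^{-N}\lambda),\qquad N\in\bN_0,
\end{equation*}
which follows by subtracting \eqref{1dCalderon} applied at $\lambda$ from \eqref{1dCalderon} applied at $2^{-N}\lambda$ (both sides equal $1$).

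Setting $\varphi(\lambda_1,\lambda_2):=\varphi_0^1(\lambda_1)\varphi_0^2(\lambda_2)$, the function $\varphi$ lies in $\cS(\R^2)$ (smooth and compactly supported), satisfies $\varphi(\pm\lambda_1,\pm\lambda_2)=\varphi(\lambda_1,\lambda_2)$ by \eqref{phi-phi0}(i), has $\varphi(\zero)=1$, and $\partial^{\bbeta}\varphi(\zero)=0$ for every $\bbeta\in\bN_0^2$ with $|\bbeta|\ge 1$, because $\varphi\equiv 1$ on the neighbourhood $[-1,1]^2$ of the origin. Applying the telescoping identity in each coordinate and using the functional calculus (Theorem~\ref{thm:F-G}) yields, with $\bdelta:=(2^{-N_1},2^{-N_2})$,
\begin{equation*}
\sum_{j_1=0}^{N_1}\sum_{j_2=0}^{N_2}\varphi_\jj(\sqrt L)\;=\;\varphi(\bdelta\sqrt L)
\quad\text{as operators on } L^2(\XX,d\mu).
\end{equation*}

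Finally, as $N_1,N_2\to\infty$ we have $\bdelta\to\zero$, so Theorem~\ref{thm:CRF} applied to $\varphi$ gives $\varphi(\bdelta\sqrt L)\phi\to\phi$ in $\cS$ for every $\phi\in\cS$, proving \eqref{cald-1}, and $\varphi(\bdelta\sqrt L)f\to f$ in $\cS'$ for every $f\in\cS'$, proving \eqref{cald-2}. The only mild subtlety is the derivation of the flatness $\varphi_0^i\equiv 1$ near the origin from the given support hypotheses; once this is in hand, the conclusion is an immediate application of Theorem~\ref{thm:CRF}, with no further analytic work required.
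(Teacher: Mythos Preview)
Your proof is correct and follows essentially the same strategy as the paper: telescope the rectangular partial sums into a single dilation and invoke Theorem~\ref{thm:CRF}. The only cosmetic difference is that the paper introduces the auxiliary function $\theta^i(t):=\varphi_0^i(t)+\varphi^i(t/2)$ (which is $1$ on $[-2,2]$) and obtains $\sum_{n=0}^{\ell+1}\varphi_n^i(t)=\theta^i(2^{-\ell}t)$, whereas you use $\varphi_0^i$ itself after observing that \eqref{phi-phi0}--\eqref{1dCalderon} force $\varphi_0^i\equiv 1$ on $[-1,1]$, giving $\sum_{j=0}^{N}\varphi_j^i(\lambda)=\varphi_0^i(2^{-N}\lambda)$ directly; your route is marginally more economical but the two arguments are equivalent.
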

\begin{proof}
Let $f\in\cS$ (resp. $f\in\cS'$).
We set $\theta^i(t):=\varphi^i_0(t)+\varphi^i (2^{-1}t)$, $i=1,2$,
and observe that \eqref{phi-phi0}--\eqref{1dCalderon}
imply $\supp\theta^i\subset[-4,4]$ and $\theta^i(t)=1$ for $t\in[-2,2]$.
Moreover from the definition of $\theta^i$ and (\ref{1dCalderon}) we have
$$
\sum_{n=0}^{\ell+1}\varphi^i_n(t)=\theta^i\big(2^{-\ell}t\big),\quad \ell\in \bN_0.
$$
Set $\theta:=\theta^1\otimes\theta^2$. Then it is easily seen that for any $\kk:=(k_1,k_2)\in\bN^2$,
$$
\sum_{j_1=0}^{k_1+1}\sum_{j_2=0}^{k_2+1}\varphi_{\jj}(\blambda)
=(\theta^1\otimes\theta^2)(b_1^{-k_1}\lambda_1, b_2^{-k_2}\lambda_2)=\theta\big(b^{-\kk}\blambda\big).
$$
Finally, applying Theorem~\ref{thm:CRF} to $\theta$  we obtain
\begin{align*}
\sum_{\jj\in\bN_0^2}\varphi_\jj (\sqrt{L})f
&=\lim_{k_1,k_2\rightarrow\infty}\sum_{j_1=0}^{k_1+1}\sum_{j_2=0}^{k_2+1}
\big(\varphi^1_{j_1}(\sqrt{L_1})\otimes\varphi^2_{j_2}(\sqrt{L_2})\big)(f)
\\
&=\lim_{k_1,k_2\rightarrow\infty}\theta(b^{-\kk}\sqrt{L})f=f,
\end{align*}
which completes the proof.
\end{proof}

Our next step is to establish our principle convergence result.

\begin{theorem}\label{thm:converge}
Let $\varphi \in \cS(\R^2)$ be a real-valued function satisfying the conditions:
$\varphi(\zero)=1$
and
$\varphi(\pm\lambda_1,\pm\lambda_2)=\varphi(\lambda_1,\lambda_2)$ for $(\lambda_1,\lambda_2)\in \R^2$.
Then for any $\phi\in \cS$
\begin{equation}\label{conv-1}
\phi=\lim_{\bdelta\to \zero} \varphi (\bdelta\sqrt L) \phi
\quad \mbox{ in }\; \cS
\end{equation}
and, therefore, for any $f\in \cS'$
\begin{equation}\label{conv-2}
f=\lim_{\bdelta\to \zero} \varphi (\bdelta\sqrt L) f
\quad \mbox{ in }\; \cS'.
\end{equation}
Furthermore, \eqref{conv-2} holds in $L^p$ for any $f\in L^p$, $1\le p\le \infty$,
where $L^\infty= {\rm UCB}$ is the space of all uniformly continuous and bounded functions on $\XX$.
\end{theorem}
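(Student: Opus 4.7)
The plan is to reduce to Theorem~\ref{thm:CRF} by an approximation argument. Choose an even $u \in \cC^\infty_c(\R)$ with $u \equiv 1$ on $[-1,1]$ and $\supp u \subset [-2,2]$, and set $\psi(\lambda_1,\lambda_2) := u(\lambda_1)u(\lambda_2)$. Then $\psi \in \cS(\R^2)$ is compactly supported, even in each variable, satisfies $\psi(\zero)=1$, and has all partial derivatives vanishing at $\zero$. So $\psi$ meets the stronger hypotheses of Theorem~\ref{thm:CRF}, which gives $\psi(\bdelta'\sqrt{L})\phi \to \phi$ in $\cS$ as $\bdelta' \to \zero$ for every $\phi\in\cS$.

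The technical heart is a uniform-boundedness estimate: for any $m,k \in \bN_0$ there exist $m',k' \in \bN_0$ and $C>0$ so that $\cP_{m,k}(\varphi(\bdelta\sqrt{L})\phi) \le C\cP_{m',k'}(\phi)$ for all $\bdelta \in (0,\one]$ and $\phi\in\cS$. To prove it, I would commute $L^\bnu\varphi(\bdelta\sqrt{L})\phi = \varphi(\bdelta\sqrt{L})L^\bnu\phi$ (thereby avoiding the $\bdelta^{-2\bnu}$ blow-up in Corollary~\ref{cor:gen-local}), invoke the kernel bound $|\KK_{\varphi(\bdelta\sqrt{L})}(\xx,\yy)| \le c_\bsigma\DD_{\bdelta,\bsigma}(\xx,\yy)$, apply \eqref{D-D*} to extract $V(\xx,\bdelta)^{-1}$, move that factor inside the integral as $V(\yy,\bdelta)^{-1}$ via the change-of-centers inequality \eqref{rect-doubling2}, and then apply \eqref{tech-4} against the Schwartz bound $|L^\bnu\phi(\yy)| \le \cP_{m',K}(\phi)\DD^*_{\one,(K,K)}(\yy,\xx_0)$. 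In \eqref{tech-4} all $\bdelta$-dependence cancels, which is exactly why $C$ is uniform in $\bdelta$.

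With these two ingredients in hand, the proof of \eqref{conv-1} proceeds via the three-term split
\[
\varphi(\bdelta\sqrt{L})\phi - \phi = \varphi(\bdelta\sqrt{L})\big[\phi - \psi(\bdelta'\sqrt{L})\phi\big] + G_{\bdelta,\bdelta'}(\sqrt{L})\phi + \big[\psi(\bdelta'\sqrt{L})\phi - \phi\big],
\]
with $G_{\bdelta,\bdelta'}(\blambda) := [\varphi(\bdelta\blambda)-1]\psi(\bdelta'\blambda)$. Given $\epsilon>0$, Theorem~\ref{thm:CRF} and the uniform boundedness let us choose $\bdelta'$ so that the first and third terms have $\cP_{m,k}$-seminorm below $\epsilon/3$. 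For this fixed $\bdelta'$, the compact support of $\psi(\bdelta'\cdot)$ localises $G_{\bdelta,\bdelta'}$ to a fixed compact set on which $\varphi(\bdelta\blambda)-1 \to 0$ uniformly together with all partial derivatives as $\bdelta\to\zero$; hence $G_{\bdelta,\bdelta'}\to 0$ in $\cS(\R^2)$. Since the bound in Proposition~\ref{prop:basicprop}(ii) depends only on finitely many Schwartz seminorms of the symbol (traceable through Corollary~\ref{cor:gen-local}), this gives $\cP_{m,k}(G_{\bdelta,\bdelta'}(\sqrt{L})\phi) \to 0$, making the middle term below $\epsilon/3$ for small $\bdelta$. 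Convergence \eqref{conv-2} in $\cS'$ is then immediate by duality: $\langle\varphi(\bdelta\sqrt{L})f,\phi\rangle = \langle f,\varphi(\bdelta\sqrt{L})\phi\rangle \to \langle f,\phi\rangle$.

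For the $L^p$ convergence ($1\le p\le\infty$, $L^\infty=\text{UCB}$), the bound $\sup_\xx\int|\KK_{\varphi(\bdelta\sqrt{L})}(\xx,\yy)|d\mu(\yy)\le c$ uniformly in $\bdelta$ (via $|\KK|\le c\DD_{\bdelta,\bsigma}$, \eqref{D-D*}, \eqref{tech-1}) gives uniform $L^p$-boundedness by Schur. The identity $\int\KK_{\varphi(\bdelta\sqrt{L})}(\xx,\yy)d\mu(\yy) = \varphi(\zero) = 1$ from Theorem~\ref{thm:gen-local} yields the approximate-identity cancellation $\varphi(\bdelta\sqrt{L})f(\xx)-f(\xx) = \int\KK_{\varphi(\bdelta\sqrt{L})}(\xx,\yy)(f(\yy)-f(\xx))d\mu(\yy)$. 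A standard splitting of this integral over $\rho_i(x_i,y_i)\le r$ vs.\ $\rho_i(x_i,y_i)>r$, using uniform continuity of $f$ in the UCB case and density of compactly supported continuous functions together with $L^p$-continuity of translations in the $L^p$ case, completes the argument. The main obstacle is the uniform-boundedness step on $\cS$: the $\bdelta$-dependent kernel bounds have to be juggled carefully through the change-of-centers inequalities to land on a $\bdelta$-independent estimate via \eqref{tech-4}.
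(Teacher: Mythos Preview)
Your proof is correct and follows a genuinely different route from the paper's. The paper proceeds directly: it first establishes a H\"older-type estimate for test functions (Lemma~\ref{lem:S}, itself proved via the Calder\'on decomposition of Corollary~\ref{cor:Calderon}), then writes
\[
L^{\bnu}\big(\phi-\varphi(\bdelta\sqrt{L})\phi\big)(\xx)=\int_\XX \KK_{\varphi(\bdelta\sqrt{L})}(\xx,\yy)\big[L^{\bnu}\phi(\xx)-L^{\bnu}\phi(\yy)\big]\,d\mu(\yy)
\]
using $\int\KK=\varphi(\zero)=1$, and splits the integral into the near region $Q(\xx,\one)$ (where Lemma~\ref{lem:S} supplies a factor $\sum_i\rho_i(x_i,y_i)^{\alpha_i}$) and its complement (where the kernel decay alone suffices). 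This yields the explicit quantitative bound $\cP_{n,\ell}(\phi-\varphi(\bdelta\sqrt{L})\phi)\le c\,\cP_{m+n,k}(\phi)(\delta_1^{\alpha_1}+\delta_2^{\alpha_2})$. Your argument instead treats Theorem~\ref{thm:CRF} as a black box and reaches the general $\varphi$ by a three-term/equicontinuity argument; the key observation that \eqref{tech-4} gives $\bdelta$-uniform bounds for $\varphi(\bdelta\sqrt{L})$ on $\cS$ is exactly right. This is cleaner and avoids the separate H\"older lemma, but it does not produce a rate of convergence, and the step ``the constants in Proposition~\ref{prop:basicprop}(ii) depend only on finitely many seminorms of the symbol'' --- while true --- needs the reader to trace back through Theorems~\ref{th:boxsupport}--\ref{thm:gen-local} to see that $c_\kk$ there is linear in $\|F\|_\infty+\|F\|_{W_\infty^{k_1+k_2}}$. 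The $L^p$ part is treated identically in both arguments.
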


For the proof of this theorem we need the following

\begin{lemma}\label{lem:S}
Let $\bsigma >\zero$,
$k \ge (\sigma_1+d_1)\vee(\sigma_2+d_2)$, and
$m > \frac{d_1+\alpha_1}{2} \vee \frac{d_2+\alpha_2}{2}$
with $\balpha=(\alpha_1,\alpha_2)$ from $(\ref{lip})$.
Then there exists a constant $c>0$ such that
for any $\phi\in \cS$ and $\xx, \yy\in \XX$
\begin{equation}\label{holder-S}
|\phi(\xx)-\phi(\yy)|
\le c\Big(\sum_{i=1,2}\rho_i(x_i, y_i)^{\alpha_i}\Big) \cP_{m,k}(\phi)
\big[\DD^*_{\one,\bsigma}(\xx, \xx_0)+\DD^*_{\one,\bsigma}(\yy, \xx_0)].
\end{equation}
\end{lemma}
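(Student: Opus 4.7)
The strategy is to combine the Calder\'on reproducing formula of Corollary~\ref{cor:Calderon} with the kernel estimates of Theorem~\ref{th:boxsupport}. First, if $\rho_i(x_i,y_i) \ge 1$ for some $i$, then $\sum_{i=1,2}\rho_i(x_i,y_i)^{\alpha_i} \ge 1$, and the desired inequality follows from the trivial pointwise bound $|\phi(\zz)| \le \cP_{m,k}(\phi)\DD^*_{\one,(k,k)}(\zz,\xx_0) \le \cP_{m,k}(\phi)\DD^*_{\one,\bsigma}(\zz,\xx_0)$ applied at $\zz=\xx$ and $\zz=\yy$ (where $k \ge \sigma_i$ is used). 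Hence it suffices to treat the case $\rho_i := \rho_i(x_i,y_i) \le 1$ for both $i=1,2$.

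Fix Littlewood--Paley functions $\varphi^i_0, \varphi^i$ ($i=1,2$) satisfying \eqref{phi-phi0}--\eqref{1dCalderon} and decompose, by Corollary~\ref{cor:Calderon}, $\phi = \sum_{\jj\in\bN_0^2} F_\jj$ with $F_\jj := \varphi_\jj(\sqrt L)\phi$. For $\jj$ with $j_1,j_2 \ge 1$, the decisive observation is that $\varphi^i$ is supported away from the origin by \eqref{phi-phi0}(ii), so one may write $\varphi^i(\lambda) = \lambda^{2m}\tilde\varphi^i(\lambda)$ with $\tilde\varphi^i \in \cC_c^\infty(\RR)$ even, yielding
\begin{equation*}
F_\jj = 2^{-2m(j_1+j_2)}\,\tilde\varphi_\jj(\sqrt L)\,L^{(m,m)}\phi,
\qquad
\tilde\varphi_\jj(\blambda) := \tilde\varphi^1(2^{-j_1}\lambda_1)\tilde\varphi^2(2^{-j_2}\lambda_2).
\end{equation*}
Applying Theorem~\ref{th:boxsupport} to $\tilde\varphi^1\otimes\tilde\varphi^2$ at scale $\bdelta = 2^{-\jj}$ with $R=2$ furnishes the decay bound $|\KK_{\tilde\varphi_\jj(\sqrt L)}(\xx,\zz)| \le c\,\DD_{2^{-\jj},\bkappa}(\xx,\zz)$ and, when $\rho_i \le 2^{-j_i}$ for both $i$, the H\"older estimate $|\KK(\xx,\zz) - \KK(\yy,\zz)| \le c\sum_i(2^{j_i}\rho_i)^{\alpha_i}\DD_{2^{-\jj},\bkappa}(\xx,\zz)$, with $\bkappa$ free to be chosen as large as needed. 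Coupled with $|L^{(m,m)}\phi(\zz)| \le \cP_{m,k}(\phi)\DD^*_{\one,(k,k)}(\zz,\xx_0)$, the inequalities \eqref{D-D*} and \eqref{tech-2}, the doubling relation $V(\xx,2^{-\jj})^{-1} \le c\,2^{\jj\cdot\dd}V(\xx,\one)^{-1}$ coming from \eqref{rect-doubling}, and the hypothesis $k \ge \sigma_i+d_i$, these estimates imply
\begin{equation*}
|F_\jj(\xx) - F_\jj(\yy)| \le c\,\omega_\jj\,\cP_{m,k}(\phi)\,\big[\DD^*_{\one,\bsigma}(\xx,\xx_0) + \DD^*_{\one,\bsigma}(\yy,\xx_0)\big]
\end{equation*}
with $\omega_\jj := 2^{j_1(d_1-2m)+j_2(d_2-2m)}\min\{1,\,\sum_{i=1,2}(2^{j_i}\rho_i)^{\alpha_i}\}$. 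The boundary indices $\jj=\zero$, $\jj=(0,j_2)$ and $\jj=(j_1,0)$ are handled analogously by factoring $\lambda^{2m}$ only in the coordinate where $j_i \ge 1$ and applying Theorem~\ref{th:boxsupport} at the appropriate mixed scale.

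The proof concludes by summing $\sum_{\jj\in\bN_0^2}\omega_\jj$. Split $\bN_0^2$ into four regions according to whether $2^{j_i}\rho_i \le 1$ or $>1$ for each $i$. In a region where $2^{j_i}\rho_i \le 1$, the contribution $(2^{j_i}\rho_i)^{\alpha_i}$ generates a partial geometric sum in $j_i$ with ratio $2^{d_i-2m+\alpha_i}$, bounded by $c\rho_i^{\alpha_i}$; in the complementary tail $2^{j_i}\rho_i > 1$ the minimum equals $1$ and the geometric tail $\sum_{j_i>\log_2(1/\rho_i)}2^{j_i(d_i-2m)}$ is dominated by $\rho_i^{2m-d_i} \le \rho_i^{\alpha_i}$, using $\rho_i \le 1$. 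The summation in the other coordinate converges because $m > d_{3-i}/2$, a consequence of the hypothesis. Thus $\sum_\jj\omega_\jj \le c\sum_i\rho_i^{\alpha_i}$, completing the estimate. The principal technical obstacle is this final summation: the strict inequality $m > (d_i+\alpha_i)/2$ is essential both to ensure convergence of the partial geometric series in the H\"older region and to make the geometric tail produce precisely the Hölder factor $\rho_i^{\alpha_i}$, and one must verify this balance across all four regimes together with the boundary-index analogues.
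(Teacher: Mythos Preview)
Your proof is correct and follows essentially the same route as the paper: Calder\'on decomposition via Corollary~\ref{cor:Calderon}, the factorization $\varphi_\jj(\sqrt L)\phi = 2^{-2m|\jj|}\omega(2^{-\jj}\sqrt L)L^{(m,m)}\phi$ for $\jj\in\bN^2$, kernel bounds from Theorem~\ref{th:boxsupport}, and summation using $m>(d_i+\alpha_i)/2$. The only cosmetic difference is in the summation bookkeeping: the paper replaces $\sum_i(2^{j_i}\rho_i)^{\alpha_i}$ by the cruder $2^{\jj\cdot\balpha}\sum_i\rho_i^{\alpha_i}$ uniformly (in both the H\"older and the trivial-bound cases), which makes $\sum_{\jj}$ a single geometric series, whereas you keep the sharper $\min\{1,\sum_i(2^{j_i}\rho_i)^{\alpha_i}\}$ and split into four regions---your preliminary reduction to $\rho_i\le 1$ is then needed precisely to convert the tail estimate $\rho_i^{2m-d_i}$ into $\rho_i^{\alpha_i}$.
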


\begin{proof}
Let $\varphi^1_0,\varphi^2_0, \varphi^1,\varphi^2 \in \cC^{\infty}(\RR)$ be functions
satisfying $(\ref{phi-phi0})$--$(\ref{1dCalderon})$
and
let $\varphi_\jj:=\varphi^1_{j_1}\otimes\varphi^2_{j_2}$
be just as in Corollary \ref{cor:Calderon}.
Then by Corollary \ref{cor:Calderon}
\begin{equation}\label{cald-1-Hp}
\phi=\sum_{\jj\in\bN_0^2}\varphi_\jj(\sqrt{L})\phi
\quad \mbox{ in }\; \cS.
\end{equation}
We write
\begin{align}
\phi(\xx)-\phi(\yy)&=\sum_{\jj\in\bN_0^2}\big(\varphi_\jj(\sqrt{L})\phi(\xx)-\varphi_\jj(\sqrt{L})\phi(\yy)\big)
=:\sum_{k=1}^{4}\sum_{\jj\in \cJ_k}U_{\jj}(\xx,\yy),
\label{L1Hp1}
\end{align}
where $\cJ_1:=\bN^2$, $\cJ_2:=\{(0,0)\}$, $\cJ_3:=\bN\times\{0\}$ and $\cJ_4:=\{0\}\times\bN$.

We next estimate $\sum_{\jj\in \cJ_1}|U_{\jj}(\xx,\yy)|$.
Let $\mm:=(m,m)$
and set
\begin{equation*}
\omega(\lambda_1, \lambda_2):=|\lambda_1|^{-2m}|\lambda_2|^{-2m}\varphi^1(\lambda_1)\varphi^2(\lambda_2).
\end{equation*}
Clearly, $\omega(\pm\lambda_1,\pm\lambda_2)=\omega(\lambda_1,\lambda_2)$ and $\supp\omega\subset [-2,2]\setminus[-1,1]$.
%\{\lambda:1/2\le|\lambda_i|\le 2, i=1,2\}$.
%
For $\jj\in \cJ_1=\bN^2$ and $\phi\in\cS$
we have
$\varphi_\jj(\sqrt{L})\phi=2^{-2m(j_1+j_2)}\omega(2^{-\jj}\sqrt{L})L^\mm\phi$
and, therefore,
\begin{align}\label{L1Hp2}
\varphi_\jj(\sqrt{L})\phi(\xx)-&\varphi_\jj(\sqrt{L})\phi(\yy)\nonumber
\\
&= 2^{-2m|\jj|}\int_{\XX}\big(\KK_{\omega(2^{-\jj}\sqrt{L})}(\xx,\zz)-\KK_{\omega(2^{-\jj}\sqrt{L})}(\yy,\zz)\big)L^{\mm}\phi(\zz)d\mu(\zz).
\end{align}
By Theorem~\ref{th:boxsupport} it readily follows that
\begin{align}
|\KK_{\omega(2^{-\jj}\sqrt{L})}(\xx,\zz)|&\le c_{\bsigma}\DD_{2^{-\jj},\bsigma+3\dd/2}(\xx,\zz)
\le cV(\xx,2^{-\jj})^{-1}\DD_{2^{-\jj},\bsigma+\dd}^{*}(\xx,\zz)
\nonumber
\\
&\le c2^{\jj\cdot\dd}V(\xx,\one)^{-1}\DD_{\one,\bsigma+\dd}^{*}(\xx,\zz),
\label{L1Hp3}
\end{align}
in light of \eqref{D-D*} and \eqref{rect-doubling}.
Moreover, if $\rho_i(x_i,y_i)\le 2^{-j_i}$, $i=1,2$, then
\begin{align}
|\KK_{\omega(2^{-\jj}\sqrt{L})}(\xx,\zz)-&\KK_{\omega(2^{-\jj}\sqrt{L})}(\yy,\zz)| \nonumber
\\
&\le c\Big(\sum_{i=1,2}(2^{j_i}\rho_i(x_i,y_i))^{\alpha_i}\Big) 2^{\jj\cdot\dd}V(\xx,\one)^{-1}\DD_{\one,\bsigma+\dd}^{*}(\xx,\zz),
\label{L1Hp4}
\end{align}
Note also that, since $\phi\in\cS$,
\begin{equation}
|L^{\mm}\phi(\zz)|\le \cP_{m,k}(\phi)\DD^{*}_{\one,\bsigma+\dd}(\zz,\xx_0).
\label{L1Hp5}
\end{equation}
Combining (\ref{L1Hp2}), (\ref{L1Hp4}) and (\ref{L1Hp5}) we conclude that
if $\rho_i(x_i,y_i)\le 2^{-j_i}$, $i=1,2$, then
\begin{align}\label{L1Hp6}
|\varphi_\jj(\sqrt{L})\phi(\xx)&-\varphi_\jj(\sqrt{L})\phi(\yy)|
\le c 2^{-2m|\jj|}2^{\jj\cdot\dd}\Big(\sum_{i=1,2}(2^{j_i}\rho_i(x_i,y_i))^{\alpha_i}\Big) \cP_{m,k}(\phi)\nonumber
\\
&\times V(\xx,\one)^{-1}\int_{\XX}\DD_{\one,\bsigma+\dd}^{*}(\xx,\zz)\DD^{*}_{\one,\bsigma+\dd}(\zz,\xx_0)d\zz
\\
&\le c \cP_{m,k}(\phi)2^{-2m|\jj|}2^{\jj\cdot\dd}2^{\jj\cdot\balpha}
\Big(\sum_{i=1,2}\rho_i(x_i,y_i)^{\alpha_i}\Big) \DD_{\one,\bsigma}^{*}(\xx,\xx_0), \nonumber
\end{align}
where we used (\ref{tech-2}).

Assume $\rho_{\iota}(x_\iota,y_\iota)>2^{-j_\iota}$ for some $\iota\in\{1,2\}$.
Then by (\ref{L1Hp3}), (\ref{L1Hp5}) and (\ref{tech-2}) we obtain, for $\xx\in \XX$,
\begin{align*}
&|\varphi_\jj(\sqrt{L})\phi(\xx)|\le 2^{-2m|\jj|}\int_{\XX}|\KK_{\omega(2^{-\jj}\sqrt{L})}(\xx,\zz)||L^{\mm}\phi(\zz)|d\zz
\\
&\le c 2^{-2m|\jj|}2^{\jj\cdot\dd}\cP_{m,k}(\phi)(2^{\iota}\rho_{\iota}(x_{\iota},y_{\iota}))^{\alpha_\iota}
V(\xx,\one)^{-1}\int_{\XX}\DD_{\one,\bsigma+\dd}^{*}(\xx,\zz)\DD^{*}_{\one,\bsigma+\dd}(\zz,\xx_0)d\zz
\\
&\le c\cP_{m,k}(\phi)2^{-2m|\jj|}2^{\jj\cdot\dd}2^{\jj\cdot\balpha}
\Big(\sum_{i=1,2}\rho_i(x_i,y_i)^{\alpha_i}\Big) \DD_{\one,\bsigma}^{*}(\xx,\xx_0),
\end{align*}
implying
\begin{align*}%\label{L1Hp8}
&|\varphi_\jj(\sqrt{L})\phi(\xx)-\varphi_\jj(\sqrt{L})\phi(\yy)|
\le|\varphi_\jj(\sqrt{L})\phi(\xx)|+|\varphi_\jj(\sqrt{L})\phi(\yy)|\nonumber
\\
&\le c 2^{-2m|\jj|}2^{\jj\cdot\dd}2^{\jj\cdot\balpha}\Big(\sum_{i=1,2}\rho_i(x_i,y_i)^{\alpha_i}\Big)
\cP_{m,k}(\phi)\big[\DD^{*}_{\one,\bsigma}(\xx,\xx_0)+\DD_{\one,\bsigma}^{*}(\yy,\xx_0)\big].
\end{align*}

Summing up \eqref{L1Hp6} and the above inequalities over $\jj\in\cJ_1=\bN^2$,
using the assumption that
$m > \frac{d_1+\alpha_1}{2} \vee \frac{d_2+\alpha_2}{2}$,
we get
\begin{align}\label{L1HpS1}
\sum_{\jj\in \cJ_1}&|U_{\jj}(\xx,\yy)| \nonumber
\\
&\le c\cP_{m,k}(\phi)\Big(\sum_{i=1,2}\rho_i(x_i,y_i)^{\alpha_i}\Big)\big[\DD^{*}_{\one,\bsigma}(\xx,\xx_0)+\DD_{\one,\bsigma}^{*}(\yy,\xx_0)\big].
\end{align}

The estimation of the sums $\sum_{\jj\in \cJ_i}|U_{\jj}(\xx,\yy)|$, $i=2,3,4$,
is similar (simpler) and we omit it.
The desired estimate \eqref{holder-S} follows from these estimates, \eqref{L1Hp1}, and \eqref{L1HpS1}.
\end{proof}

\begin{proof}[Proof of Theorem~\ref{thm:converge}]
We will prove \eqref{conv-1}. Then \eqref{conv-2} follows by duality.
We need to show that for any $n,\ell\in \bN_0$ and $\phi\in \cS$
\begin{equation}\label{limit-0}
\lim_{\bdelta\to\zero} \cP_{n,\ell}(\phi-\varphi(\bdelta \sqrt L)\phi) = 0.
\end{equation}
By Theorem~\ref{thm:gen-local} it follows that for any $\bsigma>\zero$ there exists a constant $c_{\bsigma}>0$ such that
\begin{equation}\label{ker-1}
|\KK_{\varphi(\bdelta \sqrt L)}(\xx,\yy)|\le c_\bsigma V(\xx,\bdelta)^{-1}\DD_{\bdelta,\bsigma}^*(\xx, \yy)
\end{equation}
and
\begin{equation}\label{ker-2}
\int_{\XX}\KK_{\varphi(\bdelta \sqrt L)}(\xx,\yy)d\mu(\yy)=\varphi(\zero)=1.
\end{equation}
Fix $n,\ell\in \bN_0$ and let $\bell:=(\ell,\ell)$ and $\bnu\in\bN_0^2$ be so that $0\le \nu_i\le n$, $i=1,2$.
We choose $\bsigma>\dd+\bell+\balpha$.
Also, let $m,k\in \bN_0$ be just as in Lemma~\ref{lem:S},
i.e. $m > \frac{d_1+\alpha_1}{2} \vee \frac{d_2+\alpha_2}{2}$ and $k \ge (\sigma_1+d_1)\vee(\sigma_2+d_2)$.
To prove \eqref{limit-0} we have to deal with
\begin{align}\label{P1Hp3}
\DD_{\one,\bell}^{*}&(\xx,\xx_0)^{-1}|L^{\bnu}(\phi-\varphi(\bdelta\sqrt{L})\phi)(\xx)|
\nonumber
\\
&\le \DD_{\one,\bell}^{*}(\xx,\xx_0)^{-1}\int_{\XX}|\KK_{\varphi(\bdelta \sqrt L)}(\xx,\yy)||L^{\bnu}\phi(\xx)-L^{\bnu}\phi(\yy)|d\mu(\yy)
\nonumber
\\
&\le c\DD_{\one,\bell}^{*}(\xx,\xx_0)^{-1}\int_{\XX}\frac{\DD_{\bdelta,\bsigma}^*(\xx, \yy)}{V(\xx,\bdelta)}|L^{\bnu}\phi(\xx)-L^{\bnu}\phi(\yy)|d\mu(\yy)
\\
&\le c\DD_{\one,\bell}^{*}(\xx,\xx_0)^{-1}\int_{Q(\xx,\one)}\dots + c\DD_{\one,\bell}^{*}(\xx,\xx_0)^{-1}\int_{\XX\setminus Q(\xx,\one)}\dots
\nonumber
\\
&=: W_1(\xx)+W_2(\xx), \nonumber
\end{align}
where we used \eqref{ker-1} and \eqref{ker-2}.
Recall that $Q(\xx,\one):= B_1(x_1, 1)\times B_2(x_2,1)$, see \eqref{rectangles}.

We focus first on the estimation of $W_1(\xx)$. As $\phi\in\cS$ we have $L^{\bnu}\phi\in\cS$,
and in light of \eqref{norm-S}
$\cP_{m,k}(L^{\bnu}\phi) \le \cP_{m+n,k}(\phi)$.
We use this and Lemma~\ref{lem:S} to obtain
\begin{align}\label{P1Hp4}
W_1(\xx)&\le c\cP_{m+n,k}(\phi)\DD_{\one,\bell}^{*}(\xx,\xx_0)^{-1} \nonumber
\\
&\times\int_{Q(\xx,\one)}\Big(\sum_{i=1,2}\rho_i(x_i, y_i)^{\alpha_i}\Big)
\big[\DD^*_{\one,\bsigma}(\xx, \xx_0)+\DD^*_{\one,\bsigma}(\yy, \xx_0)]\frac{\DD_{\bdelta,\bsigma}^*(\xx, \yy)}{V(\xx,\bdelta)}d\mu(\yy)
\nonumber
\\
&=c\cP_{m+n,k}(\phi)\Big(\int_{Q(\xx,\one)}\DD^*_{\one,\bsigma}(\xx, \xx_0) \cdots + \int_{Q(\xx,\one)}\DD^*_{\one,\bsigma}(\yy, \xx_0) \cdots\Big)
\\
&=:c\cP_{m+n,k}(\phi)(J_1(\xx)+J_2(\xx)). \nonumber
\end{align}
To estimate $J_1(\xx)$ we note first that
$\rho_i(x_i, y_i)^{\alpha_i}<\delta_i^{\alpha_i}(1+\delta_i^{-1}\rho_i(x_i, y_i))^{\alpha_i}$,
$i=1,2$,
and hence
\begin{equation}\label{P1Hp5}
\sum_{i=1,2}\rho_i(x_i, y_i)^{\alpha_i}<\frac{\delta_1^{\alpha_1}+\delta_2^{\alpha_2}}{\DD^*_{\bdelta,\balpha}(\xx,\yy)}.
\end{equation}
From the choice of $\bsigma$ it follows that $\bsigma\ge\bell$ and $\bsigma>\dd+\balpha$.
Then using these, \eqref{P1Hp5}, and \eqref{tech-1}) we get
\begin{align}
J_1(\xx)&\le c(\delta_1^{\alpha_1}+\delta_2^{\alpha_2})V(\xx,\bdelta)^{-1}
\int_{\XX}\DD^*_{\bdelta,\bsigma-\balpha}(\xx,\yy)d\mu(\yy)
\le c(\delta_1^{\alpha_1}+\delta_2^{\alpha_2}).\label{P1Hp6}
\end{align}

We next estimate $J_2(\xx)$. We may assume $\bdelta<\one$.
The obvious inequalities
$1+\rho_i(x_i,x_{0i})\le(1+\delta_i^{-1}\rho_i(x_i,y_i))(1+\rho_i(y_i,x_{0i}))$, $i=1,2$,
implies that
\begin{equation*}
\DD_{\one,\bell}^{*}(\xx,\xx_0)^{-1} \le  \DD_{\bdelta,\bell}^{*}(\xx,\yy)^{-1}\DD_{\one,\bell}^{*}(\yy,\xx_0)^{-1}.
\end{equation*}
From this and \eqref{P1Hp5} it follows that
\begin{align}\label{P1Hp7}
&\frac{(\sum_{i=1,2}\rho_i(x_i, y_i)^{\alpha_i})\DD^*_{\one,\bsigma}(\yy, \xx_0)\DD_{\bdelta,\bsigma}^*(\xx, \yy)}{\DD_{\one,\bell}^{*}(\xx,\xx_0)}
\nonumber
\\
&\hspace{1.5in}\le (\delta_1^{\alpha_1}+\delta_2^{\alpha_2})\DD_{\bdelta,\bsigma-\bell-\balpha}^{*}(\xx,\yy)\DD_{\one,\bsigma-\bell}^{*}(\yy,\xx_0)
\\
&\hspace{1.5in}\le (\delta_1^{\alpha_1}+\delta_2^{\alpha_2})\DD_{\bdelta,\bsigma-\bell-\balpha}^{*}(\xx,\yy), \nonumber
\end{align}
where we used that $\bsigma>\bell$.
We use \eqref{P1Hp7} and \eqref{tech-1} and the fact that $\bsigma>\dd+\bell+\balpha$ to derive
\begin{equation*}
J_2(\xx)\le c(\delta_1^{\alpha_1}+\delta_2^{\alpha_2})V(\xx,\bdelta)^{-1}
\int_{\XX}\DD^*_{\bdelta,\bsigma-\balpha}(\xx,\yy)d\mu(\yy)
\le c(\delta_1^{\alpha_1}+\delta_2^{\alpha_2}).
\end{equation*}
From this, \eqref{P1Hp4}, and \eqref{P1Hp6} it follows that
\begin{equation}\label{est-W1}
W_1(x) \le c \cP_{m+n,k}(\phi)(\delta_1^{\alpha_1}+\delta_2^{\alpha_2}).
\end{equation}

We next estimate $W_2(\xx)$. We split it into two:
\begin{align}\label{P1Hp9}
W_2(\xx)&\le c\DD_{\one,\bell}^{*}(\xx,\xx_0)^{-1}
\int_{\XX\setminus Q(\xx,\one)}\frac{\DD_{\bdelta,\bsigma}^*(\xx, \yy)}{V(\xx,\bdelta)}|L^{\bnu}(\phi)(\xx)|d\mu(\yy)
\nonumber
\\
&+c\DD_{\one,\bell}^{*}(\xx,\xx_0)^{-1}
\int_{\XX\setminus Q(\xx,\one)}\frac{\DD_{\bdelta,\bsigma}^*(\xx, \yy)}{V(\xx,\bdelta)}|L^{\bnu}(\phi)(\yy)|d\mu(\yy)
\\
&=: W_{21}(\xx)+W_{22}(\xx).\nonumber
\end{align}
Note that because $0\le \nu_i\le n$ and $k>\sigma_1\vee\sigma_2$ we have
\begin{equation}\label{P1Hp10}
|L^{\bnu}\phi(\zz)|\le \cP_{n,k}(\phi)\DD^{*}_{\one,\bsigma}(\zz,\xx_0),\quad \forall\zz\in \XX,
\end{equation}
and using that $\bsigma>\bell$ we obtain
\begin{equation}
\label{P1Hp11}
W_{21}(\xx)\le \cP_{n,k}(\phi)\int_{\XX\setminus Q(\xx,\one)}\frac{\DD_{\bdelta,\bsigma}^*(\xx, \yy)}{V(\xx,\bdelta)}d\mu(\yy).
\end{equation}
Clearly, for any $\yy\in\XX \setminus Q(\xx,\one)$, there exists $\iota\in\{1,2\}$ such that $\rho_{\iota}(x_\iota,y_\iota)>1$
and hence
\begin{equation}\label{P1Hp12}
1<\delta_{\iota}^{\alpha_\iota}(1+\delta_\iota^{-1}\rho_{\iota}(x_\iota,y_\iota))^{\alpha_{\iota}}
<\frac{\delta_1^{\alpha_1}+\delta_2^{\alpha_2}}{\DD^*_{\bdelta,\balpha}(\xx,\yy)}.
\end{equation}
Combining \eqref{P1Hp11}, \eqref{P1Hp12} and using (\ref{tech-1}) and the fact that $\bsigma>\dd+\balpha$ we get
\begin{align}\label{P1Hp13}
W_{21}(\xx)
&\le c\cP_{n,k}(\phi)(\delta_1^{\alpha_1}+\delta_2^{\alpha_2})
\int_{\XX}\frac{\DD^{*}_{\bdelta,\bsigma-\balpha}(\xx,\yy)}{V(\xx,\bdelta)}d\mu(\yy)
\\
&\le c\cP_{n,k}(\phi)(\delta_1^{\alpha_1}+\delta_2^{\alpha_2}).\nonumber
\end{align}
Similarly
\begin{align}\label{P1Hp14}
W_{22}(\xx)
&\le c\cP_{n,k}(\phi)(\delta_1^{\alpha_1}+\delta_2^{\alpha_2})
\int_{\XX}\frac{\DD^{*}_{\bdelta,\bsigma-\balpha}(\xx,\yy)\DD^*_{\one,\bsigma}(\yy,\xx_0)}{\DD^*_{\one,\mm}(\xx,\xx_0)V(\xx,\bdelta)}d\mu(\yy)
\nonumber
\\
&\le c\cP_{n,k}(\phi)(\delta_1^{\alpha_1}+\delta_2^{\alpha_2})
\int_{\XX}\frac{\DD^{*}_{\bdelta,\bsigma-\bell-\balpha}(\xx,\yy)}{V(\xx,\bdelta)}d\mu(\yy)
\\
&\le c\cP_{n,k}(\phi)(\delta_1^{\alpha_1}+\delta_2^{\alpha_2}), \nonumber
\end{align}
where we used \eqref{tech-1} and the fact that $\bsigma>\dd+\bell+\balpha$.
Combining \eqref{P1Hp9}, \eqref{P1Hp13}, and \eqref{P1Hp14} we get
\begin{equation}\label{est-W2}
W_2(x) \le c \cP_{n,k}(\phi)(\delta_1^{\alpha_1}+\delta_2^{\alpha_2}).
\end{equation}

Finally, \eqref{est-W1} and \eqref{est-W2} yield
\begin{equation*}
\DD_{\one,\bell}^{*}(\xx,\xx_0)^{-1}|L^{\bnu}(\phi-\varphi(\bdelta\sqrt{L})\phi)(\xx)|
\le c \cP_{m+n,k}(\phi)(\delta_1^{\alpha_1}+\delta_2^{\alpha_2}),
\end{equation*}
and hence
\begin{equation*}
\cP_{n,\ell}\big(\phi-\varphi(\bdelta \sqrt L)\phi\big)\le c \cP_{m+n,k}(\phi)(\delta_1^{\alpha_1}+\delta_2^{\alpha_2}).
\end{equation*}
In turn this yields \eqref{limit-0}.

The convergence in \eqref{conv-2} in $L^p$ for $f\in L^p$ follows by a standard argument using \eqref{ker-1} and \eqref{ker-2}.
\end{proof}

\section{Spectral spaces}\label{spectral-spaces}

In this section we deal with spectral spaces,
which can be viewed as a generalization of the band limited functions in the classical setting on $\RR^n$.
Our main goal here is to establish Peetre and Nikolski type inequalities that will be used in what follows.

\begin{definition}\label{def:spec-spaces}
The spectral space $\Sigma_\tt$, $\tt=(t_1,t_2)\in(0,\infty)^2$, is defined by
\begin{equation}\label{spec-sp}
\Sigma_\tt :=\{g\in \cS': \theta (\sqrt L)g = g, \text{ for all } \theta\in \cA_\tt\},
\quad \cS':=\cS'(L_1, L_2),
\end{equation}
where
$$
\cA_\tt:=\{\theta\in \cC^\infty_0(\bR^2): \theta(\pm\lambda_1,\pm\lambda_2)=\theta(\lambda_1,\lambda_2)
\text{ and } \theta\equiv 1 \text{ on } [0, t_1]\times[0,t_2]\}
$$
with $\cC^\infty_0(\bR^2)$ being the set of all compactly supported $\cC^\infty$ functions on $\RR^2$.
\end{definition}

The above definition and Proposition~\ref{prop:slow} imply that each $g\in \Sigma_\tt$
is a slowly growing continuous function.

\begin{proposition}\label{prop:slow-2}
Each $g\in \Sigma_\tt$, $\tt\in(0,\infty)^2$, is a slowly growing continuous function in the sense that
there exist constants $N,c>0$ $($depending on $g$$)$ such that
\begin{align}
|g(\xx)| &\le c V(\xx_0,1)^{-1}\prod_{i=1,2}(t_i^N+t_i^{-N})\big(1+\rho_i(x_i,x_{0i})\big)^{N}
\quad\hbox{and} \label{slow-12}
\\
|g(\xx)-g(\xx')|
&\le c V(\xx_0,1)^{-1}\prod_{i=1,2}(t_i^N+t_i^{-N})\big(1+\rho_i(x_i,x_{0i})\big)^{N}
\sum_{i=1,2}\rho_i(x_i,x'_i)^{\alpha_i}  \label{cont-12}
\end{align}
if $\rho_i(x_i,x'_i)\le t_i^{-1}$, $i=1,2$.
Here $\alpha_1, \alpha_2$ are from \eqref{lip}.
\end{proposition}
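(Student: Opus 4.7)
The plan is to reduce the proposition to a direct application of Proposition~\ref{prop:slow} by realizing a suitable element of $\cA_\tt$ as the rescaling of a universal cutoff.

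First, I fix a universal function $\theta_0 \in \cC^\infty_0(\RR^2)$ that is real-valued, even in each variable, identically $1$ on $[-1,1]^2$, and supported in $[-2,2]^2$ (for instance $\theta_0(\lambda_1,\lambda_2) = \phi(\lambda_1)\phi(\lambda_2)$ with $\phi$ the univariate cutoff used in the proof of Lemma~\ref{lem:dec-unity}). Setting $\theta(\lambda_1,\lambda_2) := \theta_0(\lambda_1/t_1,\lambda_2/t_2)$, the function $\theta$ lies in $\cA_\tt$: it is smooth, compactly supported, even in each variable, and identically $1$ on $[0,t_1]\times[0,t_2]$. Consequently, from the defining property of $\Sigma_\tt$,
$$
g \;=\; \theta(\sqrt L)\,g \;=\; \theta_0(\tt^{-1}\sqrt L)\,g \quad \text{in } \cS',
$$
where $\tt^{-1}:=(1/t_1,1/t_2)$.

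Next, I would apply Proposition~\ref{prop:slow} with $\varphi=\theta_0$ (which belongs to $\cS(\RR^2)$, is real-valued and even in each variable) and $\bdelta=\tt^{-1}$. The proposition asserts that $\theta_0(\tt^{-1}\sqrt L)g$ is a slowly growing continuous function and furnishes constants $N,c>0$, depending on $g$ (and on the fixed universal $\theta_0$), such that
$$
\bigl|\theta_0(\tt^{-1}\sqrt L)g(\xx)\bigr| \;\le\; c\,V(\xx_0,\tt^{-1})^{-1}\prod_{i=1,2}\bigl(t_i^{-N}+t_i^{N}\bigr)\bigl(1+\rho_i(x_i,x_{0i})\bigr)^{N},
$$
together with the analogous Hölder-type bound from \eqref{cont-1}, valid whenever $\rho_i(x_i,x_i')\le 1/t_i$. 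Identifying $g$ with this continuous representative, which is legitimate since the two agree as elements of $\cS'$, yields bounds of exactly the shape of \eqref{slow-12} and \eqref{cont-12}, up to the form of the volume factor.

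Finally, I would convert $V(\xx_0,\tt^{-1})^{-1}$ into $V(\xx_0,1)^{-1}$ using the doubling inequality \eqref{doubling-d}. For each $i=1,2$, this gives $V_i(x_{0i},1)\le c(1+t_i)^{d_i}V_i(x_{0i},1/t_i)$, hence $V_i(x_{0i},1/t_i)^{-1}\le c(1+t_i)^{d_i}V_i(x_{0i},1)^{-1}$; multiplying over $i$ yields $V(\xx_0,\tt^{-1})^{-1}\le c\prod_{i=1,2}(1+t_i)^{d_i}V(\xx_0,1)^{-1}$. Absorbing the factors $(1+t_i)^{d_i}$ into $t_i^{N}+t_i^{-N}$ by enlarging $N$ finishes both \eqref{slow-12} and \eqref{cont-12}. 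There is no substantial obstacle to this argument: the entire proof is the rescaling trick plus an appeal to Proposition~\ref{prop:slow} and a routine doubling estimate. The only place asking for mild care is verifying that the rescaled universal cutoff $\theta$ genuinely belongs to $\cA_\tt$, which is secured by demanding that $\theta_0\equiv 1$ on $[-1,1]^2$ rather than just on a smaller set.
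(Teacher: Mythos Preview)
Your argument is correct and is exactly the approach the paper intends: the paper does not spell out a proof but states just before Proposition~\ref{prop:slow-2} that it follows from Definition~\ref{def:spec-spaces} and Proposition~\ref{prop:slow}, which is precisely your rescaling trick $g=\theta_0(\tt^{-1}\sqrt L)g$ followed by an application of Proposition~\ref{prop:slow} with $\bdelta=\tt^{-1}$. Your doubling step to pass from $V(\xx_0,\tt^{-1})^{-1}$ to $V(\xx_0,1)^{-1}$ is the routine adjustment needed to match the stated form of the bounds.
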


The following simple claim will be needed:

\begin{proposition}\label{prop:spectral}
Let $\varphi\in \cC^\infty_0(\bR^2)$ be real-valued,
$\varphi(\pm\lambda_1,\pm\lambda_2)=\varphi(\lambda_1,\lambda_2)$,
and $\supp \varphi \subset [-t_1, t_1]\times[-t_2, t_2]$, $t_1, t_2>0$.
Then for any $f\in \cS'=\cS'(L_1, L_2)$ we have
$\varphi(\sqrt{L})f \in \Sigma_\tt$.
\end{proposition}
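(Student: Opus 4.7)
The plan is to unpack the definition of $\Sigma_\tt$: I must show that for every $\theta \in \cA_\tt$, one has $\theta(\sqrt L)\varphi(\sqrt L) f = \varphi(\sqrt L)f$ as elements of $\cS'$. Fix such a $\theta$. The first step, which is elementary but crucial, is to observe that $\theta\varphi \equiv \varphi$ as functions on $\R^2$. Indeed, by the evenness assumption on $\theta$ together with the condition $\theta\equiv 1$ on $[0,t_1]\times [0,t_2]$, we have $\theta \equiv 1$ on the full symmetric rectangle $[-t_1,t_1]\times[-t_2,t_2]$, and $\supp\varphi \subset [-t_1,t_1]\times[-t_2,t_2]$, so $\theta(\blambda)\varphi(\blambda) = \varphi(\blambda)$ for every $\blambda\in\R^2$.

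Next I would promote this pointwise equality to an operator identity on $L^2(\XX, d\mu)$. Apply Theorem~\ref{thm:F-G} (with $L_i$ replaced by $\sqrt{L_i}$, which is also nonnegative self-adjoint) to the bounded Borel functions $\theta$ and $\varphi$ of $(\sqrt{\lambda_1},\sqrt{\lambda_2})$: this gives
\begin{equation*}
\theta(\sqrt L)\,\varphi(\sqrt L) = (\theta\varphi)(\sqrt L) = \varphi(\sqrt L)
\quad\text{on }L^2(\XX,d\mu).
\end{equation*}
In particular, since $\cS\subset L^2$ (test functions decay rapidly), this identity holds when applied to any $\phi\in\cS$. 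Moreover Proposition~\ref{prop:basicprop}(ii) guarantees that $\theta(\sqrt L)\phi\in\cS$, so the expression $\varphi(\sqrt L)\theta(\sqrt L)\phi$ makes sense in $\cS$ and equals $\varphi(\sqrt L)\phi$.

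Finally, I would transfer the identity to distributions using Definition~\ref{def:phi-f} (twice). For any $\phi\in\cS$,
\begin{align*}
\langle \theta(\sqrt L)\varphi(\sqrt L)f,\,\phi\rangle
&= \langle \varphi(\sqrt L)f,\,\theta(\sqrt L)\phi\rangle
= \langle f,\,\varphi(\sqrt L)\theta(\sqrt L)\phi\rangle \\
&= \langle f,\,\varphi(\sqrt L)\phi\rangle
= \langle \varphi(\sqrt L)f,\,\phi\rangle,
\end{align*}
where the dualities are legal because $\theta$ and $\varphi$ are real-valued, even in each variable, and Schwartz (being $\cC^\infty_0$), so both act continuously on $\cS$. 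Since $\phi\in\cS$ was arbitrary and $\theta\in\cA_\tt$ was arbitrary, we conclude $\varphi(\sqrt L)f\in\Sigma_\tt$.

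The argument is essentially bookkeeping; the only point that requires care is making sure the two-parameter functional calculus identity $(\theta\varphi)(\sqrt L)=\theta(\sqrt L)\varphi(\sqrt L)$ from Theorem~\ref{thm:F-G} genuinely applies with $\sqrt{L_i}$ in place of $L_i$, and that the action on $\cS$ (rather than only on $L^2$) is justified via Proposition~\ref{prop:basicprop}(ii) so that the chain of dualities above is well-defined.
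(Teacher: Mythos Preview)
Your proof is correct and follows essentially the same approach as the paper: observe $\theta\varphi=\varphi$ pointwise, then lift this to the operator identity $\theta(\sqrt L)\varphi(\sqrt L)f=\varphi(\sqrt L)f$ on $\cS'$. The paper compresses all of this into one sentence invoking Proposition~\ref{prop:f-phi}, whereas you spell out the duality argument via Definition~\ref{def:phi-f} and Proposition~\ref{prop:basicprop}(ii); your added care about how the $L^2$ identity from Theorem~\ref{thm:F-G} transfers to $\cS$ and then $\cS'$ is entirely appropriate.
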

\begin{proof}
Let $\theta$ be a function as in Definition~\ref{def:spec-spaces}.
Then $\theta(\blambda)\varphi(\blambda)=\varphi(\blambda)$ and in light of Proposition~\ref{prop:f-phi}
for any $f\in \cS'$ we have
$
\theta (\sqrt L)\varphi(\sqrt L)f = \varphi(\sqrt L)f,
$
which completes the proof.
\end{proof}

\subsection{Peetre type maximal inequality}\label{subsec:peetre}

For reader's convenience we recall the following compact notation from \eqref{rectangles-gamma};
it will be used in what follows:
\begin{equation}\label{V-V12}
V(\xx,\tt^{-1})^{\bgamma}:= V_1(x_1,t_1^{-1})^{\gamma_1} V_2(x_2,t_2^{-1})^{\gamma_2},
\quad \tt\in \bR^2_+, \bgamma\in \bR^2.
\end{equation}

\begin{theorem}\label{thm:Peetre-max}
Let $\bgamma, \btau\in\RR^2$, $\btau> 2\dd$, $r>0$, and $\bnu\in \bN_0^2$.
Then there exists a constant $c>0$ %$c=c(\bgamma,r,\bnu)>0$
such that
for any $g\in\Sigma_\tt$ with $\tt\in [1,\infty)^2$ we have
\begin{equation}
\begin{aligned}\label{Peetre-max}
&\tt^{-2\bnu}\sup_{\yy\in \XX}\frac{V(\yy, \tt^{-1})^{\bgamma}|L^\bnu g(\yy)|}
{\prod_{i=1,2}\big(1+t_i\rho(x_i, y_i)\big)^{\tau_i/r}}
\\
&\quad\le c\sup_{\yy\in \XX}\frac{V(\yy, \tt^{-1})^{\bgamma}|g(\yy)|}
{\prod_{i=1,2}\big(1+t_i\rho(x_i, y_i)\big)^{\tau_i/r}}
\le c\cM_{r}\big(V(\cdot,\tt^{-1})^{\bgamma}g(\cdot)\big)(\xx), \;\; \xx\in \XX.
\end{aligned}
\end{equation}
\end{theorem}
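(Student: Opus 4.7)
The plan is to exploit the fact that $g\in\Sigma_\tt$ admits a reproducing formula with a highly localized kernel (via Corollary~\ref{cor:gen-local}), and to combine this with a Hölder-continuity bootstrap and a volume comparison at scale $\tt^{-1}$ between $\xx$ and $\yy$.

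\textbf{Setup.} I would first fix $\Psi\in \cC^\infty_0(\R^2)$, real-valued, with $\Psi(\pm\lambda_1,\pm\lambda_2)=\Psi(\lambda_1,\lambda_2)$, $\Psi\equiv 1$ on $[-1,1]^2$, and $\supp\Psi\subset[-2,2]^2$. Since $\tt\in[1,\infty)^2$, the function $\Psi(\tt^{-1}\cdot)$ belongs to $\cA_\tt$, so by definition of $\Sigma_\tt$, $g=\Psi(\tt^{-1}\sqrt L)g$, and therefore $L^\bnu g=L^\bnu\Psi(\tt^{-1}\sqrt L)g$. By Corollary~\ref{cor:gen-local} the latter is an integral operator whose kernel $\KK$ satisfies, for any $\bsigma>\zero$,
$$|\KK(\yy,\zz)|\le c_\bsigma\tt^{2\bnu}\DD_{\tt^{-1},\bsigma}(\yy,\zz),\qquad |\KK(\yy,\zz)-\KK(\yy',\zz)|\le c_\bsigma\tt^{2\bnu}\Bigl(\textstyle\sum_i (t_i\rho_i(y_i,y'_i))^{\alpha_i}\Bigr)\DD_{\tt^{-1},\bsigma}(\yy,\zz)$$
whenever $\rho_i(y_i,y'_i)\le t_i^{-1}$ (the Hölder estimate in the first variable follows from the symmetry of $\KK$). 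Since $g$ is continuous and slowly growing (Proposition~\ref{prop:slow-2}), Proposition~\ref{prop:f-phi} legitimizes the pointwise reproducing identity $L^\bnu g(\yy)=\int_\XX \KK(\yy,\zz)g(\zz)\,d\mu(\zz)$.

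\textbf{First inequality.} Denote the middle quantity in \eqref{Peetre-max} by $M(\xx)$. Inserting $|g(\zz)|\le M(\xx)V(\zz,\tt^{-1})^{-\bgamma}\prod_i(1+t_i\rho_i(x_i,z_i))^{\tau_i/r}$ into the reproducing formula and using in succession the volume-change inequality \eqref{V-gamma-xy}, the triangle bound $(1+t_i\rho_i(x_i,z_i))^{\tau_i/r}\le(1+t_i\rho_i(x_i,y_i))^{\tau_i/r}(1+t_i\rho_i(y_i,z_i))^{\tau_i/r}$, the reduction \eqref{D-D*}, and finally the integral estimate \eqref{tech-1}, with $\bsigma$ chosen so that $\sigma_i>3d_i/2+d_i|\gamma_i|+\tau_i/r$, produces
$$|L^\bnu g(\yy)|\le cM(\xx)\tt^{2\bnu}V(\yy,\tt^{-1})^{-\bgamma}\prod_i(1+t_i\rho_i(x_i,y_i))^{\tau_i/r}.$$
Multiplying by $\tt^{-2\bnu}V(\yy,\tt^{-1})^\bgamma$ and taking $\sup_\yy$ yields the first inequality. (Finiteness of $M(\xx)$ is handled by a standard truncation: run the argument with $M_K(\xx):=\sup_{\yy\in Q(\xx,K)}\{\cdots\}$, split the integral into $\zz$ near/far from $\xx$, bound the far part using the slow growth of $g$ from Proposition~\ref{prop:slow-2}, and let $K\to\infty$.)

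\textbf{Second inequality.} This is the Peetre-type step and the main obstacle. Applying the Hölder bound for $\KK$ above and repeating the estimation of the first inequality gives, for $\rho_i(y_i,y'_i)\le\eps t_i^{-1}$ with $0<\eps\le 1$,
$$|g(\yy)-g(\yy')|\le c\eps^{\alpha_1\wedge\alpha_2}V(\yy,\tt^{-1})^{-\bgamma}\prod_i(1+t_i\rho_i(x_i,y_i))^{\tau_i/r}M(\xx).$$
Write $h(\zz):=V(\zz,\tt^{-1})^\bgamma g(\zz)$. For $\zz\in Q(\yy,\eps\tt^{-1})$ the volumes satisfy $V(\yy,\tt^{-1})^\bgamma\sim V(\zz,\tt^{-1})^\bgamma$, so raising to the $r$th power and averaging over such $\zz$ yields
$$V(\yy,\tt^{-1})^{\bgamma r}|g(\yy)|^r\le \frac{c}{\mu(Q(\yy,\eps\tt^{-1}))}\int_{Q(\yy,\eps\tt^{-1})}|h(\zz)|^r d\mu(\zz)+c\eps^{r(\alpha_1\wedge\alpha_2)}\prod_i(1+t_i\rho_i(x_i,y_i))^{\tau_i}M(\xx)^r.$$
The inclusion $Q(\yy,\eps\tt^{-1})\subset Q(\xx,\eps\tt^{-1}+\rho(\xx,\yy))$ combined with the doubling inequality \eqref{rect-doubling} and the definition of $\cM_r$ gives
$$\frac{1}{\mu(Q(\yy,\eps\tt^{-1}))}\int_{Q(\yy,\eps\tt^{-1})}|h|^r d\mu\le c\prod_i(1+t_i\rho_i(x_i,y_i))^{d_i}\cM_r[h](\xx)^r.$$
Dividing through by $\prod_i(1+t_i\rho_i(x_i,y_i))^{\tau_i}$ (which dominates the $d_i$ powers since $\tau_i>2d_i$) and taking $\sup_\yy$ produces
$$M(\xx)^r\le c\cM_r[h](\xx)^r+c\eps^{r(\alpha_1\wedge\alpha_2)}M(\xx)^r.$$
Fixing $\eps$ so small that $c\eps^{r(\alpha_1\wedge\alpha_2)}\le 1/2$ absorbs the last term and yields the second inequality. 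The principal subtlety is this self-improvement step: the Hölder error already involves $M(\xx)$, so its finiteness must be established beforehand via truncation, and the balance between the Hölder scale $\eps$ and the Peetre weight $\tau_i/r$ is what makes the absorption legitimate.
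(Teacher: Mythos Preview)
Your argument is essentially the same bootstrap as the paper's, and the first inequality is handled identically. Two points of comparison are worth noting.

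First, for $r\ge 1$ the paper bypasses the bootstrap entirely: it applies H\"older's inequality (with exponents $r,r'$) directly to the reproducing integral and lands on $\cM_r$ in one step. Your H\"older-continuity route works for all $r>0$, so this is only a matter of economy.

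Second, and more substantively, your handling of the a~priori finiteness of $M(\xx)$ is too casual. The truncation $M_K(\xx)=\sup_{\yy\in Q(\xx,K)}\{\cdots\}$ is indeed finite, but the H\"older error term $\int_\XX|\KK(\yy,\zz)-\KK(\yy',\zz)||g(\zz)|\,d\mu(\zz)$ is an integral over \emph{all} of $\XX$; bounding $|g(\zz)|$ via $M_K$ works only for $\zz\in Q(\xx,K)$, and on the complement the slow-growth exponent $N$ of $g$ (which depends on $g$ and may exceed $\tau_i/r$) prevents the far contribution from vanishing uniformly as $K\to\infty$ when $\yy$ ranges near $\partial Q(\xx,K)$. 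The paper circumvents this by a two-stage argument: it first runs a version of the bootstrap with the Peetre exponent $\tau_i/r$ replaced by a large $\tilde N=\tilde N(g)$ chosen so that slow growth forces $\sup_\zz V(\zz,\tt^{-1})^{\bgamma}|g(\zz)|/\prod_i(1+t_i\rho_i(x_i,z_i))^{\tilde N}<\infty$; this yields a pointwise estimate on $|g|^r$, which is then fed back to obtain the inequality at the correct exponent $\tau_i/r$ but with a constant $c(g)$. That already gives $M(\xx)<\infty$ whenever $\cM_r(h)(\xx)<\infty$, and only then is your absorption step $c\eps^{r(\alpha_1\wedge\alpha_2)}M(\xx)^r\le \tfrac12 M(\xx)^r$ legitimate. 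Your outline would benefit from replacing the truncation sketch with this preliminary reduction.
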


\begin{proof}
We first establish the left-hand side inequality in \eqref{Peetre-max}.

Let $g\in\Sigma_\tt$, $\tt\in[1,\infty)^2$, and set $\bdelta:=\tt^{-1}=(t_1^{-1},t_2^{-1})$.
Let $\theta\in \cC^\infty_0(\bR^2)$,
$\theta(\pm\lambda_1,\pm\lambda_2)=\theta(\lambda_1,\lambda_2)$
and $\theta(\blambda)=1$ for $\blambda\in [0,1]^2$.
Then by \eqref{spec-sp} $g=\theta(\bdelta\sqrt{L})g$.
Let $\omega(\lambda_1, \lambda_2):= \lambda_1^{2\nu_1}\lambda_2^{2\nu_2}\theta(\lambda_1, \lambda_2)$.
Clearly,
$\omega(\bdelta \sqrt{L}) = \bdelta^{2\bnu}L^{\bnu}\theta(\bdelta\sqrt{L})$
and hence
\begin{equation}\label{L-nu-g-p}
L^\bnu g(\yy)
=L^\bnu\theta(\bdelta\sqrt{L})g(\yy) = \bdelta^{-2\bnu}\omega(\bdelta \sqrt{L})g(\yy)
= \bdelta^{-2\bnu}\int_\XX\KK_{\omega(\bdelta \sqrt{L})}(\yy,\zz)g(\zz) d\mu(\zz).
\end{equation}
We use Corollary~\ref{cor:gen-local} and \eqref{D-D*} to conclude that
for any $\bsigma\in(0,\infty)^2$
\begin{equation}\label{KL-theta-p}
|\KK_{\omega(\bdelta\sqrt{L})}(\yy,\zz)|\le c_\bsigma V(\xx,\bdelta)^{-1}\DD^*_{\bdelta,\bsigma}(\yy,\zz).
\end{equation}
Therefore,
\begin{equation}\label{P-1}
\bdelta^{2\bnu}|L^\bnu g(\yy)|\le c V(\yy,\bdelta)^{-1} \int_\XX \DD^*_{\bdelta,\bsigma}(\yy,\zz)|g(\zz)| d\mu(\zz).
\end{equation}
We know from \eqref{rectangles-gamma} that
\begin{equation}\label{P-2}
V(\yy,\bdelta)^{\bgamma}\le cV(\zz,\bdelta)^{\bgamma}\prod_{i=1,2}\big(1+\delta_i^{-1}\rho_i(y_i,z_i)\big)^{d_i|\gamma_i|}.
\end{equation}
and obviously
\begin{equation}\label{triangle}
1+\delta_i^{-1}\rho_i(x_i,z_i) \le (1+ \delta_i^{-1}\rho_i(x_i,y_i))(1+\delta_i^{-1}\rho_i(y_i,z_i)),
\quad i=1,2.
\end{equation}
We use these inequalities and \eqref{P-1} with $\sigma_i:=\tau_i/r+ d_i|\gamma_i| +d_i+1$
to obtain
\begin{align*}
&\frac{\bdelta^{2\bnu}V(\yy, \bdelta)^{\bgamma}|L^\bnu g(\yy)|}
{\prod_{i=1,2}\big(1+\delta_i^{-1}\rho(x_i, y_i)\big)^{\tau_i/r}}
\\
&\le c
\int_\XX \frac{V(\yy,\bdelta)^{-1}V(\zz, \bdelta)^{\bgamma}\prod_{i=1,2}\big(1+\delta_i^{-1}\rho_i(y_i,z_i)\big)^{d_i|\gamma_i|}|g(\zz)|}
{\prod_{i=1,2}\big(1+\delta_i^{-1}\rho(x_i, y_i)\big)^{\tau_i/r}
\big(1+\delta_i^{-1}\rho(y_i, z_i)\big)^{\tau_i/r+ d_i|\gamma_i| +d_i+1}} d\mu(\zz)
\\
&\le c V(\yy,\bdelta)^{-1}
\int_\XX \frac{V(\zz, \bdelta)^{\bgamma}|g(\zz)|}
{\prod_{i=1,2}\big(1+\delta_i^{-1}\rho(x_i, z_i)\big)^{\tau_i/r}
\prod_{i=1,2}\big(1+\delta_i^{-1}\rho(y_i, z_i)\big)^{d_i+1}} d\mu(\zz)
\\
&\le c\sup_{\zz\in \XX}\frac{V(\zz, \bdelta)^{\bgamma}|g(\zz)|}
{\prod_{i=1,2}\big(1+\delta_i^{-1}\rho(x_i, z_i)\big)^{\tau_i/r}}
\int_\XX \frac{V(\yy, \bdelta)^{-1}}
{\prod_{i=1,2}\big(1+\delta_i^{-1}\rho(y_i, z_i)\big)^{d_i+1}} d\mu(\zz)
\\
&\le c\sup_{\zz\in \XX}\frac{V(\zz, \bdelta)^{\bgamma}|g(\zz)|}
{\prod_{i=1,2}\big(1+\delta_i^{-1}\rho(x_i, z_i)\big)^{\tau_i/r}},
\end{align*}
which implies the left-hand side inequality in \eqref{Peetre-max}.
Above for the last inequality we used \eqref{tech-1}.

%%%%%%%%%%%%%%

\smallskip

To prove the right-hand side inequality in \eqref{Peetre-max} we consider three cases.

{\em Case 1:} $1<r<\infty$.
With the notation from above it follows from Corollary~\ref{cor:gen-local} and \eqref{D-D*} that
for any $\bsigma\in(0,\infty)^2$
\begin{equation}\label{Ker-theta}
|\KK_{\theta(\bdelta\sqrt{L})}(\xx,\yy)|\le c_\bsigma V(\xx,\bdelta)^{-1}\DD^*_{\bdelta,\bsigma}(\xx,\yy)
\end{equation}
and hence
\begin{equation}\label{theta-g}
|g(\yy)| = |\theta(\bdelta\sqrt{L})(\yy)|\le c V(\yy,\bdelta)^{-1} \int_\XX \DD^*_{\bdelta,\bsigma}(\yy,\zz)|g(\zz)| d\mu(\zz).
\end{equation}
Choose $\sigma_i:=\tau_i/r+ d_i|\gamma_i| +(d_i+1)/r'$, $i=1,2$, where $1/r+1/r'=1$.
Then just as above we use \eqref{P-2} and \eqref{triangle} to obtain
\begin{align*}
&\frac{V(\yy, \bdelta)^{\bgamma}|g(\yy)|}
{\prod_{i=1,2}\big(1+\delta_i^{-1}\rho(x_i, y_i)\big)^{\tau_i/r}}
\\
&\le c
\int_\XX \frac{V(\yy,\bdelta)^{-1}V(\zz, \bdelta)^{\bgamma}\prod_{i=1,2}\big(1+\delta_i^{-1}\rho_i(y_i,z_i)\big)^{d_i|\gamma_i|}|g(\zz)|}
{\prod_{i=1,2}\big(1+\delta_i^{-1}\rho(x_i, y_i)\big)^{\tau_i/r}
\big(1+\delta_i^{-1}\rho(y_i, z_i)\big)^{\tau_i/r+ d_i|\gamma_i| +(d_i+1)/r'}} d\mu(\zz)
\\
&\le c
\int_\XX \frac{V(\yy,\bdelta)^{-1}V(\zz, \bdelta)^{\bgamma}|g(\zz)|}
{\prod_{i=1,2}\big(1+\delta_i^{-1}\rho(x_i, z_i)\big)^{\tau_i/r}
\prod_{i=1,2}\big(1+\delta_i^{-1}\rho(y_i, z_i)\big)^{(d_i+1)/r'}} d\mu(\zz)
\end{align*}
Applying now H\"{o}lder's inequality we get
\begin{align*}
&\frac{V(\yy, \bdelta)^{\bgamma}|g(\yy)|}
{\prod_{i=1,2}\big(1+\delta_i^{-1}\rho(x_i, y_i)\big)^{\tau_i/r}}
\\
&\qquad \le c\Big(\int_\XX \frac{[V(\zz, \bdelta)^{\bgamma}|g(\zz)|]^r}
{V(\yy,\bdelta)\prod_{i=1,2}\big(1+\delta_i^{-1}\rho(x_i, z_i)\big)^{\tau_i}}d\mu(\zz)\Big)^{1/r}
\\
& \hspace{1.3in}\times
\Big(\int_\XX \frac{1}{V(\yy,\bdelta)\prod_{i=1,2}\big(1+\delta_i^{-1}\rho(y_i, z_i)\big)^{d_i+1}} d\mu(\zz)\Big)^{1/r'}
\\
&\qquad\le c\cM_{r}\big(V(\cdot,\bdelta)^{\bgamma}g(\cdot)\big)(\xx),
\end{align*}
where for the last inequality we used Lemma~\ref{lem:max} and \eqref{tech-1}.
The right-hand side inequality in \eqref{Peetre-max} follows from above.

\smallskip

{\em Case 2:} $r=1$.
The proof in this case is similar to the above proof but is easier; we omit it.

\smallskip

{\em Case 3:} $0<r<1$.
To prove the right-hand side inequality in \eqref{Peetre-max} in this case we first show that
it is valid with a constant $c=c(g)$ depending on $g$.

The function $\theta(\blambda)$ from above obeys the assumptions of Proposition~\ref{prop:slow}
and hence there exist constants $N,c>0$, depending on $g$, such that
\begin{align}\label{slow-22}
|g(\xx)|&= |\theta(\bdelta \sqrt{L})g(\xx)|
\\
&\le c(g)V(\xx_0,\bdelta)^{-1}
\prod_{i=1,2}(\delta_i^N+\delta_i^{-N})\big(1+\rho_i(x_i,x_{0i})\big)^{N}. \nonumber
%\\
%&= c(g,\delta,N)\prod_{i=1,2}\big(1+\rho_i(x_i,x_{0i})\big)^{N}. \nonumber
\end{align}
We may assume $N\ge (\tau_1\vee \tau_2)/r$.
On the other hand, from \eqref{V-gamma-xy} we obtain
\begin{align}\label{V-g-xx-p}
V(\xx,\bdelta)^\bgamma
&\le cV(\xx_0,\bdelta)^\bgamma \prod_{i=1,2}\big(1+\delta_i^{-1}\rho_i(x_i, x_{0i})\big)^{d_i|\gamma_i|}
\\
&\le cV(\xx_0,\bdelta)^\bgamma \prod_{i=1,2}(1+\delta_i^{-1})^{d_i|\gamma_i|}\big(1+\rho_i(x_i, x_{0i})\big)^{d_i|\gamma_i|}. \nonumber
\end{align}
This coupled with \eqref{slow-22} implies
\begin{equation}\label{slow-33}
V(\xx,\bdelta)^\bgamma |g(\xx)| \le c(g,\bdelta)\prod_{i=1,2}\big(1+\rho_i(x_i,x_{0i})\big)^{\tilde{N}},
\end{equation}
where $\tilde{N} := N+|\gamma_1| d_1+ |\gamma_2| d_2$.
In turn, \eqref{slow-33} with $\xx$ replaced by $\zz$ and the fact that $\delta_i=t_i^{-1}\le 1$ lead to the estimates
\begin{align*}
&\frac{V(\zz,\bdelta)^\bgamma|g(\zz)|}{\prod_{i=1,2}\big(1+\delta_i^{-1}\rho_i(x_i,z_i)\big)^{\tilde{N}}}
\le \frac{c\prod_{i=1,2}\big(1+\rho_i(z_i,x_{0i})\big)^{\tilde{N}}}{\prod_{i=1,2}\big(1+\delta_i^{-1}\rho_i(x_i,z_i)\big)^{\tilde{N}}}
\\
&\le \frac{c\prod_{i=1,2}\big(1+\rho_i(x_i,z_i)\big)^{\tilde{N}}\prod_{i=1,2}\big(1+\rho_i(x_i,x_{0i})\big)^{\tilde{N}}}
{\prod_{i=1,2}\big(1+\rho_i(x_i,z_i)\big)^{\tilde{N}}},
\end{align*}
implying
\begin{equation}\label{slow-44}
\sup_{\zz\in \XX}\frac{V(\zz,\bdelta)^\bgamma|g(\zz)|}{\prod_{i=1,2}\big(1+\delta_i^{-1}\rho_i(x_i,z_i)\big)^{\tilde{N}}}
\le c(g,\bdelta)\prod_{i=1,2}\big(1+\rho_i(x_i,x_{0i})\big)^{\tilde{N}}.
\end{equation}

Choose $\sigma_i:= \tilde{N}+|\gamma_1|d_1+|\gamma_2|d_2$ and set $\tilde{\sigma}_i:= \tilde{N}$, $i=1,2$.
As before using \eqref{Ker-theta}, \eqref{P-2}, and \eqref{triangle} we obtain
\begin{align*}
V(\zz,\bdelta)^\bgamma|g(\zz)|
& = V(\zz,\bdelta)^\bgamma \Big|\int_\XX\KK_{\theta(\bdelta\sqrt{L})}(\xx,\yy)g(\yy)d\mu(\yy)\Big|
\\
&\le c\int_\XX V(\zz, \bdelta)^\bgamma V(\yy, \bdelta)^{-1}\DD^*_{\bdelta,\bsigma}(\zz,\yy)|g(\yy)| d\mu(\yy)
\\
&\le c\int_\XX V(\yy, \bdelta)^{\bgamma} V(\yy, \bdelta)^{-1}\DD^*_{\bdelta,\tilde{\bsigma}}(\zz,\yy)|g(\yy)| d\mu(\yy)
\end{align*}
and hence
\begin{align*}
&\frac{V(\zz,\bdelta)^\bgamma|g(\zz)|}{\prod_{i=1,2}(1+\delta_i^{-1}\rho_i(x_i,z_i))^{\tilde{N}}}
\\
&\qquad \le c\int_\XX \frac{V(\yy,\bdelta)^\bgamma V(\yy, \bdelta)^{-1}|g(\yy)|}
{\prod_{i=1,2}(1+\delta_i^{-1}\rho_i(x_i,z_i))^{\tilde{N}}(1+\delta_i^{-1}\rho_i(z_i,y_i))^{\tilde{N}}}d\mu(\yy)
\\
&\qquad \le c\int_\XX \frac{V(\yy,\bdelta)^\bgamma |g(\yy)|}
{V(\yy, \bdelta)\prod_{i=1,2}(1+\delta_i^{-1}\rho_i(x_i,y_i))^{\tilde{N}}}d\mu(\yy)
\\
&\qquad \le c\Big(\sup_{\yy\in \XX}\frac{V(\yy,\bdelta)^\bgamma |g(\yy)|}
{\prod_{i=1,2}(1+\delta_i^{-1}\rho_i(x_i,y_i))^{\tilde{N}}}\Big)^{1-r}
\\
&\hspace{1.6in}\times \int_\XX \frac{[V(\yy,\bdelta)^\bgamma |g(\yy)|]^r}
{V(\yy, \bdelta)\prod_{i=1,2}(1+\delta_i^{-1}\rho_i(x_i,y_i))^{\tilde{N}r}}d\mu(\yy).
\end{align*}
We now take $\sup_{\zz\in \XX}$ above and use the fact that by \eqref{slow-44}
$$
\sup_{\yy\in \XX}\frac{V(\yy,\bdelta)^\bgamma |g(\yy)|}
{\prod_{i=1,2}(1+\delta_i^{-1}\rho_i(x_i,y_i))^{\tilde{N}}} \le c(g,\bdelta,\xx) <\infty,
$$
which allows us to divide and obtain
\begin{multline}\label{Peetre-1}
%\begin{align*}
\Big(\sup_{\zz\in \XX}\frac{V(\zz,\bdelta)^\bgamma |g(\zz)|}
{\prod_{i=1,2}(1+\delta_i^{-1}\rho_i(x_i,z_i))^{\tilde{N}}}\Big)^r
\\
\le c\int_\XX \frac{[V(\yy,\bdelta)^\bgamma |g(\yy)|]^r}
{V(\yy, \bdelta)\prod_{i=1,2}(1+\delta_i^{-1}\rho_i(x_i,y_i))^{\tilde{N}r}}d\mu(\yy).
\end{multline}
Taking $\zz=\xx$ on the left we get
\begin{equation*}
[V(\xx,\bdelta)^\bgamma |g(\xx)|]^r
\le c\int_\XX \frac{[V(\yy,\bdelta)^\bgamma |g(\yy)|]^r}
{V(\yy, \bdelta)\prod_{i=1,2}(1+\delta_i^{-1}\rho_i(x_i,y_i))^{\tilde{N}r}}d\mu(\yy).
\end{equation*}
Using this estimate with $\zz$ in the place of $\xx$ and the fact that $\tilde{N}r \ge \tau_1\vee \tau_2$
we obtain
\begin{align*}
&\frac{[V(\zz,\bdelta)^\bgamma|g(\zz)|]^r}{\prod_{i=1,2}(1+\delta_i^{-1}\rho_i(x_i,z_i))^{\tau_i}}
\\
&\le c\int_\XX \frac{[V(\yy,\bdelta)^\bgamma |g(\yy)|]^r}
{V(\yy, \bdelta)\prod_{i=1,2}(1+\delta_i^{-1}\rho_i(x_i,z_i))^{\tau_i}(1+\delta_i^{-1}\rho_i(z_i,y_i))^{\tau_i}}d\mu(\yy)
\\
&\le c\int_\XX \frac{[V(\yy,\bdelta)^\bgamma |g(\yy)|]^r}
{V(\yy, \bdelta)\prod_{i=1,2}(1+\delta_i^{-1}\rho_i(x_i,y_i))^{\tau_i}}d\mu(\yy).
\\
&\le c\int_\XX \frac{[V(\yy,\bdelta)^\bgamma |g(\yy)|]^r}
{V(\xx, \bdelta)\prod_{i=1,2}(1+\delta_i^{-1}\rho_i(x_i,y_i))^{\tau_i-d_i}}d\mu(\yy),
\end{align*}
where for the first inequality we used \eqref{triangle} and for the last \eqref{V-gamma-xy}.
Applying Lemma~\ref{lem:max} using that $\tau_i > 2d_i$
we obtain
\begin{equation}\label{finite}
\frac{[V(\zz,\bdelta)^\bgamma|g(\zz)|]^r}{\prod_{i=1,2}(1+\delta_i^{-1}\rho_i(x_i,z_i))^{\tau_i}}
\le c(g)\big[\cM_r\big(V(\cdot,\bdelta)^\bgamma g(\cdot)\big)(\xx)\big]^r,
\end{equation}
which implies the right-hand side inequality in \eqref{Peetre-max}.
Here the constant depends on $g$ because $N$ depends on $g$.

\smallskip

We are now prepared to complete the proof of the right-hand inequality in \eqref{Peetre-max}.
Recall the notation (see \eqref{rectangles})
\begin{equation*}
Q(\xx,\bdelta):= B_1(x_1, \delta_1)\times B_2(x_2, \delta_2)
\quad\hbox{and}\quad
\DD^*_{\bdelta,\bsigma}(\xx,\yy):=\prod_{i=1,2}(1+\delta_i^{-1}\rho(x_i,y_i))^{-\sigma_i}
\end{equation*}
and observe that $\mu(Q(\xx,\bdelta))=V(\xx, \bdelta)$, see \eqref{def-V}.

Using the operator $\theta(\bdelta\sqrt{L})$ from above we have
\begin{equation}\label{rep-g}
g(\xx)=\theta(\bdelta\sqrt{L}) g(\xx)=\int_{M}\KK_{\theta(\bdelta\sqrt{L})}(\xx,\yy)g(\yy)d\mu(\yy).
\end{equation}
Let $\yy,\zz\in \XX$ and assume $\uu\in Q(\yy,\bdelta)$.
By Theorem~\ref{th:boxsupport} and \eqref{D-D*} it follows that for any $\bsigma\in(0,\infty)^2$
there exist a constant $c>0$ such that
\begin{equation}\label{pm-1}
\big|\KK_{\theta(\bdelta\sqrt{L})}(\yy,\zz)-\KK_{\theta(\bdelta\sqrt{L})}(\uu,\zz)\big|
\le cV(\yy,\bdelta)^{-1}\sum_{i=1,2}\big(\rho_i(y_i,u_i)/\delta_i\big)^{\alpha_i}\DD^*_{\bdelta,\bsigma}(\yy,\zz).
\end{equation}
Fix $0<\varepsilon<1$.
Then
$$
|g(\yy)|\le\inf_{u\in Q(\yy,\varepsilon \bdelta)}|g(\uu)|+\sup_{\uu\in Q(\yy,\varepsilon \bdelta)}|g(\yy)-g(\uu)|.
$$
Denote
\begin{equation*}
g^*(\xx):= \sup_{\yy\in \XX}\frac{V(\yy, \bdelta)^{\bgamma}|g(\yy)|}
{\prod_{i=1,2}\big(1+\delta_i^{-1}\rho(x_i, y_i)\big)^{\tau_i/r}}
= \sup_{\yy\in \XX}V(\yy,\bdelta)^{\bgamma}\DD^*_{\bdelta,\btau/r}(\xx,\yy)|g(\yy)|.
\end{equation*}
Then
\begin{align}
g^*(\xx)&\le \sup_{\yy\in \XX} V(\yy,\bdelta)^{\bgamma}
\DD^*_{\delta,\btau/r}(\xx,\yy)\inf_{\uu\in Q(\yy,\varepsilon \bdelta)}|g(\uu)|\nonumber
\\
&+\sup_{\yy\in \XX} V(\yy,\bdelta)^{\bgamma}\DD^*_{\bdelta,\btau/r}(\xx,\yy)\sup_{\uu\in Q(\yy,\varepsilon \bdelta)}|g(\yy)-g(\uu)|\nonumber
\\
\label{pm-2}
&=:\sup_{\yy\in \XX}g^*_1(\yy)+\sup_{\yy\in \XX}g^*_2(\yy).
\end{align}

We first estimate $g^*_1(\yy)$, $\yy\in \XX$.
Clearly,
$$
\inf_{\uu\in Q(\yy,\varepsilon \bdelta)}|g(\uu)|
\le\Big(\frac{1}{\mu(Q(\yy,\varepsilon \bdelta))}\int_{Q(\yy,\varepsilon \bdelta)} |g(\uu)|^r d\mu(\uu)\Big)^{1/r},
$$
and hence
\begin{equation}
\begin{aligned}\label{pm-3}
g^*_1(\yy)&\le \mu(Q(\yy,\bdelta))^{\bgamma}\DD^*_{\bdelta,\btau/r}(\xx,\yy)
\Big(\frac{1}{\mu(Q(\yy,\varepsilon \bdelta))}\int_{Q(\yy,\varepsilon \bdelta)} |g(\uu)|^r d\mu(\uu)\Big)^{1/r}
\\
&\le c\DD^*_{\bdelta,\btau/r}(\xx,\yy)
\Big(\frac{1}{\mu(Q(\yy,\varepsilon \bdelta))}
\int_{Q(\yy,\varepsilon \bdelta)} \mu(Q(\uu,\bdelta))^{\bgamma}|g(\uu)|\bigr)^r d\mu(\uu)\Big)^{1/r},
\end{aligned}
\end{equation}
where in the last inequality we used that $\mu(Q(\yy,\bdelta))\sim \mu(Q(\uu,\bdelta))$
since for every $\uu\in Q(\yy,\varepsilon \bdelta)$, $Q(\yy,\bdelta)\subset Q(\uu,2\bdelta)$
and $Q(\uu,\bdelta)\subset Q(\yy,2\bdelta)$.

We introduce the abbreviated notation:
\begin{align*}
Q&:=Q(\yy,\varepsilon \bdelta),
\quad
\tilde{Q}:=B_1\big(x_1,\rho_1(x_1,y_1)+\varepsilon \delta_1\big)\times B_2\big(x_2,\rho_2(x_2,y_2)+\varepsilon \delta_2\big),
\\
Q^{*}&:=B_1\big(y_1,2\rho_1(x_1,y_1)+\varepsilon \delta_1\big)\times B_2\big(y_2,2\rho_2(x_2,y_2)+\varepsilon \delta_2\big).
\end{align*}
From (\ref{pm-3}) taking into account that $Q\subset\tilde{Q}$ we get
 \begin{equation}
\begin{aligned}\label{pm-4}
g^*_1(\yy)&\le c\Big(\frac{\mu(\tilde{Q})}{\mu(Q)}\DD^*_{\bdelta,\btau}(\xx,\yy)\Big)^{1/r}
\Big(\frac{1}{\mu(\tilde{Q})}\int_Q \big(V(\uu,\bdelta)^{\bgamma}|g(\uu)|\big)^r d\mu(\uu)\Big)^{1/r}
\\
&\le c\Big(\frac{\mu(\tilde{Q})}{\mu(Q)}\DD^*_{\bdelta,\btau}(\xx,\yy)\Big)^{1/r}
\Big(\frac{1}{\mu(\tilde{Q})}\int_{\tilde Q} \big(V(\uu,\bdelta)^{\bgamma}|g(\uu)|\big)^r d\mu(\uu)\Big)^{1/r}
\\
&\le c\Big(\frac{\mu(\tilde{Q})}{\mu(Q)}\DD^*_{\bdelta,\btau}(\xx,\yy)\Big)^{1/r}
\cM_{r}\big(V(\cdot,\bdelta)^{\bgamma}g(\cdot)\big)(\xx).
\end{aligned}
\end{equation}
By (\ref{rect-doubling}) and the fact that $0<\varepsilon<1$ we get
$$
\mu(\tilde{Q})\le \mu(Q^*)
\le c\mu(Q)\prod_{i=1,2}\Big(1+\frac{2\rho_i(x_i,y_i)}{\varepsilon\delta_i}\Big)^{d_i}
%\big(\varepsilon^{-1}\delta_i^{-1}[2\rho_i(x_i,y_i)+\varepsilon \delta_i]\big)^{d_i}
\le c\varepsilon^{-|\dd|}\mu(Q)\DD^*_{\bdelta,\btau}(\xx,\yy)^{-1}
$$
and hence
\begin{equation}\label{pm-5}
\Big(\frac{\mu(\tilde{Q})}{\mu(Q)}\DD^*_{\bdelta,\btau}(\xx,\yy)\Big)^{1/r}\le c\varepsilon^{-|\dd|/r}.
\end{equation}
This inequality coupled with \eqref{pm-4} yields
\begin{equation}\label{pm-6}
g^*_1(\yy) \le c\varepsilon^{-|\dd|/r}\cM_{r}\big(V(\cdot,\delta)^{\bgamma}g(\cdot)\big)(\xx).
\end{equation}

We now estimate $g^*_2(\yy)$.
Using that $\theta(\delta\sqrt{L})g=g$ and \eqref{pm-1} we obtain
\begin{equation*}
\begin{aligned}
\sup_{\uu\in Q}|g(\yy)-g(\uu)|
&\le \sup_{\uu\in Q}\int_\XX |\KK_{\theta(\delta\sqrt{L})}(\yy,\zz)-\KK_{\theta(\delta\sqrt{L})}(\uu,\zz)||g(\zz)|d\mu(\zz)
\\
&\le c \sup_{\uu\in Q}V(\yy,\bdelta)^{-1}\sum_{i=1,2}\big(\rho_i(y_i,u_i)/\delta_i\big)^{\alpha_i}
\int_\XX \DD^*_{\bdelta,\bsigma}(\yy,\zz)|g(\zz)|d\mu(\zz)
\\
&\le c(\varepsilon^{\alpha_1}+\varepsilon^{\alpha_2})V(\yy,\bdelta)^{-1}
\int_\XX \DD^*_{\bdelta,\bsigma}(\yy,\zz)|g(\zz)|d\mu(\zz).
\end{aligned}
\end{equation*}
Choose $\sigma_i:= \tau_i/r+d_i|\gamma_i|+d_i+1$, $i=1,2$.
From above, the definition of $g^*_2(\yy)$, \eqref{P-2}, and \eqref{triangle} we obtain
\begin{align*}
&g^*_2(\yy)
\\
&\le \frac{c(\varepsilon^{\alpha_1}+\varepsilon^{\alpha_2})}{V(\yy, \bdelta)}
\int_\XX\frac{V(\yy, \bdelta)^\bgamma |g(\zz)|}
{\prod_{i=1,2}(1+\delta_i^{-1}\rho_i(x_i,y_i))^{\tau_i/r}(1+\delta_i^{-1}\rho_i(y_i,z_i))^{\sigma_i}}d\mu(\zz)
\\
&\le \frac{c(\varepsilon^{\alpha_1}+\varepsilon^{\alpha_2})}{V(\yy, \bdelta)}
\int_\XX\frac{V(\zz, \bdelta)^\bgamma |g(\zz)|\prod_{i=1,2}(1+\delta_i^{-1}\rho_i(y_i,z_i))^{d_i|\gamma_i|}}
{\prod_{i=1,2}(1+\delta_i^{-1}\rho_i(x_i,y_i))^{\tau_i/r}(1+\delta_i^{-1}\rho_i(y_i,z_i))^{\sigma_i}}d\mu(\zz)
\\
&\le \frac{c(\varepsilon^{\alpha_1}+\varepsilon^{\alpha_2})}{V(\yy, \bdelta)}
\int_\XX\frac{V(\zz, \bdelta)^\bgamma |g(\zz)|}
{\prod_{i=1,2}(1+\delta_i^{-1}\rho_i(x_i,y_i))^{\tau_i/r}(1+\delta_i^{-1}\rho_i(y_i,z_i))^{\tau_i/r+d_i+1}}d\mu(\zz)
\\
&\le c(\varepsilon^{\alpha_1}+\varepsilon^{\alpha_2})
\sup_{\zz\in \XX}\frac{V(\zz, \bdelta)^\bgamma |g(\zz)|}{\prod_{i=1,2}(1+\delta_i^{-1}\rho_i(x_i,z_i))^{\tau_i/r}}
\\
& \hspace{1.5in}\times \frac{1}{V(\yy, \bdelta)}\int_\XX \frac{1}{\prod_{i=1,2}(1+\delta_i^{-1}\rho_i(y_i,z_i))^{d_i+1}}d\mu(\zz)
\\
&\le c(\varepsilon^{\alpha_1}+\varepsilon^{\alpha_2})g^*(\xx).
\end{align*}
Thus
$$
g^*_2(\yy)\le\tilde{c}(\varepsilon^{\alpha_1}+\varepsilon^{\alpha_2})g^*(\xx),
$$
where the constant $\tilde{c}$ is independent of $\varepsilon$.
We choose $\varepsilon$ so that $\tilde{c}(\varepsilon^{\alpha_1}+\varepsilon^{\alpha_2})=1/2$.
Then $\sup_{\yy\in \XX}g^*_2(\yy)\le g^*(\xx)/2$ which combined with (\ref{pm-6}) leads to
\begin{align*}
g^*(\xx)\le &\sup_{\yy\in \XX}g^*_1(\yy)+\sup_{\yy\in \XX}g^*_2(\yy)\\
&\le c\varepsilon^{-|\dd|/r}\cM_{r}\big(V(\cdot,\bdelta)^{\bgamma}g(\cdot)\big)(\xx)+g^*(\xx)/2.
\end{align*}
This implies \eqref{Peetre-max} using that by \eqref{finite} $g^*(\xx)<\infty$
if $\cM\big(V(\cdot,\bdelta)^\bgamma g(\cdot)\big)(\xx)<\infty$.
\end{proof}

\subsection{Nikolski type inequality}\label{subsec:nikolski}

We will often compare different $L^p$-norms of spectral functions using Nikolski type inequalities.

\begin{theorem}\label{thm:Nik}
$(i)$
Let $0<p\le q\le\infty$, $\bnu\in \bN_0^2$ and $\bgamma\in\bR^2$.
Then there exists a constant $c>0$ such that
for any $g\in\Sigma_\tt$ with $\tt\in[1,\infty)^2$ and $\bnu\in \bN_0^2$ we have
\begin{equation}\label{Band-1}
\big\|V(\cdot,\tt^{-1})^{\bgamma} L^{\bnu}g(\cdot)\big\|_q
\le c \tt^{2\bnu}\big\|V(\cdot,\tt^{-1})^{\bgamma+(\frac{1}{q}-\frac{\one}{p})\one} g(\cdot)\big\|_p,
\quad \tt^{2\bnu}:=t_1^{2\nu_1}t_2^{2\nu_2}.
\end{equation}

$(ii)$
If in addition we assume the non-collapsing condition $\eqref{non-collapsing}$ for the coordinate spaces $\XX_1,\XX_2$,
then for any $g\in\Sigma_\tt$, $\tt\in[1,\infty)^2$, and $\bnu\in \bN_0^2$
\begin{equation}\label{Bl}
\| L^\bnu g\|_q\le c \tt^{2\bnu+(\frac{1}{p}-\frac{1}{q})\dd}\| g\|_p.
\end{equation}
Recall that $\dd=(d_1,d_2)$, $\one:=(1,1)$, and
for the definition of $V(\cdot,\tt^{-1})^{\bgamma}$, $\bgamma\in \RR^2$, see $\eqref{V-V12}$.
\end{theorem}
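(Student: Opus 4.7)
The plan is to reduce (i) to an ``$L^p$--$L^\infty$'' Nikolski-type estimate for spectral functions via a pointwise factorization, and then to derive (ii) as a corollary using the non-collapsing lower bound on $V$. First I would dispose of the derivative $L^\bnu$. Since $L^\bnu g\in\Sigma_\tt$ whenever $g\in\Sigma_\tt$ (as $L_1,L_2$ are self-adjoint), applying the derivative-free version of (i) to $L^\bnu g$ reduces matters to the case $\bnu=\zero$, provided we also know the diagonal bound $\|V(\cdot,\tt^{-1})^{\bgamma''}L^\bnu g\|_p\le c\tt^{2\bnu}\|V(\cdot,\tt^{-1})^{\bgamma''}g\|_p$ for any weight $\bgamma'':=\bgamma+(\tfrac{1}{q}-\tfrac{1}{p})\one$. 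But this follows at once from Theorem~\ref{thm:Peetre-max} evaluated at $\yy=\xx$ combined with the Fefferman--Stein inequality \eqref{max} applied with $r<p$.

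Next, to prove the derivative-free Nikolski $\|V(\cdot,\tt^{-1})^\bgamma g\|_q\le c\|V(\cdot,\tt^{-1})^{\bgamma''}g\|_p$, I would use the pointwise factorization
$$\bigl(V(\xx,\tt^{-1})^\bgamma|g(\xx)|\bigr)^q=\bigl(V(\xx,\tt^{-1})^{\bgamma''}|g(\xx)|\bigr)^p\cdot\bigl(V(\xx,\tt^{-1})^{\bgamma+(1/q)\one}|g(\xx)|\bigr)^{q-p},$$
which is just the algebraic identity $q\bgamma=p\bgamma''+(q-p)(\bgamma+(1/q)\one)$. Integrating and bounding the second factor by its $L^\infty$ norm reduces the problem to the $L^p$--$L^\infty$ Nikolski estimate
$$\bigl\|V(\cdot,\tt^{-1})^\balpha g\bigr\|_\infty\le c\bigl\|V(\cdot,\tt^{-1})^{\balpha-(1/p)\one}g\bigr\|_p,\qquad g\in\Sigma_\tt,$$
applied with $\balpha:=\bgamma+(1/q)\one$ (so $\balpha-(1/p)\one=\bgamma''$). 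This $L^p$--$L^\infty$ estimate is the heart of the proof and the main obstacle, especially because it must hold uniformly across all $0<p\le\infty$ (including $p<1$, where the usual H\"older--on--the--integral--kernel argument breaks down).

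I would establish it by a localization argument at a near-supremum point. Writing $g=\theta(\tt^{-1}\sqrt L)g$ with some $\theta\in\cA_\tt$, the H\"older continuity \eqref{local-2} and decay \eqref{local-1} of the kernel from Theorem~\ref{th:boxsupport}, together with repeated applications of the change-of-centre inequality \eqref{V-gamma-xy} and the integral bound \eqref{tech-1}, yield for any $\xx^*\in\XX$ an estimate of the form
$$|g(\xx^*)-g(\yy)|\le c\sum_{i=1,2}\bigl(t_i\rho_i(x_i^*,y_i)\bigr)^{\alpha_i}\,V(\xx^*,\tt^{-1})^{-\balpha}\,\|V(\cdot,\tt^{-1})^\balpha g\|_\infty.$$
The decisive algebraic step is that when $\xx^*$ is a near-supremum point of $V(\cdot,\tt^{-1})^\balpha|g|$, the product $V(\xx^*,\tt^{-1})^{-\balpha}\|V(\cdot,\tt^{-1})^\balpha g\|_\infty$ collapses into a constant multiple of $|g(\xx^*)|$, producing a uniform modulus $|g(\xx^*)-g(\yy)|\le c\sum_i(t_i\rho_i(x_i^*,y_i))^{\alpha_i}|g(\xx^*)|$ with $c$ independent of both $g$ and $\tt$. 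Choosing a fixed $\varepsilon>0$ small enough, depending only on the structural constants $\alpha_1,\alpha_2$ and $c$, then forces $|g(\yy)|\ge\tfrac{1}{2}|g(\xx^*)|$ on the ball $B(\xx^*,\varepsilon\tt^{-1})$; integrating $|V(\cdot,\tt^{-1})^{\balpha-(1/p)\one}g|^p$ over this ball and using $V(\yy,\tt^{-1})\sim V(\xx^*,\tt^{-1})$ throughout yields the required $L^p$ lower bound on $V(\xx^*,\tt^{-1})^\balpha|g(\xx^*)|$. Ensuring that this weight cancellation is clean in the two-parameter product setting, and that all constants are truly independent of $g$, is the main technical hurdle.

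Finally, part (ii) follows at once from (i) with $\bgamma=\zero$: under the non-collapsing condition, \eqref{rect-non-coll} gives $V(\xx,\tt^{-1})\ge c\tt^{-\dd}$ for $\tt\in[1,\infty)^2$, whence $V(\xx,\tt^{-1})^{(1/q-1/p)\one}\le c\tt^{(1/p-1/q)\dd}$ pointwise (since $1/q-1/p\le 0$), and substituting this into the right-hand side of (i) produces \eqref{Bl}.
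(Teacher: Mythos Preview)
Your overall strategy---reduce to $q=\infty$ via the pointwise factorization, then prove the $L^p\to L^\infty$ Nikolski by localizing at a near-supremum point---is correct in outline but differs from the paper, which proves the $L^p\to L^\infty$ step by splitting into cases: for $1\le p\le\infty$ it applies H\"older's inequality directly to the kernel representation $g=\theta(\bdelta\sqrt L)g$, while for $0<p<1$ it bootstraps from the already-proved $p=2$ case via the interpolation identity \eqref{Vg-Vg}. Your reduction of $L^\bnu$ via the Peetre inequality coincides with the paper's \eqref{Lpbound}, and your factorization $q\bgamma=p\bgamma''+(q-p)(\bgamma+(1/q)\one)$ is precisely the identity the paper uses in its final display to pass from $q=\infty$ to general $q$. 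Part~(ii) is handled identically in both.

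There is, however, a genuine gap in your localization argument. Your H\"older-continuity estimate
\[
|g(\xx^*)-g(\yy)|\le c\sum_{i=1,2}\bigl(t_i\rho_i(x_i^*,y_i)\bigr)^{\alpha_i}\,V(\xx^*,\tt^{-1})^{-\balpha}\,\bigl\|V(\cdot,\tt^{-1})^{\balpha} g\bigr\|_\infty,
\]
as well as the very existence of a ``near-supremum point'', presuppose that $\|V(\cdot,\tt^{-1})^{\balpha} g\|_\infty<\infty$. For arbitrary $\balpha\in\bR^2$ and $\mu(\XX_i)=\infty$ this does not follow from the slow-growth statement of Proposition~\ref{prop:slow-2}. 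The paper confronts exactly this issue in its Case~3: it first establishes the inequality with a $g$-dependent constant (see \eqref{nik-g}, obtained by replaying the argument of Theorem~\ref{thm:Peetre-max}), and only then uses the already-proved $p=2$ case together with \eqref{Vg-Vg} to remove the dependence---dividing by $\|V(\cdot,\tt^{-1})^{\bgamma} g\|_\infty^{1-p/2}$ is legitimate precisely because \eqref{nik-g} guarantees this quantity is finite whenever the right-hand side is. Your localization argument would go through verbatim once such a qualitative finiteness statement is in hand, but the proposal as written does not supply one.
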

\begin{proof}
(i)
Let $g\in\Sigma_\tt$, $\tt\in[1,\infty)^2$, and set $\bdelta:=\tt^{-1}=(t_1^{-1},t_2^{-1})$.
Let $\theta\in \cC^\infty_0(\bR^2)$,
$\theta(\pm\lambda_1,\pm\lambda_2)=\theta(\lambda_1,\lambda_2)$
and $\theta(\blambda)=1$ for $\blambda\in [0,1]^2$.
Then $g=\theta(\bdelta\sqrt{L})g$.
Let $\omega(\lambda_1, \lambda_2):= \lambda_1^{2\nu_1}\lambda_2^{2\nu_2}\theta(\lambda_1, \lambda_2)$.
Then
$\omega(\bdelta \sqrt{L}) = \bdelta^{2\bnu}L^{\bnu}\theta(\bdelta\sqrt{L})$
and hence
\begin{equation}\label{L-nu-g}
L^\bnu g(\xx)=L^\bnu\theta(\bdelta\sqrt{L})g(\xx) = \bdelta^{-2\bnu}\omega(\bdelta \sqrt{L})g(\xx)
= \bdelta^{-2\bnu}\int_\XX\KK_{\omega(\bdelta \sqrt{L})}(\xx,\yy)g(\yy) d\mu(\yy).
\end{equation}
We use Corollary~\ref{cor:gen-local} and \eqref{D-D*} to conclude that
for any $\bsigma\in(0,\infty)^2$
\begin{equation}\label{KL-theta}
|\KK_{\omega(\bdelta\sqrt{L})}(\xx,\yy)|\le c_\bsigma V(\xx,\bdelta)^{-1}\DD^*_{\bdelta,\bsigma}(\xx,\yy).
\end{equation}

We next show that inequality \eqref{Band-1} is valid when $q=\infty$, i.e.
\begin{equation}\label{Band-2}
\big\|V(\cdot,\tt^{-1})^{\bgamma} L^{\bnu}g\big\|_\infty
\le c \tt^{2\bnu}\big\|V(\cdot,\tt^{-1})^{\bgamma-\frac{\one}{p}} g\big\|_p.
\end{equation}

{\em Case 1:} $1<p<\infty$.
We use \eqref{L-nu-g} and H\"{o}lder's inequality to obtain
\begin{align}
\bdelta^{2\bnu}|L^\bnu g(\xx)|&\le\big\|V(\cdot,\delta)^{\bgamma-\frac{\one}{p}} g\big\|_p
\Big(\int_\XX \big(|\KK_{\omega(\bdelta\sqrt{L})}(\xx,\yy)
|V(\yy,\bdelta)^{-\bgamma+\frac{\one}{p}}\big)^{p'}d\mu(\yy)\Big)^{1/p'} \nonumber
\\
&=:\big\|V(\cdot,\bdelta)^{\bgamma-\frac{\one}{p}} g\big\|_p \cQ. \label{Band-22}
\end{align}
By \eqref{V-gamma-xy} we have
\begin{equation}\label{V-V}
V(\yy,\bdelta)^{-\bgamma+\frac{\one}{p}}
\le cV(\xx,\bdelta)^{-\bgamma+\frac{\one}{p}}\prod_{i=1,2}\big(1+\delta_i^{-1}\rho_i(x_i,y_i)\big)^{d_i|-\gamma_i+\frac{1}{p}|}.
\end{equation}
Choose $\sigma_i := d_i|-\gamma_i+\frac{1}{p}|+(d_i+1)/p'$
and set
$\tilde{\sigma}_i:= p'(\sigma_i-d_i|-\gamma_i+\frac{1}{p}|)$, $i=1,2$.
Clearly $\tilde{\sigma}_i=d_i+1$.
Using \eqref{KL-theta}, \eqref{V-V}, and \eqref{tech-1} we obtain
\begin{equation*}
\cQ\le V(\xx,\bdelta)^{-\bgamma-(1-\frac{1}{p})\one}
\Big(\int_\XX \DD^*_{\bdelta, \tilde{\bsigma}}(\xx,\yy)d\mu(\yy)\Big)^{1/p'}
\le cV(\xx,\bdelta)^{-\bgamma}.
\end{equation*}
This coupled with \eqref{Band-22} implies \eqref{Band-2}.

\smallskip

{\em Case 2:} $p=1$.
The proof of inequality \eqref{Band-2} in this case is similar to the above proof but easier;
we omit it.

\smallskip

{\em Case 3:} $0< p <1$.
We first show that inequality \eqref{Band-2} holds in this case when $\bnu=(0,0)$, i.e.
\begin{equation}\label{Band-3}
\big\|V(\cdot,\tt^{-1})^{\bgamma}g\big\|_\infty
\le c\big\|V(\cdot,\tt^{-1})^{\bgamma-\frac{\one}{p}} g\big\|_p.
\end{equation}
To this end we first prove that this inequality is valid
with a constant $c=c(g)$ depending on $g$.
The function $\theta(\blambda)$ from above obeys the assumptions of Proposition~\ref{prop:slow}
and hence there exist constants $N,c>0$, depending on $g$, such that
\begin{align}\label{slow-2}
|g(\xx)|&= |\theta(\bdelta \sqrt{L})g(\xx)|
\\
&\le c(g)V(\xx_0,\delta)^{-1}
\prod_{i=1,2}(\delta_i^N+\delta_i^{-N})\big(1+\rho_i(x_i,x_{0i})\big)^{N}. \nonumber
\end{align}
Now, precisely as in the proof of Theorem~\ref{thm:Peetre-max} (cf. \eqref{Peetre-1})
with $r$ replaced by $p$ we obtain
\begin{multline*}
%\begin{align*}
\Big(\sup_{\zz\in \XX}\frac{V(\zz,\bdelta)^\bgamma |g(\zz)|}
{\prod_{i=1,2}(1+\delta_i^{-1}\rho_i(x_i,z_i))^{\tilde{N}}}\Big)^p
\\
\le c\int_\XX \frac{[V(\yy,\bdelta)^\bgamma |g(\yy)|]^p}
{V(\yy, \bdelta)\prod_{i=1,2}(1+\delta_i^{-1}\rho_i(x_i,y_i))^{\tilde{N}p}}d\mu(\yy)
\\
\le c\int_\XX [V(\yy,\bdelta)^{\bgamma-(1/p)\one} |g(\yy)|]^p d\mu(\yy).
\end{multline*}
%\end{align*}
With $\zz=\xx$ the above implies
\begin{equation}\label{nik-g}
\|V(\cdot,\bdelta)^\bgamma g(\cdot)\|_\infty
\le c(g)\|V(\cdot,\bdelta)^{\bgamma-(1/p)\one} g(\cdot)\|_p.
\end{equation}
Here the constant depends on $g$ because $N$ depends on $g$.

We are now ready to complete the proof of \eqref{Band-3}.
Applying \eqref{Band-2} with $p=2$ and $\bnu=(0,0)$ we get
\begin{align}\label{Vg-Vg}
\big\|V(\cdot,\bdelta)^{\bgamma}g\big\|_{\infty}
&\le c\big\|V(\cdot,\bdelta)^{\bgamma-(1/2)\one} g\big\|_2 \nonumber
\\
&=c\Big(\int_\XX \big(V(\xx,\bdelta)^{\bgamma}|g(\xx)|\big)^{2-p}
\big(V(\xx,\bdelta)^{\bgamma-(1/p)\one}|g(\xx)|\big)^{p}d\mu(\xx) \Big)^{1/2}
\\
&\le c\big\|V(\cdot,\bdelta)^{\bgamma} g(\cdot)\big\|_{\infty}^{1-p/2}
\big\|V(\cdot,\bdelta)^{\bgamma-(1/p)\one} g(\cdot)\big\|_{p}^{p/2}. \nonumber
\end{align}
If $\|V(\cdot,\bdelta)^{\bgamma-\frac{\one}{p}} g(\cdot)\|_p=\infty$, then there is nothing to prove.
If $\|V(\cdot,\bdelta)^{\bgamma-\frac{\one}{p}} g(\cdot)\|_p<\infty$,
then by \eqref{nik-g} $\|V(\cdot,\bdelta)^\bgamma g(\cdot)\|_\infty<\infty$
and we can divide in \eqref{Vg-Vg} by
$\big\|V(\cdot,\bdelta)^{\bgamma} g\big\|_{\infty}^{1-p/2}$
to obtain \eqref{Band-3}. % for $0<p\le1$.

Finally, using \eqref{Band-2} with $p=2$, \eqref{Vg-Vg}, and \eqref{Band-3} we get
\begin{align*}
\bdelta^{2\bnu}\big\|V(\cdot,\bdelta)^{\bgamma} L^{\bnu}g(\cdot)\big\|_{\infty}
&\le c\big\|V(\cdot,\bdelta)^{\bgamma-\frac{\one}{2}} g\big\|_2
\\
&\le c\big\|V(\cdot,\bdelta)^{\bgamma} g(\cdot)
\big\|_{\infty}^{1-p/2}\big\|V(\cdot,\bdelta)^{\bgamma-\frac{\one}{p}} g(\cdot)\big\|_{p}^{p/2}
\\
&\le c\big\|V(\cdot,\bdelta)^{\bgamma-\frac{\one}{p}} g\big\|_p,
\end{align*}
which completes the proof of \eqref{Band-2}.

\medskip

We next prove \eqref{Band-1} in the case $0<p<q<\infty$.
From  Theorem~\ref{thm:Peetre-max} it follows that for any $\bgamma\in \bR^2$
\begin{align*}
\bdelta^{2\bnu} V(\xx,\delta)^{\bgamma} |L^\bnu g(\xx)|
\le c \cM_{r}\big(V(\cdot,\bdelta)^{\bgamma}g(\cdot)\big)(\xx).
\end{align*}
We choose $0<r<p$ and apply the maximal inequality \eqref{max} to obtain
\begin{equation}\label{Lpbound}
\delta^{2\bnu} \|V(\cdot,\bdelta)^{\bgamma} L^\bnu g(\cdot)\|_p
\le c\| \cM_{r}\big(V(\cdot,\bdelta)^{\bgamma}g(\cdot)\big)\|_p
\le c \|V(\cdot,\bdelta)^{\bgamma}g(\cdot)\|_p.
\end{equation}

Finally, using \eqref{Band-2} with $\bgamma$ replaced by $\bgamma+\frac{\one}{q}$
and \eqref{Lpbound} with $\bgamma$ replaced by $\bgamma+(\frac{1}{q}-\frac{1}{p})\one$ we get
\begin{align*}
\bdelta^{2\bnu}&\big\|V(\cdot,\bdelta)^{\bgamma} L^\bnu g(\cdot)\big\|_{q}
\\
&\le \big(\delta^{2\bnu} \big\|V(\cdot,\bdelta)^{\bgamma+\frac{\one}{q}} L^\bnu g\big\|_{\infty}\big)^{1-\frac{p}{q}}
\big(\bdelta^{2\bnu}\big\|V(\cdot,\bdelta)^{\bgamma+(\frac{1}{q}-\frac{1}{p})\one} L^\bnu g(\cdot)\big\|_{p}\big)^{\frac pq}
\\
&\le c\big\|V(\cdot,\bdelta)^{\bgamma+(\frac{1}{q}-\frac{1}{p})\one} g(\cdot)\big\|_{p},
\end{align*}
which confirms \eqref{Band-1}.

\smallskip

(ii) From  (i) with $\bgamma=(0,0)$ we derive
\begin{equation*}
\bdelta^{2\bnu}\|L^\bnu g\|_q\le c\big\|V(\cdot,\bdelta)^{(\frac{1}{q}-\frac{1}{p})\one} g(\cdot)\big\|_p
\end{equation*}
and using \eqref{rect-non-coll}, which is a consequence of the non-collapsing condition \eqref{non-collapsing},
we arrive at (\ref{Bl}).
\end{proof}

\section{Product Hardy spaces}\label{sec:Hardy}

Hardy spaces $H^p$, $0<p\le 1$, play a fundamental role in Harmonic analysis, see \cite{FS} and \cite{Stein} and the references therein.
R. Coifman and G. Weiss \cite{CW} introduced and pioneered the theory of Hardy spaces on spaces of homogeneous type.
Later Hardy spaces have been extensively studied on Euclidean \cite{DY} and metric spaces associated with operators \cite{HLMMY}.
Recently, in \cite{DKKP2} the equivalence of Hardy spaces defined by several maximal operators as well as atomic Hardy spaces were obtained in
the setting of a doubling metric measure space $(\XX,\rho,\mu)$ in the presence of a non-negative self-adjoint operator $L$
whose heat kernel has Gaussian localization.
In this setting the Hardy space $H^p$, $0<p\le 1$, is defined as the set of distributions $f\in\cS'$ such that
\begin{equation}\label{Hardyold}
\|f\|_{H^p}:=\big\|\sup_{t>0}|e^{-t^2 L}f(\cdot)|\big\|_p<\infty.
\end{equation}

Hardy spaces $H^p$, $0<p<\infty$, in the two-parameter classical setting have been developed mainly by
Sun-Yung Chang and R. Fefferman, see \cite{CF, CF2, CF3, F1} and the references therein. 
In the anisotropic setting the spaces have been studied in \cite{LBY,BLYZ}. 
Hardy spaces $H^p$, $p\ge1$, have been introduced via area functions on product of spaces of homogeneous type associated with operators \cite{CDLWY}.

In the present section we initiate the development of Hardy spaces $H^p$, $0<p<\infty$, in the two-parameter setting on product domains of this article.
We will introduce the bi-variable version of \eqref{Hardyold} and
establish equivalent characterizations via several maximal operators.

We will operate in the setting of this article (see Section \ref{sec:setting}), assuming in addition that
$\mu_i(\XX_i)=\infty$, $i=1,2$. We start with the following

\begin{definition}
The product Hardy space $H^p=H^p(L_1,L_2)$, $0<p\le 1$, is defined as the set of all $f\in\cS'=\cS'(L_1,L_2)$ such that
\begin{equation}\label{Hardynew}
\|f\|_{H^p}:=\big\|\sup_{\tt>\zero}|e^{-t_1^2 L_1-t_2^2 L_2}f(\cdot)|\big\|_p<\infty.
\end{equation}
\end{definition}

The bi-variate nature of these spaces is exhibited %reflected
in the vector-dilation of the parameter $\tt=(t_1,t_2)$.

\subsection{Maximal operators and the main result}\label{sec:max-operators}

We next introduce several maximal operators that will be used in characterizing the two-parameter (product) Hardy spaces $H^p$.
We first introduce the class of admissible functions on $\bR^2$, suitable for the theory in the product setting.

\begin{definition}\label{def:admissible}
We say that a function $\varphi\in\cS(\bR^2)$ is {\em admissible} if $\varphi$ is real-valued,
$\varphi(0,0)\ne 0$, and
\begin{equation}\label{admissible:equ}
\varphi(\pm\lambda_1,\pm\lambda_2)=\varphi(\lambda_1,\lambda_2),
%\quad \text{for every}\;\;
\quad \forall(\lambda_1,\lambda_2)\in\bR^2.
\end{equation}
\end{definition}

\begin{definition}\label{def:max-op-1}
Let $\varphi\in\cS(\bR^2)$ be an admissible function.
We define the following maximal operators:
Assuming $f\in \cS'$ and $\xx\in \XX$, we set
\begin{equation}\label{def-max-1}
M(f;\varphi)(\xx):= \sup_{\tt>\zero} |\varphi(\tt\sqrt L)f(\xx)|,
\end{equation}
\begin{equation}\label{def-max-2}
M^*_\ba(f;\varphi)(\xx):= \sup_{\tt>\zero}\sup_{\{\yy\in \XX: \rho_i(x_i, y_i)\le a_i t_i\}} |\varphi(\tt\sqrt L)f(\yy)|, \quad \ba\ge \zero,
\end{equation}
and
\begin{equation}\label{def-max-3}
M^{**}_\bgamma(f;\varphi)(\xx):= \sup_{\tt>\zero}\sup_{\yy\in \XX}
\frac{|\varphi(\tt\sqrt L)f(\yy)|}{\prod_{i=1,2}(1+t_i^{-1}\rho_i(x_i,y_i))^{\gamma_i}},\quad \bgamma >\zero.
\end{equation}
Using the compact notation $\DD^*_{\tt,\bgamma}(\xx,\yy):=\prod_{i=1,2}(1+t_i^{-1}\rho_i(x_i,y_i))^{-\gamma_i}$, see \eqref{kernelsD*},
this can be written in the form
\begin{equation}\label{def-max-31}
M^{**}_\bgamma(f;\varphi)(\xx):= \sup_{\tt>\zero}\sup_{\yy\in \XX}
|\varphi(\tt\sqrt L)f(\yy)|\DD^*_{\tt,\bgamma}(\xx,\yy).
\end{equation}
\end{definition}

Obviously,
\begin{equation}\label{relationship}
M(f;\varphi)(\xx) \le M^*_\ba(f;\varphi)(\xx) \le (\one+\ba)^\bgamma M^{**}_\bgamma(f;\varphi)(\xx), \quad \forall f\in\cS',\;\xx\in \XX.
\end{equation}

We next introduce the grand maximal operator.

%%%%%%% Definition

\begin{definition}\label{def:max-op-2}
Let $\varphi\in \cS(\R^2)$ be admissible and $N\in\bN$. We define
\begin{equation}\label{norm-varphi}
\cN_N(\varphi):= \sup_{\blambda\in\bR^2} (1+|\blambda|)^N \max_{0\le |\bbeta|\le N}|\partial^{\bbeta}\varphi(\blambda)|
\end{equation}
and set
$$
\cF_N:= \{\varphi\in\cS(\bR^2): \varphi\;{\rm is \; admissible \; and } \;\;\cN_N(\varphi)\le 1\}.
$$
The grand maximal operator is defined by
\begin{equation}\label{def-grand-max1}
\sM_N(f)(\xx):= \sup_{\varphi\in\cF_N} M^*_\one(f;\varphi)(\xx), \quad f\in \cS',\;\xx\in \XX,
\end{equation}
that is,
\begin{equation}\label{def-grand-max3}
\sM_{N}(f)(\xx):= \sup_{\varphi\in\cF_N}\sup_{\tt>\zero}\sup_{\{\yy\in \XX: \rho_i(x_i, y_i)\le t_i\}} |\varphi(\tt\sqrt L)f(\yy)|.
\end{equation}
%where $N>0$ is sufficiently large $($to be specified$)$.
\end{definition}

Observe that for any admissible function $\varphi$, $\ba\ge \one$, and $N\in\bN$,
\begin{equation}\label{est-grand-max}
M^*_\ba(f;\varphi)(\xx) \le |\ba|^N\cN_N(\varphi)\sM_{N}(f)(x),
\quad \forall f\in\cS',\;\xx\in \XX.
\end{equation}
Indeed, consider $\phi(\lambda_1, \lambda_2):= A^{-1}\varphi(a_1^{-1}\lambda_1, a_2^{-1}\lambda_2)$, where $A:=|\ba|^N\cN_N(\varphi)$.
It is readily seen that
$M^*_\ba(f;\varphi)(\xx) = AM^*_\one(f;\phi)(\xx)$
and a simple manipulation shows that
$\cN_N(\phi)\le 1$.
Then \eqref{est-grand-max} follows.

The main result in the section is the following

\begin{theorem}\label{thm:Hp}
Let $0<p\le 1$.
Assume $N >6|\dd|/p+3|\dd|/2+3$, $\ba\ge \one$, $\bgamma>(2/p)\dd$
and let $\varphi\in \cS(\bR^2)$ be admissible. % with $\varphi(0,0)\ne 0$.
Then for any $f\in \cS'$
\begin{equation}\label{equiv-Hp-norms}
\|f\|_{H^p} \sim \|\sM_{N}(f)\|_{L^p}
\sim \|M(f;\varphi)\|_{L^p}
\sim \|M^*_\ba(f;\varphi)\|_{L^p}
\sim \|M^{**}_\bgamma(f;\varphi)\|_{L^p}.
%\quad \forall f\in \cS',
\end{equation}
Here the equivalence constants depend on $\ba$, $\bgamma$, $N$, and $\varphi$, respectively.
\end{theorem}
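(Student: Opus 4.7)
The plan is to establish the two-sided equivalences
\begin{equation*}
\|f\|_{H^p} \sim \|\sM_N(f)\|_p \sim \|M(f;\varphi)\|_p \sim \|M^*_\ba(f;\varphi)\|_p \sim \|M^{**}_\bgamma(f;\varphi)\|_p
\end{equation*}
via a cycle of six inequalities. The easy direction $\|M(f;\varphi)\|_p \le \|M^*_\ba(f;\varphi)\|_p \le C\|M^{**}_\bgamma(f;\varphi)\|_p$ is immediate from \eqref{relationship}, while $\|M^{**}_\bgamma(f;\varphi)\|_p\lesssim\|\sM_N(f)\|_p$ follows by splitting the sup over $\yy$ in $M^{**}_\bgamma$ into dyadic annuli $\rho_i(x_i,y_i)\sim 2^{k_i}t_i$ and applying \eqref{est-grand-max} to each shell. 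The bound $\|M(f;\varphi)\|_p\le\cN_N(\varphi)\|\sM_N(f)\|_p$ is similarly immediate (taking $\ba=\zero$ in \eqref{est-grand-max}), and $\|f\|_{H^p}=\|M(f;\varphi_0)\|_p\le\cN_N(\varphi_0)\|\sM_N(f)\|_p$ with $\varphi_0(\blambda):=e^{-\lambda_1^2-\lambda_2^2}$ gives one half of the $H^p$-equivalence.

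\textbf{Peetre reduction.} The first nontrivial ingredient is the pointwise estimate
\begin{equation*}
M^{**}_\bgamma(f;\varphi)(\xx)\le C\,\cM_r\!\bigl(M(f;\varphi)^r\bigr)(\xx)^{1/r}
\end{equation*}
valid for some $0<r<p$ with $\gamma_i/r>2d_i$; such $r$ exists because $\bgamma>(2/p)\dd$. Combined with the Hardy-Littlewood maximal inequality \eqref{max} this gives $\|M^{**}_\bgamma(f;\varphi)\|_p\lesssim\|M(f;\varphi)\|_p$, closing the first half of the cycle. The proof mirrors Theorem~\ref{thm:Peetre-max}: for fixed $(\tt,\yy)$ and small $\varepsilon\in(0,1)$, use the H\"older estimate on $\KK_{\varphi(\tt\sqrt L)}$ from Corollary~\ref{cor:gen-local} to compare $\varphi(\tt\sqrt L)f(\yy)$ with its values on $\yy'\in Q(\yy,\varepsilon\tt)$ and then average. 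The resulting error contributes a factor $C(\varepsilon^{\alpha_1}+\varepsilon^{\alpha_2})M^{**}_\bgamma(f;\varphi)(\xx)^r\DD^*_{\tt,r\bgamma}(\xx,\yy)$ that can be absorbed for small $\varepsilon$, after a preliminary truncation/finiteness argument based on the slow growth of $\varphi(\tt\sqrt L)f$ from Proposition~\ref{prop:slow}.

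\textbf{Grand maximal bound.} The remaining step is $\|\sM_N(f)\|_p\lesssim\|M(f;\varphi)\|_p$ for any admissible $\varphi$. Apply the Calder\'on formula \eqref{cald-2} to write $f=\sum_{\jj\in\bN_0^2}\varphi_\jj(\sqrt L)f$ with each $\varphi_\jj$ compactly supported in a dyadic box $\{|\lambda_i|\sim 2^{j_i}\}$. Because $\varphi(\zero)\neq 0$, the rescaled function $\blambda\mapsto\varphi(2^{-\jj}\blambda)$ is bounded above and below on the support of $\varphi_\jj$, permitting the factorization $\varphi_\jj(\sqrt L)=G_\jj(\sqrt L)\varphi(2^{-\jj}\sqrt L)$ with $G_\jj:=\varphi_\jj/\varphi(2^{-\jj}\cdot)$ smooth, compactly supported, and bounded uniformly in $\jj$ (for $\jj=\zero$, treat the low-frequency piece separately). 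Theorem~\ref{th:boxsupport} applied to $G_\jj$ then gives a pointwise domination $|\varphi_\jj(\sqrt L)f(\zz)|\lesssim\cM_r(M(f;\varphi)^r)(\zz)^{1/r}$. For any $\phi\in\cF_N$, $\tt>\zero$, and $\yy$ with $\rho_i(x_i,y_i)\le t_i$, use the representation
\begin{equation*}
\phi(\tt\sqrt L)f(\yy)=\sum_{\jj\ge\zero}\int_\XX\KK_{\phi(\tt\sqrt L)\varphi_\jj(\sqrt L)}(\yy,\zz)\,\varphi_\jj(\sqrt L)f(\zz)\,d\mu(\zz).
\end{equation*}
Theorem~\ref{thm:gen-local} applied to the product $\phi(\tt\cdot)\varphi_\jj(\cdot)$ produces the kernel decay $\DD_{\btau,\kk}(\yy,\zz)$ together with an extra gain $\prod_i\min(t_i 2^{j_i},(t_i2^{j_i})^{-1})^{\epsilon_i}$: the high-frequency gain ($t_i 2^{j_i}\gg 1$) comes from the polynomial decay of $\phi\in\cF_N$, while the low-frequency gain ($t_i 2^{j_i}\ll 1$) comes from the separation of $\supp\varphi_\jj$ from $\zero$. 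Summing over $\jj$ and applying \eqref{max} yields the pointwise estimate $\sM_N(f)(\xx)\lesssim\cM_r(M(f;\varphi)^r)(\xx)^{1/r}$ uniformly in $\phi\in\cF_N$, closing the loop.

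\textbf{Main obstacle.} The delicate part is the bi-parameter kernel analysis in the grand maximal step: because $\tt$ and $2^\jj$ live in independent two-dimensional scales, the kernel $\KK_{\phi(\tt\sqrt L)\varphi_\jj(\sqrt L)}$ must be estimated across four regimes (low/high frequency in each coordinate), and in the two mixed regimes the cancellation of $\varphi_\jj$ in one direction has to couple with the decay of $\phi$ in the other. The sharp threshold $N>6|\dd|/p+3|\dd|/2+3$ arises from this balance: $6|\dd|/p$ absorbs the Peetre exponent $2|\dd|/p$ inflated by a factor of three coming from successive applications of the Peetre step, the pointwise domination of $\varphi_\jj(\sqrt L)f$, and the Fefferman-Stein inequality, while $3|\dd|/2+3$ supplies the $C^{k_1+k_2}$ margin required by Theorems~\ref{th:boxsupport}--\ref{thm:gen-local} (where $k_i>3d_i/2$).
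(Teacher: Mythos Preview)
Your proposal has two genuine gaps, both of which the paper circumvents with a single device you are missing: the Rychkov-type reproducing formula of Lemma~\ref{lem:Rych}.

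\textbf{The Peetre reduction does not transfer from spectral functions.} You claim the pointwise estimate $M^{**}_\bgamma(f;\varphi)\le C\,\cM_r(M(f;\varphi))$ ``mirrors Theorem~\ref{thm:Peetre-max}'', but that theorem applies to $g\in\Sigma_\tt$ and its proof hinges on the self-reproducing identity $g=\theta(\bdelta\sqrt L)g$, which lets one write $g(\yy)-g(\uu)=\int[\KK_\theta(\yy,\zz)-\KK_\theta(\uu,\zz)]\,g(\zz)\,d\mu(\zz)$ and bound the right side by $M^{**}$ of $g$ itself. For an arbitrary admissible $\varphi\in\cS(\bR^2)$ --- in particular the Gaussian defining $\|\cdot\|_{H^p}$ --- the function $\varphi(\tt\sqrt L)f$ is \emph{not} in any spectral space, so no such identity is available. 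Invoking the H\"older estimate on $\KK_{\varphi(\tt\sqrt L)}$ from Corollary~\ref{cor:gen-local} gives you $\varphi(\tt\sqrt L)f(\yy)-\varphi(\tt\sqrt L)f(\uu)=\langle f,\KK_{\varphi(\tt\sqrt L)}(\yy,\cdot)-\KK_{\varphi(\tt\sqrt L)}(\uu,\cdot)\rangle$, whose size is governed by the distribution $f$, not by $M^{**}_\bgamma(f;\varphi)$; the bootstrap therefore does not close.

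\textbf{The factorization in the grand-maximal step fails.} You write $\varphi_\jj=G_\jj\cdot\varphi(2^{-\jj}\cdot)$ with $G_\jj$ bounded ``because $\varphi(\zero)\ne 0$''. But for $\jj\in\bN^2$ the support of $\varphi_\jj$ rescales to $[1/2,2]^2$, and admissibility imposes no lower bound on $|\varphi|$ there --- $\varphi$ may well vanish on that box. The same obstruction spoils the ``low-frequency gain'' you need when $t_i2^{j_i}\ll 1$.

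The paper handles both issues at once via Lemma~\ref{lem:Rych}: starting only from $\varphi(\zero)=1$, it produces $\psi,\Psi\in\cS(\bR^2)$ with $\partial^\bbeta\Psi(\zero)=0$ for $|\bbeta|\le N$ such that
\[
f=\psi(2^{-\jj}\sqrt L)\varphi(2^{-\jj}\sqrt L)f+\sum_{k\ge 1}\Psi(2^{-\jj-\kk}\sqrt L)\bigl[\varphi(2^{-\jj-\kk}\sqrt L)-\varphi(2^{-\jj-\kk+\one}\sqrt L)\bigr]f .
\]
Applying $\varphi(\tt\sqrt L)$ (for Proposition~\ref{prop:M-M-1}) or an arbitrary $\phi\in\cF_N$ (for Proposition~\ref{prop:grant-M}) to this identity, the composite kernels $\varphi(\tt\sqrt L)\Psi(2^{-\jj-\kk}\sqrt L)$ acquire the decay $2^{-k(|\bgamma|+\eps)}$ from the vanishing moments of $\Psi$, and the remaining factor is $\varphi(2^{-\jj-\kk}\sqrt L)f$, which is exactly what $M(f;\varphi)$ controls. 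This is the substitute for both your spectral-reproducing step and your division by $\varphi$.

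A smaller point: your claim that $\|M^{**}_\bgamma(f;\varphi)\|_p\lesssim\|\sM_N(f)\|_p$ follows from \eqref{est-grand-max} by splitting into dyadic shells is also incorrect --- that estimate costs a factor $|2^\kk|^N\sim 2^{N(k_1\vee k_2)}$, which swamps the denominator $2^{\kk\cdot\bgamma}$ since $N$ is much larger than $\gamma_i$. Fortunately this direction is redundant once the two propositions above are in place.
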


For the proof of this theorem we need some preparation.
We first establish an useful decomposition identity.

\begin{lemma}\label{lem:Rych}
Let $\varphi\in \cS(\bR^2)$ be admissible with $\varphi(0,0)=1$ and let $N\in \bN_0$.
Then there exist an admissible function $\psi\in \cS(\bR^2)$ with $\psi(0,0)=1$
and a function $\Psi\in \cS(\bR^2)$ with the properties
$\Psi(\pm \lambda_1, \pm \lambda_2)=\Psi(\lambda_1, \lambda_2)$ for $\blambda\in\R^2$ and
$\partial^{\bbeta}\Psi(0,0)=0$ for $|\bbeta|\le N$
such that for any $f\in \cS'$ and $\jj\in \bZ^2$
\begin{equation}\label{dec-Rych}
f = \psi(2^{-\jj}\sqrt L)\varphi(2^{-\jj}\sqrt L)f
+ \sum_{k=1}^\infty \Psi(2^{-\jj-\kk}\sqrt L)\big(\varphi(2^{-\jj-\kk}\sqrt L) - \varphi(2^{-\jj-\kk+\one}\sqrt L)\big)f,
%\quad\hbox{in}\;\; \cS'.
\end{equation}
where the convergence is in $\cS'$ and $\kk:=(k,k)$, $\one:=(1,1)$.
\end{lemma}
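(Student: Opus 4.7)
My plan is to construct $\psi$ and $\Psi$ as explicit scalar functions on $\bR^2$ so that a telescoping identity gives (\ref{dec-Rych}), and then lift the identity to $\cS'$ via the two-parameter functional calculus (Theorem~\ref{thm:F-G}) and the convergence result Theorem~\ref{thm:converge}. Fix an integer $M$ to be determined in terms of $N$. I first define
\begin{equation*}
\psi(\blambda) := \sum_{j=1}^{M}\binom{M}{j}(-1)^{j+1}\varphi(\blambda)^{j-1}
\end{equation*}
and observe that the binomial expansion of $(1-\varphi)^M$ produces the pointwise identity $1 = \psi(\blambda)\varphi(\blambda) + (1-\varphi(\blambda))^M$; admissibility of $\varphi$ with $\varphi(0,0)=1$ forces $\psi$ to be admissible with $\psi(0,0)=\sum_{j=1}^M\binom{M}{j}(-1)^{j+1}=1$.

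To decompose the remainder $(1-\varphi)^M$, I telescope $A_k(\blambda) := (1-\varphi(2^{-k}\blambda))^M$: since $\varphi(\zero)=1$ forces $A_k\to 0$ pointwise as $k\to\infty$, the pointwise sum $\sum_{k\ge 1}(A_{k-1}-A_k)$ equals $A_0 = (1-\varphi)^M$. The algebraic factorization $u^M - v^M = (u-v)\sum_{j=0}^{M-1}u^{M-1-j}v^j$ applied with $u := 1-\varphi(2^{-k+1}\blambda)$ and $v := 1-\varphi(2^{-k}\blambda)$ isolates the factor $u - v = \omega(2^{-k}\blambda)$, where $\omega(\blambda) := \varphi(\blambda)-\varphi(2\blambda)$. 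Setting
\begin{equation*}
\Psi(\blambda) := \sum_{j=0}^{M-1}\bigl(1-\varphi(2\blambda)\bigr)^{M-1-j}\bigl(1-\varphi(\blambda)\bigr)^{j}
\end{equation*}
then gives the pointwise identity $1 = \psi(\blambda)\varphi(\blambda) + \sum_{k\ge 1}\Psi(2^{-k}\blambda)\omega(2^{-k}\blambda)$.

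The required properties of $\Psi$ follow by inspection: it is Schwartz as a polynomial expression in Schwartz functions, it inherits evenness in each variable from $\varphi$, and its origin-vanishing is checked as follows. Because $\varphi$ is even in each variable and $\varphi(\zero)=1$, all first-order partials of $\varphi$ at the origin vanish, so $1-\varphi(\blambda)=O(|\blambda|^2)$, and likewise $1-\varphi(2\blambda)=O(|\blambda|^2)$. Each summand of $\Psi$ is a product of $M-1$ such factors, so $\Psi$ vanishes to order $\ge 2(M-1)$ at the origin. Choosing $M := N+2$ guarantees $2(M-1) = 2N+2 \ge N+1$, so $\partial^{\bbeta}\Psi(\zero)=0$ for all $|\bbeta|\le N$.

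It remains to lift the scalar identity to (\ref{dec-Rych}) in $\cS'$. Applying Theorem~\ref{thm:F-G} to the partial-sum form of the scalar telescoping at the scale $2^{-\jj}\blambda$ produces the operator-level equality
\begin{equation*}
\sum_{k=1}^{K}\Psi(2^{-\jj-\kk}\sqrt L)\omega(2^{-\jj-\kk}\sqrt L)f = (1-\varphi(2^{-\jj}\sqrt L))^{M}f - (1-\varphi(2^{-\jj-\KK}\sqrt L))^{M}f
\end{equation*}
with $\KK := (K,K)$, and the relation $1-\psi\varphi = (1-\varphi)^M$ rewrites the first term on the right as $f - \psi(2^{-\jj}\sqrt L)\varphi(2^{-\jj}\sqrt L)f$. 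Since each power $\varphi^j$ is itself admissible with $\varphi^j(0,0)=1$, Theorem~\ref{thm:converge} yields $\varphi^j(2^{-\jj-\KK}\sqrt L)f\to f$ in $\cS'$ as $K\to\infty$; binomial expansion and $(1-1)^M=0$ then force the tail $(1-\varphi(2^{-\jj-\KK}\sqrt L))^{M}f\to 0$ in $\cS'$. Passing to the limit gives (\ref{dec-Rych}). The main obstacle is exactly this last step, namely justifying that the operator-level telescoping is well-defined via the multiplicativity in Theorem~\ref{thm:F-G} and that the tail vanishes in $\cS'$; the remainder of the argument is algebraic and a Taylor-order count.
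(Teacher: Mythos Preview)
Your proof is correct and follows essentially the same Rychkov-type strategy as the paper: build $\psi$ and $\Psi$ algebraically from $\varphi$ so that a telescoping identity holds pointwise, then pass to $\cS'$ via Theorem~\ref{thm:converge}. The only difference is cosmetic---the paper bases its construction on $(1-\varphi^2)^N$ rather than your $(1-\varphi)^M$, which leads to slightly different (and somewhat bulkier) explicit formulas for $\psi$ and $\Psi$, and it telescopes the partial sums to $\psi(2^{-\jj-\mm}\sqrt L)\varphi(2^{-\jj-\mm}\sqrt L)f$ rather than to $(1-\varphi(2^{-\jj-\KK}\sqrt L))^M f$; your version is marginally cleaner but otherwise equivalent.
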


\begin{proof}
Let $\varphi$ be from the hypothesis of the lemma and assume $N\ge 2$.
From the binomial formula it readily follows that
\begin{equation}\label{binom}
\sum_{m=1}^{N}{\textstyle\binom{N}{m}}\varphi(\blambda)^{2m}(1-\varphi(\blambda)^2)^{N-m}=1-(1-\varphi(\blambda)^2)^{N}
\end{equation}
and for $k\ge 1$
\begin{align*}
\sum_{m=1}^{N}{\textstyle\binom{N}{m}}&(\varphi(2^{-k}\blambda)^{2}-\varphi(2^{-k+1}\blambda)^{2})^m(1-\varphi(2^{-k}\blambda)^2)^{N-m}
\\
&=(1-\varphi(2^{-k+1}\blambda)^2)^{N}-(1-\varphi(2^{-k}\blambda)^2)^{N}.
\end{align*}
Summing up the above identities and using that $\varphi(0,0)=1$ we get
\begin{align*}
\sum_{k=1}^{\infty}\sum_{m=1}^{N}{\textstyle\binom{N}{m}}&(\varphi(2^{-k}\blambda)^{2}-\varphi(2^{-k+1}\blambda)^{2})^m(1-\varphi(2^{-k}\blambda)^2)^{N-m}
=(1-\varphi(\blambda)^2)^{N}.
\end{align*}
Combining this with \eqref{binom} we find
\begin{align*}
1&=\sum_{m=1}^{N}{\textstyle\binom{N}{m}}\varphi(\blambda)^{2m}(1-\varphi(\blambda)^2)^{N-m}
\\
&+\sum_{k=1}^{\infty}\sum_{m=1}^{N}{\textstyle\binom{N}{m}}(\varphi(2^{-k}\blambda)^{2}-\varphi(2^{-k+1}\blambda)^{2})^m(1-\varphi(2^{-k}\blambda)^2)^{N-m},
\end{align*}
which takes the form
\begin{equation}
\label{L2Hp1}
1=\psi(\blambda)\varphi(\blambda)+\sum_{k=1}^{\infty}\Psi(2^{-k}\blambda)(\varphi(2^{-k}\blambda)-\varphi(2^{-k+1}\blambda)),
\end{equation}
where
$$
\psi(\blambda):=
\sum_{m=1}^{N}{\textstyle\binom{N}{m}}\varphi(\blambda)^{2m-1}(1-\varphi(\blambda)^2)^{N-m}
$$
and
$$
\Psi(\blambda):=(\varphi(\blambda)+\varphi(2\blambda))\sum_{m=1}^{N}{\textstyle\binom{N}{m}}
(\varphi(\blambda)^{2}-\varphi(2\blambda)^{2})^{m-1}(1-\varphi(\blambda)^2)^{N-m}.
$$
Note that since $\varphi$ is admissible, then $\psi$ and $\Psi$ are admissible as well.
Also by the construction $\psi(0,0)=1$ and $\partial^{\beta}\Psi(0,0)=0$ for $|\beta|\le N-2$.
We now replace $\blambda$ by $2\blambda$ in \eqref{L2Hp1} and subtract the resulting identity from \eqref{L2Hp1} to obtain
\begin{equation*}
\Psi(\blambda)(\varphi(\blambda)-\varphi(2\blambda))=\psi(\blambda)\varphi(\blambda)-\psi(2\blambda)\varphi(2\blambda).
\end{equation*}
From this it readily follows that for any $m\in\bN$
\begin{align*}
\sum_{k=1}^{m}\Psi(2^{-\jj-\kk}\blambda)&(\varphi(2^{-\jj-\kk}\blambda)-\varphi(2^{-\jj-\kk+1}\blambda))
\\
&=\psi(2^{-\jj-\mm}\blambda)\varphi(2^{-\jj-\mm}\blambda)-\psi(2^{-\jj}\blambda)\varphi(2^{-\jj}\blambda),
\quad \mm:=(m,m).
\end{align*}
and hence for any $f\in\cS'$
\begin{align}\label{rep-f}
\psi(2^{-\jj}\sqrt{L})\varphi(2^{-\jj}\sqrt{L})f&+\sum_{k=1}^{m}\Psi(2^{-\jj-k}\sqrt{L})
\big(\varphi(2^{-\jj-k}\sqrt{L})-\varphi(2^{-\jj-k+1}\sqrt{L})\big)f \nonumber
\\
&=\psi(2^{-\jj-\mm}\sqrt{L})\varphi(2^{-\jj-\mm}\sqrt{L})f.
\end{align}
From Theorem~\ref{thm:converge} it follows that
$\lim_{m\to\infty} \psi(2^{-\jj-\mm}\sqrt{L})\varphi(2^{-\jj-\mm}\sqrt{L})f =f$ in $\cS'$
and passing to the limit in \eqref{rep-f} as $m\to\infty$ we obtain \eqref{dec-Rych}.
Finally, the lemma follows from the above considerations by replacing $N$ with $N+2$.
\end{proof}

Our next step is to connect different maximal operators from Section \ref{sec:max-operators} by means of the Hardy-Littlewood maximal operator.

\begin{proposition}\label{prop:M-M-1}
Let $\varphi\in\cS(\bR^2)$ be admissible and $0<\theta\le 1$. % and $\varphi(0,0)\neq 0$.
Let also $\bgamma=(\gamma_1,\gamma_2)>\zero$ be so that $\bgamma>(2/\theta)\dd$.
Then there exists a constant $c>0$ such that
\begin{equation}\label{M-M-1}  %{P2Hpeq}
M_{\bgamma}^{**}(f;\varphi)(\xx)\le c \cM_{\theta}(M(f;\varphi))(\xx),\quad\text{for}\;f\in\cS',\;\xx\in \XX,
\end{equation}
where $\cM_{\theta}$ is the Hardy-Littlewood maximal operator, see \eqref{Max1}.
\end{proposition}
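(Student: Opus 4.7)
The approach combines a Calder\'on-type reproducing identity with a $\theta$-power trick. Without loss of generality assume $\varphi(\zero)=1$. Apply Lemma~\ref{lem:Rych} with a large integer $N$ (to be chosen depending on $\bgamma$ and $\theta$) to obtain admissible $\psi,\Psi\in\cS(\bR^2)$, with $\Psi$ vanishing at $\zero$ to order $N$. For any $\tt>\zero$ and $\yy\in\XX$, choose $\jj\in\bZ^2$ with $2^{-j_i-1}<t_i\le 2^{-j_i}$, and set $\bdelta_k:=2^{-\jj-\kk}$ (so each component $\bdelta_{k,i}\in(t_i 2^{-k-1}, t_i 2^{-k}]$). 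Applying $\varphi(\tt\sqrt L)$ to the reproducing identity decomposes
\[
\varphi(\tt\sqrt L)f(\yy)=\Theta_0(\yy)+\sum_{k\ge 1}\Theta_k(\yy),
\]
where each $\Theta_k$ (for $k\ge 1$) is $\varphi(\tt\sqrt L)\Psi(\bdelta_k\sqrt L)$ applied to $G_{\jj+\kk}f:=\varphi(\bdelta_k\sqrt L)f-\varphi(2\bdelta_k\sqrt L)f$ and $\Theta_0$ is an analogous term using $\psi$ and $\varphi(2^{-\jj}\sqrt L)f$.

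Next, estimate the kernel of $\varphi(\tt\sqrt L)\Psi(\bdelta_k\sqrt L)$. Its symbol, rescaled to unit scale $\bdelta_k$, equals $\tilde F_k(\blambda):=\varphi((\tt/\bdelta_k)\blambda)\Psi(\blambda)$, and since $\tt/\bdelta_k\sim 2^\kk$, $\Psi(\blambda)=O(|\blambda|^N)$ near $\zero$, while $\varphi\in\cS(\bR^2)$ is rapidly decaying, a careful $\cC^K$-norm computation combined with Corollary~\ref{cor:gen-local} (or Theorem~\ref{th:boxsupport} applied to $\tilde F_k$) yields
\[
|\mathcal{K}_{\varphi(\tt\sqrt L)\Psi(\bdelta_k\sqrt L)}(\yy,\zz)|\le c_\bsigma\,2^{-kN_0}\,V(\yy,\bdelta_k)^{-1}\DD^*_{\bdelta_k,\bsigma}(\yy,\zz)
\]
for arbitrary $\bsigma>\zero$, where $N_0=N-O(K+|\bsigma|)$ can be made arbitrarily large by choice of $N$. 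This kernel estimate, extracting the decisive factor $2^{-kN_0}$ from the vanishing of $\Psi$ at the origin, is the main technical obstacle; all subsequent steps are bookkeeping with the tools already developed in the paper.

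Finally, invoke the $\theta$-trick using two bounds on $|G_{\jj+\kk}f(\zz)|$: the trivial bound $\le 2M(f;\varphi)(\zz)$, and, from the defining property of $M^{**}_\bgamma$ applied at $\tt'=\bdelta_k$, $\yy'=\zz$, the bound $\le c\,M^{**}_\bgamma(f;\varphi)(\xx)\DD^*_{\bdelta_k,\bgamma}(\xx,\zz)^{-1}$. Combine their $\theta$- and $(1-\theta)$-powers respectively, then process the integrand via $(i)$ the volume comparison $V(\yy,\bdelta_k)^{-1}\le cV(\xx,\bdelta_k)^{-1}\prod_i(1+\bdelta_{k,i}^{-1}\rho_i(\xx_i,\yy_i))^{d_i}$, $(ii)$ the triangle bound $\DD^*_{\bdelta_k,\bsigma}(\yy,\zz)\le \DD^*_{\bdelta_k,\bsigma}(\xx,\zz)\prod_i(1+\bdelta_{k,i}^{-1}\rho_i(\xx_i,\yy_i))^{\sigma_i}$, $(iii)$ Lemma~\ref{lem:max} with $r=\theta$ applied to the $\zz$-integral (requiring $\bsigma-(1-\theta)\bgamma>\dd$), and $(iv)$ multiplication by $\DD^*_{\tt,\bgamma}(\xx,\yy)$, which after using $1+\bdelta_{k,i}^{-1}\rho_i\le 2^{k+1}(1+t_i^{-1}\rho_i)$ must absorb the residual $\yy$-dependent growth $\prod_i(1+t_i^{-1}\rho_i(\xx_i,\yy_i))^{d_i+\sigma_i-\gamma_i}$ (requiring $\sigma_i\le\gamma_i-d_i$). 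The two requirements $\sigma_i>(1-\theta)\gamma_i+d_i$ and $\sigma_i\le\gamma_i-d_i$ are simultaneously satisfiable in $\sigma_i$ \emph{precisely} when $\gamma_i>2d_i/\theta$, matching the hypothesis $\bgamma>(2/\theta)\dd$. Summing over $k$ (convergent provided $N_0\gg|\dd|+|\bsigma|$) and taking sup over $(\tt,\yy)$ yields the self-improving inequality
\[
M^{**}_\bgamma(f;\varphi)(\xx)\le c\,[M^{**}_\bgamma(f;\varphi)(\xx)]^{1-\theta}\,[\cM_\theta(M(f;\varphi))(\xx)]^\theta.
\]
A standard truncation argument on $\varphi$ (replace $\varphi$ by $\varphi\cdot\eta_R$ for a compactly supported admissible cutoff and pass to the limit) ensures a priori finiteness of $M^{**}_\bgamma(f;\varphi)(\xx)$, so dividing through gives \eqref{M-M-1}.
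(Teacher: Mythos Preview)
Your main architecture matches the paper's: Lemma~\ref{lem:Rych}, the kernel decay $2^{-kN_0}$ coming from the vanishing of $\Psi$ at $\zero$, the $\theta$-power splitting of $|G_{\jj+\kk}f(\zz)|$, and the resulting self-improving inequality are exactly the paper's ``Part~1''. (The paper localizes the composed kernel at scale $2^{-\jj}$ rather than your $\bdelta_k=2^{-\jj-\kk}$, but this is an innocuous rescaling.) Your explicit constraints $\sigma_i>(1-\theta)\gamma_i+d_i$ and $\sigma_i\le\gamma_i-d_i$ are a clean way to exhibit where $\bgamma>(2/\theta)\dd$ enters.

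The gap is in your final sentence. Truncating $\varphi$ to compact support does \emph{not} yield a~priori finiteness of $M^{**}_\bgamma(f;\varphi_R)(\xx)$. The supremum is still over all $\tt>\zero$ and all $\yy\in\XX$, and Proposition~\ref{prop:slow} only gives
\[
|\varphi_R(\tt\sqrt L)f(\yy)|\le c\,V(\xx_0,\tt)^{-1}\prod_{i=1,2}(t_i^{-N}+t_i^N)\big(1+\rho_i(y_i,x_{0i})\big)^N
\]
with $N$ depending on the distribution $f$; when $N>\gamma_i$ the damping $\DD^*_{\tt,\bgamma}(\xx,\yy)$ cannot control the growth in $\yy$, regardless of the support of $\varphi_R$. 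The paper's remedy (its ``Part~2'') is to introduce an auxiliary maximal operator
\[
M^{**}_\bgamma(f;\varphi)^{\eps,K}(\xx):=\sup_{\zero<\tt\le\one/\beps}\ \sup_{\yy\in\XX}\frac{|\varphi(\tt\sqrt L)f(\yy)|}{\prod_{i}(1+t_i^{-1}\rho_i(x_i,y_i))^{\gamma_i}(1+\eps\rho_i(x_i,y_i))^K(1+\eps t_i^{-K})},
\]
with $K$ taken equal to the slow-growth exponent of $f$, so that this quantity is automatically finite. One then reruns the entire Part~1 argument for this operator (choosing $N$ in Lemma~\ref{lem:Rych} depending also on $K$) and obtains $M^{**}_\bgamma(f;\varphi)^{\eps,K}(\xx)\le\tilde c\,\cM_\theta(M(f;\varphi))(\xx)$ with $\tilde c$ independent of $\eps$; letting $\eps\to0$ gives $M^{**}_\bgamma(f;\varphi)(\xx)<\infty$ (with an $f$-dependent constant), after which your Part~1 argument delivers \eqref{M-M-1} with $c$ independent of $f$.
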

\begin{proof}
Let $\varphi, \theta$ and $\bgamma$ be from the hypothesis of the proposition.
Without loss of generality we may assume that $\varphi(0,0)=1$; otherwise we use $\varphi(0,0)^{-1}\varphi$ instead.
Fix $f\in\cS'$ and $\xx\in \XX$.

\smallskip

\noindent
{\bf Part 1.}
We first show that inequality \eqref{M-M-1} is valid under the additional assumption
\begin{equation}\label{M-infty}
M_{\bgamma}^{**}(f;\varphi)(\xx) <\infty.
\end{equation}
By Lemma~\ref{lem:Rych} for any $N\in\bN_0$ there exists an admissible function $\psi\in \cS(\bR^2)$ with $\psi(0,0)=1$
and a function $\Psi\in \cS(\bR^2)$ with the properties
$\Psi(\pm \lambda_1, \pm \lambda_2)=\Psi(\lambda_1, \lambda_2)$ for $\blambda\in \R^2$,
and $\partial^{\bbeta}\Psi(0,0)=0$ for $|\bbeta|\le N$
such that
\begin{equation*}%\label{P2Hp1}
f = \psi(2^{-\jj}\sqrt L)\varphi(2^{-\jj}\sqrt L)f
+ \sum_{k=1}^\infty \Psi(2^{-\jj-\kk}\sqrt L)(\varphi(2^{-\jj-\kk}\sqrt L) - \varphi(2^{-\jj-\kk+\one}\sqrt L))f,
\end{equation*}
where $\kk:=(k,k)$.
We choose $N:=\lfloor 3|\bgamma|+3|\dd|/2+4\rfloor$.

Fix $\tt>\zero$ and let $\jj\in\bZ^2$ be such that $2^{-\jj}\le\tt<2^{-\jj+\one}$.
In light of \eqref{def-max-31} we need to deal with $|\varphi(\tt\sqrt{L})f(\yy)|\DD^{*}_{\tt,\bgamma}(\xx,\yy)$.
From above, using that $t_i\sim 2^{-j_i}$, we get
\begin{align}\label{varphi-0}
&|\varphi(\tt\sqrt{L})f(\yy)|\DD^{*}_{\tt,\bgamma}(\xx,\yy)
\le c|\varphi(\tt\sqrt{L})\psi(2^{-\jj}\sqrt L)\varphi(2^{-\jj}\sqrt L)f(\yy)|\DD^{*}_{2^{-\jj},\bgamma}(\xx,\yy)
\\
&+ c\sum_{k=1}^\infty |\varphi(\tt\sqrt{L})\Psi(2^{-\jj-\kk}\sqrt L)
\big[\varphi(2^{-\jj-\kk}\sqrt L) - \varphi(2^{-\jj-\kk+\one}\sqrt L)\big]f(\yy)|\DD^{*}_{2^{-\jj},\bgamma}(\xx,\yy). \nonumber
\end{align}
Let $\omega(\blambda):=\varphi(\tt 2^{\jj}\blambda)\Psi(2^{-k}\blambda)$.
Then $\varphi(\tt\sqrt{L})\Psi(2^{-\jj-\kk}\sqrt L)=\omega(2^{-\jj}\sqrt{L})$.
Choose $m:=\lfloor \gamma_1+d_1/2+1\rfloor + \lfloor \gamma_2+ d_2/2+1\rfloor$ and $r:= m+|\dd|+1$.
Since $\varphi$ and $\Psi$ are admissible, then $\omega\in \cS(\R^2)$ and $\omega(\pm\lambda_1,\pm\lambda_2)=\omega(\lambda_1,\lambda_2)$.
Also, using that $\tt\sim 2^{-\jj}$ and $k\ge1$ we get
\begin{equation*}
|\partial^{\bbeta}\omega(\blambda)|\le c(1+|\blambda|)^{-N},\quad \blambda\in \R^2.
\end{equation*}
From this it readily follow that
\begin{equation*}%\label{omega}
|\partial^{\bbeta}\omega(\blambda)|\le c2^{-k(N-r)/2}(1+|\blambda|)^{-r},
\quad\hbox{if $\; |\blambda|\ge 2^{k/2}$, $|\bbeta|\le N$,}
\end{equation*}
and since $N \ge 3|\bgamma|+3|\dd|/2+3+2\tilde{\eps}$ for some $\tilde{\eps}>0$
it follows that
\begin{equation}\label{omega}
|\partial^{\bbeta}\omega(\blambda)|\le c2^{-k(|\bgamma|+\tilde{\eps})}(1+|\blambda|)^{-r},
\quad|\blambda|\ge 2^{k/2}, |\bbeta|\le N.
\end{equation}
On the other hand,
since $\partial^{\bbeta}\Psi(0,0)=0$ for $|\bbeta|\le N$,
Taylor's formula implies that
$|\partial^{\bbeta}\Psi(\blambda)|\le c|\blambda|^{N-|\bbeta|}$ for $|\blambda|\le 1$ and $|\bbeta|\le N$.
Then for $|\blambda|<2^{k/2}$, we get
\begin{align*}
|\partial^{\bbeta}(\Psi(2^{-k}\blambda))|=2^{-k|\bbeta|}|(\partial^{\bbeta}\Psi)(2^{-k}\blambda)|\le c2^{-k|\bbeta|}(2^{-k}|\blambda|)^{N-|\bbeta|}
\le c2^{-kN/2}.
\end{align*}
From this and the fact that $\varphi\in\cS(\R^2)$ it follows that
\begin{equation*}
|\partial^{\bbeta}\omega(\blambda)|\le c2^{-kN/2}(1+|\blambda|)^{-N}\le c2^{-k(|\bgamma|+\tilde{\eps})}(1+|\blambda|)^{-r},
\quad |\blambda|<2^{k/2}.
\end{equation*}
Combining this estimate with \eqref{omega} leads to
\begin{equation}\label{beta-omega}
|\partial^{\bbeta}\omega(\blambda)|\le c2^{-k(|\bgamma|+\tilde{\eps})}(1+|\blambda|)^{-r},
\quad \blambda\in \R^2, |\bbeta|\le m.
\end{equation}
We use this and apply Theorem~\ref{thm:gen-local} with the roles of $k_1, k_2$ played by $\lfloor \gamma_i+d_i/2+1\rfloor$, $i=1,2$,
to obtain
\begin{align}\label{P2Hp2}
&|\KK_{\varphi(\tt\sqrt{L})\Psi(2^{-\jj-\kk}\sqrt{L})}(\xx,\yy)|
=|\KK_{\omega(2^{-\jj}\sqrt{L})}(\xx,\yy)|
\\
&\le c2^{-k(|\bgamma|+\tilde{\eps})}\DD_{2^{-\jj},\bgamma+\dd/2}(\xx,\yy)
\le c2^{-k(|\bgamma|+\tilde{\eps})}V(\xx,2^{-\jj})^{-1}\DD^*_{2^{-\jj},\bgamma}(\xx,\yy), \nonumber
\end{align}
where for the last inequality we used \eqref{rect-doubling2}.
In turn this estimate implies
\begin{align}\label{varphi-1}
\big|\varphi(\tt\sqrt{L})&\Psi(2^{-\jj-\kk}\sqrt L)
(\varphi(2^{-\jj-\kk}\sqrt L) - \varphi(2^{-\jj-\kk+\one}\sqrt L))f(\yy)\big|\DD^{*}_{2^{-\jj},\bgamma}(\xx,\yy) \nonumber
\\
&\le \DD^{*}_{2^{-\jj},\bgamma}(\xx,\yy)\int_{\XX}|\KK_{\omega(2^{-\jj}\sqrt{L})}(\yy,\zz)|
\\
&\hspace{1in}\;\; \times(|\varphi(2^{-\jj-\kk}\sqrt L)f(\zz)|+|\varphi(2^{-\jj-\kk+\one}\sqrt L)f(\zz)|)d\mu(\zz) \nonumber
\\
&\le c2^{-k(|\bgamma|+\tilde{\eps})}\DD^{*}_{2^{-\jj},\bgamma}(\xx,\yy)
\int_{\XX}\frac{\DD^*_{2^{-\jj},\bgamma}(\yy,\zz)}{V(\zz,2^{-\jj})}(|F_{\jj,k}(\zz)|+|F_{\jj,k-1}(\zz)|)d\mu(\zz), \nonumber
\end{align}
where we used the short-hand notation $F_{\jj,k}(\zz):=\varphi(2^{-\jj-\kk}\sqrt L)f(\zz)$.

On the other hand,
$1+2^{j_i}\rho_i(x_i,z_i)\le(1+2^{j_i}\rho_i(x_i,y_i))(1+2^{j_i}\rho_i(y_i,z_i))$ %for every $i=1,2$.
and hence
\begin{equation*}
2^{-k|\bgamma|}\DD^{*}_{2^{-\jj},\bgamma}(\xx,\yy)\DD^*_{2^{-\jj},\bgamma}(\yy,\zz)
\le 2^{-k|\bgamma|}\DD^*_{2^{-\jj},\bgamma}(\xx,\zz)\le\DD^{*}_{2^{-\jj-\kk},\bgamma}(\xx,\zz).
\end{equation*}
With $0<\theta\le 1$ from the hypothesis of the proposition this leads to
\begin{align*}
2^{-k(|\bgamma|+\tilde{\eps})}\DD^{*}_{2^{-\jj},\bgamma}(\xx,\yy)&\DD^*_{2^{-\jj},\bgamma}(\yy,\zz)|F_{\jj,k}(\zz)|
\\
&\le c2^{-k\tilde{\eps}}M_{\bgamma}^{**}(f;\varphi)(\xx)^{1-\theta}\DD^{*}_{2^{-\jj-\kk},\theta\bgamma}(\xx,\zz)|F_{\jj,k}(\zz)|^{\theta}.
\end{align*}
Combining this with \eqref{varphi-1} we get
\begin{align*}
& |\varphi(\tt\sqrt{L})\Psi(2^{-\jj-\kk}\sqrt L)(\varphi(2^{-\jj-\kk}\sqrt L) - \varphi(2^{-\jj-\kk+\one}\sqrt L))f(\yy)|\DD^{*}_{2^{-\jj},\bgamma}(\xx,\yy)
\\
&\le c2^{-k\tilde{\eps}}M_{\bgamma}^{**}(f;\varphi)(\xx)^{1-\theta}
\int_{\XX}\DD^{*}_{2^{-\jj-k},\theta\bgamma}(\xx,\zz)\frac{|F_{\jj,k}(\zz)|^{\theta}+|F_{\jj,k-1}(\zz)|^{\theta}}{V(\zz,2^{-\jj})}d\mu(\zz).
\end{align*}
In the same way we obtain
\begin{align*}
|\varphi(\tt\sqrt{L})\psi(2^{-\jj}\sqrt L)&\varphi(2^{-\jj}\sqrt L)f(\yy)|\DD^{*}_{2^{-\jj},\bgamma}(\xx,\yy)
\\
&\le cM_{\bgamma}^{**}(f;\varphi)(\xx)^{1-\theta}
\int_{\XX}\DD^{*}_{2^{-\jj},\theta\bgamma}(\xx,\zz)\frac{|F_{\jj,0}(\zz)|^{\theta}}{V(\zz,2^{-\jj})}d\mu(\zz).
\end{align*}
Combining the above estimates with \eqref{varphi-0} we obtain
\begin{align}\label{P2Hp3}
|\varphi(\tt\sqrt{L})f(\yy)|&\DD^{*}_{\tt,\bgamma}(\xx,\yy)
\nonumber
\\
&\le cM_{\bgamma}^{**}(f;\varphi)(\xx)^{1-\theta}\sum_{k=0}^{\infty}2^{-k\tilde{\eps}}
\int_{\XX}\DD^{*}_{2^{-\jj-k},\theta\bgamma}(\xx,\zz)\frac{|F_{\jj,k}(\zz)|^{\theta}}{V(\zz,2^{-\jj})}d\mu(\zz).
\end{align}
By \eqref{rect-doubling2} it follows that
\begin{equation}\label{V-VD}
V(\zz,2^{-\jj})^{-1}\le cV(\xx,2^{-\jj})^{-1}\DD^{*}_{2^{-\jj},\dd}(\xx,\zz)^{-1}
\le cV(\xx,2^{-\jj})^{-1}\DD^{*}_{2^{-\jj-\kk},\dd}(\xx,\zz)^{-1},
\end{equation}
which applied in \eqref{P2Hp3} leads to
\begin{align*}
&|\varphi(\tt\sqrt{L})f(\yy)|\DD^{*}_{\tt,\bgamma}(\xx,\yy)
\\
&\le c[M_{\bgamma}^{**}(f;\varphi)(\xx)]^{1-\theta}\sum_{k=0}^{\infty}2^{-k\tilde{\eps}}
\int_{\XX}V(\xx,2^{-\jj})\DD^{*}_{2^{-\jj-\kk},\theta\bgamma-\dd}(\xx,\zz)|F_{\jj,k}(\zz)|^{\theta}d\mu(\zz)
\\
&\le c[M_{\bgamma}^{**}(f;\varphi)(\xx)]^{1-\theta}\sum_{\nu=0}^{\infty}2^{-\nu\epsilon}\cM_{\theta}(F_{\jj,k})(\xx)^{\theta}
\\
&\le c[M_{\bgamma}^{**}(f;\varphi)(\xx)]^{1-\theta}[\cM_{\theta}(M(f;\varphi))(\xx)]^{\theta},
\end{align*}
where for the former inequality we used Lemma~\ref{lem:max}
and the fact that $\theta\bgamma -\dd> \dd$ due to $\bgamma>(2/\theta)\dd$.
Therefore, using the definition of $M_{\bgamma}^{**}(f;\varphi)$ (see \eqref{def-max-31}) we get
\begin{equation*}
M_{\bgamma}^{**}(f;\varphi)(x) \le c[M_{\bgamma}^{**}(f;\varphi)(\xx)]^{1-\theta}[\cM_{\theta}(M(f;\varphi))(\xx)]^{\theta}.
\end{equation*}
At this point we use our assumption that $M_{\bgamma}^{**}(f;\varphi)(\xx)<\infty$ from \eqref{M-infty} to obtain
the desired estimate \eqref{M-M-1}.

%%%%%%%%%%%%%%%%%%%%

\medskip

\noindent
{\bf Part 2.}
Our next step is to show that under the hypothesis of the proposition
if $f\in\cS'$ and $M_\theta(M(f;\varphi))(\xx)<\infty$,
then
$M_\bgamma^{**}(f;\varphi)(\xx) <\infty$,
i.e. \eqref{M-infty} is valid.
As we know from Theorem~\ref{prop:slow} since $f\in \cS'$, then $\varphi(\tt\sqrt{L})f$ is slowly growing (and continuous).
More explicitly, there exist constants $K, c>0$ depending on $f$ such that
\begin{equation}\label{slow}
|\varphi(\tt\sqrt{L})f(\yy)| \le c\prod_{i=1,2}(t_i^{-K}+t_i^K)(1+\rho(y_i, x_{0i}))^K, \quad \forall \yy\in \XX, \tt>\zero.
\end{equation}
With $K$ from above and $0<\eps<1$, $\beps:=(\eps,\eps)$, we introduce the following auxiliary maximal operator
\begin{align*}%\label{M-new}
 M_\bgamma^{**}&(f;\varphi)^{\eps, K}(\xx) \nonumber
\\
&:= \sup_{\zero<\tt\le \frac{\one}{\beps}}\sup_{\yy\in\XX}
\frac{|\varphi(\tt\sqrt{L})f(\yy)|}
{\prod_{i=1,2}(1+t_i^{-1}\rho(x_i,y_i))^{\gamma_i}(1+\eps\rho(x_i,y_i))^K(1+\eps t_i^{-K})}.
\end{align*}
It is easy to see that
\begin{equation}\label{M-new-l-infty}
M_\bgamma^{**}(f;\varphi)^{\eps, K}(\xx) \le c(\xx,\eps, K)<\infty.
\end{equation}
Indeed, from \eqref{slow} it follows that for $\zero<\tt\le \frac{\one}{\beps}$
\begin{align*}
&\frac{|\varphi(t\sqrt{L})f(\yy)|}
{\prod_{i=1,2}(1+t_i^{-1}\rho(x_i,y_i))^{\gamma_i}(1+\eps\rho(x_i,y_i))^K(1+\eps t_i^{-K})}
\\
&\qquad \le \frac{c\prod_{i=1,2}(t_i^{-K}+t_i^K)(1+\rho(y_i, x_{0i}))^K}
{\prod_{i=1,2}(1+t_i^{-1}\rho(x_i,y_i))^{\gamma_i}(1+\eps\rho(x_i,y_i))^K(1+\eps t_i^{-K})}
\\
& \qquad\le \frac{c\prod_{i=1,2}(1+t_i^{2K})(1+\rho(y_i, x_{i}))^K(1+\rho(x_i, x_{0i}))^K}
{\eps^{2K+2}\prod_{i=1,2}(1+\rho(x_i,y_i))^K}
\\
& \qquad\le c\eps^{-2K-2}(1+\eps^{-2K})^2\prod_{i=1,2}(1+\rho(x_i, x_{0i}))^K,
\end{align*}
which confirms \eqref{M-new-l-infty}.

Our next goal is to establish the inequality
\begin{equation}\label{M-new-M}
M_\bgamma^{**}(f;\varphi)^{\eps, K}(\xx) \le \tilde{c}\cM_\theta(M(f;\varphi))(\xx),
\end{equation}
where the constant $\tilde{c}= \tilde{c}(f)$ is allowed to depend on $f$, but is independent of $\eps$.
Then passing to the limit as $\eps \to 0$ it will follow that
$M_\bgamma^{**}(f;\varphi)(\xx) \le \tilde{c}\cM_\theta(M(f;\varphi))(\xx)$,
which in turn shows that
\eqref{M-infty} holds and the proof will be complete.

The proof of inequality \eqref{M-new-M} will follow very closely in the footsteps of the proof from Part 1.
For this reason we will only indicated the main differences and omit most of the details.

Just as in Part 1 we apply Lemma~\ref{lem:Rych} but this time with
\begin{equation}\label{def-N}
N:=\lfloor 3|\bgamma| + 6K+3|\dd|/2+4\rfloor,
\end{equation}
where $K$ is from \eqref{slow}.
Fix $0<\eps\le 1$.
Let $\zero<\tt\le \one/\beps$ and choose $\jj\in\bZ^2$ so that  $2^{-\jj} \le \tt< 2^{-\jj+\one}$.
Hence, $2^\jj \ge \one/\tt \ge \beps$.
By Lemma~\ref{lem:Rych}
there exist an admissible function $\psi\in \cS(\bR^2)$ with $\psi(0,0)=1$
and a function $\Psi\in \cS(\bR^2)$ with the properties
$\Psi(\pm \lambda_1, \pm \lambda_2)=\Psi(\lambda_1, \lambda_2)$ for $\blambda\in \R^2$ and
$\partial^{\bbeta}\Psi(0,0)=0$ for $|\bbeta|\le N$
such that
for any $f\in \cS'$ and $\jj\in \bZ^2$ identity \eqref{dec-Rych} is valid.
Therefore,
\begin{align*}
&\frac{|\varphi(\tt\sqrt L)f(\yy)|}
{\prod_{i=1,2}(1+t_i\rho(x_i, y_i))^{\gamma_i}(1+\eps\rho(x_i,y_i))^K(1+\eps t_i^{-K})}
\\
&\qquad \le c\frac{|\varphi(\tt\sqrt L)\psi(2^{-\jj}\sqrt L)\varphi(2^{-\jj}\sqrt L)f(\yy)|}
{\prod_{i=1,2}(1+2^{j_i}\rho(x_i, y_i))^{\gamma_i}(1+\eps\rho(x_i,y_i))^K(1+\eps t_i^{-K})}
\\
&\qquad+ c\sum_{k=1}^\infty
\frac{|\varphi(\tt\sqrt L)\Psi(2^{-\jj-\kk}\sqrt L)[\varphi(2^{-\jj-\kk}\sqrt L)- \varphi(2^{-\jj-\kk+\one}\sqrt L)]f(y)|}
{\prod_{i=1,2}(1+2^{j_i}\rho(x_i, y_i))^{\gamma_i}(1+\eps\rho(x_i,y_i))^K(1+\eps t_i^{-K})}.
\end{align*}
Let $\omega(\blambda):= \varphi(\tt 2^\jj\blambda)\Psi(2^{-\kk}\blambda)$.
Then $\varphi(\tt\sqrt{L})\Psi(2^{-\jj-\kk}\sqrt L)=\omega(2^{-\jj}\sqrt{L})$.
Choose $m:=\lfloor \gamma_1+K+d_1/2+1\rfloor + \lfloor \gamma_2+K+d_2/2+1\rfloor$
and $r:= m+|\dd|+1$.
Now just as in the derivation of \eqref{beta-omega} for some constant $\tilde{\eps}>0$
\begin{equation}\label{local-omega-FF}
|\partial^\bbeta\omega(\blambda)|
\le c2^{-k(|\bgamma|+2K+\tilde{\eps})}(1+|\blambda|)^{-r},\;\; \blambda\in\R^2,
\;\;|\bbeta|\le m.
\end{equation}
Applying Theorem~\ref{thm:gen-local} to $\omega(2^{-\jj}\sqrt L)$
with $\lfloor \gamma_i+K+d_i/2+1\rfloor$, $i=1,2$, playing the role of $k_1,k_2$
and using that $2^{j_i} \ge \eps$, $i=1,2$, we get
\begin{align}\label{local-kernel-2}
|\KK_{\varphi(\tt\sqrt L)\Psi(2^{-\jj-\kk}\sqrt L)}(\yy, \zz)|
&\le \frac{c 2^{-k(|\bgamma|+2K+\tilde{\eps})}}{V(\zz, 2^{-\jj})\prod_{i=1,2}\big(1+2^{j_i}\rho(y_i, z_i)\big)^{\gamma_i+K}} \nonumber
\\
&\le \frac{c 2^{-k(|\bgamma|+2K+\tilde{\eps})}}{V(\zz, 2^{-\jj})\prod_{i=1,2}\big(1+2^{j_i}\rho(y_i, z_i)\big)^{\gamma_i}\big(1+\eps\rho(y_i, z_i)\big)^{K}},
\end{align}
This and the inequalities
$1+2^{j_i}\rho(x_i,z_i) \le (1+2^{j_i}\rho(x_i,y_i))(1+2^{j_i}\rho(y_i,z_i))$
and
$1+\eps\rho(x_i,z_i) \le (1+\eps\rho(x_i,y_i))(1+\eps\rho(y_i,z_i))$
lead to
\begin{align*}
&\frac{|\varphi(\tt\sqrt L)\Psi(2^{-\jj-\kk}\sqrt L)\varphi(2^{-\jj-\kk}\sqrt L)f(\yy)|}
{\prod_{i=1,2}(1+2^{j_i}\rho(x_i, y_i))^{\gamma_i}(1+\eps\rho(x_i, y_i))^K(1+\eps t_i^{-K})}
\\
& \le \frac{c2^{-k(|\bgamma|+2K+\tilde{\eps})}}{\prod_{i=1,2}(1+2^{j_i}\rho(x_i, y_i))^{\gamma_i}(1+\eps\rho(x_i, y_i))^K(1+\eps t_i^{-K})}
\\
& \hspace{1.1in} \times \int_\XX \frac{|\varphi(2^{-\jj-\kk}\sqrt L)f(\zz)| d\mu(\zz)}
{V(\zz, 2^{-\jj})\prod_{i=1,2}\big(1+2^{j_i}\rho(y_i, z_i)\big)^{\gamma_i}\big(1+\eps\rho(y_i, z_i)\big)^K}
\\
& \le \frac{c2^{-k\tilde{\eps}}}{\prod_{i=1,2}(1+2^{kK}\eps 2^{j_iK})}
\\
&\hspace{1.1in} \times\int_\XX \frac{2^{-k|\bgamma|}|\varphi(2^{-\jj-\kk}\sqrt L)f(\zz)| d\mu(\zz)}
{V(\zz, 2^{-\jj})\prod_{i=1,2}\big(1+2^{j_i}\rho(x_i, z_i)\big)^{\gamma_i}\big(1+\eps\rho(x_i, z_i)\big)^K}
\\
& \le \frac{c2^{-k\tilde{\eps}}}{\prod_{i=1,2}(1+\eps 2^{(j_i+k)K})}
\\
&\hspace{1.1in} \times\int_\XX \frac{|\varphi(2^{-\jj-\kk}\sqrt L)f(\zz)| d\mu(\zz)}
{V(\zz, 2^{-\jj})\prod_{i=1,2}\big(1+2^{j_i+k}\rho(x_i, z_i)\big)^{\gamma_i}\big(1+\eps\rho(x_i, z_i)\big)^K}
\\
& \le c2^{-k\tilde{\eps}}
\sup_{\zz\in\XX} \left[\frac{|\varphi(2^{-\jj-\kk}\sqrt L)f(\zz)|}
{\prod_{i=1,2}\big(1+2^{j_i+k}\rho(x_i, z_i)\big)^{\gamma_i}\big(1+\eps\rho(x_i, z_i)\big)^K(1+\eps 2^{(j_i+k)K})}\right]^{1-\theta}
\\
&\hspace{1.1in} \times\int_\XX \frac{|\varphi(2^{-\jj-\kk}\sqrt L)f(\zz)|^\theta d\mu(\zz)}
{V(\zz, 2^{-\jj})\prod_{i=1,2}\big(1+2^{j_i}\rho(x_i, z_i)\big)^{\theta\gamma_i}}
\\
&\le c2^{-k\tilde{\eps}} [M_\bgamma^{**}(f;\varphi)^{\eps,K}(x)]^{1-\theta}
[\cM_\theta(|\varphi(2^{-\jj-\kk}\sqrt L)f|)(\xx)]^\theta.
\\
&\le c2^{-k\tilde{\eps}} [M_\bgamma^{**}(f;\varphi)^{\eps,K}(\xx)]^{1-\theta}
[\cM_\theta(M(f;\varphi))(\xx)]^\theta.
\end{align*}
Here for the last two inequalities we proceeded as in Part~1,
i.e. we used \eqref{V-VD}, Lemma~\ref{lem:max} and the fact that $\bgamma>(2/\theta)\dd$.

We carry out the rest of the proof of \eqref{M-new-M} just as in Part~1,
namely, we derive the estimate
\begin{equation*}
M_\bgamma^{**}(f;\varphi)^{\eps,K}(\xx)
\le c[M_\bgamma^{**}(f;\varphi)^{\eps,K}(\xx)]^{1-\theta}
[\cM_\theta(M(f;\varphi))(\xx)]^\theta
\end{equation*}
and using \eqref{M-new-l-infty} we arrive at \eqref{M-new-M}.
We omit the further details.

Observe that the constant $\tilde{c}$ in \eqref{M-new-M} depends on $f$ because it depends on $K$,
which in turn depends on $f$.

%%%%%%%%%%%%%%%%%%

As was already alluded to above passing to the limit as $\eps \to 0$ in inequality \eqref{M-new-M}
leads to the inequality
\begin{equation}\label{M0-M}
M_\bgamma^{**}(f;\varphi)(\xx) \le \tilde{c}\cM_\theta(M(f;\varphi))(\xx).
\end{equation}
%which in turn implies \eqref{M-less-infty}.
This step, however, needs some clarification.
We set
\begin{equation*}
F(\tt,\yy,\eps):= \frac{|\varphi(\tt\sqrt{L})f(\yy)|}
{\prod_{i=1,2}(1+t_i^{-1}\rho(x_i,y_i))^{\gamma_i}(1+\eps\rho(x_i,y_i))^K(1+\eps t_i^{-K})}
\end{equation*}
and
$A:= \tilde{c} \cM_\theta(M(f;\varphi))(\xx)$.
Then
\begin{equation*}
M_\bgamma^{**}(f;\varphi)^{\eps,K}(\xx)=\sup_{\zero<\tt\le\frac{\one}{\beps}} \sup_{\yy\in \XX} F(\tt,\yy,\eps)
\;\hbox{and}\;
M_\bgamma^{**}(f;\varphi)(\xx)=\sup_{\tt>\zero} \sup_{\yy\in \XX} F(\tt,\yy,0).
\end{equation*}
Now, inequality \eqref{M-new-M} can be written as
\begin{equation}\label{sup-F-A}
\sup_{\zero<\tt\le\frac{\one}{\beps}} \sup_{\yy\in \XX} F(\tt,\yy,\eps) \le A.
\end{equation}
Denote $\Omega_n:=\{(\tt,\yy): \tt\in [1/n, n]^2, \rho(\xx,\yy)\le n\}$, $n\in \bN$.
Then from \eqref{sup-F-A} it follows that for $\eps\le n$ we have
\begin{equation}\label{sup-Omega-A}
\sup_{(\tt,\yy)\in\Omega_n} F(\tt,\yy,\eps) \le A.
\end{equation}
Clearly, $\lim_{\eps\to 0} F(\tt, \yy,\eps) = F(\tt,\yy,0)$.
Moreover, in light of \eqref{slow} this convergence is uniform on $\Omega_n$, that is,
\begin{equation*}
\lim_{\eps\to 0}\sup_{(\tt,\yy)\in\Omega_n}|F(\tt,\yy,\eps) - F(\tt,\yy,0)| =0.
\end{equation*}
From this and \eqref{sup-Omega-A} it follows that
$ %\begin{equation*}
\sup_{(\tt,\yy)\in\Omega_n}F(\tt,\yy,0) \le A.
$ %\end{equation*}
Using this and the fact that $(0,\infty)^2\times\XX = \cup_{n\in\bN} \Omega_n$
we arrive at \eqref{M0-M}.
The proof of Proposition~\ref{prop:M-M-1} is complete.
\end{proof}

%%%%%%%%%%%%%%%%%%%%

A variation of the proof of Proposition~\ref{prop:M-M-1} will give us another important ingredient for the proof of Theorem~\ref{thm:Hp}.

\begin{proposition}\label{prop:grant-M}
Let $\varphi\in\cS(\bR^2)$ be admissible and $0<\theta\le 1$. % and $\varphi(0,0)\neq 0$.
Let also $N\in\bN$ be so that $N>6|\dd|/\theta+3|\dd|/2+3$.
Then there exists a constant $c>0$ such that
\begin{equation}\label{grant-M}
\sM_{N}(f)(\xx)\le c \cM_{\theta}(M(f;\varphi))(\xx),\quad\text{for}\;f\in\cS',\;\xx\in \XX.
\end{equation}
\end{proposition}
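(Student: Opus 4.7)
The plan is to imitate the proof of Proposition~\ref{prop:M-M-1}, using the fixed admissible function $\varphi$ to drive the Rychkov-type decomposition of $f$ while letting the outer spectral multiplier $\phi\in\cF_N$ vary. A pleasant simplification compared with the proof of Proposition~\ref{prop:M-M-1}: the factor $M^{**}_\bgamma(f;\cdot)(\xx)$ that arises from splitting $|\varphi(2^{-\jj-\kk}\sqrt L)f(\zz)|^{1-\theta}\cdot|\varphi(2^{-\jj-\kk}\sqrt L)f(\zz)|^{\theta}$ will be tied to $\varphi$ (not to $\phi$), so its a.e.\ finiteness and its bound by $\cM_\theta(M(f;\varphi))$ are already provided by Proposition~\ref{prop:M-M-1}, bypassing the auxiliary-operator argument of Part 2 of that proof.

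\textbf{Reduction.} By \eqref{relationship} with $\ba=\one$, for every $\bgamma>\zero$ and every $\phi\in\cF_N$ one has $M^{*}_\one(f;\phi)(\xx)\le 2^{|\bgamma|}M^{**}_{\bgamma}(f;\phi)(\xx)$. Fix $\bgamma>(2/\theta)\dd$ with $|\bgamma|$ small enough that $3|\bgamma|+3|\dd|/2+3\le N$; the hypothesis $N>6|\dd|/\theta+3|\dd|/2+3$ allows this. Taking the supremum over $\phi\in\cF_N$, it suffices to prove
\[
M^{**}_{\bgamma}(f;\phi)(\xx)\le c\,\cM_\theta(M(f;\varphi))(\xx)
\]
with $c$ independent of $\phi\in\cF_N$.

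\textbf{Decomposition and uniform kernel bounds.} Assume WLOG $\varphi(\zero)=1$. Apply Lemma~\ref{lem:Rych} to $\varphi$ with parameter $N':=\lfloor 3|\bgamma|+3|\dd|/2+4\rfloor\le N$ to obtain admissible $\psi\in\cS(\R^2)$ and $\Psi\in\cS(\R^2)$ with $\partial^{\bbeta}\Psi(\zero)=0$ for $|\bbeta|\le N'$, together with the identity \eqref{dec-Rych}. For arbitrary $\phi\in\cF_N$ and $\tt>\zero$, pick $\jj\in\bZ^2$ with $2^{-\jj}\le\tt<2^{-\jj+\one}$, and set $\omega_{\phi,k}(\blambda):=\phi(\tt 2^{\jj}\blambda)\Psi(2^{-\kk}\blambda)$. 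Since $\cN_N(\phi)\le1$ gives $|\partial^{\bbeta}\phi(\blambda)|\le(1+|\blambda|)^{-N}$ for $|\bbeta|\le N$, the argument leading to \eqref{beta-omega} applies verbatim with constants uniform in $\phi\in\cF_N$, and yields
\[
|\partial^{\bbeta}\omega_{\phi,k}(\blambda)|\le c\,2^{-k(|\bgamma|+\tilde\eps)}(1+|\blambda|)^{-r},\qquad|\bbeta|\le m,\;\blambda\in\R^2,
\]
for some $\tilde\eps>0$, with $m,r$ chosen as in Proposition~\ref{prop:M-M-1}. Theorem~\ref{thm:gen-local} then gives, uniformly in $\phi\in\cF_N$,
\[
|\KK_{\phi(\tt\sqrt L)\Psi(2^{-\jj-\kk}\sqrt L)}(\yy,\zz)|\le c\,2^{-k(|\bgamma|+\tilde\eps)}V(\yy,2^{-\jj})^{-1}\DD^{*}_{2^{-\jj},\bgamma}(\yy,\zz),
\]
and analogously for $\phi(\tt\sqrt L)\psi(2^{-\jj}\sqrt L)$ (the $k=0$ term, without the $2^{-k\tilde\eps}$ gain).

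\textbf{Main inequality and conclusion.} Applying the decomposition to $\phi(\tt\sqrt L)f(\yy)$ and writing $F_{\jj,k}(\zz):=\varphi(2^{-\jj-\kk}\sqrt L)f(\zz)$, split $|F_{\jj,k}(\zz)|=|F_{\jj,k}(\zz)|^{1-\theta}|F_{\jj,k}(\zz)|^{\theta}$; bound the first factor using $|F_{\jj,k}(\zz)|\le M^{**}_\bgamma(f;\varphi)(\xx)\DD^{*}_{2^{-\jj-\kk},\bgamma}(\xx,\zz)^{-1}$ and the second by $M(f;\varphi)(\zz)^\theta$. The estimates following \eqref{varphi-1} carry over verbatim; since $\bgamma>(2/\theta)\dd$ ensures $\theta\bgamma-\dd>\dd$, Lemma~\ref{lem:max} applies and the geometric series in $k$ converges, giving, uniformly in $\phi\in\cF_N$, $\tt>\zero$ and $\yy\in\XX$,
\[
|\phi(\tt\sqrt L)f(\yy)|\DD^{*}_{\tt,\bgamma}(\xx,\yy)\le c\,\bigl[M^{**}_\bgamma(f;\varphi)(\xx)\bigr]^{1-\theta}\bigl[\cM_\theta(M(f;\varphi))(\xx)\bigr]^{\theta}.
\]
Invoking Proposition~\ref{prop:M-M-1} to bound $M^{**}_\bgamma(f;\varphi)(\xx)\le c\,\cM_\theta(M(f;\varphi))(\xx)$ and taking the supremum in $\yy,\tt$ yields $M^{**}_\bgamma(f;\phi)(\xx)\le c\,\cM_\theta(M(f;\varphi))(\xx)$ uniformly in $\phi$; combined with the reduction step this finishes the proof.

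\textbf{Main obstacle.} The technical heart of the argument is the uniformity in $\phi\in\cF_N$ of the kernel bounds on $\phi(\tt\sqrt L)\Psi(2^{-\jj-\kk}\sqrt L)$ and $\phi(\tt\sqrt L)\psi(2^{-\jj}\sqrt L)$. This rests entirely on the fact that $\cN_N(\phi)\le1$ produces the required derivative bounds on $\omega_{\phi,k}$ with constants that depend only on $N$, the fixed functions $\varphi,\psi,\Psi$, and the parameters $m,r,N'$. The threshold $N>6|\dd|/\theta+3|\dd|/2+3$ is precisely what is needed so that $N\ge N'$ and, via $N\ge m$, the application of Theorem~\ref{thm:gen-local} delivers the $2^{-k\tilde\eps}$ summability in $k$.
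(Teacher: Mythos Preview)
Your proposal is correct and follows essentially the same approach as the paper's proof: reduce to a uniform bound on $M^{**}_\bgamma(f;\phi)$, run the decomposition of Lemma~\ref{lem:Rych} driven by $\varphi$, obtain kernel bounds on $\phi(\tt\sqrt L)\Psi(2^{-\jj-\kk}\sqrt L)$ uniformly in $\phi\in\cF_N$ via $\cN_N(\phi)\le 1$, and then invoke Proposition~\ref{prop:M-M-1} to control $M^{**}_\bgamma(f;\varphi)(\xx)$. One minor remark: the inequality $N'\le N$ you state may be off by one with your choice $3|\bgamma|+3|\dd|/2+3\le N$ (take strict inequality instead, as the paper does), but this claim is not actually used anywhere---what matters is $m\le N$ for the Leibniz rule on $\phi$ and $N'\ge 2(|\bgamma|+\tilde\eps)$ for the Taylor estimate on $\Psi$, both of which hold.
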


\begin{proof}
Let $\varphi\in\cS(\bR^2)$ be admissible.
Fix $\phi\in\cF_{N}$, where as in the hypothesis of the proposition
$N>6|\dd|/\theta+3|\dd|/2+3$, $0<\theta \le 1$.
It is readily seen that $M_{\one}^{*}(f;\phi)(\xx)\le 2^{|\bgamma|}M_{\bgamma}^{**}(f;\phi)(\xx)$.
Therefore, in order to establish \eqref{grant-M} it suffices to prove that for some $\bgamma >\zero$
\begin{equation}\label{P3Hpeqsuf}
M_{\bgamma}^{**}(f;\phi)(\xx)\le c\cM_{\theta}(M(f;\varphi))(\xx),
\end{equation}
where $c>0$ is a constant independent of $\phi$.
We choose $\bgamma$ so that $\bgamma >(2/\theta)\dd$ and $N>3|\bgamma|+3|\dd|/2+3$.

Fix $\tt>\zero$ and let $\jj\in\bZ^2$, be  such that $2^{-\jj}\le\tt<2^{-\jj+1}$.
Using \eqref{varphi-0} we get
\begin{align*}
&|\phi(\tt\sqrt{L})f(\yy)|\DD^{*}_{\tt,\bgamma}(\xx,\yy)
\le c|\phi(\tt\sqrt{L})\psi(2^{-\jj}\sqrt L)\varphi(2^{-\jj}\sqrt L)f(\yy)|\DD^{*}_{2^{-\jj},\bgamma}(\xx,\yy)
\\
&+ c\sum_{k=1}^\infty |\phi(\tt\sqrt{L})\Psi(2^{-\jj-k}\sqrt L)
(\varphi(2^{-\jj-\kk}\sqrt L) - \varphi(2^{-\jj-\kk+\one}\sqrt L))f(\yy)|\DD^{*}_{2^{-\jj},\bgamma}(\xx,\yy).
\end{align*}
Just as in the proof of Proposition~\ref{prop:M-M-1} (see \eqref{P2Hp2}) we get
\begin{align*}
|\KK_{\phi(\tt\sqrt{L})\Psi(2^{-\jj-k}\sqrt{L})}(\xx,\yy)|
&\le c2^{-k(|\bgamma|+\varepsilon)}V(\xx,2^{-\jj})^{-1}\DD^*_{2^{-\jj},\bgamma}(\xx,\yy),
\end{align*}
but with the constant $c>0$ independent of $\phi$ due to $\cN_{N}(\phi)\le1$.

Just as in the proof of Proposition~\ref{prop:M-M-1} and using \eqref{M-M-1} we get
\begin{align*}
|\phi(\tt\sqrt{L})f(\yy)|\DD^{*}_{\tt,\bgamma}(\xx,\yy)
&\le cM_{\bgamma}^{**}(f;\varphi)(\xx)^{1-\theta}\cM_{\theta}(M(f;\varphi))(\xx)^{\theta}
\\
&\le c\cM_{\theta}(M(f;\varphi))(\xx),
\end{align*}
which implies \eqref{P3Hpeqsuf} and completes the proof.
\end{proof}

We are now prepared to prove Theorem~\ref{thm:Hp}.

\begin{proof}[Proof of Theorem~\ref{thm:Hp}]
Let $\Phi_0(\blambda):=e^{-|\blambda|^2}$. Clearly, $\Phi_0$ is admissible and $\Phi(\zero)=1$.
Let $0<p\le 1$ and assume $N>6|\dd|/p+3|\dd|/2+3$.
We choose $\theta$ so that $0<\theta<p$ and $N>6|\dd|/\theta+3|\dd|/2+3$.
Now, we apply Proposition~\ref{prop:grant-M} and use the maximal inequality \eqref{max} to obtain
\begin{equation*}
\|\sM_{N}f\|_{L^p}\le c\|\cM_{\theta}(M(f;\Phi_0))\|_{L^p}\le c\|M(f;\Phi_0)\|_{L^p}=c\|f\|_{H^p}.
\end{equation*}
In the other direction we use inequalities \eqref{relationship} and \eqref{est-grand-max} and get
\begin{equation*}
\|f\|_{H^p}=\|M(f;\Phi_0)\|_{L^p}\le \|M_{\one}^{*}(f;\Phi_0)\|_{L^p}\le c\|\sM_{N}f\|_{L^p}.
\end{equation*}
Similarly as above we obtain $\|\sM_{N}f\|_p\sim \|M(f;\varphi)\|_p$,
with the equivalence constants depending in addition on the admissible function $\varphi$.

Let $\bgamma > (2/p)\dd$. This time we choose $\theta$ so that $0<\theta<p$ and $\bgamma > (2/\theta)\dd$.
Applying Proposition~\ref{prop:M-M-1} and the maximal inequality \eqref{max} we obtain
\begin{equation*}
\|M_{\bgamma}^{**}(f;\varphi)\|_p\le c\|\cM_{\theta}(M(f;\varphi))\|_p \le c\|M(f;\varphi)\|_p.
\end{equation*}
The remaining estimates in \eqref{equiv-Hp-norms} follow using \eqref{relationship} and \eqref{est-grand-max}.
\end{proof}

\subsection{Product Hardy spaces $H^p$ when $p>1$}

We close this section by establishing the well expected coincidence between product Hardy and Lebesgue spaces when $p>1$.

\begin{proposition}\label{prop:Hp-p-big}
Let $1 <p<\infty$. Then $H^p=L^p$ with equivalent norms.
\end{proposition}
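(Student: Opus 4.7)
The equivalence follows from two bounds, and the crux lies in the inclusion $L^p \hookrightarrow H^p$.

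For $L^p \hookrightarrow H^p$ with $\|f\|_{H^p} \le c\|f\|_p$, the plan is to pointwise dominate the heat maximal function by the strong Hardy-Littlewood maximal operator $\cM_1$.  Let $\Phi_0(\blambda):=e^{-|\blambda|^2}$, so that $\Phi_0(\tt\sqrt L) = e^{-t_1^2L_1 - t_2^2L_2}$ and $\|f\|_{H^p} = \|M(f;\Phi_0)\|_p$.  Choose $\bsigma > 3\dd/2$.  Proposition~\ref{prop:Gauss-local}(a) followed by \eqref{D-D*} gives
\[
|p_{(t_1^2,t_2^2)}(\xx,\yy)| \le c\,\DD_{\tt,\bsigma}(\xx,\yy) \le c\,V(\xx,\tt)^{-1}\DD^*_{\tt,\bsigma-\dd/2}(\xx,\yy),
\]
and since $\bsigma - \dd/2 > \dd$, Lemma~\ref{lem:max} applied with $r=1$ yields $|e^{-t_1^2L_1-t_2^2L_2}f(\xx)| \le c\,\cM_1 f(\xx)$ uniformly in $\tt>\zero$.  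Taking the supremum over $\tt$ and then the $L^p$-norm, the scalar strong maximal theorem (the $q=\infty$ specialisation of \eqref{max}, valid for $1<p<\infty$) gives $\|f\|_{H^p} \le c\|\cM_1 f\|_p \le c\|f\|_p$.

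For the reverse inclusion $H^p \hookrightarrow L^p$, the plan is to exploit reflexivity of $L^p$.  If $f\in H^p$, then pointwise $|e^{-t_1^2L_1-t_2^2L_2}f(\xx)| \le M(f;\Phi_0)(\xx)$, and the latter lies in $L^p$, so $\{e^{-t_1^2L_1-t_2^2L_2}f\}_{\tt>\zero}$ is uniformly bounded in $L^p$ by $\|f\|_{H^p}$.  Pick any sequence $\tt_n \to \zero$; since $L^p$ is reflexive, after passing to a subsequence $e^{-t_{1,n}^2L_1-t_{2,n}^2L_2}f \rightharpoonup g$ weakly in $L^p$ for some $g\in L^p$ with $\|g\|_p \le \|f\|_{H^p}$.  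Meanwhile Theorem~\ref{thm:converge} (applied to $\Phi_0$) gives $e^{-t_{1,n}^2L_1-t_{2,n}^2L_2}f \to f$ in $\cS'$.  Because $\cS \subset L^{p'}$ (test functions have rapid decay and hence lie in every Lebesgue space), weak convergence in $L^p$ implies $\cS'$-convergence, forcing $g = f$ as distributions.  Therefore $f\in L^p$ with $\|f\|_p \le \|f\|_{H^p}$.

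No step presents a genuine obstacle: the Gaussian upper bound and the strong maximal inequality \eqref{max} do all the work on the easy side, while the reverse inclusion is a soft functional-analytic argument.  The only mildly delicate point is reconciling the weak $L^p$ limit with the distributional limit, for which the embedding $\cS \hookrightarrow L^{p'}$ suffices.
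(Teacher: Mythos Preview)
Your proof is correct and follows essentially the same route as the paper: domination of the heat maximal function by $\cM_1$ via the Gaussian bound and Lemma~\ref{lem:max} for one direction, and a weak (equivalently weak$^*$, by reflexivity) compactness argument combined with Theorem~\ref{thm:converge} and the embedding $\cS\subset L^{p'}$ for the other. The only cosmetic difference is that you obtain $\|f\|_p\le\|f\|_{H^p}$ directly from weak lower semicontinuity of the norm, whereas the paper, after establishing $f\in L^p$, invokes the $L^p$-convergence part of Theorem~\ref{thm:converge} to reach the same conclusion.
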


\begin{proof}
We carry out this proof similarly as in the classical case on $\R^n$.

(a) We first show that $L^p\subset H^p$.
Let $f\in L^p$, $p>1$.
By Lemma~\ref{lem:max}, applied with $\varphi(\blambda) = \Phi_0(\blambda):=e^{-|\blambda|^2}$, we get
\begin{equation*}
|\Phi_0(\tt\sqrt{L})f(\xx)|\le c\cM_1(f)(\xx)
\;\; \Rightarrow \;\;
M(f;\Phi_0)(\xx)\le c\cM_1 f(x), \;\;\xx\in\XX.
\end{equation*}
Now, applying the maximal inequality \eqref{max} we obtain
\begin{equation*}
\|f\|_{H^p}=\|M(f;\Phi_0)\|_{L^p}\le c\|\cM_1 f\|_{L^p}\le c\|f\|_{L^p}.
\end{equation*}
Hence, $f\in H^p$.

(b) For the other direction, assume $f\in H^p$, $p>1$.
Then by Definition~\ref{Hardynew}
\begin{equation*}
\big\|\sup_{\tt>\zero}|\Phi_0(\tt\sqrt{L})f(\cdot)|\big\|_{L^p} = \|f\|_{H^p} <\infty.
\end{equation*}
Hence, the sequence $\{\Phi_0((\frac{1}{n},\frac{1}{n})\sqrt{L})f\}_{n\ge 1}$ is bounded on $L^p$.
By the Banach-Alaoglu theorem there exists a subsequence $\{n_k\}_{k\ge 1}$ and a function $f_0\in L^p$ such that
\begin{equation*}
\Phi_0\Big(\Big(\frac{1}{n_k},\frac{1}{n_k}\Big)\sqrt{L}\Big)f\rightarrow f_0
\end{equation*}
in the weak$^*$ topology of $L^p$.
But by Theorem~\ref{thm:converge}
$$
\Phi_0\Big(\Big(\frac{1}{n_k},\frac{1}{n_k}\Big)\sqrt{L}\Big)f\rightarrow f,\quad\text{in}\;\;\cS'.
$$
Therefore, $f=f_0\in L^p$.
Further, by Theorem~\ref{thm:converge} it follows that
\begin{equation*}
\|\Phi_0(\tt\sqrt{L})f - f\|_{L^p} \to 0
\quad\hbox{as}\quad \tt\to \zero
\end{equation*}
and hence
\begin{equation*}
\|f\|_{L^p} \le \big\|\sup_{\tt>\zero}|\Phi_0(\tt\sqrt{L})f|\big\|_{L^p}  = \|f\|_{H^p}.
\end{equation*}
The proof is complete.
\end{proof}

\subsection{Open problem}

The atomic decomposition of $H^p$ plays an important role in the theory of Hardy spaces.
The atomic decomposition of $H^p$ in the classical setting on $\RR^n$ in dimension $n=1$ was first established by R. Coifman \cite{Coifman}
and by R. Latter \cite{Latter} in dimensions $n>1$.
For more details, see \cite{Stein}.
Atomic decompositions of Hardy spaces are also available in nonclassical settings, see e.g. \cite{Bow}.
Closely related to our development in the present section is the atomic decomposition of the Hardy spaces $H^p$
in the general setting of a metric measure space with the doubling property
and in the presence of a non-negative self-adjoint operator whose heat kernel has Gaussian localization
and the Markov property established in \cite{DKKP2}.

The situation is more complicated in the two-parameter case on product spaces.
An atomic decomposition of the $H^p$ spaces in the two-parameter classical setting has been obtained by
Sun-Yung Chang and R. Fefferman, see \cite{CF, CF2, CF3}.
It is an open problem to obtain an atomic decomposition of the product Hardy spaces $H^p$ considered in this section.

\section{Besov and Triebel-Lizorkin spaces with dominating mixed smoothness}\label{sec:B-F-spaces}

The purpose of this section is to develop the basic theory
of Besov and Triebel-Lizorkin spaces with dominating mixed smoothness on product domains $\XX_1\times \XX_2$
associated with operators $L_1,L_2$ in the setting of this article.
The theory of Besov and Triebel-Lizorkin spaces with dominating mixed smoothness in the classical case
on $\RR^m\times\RR^n$ has been mainly developed by  H.-J. Schmeisser and H. Triebel \cite{ST},
see also \cite{Schmeisser, V} and the references therein.
In \cite{Schmeisser, ST, V} these spaces are denoted by
$S_{pq}^{r_1,r_2}B(\RR^m\times\RR^n)$ and $S_{pq}^{r_1,r_2}F(\RR^m\times\RR^n)$
and the respective norms by
$\|f\,|\,S_{pq}^{r_1,r_2}B(\RR^m\times\RR^n)\|$ and
$\|f\,|\,S_{pq}^{r_1,r_2}F(\RR^m\times\RR^n)\|$.
We will use the following more compact notation for the analogue of these spaces
on product domains in the setting of this article:
$\BB_{pq}^\ss$, $\tBB_{pq}^\ss$, $\FF_{pq}^\ss$, and $\tFF_{pq}^\ss$.
For short we will call them {\em mixed-smoothness B and F-spaces}.
In the previous sections we have developed all necessary tools for this theory.

The basics of the theory of ordinary Besov and Triebel-Lizorkin spaces associated
with the operators $L_1,L_2$ in the setting of this article
will be rapidly developed in the next section.
It is useful for better understanding
of the Besov and Triebel-Lizorkin spaces with dominating mixed smoothness.

\subsection{Definition of mixed-smoothness Besov and Triebel-Lizorkin spaces}\label{subsec:B-F-spaces}

To deal with possible anisotropic geometries of the coordinate spaces $\XX_1, \XX_2$
we introduce two types of
mixed-smoothness Besov and Triebel-Lizorkin spaces in the setting of this article:
(i) {\em Classical mixed-smoothness} B-spaces $\BB_{pq}^{s}= \BB_{pq}^{s}(L)$ and F-spaces $\FF_{pq}^{s}=\FF_{pq}^{s}(L)$,
and
(ii) {\em Nonclassical mixed-smoothness} B-spaces $\tBB_{pq}^{s}=\tBB_{pq}^{s}(L)$ and F-spaces $\tFF_{pq}^{s}=\tFF_{pq}^{s}(L)$.
To define these spaces we introduce two pairs of functions
$\varphi^i_0,\varphi^i \in \cC^\infty(\RR)$, $i=1,2$, satisfying the conditions:
\begin{equation}\label{cond-1}
\begin{aligned}
&(i)\ \hbox{$\varphi^i_0,\varphi^i$ are real-valued and even,}
\\
%&(i)\ \varphi^i_0(-t)= \varphi^i_0(t), \ \varphi^i(-t)= \varphi^i(t), \; t\in\RR,
%\\
&(ii)\ \supp\varphi^i_0\subset[-2,2], \
|\varphi^i_0(t)| \ge \hat{c}_i>0 \hbox{  for  } t\in [-5/3, 5/3],
\\
&(iii)\ \supp\varphi^i\subset[-2,2]\setminus [-1/2, 1/2],\
|\varphi^i(t)| \ge \hat{c}_i>0 \;\hbox{ for } t\in [3/5, 5/3].
\end{aligned}
\end{equation}
As before we will use the notation $\varphi^i_n(t):=\varphi^i(2^{-n}t), n\in \bN$.
Then from \eqref{cond-1} it follows that
\begin{equation}\label{cond-2}
\sum_{n\in \bN_0} |\varphi^i_n(t)| \ge \hat{c}_i >0, \quad t\in \RR.
\end{equation}
Further, for each $\jj=(j_1,j_2)\in \bN^2_0$ we set $\varphi_\jj:=\varphi^1_{j_1}\otimes\varphi^2_{j_2}$.
Then from above it follow that %for each $\blambda=(\lambda_1,\lambda_2)\in \bR^2_+$
$$
\sum_{\jj\in\bN_0^2}|\varphi_j(\blambda)|=\prod_{i=1,2} \sum_{j_i\in\bN_0}|\varphi^i_{j_i}(\lambda_i)|
\ge \hat{c}_1 \hat{c}_2>0,
\quad \blambda=(\lambda_1,\lambda_2)\in \RR^2.
$$

%%%%%%%%%%%% Definition

\begin{definition}\label{def-B-spaces}
Let $\ss \in \R^2$ and $0<p,q \le \infty$.

$(i)$ The classical mixed-smoothness Besov space  $\BB_{pq}^{\ss}=\BB_{pq}^{\ss}(L)$
is defined as the set of all $f \in \cS'$ such that
\begin{equation}\label{def-Besov-space1}
\|f\|_{\BB_{pq}^{\ss}} :=
\Big(\sum_{\jj\in\bN_0^2} \Big(2^{\jj\cdot\ss}
\|\varphi_\jj(\sqrt{L}) f\|_{p}\Big)^q\Big)^{1/q} <\infty.
\end{equation}

$(ii)$ The non-classical mixed-smoothness Besov space  $\tBB_{pq}^{\ss}= \tBB_{pq}^{\ss}(L)$ is defined as the set
of all $f \in \cS'$ such that
\begin{equation}\label{def-Besov-space2}
\|f\|_{\tBB_{pq}^{\ss}}
:= \Big(\sum_{\jj\in\bN_0^2} \Big( \big\|V(\cdot, 2^{-\jj})^{-\ss/\dd}
\varphi_\jj(\sqrt{L}) f(\cdot)\big\|_{p}\Big)^q\Big)^{1/q} <\infty.
\end{equation}
Above the $\ell^q$-norm is replaced by the sup-norm if $q=\infty$.
%In the above spaces the $\ell^q$-norm is replaced by the sup-norm if $q=\infty$.
\end{definition}

\begin{definition}\label{def-F-spaces}
Let $\ss \in \RR^2$, $0<p< \infty$ and $0<q \le \infty$.

$(i)$ The classical mixed-smoothness Triebel-Lizorkin space  $\FF_{pq}^{\ss}=\FF_{pq}^{\ss}(L)$
is defined as the set of all $f \in \cS'$ such that
\begin{equation}\label{def-F-space1}
\|f\|_{\FF_{pq}^\ss} :=\Big\|\Big(\sum_{\jj\in \bN_0^2}
\Big(2^{\jj\cdot\ss}|\varphi_\jj(\sqrt L) f(\cdot)|\Big)^q\Big)^{1/q}\Big\|_{p} <\infty.
\end{equation}

$(ii)$  The non-classical mixed-smoothness Triebel-Lizorkin space
$\tFF_{pq}^{s}= \tFF_{pq}^{s}(L)$ is defined as the set of all $f \in \cS'$
such that
\begin{equation}\label{def-F-space2}
\|f\|_{\tFF_{pq}^\ss} :=
\Big\|\Big(\sum_{\jj\in \bN_0^2} \Big(V(\cdot, 2^{-\jj})^{-\ss/\dd}
|\varphi_\jj(\sqrt L) f(\cdot)|\Big)^q\Big)^{1/q}\Big\|_{p} <\infty.
\end{equation}
As before above the $\ell^q$-norm is replaced by the sup-norm if $q=\infty$.

\end{definition}

%\begin{remark} %Let us list at this point some first remarks on the new spaces we defined.
%
%$(1)$ The mixed-smoothness B and F-spaces are quasi-Banach spaces $($Banach spaces if $p,q\ge1$$)$.

%$(2)$ If the spaces $\XX_1, \XX_2$ enjoy the Ahlfors regularity property:
%$\mu_i(B(x_i,r))\sim r^{d_i}$, uniformly with respect to $x_i$, $r$,
%then the classical and non-classical spaces coincide.
%Of course, the classical B and  F-spaces are closer to what one expects by their Euclidean versions,
%but as will be seen in some instances the non-classical B and F-spaces have the ``correct" properties $($e.g. embeddings$)$,
%which is not true for some examples of classical spaces.

%$(3)$ As is expected, the mixed-smoothness Besov and Triebel-Lizorkin spaces from above
%are independent of the selection of the pair of functions $\varphi^i_0,\varphi^i, i=1,2$,
%used in their definition.
%We establish this in the following proposition.
%\end{remark}

Note that the mixed-smoothness B and F-spaces are quasi-Banach spaces (Banach spaces if $p,q\ge1$). Moreover, as is expected, the above spaces 
are independent of the selection of the pair of functions $\varphi^i_0,\varphi^i, i=1,2$,
used in their definition. We establish this in the following proposition.

\begin{proposition}\label{prop:independent}
Let the functions $\{\varphi^i_0,\varphi^i\}$ and $\{\Phi^i_0,\Phi^i\}$, $i=1,2$,
be just as in Definitions~\ref{def-B-spaces}, \ref{def-F-spaces}, see \eqref{cond-1}.
Then the corresponding spaces $\BB^\ss_{pq}(\varphi)$ and $\BB^\ss_{pq}(\Phi)$ are the same
with equivalent quasi-norms: $\|\cdot\|_{\BB^\ss_{pq}(\varphi)} \sim\|\cdot\|_{\BB^\ss_{pq}(\Phi)}$ .
The same holds true for the spaces $\tBB^\ss_{pq}, \FF^\ss_{pq}$, and $\tFF^\ss_{pq}$.
\end{proposition}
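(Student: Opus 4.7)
The plan is to show that each of the quasi-norms computed with one of the systems $\{\varphi^i_0,\varphi^i\}$, $\{\Phi^i_0,\Phi^i\}$ dominates (up to a constant) the one computed with the other, by transporting frequency pieces via an almost-diagonal operator and then invoking scalar and vector-valued maximal inequalities. All the machinery is already available in the paper: the localized kernel bounds (Theorem~\ref{th:boxsupport}), the Calder\'on reproducing formula (Corollary~\ref{cor:Calderon}), the Peetre maximal inequality (Theorem~\ref{thm:Peetre-max}), Lemma~\ref{lem:max}, and the Fefferman--Stein inequality \eqref{max}. I would argue the four cases $\BB^\ss_{pq},\tBB^\ss_{pq},\FF^\ss_{pq},\tFF^\ss_{pq}$ in parallel.

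\textbf{Step 1 (auxiliary partition of unity).} Using the lower bounds in \eqref{cond-1}, the sum $\sum_{n\ge 0}|\varphi^i_n(t)|^2$ is bounded below by $\hat c_i^{\,2}>0$ on $\RR$. A standard smoothing-by-division argument then produces even $\cC^{\infty}$ functions $\tilde\psi^i_0$, $\tilde\psi^i$ with $\supp\tilde\psi^i_0\subset[-2,2]$ and $\supp\tilde\psi^i\subset[-2,2]\setminus[-1/2,1/2]$ such that the pair $\{\tilde\psi^i_0\varphi^i_0,\tilde\psi^i\varphi^i\}$ satisfies \eqref{phi-phi0}--\eqref{1dCalderon}. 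Setting $\tilde\psi_\kk:=\tilde\psi^1_{k_1}\otimes\tilde\psi^2_{k_2}$ and applying Corollary~\ref{cor:Calderon} gives
\begin{equation*}
f=\sum_{\kk\in\bN_0^2}\tilde\psi_\kk(\sqrt L)\,\varphi_\kk(\sqrt L)f \quad\text{in }\cS'.
\end{equation*}

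\textbf{Step 2 (comparison identity).} Applying the operator $\Phi_\jj(\sqrt L)$ termwise (which is legitimate by Remark~\ref{rem:opercontinuity}) and invoking the support analysis, $\Phi_\jj\tilde\psi_\kk\equiv 0$ unless $|j_i-k_i|\le 2$ for both $i=1,2$. Hence
\begin{equation*}
\Phi_\jj(\sqrt L)f=\!\!\sum_{\kk:\,|j_i-k_i|\le 2}\!\!\bigl[\Phi_\jj\tilde\psi_\kk\bigr](\sqrt L)\,\varphi_\kk(\sqrt L)f,
\end{equation*}
a sum of at most $25$ terms.

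\textbf{Step 3 (uniform pointwise control).} Each connector $\Phi_\jj\tilde\psi_\kk$ is real-valued, even in each variable, $\cC^\infty$, and supported in a rectangle of sides $\sim 2^{j_1}\times 2^{j_2}$. The rescaling reduction at the beginning of the proof of Theorem~\ref{th:boxsupport} then yields, for any $\bsigma\in(0,\infty)^2$,
\begin{equation*}
\bigl|\KK_{[\Phi_\jj\tilde\psi_\kk](\sqrt L)}(\xx,\yy)\bigr|\le c\,\DD_{2^{-\jj},\bsigma}(\xx,\yy),
\end{equation*}
with $c$ independent of $\jj,\kk$ in the range $|j_i-k_i|\le 2$. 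Inserting the factor $V(\xx,2^{-\jj})^{\bgamma}/V(\yy,2^{-\jj})^{\bgamma}$ and absorbing it into $\DD^*_{2^{-\jj},\bsigma}$ via \eqref{V-gamma-xy}, then applying Lemma~\ref{lem:max}, produces, for every $r>0$ and $\bgamma\in\RR^2$,
\begin{equation*}
V(\xx,2^{-\jj})^{\bgamma}\bigl|[\Phi_\jj\tilde\psi_\kk](\sqrt L)\varphi_\kk(\sqrt L)f(\xx)\bigr|
\le c\,\cM_r\bigl(V(\cdot,2^{-\kk})^{\bgamma}|\varphi_\kk(\sqrt L)f|\bigr)(\xx),
\end{equation*}
where I used $V(\cdot,2^{-\jj})^{\bgamma}\sim V(\cdot,2^{-\kk})^{\bgamma}$ for $|j_i-k_i|\le 2$ (an instance of \eqref{rect-doubling}). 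For $0<r<1$ this last passage from integral to maximal function requires an additional use of the Peetre inequality (Theorem~\ref{thm:Peetre-max}) applied to $\varphi_\kk(\sqrt L)f\in\Sigma_{2^{\kk+\one}}$ (Proposition~\ref{prop:spectral}).

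\textbf{Step 4 (assembling the norms).} Take $\bgamma=\zero$ for $\BB^\ss_{pq},\FF^\ss_{pq}$ and $\bgamma=-\ss/\dd$ for $\tBB^\ss_{pq},\tFF^\ss_{pq}$; in either case $V(\xx,2^{-\jj})^{\bgamma}$ equals the weight in the relevant quasi-norm, and $2^{\jj\cdot\ss}\sim 2^{\kk\cdot\ss}$ on the comparison range. For the B-spaces I pick $0<r<p$, take $L^p$ of both sides of Step~3, and apply the scalar Hardy--Littlewood maximal inequality, obtaining
\begin{equation*}
\big\|V(\cdot,2^{-\jj})^{\bgamma}\Phi_\jj(\sqrt L)f\big\|_p
\le c\!\!\sum_{\kk:\,|j_i-k_i|\le 2}\!\!\big\|V(\cdot,2^{-\kk})^{\bgamma}\varphi_\kk(\sqrt L)f\big\|_p;
\end{equation*}
multiplying by $2^{\jj\cdot\ss}$ (if applicable), taking the $\ell^q$ norm, and using shift-invariance of $\ell^q$ over the bounded range of $\jj-\kk$ yields $\|f\|_{\BB^\ss_{pq}(\Phi)}\le c\|f\|_{\BB^\ss_{pq}(\varphi)}$ and analogously for $\tBB^\ss_{pq}$. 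For the F-spaces I instead pick $0<r<\min(p,q)$, take $\ell^q$ before $L^p$, and apply the vector-valued Fefferman--Stein inequality \eqref{max}. Swapping the roles of $\varphi$ and $\Phi$ gives the reverse inequalities and completes the proof.

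\textbf{Main obstacle.} The genuinely delicate step is Step~3: extracting a kernel bound for $[\Phi_\jj\tilde\psi_\kk](\sqrt L)$ whose constant is uniform in $\jj,\kk$ (this is not obvious because Theorem~\ref{th:boxsupport} produces constants that grow with the support radius $R$), and then converting that bound into a weighted pointwise estimate in which $V(\cdot,2^{-\jj})^{\bgamma}$ can be freely interchanged with $V(\cdot,2^{-\kk})^{\bgamma}$. The uniformity is achieved by the rescaling reduction at the start of the proof of Theorem~\ref{th:boxsupport}; the weight interchange rests on the condition $|j_i-k_i|\le 2$ combined with \eqref{rect-doubling}. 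Once Step~3 is in place, the remaining work in Step~4 is standard.
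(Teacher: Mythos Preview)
Your proposal is correct and follows essentially the same route as the paper: build a Calder\'on reproducing formula adapted to one system, apply the other system and use support localization to reduce to a finite shifted sum, estimate the connector kernel, pass to a weighted maximal function via Peetre's inequality and \eqref{V-gamma-xy}, and finish with the scalar or vector-valued maximal inequality. The only cosmetic differences are that the paper reverses the roles of $\varphi$ and $\Phi$ (it expands $f$ via $\Psi_\kk\Phi_\kk$ and then applies $\varphi_\jj$), and it estimates the kernel of the connector as a composition $\varphi_\jj(\sqrt L)\Psi_\kk(\sqrt L)$ via two separate applications of Theorem~\ref{thm:gen-local} and the convolution bound \eqref{tech-3}, whereas you treat $[\Phi_\jj\tilde\psi_\kk](\sqrt L)$ as a single rescaled multiplier and invoke the $R=1$ case of Theorem~\ref{th:boxsupport} directly; both yield the same uniform bound.
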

\begin{proof}
We will only prove the above statement for the spaces $\tFF^\ss_{pq}$ and $\tBB^\ss_{pq}$.

{\em Claim 1:}
$\tFF^\ss_{pq}(\varphi)= \tFF^\ss_{pq}(\Phi)$ and
$\|\cdot\|_{\tFF^\ss_{pq}(\varphi)} \sim\|\cdot\|_{\tFF^\ss_{pq}(\Phi)}$.
%Let $\Phi^i_0,\Phi^i \in \cC^\infty(\RR)$, $i=1,2$, satisfy \eqref{cond-1}.
As is well known (e.g. \cite{FJW}) there exist functions
$\Psi^i_0,\Psi^i \in \cC^\infty(\RR)$, $i=1,2$, satisfying \eqref{cond-1} such that
$$
\Phi^i_0(t)\Psi^i_0(t)+\sum_{n=1}^{\infty}\Phi^i(2^{-n}t)\Psi^i(2^{-n}t)=1, \quad t\in \RR.
$$
As before for $n\in \bN$ we denote
$\Phi^i_n(t):=\Phi_i(2^{-n}t)$ and $\Psi^i_n(t):=\Psi^i(2^{-n}t)$,  $i=1,2$.
Also, for $\jj=(j_1,j_2)\in \bN^2_0$ we set
$\Phi_\jj:=\Phi^1_{j_1}\otimes\Phi^2_{j_2}$ and $\Psi_\jj:=\Psi^1_{j_1}\otimes\Psi^2_{j_2}$.

Let $f\in\cS'$. By Corollary~\ref{cor:Calderon} we have the decomposition
$$
f=\sum_{\jj\in\bN_0^2}\Psi_\jj(\sqrt{L})\Phi_\jj(\sqrt{L})f \quad\text{in }\cS',
$$
where we used that for any $\jj=(j_1,j_2)\in \bN_0^2$,
$$(\Phi^1_{j_1}(\sqrt {L_1})\Psi^1_{j_1}(\sqrt {L_1}))\otimes(\Phi^2_{j_2}(\sqrt {L_2})\Psi^2_{j_2}(\sqrt {L_2}))
=\Phi_\jj(\sqrt{L})\Psi_\jj(\sqrt{L}).
$$
Let $\jj=(j_1,j_2)\in\bN_0^2$. From the support conditions of these spectral multipliers
it follows that
\begin{equation}\label{indep-0}
\varphi_\jj(\sqrt{L})f=\sum_{k_2=j_2-1}^{j_2+1}\sum_{k_1=j_1-1}^{j_1+1}\varphi_\jj(\sqrt{L})\Psi_\kk(\sqrt{L})\Phi_\kk(\sqrt{L})f,
\end{equation}
where by definition $\Phi^i_{-1}=\Psi^i_{-1}\equiv 0$.

Fix $0<r<\min\{p,q\}$ and choose
%$\bsigma\in(0,\infty)^2$ so that $\sigma_i=\frac{3d_i}{2}+|s_i|+ \frac{2d_i+1}{r}+1$, $i=1,2$.
$\sigma_i:=2d_i+|s_i|+ (2d_i+1)/r$, $i=1,2$.
Denote by $\KK(\xx,\yy)$ the kernel of the operator $\varphi_\jj(\sqrt{L})\Psi_\kk(\sqrt{L})$.
From Theorem~\ref{thm:gen-local} we know that for any $\bsigma>0$
there exists a constant $c=c_\bsigma>0$ such that
$$
|\KK_{\varphi_\jj(\sqrt{L})}(\xx,\zz)|\le c\DD_{2^{-\jj},\bsigma}(\xx,\zz)
\quad\text{and}\quad
|\KK_{\Psi_\kk(\sqrt{L})}(\zz,\yy)|\le c\DD_{2^{-\kk},\bsigma}(\zz,\yy).
$$
Using \eqref{rect-doubling} (taking into account that $j_i\sim k_i$), \eqref{tech-3} ($\bsigma >2\dd$), and \eqref{D-D*} we get
\begin{align*}
|\KK(\xx,\yy)|&\le \int_\XX |\KK_{\varphi_\jj(\sqrt{L})}(\xx,\zz)||\KK_{\Psi_\kk(\sqrt{L})}(\zz,\yy)|d\mu(\zz)
\\
&\le c\int_\XX \DD_{2^{-\kk},\bsigma}(\xx,\zz)\DD_{2^{-\kk},\bsigma}(\zz,\yy)d\mu(\zz)
\\
&\le c\DD_{2^{-\kk},\bsigma}(\xx,\yy)\le cV(\xx,2^{-\kk})^{-1}\DD_{2^{-\kk},\bsigma-\dd/2}^*(\xx,\yy).
\end{align*}
Hence,
\begin{align*}
|\varphi_\jj(\sqrt{L})\Psi_\kk(\sqrt{L})&\Phi_\kk(\sqrt{L})f(\xx)|
\le\int_\XX |\KK(\xx,\yy)||\Phi_\kk(\sqrt{L})f(\yy)|d\mu(\yy)
\\
&\le cV(\xx,2^{-\kk})^{-1}\int_\XX \DD_{2^{-\kk},\bsigma-\dd/2}^*(\xx,\yy)|\Phi_\kk(\sqrt{L})f(\yy)|d\mu(\yy).
\end{align*}
We now use \eqref{V-gamma-xy}, \eqref{rect-doubling}, and the fact that $j_i\sim k_i$ to obtain
\begin{align*}
V(\xx,2^{-\jj})^{-\ss/\dd} \DD_{2^{-\kk},\bsigma-\dd/2}^*(\xx,\yy)
&\le cV(\yy,2^{-\kk})^{-\ss/\dd} \DD_{2^{-\kk},\tilde{\bsigma}}^*(\xx,\yy)
\\
& = cV(\yy,2^{-\kk})^{-\ss/\dd} \DD_{2^{-\kk},\btau/r}^*(\xx,\yy)\DD_{2^{-\kk},\bet}^*(\xx,\yy),
\end{align*}
where $\tilde{\sigma}_i := \sigma_i-d_i/2 - |s_i| = (2d_i+1)/r+ 3d_i/2$
and $\tau_i:=2d_i+1$, $\eta_i:= 3d_i/2$.
Therefore,
\begin{align}\label{indep-3}
&V(\xx,2^{-\jj})^{-\ss/\dd}|\varphi_\jj(\sqrt{L})\Psi_\kk(\sqrt{L})\Phi_\kk(\sqrt{L})f(\xx)|
\\
&\le cV(\xx,2^{-\kk})^{-1}\int_\XX \DD_{2^{-\kk},\btau/r}^*(\xx,\yy)\DD_{2^{-\kk},\btau}^*(\xx,\yy)
V(\yy,2^{-\kk})^{-\ss/\dd}|\Phi_\kk(\sqrt{L})f(\yy)|d\mu(\yy) \nonumber
\\
%&\le cV(x,2^{-\kk})^{-1}\int_\XX \DD_{2^{-\kk},\btau/r}^*(x,y)\DD_{2^{-\kk},\btau}^*(x,y)
%V(y,2^{-\jj})^{-\ss/\dd}|\Phi_\kk(\sqrt{L})f(y)|d\mu(y)
%\\
&\le c\sup_{\yy\in \XX} V(\yy,2^{-\kk})^{-\ss/\dd}|\Phi_\kk(\sqrt{L})f(\yy)|\DD_{2^{-\kk},\btau/r}^*(\xx,\yy)
\int_\XX \frac{\DD_{2^{-\kk},\bet}^*(\xx,\yy)}{V(\xx,2^{-\kk})}d\mu(\yy) \nonumber
\\
&\le c\sup_{\yy\in \XX} V(\yy,2^{-\kk})^{-\ss/\dd}|\Phi_\kk(\sqrt{L})f(\yy)|\DD_{2^{-\kk},\btau/r}^*(\xx,\yy), \nonumber
\end{align}
where for the last inequality we used \eqref{tech-1} ($\bet>\dd$).
Because $\supp\Phi_k\subset[-2^{k-1},2^{k+1}]$ Proposition~\ref{prop:spectral} implies that
$\Phi_k(\sqrt{L})f\in\Sigma_{2^{\kk+\one}}$ and
hence by Theorem~\ref{thm:Peetre-max} we get
\begin{equation}\label{indep-2}
\begin{aligned}
\sup_{\yy\in \XX}V(\yy,2^{-\kk})^{-\ss/\dd}&|\Phi_\kk(\sqrt{L})f(\yy)|\DD_{2^{-\kk},\btau/r}^*(\xx,\yy)
\\
&\le c\cM_{r}\big(V(\cdot,2^{-\kk})^{-\ss/\dd}\Phi_\kk(\sqrt{L})f(\cdot)\big)(\xx).
\end{aligned}
\end{equation}
Combining \eqref{indep-0}--\eqref{indep-2} we arrive at
\begin{equation}\label{indep-4}
\begin{aligned}
&V(\xx,2^{-j})^{-\ss/\dd}|\varphi_\jj(\sqrt{L})f(\xx)|
\\
&\le c\sum_{k_2=j_2-1}^{j_2+1}\sum_{k_1=j_1-1}^{j_1+1}
\cM_{r}\big(V(\cdot,2^{-\kk})^{-\ss/\dd}\Phi_\kk(\sqrt{L})f(\cdot)\big)(\xx).
\end{aligned}
\end{equation}
We use now the definition of the $\tFF$-norm and the maximal inequality \eqref{max} to obtain
\begin{align*}
\|f\|_{\tFF^\ss_{pq}(\varphi)}
&=\Big\|\Big(\sum_{\jj\in \bN_0^2} \Big(V(\cdot, 2^{-\jj})^{-\ss/\dd}
|\varphi_\jj(\sqrt L) f(\cdot)|\Big)^q\Big)^{1/q}\Big\|_{p}
\\
&\le c\Big\|\Big(\sum_{\jj\in \bN_0^2} \Big(\sum_{k_2=j_2-1}^{j_2+1}\sum_{k_1=j_1-1}^{j_1+1}
\cM_{r}\big(V(\cdot,2^{-k})^{-\ss/\dd}\Phi_\kk(\sqrt{L})f(\cdot)|\big)\Big)^q\Big)^{1/q}\Big\|_{p}
\\
&\le c\Big\|\Big(\sum_{\jj\in \bN_0^2}\sum_{k_2=j_2-1}^{j_2+1}\sum_{k_1=j_1-1}^{j_1+1} \Big[\cM_{r}\big(V(\cdot,2^{-\kk})^{-\ss/\dd}\Phi_\kk(\sqrt{L})f(\cdot)|\big)\Big]^q\Big)^{1/q}\Big\|_{p}
\\
&\le c\Big\|\Big(\sum_{\kk\in \bN_0^2}
\Big[\cM_{r}\big(V(\cdot,2^{-\kk})^{-\ss/\dd}\Phi_\kk(\sqrt{L})f(\cdot)|\big)\Big]^q\Big)^{1/q}\Big\|_{p}
\\
&\le c\Big\|\Big(\sum_{\kk\in \bN_0^2}
\Big(V(\cdot,2^{-\kk})^{-\ss/\dd}\Phi_\kk(\sqrt{L})f(\cdot)|\Big)^q\Big)^{1/q}\Big\|_{p}
\\
&= c\|f\|_{\tFF^s_{pq}(\Phi)}.
\end{align*}
To prove the inequality in the opposite direction we simply interchange the roles of
the $\varphi$'s and $\Phi$'s above. This completes the proof.

\smallskip

{\em Claim 2:} $\tBB^\ss_{pq}(\varphi)= \tBB^\ss_{pq}(\Phi)$ and
$\|\cdot\|_{\tBB^\ss_{pq}(\varphi)} \sim\|\cdot\|_{\tBB^\ss_{pq}(\Phi)}$.
The proof follows in the footsteps of the proof of the previous claim.
Assuming that $0<r<p$ and applying the maximal inequality \eqref{max}
(for a single function) to (\ref{indep-4}) we get that
\begin{align*}
&\|V(\cdot,2^{-\jj})^{-\ss/\dd}\varphi_\jj(\sqrt{L})f(\cdot)\|_p
\\
&\le c\sum_{k_2=j_2-1}^{j_2+1}\sum_{k_1=j_1-1}^{j_1+1}
\big\|\cM_{r}\big(V(\cdot,2^{-\kk})^{-\ss/\dd}\Phi_\kk(\sqrt{L})f(\cdot)\big)\big\|_p
\\
&\le c\sum_{k_2=j_2-1}^{j_2+1}\sum_{k_1=j_1-1}^{j_1+1}
\big\|V(\cdot,2^{-\kk})^{-\ss/\dd}\Phi_\kk(\sqrt{L})f(\cdot)\big)\big\|_p.
\end{align*}
Finally, taking the $\ell_q$-norm we get
$
\|f\|_{\tBB^\ss_{pq}(\varphi)}\le c\|f\|_{\tBB^\ss_{pq}(\Phi)}.
$
For the inequality in the other direction we switch the roles of the $\varphi$'s and $\Phi$'s.
\end{proof}

\subsection{Embeddings}

The purpose of this subsection is to establish embedding results that involve
mixed-smoothness Besov and Triebel-Lizorkin spaces and other important classes.
Recall that a quasi-normed space $X$ is continuously embedded in the quasi-normed space $Y$ ($X\hookrightarrow Y$)
if $X\subset Y$ and there exists a constant $c>0$ such that
$\|f\|_Y\le c\|f\|_X$ for all $f\in X$.

\subsubsection{Embeddings between B and F-spaces and test functions or distributions}
We start with the following:

\begin{theorem}\label{thm:embed}
Let $\ss\in\bR^2$ and $0<q\le \infty$. If $0<p\le \infty$ then
\begin{equation*}
\cS\hookrightarrow \BB^\ss_{pq},\;\tBB^\ss_{pq}\hookrightarrow\cS',
\end{equation*}
and, if $0<p< \infty$, then
\begin{equation*}
\cS\hookrightarrow \;\FF^\ss_{pq},\;\tFF^\ss_{pq}\hookrightarrow\cS'.
\end{equation*}
\end{theorem}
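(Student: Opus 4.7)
All four embeddings will be reduced to a single quantitative estimate on the Littlewood--Paley blocks: for every $f\in\cS$, every $\mm\in\bN_0^2$, and every $N\in\bN$, there exist integers $m,k$ and a constant $c$ (independent of $f$) so that
\begin{equation*}
|\varphi_\jj(\sqrt{L})f(\xx)|\le c\,\cP_{m,k}(f)\,2^{-2\jj\cdot\mm}\,V(\xx_0,\one)^{-1}\prod_{i=1,2}\big(1+\rho_i(x_i,x_{0i})\big)^{-N},\quad\forall\jj\in\bN_0^2.
\end{equation*}
The proof uses the support condition \eqref{cond-1}(iii): on $\supp\varphi^i$ with $j_i\ge 1$, the function $\omega^i(\lambda):=\varphi^i(\lambda)/\lambda^{2m_i}$ is smooth, compactly supported and even, so the factorisation $\varphi_\jj(\sqrt{L})=2^{-2\jj\cdot\mm}\omega_\jj(\sqrt{L})L^\mm$ holds, with $\omega_\jj(\blambda):=\omega^1(2^{-j_1}\lambda_1)\omega^2(2^{-j_2}\lambda_2)$. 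Applying the kernel bound of Theorem~\ref{th:boxsupport} to $\omega_\jj(\sqrt{L})$, combining with the Schwartz decay of $L^\mm f\in\cS$, and integrating via \eqref{tech-2} gives the estimate. Boundary rows ($j_1=0$ or $j_2=0$) are treated by factoring only in the non-trivial direction, and the corner $\jj=\zero$ reduces to Proposition~\ref{prop:basicprop}.

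\textbf{The embeddings $\cS\hookrightarrow X$.} For $X=\BB^\ss_{pq}$ or $\FF^\ss_{pq}$, choose $\mm$ with $2m_i>|s_i|$ and $N$ with $Np>d_1\vee d_2$; the geometric decay in $\jj$ makes the $\ell^q(2^{\jj\cdot\ss})$-sum finite and the polynomial weight is $L^p$-integrable by the doubling condition, yielding $\|f\|_X\le c\,\cP_{m,k}(f)$. For $X=\tBB^\ss_{pq}$ or $\tFF^\ss_{pq}$, combine the doubling bound \eqref{rect-doubling} with the centre-change \eqref{V-gamma-xy} to bound
\begin{equation*}
V(\xx,2^{-\jj})^{-\ss/\dd}\le c\,2^{j_1|s_1|+j_2|s_2|}\,V(\xx_0,\one)^{-\ss/\dd}\prod_{i=1,2}\big(1+\rho_i(x_i,x_{0i})\big)^{|s_i|};
\end{equation*}
the factor $2^{j_1|s_1|+j_2|s_2|}$ is absorbed by taking $\mm$ larger and the polynomial growth by taking $N$ larger, so continuity follows as before.

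\textbf{The embeddings $X\hookrightarrow\cS'$.} Given $f\in X$ and $\phi\in\cS$, I would choose a dual family $\{\tilde\varphi^i_0,\tilde\varphi^i\}$ as in the proof of Proposition~\ref{prop:independent} and apply the Calder\'on identity of Corollary~\ref{cor:Calderon} to $\phi$ to write
\begin{equation*}
\langle f,\phi\rangle=\sum_{\jj\in\bN_0^2}\big\langle \varphi_\jj^{(*)}(\sqrt{L})f,\tilde\varphi_\jj(\sqrt{L})\phi\big\rangle,
\end{equation*}
where $\varphi_\jj^{(*)}$ is the local neighbourhood sum $\sum_{|\kk-\jj|_\infty\le 1}\varphi_\kk$, chosen so that $\varphi_\jj^{(*)}\equiv 1$ on $\supp\tilde\varphi_\jj$. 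For $p\ge 1$ bound each inner product by H\"older's inequality; for $p<1$ combine H\"older with the Nikolski inequality \eqref{Band-1} of Theorem~\ref{thm:Nik}, applicable because $\varphi_\jj^{(*)}(\sqrt{L})f\in\Sigma_{2^{\jj+\one}}$ by Proposition~\ref{prop:spectral}. The first factor is controlled by $\|f\|_X$ (using the Fefferman--Stein inequality \eqref{max} in the F-space case to exchange $\ell^q$ and $L^p$), while the second factor inherits the rapid $\jj$-decay from the first step applied to $\phi$; summation yields $|\langle f,\phi\rangle|\le c\|f\|_X\,\cP_{m',k'}(\phi)$.

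\textbf{Main obstacle.} The most delicate case is $\FF^\ss_{pq}$ or $\tFF^\ss_{pq}$ with $p<1$, where the $\ell^q$-norm sits inside the $L^p$-norm and direct duality is unavailable. Theorem~\ref{thm:Nik} supplies an $L^p$-to-$L^\infty$ passage, but the resulting $V(\cdot,2^{-\jj})^{-\one/p}$ weight must be tracked carefully and absorbed either by the $V$-weights in $\tFF^\ss_{pq}$ or, for $\FF^\ss_{pq}$, against the rapid decay of $\tilde\varphi_\jj(\sqrt{L})\phi$ established in the first step. Verifying that the resulting $\jj$-series still sums with a continuity constant depending linearly on $\|f\|_X$ and polynomially on a single semi-norm of $\phi$ is the main technical hurdle.
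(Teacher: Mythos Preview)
Your approach is essentially the paper's: the factorisation $\varphi_\jj(\sqrt{L})=2^{-2\jj\cdot\mm}\omega(2^{-\jj}\sqrt{L})L^\mm$ with kernel localisation for the forward embedding, and H\"older plus the Nikolski inequality \eqref{Band-1} for the dual direction when $p<1$, match the paper's argument exactly. One simplification the paper uses for $X\hookrightarrow\cS'$: rather than a dual family and neighbourhood sums $\varphi_\jj^{(*)}$, it chooses $\varphi^i_0,\varphi^i$ with $(\varphi^i_0)^2+\sum_{n\ge1}(\varphi^i)^2(2^{-n}\cdot)=1$, so that $f=\sum_\jj\varphi_\jj^2(\sqrt{L})f$ and self-adjointness give directly $\langle f,\phi\rangle=\sum_\jj\langle\varphi_\jj(\sqrt{L})f,\varphi_\jj(\sqrt{L})\phi\rangle$, bypassing the bookkeeping with $\varphi_\jj^{(*)}$.

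One minor correction: your pointwise bound omits a factor $2^{\jj\cdot\dd}$ coming from $V(\xx,2^{-\jj})^{-1}\le c\,2^{\jj\cdot\dd}V(\xx,\one)^{-1}$ when integrating a kernel at scale $2^{-\jj}$ against Schwartz decay at scale $\one$ via \eqref{tech-2}; accordingly the paper requires $2m>|s_i|+d_i$ rather than your $2m_i>|s_i|$. (If instead you invoke \eqref{tech-4}, the $2^{\jj\cdot\dd}$ loss disappears and your stated condition is enough.)
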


\begin{proof}
We will establish only the embeddings involving the $\tBB^\ss_{pq}$ spaces.
The proofs of the other embeddings are similar; we omit them.

\smallskip

{\em Claim 1:} $\cS\hookrightarrow \tBB^\ss_{pq}$.
We need to show that there exist constants $m,k\in \bN_0$ and $c>0$ (depending on $\ss,p,q$) such that
\begin{equation}\label{S->B}
\|\phi\|_{\tBB^\ss_{pq}} \le c\cP_{m,k}(\phi),\quad \forall \phi\in\cS.
\end{equation}
In light of the definition of the mixed-smoothness Besov spaces it suffices to prove that
there exist $m,k\in \bN_0$, $c>0$, and $\beps\in \bR_+^2$ such that for all $\jj\in\bN_0^2$
\begin{equation}\label{suff}
\big\|V(\cdot,2^{-\jj})^{-\ss/\dd}\varphi_\jj(\sqrt{L})\phi(\cdot)\big\|_p
\le c\cP_{m,k}(\phi)2^{-\beps\cdot\jj},
\quad \forall \phi\in\cS.
\end{equation}
Assume $\phi\in\cS$ and let $\jj=(j_1,j_2)\in\bN^2$.
The proof below still works with minor changes in the case when $j_1=0$ or $j_2=0$; we omit it.
Choose
\begin{equation}\label{def-km}
k>\max\{d_1(1+1/p)+|s_1|, d_2(1+1/p)+|s_2|\},
\quad
m>\max\{|s_1|+d_1, |s_2|+d_2\}/2
\end{equation}
and set $\mm:=(m,m)$.
We define
$\omega(\lambda_1, \lambda_2):= \lambda_1^{-2m}\lambda_2^{-2m}\varphi^1(\lambda_1)\varphi^2(\lambda_2)$.
Clearly,
$\varphi_\jj(\sqrt{L}) = 2^{-2\jj\cdot\mm}\omega(2^{-\jj}\sqrt{L})L^{\mm}$.
From Theorem~\ref{thm:gen-local} and \eqref{D-D*} it follows that for any $\bsigma>0$
there exists a constant $c_\bsigma>0$ such that
\begin{equation}\label{local-omega}
|\KK_{\omega(2^{-\jj}\sqrt{L})}(\xx,\yy)| \le c_\bsigma V(\xx, 2^{-\jj})^{-1} \DD^*_{2^{-\jj},\bsigma}(\xx,\yy).
\end{equation}
We choose $\bsigma=\kk:=(k,k)$.
On the other hand, because $\phi\in\cS$ we have
\begin{equation}\label{L-phi}
|L^{\mm}\phi(\yy)| \le \cP_{m,k}(\phi)\prod_{i=1,2}(1+\rho_i(y_i,x_{0i}))^{-k}
= \cP_{m,k}(\phi)\DD^*_{\one,\kk}(\yy,\xx_0).
\end{equation}
Also, from \eqref{rect-doubling} it follows that
$V(\xx, 2^{-\jj})^{-1} \le c2^{\jj\cdot\dd}V(\xx, \one)^{-1}$.
This along with \eqref{local-omega} and \eqref{L-phi} yield
\begin{align}\label{varphi-phi}
|\varphi_\jj(\sqrt{L})\phi(\xx)|
&\le c2^{-2\jj\cdot\mm}\int_\XX |\KK_{\omega(2^{-\jj}\sqrt{L})}(\xx,\yy)||L^{\mm}\phi(\yy)| d\mu(\yy) \nonumber
\\
&\le c2^{-2\jj\cdot\mm}\cP_{m,k}(\phi)V(\xx, 2^{-\jj})^{-1}
\int_\XX \DD^*_{2^{-\jj},\kk}(\xx,\yy)\DD^*_{\one,\kk}(\yy,\xx_0)d\mu(\yy)
\\
&\le c2^{-2\jj\cdot\mm+\jj\cdot\dd}\cP_{m,k}(\phi)V(\xx, \one)^{-1}
\int_\XX \DD^*_{\one,\kk}(\xx,\yy)\DD^*_{\one,\kk}(\yy,\xx_0)d\mu(\yy) \nonumber
\\
&\le c2^{-2\jj\cdot\mm+\jj\cdot\dd}\cP_{m,k}(\phi)\DD^*_{\one,\kk-\dd}(\xx,\xx_0), \nonumber
\end{align}
where for the last inequality we used \eqref{tech-2} ($\kk>\dd$).

We next estimate $V(\xx,2^{-j})^{-\ss/\dd}$ using \eqref{rect-doubling} and \eqref{V-gamma-xy}.
We obtain
\begin{align*}
V(\xx,2^{-j})^{-\ss/\dd}
&\le c2^{j_1|s_1|+j_2|s_2|}V(\xx,\one)^{-\ss/\dd}
\\
&\le c2^{j_1|s_1|+j_2|s_2|}V(\xx_0,\one)^{-\ss/\dd}\prod_{i=1,2}\big(1+\rho_i(x_i,x_{0i})\big)^{|s_i|}.
\end{align*}
Combining this with \eqref{varphi-phi} we arrive at
\begin{align*}
&V(\xx,2^{-j})^{-\ss/\dd}|\varphi_\jj(\sqrt{L})\phi(\xx)|
\\
& \le c2^{-j_1\eps_1-j_2\eps_2}
V(\xx_0,\one)^{-\ss/\dd}\cP_{m,k}(\phi)\DD^*_{\one,\kk-\dd}(\xx,\xx_0)
\prod_{i=1,2}\big(1+\rho_i(x_i,x_{0i})\big)^{|s_i|}
%& \le c2^{-j_1(2m-|s_1|-d_1)-j_2(2m-|s_2|-d_2)}
%V(x_0,\one)^{-\ss/\dd}\cP_{m,k}(\phi)\DD^*_{\one,\kk-\dd}(x,x_0)
%\prod_{i=1,2}\big(1+\rho_i(x_i,x_{0i})\big)^{|s_i|}
\\
& = c2^{-j_1\eps_1-j_2\eps_2}
V(\xx_0,\one)^{-\ss/\dd}\cP_{m,k}(\phi)
\prod_{i=1,2}\big(1+\rho_i(x_i,x_{0i})\big)^{-k+d_i+|s_i|},
\end{align*}
where $\eps_i= 2m-|s_i|-d_i >0$, $i=1,2$ (cf. \eqref{def-km}).
We now take the $L^p$ norm and use \eqref{tech-1} and the fact that
$k> d_i+|s_i|+ d_i/p$ (cf. \eqref{def-km})
to obtain
\begin{align*}
\|V(\cdot,2^{-j})^{-\ss/\dd}|\varphi_\jj(\sqrt{L})\phi(\cdot)|\|_p
\le c2^{-j_1\eps_1-j_2\eps_2}V(\xx_0,\one)^{-\ss/\dd+\one/p}\cP_{m,k}(\phi),
\end{align*}
which confirms \eqref{suff} and completes the proof of Claim 1.

\smallskip

{\em Claim 2:} $\tBB^s_{pq}\hookrightarrow \cS'$.
This claim essentially will follow by duality
employing the embedding $\cS \hookrightarrow \tBB^s_{pq}$ just proved.
We need to show that there exist constants $m,k\in\bN$ and $c>0$ (depending on $\ss,p,q$)
such that
\begin{equation}\label{embed}
|\langle f,\phi\rangle|\le c\|f\|_{\tBB^s_{pq}}\cP_{m,k}(\phi),
\quad \forall f\in\tBB^s_{pq}, \; \phi\in\cS.
\end{equation}
Assume $f\in \tBB^s_{pq}$.
Let $\varphi^i_0,\varphi^i\in\cC^{\infty}(\bR)$, $i=1,2$,
be functions as in Definitions~\ref{def-B-spaces},~\ref{def-F-spaces},
i.e. satisfying \eqref{cond-1}, with the additional assumption that they satisfy the condition
\begin{equation*}
(\varphi_0^i)^2 (t)+\sum_{n=1}^{\infty}(\varphi^i)^2(2^{-n}t)=1,
\quad \forall t\in\RR, \; i=1,2.
\end{equation*}
For each $\jj=(j_1,j_2)\in \bN^2_0$ we set $\varphi_\jj:=\varphi^1_{j_1}\otimes\varphi^2_{j_2}$.
From Corollary~\ref{cor:Calderon} it follows that
\begin{equation}\label{emb55}
f=\sum_{\jj\in\bN_0^2}\varphi_\jj^2(\sqrt{L})f\quad\text{in }\cS',
\end{equation}
and hence for any $\phi\in \cS$
\begin{equation}
\label{emb6}
\langle f,\phi\rangle=\sum_{\jj\in\bN_0^2}\langle\varphi_\jj^2(\sqrt{L})f,\phi\rangle
=\sum_{\jj\in\bN_0^2}\langle\varphi_\jj(\sqrt{L})f,\varphi_\jj(\sqrt{L})\phi\rangle.
\end{equation}
Therefore, in order to prove \eqref{embed}
it suffices to show that there exist constants $m,k\in\bN$, $c>0$, end $\beps=(\eps_1,\eps_2)\in \RR_+^2$ such that
\begin{equation}\label{suff2}
|\langle\varphi_\jj(\sqrt{L})f,\varphi_\jj(\sqrt{L})\phi\rangle|
\le c\|f\|_{\tBB^\ss_{pq}}\cP_{m,k}(\phi)2^{-\beps\cdot\jj},
\quad \forall \phi\in \cS, \; \jj\in \bN_0^2.
\end{equation}

Without loss of generality we may assume that $j\in\bN^2$. We further distinguish two cases.

{\em Case 1:} $1\le p\le\infty$.
We will employ \eqref{suff} with $\ss$ replaced by $-\ss$
and $p$ replaced by $p'$ ($1/p+1/p'=1$).
In light of \eqref{def-km} we choose
\begin{equation*}
k>\max\{d_1(2-1/p)+|s_1|, d_2(2-1/p)+|s_2|\},
\quad
m>\max\{|s_1|+d_1, |s_2|+d_2\}/2.
\end{equation*}
Then using H\"{o}lder's inequality, Definition~\ref{def-B-spaces}, and \eqref{suff} we get
%\begin{equation}\label{emb7}
%\begin{aligned}
\begin{align*}
|\langle\varphi_\jj(\sqrt{L})f,\varphi_\jj(\sqrt{L})\phi\rangle|
&\le\big\|V(\cdot,2^{-\jj})^{-\ss/\dd}\varphi_\jj(\sqrt{L})f(\cdot)\big\|_p
\big\|V(\cdot,2^{-\jj})^{\ss/\dd}\varphi_\jj(\sqrt{L})\phi(\cdot)\big\|_{p'}
\\
&\le c\|f\|_{\tBB^\ss_{pq}}\cP_{m,k} (\phi)2^{-\beps\cdot\jj},
\end{align*}
%\end{aligned}
%\end{equation}
where $\eps_i:= 2m-|s_i|-d_i >0$, $i=1,2$.
This confirms \eqref{suff2}.
%and completes the proof of the embedding whenever $1\le p\le\infty$.

\smallskip

{\em Case 2:} $0< p<1$.
Let $\gamma_i:=s_i/d_i-1/p+1$, $i=1,2$.
By Proposition~\ref{prop:spectral} it readily follows that
$\varphi_j(\sqrt{L})f\in\Sigma_{2^{\jj+\one}}$
and hence using Theorem~\ref{thm:Nik} and Definition~\ref{def-B-spaces} we get
\begin{equation}\label{nik-embed}
\big\|V(\cdot,2^{-\jj})^{-\bgamma}\varphi_\jj(\sqrt{L})f(\cdot)\big\|_1
\le c\big\|V(\cdot,2^{-\jj})^{-\ss/\dd}\varphi_\jj(\sqrt{L})f(\cdot)\big\|_p
\le c\|f\|_{\tBB^\ss_{pq}}.
\end{equation}
On the other hand, we have already shown that \eqref{suff}
is valid with $k,m$ from \eqref{def-km} and $\eps_i=2m-|s_i|-d_i$, $i=1,2$.
Hence, applying \eqref{suff} with
$\tilde{s}_i:= -s_i+ (1/p-1)d_i$ in the place if $s_i$, $i=1,2$, and $p=\infty$
we obtain
\begin{equation}\label{embed-2}
\big\|V(\cdot,2^{-\jj})^{\bgamma}\varphi_\jj(\sqrt{L})f(\cdot)\big\|_\infty
\le \cP_{m,k}(\phi)2^{-\tilde{\beps}\cdot\jj},
\end{equation}
where
\begin{equation*}
k>\max\{d_1+|\tilde{s}_1|, d_2+|\tilde{s}_2|\},
\;
m>\max\{|\tilde{s}_1|+d_1, |\tilde{s}_2|+d_2\}/2,
\;
\tilde{\eps}_i:= 2m-|\tilde{s}_i|-d_i >0.
\end{equation*}
The above selection of $\tilde{s}_i$ is justified by the identity
$-\tilde{s}/\dd = \ss/\dd + (1-1/p)\one$.
Finally, we use \eqref{nik-embed} and \eqref{embed-2} to obtain for any $\phi\in\cS$
\begin{align*}
|\langle\varphi_\jj(\sqrt{L})f,\varphi_\jj(\sqrt{L})\phi\rangle|
&\le\big\|V(\cdot,2^{-\jj})^{-\bgamma}\varphi_\jj(\sqrt{L})f(\cdot)\big\|_1
\big\|V(\cdot,2^{-\jj})^{\bgamma}\varphi_\jj(\sqrt{L})\phi(\cdot)\big\|_{\infty}
\\
&\le c\|f\|_{\tBB^\ss_{pq}}\cP_{m,k}(\phi)2^{-\tilde{\beps}\cdot\jj},
\end{align*}
which confirms \eqref{suff2}.
\end{proof}

\subsubsection{Embeddings between mixed-smoothness B-spaces with different parameters}

From Definition~\ref{def-B-spaces} it follows that
if $\ss,\ss'\in\bR^2$ and $s_i\ge s'_i$, $i=1,2$, $0<p\le\infty$ and $0<q\le r\le\infty$,
then $\|\cdot\|_{\BB^{\ss'}_{pr}}\le \|\cdot\|_{\BB^{\ss}_{pq}}$,
i.e. $\BB^{\ss}_{pq}\hookrightarrow \BB^{\ss'}_{pr}$.

As a consequence of Theorem~\ref{thm:Nik} we get:

\begin{proposition}\label{prop:B-embed}
Let $\ss,\ss'\in\bR^2$, $0<p\le r\le\infty$, $0<q\le\tau\le\infty$, and
\begin{equation}\label{saemb}
\frac{\ss}{\dd}-\frac{\one}{p}=\frac{\ss'}{\dd}-\frac{\one}{r}
\quad\Longleftrightarrow\quad
\frac{s_i}{d_i}-\frac{1}{p}=\frac{s'_i}{d_i}-\frac{1}{r}, \; i=1,2.
\end{equation}
Then  $\tBB^{s}_{pq}\hookrightarrow \tBB^{\ss'}_{r\tau}$.
If in addition the non-collapsing condition $\eqref{non-collapsing}$ holds for the spaces $\XX_1$ and $\XX_2$,
then $\BB^{\ss}_{pq}\hookrightarrow \BB^{\ss'}_{r\tau}$.
\end{proposition}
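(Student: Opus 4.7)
The plan is to reduce the embedding to a dyadic Nikolski-type inequality applied one frequency block at a time, followed by the elementary $\ell^q \hookrightarrow \ell^\tau$ embedding.

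\textbf{Step 1 (spectral localization).} Fix any $\jj=(j_1,j_2)\in\bN_0^2$ and any $f\in\tBB^{\ss}_{pq}$. From the support conditions \eqref{cond-1} on $\varphi_0^i, \varphi^i$ we have $\supp\varphi_\jj\subset[-2^{j_1+1},2^{j_1+1}]\times[-2^{j_2+1},2^{j_2+1}]$. Proposition~\ref{prop:spectral} then yields $\varphi_\jj(\sqrt L)f\in\Sigma_{\tt_\jj}$ with $\tt_\jj:=(2^{j_1+1},2^{j_2+1})\in[2,\infty)^2\subset[1,\infty)^2$, which is exactly the setting required by Theorem~\ref{thm:Nik}.

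\textbf{Step 2 (main inequality, general case).} Apply Theorem~\ref{thm:Nik}(i) to $g=\varphi_\jj(\sqrt L)f$, with parameters $\tt=\tt_\jj$, $\bnu=\zero$, $\bgamma=-\ss'/\dd$, and the exponents $p\le r$ in the roles of $p\le q$ there. This gives
\begin{equation*}
\bigl\|V(\cdot,2^{-\jj-\one})^{-\ss'/\dd}\,\varphi_\jj(\sqrt L)f(\cdot)\bigr\|_{r}
\le c\bigl\|V(\cdot,2^{-\jj-\one})^{-\ss'/\dd+(1/r-1/p)\one}\,\varphi_\jj(\sqrt L)f(\cdot)\bigr\|_{p}.
\end{equation*}
The hypothesis \eqref{saemb} is precisely the algebraic identity $-\ss'/\dd+(1/r-1/p)\one=-\ss/\dd$, so the right-hand exponent collapses to $-\ss/\dd$. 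The doubling property \eqref{rect-doubling} lets me replace $V(\cdot,2^{-\jj-\one})$ by $V(\cdot,2^{-\jj})$ up to an absolute multiplicative constant on both sides, yielding
\begin{equation*}
\bigl\|V(\cdot,2^{-\jj})^{-\ss'/\dd}\varphi_\jj(\sqrt L)f\bigr\|_{r}
\le c\bigl\|V(\cdot,2^{-\jj})^{-\ss/\dd}\varphi_\jj(\sqrt L)f\bigr\|_{p}
\end{equation*}
uniformly in $\jj$.

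\textbf{Step 3 (summation in $\jj$).} Raising to the $\tau$-th power, summing over $\jj$, and invoking the elementary embedding $\ell^q\hookrightarrow\ell^\tau$ (valid since $q\le\tau$) to pass from the $\ell^\tau$-norm to the $\ell^q$-norm on the right-hand side, I obtain
\begin{equation*}
\|f\|_{\tBB^{\ss'}_{r\tau}}\le c\|f\|_{\tBB^{\ss'}_{rq}}\le c\|f\|_{\tBB^{\ss}_{pq}},
\end{equation*}
which is the first assertion.

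\textbf{Step 4 (non-collapsing case).} When the non-collapsing condition \eqref{non-collapsing} is in force, I use Theorem~\ref{thm:Nik}(ii) instead of (i). Applied to $g=\varphi_\jj(\sqrt L)f\in\Sigma_{\tt_\jj}$ with $\bnu=\zero$ and exponents $p\le r$, it yields
\begin{equation*}
\|\varphi_\jj(\sqrt L)f\|_{r}\le c\,\tt_\jj^{(1/p-1/r)\dd}\|\varphi_\jj(\sqrt L)f\|_{p}\le c'\,2^{(1/p-1/r)\jj\cdot\dd}\|\varphi_\jj(\sqrt L)f\|_{p}.
\end{equation*}
Multiplying by $2^{\jj\cdot\ss'}$ and noting that \eqref{saemb} is equivalent to $\ss'=\ss-(1/p-1/r)\dd$ (so that $\jj\cdot\ss'+(1/p-1/r)\jj\cdot\dd=\jj\cdot\ss$), I get the block estimate $2^{\jj\cdot\ss'}\|\varphi_\jj(\sqrt L)f\|_r\le c\,2^{\jj\cdot\ss}\|\varphi_\jj(\sqrt L)f\|_p$. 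Summation and the $\ell^q\hookrightarrow\ell^\tau$ embedding, exactly as in Step~3, give the second embedding.

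There is no real obstacle in this proof: the content is entirely concentrated in Theorem~\ref{thm:Nik}, and all that is needed is a careful bookkeeping of the exponent $\bgamma$ so that the algebraic shift produced by the Nikolski inequality matches the smoothness-reduction forced by~\eqref{saemb}. The only slightly delicate point is the replacement of $V(\cdot,2^{-\jj-\one})$ by $V(\cdot,2^{-\jj})$, which is a one-line consequence of doubling.
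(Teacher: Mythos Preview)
Your proof is correct and follows essentially the same approach as the paper: apply the Nikolski inequality (Theorem~\ref{thm:Nik}) to each block $\varphi_\jj(\sqrt L)f\in\Sigma_{2^{\jj+\one}}$, use the algebraic identity \eqref{saemb} to match the exponents, and then invoke the $\ell^q\hookrightarrow\ell^\tau$ embedding. You are in fact slightly more careful than the paper in explicitly noting the doubling step that passes from $V(\cdot,2^{-\jj-\one})$ to $V(\cdot,2^{-\jj})$.
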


\begin{proof}
Let the family $\{\varphi_\jj\}_{\jj\in\bN_0^2}$ be as in Definition~\ref{def-B-spaces}.

$(i)$
Let $f\in \tBB^{s}_{pq}$ and $j\in\bN_0^2$.
Then $\varphi_j(\sqrt{L})f\in\Sigma_{2^{\jj+\one}}$
and by \eqref{Band-1} there exists a constant $c>0$ independent of $f$ and $\jj$ such that
\begin{align*}
\big\|V(\cdot, 2^{-\jj})^{-\ss'/\dd}\varphi_\jj(\sqrt{L}) f(\cdot)\big\|_{r}
&\le c\big\|V(\cdot, 2^{-\jj})^{-\frac{\ss'}{\dd}+(\frac{1}{r}-\frac{1}{p})\one}\varphi_\jj(\sqrt{L}) f(\cdot)\big\|_{p}
\\
&=c\big\|V(\cdot, 2^{-\jj})^{-\ss/\dd}\varphi_\jj(\sqrt{L}) f(\cdot)\big\|_{p},
\end{align*}
using (\ref{saemb}).
Now the embedding follows from Definition~\ref{def-B-spaces}.

$(ii)$ Let $f\in \BB^{s}_{pq}$ and $\jj\in\bN_0^2$.
Then $\varphi_j(\sqrt{L})f\in\Sigma_{2^{\jj+\one}}$.
By \eqref{Bl} there exists a constant $c>0$ independent of $f$ and $\jj$ such that
$$
2^{\jj\cdot\ss'}\big\|\varphi_\jj(\sqrt{L}) f\big\|_{r}
\le c2^{\jj\cdot\ss'}2^{\jj\cdot\dd(1/p-1/r)}\big\|\varphi_\jj(\sqrt{L}) f\big\|_{p}
=c2^{\jj\cdot\ss}\big\|\varphi_\jj(\sqrt{L}) f\big\|_{p}
$$
and the embedding again follows in light of Definition~\ref{def-B-spaces}.
\end{proof}

\begin{remark}
Proposition~\ref{prop:B-embed} shows that the nonclassical mixed-smoothness Besov spaces $\tBB^{s}_{pq}$
are embedded into each other correctly,
while the Besov spaces $\BB^{s}_{pq}$ are embedded into each other correctly
under the non-collapsing condition.
\end{remark}

\subsubsection{Embedding of mixed-smoothness Besov spaces into Lebesgue spaces}

\begin{proposition}\label{prop:embed-B-L}
Let $0<p,q\le\infty$.

$(i)$ If $\ss\in \bR^2_+$, then $\BB^{\ss}_{pq}\hookrightarrow L^p$.

$(ii)$ If the non-collapsing condition $(\ref{non-collapsing})$ is valid
for the coordinate spaces $\XX_1,\XX_2$ and $\ss\in\bR^2$ is such that $s_i>d_i/p$, $i=1,2$,
then $\BB^{\ss}_{pq}\hookrightarrow L^{\infty}$.
\end{proposition}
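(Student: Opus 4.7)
I plan to prove both parts by first reducing to the case $q=\infty$ (using the trivial embedding $\BB^\ss_{pq}\hookrightarrow \BB^\ss_{p\infty}$ coming from $\ell^q\hookrightarrow\ell^\infty$), then applying the Calder\'on reproducing formula of Corollary~\ref{cor:Calderon} to write
\begin{equation*}
f=\sum_{\jj\in\bN_0^2}\varphi_\jj(\sqrt L)f \quad\text{in }\cS',
\end{equation*}
and finally summing the quasi-norms of the pieces using the geometric decay provided by the smoothness assumption.

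For part (i), I would set $\eta:=\min(1,p)$ and use the $\eta$-subadditivity $\|f+g\|_p^\eta\le\|f\|_p^\eta+\|g\|_p^\eta$ of the quasi-norm in $L^p$ to obtain
\begin{equation*}
\|f\|_p^\eta\le\sum_{\jj\in\bN_0^2}\|\varphi_\jj(\sqrt L)f\|_p^\eta
\le\|f\|_{\BB^\ss_{p\infty}}^\eta\sum_{\jj\in\bN_0^2}2^{-\eta\jj\cdot\ss},
\end{equation*}
and the last sum is finite because $\ss>\zero$. This gives $\|f\|_p\le c\|f\|_{\BB^\ss_{p\infty}}\le c\|f\|_{\BB^\ss_{pq}}$.

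For part (ii), I would invoke the Nikol'skii inequality of Theorem~\ref{thm:Nik}(ii), whose proof relies on the non-collapsing condition~\eqref{non-collapsing}. Since $\varphi_\jj(\sqrt L)f\in\Sigma_{2^{\jj+\one}}$ by Proposition~\ref{prop:spectral}, Theorem~\ref{thm:Nik}(ii) (with $\bnu=\zero$ and exponents $p$ and $\infty$) yields
\begin{equation*}
\|\varphi_\jj(\sqrt L)f\|_\infty\le c\,2^{\jj\cdot\dd/p}\|\varphi_\jj(\sqrt L)f\|_p
\le c\,2^{-\jj\cdot(\ss-\dd/p)}\|f\|_{\BB^\ss_{p\infty}}.
\end{equation*}
Because $s_i>d_i/p$ for $i=1,2$, the weights $2^{-\jj\cdot(\ss-\dd/p)}$ form a summable sequence, and the triangle inequality in $L^\infty$ then delivers $\|f\|_\infty\le c\|f\|_{\BB^\ss_{p\infty}}\le c\|f\|_{\BB^\ss_{pq}}$.

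The only delicate point—and I expect this to be the main obstacle—is justifying that the Calder\'on series actually converges in the target Lebesgue space (and not merely in $\cS'$) and that this limit coincides with the distribution $f$. For $p\ge 1$ in part (i) and for part (ii) (target $L^\infty$) the chain of inequalities above shows that the partial sums are Cauchy in the target space, and since $L^p\hookrightarrow\cS'$ for $p\ge 1$ (respectively $L^\infty\hookrightarrow\cS'$), uniqueness of $\cS'$-limits forces the $L^p$ (resp.\ $L^\infty$) limit to equal $f$. For $0<p<1$ in part (i) one cannot appeal directly to $L^p\hookrightarrow\cS'$; instead, recalling from Proposition~\ref{prop:slow} that each $\varphi_\jj(\sqrt L)f$ is a continuous slowly-growing function, I would show that the partial sums converge in $L^p$ to some $g$ and, by pairing $g$ with test functions $\phi\in\cS$ via dominated convergence (using the $L^p$ decay of the pieces together with the $\cS$-decay of $\phi$), identify $g$ with the $\cS'$-limit $f$ supplied by Corollary~\ref{cor:Calderon}.
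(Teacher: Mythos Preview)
Your proposal is correct and follows essentially the same route as the paper: Calder\'on decomposition (Corollary~\ref{cor:Calderon}), quasi-subadditivity of $\|\cdot\|_p$, and for part~(ii) the Nikolski inequality \eqref{Bl}. The only cosmetic difference is that you first pass to $q=\infty$ via $\ell^q\hookrightarrow\ell^\infty$ and then sum with exponent $\eta=\min(1,p)$, whereas the paper keeps $q$ and applies H\"older's inequality with exponents $q/r$ and $(q/r)'$ where $r=\min\{1,p,q\}$; your organisation is arguably cleaner. Your closing paragraph on identifying the $L^p$-limit with the $\cS'$-limit addresses a point the paper's proof passes over in silence, and your outline (uniqueness of $\cS'$-limits for $p\ge 1$; for $p<1$ a pairing/dominated-convergence argument using the continuity and decay of the pieces) is an acceptable way to close that gap.
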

\begin{proof}
(i) Let $r:=\min\{1,p,q\}$ and $\tau:=\frac{q}{r}\ge1$.
Assuming $f\in \BB^\ss_{pq}$
we use the decomposition \eqref{cald-2}, the $r$-triangle inequality and H\"{o}lder's inequality to obtain
\begin{align*}
\|f\|_p^r&=\Big\|\sum_{\jj\in\bN_0^2}\varphi_\jj (\sqrt{L})f\Big\|_p^r
\le\sum_{\jj\in\bN_0^2}\big\|\varphi_\jj (\sqrt{L})f\big\|_p^r
\\
&\le \Big(\sum_{\jj\in\bN_0^2}\big(2^{\jj\cdot\ss}\big\|\varphi_\jj (\sqrt{L})f\big\|_p\big)^q\Big)^{r/q}
\Big(\sum_{\jj\in\bN_0^2} 2^{-r\tau'\jj\cdot\ss}\Big)^{1/\tau'}
\le c\|f\|_{\BB^\ss_{pq}}^r,
\end{align*}
taking into account that $s_i>0$, $i=1,2$.

(ii) Let $f\in \BB^{s}_{pq}$.
By \eqref{cald-2} and \eqref{Bl} we get
\begin{align*}
\|f\|_{\infty}&\le\sum_{\jj\in\bN_0^2}\big\|\varphi_\jj (\sqrt{L})f\big\|_{\infty}
\le c\sum_{\jj\in\bN_0^2}2^{\jj\cdot\dd/p}\big\|\varphi_\jj (\sqrt{L})f\big\|_{p}
\\
&\le c\sum_{\jj\in\bN_0^2}2^{-\jj(\ss-\dd/p)}\|f\|_{\BB^\ss_{pq}}
\le c\|f\|_{\BB^\ss_{pq}},
\end{align*}
using that $s_i>d_i/p$, $i=1,2$.
\end{proof}

\section{Spectral multipliers}\label{sec:multipliers}

An important class of operators in the classical (Euclidean) harmonic analysis
is the class of Fourier multipliers.
Let $\m$ be a bounded function on $\bR^d$. The Fourier multiplier $T_\m$ associated to $\m$ is defined by
$$
T_\m f(\xx):=\cF^{-1}\big(\m\cF f\big)(\xx),\quad f\in L^2,
$$
where $\cF$ and $\cF^{-1}$ stand for the Fourier and inverse Fourier transforms.
Plancerel's inequality guarantees the boundedness of $T_\m$ on $L^2$.
The fundamental problem here is to find conditions on the symbol function $\m$
under which $T_\m$ is bounded in certain function classes such as distributions and smoothness spaces.

In the setting of this article a natural substitute for the Fourier theory is the Spectral theory.
Let $\m:[0, \infty)^2 \to\bR$ be a measurable function.
The spectral multiplier $\m(\sqrt{L})= \m(\sqrt{L_1}, \sqrt{J_2})$ is defined by
\begin{equation}\label{eq:specmult}
\m(\sqrt L)=\int_{[0,\infty)^2}\m(\sqrt {\lambda_1},\sqrt{\lambda_2})dE_{(\lambda_1,\lambda_2)}.
\end{equation}
where $dE_{(\lambda_1,\lambda_2)}$ is the spectral measure defined in \S\ref{subsec:spectr-func}.
Clearly, if $\m$ is bounded on $[0,\infty)^2$, then $\m(\sqrt{L})$ is bounded on $L^2(\XX)$.

Our goal is to study the boundedness of spectral multipliers on the classes of
test functions $\cS=\cS(L_1, L_2)$ and distributions $\cS'=\cS'(L_1, L_2)$, see Section~\ref{sec:distributions},
as well as the mixed-smoothness Besov and Triebel-Lizoskin spaces, introduced in Section~\ref{sec:B-F-spaces}.
Single-variable spectral multiplier results are obtained in \cite{BD,GN,GN2,GKKP2}.

\subsection{Bounded spectral multipliers on distributions}\label{subsec:multiplier-S}

We begin by introducing spectral multipliers on test functions
and distributions associated to the operators $L_1, L_2$.

\begin{definition}\label{def:mS}
A function $\m \in \cC^{\infty}(\R^2)$ is called {\em admissible}
if it is real-valued,
$\m(\pm\lambda_1,\pm\lambda_2)=\m(\lambda_1,\lambda_2)$ for $(\lambda_1,\lambda_2)\in \bR^2$,
and all its partial derivatives have at most polynomial growth,
i.e. for any $\bbeta\in\bN_0^2$ there exist constants $c_{\bbeta}>0$ and $N_{\bbeta}\in\bN_0$ such that
\begin{equation}\label{pg}
\big|\partial^{\bbeta}\m(\blambda)\big|\le c_{\bbeta}(1+|\blambda|)^{N_{\bbeta}}.
\end{equation}

Assume that $\m \in \cC^{\infty}(\R^2)$ is admissible and let
$\varphi^i_0,\varphi^i \in \cC^{\infty}(\RR)$, $i=1,2$, be two pairs of functions
satisfying conditions \eqref{phi-phi0}--\eqref{1dCalderon}.
For reader's convenience we recall them here:
\begin{equation}\label{cond-11}
\begin{aligned}
&(i)\ \hbox{$\varphi^i_0,\varphi^i$ are real-valued and even,}
\\
&(ii)\ \supp\varphi^i_0\subset[-2,2], \ \supp\varphi^i\subset[-2,2]\setminus[-1/2, 1/2],
\end{aligned}
\end{equation}
and
\begin{equation}\label{Cald-1d}
\varphi^i_0(t)+\sum_{n\ge 1}\varphi^i(2^{-n} t)=1, \quad \forall t\in\RR,
\; i=1,2.
\end{equation}
Using the standard notation $\varphi^i_n(t):=\varphi^i(2^{-n}t)$, %$n\in \bN$,
we define $\varphi_\jj:=\varphi^1_{j_1}\otimes\varphi^2_{j_2}$ for $\jj=(j_1,j_2)\in \bN^2_0$.
Then from \eqref{Cald-1d} it readily follows that
\begin{equation*}
\sum_{\jj\in\bN_0^2} \varphi_\jj(\blambda) = 1, \quad \blambda\in \RR^2.
\end{equation*}
Further, set $\m_\jj(\blambda):=\m(\blambda)\varphi_\jj(\blambda)$.
Then obviously
\begin{equation}\label{m-lam}
\m(\blambda)=\sum_{\jj\in \bN_0^2}\m(\blambda)\varphi_\jj(\blambda)=\sum_{\jj\in \bN_0^2}\m_\jj(\blambda).
\end{equation}

We define the multiplier operator
$\m(\sqrt{L})=\m(\sqrt{L_1},\sqrt{L_2})$ on $\cS=\cS(L_1,L_2)$
by
\begin{equation}\label{def:mL}
\m(\sqrt{L})\phi%=\sum_{\jj\in \bN_0^2}m(\blambda)\varphi_j(\blambda)
:=\sum_{\jj\in \bN_0^2}\m_\jj(\sqrt{L})\phi,
\quad \phi\in \cS,
\end{equation}
and on $\cS'=\cS'(L_1,L_2)$ by
\begin{equation}\label{def:mL-1}
\m(\sqrt{L})f%=\sum_{j\in \bN_0^2}m(\blambda)\varphi_j(\blambda)
:=\sum_{\jj\in \bN_0^2}\m_\jj(\sqrt{L})f,
\quad f\in \cS',
\end{equation}
where the convergence is in $\cS$ and $\cS'$, respectively.

\end{definition}

\begin{theorem}\label{thm:mS}
Let $\m \in \cC^{\infty}(\R^2)$ be admissible $($Defintion~\ref{def:mS}$)$.
Then the multiplier operator $\m(\sqrt{L})$ is well defined and continuous on $\cS$ and $\cS'$.
\end{theorem}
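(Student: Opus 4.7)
The strategy is to establish a uniform summable bound
\[
\cP_{m,k}(\m_\jj(\sqrt L)\phi)\le C_\jj\,\cP_{m',k'}(\phi),
\qquad \sum_{\jj\in\bN_0^2} C_\jj<\infty,
\]
with $m',k'$ depending on $m,k$ and the admissibility indices of $\m$. Each $\m_\jj$ is a real-valued, compactly supported $\cC^\infty$ function that is even in each variable, hence lies in $\cS(\R^2)$, and Proposition~\ref{prop:basicprop}(ii) already yields the continuity of the individual operator $\m_\jj(\sqrt L)\colon\cS\to\cS$. The summable bound above then simultaneously produces convergence of the series \eqref{def:mL} in $\cS$ and continuity of the limit on $\cS$. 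The extension to $\cS'$ is immediate by duality: for $f\in\cS'$ and $\phi\in\cS$ the partial sums $\sum_{|\jj|\le K}\langle\m_\jj(\sqrt L)f,\phi\rangle=\langle f,\sum_{|\jj|\le K}\m_\jj(\sqrt L)\phi\rangle$ converge because the inner sum converges in $\cS$, and the seminorm estimate transfers to $\cS'$ in the usual way.

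The key idea is to exploit the support properties of $\m_\jj$ to factor out powers of $L$. For $\jj\in\bN^2$ the function $\m_\jj$ vanishes on neighborhoods of both coordinate axes, so
\[
\tilde G_\jj(\blambda):=\m_\jj(\blambda)\lambda_1^{-2\nu_1}\lambda_2^{-2\nu_2}
\]
is well defined, smooth, compactly supported, real-valued, and even in each variable for any fixed $\bnu=(\nu_1,\nu_2)\in\bN^2$; by the spectral calculus $\m_\jj(\sqrt L)=L^\bnu\tilde G_\jj(\sqrt L)=\tilde G_\jj(\sqrt L)L^\bnu$, hence $L^\bmu\m_\jj(\sqrt L)\phi=\tilde G_\jj(\sqrt L)L^{\bmu+\bnu}\phi$. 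Rescaling via $H_\jj(\bmu):=2^{2\bnu\cdot\jj}\tilde G_\jj(2^\jj\bmu)=\m(2^\jj\bmu)\varphi^1(\mu_1)\varphi^2(\mu_2)\mu_1^{-2\nu_1}\mu_2^{-2\nu_2}$, one checks that $\supp H_\jj\subset[-2,2]^2$, $H_\jj$ is smooth and even in each variable, and the admissibility condition \eqref{pg} forces
\[
\|H_\jj\|_{W^{k_1+k_2}_\infty(\R^2)}\le c\,2^{N|\jj|}
\]
for some $N=N(k_1+k_2,\m)$ determined by the polynomial-growth indices of $\m$. Theorem~\ref{th:boxsupport} applied to $H_\jj$ with $R=2$ at the scale $\bdelta=2^{-\jj}$, combined with $\tilde G_\jj(\sqrt L)=2^{-2\bnu\cdot\jj}H_\jj(2^{-\jj}\sqrt L)$, then yields
\[
|\KK_{\tilde G_\jj(\sqrt L)}(\xx,\yy)|\le c\,2^{(N-2(\nu_1\wedge\nu_2))|\jj|}\,\DD_{2^{-\jj},\kk}(\xx,\yy)
\]
for any prescribed $\kk$ with $k_i>3d_i/2$.

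To finish, substitute this kernel bound into the representation $L^\bmu\m_\jj(\sqrt L)\phi=\tilde G_\jj(\sqrt L)L^{\bmu+\bnu}\phi$, estimate $|L^{\bmu+\bnu}\phi(\yy)|\le\cP_{m+\nu_1\vee\nu_2,k'}(\phi)\,\DD^*_{\one,\kk'}(\yy,\xx_0)$, pass from $\DD_{2^{-\jj},\kk}$ to $V(\xx,2^{-\jj})^{-1}\DD^*_{2^{-\jj},\kk-\dd/2}$ via \eqref{D-D*}, and integrate $\yy$ using \eqref{tech-4} (applicable because $2^{-\jj}\le\one$ for all $\jj\in\bN_0^2$). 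Absorbing the weight $\prod_{i=1,2}(1+\rho_i(x_i,x_{0i}))^k$ against the decay $\DD^*_{\one,\kk'}(\xx,\xx_0)$ by taking $\kk'$ large enough produces
\[
\cP_{m,k}(\m_\jj(\sqrt L)\phi)\le c\,2^{(N-2(\nu_1\wedge\nu_2))|\jj|}\,\cP_{m+\nu_1\vee\nu_2,k'}(\phi),
\]
and choosing $\bnu$ with $2(\nu_1\wedge\nu_2)>N+2$ yields geometric decay $2^{-2|\jj|}$, summable over $\bN^2$. The boundary strips $\{j_1=0,\ j_2\ge 1\}$ and $\{j_1\ge 1,\ j_2=0\}$ are treated by the same device, factoring out only the single power $\lambda_i^{-2\nu_i}$ corresponding to the nonzero index and applying Theorem~\ref{th:boxsupport} at the anisotropic scale $(1,2^{-j_2})$ or $(2^{-j_1},1)$; the single corner term $\m_{(0,0)}(\sqrt L)$ is bounded directly by Proposition~\ref{prop:basicprop}(ii). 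The principal obstacle is the growth $2^{N|\jj|}$ of the rescaled symbols forced by \eqref{pg}; defeating it requires choosing $\bnu$ in terms of $N$, which in turn depends on how many derivatives are demanded by Theorem~\ref{th:boxsupport}, so $m'$ and $k'$ in the final estimate necessarily grow with $(m,k)$—but the admissibility hypothesis is exactly what makes a finite $\bnu$ suffice.
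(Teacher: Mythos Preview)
Your approach is correct and genuinely different from the paper's. The paper factors each $\m_\jj$ through the \emph{radial} power $|\blambda|^{-2r}=(\lambda_1^2+\lambda_2^2)^{-r}$, writing $L^\bnu\m_\jj(\sqrt L)=(2^{j_1}+2^{j_2})^{-r}\theta_\jj(\sqrt L)\sum_\kappa\binom{r}{\kappa}L^{(\nu_1+\kappa,\nu_2+r-\kappa)}$ with $\theta_\jj(\blambda)=(2^{j_1}+2^{j_2})^{r}\m_\jj(\blambda)|\blambda|^{-2r}$, and then applies Theorem~\ref{thm:gen-local} to $\theta_\jj$ at the fixed scale $\bdelta=\one$, obtaining $|\KK_{\theta_\jj(\sqrt L)}|\le c\,\DD_{\one,\tilde\kk}$ uniformly in $\jj$. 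You instead factor through the \emph{tensor} power $\lambda_1^{-2\nu_1}\lambda_2^{-2\nu_2}$, rescale to a fixed compact support, and apply Theorem~\ref{th:boxsupport} at the variable scale $2^{-\jj}$, paying a factor $2^{N|\jj|}$ in the symbol norm which you then defeat with $2^{-2\bnu\cdot\jj}$. Both routes work; the paper's has the advantage that the resulting kernel already lives at the test-function scale $\one$, so the integration against $L^{\bmu+\bnu}\phi$ uses only the simpler estimates \eqref{D-D*}--\eqref{tech-4} without a scale change, and the radial factor $|\blambda|^{-2r}$ is nonsingular as soon as $\jj\ne\zero$ (the support of $\m_\jj$ avoids the origin), which slightly streamlines the boundary-strip bookkeeping. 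Your route, on the other hand, avoids the binomial expansion of $(L_1+L_2)^r$ and keeps the operator arithmetic purely in tensor form.

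One omission: ``well defined'' in the statement includes independence of the particular partition $\{\varphi_0^i,\varphi^i\}$ chosen in Definition~\ref{def:mS}. The paper proves this explicitly (part~(b) of its argument) by comparing two partial sums $S_\nn\phi-\tilde S_\bell\phi$ built from different cutoffs and showing $\cP_{m,k}(S_\nn\phi-\tilde S_\bell\phi)\to 0$. Your summable bound gives convergence for each fixed choice but does not by itself identify the two limits; you should add a line to this effect --- either repeat your estimate for the difference of the two partial sums, or note that both limits lie in $\cS\subset L^2$ and agree there since $E(U)\sum_\jj\m_\jj(\sqrt L)\phi=(\m\ONE_U)(\sqrt L)\phi$ is cutoff-independent for every bounded spectral set~$U$.
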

\begin{proof}
Let $\m \in \cC^{\infty}(\R^2)$ be admissible in the sense of Definition~\ref{def:mS}.
We first show that the operator $\m(\sqrt{L})$ is well defined by \eqref{def:mL} and \eqref{def:mL-1}.
To this end we will show that
(a) the series in \eqref{def:mL} and \eqref{def:mL-1} converge
and
(b) the definition of $\m(L)$ %by \eqref{def:mL} and \eqref{def:mL-1}
is independent of the specific selection of the functions $\varphi^i_0,\varphi^i$, $i=1,2$.

(a)
To prove that the series in \eqref{def:mL} converges it suffices to
show that the sequence
$
\Big\{\sum_{\zero\le\jj\le \nn}\m_\jj(\sqrt{L})\phi\Big\}
$
is a Cauchy sequence in $\cS$ for any $\phi\in \cS$, i.e. for $m, k\in \bN_0$
\begin{equation}\label{cauchy}
\cP_{m,k}\Big(\sum_{\zero\le\jj\le \nn}\m_\jj(\sqrt{L})\phi -\sum_{\zero\le\jj\le \bell}\m_\jj(\sqrt{L})\phi \Big) \to 0
\quad\hbox{as}\quad |\nn|, |\bell|\to \infty,\;\; \forall \phi\in\cS.
\end{equation}
We denote $\Phi^i(t):= \varphi_0^i(2t)+ \varphi^i(t)$, $i=1,2$.
From \eqref{cond-11}--\eqref{Cald-1d} it follows that
$\supp \Phi^i \subset [-2,2]$, $\Phi^i(t)=1$ for $t\in[-1,1]$,
and most importantly
\begin{equation}\label{sum-1}
\sum_{j=0}^{n}\varphi^i_{j}(t)=\Phi^i(2^{-n}t),\quad t\in\RR, \; n\in\bN.
\end{equation}
Hence, for $\nn\in \bN^2$
\begin{equation}\label{sum-2}
\sum_{j_1=0}^{n_1}\sum_{j_2=0}^{n_2} \m(\lambda_1,\lambda_2)\varphi_{j_1}^1(\lambda_1)\varphi_{j_2}^2(\lambda_2)
= \m(\lambda_1,\lambda_2)\Phi^1(2^{-n_1}\lambda_1)\Phi^2(2^{-n_2}\lambda_2),
\end{equation}
implying
\begin{equation*}
S_\nn:=\sum_{\zero\le\jj\le \nn}\m_\jj(\sqrt{L}) =\m(\sqrt{L})\Phi(2^{-\nn}\sqrt{L}),
\end{equation*}
where
$\Phi:=\Phi^1\otimes\Phi^2$.
If $\bell,\nn\in \bN^2$, then from above
\begin{equation*}
S_\nn - S_\bell
=\sum_{\zero\le\jj\le \nn}\m_\jj(\sqrt{L}) - \sum_{\zero\le\jj\le \bell}\m_\jj(\sqrt{L})
= \m(L)\big(\Phi(2^{-\nn}\sqrt{L})-\Phi(2^{-\bell}\sqrt{L})\big).
\end{equation*}
Consider the functions
\begin{equation*}
\omega_{\bell,\nn}(\lambda_1, \lambda_2)
:= \Phi^1(2^{-n_1}\lambda_1)\Phi^2(2^{-n_2}\lambda_2) - \Phi^1(2^{-\ell_1}\lambda_1)\Phi^2(2^{-\ell_2}\lambda_2)
\end{equation*}
and
\begin{equation}\label{def-theta-ln}
\theta_{\bell,\nn}(\lambda_1, \lambda_2)
:= \min\{2^{\ell_1\wedge n_1}, 2^{\ell_2\wedge n_2}\}^{r}
\m(\lambda_1,\lambda_2)\omega_{\bell,\nn}(\lambda_1, \lambda_2)(\lambda_1^2+\lambda_2^2)^{-r}.
\end{equation}
Clearly,
\begin{equation*}
S_\nn - S_\bell = \min\{2^{\ell_1\wedge n_1}, 2^{\ell_2\wedge n_2}\}^{-r}\theta_{\bell,\nn}(\sqrt{L_1}, \sqrt{L_2})(L_1+L_2)^r
\end{equation*}
and hence
\begin{equation}\label{S-S}
L^\bnu(S_\nn - S_\bell)
= \min\{2^{\ell_1\wedge n_1}, 2^{\ell_2\wedge n_2}\}^{-r} \theta_{\bell,\nn}(\sqrt{L_1}, \sqrt{L_2})
\sum_{\kappa=0}^{r}\binom{r}{\kappa}L^{(\nu_1+\kappa, \nu_2+r-\kappa)}.
\end{equation}

Fix $m,k\in\bN_0$. %and let $\phi\in \cS$.
Choose $\tilde{k}_i > k+5d_i/2$, $i=1,2$,
and introduce the number $N:= \max\{N_\bbeta: |\bbeta|\le \tilde{k}_1+\tilde{k}_2\}$.
Also, we choose $r > N+ d_1+d_2+\tilde{k}_1+\tilde{k}_2$.
We claim that  $\theta_{\bell,\nn}(\sqrt{L})$ is an integral operator whose kernel
satisfies
\begin{equation}\label{K-theta}
|\KK_{\theta_{\bell,\nn}(\sqrt{L})}(\xx,\yy)|
\le c\DD_{\one,\tilde{\kk}}(\xx,\yy).
\end{equation}
To prove this, in light of Theorem~\ref{thm:gen-local},
it  suffices to show that for all $|\bbeta|\le \tilde{k}_1+\tilde{k}_2$
\begin{equation}\label{partial-theta}
|\partial^{\bbeta}\theta_{\bell,\nn}(\blambda)|
\le c(1+|\blambda|)^{-q},
\quad q > d_1+d_2+\tilde{k}_1+\tilde{k}_2.
\end{equation}
Clearly,
\begin{equation*}
|\partial^{\bbeta}\omega_{\bell,\nn}(\blambda)|\le c(\bbeta)
\quad\hbox{and}\quad
|\partial^{\bbeta}(|\blambda|^{-2r})|\le c(\bbeta,r)|\blambda|^{-2r}, \quad |\blambda|\ge 1.
\end{equation*}
In addition to this
$\omega_{\bell,\nn}(\blambda)=0$ for $\blambda \in [0,2^{\ell_1\wedge n_1}]\times[0,2^{\ell_2\wedge n_2}]$.
Therefore, using the Libniz rule:
$
\partial^\bbeta(fg)=\sum_{\bgamma\le \bbeta} \binom{\bbeta}{\bgamma}\partial^\bgamma f \partial^{\bbeta-\bgamma}g
$
we get
\begin{align*}
|\partial^{\bbeta}(\omega_{\bell,\nn}(\blambda)|\blambda|^{-r})|
\le c(1+|\blambda|)^{-2r}
\le c\min\{2^{\ell_1\wedge n_1}, 2^{\ell_2\wedge n_2}\}^{-r}(1+|\blambda|)^{-r},
\end{align*}
for $\blambda \in [0, \infty)^2\setminus [0,2^{\ell_1\wedge n_1}]\times[0,2^{\ell_2\wedge n_2}]$
and
$\partial^{\bbeta}(\omega_{\bell,\nn}(\blambda)|\blambda|^{-r})=0$ otherwise.
Consequently,
\begin{equation}\label{partial-omega}
|\partial^{\bbeta}(\omega_{\bell,\nn}(\blambda)|\blambda|^{-r})|
\le c\min\{2^{\ell_1\wedge n_1}, 2^{\ell_2\wedge n_2}\}^{-r}(1+|\blambda|)^{-r},
\quad \forall \blambda\in [0,\infty)^2.
\end{equation}
On the other hand, from the assumption that the function $\m$ is admissible it follows that
\begin{equation*} %\label{partial-m}
|\partial^{\bbeta}\m(\blambda)|\le c_{\bbeta}(1+|\blambda|)^{N_{\bbeta}}\le c (1+|\blambda|)^N
\quad\hbox{if}\;\; |\bbeta|\le \tilde{k}_1+\tilde{k}_2.
\end{equation*}
Using this and \eqref{partial-omega} along with the Libniz rule, and \eqref{def-theta-ln} we obtain
\begin{equation*}
|\partial^{\bbeta}\theta_{\bell,\nn}(\blambda)|
\le c(1+|\blambda|)^{-r+N}
\quad\hbox{if}\quad |\bbeta|\le \tilde{k}_1+\tilde{k}_2,
\end{equation*}
which confirms \eqref{partial-theta} because $r > N+ d_1+d_2+\tilde{k}_1+\tilde{k}_2$.

We introduce the abbreviated notation
$\eps(\bell, \nn,r):= \min\{2^{\ell_1\wedge n_1}, 2^{\ell_2\wedge n_2}\}^{-r}$.
Let $\phi\in\cS$ and fix $K\ge (\tilde{k}_1-d_1/2)\vee(\tilde{k}_2-d_2/2)$.
Then using \eqref{S-S} and \eqref{K-theta} we obtain
\begin{align}\label{L-SS}
|L^\bnu(S_\nn &- S_\bell)\phi(\xx)| \nonumber
\\
&\le c\eps(\bell, \nn,r)
\sum_{\kappa=0}^{r}\int_\XX|\KK_{\theta_{\bell,\nn}(\sqrt{L})}(\xx,\yy)|
|L^{(\nu_1+\kappa, \nu_2+r-\kappa)}\phi(\yy)| d\mu(\yy) \nonumber
\\
&\le c\eps(\bell, \nn,r)
\cP_{m+r,K}(\phi)\int_\XX \DD_{\one, \tilde{\kk}}(\xx,\yy)\DD^*_{\one, \tilde{\kk}-\dd/2}(\yy,\xx_0) d\mu(\yy)
\\
&\le c\eps(\bell, \nn,r)
\cP_{m+r,K}(\phi)\int_\XX V(\yy, \one)^{-1} \DD^*_{\one, \tilde{\kk}-\dd/2}(\xx,\yy)
\DD^*_{\one, \tilde{\kk}-\dd/2}(\yy,\xx_0) d\mu(\yy) \nonumber
\\
&\le c\eps(\bell, \nn,r)\cP_{m+r,K}(\phi) \DD^*_{\one, \tilde{\kk}-\dd/2}(\xx,\xx_0). \nonumber
\end{align}
Here for the third inequality we used \eqref{D-D*} and
for the last we use \eqref{tech-4} and the fact that $\tilde{k}_i -d_i/2>2d_i$.

Now, using \eqref{L-SS} we get
\begin{align*}
\cP_{m,k}(S_\nn\phi-S_\bell\phi)
\le c\eps(\bell, \nn,r)\cP_{m+r,K}(\phi)
\sup_{\xx\in \XX}\prod_{i=1,2}\frac{(1+\rho_i(x_i,x_{0i}))^k}{(1+\rho_i(x_i,x_{0i}))^{\tilde{k}_i-d_i/2}}.
\end{align*}
Finally, taking into account that $\tilde{k}_i > k+5d_i/2$ we conclude that
\begin{equation*}
\cP_{m,k}\Big(\sum_{\zero\le\jj\le \nn}\m_\jj(\sqrt{L})\phi -\sum_{\zero\le\jj\le \bell}\m_\jj(\sqrt{L})\phi \Big)
\le c\min\{2^{\ell_1\wedge n_1}, 2^{\ell_2\wedge n_2}\}^{-r}\cP_{m+r,K}(\phi),
\end{equation*}
which implies \eqref{cauchy}.
%as $\bell, \nn \to \inftuy$.

\smallskip

The convergence of the series in \eqref{def:mL-1} follows by duality from the convergence in \eqref{def:mL}.

\smallskip

(b) To show that the definition of the multiplier $\m(\sqrt{L})$
is independent of the specific selection of the functions $\varphi_0^i, \varphi^i$, $i=1,2$,
we assume that $\psi_0^i, \psi^i\in \cC^\infty(\RR)$, $i=1,2$, are other two pairs of functions
satisfying conditions \eqref{cond-11} and \eqref{Cald-1d}.
Denote $\psi_\jj:=\psi^1_{j_1}\otimes\psi^2_{j_2}$ for $\jj=(j_1,j_2)\in \bN^2_0$
and set $\widetilde{\m}_\jj(\blambda):=\m(\blambda)\psi_\jj(\blambda)$.
Then
\begin{equation*} %\label{m-lam-2}
\m(\blambda)=\sum_{\jj\in \bN_0^2}\m(\blambda)\psi_\jj(\blambda)=\sum_{\jj\in \bN_0^2}\widetilde{\m}_j(\blambda).
\end{equation*}
We define the multiplier operator
$\widetilde{\m}(\sqrt{L})$ on $\cS$
by
\begin{equation*} %\label{def:mL-2}
\widetilde{\m}(\sqrt{L})\phi%=\sum_{\jj\in \bN_0^2}m(\lambda)\varphi_j(\lambda)
:=\sum_{\jj\in \bN_0^2}\widetilde{\m}_\jj(\sqrt{L})\phi,
\quad \phi\in \cS,
\end{equation*}
and will show that $\widetilde{\m}(\sqrt{L})\phi=\m(\sqrt{L})\phi$ for $\phi\in \cS$.

Indeed set $\Psi^i(t):= \psi_0^i(2t)+ \psi^i(t)$, $i=1,2$.
From \eqref{cond-11}--\eqref{Cald-1d} it follows that
$\supp \Psi^i \subset [-2,2]$, $\Psi^i(t)=1$ for $t\in[-1,1]$,
and
\begin{equation}\label{sum-11}
\sum_{j=0}^{n}\psi^i_{j}(t)=\Psi^i(2^{-n}t),\quad t\in\RR, \; n\in\bN.
\end{equation}
Denote
\begin{equation*}
\widetilde{S}_\nn:=\sum_{\zero\le\jj\le \nn}\widetilde{\m}_\jj(\sqrt{L}) =\m(\sqrt{L})\Psi(2^{-\nn}\sqrt{L}),
\end{equation*}
where $\Psi:=\Psi^1\otimes \Psi^2$,
and observe that
for $\bell,\nn\in \bN^2$ we have
\begin{equation*}
S_\nn - \widetilde{S}_\bell
=\sum_{\zero\le\jj\le \nn}\m_\jj(\sqrt{L}) - \sum_{\zero\le\jj\le \bell}\widetilde{\m}_\jj(\sqrt{L})
= \m(L)\big(\Phi(2^{-\nn}\sqrt{L})-\Psi(2^{-\bell}\sqrt{L})\big).
\end{equation*}
We introduce the function
\begin{equation*}
\widetilde{\omega}_{\bell,\nn}(\lambda_1, \lambda_2)
:= \Phi^1(2^{-n_1}\lambda_1)\Phi^2(2^{-n_2}\lambda_2) - \Psi^1(2^{-\ell_1}\lambda_1)\Psi^2(2^{-\ell_2}\lambda_2).
\end{equation*}
From this point on we repeat the proof from part (a) above
with the role of $\omega_{\bell,\nn}$ played by $\widetilde{\omega}_{\bell,\nn}$
to conclude that for any $m,k\in\bN_0$
\begin{equation*}
\cP_{m,k}(S_\nn\phi - \widetilde{S}_\bell\phi) \to 0
\quad\hbox{as}\quad |\nn|, |\bell|\to \infty,
\quad \forall \phi\in\cS,
\end{equation*}
which yields
$\widetilde{\m}(\sqrt{L})\phi=\m(\sqrt{L})\phi$ for $\phi\in \cS$.
This readily implies that
$\widetilde{\m}(\sqrt{L})f=\m(\sqrt{L})f$ for $f\in \cS'$.

%%%%%%%%%%%%%%%%%%

\medskip

\noindent
{\bf The continuity of the operator $\m(\sqrt{L})$.}
We now focus on the proof of the continuity of the operator $\m(\sqrt{L})$ on $\cS$ and $\cS'$.
To a large extent we will proceed similarly as in the above proof of the first part of the theorem.

By duality it suffices to prove the continuity of $\m(\sqrt{L})$ on $\cS$ only.
Let $m,k\in\bN_0$.
We need to show that there exist constants $m^*, k^*\in\bN$ and $c>0$ such that
\begin{equation}\label{P-P}
\cP_{m,k}(\m(\sqrt{L})\phi)\le c\cP_{m^*,k^*}(\phi), \quad \forall\phi\in\cS.
\end{equation}
By \eqref{norm-S} it suffices to prove that there exist constants $m^*, k^*\in\bN$ and $c>0$ such that
for any $\phi\in\cS$, $\bnu=(\nu_1,\nu_2)\in\bN_0^2$ with $|\nu_i|\le m$, and  $\xx\in \XX$
\begin{equation}\label{S-bound-1}
\prod_{i=1,2}\big(1+\rho_i(x_i,x_{0i})\big)^{k}|L^{\bnu}\m(\sqrt{L})\phi(\xx)|
\le c\cP_{m^*,k^*}(\phi).
\end{equation}
Choose $\tilde{k}_i > k+5d_i/2$, $i=1,2$,
and set $N:= \max\{N_\bbeta: |\bbeta|\le \tilde{k}_1+\tilde{k}_2\}$, see \eqref{pg}.
Also, choose $r > N+ d_1+d_2+\tilde{k}_1+\tilde{k}_2$.

From the proof of the first part of the theorem
$
\m(\sqrt{L})\phi=\sum_{\jj\in \bN_0^2}\m_\jj(\sqrt{L})\phi
$
in $\cS$,
where $\m_\jj(\blambda) := \m(\blambda)\varphi_\jj(\blambda)$
and hence
\begin{equation}
\label{L-m}
|L^{\bnu}\m(\sqrt{L})\phi(\xx)|\le\sum_{\jj\in\bN_0^2}|L^{\bnu}\m_\jj(\sqrt{L})\phi(\xx)|.
\end{equation}

We next estimate $|L^{\bnu}\m_\jj(\sqrt{L})\phi(\xx)|$ in the case when $\jj=(j_1, j_2)\in \bN^2$.
The estimation of $|L^{\bnu}\m_\jj(\sqrt{L})\phi(\xx)|$ when $j_1=0$ or $j_2=0$ is similar; we omit it.
Recall that
\begin{equation*}
\m_\jj(\blambda)
= \m(\blambda)\varphi_\jj(\blambda)
= \m(\lambda_1, \lambda_2)\varphi^1(2^{-j_1}\lambda_1)\varphi^2(2^{-j_2}\lambda_2),
\quad \jj\in \bN^2.
\end{equation*}
Consider the function
\begin{align}\label{def-thet-j}
\theta_\jj(\blambda)
&= (2^{j_1}+2^{j_2})^r\m_\jj(\blambda)|\blambda|^{-r}
\\
&= (2^{j_1}+2^{j_2})^r\m(\lambda_1, \lambda_2)
\varphi^1(2^{-j_1}\lambda_1)\varphi^2(2^{-j_2}\lambda_2)(\lambda_1^2+\lambda_2^2)^{-r}. \nonumber
\end{align}
Clearly,
\begin{equation*}
\m_\jj(\sqrt{L}) = (2^{j_1}+2^{j_2})^{-r}\theta_\jj(\sqrt{L})(L_1+L_2)^r
\end{equation*}
and hence
\begin{equation}\label{L-m-1}
L^{\bnu}\m_\jj(\sqrt{L})= (2^{j_1}+2^{j_2})^{-r}\theta_\jj(\sqrt{L})
\sum_{\kappa=0}^r\binom{r}{\kappa}L^{(\nu_1+\kappa, \nu_2+r-\kappa)}.
\end{equation}

We next show that  $\theta_\jj(\sqrt{L})$ is an integral operator
with kernel satisfying
\begin{equation}\label{K-theta-2}
|\KK_{\theta_\jj(\sqrt{L})}(\xx,\yy)|
\le c\DD_{\one,\tilde{\kk}}(\xx,\yy).
\end{equation}
In light of Theorem~\ref{thm:gen-local}
it  suffices to show that for all $|\bbeta|\le \tilde{k}_1+\tilde{k}_2$
\begin{equation}\label{partial-theta-2}
|\partial^{\bbeta}\theta_\jj(\blambda)|
\le c(1+|\blambda|)^{-q},
\quad q > d_1+d_2+\tilde{k}_1+\tilde{k}_2.
\end{equation}
It is easy to see that
\begin{equation*}
|\partial^{\bbeta}\varphi_\jj(\blambda)|\le c(\bbeta)
\quad\hbox{and}\quad
|\partial^{\bbeta}(|\blambda|^{-2r})|\le c(\bbeta,r)|\blambda|^{-2r}, \;\; |\blambda|\ge 1.
\end{equation*}
Observe that
$\supp \varphi_\jj \cap [0, \infty)^2 \subset [2^{j_1-1}, 2^{j_1+1}]\times[2^{j_2-1}, 2^{j_2+1}]$.
Hence, using the Libniz rule
\begin{align*}
|\partial^{\bbeta}(\varphi_\jj(\blambda)|\blambda|^{-r})|
\le c(1+|\blambda|)^{-2r}
\le c(2^{j_1}+2^{j_2})^{-r}(1+|\blambda|)^{-r},
\end{align*}
for $\blambda \in [2^{j_1-1}, 2^{j_1+1}]\times[2^{j_2-1}, 2^{j_2+1}]$
and
$\partial^{\bbeta}(\varphi_\jj(\blambda)|\blambda|^{-r})=0$ otherwise.
Therefore,
\begin{equation}\label{partial-omega-2}
|\partial^{\bbeta}(\varphi_\jj(\blambda)|\blambda|^{-r})|
\le c(2^{j_1}+2^{j_2})^{-r}(1+|\blambda|)^{-r},
\quad \forall \blambda\in [0,\infty)^2.
\end{equation}
On the other hand, from \eqref{pg} we have
\begin{equation*} %\label{partial-m}
|\partial^{\bbeta}\m(\blambda)|\le c_{\bbeta}(1+|\blambda|)^{N_{\bbeta}}\le c (1+|\blambda|)^N
\quad\hbox{if}\;\; |\bbeta|\le \tilde{k}_1+\tilde{k}_2.
\end{equation*}
Using this and \eqref{partial-omega-2} together with the Libniz rule, and \eqref{def-thet-j} we get
\begin{equation*}
|\partial^{\bbeta}\theta_\jj(\blambda)|
\le c(1+|\blambda|)^{-r+N}
\quad\hbox{if}\quad |\bbeta|\le \tilde{k}_1+\tilde{k}_2,
\end{equation*}
which confirms \eqref{partial-theta-2} because $r > N+ d_1+d_2+\tilde{k}_1+\tilde{k}_2$.

Let $\phi\in\cS$ and choose $K\ge (\tilde{k}_1-d_1/2)\vee(\tilde{k}_2-d_2/2)$.
We use \eqref{L-m-1} and \eqref{K-theta-2} to obtain
\begin{align*}
&|L^\bnu\m_\jj(\sqrt{L})\phi(\xx)|
\\
&\qquad\le c(2^{j_1}+2^{j_2})^{-r}
\sum_{\kappa=0}^{r}\int_\XX|\KK_{\theta_\jj(\sqrt{L})}(\xx,\yy)|
|L^{(\nu_1+\kappa, \nu_2+r-\kappa)}\phi(\yy)| d\mu(\yy)
\\
&\qquad\le c(2^{j_1}+2^{j_2})^{-r}
\cP_{m+r,K}(\phi)\int_\XX \DD_{\one, \tilde{\kk}}(\xx,\yy)\DD^*_{\one, \tilde{\kk}-\dd/2}(\yy,\xx_0) d\mu(\yy)
\\
&\qquad\le c(2^{j_1}+2^{j_2})^{-r}
\cP_{m+r,K}(\phi)\int_\XX V(\yy, \one)^{-1} \DD^*_{\one, \tilde{\kk}-\dd/2}(\xx,\yy)\DD^*_{\one, \tilde{\kk}-\dd/2}(\yy,\xx_0) d\mu(\yy)
\\
&\qquad\le c(2^{j_1}+2^{j_2})^{-r}\cP_{m+r,K}(\phi) \DD^*_{\one, \tilde{\kk}-\dd/2}(\xx,\xx_0),
\end{align*}
where for the third inequality we used \eqref{D-D*} and
for the last we use \eqref{tech-4} and the fact that $\tilde{k}_i -d_i/2>2d_i$.
Using the above in \eqref{L-m} and summing up we obtain
\begin{align*}
|L^\bnu\m(\sqrt{L})\phi(\xx)|
&\le c\cP_{m+r,K}(\phi) \DD^*_{\one, \tilde{\kk}-\dd/2}(\xx,\xx_0)\sum_{\jj\in\bN_0^2}(2^{j_1}+2^{j_2})^{-r}
\\
&\le c\cP_{m+r,K}(\phi) \DD^*_{\one, \tilde{\kk}-\dd/2}(\xx,\xx_0),
\end{align*}
implying
\begin{align*}
\prod_{i=1,2}\big(1+\rho_i(x_i,x_{0i})\big)^{k}\big|L^\bnu \m(\sqrt{L})\phi(\xx)\big|
&\le \cP_{m+r,K}(\phi)\prod_{i=1,2}
\frac{\big(1+\rho_i(x_i,x_{0i})\big)^{k}}{\big(1+\rho_i(x_i,x_{0i})\big)^{\tilde{k}-d_i/2}}
\\
&\le c \cP_{m+r,K}(\phi),
\end{align*}
which yields \eqref{S-bound-1}.
Above we used that $\tilde{k}-d_i/2\ge k$.
The proof is complete.
\end{proof}

\subsection{Spectral multipliers acting on mixed-smoothness B and F-spaces}\label{subsec:multipl-BF}

Here we consider classes of spectral multipliers
that can be extended to bounded operators between mixed-smoothness Besov or Triebel-Lizorkin spaces.

\begin{definition}\label{def:muldipl-BF}
Assume $\btau=(\tau_1, \tau_2)\in\bR^2$, $\bkappa=(\kappa_1,\kappa_2)\in\bN^2$
and let the real-valued function $\m\in \cC^{\kappa_1+\kappa_2}(\RR^2)$ obey
$\m(\pm \lambda_1, \pm\lambda_2)=\m(\lambda_1,\lambda_2)$ for $(\lambda_1,\lambda_2)\in \bR^2$.
We say that $\m$ belongs to the class $\bM(\btau,\bkappa)$ if
for any $\bbeta=(\beta_1,\beta_2)$ with $\bbeta\le\bkappa$
there exists a constant $c_{\bbeta}>0$ such that
\begin{equation}\label{multibounds}
|\partial^\bbeta\m(\blambda)|
\le c_{\bbeta}\big(1+\lambda_1\big)^{\tau_1-\beta_1}(1+\lambda_2\big)^{\tau_2-\beta_2},
\quad \forall \; \blambda\ge \zero. %\lambda_1,\lambda_2\ge 0.
\end{equation}
\end{definition}

\begin{remark} \label{ex:ma}
$(1)$
The multiplier
$\m_{\btau}(\blambda):=\big(1+\lambda_1^2\big)^{\tau_1/2}\big(1+\lambda_2^2\big)^{\tau_2/2}$
belongs to the class $\bM(\btau,\bkappa)$ for each $\bkappa\in\bN^2$.

$(2)$
The assumption $(\ref{multibounds})$ can be replaced by a H\"{o}rmander-type one,
giving a broader class of multipliers. We do not elaborate on this issue here.

%$(\gamma)$ In $(\ref{multibounds})$ we have introduced  $\tau_i$'s in order to study the boundedness of the spectral multipliers between spaces of different smoothness (see Corollary \ref{col:lifting}).

$(3)$
We will use the notation $\bM(\bkappa):=\bM((0,0),\bkappa)$ whenever $\btau=(0,0)$.
\end{remark}

We now present our principle multiplier result.

\begin{theorem}
\label{th:multiBF}
Let $\ss, \btau\in\bR^2$, $0<q\le\infty$, $\bkappa\in\bN^2$ and $\m\in\mathbb{M}(\btau,\bkappa)$.

$(i)$ If $0<p\le\infty$ and $\bkappa>\frac{2\dd}{p}+\frac{3\dd}{2}$,
then the spectral multiplier $\m(\sqrt{L})$ is bounded from $\BB^{\ss+\btau}_{pq}$ into $\BB^\ss_{pq}$.

$(ii)$ If $0<p<\infty$ and $\bkappa>\frac{2\dd}{\min\{p,q\}}+\frac{3\dd}{2}$,
then the spectral multiplier $\m(\sqrt{L})$ is bounded from $\FF^{\ss+\btau}_{pq}$ into $\FF^\ss_{pq}$.
\end{theorem}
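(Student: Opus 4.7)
The plan is to exploit the almost-diagonal structure of the spectral decomposition: each block $\varphi_\jj(\sqrt L)\m(\sqrt L)$ couples only to neighbouring blocks $\varphi_\kk(\sqrt L) f$ with $|j_i-k_i|\le 1$, and for each such block the operator norm is controlled by a pointwise kernel bound derived from Theorem~\ref{th:boxsupport} combined with Peetre's maximal inequality.

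First, reduction to a single block. By Corollary~\ref{cor:Calderon}, $f=\sum_{\kk\in\bN_0^2}\varphi_\kk(\sqrt L)f$ in $\cS'$, hence
\[
\varphi_\jj(\sqrt L)\m(\sqrt L)f=\sum_{\kk}H_{\jj,\kk}(\sqrt L)f,\qquad H_{\jj,\kk}:=\varphi_\jj\,\m\,\varphi_\kk.
\]
The support condition \eqref{cond-11} forces $H_{\jj,\kk}\equiv 0$ whenever $|j_i-k_i|>1$ for some $i$, so at most nine terms survive (the boundary cases $j_i=0$ or $k_i=0$ require only notational adjustment). Next, to estimate each surviving block, I would introduce the rescaled symbol $G_{\jj,\kk}(\blambda):=H_{\jj,\kk}(2^{\jj}\blambda)$. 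Since $\varphi^1,\varphi^2$ are even with support in a fixed annulus, $G_{\jj,\kk}$ is even in each variable and supported in a uniformly bounded box $[-R,R]^2$. On this support $\lambda_i\sim 1$, hence $(1+2^{j_i}\lambda_i)^{\tau_i-\beta_i}\sim 2^{j_i(\tau_i-\beta_i)}$, and the Leibniz rule combined with \eqref{multibounds} gives
\[
\|G_{\jj,\kk}\|_{W^{\kappa_1+\kappa_2}_\infty}\le c\,2^{\jj\cdot\btau}.
\]
Applying Theorem~\ref{th:boxsupport} to $G_{\jj,\kk}$ with $\bdelta=2^{-\jj}$, noting that $H_{\jj,\kk}(\sqrt L)=G_{\jj,\kk}(2^{-\jj}\sqrt L)$, yields the key kernel estimate
\[
|\KK_{H_{\jj,\kk}}(\xx,\yy)|\le c\,2^{\jj\cdot\btau}\,\DD_{2^{-\jj},\bkappa}(\xx,\yy).
\]

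Second, from kernel to $L^p$ bound. Set $g_\kk:=\varphi_\kk(\sqrt L)f\in\Sigma_{2^{\kk+\one}}$ (Proposition~\ref{prop:spectral}); the goal is
\[
\|H_{\jj,\kk}(\sqrt L)g_\kk\|_p\le c\,2^{\jj\cdot\btau}\|g_\kk\|_p\qquad(|\jj-\kk|_\infty\le 1).
\]
For $p\ge 1$ this follows directly from Schur's lemma (as in Proposition~\ref{prop:bound-int-oper}) using $\bkappa>3\dd/2$ together with \eqref{D-D*} and \eqref{tech-1}. For $0<p<1$ one cannot integrate the kernel against $g_\kk$ in $L^p$ and must invoke Peetre's maximal inequality. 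Choose $r<p$ and $\bsigma>2\dd$ with $\bkappa>\bsigma/r+3\dd/2$ (possible precisely because $\bkappa>2\dd/p+3\dd/2$); then Theorem~\ref{thm:Peetre-max} applied to $g_\kk$ gives $|g_\kk(\yy)|\le c\,\DD^*_{2^{-\jj},\bsigma/r}(\xx,\yy)^{-1}\cM_r(g_\kk)(\xx)$, and inserting this into the kernel bound and integrating by \eqref{D-D*}, \eqref{tech-1} yields the pointwise estimate
\[
|H_{\jj,\kk}(\sqrt L)g_\kk(\xx)|\le c\,2^{\jj\cdot\btau}\,\cM_r(g_\kk)(\xx),
\]
after which \eqref{max} delivers the $L^p$ norm.

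Third, concluding the two parts. For (i), combining the block estimate with the $\min(1,p,q)$-triangle inequality gives
\[
2^{\jj\cdot\ss}\|\varphi_\jj(\sqrt L)\m(\sqrt L)f\|_p\le c\sum_{|\jj-\kk|_\infty\le 1}2^{\kk\cdot(\ss+\btau)}\|\varphi_\kk(\sqrt L)f\|_p;
\]
taking the $\ell^q$ quasi-norm over $\jj$ and reindexing completes the proof. For (ii), I would retain the pointwise bound (with $r<\min\{p,q\}$, which is the reason the stronger hypothesis $\bkappa>2\dd/\min\{p,q\}+3\dd/2$ is needed), multiply by $2^{\jj\cdot\ss}$, take the $\ell^q$ norm over $\jj$ inside the $L^p$ norm, and apply the Fefferman-Stein vector-valued maximal inequality \eqref{max}. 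The main obstacle is the sub-unit exponent range, where Schur's lemma fails and one must tune the parameters $r,\bsigma$ so that Peetre's inequality absorbs the kernel decay while leaving room for the (vector-valued) maximal inequality; this is precisely what forces the quantitative conditions on $\bkappa$ stated in the theorem.
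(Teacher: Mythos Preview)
Your proposal is correct and follows essentially the same approach as the paper: exploit the support structure to reduce to neighbouring blocks $|j_i-k_i|\le 1$, rescale and apply Theorem~\ref{th:boxsupport} to get the kernel bound with factor $2^{\jj\cdot\btau}$, then feed this into Peetre's maximal inequality (Theorem~\ref{thm:Peetre-max}) to obtain the pointwise domination by $\cM_r$, and conclude with Fefferman--Stein. The only cosmetic differences are that the paper works with the single rescaled symbol $\m_\kk(\blambda):=\m(2^\kk\blambda)\varphi_\kk(2^\kk\blambda)$ rather than your two-sided $H_{\jj,\kk}=\varphi_\jj\m\varphi_\kk$, and it does not split into the cases $p\ge 1$ (Schur) versus $p<1$ (Peetre): the Peetre route already covers all $0<p\le\infty$, so your Schur detour is unnecessary but harmless.
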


\begin{proof}
We will prove only part (ii) of the theorem.
The proof of part (i) is similar; we omit it.

Let $\ss, \btau\in\RR^2$, $0<q\leq \infty$, $0<p<\infty$ and
$\m\in \bM(\btau,\bkappa)$, where $\bkappa>\frac{2\dd}{\min\{p,q\}}+\frac{3\dd}{2}$.
Assume $f\in \FF^{\ss+\btau}_{pq}$ and
let $\varphi^i_0,\varphi^i, i=1,2$, be two pairs of real-valued functions
satisfying \eqref{cond-11}--\eqref{Cald-1d}.
From Corollary~\ref{cor:Calderon} we have the decomposition
\begin{equation*}
f= \sum\limits_{\jj\in\bN_0^2} \varphi_\jj(\sqrt{L})f
\quad(\text{convergence in}\;\mathcal{\cS'}),
\end{equation*}
and using Theorem~\ref{thm:mS} we get
\begin{equation}\label{eq:Fbounded1}
\m(\sqrt{L})f=\sum\limits_{\jj\in\bN_0^2} \m(\sqrt{L})\varphi_\jj(\sqrt{L})f
\quad(\text{convergence in}\;\mathcal{\cS'}).
\end{equation}
We will use (\ref{eq:Fbounded1}) to estimate $\|\m(\sqrt{L})f\|_{\FF^{\ss}_{pq}}$.
Let $\kk\in\bN_0^2$.
Using what we know about the supports of the functions $\varphi_\jj$ we obtain
\begin{equation}\label{eq:Fbounded2}
\begin{aligned}
\varphi_\kk(\sqrt{L})\m(\sqrt{L})f
&=\sum\limits_{j_2=k_2-1}^{k_2+1}\sum\limits_{j_1=k_1-1}^{k_1+1}\varphi_\kk(\sqrt{L})\m(\sqrt{L})\varphi_\jj(\sqrt{L})f
\\
&=\sum\limits_{j_2=k_2-1}^{k_2+1}\sum\limits_{j_1=k_1-1}^{k_1+1} \m_\kk(2^{-\kk}\sqrt{L})\varphi_\jj(\sqrt{L})f,
\end{aligned}
\end{equation}
where $\m_\kk(\blambda):=\m(2^\kk\blambda)\varphi_\kk(2^\kk\blambda)$ and $\varphi_{-1}\equiv0$.
Hence, for $\xx\in \XX$ we have
\begin{equation}\label{eq:Fbounded3}
\begin{aligned}
|\varphi_\kk(\sqrt{L})&\m(\sqrt{L})f(\xx)|
\le \sum\limits_{j_2=k_2-1}^{k_2+1}\sum\limits_{j_1=k_1-1}^{k_1+1}
|\m_\kk\big(2^{-\kk}\sqrt{L}\big)\varphi_\jj(\sqrt{L})f(\xx)|
\\
&\le \sum\limits_{j_2=k_2-1}^{k_2+1}\sum\limits_{j_1=k_1-1}^{k_1+1}
\int_\XX |\KK_{\m_\kk\big(2^{-\kk}\sqrt{L}\big)}(\xx,\yy)||\varphi_\jj(\sqrt{L})f(\yy)|d\mu(\yy)
\\
&=:\sum\limits_{j_2=k_2-1}^{k_2+1}\sum\limits_{j_1=k_1-1}^{k_1+1} I_{\jj,\kk}(\xx).
\end{aligned}
\end{equation}

We need to estimate the decay of the kernels of the operators $\m_\kk(2^{-\kk}\sqrt{L})$.
We claim that for any $\kk\in \bN_0^2$ and
$\bkappa=(\kappa_1,\kappa_2)\in \bR^2_+$ such that $\bkappa > 3\dd/2$
\begin{equation}\label{K-mk}
\big|\KK_{\m_\kk(2^{-\kk}\sqrt{L})}(\xx,\yy)\big|
\le c2^{\kk\cdot\btau}V(\xx,2^{-\kk})^{-1}\DD^*_{2^{-\kk},\bkappa-\frac{\dd}{2}}(\xx,\yy).
\end{equation}
To establish this estimate we will apply Theorem~\ref{th:boxsupport} and inequality \eqref{D-D*}.
Clearly, $\supp \m_\kk\subset [-2,2]^2$
and hence we only need to show that
\begin{equation}\label{partial-mk}
\|\partial^{\bbeta}\m_\kk\|_{\infty}\le c_{\bkappa} 2^{\kk\cdot\btau}
\quad\text{for}\;\; |\bbeta|\le \kappa_1+\kappa_2.
\end{equation}
Observe first that $\varphi_\kk(2^\kk \blambda)$ is one of the the four functions
$\varphi^1_0(\lambda_1)\varphi^2_0(\lambda_2)$, $\varphi^1_0(\lambda_1)\varphi^2(\lambda_2)$,
$\varphi^1(\lambda_1)\varphi^2_0(\lambda_2)$, or $\varphi^1(\lambda_1)\varphi^2(\lambda_2)$,
depending on whether  $k_i=0$ or $k_i\ge 1$, $i=1,2$.
Hence
$\|\partial^{\bbeta}(\varphi_\kk(2^\kk \blambda))\|_{\infty}\le c(\bbeta)$.
On the other hand, from the hypothesis of the theorem
$\m\in \bM(\btau,\bkappa)$ and hence $\m(\blambda)$ obeys \eqref{multibounds}
leading to the conclusion that
\begin{align*}
|\partial^\bbeta (\m(2^\kk\blambda))|
&= |\partial^\bbeta (\m(2^{k_1}\lambda_1, 2^{k_2}\lambda_2))|
= |2^{k_1\beta_1+k_2\beta_2}\partial^\bbeta \m(2^{k_1}\lambda_1, 2^{k_2}\lambda_2)|
\\
&\le c2^{k_1\beta_1+k_2\beta_2}(1+2^{k_1}\lambda_1)^{\tau_1-\beta_1}(1+2^{k_2}\lambda_2)^{\tau_2-\beta_2}
\\
&\le c2^{k_1\tau_1+k_2\tau_2}(1+\lambda_1)^{\tau_1-\beta_1}(1+\lambda_2)^{\tau_2-\beta_2}
\le c2^{k_1\tau_1+k_2\tau_2},
\end{align*}
whenever $\beta_i\le \kappa_i$, $i=1,2$, and $\blambda\in [0,2]^2$.
The above estimates along with the Libniz rule imply \eqref{partial-mk}.
Then applying Theorem~\ref{th:boxsupport} shows that estimate \eqref{K-mk} is valid.

Choose $0<r<\min \{p,q\}$ and $\bet\in\RR^2$ with $\eta_i>2d_i$ so that $\kappa_i>\eta_i/r+3d_i/2$, $i=1,2$.
Assume $\kk,\jj\in\bN_0^2$ with $|j_i-k_i|\le 1$, $i=1,2$.
From this and \eqref{K-mk} it follows that
\begin{equation*}
\begin{aligned}
\big|\KK_{\m_\kk(2^{-\kk}\sqrt{L})}(\xx,\yy)\big|
&\le c2^{\kk\cdot\btau}V(\xx,2^{-\kk})^{-1}\DD^*_{2^{-\kk},\bkappa-\frac{\dd}{2}}(\xx,\yy)
\\
&\le c2^{\jj\cdot\btau} V(\xx,2^{-\jj})^{-1}\DD^*_{2^{-\jj},\bkappa-\frac{\dd}{2}}(\xx,\yy).
\end{aligned}
\end{equation*}
We use the above to estimate $I_{\jj,\kk}(\xx)$.
We obtain
\begin{equation}\label{eq:Fbounded6}
\begin{aligned}
I_{\jj,\kk}(\xx)
&\le c2^{\jj\cdot\btau} V(\xx,2^{-\jj})^{-1}
\int_\XX \DD_{2^{-\jj},\bkappa-\frac{\dd}{2}}^*(\xx,\yy)|\varphi_\jj(\sqrt{L})f(\yy)|d\mu(\yy)
\\
&\le c2^{\jj\cdot\btau} \sup_{\yy\in \XX} |\varphi_\jj(\sqrt{L})f(\yy)|\DD^*_{2^{-\jj},\frac{\bet}{r}}(\xx,\yy)
\\
&\hspace{1in}\times V(\xx,2^{-\jj})^{-1}\int_\XX \DD^*_{2^{-\jj},\bkappa-\frac{\dd}{2}-\frac{\bet}{r}}(\xx,\yy)d\mu(\yy)
\\
&\le c2^{\jj\cdot\btau}\cM_r\big(\varphi_\jj(\sqrt{L})f\big)(\xx).
\end{aligned}
\end{equation}
Here for the last inequality we used:
(a) the maximal inequality \eqref{Peetre-max} and
the fact that $\varphi_\jj(\sqrt{L})f \in \Sigma_{2^{\jj+\one}}$ and $\eta_i>2d_i$;
and
(b) \eqref{tech-1} and the fact that $\kappa_i>3d_i/2+\eta_i/r$.

Now, combining \eqref{eq:Fbounded3}) and (\ref{eq:Fbounded6}) we obtain
\begin{equation*}
|\varphi_\kk(\sqrt{L})\m(\sqrt{L})f(\xx)|
\le c\sum_{j_2=k_2-1}^{k_2+1}\sum_{j_1=k_1-1}^{k_1+1}\cM_r\big(2^{\jj\cdot\btau}\varphi_\jj(\sqrt{L})f\big)(\xx).
\end{equation*}
In turn, this and the maximal inequality \eqref{max} lead to
\begin{align*}
\|\m(\sqrt{L})f\|_{\FF^\ss_{pq}}
&=\Big\|\Big(\sum_{\kk\in\bN_0^2} \Big(2^{\kk\cdot\ss}|\varphi_\kk(\sqrt L)\m(\sqrt{L}) f(\cdot)|\Big)^q\Big)^{1/q}\Big\|_p
\\
&\le c\Big\|\Big(\sum_{\kk\in\bN_0^2} \Big(2^{\kk\cdot\ss}
\sum_{j_2=k_2-1}^{k_2+1}\sum_{j_1=k_1-1}^{k_1+1}
\cM_r\big(2^{\jj\cdot\btau}\varphi_\jj(\sqrt{L})f\big)(\cdot)\Big)^q\Big)^{1/q}\Big\|_p
\\
&\le c\Big\|\Big(\sum_{\kk\in\bN_0^2}\sum_{j_2=k_2-1}^{k_2+1}\sum_{j_1=k_1-1}^{k_1+1}
\big[\cM_r\big(2^{\jj\cdot(\ss+\btau)}\varphi_\jj(\sqrt{L})f\big)(\cdot)\big]^q\Big)^{1/q}\Big\|_p
\\
&\le c\Big\|\Big(\sum_{\jj\in\bN_0^2}
\big[\cM_r\big(2^{\jj\cdot(\ss+\btau)}\varphi_\jj(\sqrt{L})f\big)(\cdot)\big]^q\Big)^{1/q}\Big\|_p
\\
&\le c\Big\|\Big(\sum_{\jj\in\bN_0^2}
\big(2^{\jj\cdot(\ss+\btau)}|\varphi_\jj(\sqrt{L})f(\cdot)|\big)^q\Big)^{1/q}\Big\|_p
\\
&\le c\|f\|_{\FF^{\ss+\btau}_{pq}}
\end{align*}
and the proof is complete.
\end{proof}

\subsection{Lifting property}

As we saw in Remark \ref{ex:ma} the multiplier $\m_\btau$ belongs to
the class $\bM(\btau,\bkappa)$ for each $\bkappa\in\bN^2$.
This means that we can apply Theorem~\ref{th:multiBF} to both $\m_{\btau}$ and $\m_{-\btau}$.
On the other hand the spectral multiplier rising from $\m_{\btau}$ is the tensor product $\big(I_1+L_1\big)^{\tau_1/2}\otimes\big(I_2+L_2\big)^{\tau_2/2}$.
As a consequence of Theorem~\ref{th:multiBF} we infer the following lifting property:

\begin{corollary}\label{col:lifting}
Let $\ss, \btau\in\bR^2$ and $0<q\le\infty$. Then

$(i)$
If $0<p\le\infty$, then the operator
$\big(I_1+L_1\big)^{\tau_1/2}\otimes\big(I_2+L_2\big)^{\tau_2/2}$
is bounded from $\BB^{\ss}_{pq}$ into $\BB^{\ss-\btau}_{pq}$ and $\big\|\big(I_1+L_1\big)^{\tau_1/2}\otimes\big(I_2+L_2\big)^{\tau_2/2}f\big\|_{\BB^{\ss-\btau}_{pq}}$
is an equivalent quasi-norm for $\BB^\ss_{pq}$.

$(ii)$
If $0<p<\infty$, then the operator
$\big(I_1+L_1\big)^{\tau_1/2}\otimes\big(I_2+L_2\big)^{\tau_2/2}$
is bounded from $\FF^{\ss}_{pq}$ into $\FF^{\ss-\btau}_{pq}$ and $\big\|\big(I_1+L_1\big)^{\tau_1/2}\otimes\big(I_2+L_2\big)^{\tau_2/2}f\big\|_{\FF^{\ss-\btau}_{pq}}$
is an equivalent quasi-norm for $\FF^\ss_{pq}$.
\end{corollary}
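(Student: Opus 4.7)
The plan is to deduce the corollary directly from Theorem~\ref{th:multiBF} applied twice: once with the lifting symbol $\m_\btau$ and once with its inverse $\m_{-\btau}$. The key observation, already noted in Remark~\ref{ex:ma}(1), is that for every $\btau \in \bR^2$ the function
\begin{equation*}
\m_\btau(\blambda) = (1+\lambda_1^2)^{\tau_1/2}(1+\lambda_2^2)^{\tau_2/2}
\end{equation*}
satisfies $\m_\btau \in \bM(\btau, \bkappa)$ for \emph{every} $\bkappa \in \bN^2$. This is a straightforward verification using the Leibniz rule and the one-variable estimate $|\partial_\lambda^\beta (1+\lambda^2)^{\tau/2}| \le c_\beta (1+\lambda)^{\tau - \beta}$, together with the fact that $\m_\btau$ is even in each variable and smooth. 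Consequently, given any choice of $\bkappa$ required by Theorem~\ref{th:multiBF}, the hypotheses are automatically met.

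First, I would identify the spectral multiplier with the operator in the statement. By the definition of the two-parameter functional calculus in Section~\ref{sec:func-calc} (in particular the tensor-product construction in Subsection~\ref{subsec:tensor-product}), the spectral multiplier $\m_\btau(\sqrt{L})$ coincides with the tensor product $(I_1+L_1)^{\tau_1/2} \otimes (I_2+L_2)^{\tau_2/2}$, which is well-defined on $\cS'$ by Theorem~\ref{thm:mS} (since $\m_\btau$ is admissible in the sense of Definition~\ref{def:mS}, being a tensor product of smooth symmetric functions of polynomial growth).

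Next, I would apply Theorem~\ref{th:multiBF}. For part~(i) of the corollary: choose $\bkappa \in \bN^2$ with $\bkappa > (2/p)\dd + (3/2)\dd$ (with $p$ interpreted as $\min\{p,q\}$ in the F-space case for part~(ii)). Applied with the smoothness parameter $\ss - \btau$ in place of $\ss$, Theorem~\ref{th:multiBF} yields that $\m_\btau(\sqrt{L}) : \BB^\ss_{pq} \to \BB^{\ss-\btau}_{pq}$ is bounded, i.e.
\begin{equation*}
\big\|(I_1+L_1)^{\tau_1/2}\otimes (I_2+L_2)^{\tau_2/2} f\big\|_{\BB^{\ss-\btau}_{pq}} \le c \|f\|_{\BB^\ss_{pq}},
\end{equation*}
and likewise for the F-spaces in part~(ii).

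For the equivalence of quasi-norms, I would apply the same theorem to $\m_{-\btau} \in \bM(-\btau, \bkappa)$ with smoothness $\ss$, which gives $\m_{-\btau}(\sqrt{L}) : \BB^{\ss-\btau}_{pq} \to \BB^{\ss}_{pq}$ boundedly. Since $\m_\btau \cdot \m_{-\btau} \equiv 1$, Theorem~\ref{thm:F-G} (extended routinely from $L^2$ to $\cS'$ via the construction of Section~\ref{subsec:multiplier-S}) implies $\m_{-\btau}(\sqrt L)\,\m_\btau(\sqrt L) = I$ on $\cS'$, so the two operators are mutual inverses. Combining the two bounds yields $\|f\|_{\BB^\ss_{pq}} \sim \|\m_\btau(\sqrt L) f\|_{\BB^{\ss-\btau}_{pq}}$, and analogously in the F-space case. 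The only mildly delicate point — which is really the sole obstacle — is the verification that $\m_{-\btau}(\sqrt L) \m_\btau(\sqrt L) = I$ on all of $\cS'$ (not merely on $L^2$); this is handled by the decomposition $f = \sum_\jj \varphi_\jj(\sqrt L) f$ of Corollary~\ref{cor:Calderon}, on each dyadic block $\varphi_\jj(\sqrt L) f$ the composition reduces to a compactly supported functional calculus identity via Theorem~\ref{thm:F-G}, and the sums converge in $\cS'$ by Theorem~\ref{thm:mS}.
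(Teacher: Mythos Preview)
Your proposal is correct and follows essentially the same approach as the paper: apply Theorem~\ref{th:multiBF} to both $\m_\btau$ and $\m_{-\btau}$, using Remark~\ref{ex:ma}(1) to guarantee that $\m_{\pm\btau}\in\bM(\pm\btau,\bkappa)$ for every $\bkappa$, and identify the multiplier with the tensor product operator. If anything you are more careful than the paper, which leaves the verification that $\m_{-\btau}(\sqrt L)\m_\btau(\sqrt L)=I$ on $\cS'$ implicit.
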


This corollary is in the spirit of the classical smoothness reduction on $\bR^d$
by using as multipliers $\m_{\btau}(\bxi):=\big(1+|\bxi|^2\big)^{\btau/2}$, $\btau\in\bR$.

\subsection{Spectral multipliers on non-classical mixed-smoothness B-F-spaces}

It is natural to question whether Theorem~\ref{th:multiBF} holds for the $\tBB$ and $\tFF$ spaces.
We next establish the following result:

\begin{theorem}\label{th:multitBtF}
Let $\ss\in\bR^2$, $0<q\le\infty$, $\bkappa\in\bN^2$ and $\m\in\bM(\bkappa)$.

$(i)$ If $0<p\le\infty$ and $\kappa_i>|s_i|+\frac{2d_i}{p}+\frac{3d_i}{2}, i=1,2$,
then the spectral multiplier $\m(\sqrt{L})$ is bounded on $\tBB^{\ss}_{pq}$.

$(ii)$ If $0<p<\infty$ and $\kappa_i>|s_i|+\frac{2d_i}{\min(p,q)}+\frac{3d_i}{2}$,
then the spectral multiplier $\m(\sqrt{L})$ is bounded on $\tFF^\ss_{pq}$.
\end{theorem}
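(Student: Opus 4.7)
The plan is to follow the same overall strategy as in the proof of Theorem~\ref{th:multiBF}, but to account for the weighted structure of the $\tBB$ and $\tFF$ norms by inserting the weights $V(\cdot,2^{-\jj})^{-\ss/\dd}$ inside the kernel estimates via the inequality \eqref{V-gamma-xy}. I will do the $\tFF$ case in detail; the $\tBB$ case is parallel with the Fefferman--Stein inequality replaced by the single-function maximal inequality.

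First, fix functions $\varphi^i_0,\varphi^i\in\cC^\infty(\RR)$, $i=1,2$, satisfying \eqref{cond-11}--\eqref{Cald-1d}. For $f\in\tFF^\ss_{pq}$ apply Corollary~\ref{cor:Calderon} to write $f=\sum_{\jj\in\bN_0^2}\varphi_\jj(\sqrt L)f$ in $\cS'$ and, using Theorem~\ref{thm:mS} (with $\m\in\bM(\bkappa)\subset\bM(\zero,\bkappa)$ admissible in the sense of Definition~\ref{def:mS}), obtain $\m(\sqrt L)f=\sum_\jj \m(\sqrt L)\varphi_\jj(\sqrt L)f$. The support properties of $\varphi_\kk$ then reduce $\varphi_\kk(\sqrt L)\m(\sqrt L)f$ to a finite sum over $|j_i-k_i|\le 1$ of operators $\m_\kk(2^{-\kk}\sqrt L)\varphi_\jj(\sqrt L)f$ with $\m_\kk(\blambda):=\m(2^{\kk}\blambda)\varphi_\kk(2^{\kk}\blambda)$, exactly as in \eqref{eq:Fbounded2}. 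Since $\m\in\bM(\bkappa)$ gives $\|\partial^{\bbeta}\m_\kk\|_\infty\le c$ for $|\bbeta|\le \kappa_1+\kappa_2$, Theorem~\ref{th:boxsupport} together with \eqref{D-D*} yields the kernel estimate
\begin{equation*}
\big|\KK_{\m_\kk(2^{-\kk}\sqrt L)}(\xx,\yy)\big|\le c\,V(\xx,2^{-\kk})^{-1}\DD^*_{2^{-\kk},\bkappa-\dd/2}(\xx,\yy),
\end{equation*}
with the constant independent of $\kk$ (this is exactly \eqref{K-mk} with $\btau=\zero$).

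The key new step, which distinguishes this proof from that of Theorem~\ref{th:multiBF}, is to bring the weight $V(\yy,2^{-\jj})^{-\ss/\dd}$ inside the integral. Writing $I_{\jj,\kk}(\xx):=\int_\XX|\KK_{\m_\kk(2^{-\kk}\sqrt L)}(\xx,\yy)|\,|\varphi_\jj(\sqrt L)f(\yy)|\,d\mu(\yy)$, multiplying by $V(\xx,2^{-\jj})^{-\ss/\dd}$ and using \eqref{V-gamma-xy} with $\bgamma=-\ss/\dd$ (noting $d_i|s_i/d_i|=|s_i|$) gives
\begin{equation*}
V(\xx,2^{-\jj})^{-\ss/\dd}I_{\jj,\kk}(\xx)\le c\int_\XX \frac{\DD^*_{2^{-\jj},\bkappa-\dd/2-|\ss|}(\xx,\yy)}{V(\xx,2^{-\jj})}V(\yy,2^{-\jj})^{-\ss/\dd}|\varphi_\jj(\sqrt L)f(\yy)|\,d\mu(\yy),
\end{equation*}
where $|\ss|:=(|s_1|,|s_2|)$ and we have used $|j_i-k_i|\le 1$ to pass from $2^{-\kk}$ to $2^{-\jj}$. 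Since $\varphi_\jj(\sqrt L)f\in\Sigma_{2^{\jj+\one}}$ by Proposition~\ref{prop:spectral}, I can now choose $0<r<\min(p,q)$ and $\bet>2\dd$ so that $\bkappa-\dd/2-|\ss|-\bet/r>\dd$; this is possible precisely because $\kappa_i>|s_i|+\tfrac{3d_i}{2}+\tfrac{2d_i}{\min(p,q)}$. Splitting the kernel as $\DD^*_{2^{-\jj},\bkappa-\dd/2-|\ss|}=\DD^*_{2^{-\jj},\bet/r}\cdot\DD^*_{2^{-\jj},\bkappa-\dd/2-|\ss|-\bet/r}$, taking the sup in the first factor out of the integral, bounding it via the Peetre maximal inequality (Theorem~\ref{thm:Peetre-max} with $\bgamma=-\ss/\dd$), and estimating the remaining integral by \eqref{tech-1}, I obtain
\begin{equation*}
V(\xx,2^{-\jj})^{-\ss/\dd}I_{\jj,\kk}(\xx)\le c\,\cM_r\!\left(V(\cdot,2^{-\jj})^{-\ss/\dd}\varphi_\jj(\sqrt L)f\right)(\xx).
\end{equation*}

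The final step is routine. Using that $V(\xx,2^{-\kk})^{-\ss/\dd}\sim V(\xx,2^{-\jj})^{-\ss/\dd}$ when $|j_i-k_i|\le 1$, summing the finite number of terms in $\jj$ around $\kk$, taking the $\ell^q$-norm over $\kk$, and applying the Fefferman--Stein vector-valued maximal inequality \eqref{max} (legitimate since $r<\min(p,q)$) yields
\begin{equation*}
\|\m(\sqrt L)f\|_{\tFF^\ss_{pq}}\le c\,\|f\|_{\tFF^\ss_{pq}}.
\end{equation*}
For part $(i)$ on $\tBB^\ss_{pq}$, the same chain of estimates applies, but one instead takes the $L^p$-norm in $\xx$ first (using the scalar maximal inequality, which requires only $r<p$) and then the $\ell^q$-norm in $\kk$. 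The main technical subtlety I expect is verifying that the power $|\ss|$ appearing in the index shift matches the hypothesis on $\bkappa$ exactly, and managing the interplay between the Peetre-maximal localization exponent $\bet/r$ and the integrability exponent needed in \eqref{tech-1}; once these are balanced by the chosen $r$ close to (but below) $\min(p,q)$ the argument goes through uniformly.
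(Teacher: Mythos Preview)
Your proposal is correct and follows essentially the same approach as the paper: you reduce to the proof of Theorem~\ref{th:multiBF}, insert the weight $V(\cdot,2^{-\jj})^{-\ss/\dd}$ by means of \eqref{V-gamma-xy} at the cost of $|\ss|=(|s_1|,|s_2|)$ in the decay exponent of $\DD^*$, and then balance $\bet/r$ and the integrability from \eqref{tech-1} exactly as the paper does (the paper denotes your $|\ss|$ by $\tilde{\ss}$). The only cosmetic difference is that the paper carries the factor $V(\xx,2^{-\kk})^{-\ss/\dd}$ and passes to $V(\yy,2^{-\jj})^{-\ss/\dd}$ in one step, whereas you first swap $\kk$ for $\jj$ and then $\xx$ for $\yy$; both are equivalent since $|j_i-k_i|\le 1$.
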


\begin{proof}
This proof is quite similar to the proof of Theorem~\ref{th:multiBF}.
So, we will use the notation and elements of the proof of Theorem~\ref{th:multiBF} and only indicate the differences.
Let $\kk\in\bN_0^2$.
Just as in \eqref{eq:Fbounded3} we get
\begin{equation}\label{eq:tB1}
V(\xx,2^{-\kk})^{-\ss/\dd}\big|\varphi_\kk(\sqrt{L})\m(\sqrt{L})f(\xx)\big|
\le \sum_{j_1=k_1-1}^{k_1+1}\sum_{j_2=k_2-1}^{k_2+1}V(\xx,2^{-\kk})^{-\ss/\dd}I_{\jj,\kk}(\xx).
\end{equation}
By \eqref{V-gamma-xy} and since $j_i\sim k_i$ we have
\begin{equation}\label{eq:tB2}
V(\xx,2^{-\kk})^{-\ss/\dd}\le c\DD^*_{2^\jj,\tilde{\ss}}(\xx,\yy)^{-1}V(\yy,2^{-\jj})^{-\ss/\dd},
\end{equation}
where $\tilde{\ss}:=(|s_1|,|s_2|)$.

Just as in \eqref{eq:Fbounded6} (with $\btau=(0,0)$)
using the maximal inequality \eqref{Peetre-max},
\eqref{eq:tB1}, \eqref{eq:tB2},
and estimate \eqref{tech-1},
which holds due to the assumption $\kappa_i>|s_i|+\frac{\eta_i}{r}+\frac{3d_i}{2}$,
we obtain
\begin{align*}
&V(\xx,2^{-\kk})^{-\ss/\dd}I_{\jj,\kk}(\xx)
\\
&\qquad \le cV(\xx,2^{-\jj})^{-1}
\int_\XX V(\yy,2^{-\jj})^{-\ss/\dd}\DD^*_{2^\jj,\bkappa-\tilde{\ss}-\frac{\dd}{2}}(\xx,\yy)\big|\varphi_\jj(\sqrt{L})f(\yy)\big|d\mu(\yy)
\\
&\qquad \le c\sup_{\yy\in \XX}V(\yy,2^{-\jj})^{-\ss/\dd}\big|\varphi_\jj(\sqrt{L})f(\yy)\big|\DD^*_{2^\jj,\frac{\bet}{r}}(\xx,\yy)
\\
&\hspace{1.5in}\times V(\xx,2^{-\jj})^{-1}\int_\XX \DD^*_{2^\jj,\bkappa-\tilde{\ss}-\frac{\bet}{r}-\frac{\dd}{2}}(\xx,\yy)d\mu(\yy)
\\
&\qquad \le c\cM_r\Big(V(\cdot,2^{-\jj})^{-\ss/\dd}\big|\varphi_\jj(\sqrt{L})f(\cdot)\big|\Big)(\xx).
\end{align*}
The rest of  the proof is identical to the one of Theorem \ref{th:multiBF}; we skip further details.
\end{proof}

%\begin{remark}
%$(1)$
%Under the assumption that the non-collapsing condition \eqref{non-collapsing} holds true for the coordinate spaces $\XX_i$,
%we can extend Theorem~\ref{th:multiBF} to all multipliers $\m\in\bM(\btau,\bkappa)$,
%for $\btau=(\tau_1,\tau_2)$, $\tau_i\le 0$ $($not just for $\tau_i=0$$)$ as in Theorem~\ref{th:multitBtF}).

%$(2)$
%In the compact case, the non-collapsing condition is always true, so there is nothing more to assume.

%$(3)$
%Nonclassical B and F-spaces have been also studied in \cite{GN,GN2},
%where some new classes of operators of multiplier-type were introduced.
%A multi-variable version of these results could be established
%in our product set-up as well. We will not elaborate on this issue here.
%\end{remark}

\section{Ordinary Besov and Triebel-Lizorkin spaces on product domains}\label{sec:reg-B-F-spaces}

In this section we develop the basics of ordinary Besov and Triebel-Lizorkin spaces (B and F-spaces)
on the product domain $\XX=\XX_1\times \XX_2$ associated with the operators $L_1, L_2$.
As in the case of mixed-smoothness B and F-spaces, developed in Section~\ref{sec:B-F-spaces},
we will introduce {\em classical and nonclassical} ordinary B and F-spaces,
which reflects the possible anisotropic geometries of the spaces $\XX_1, \XX_2$.
The ordinary Besov and Triebel-Lizorkin spaces will be denoted by
$B_{pq}^s$, $\tB_{pq}^s$, $F_{pq}^s$, and $\tF_{pq}^s$
and developed with full set of parameters $s,p,q$.
Note that here the smoothness parameter $s\in \RR$, i.e. $s$ is a scalar,
while for the B and F-spaces with dominating mixed smoothness the smoothness parameter $\ss$ is a vector,
i.e. $\ss\in \RR^2$.
The theory of Besov and Triebel-Lizorkin spaces on product domains is grossly underdeveloped
because B and F-spaces on domains other than $\RR^n$ are much more recent vintage.
We refer the reader to \cite{IPX} for examples of ordinary B and F-spaces on specific product domains
such as $[-1,1]^d$ associated with the Jacobi operator
or $B^{d_1}\times [-1,1]^{d_2}$ associated with operators with polynomial eigenfunctions
on the unit ball $B^{d_1}$ in $\RR^{d_1}$ and $[-1,1]^{d_2}$.
Ordinary B and F-spaces on products of other domains such as the sphere, simplex, torus, and $\RR^d$
associated with operators are also discussed in \cite{IPX}.
Our development here will cover all these specific examples of ordinary B and F-spaces
as well as other examples that involve e.g. Riemannian manifolds, Lee groups and more general Dirichlet spaces,
see Section~\ref{sec:examples} below.

The theory of ordinary B and F-spaces on product domains is somewhat simpler
than the theory of the mixed-smoothness B and F-spaces, developed in Section~\ref{sec:B-F-spaces}.
The proofs of all results that we will present here either follow with minor modifications
from the proofs of the corresponding results for mixed-smoothness B and F-spaces
or can be carried out along the lines of the proofs in Section~\ref{sec:B-F-spaces}.
Therefore, we will put the emphasis only on the differences between the two theories
and omit the routine proofs.
Overall our aim here is to exhibit the main differences between
mixed-smoothness B and F-spaces and ordinary B and F-spaces on product domains
associated with operators and put in perspective our development from the previous sections.

\smallskip

\noindent
{\bf Properties of product spaces and notation.}
Here we will be operating in the setting described in \S\ref{sec:genset}.
In particular, we consider the product space $\XX=\XX_1\times \XX_2$
equipped with the metric
\begin{equation}\label{metric-r}
\rho(\xx,\yy):=\max\big\{\rho_1(x_1,y_1),\rho_2(x_2,y_2)\big)\},
\quad\xx,\;\yy\in \XX,
\end{equation}
and the product measure $\mu:=\mu_1\times\mu_2$.

The ball $B(\xx,r) \subset \XX$ centered at $\xx=(x_1,x_2)\in \XX$ of radius $r>0$ is obviously given by
\begin{equation}\label{ball-r}
B(\xx,r)=B_1(x_1,r)\times B_2(x_2,r)
\end{equation}
and
\begin{equation}\label{ballvolume-r}
V(\xx,r):=\mu(B(\xx,r))=\mu_1(B_1(x_1,r))\mu_2(B_2(x_2,r)),
\end{equation}
where $B_i(x_i,r)$ is the respective ball in $\XX_i$, $i=1,2$.
From \eqref{eq:doubling-0} it follows that the metric measure space $(\XX,\rho,\mu)$
has the doubling property and \eqref{doubling-d} yields
\begin{equation}\label{doubl-r}
V(\xx,\lambda\delta) \le c\lambda^d V(\xx,\delta)),\quad \delta\ge 0, \;\lambda\ge 1,
\end{equation}
where $d:= d_1+d_2$.

We define, for $\xx=(x_1, x_2)$, $\yy=(y_1, y_2)$ in $\XX$ and $\delta, \sigma >0$,
\begin{equation}\label{def-D}
\DD_{\delta,\sigma}(\xx,\yy)
%:= \frac{D^*_{\delta,\sigma}(\xx,\yy)}{[V(\xx, \delta)V(\yy, \delta)]^{1/2}}
:= \frac{\prod_{i=1,2}\big(1+\delta^{-1}\rho_i (x_i,y_i)\big)^{-\sigma}}{[V(\xx, \delta)V(\yy, \delta)]^{1/2}}.
\end{equation}

\begin{remark}\label{rem:regular}
Observe that unlike in previous sections in this section the parameters $d$, $\delta$, $\sigma$,
and later $\gamma$, $\tau$, $j$, etc. are scalars, which reflects the nature of
the ordinary Besov and Triebel-Lizorkin spaces that will be introduced below
as a single parameter smoothness spaces.
Also the meaning of $V(\xx, \delta)^\gamma$ is somewhat different compared to
$V(\xx, \bdelta)^\bgamma$, compare \eqref{ballvolume-r} to \eqref{V-gamma-xy}; %\eqref{V-V12};
compare $\DD_{\delta,\sigma}(\xx,\yy)$ from \eqref{def-D}
to $\DD_{\bdelta,\bsigma}(\xx,\yy)$ from \eqref{kernelsD} as well.
\end{remark}

\noindent
{\bf Integral operators.} %\label{subsec:int-oper}
Operators of the form
$F(\delta\sqrt{L}):= F(\delta\sqrt{L_1}, \delta\sqrt{L_2})$,
where $F(\lambda_1, \lambda_2)$ is smooth and satisfies the condition
$F(\pm \lambda_1, \pm\lambda_2)=F(\lambda_1,\lambda_2)$,
will play an important role.
The next theorem is an immediate consequence of Theorem~\ref{thm:gen-local}.

\begin{theorem}\label{thm:gen-local-r}
Let $F\in \cC^{2k}(\bR^2)$, $k>3(d_1\vee d_2)/2$, $k\ge 2$, 
be real-valued and
satisfy the conditions:
$F(\pm \lambda_1, \pm\lambda_2)=F(\lambda_1,\lambda_2)$ for $(\lambda_1,\lambda_2)\in \bR^2$
and
\begin{equation*}
|\partial^{\bbeta}F(\blambda)|\le C_k(1+|\blambda|)^{-r},\quad r>d+2k,\; |\bbeta| \le 2k.
\end{equation*}
Then $F(\delta \sqrt L):=F(\delta\sqrt{L_1}, \delta\sqrt{L_2})$, $\delta>0$, is an integral operator
whose kernel $\KK_{F(\delta \sqrt L)}(\xx,\yy)$ %, $(x,y)\in \XX$,
satisfies
\begin{equation}\label{K-local-r}
|\KK_{F(\delta \sqrt L)}(\xx,\yy)|\le c_k \DD_{\delta,k}(\xx, \yy) \quad \text{and}
\end{equation}
\begin{equation}\label{K-lip-r}
|\KK_{F(\delta \sqrt L)}(\xx,\yy)- \KK_{F(\delta \sqrt L)}(\xx,\yy')|
\le c_k' \Big(\frac{\rho(\yy, \yy')}{\delta}\Big)^{\alpha_1\wedge \alpha_2}\DD_{\delta,k}(\xx,\yy)
\end{equation}
if $\rho(\yy,\yy')\le \delta$, where
the constants $c_k,c_{k'}$ depend on $k$, $d$, and $C_k$.
\end{theorem}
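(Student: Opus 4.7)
The plan is to derive Theorem~\ref{thm:gen-local-r} as a direct specialization of the mixed-smoothness result Theorem~\ref{thm:gen-local}. Specifically, I will apply Theorem~\ref{thm:gen-local} with the vector parameters
\[
\bdelta=(\delta,\delta), \qquad \kk=(k,k),
\]
so that the isotropic scalar parameters $(\delta,k)$ of the ordinary setting correspond to vector parameters with equal coordinates in the mixed-smoothness setting.

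First I would verify that the hypotheses of Theorem~\ref{thm:gen-local} are satisfied. The symmetry condition $F(\pm\lambda_1,\pm\lambda_2)=F(\lambda_1,\lambda_2)$ and real-valuedness are given outright. The smoothness requirement $F\in\cC^{k_1+k_2}$ becomes $F\in\cC^{2k}$, which matches the assumption $F\in\cC^{2k}$ here. The structural restrictions $k_i>3d_i/2$ and $k_1+k_2\ge 4$ become, with $k_1=k_2=k$, the two conditions $k>3(d_1\vee d_2)/2$ and $2k\ge 4$, both of which are in our hypothesis. Finally, the decay condition $|\partial^{\bbeta}F(\blambda)|\le C_\kk(1+|\blambda|)^{-r}$ with $r>d_1+d_2+k_1+k_2=d+2k$ is exactly what we assume.

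Next I would transfer the conclusion. With $\bdelta=(\delta,\delta)$ one has $V(\xx,\bdelta)=V_1(x_1,\delta)V_2(x_2,\delta)=V(\xx,\delta)$, and with $\kk=(k,k)$ the quantity $\DD_{\bdelta,\kk}(\xx,\yy)$ from \eqref{kernelsD} reduces to
\[
\DD_{\bdelta,\kk}(\xx,\yy)=\frac{\prod_{i=1,2}(1+\delta^{-1}\rho_i(x_i,y_i))^{-k}}{[V(\xx,\delta)V(\yy,\delta)]^{1/2}}=\DD_{\delta,k}(\xx,\yy),
\]
matching the definition \eqref{def-D}. Thus \eqref{local-ker-nu}, or rather \eqref{K-local} in Theorem~\ref{thm:gen-local}, immediately yields \eqref{K-local-r}.

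For the Hölder bound \eqref{K-lip-r}, Theorem~\ref{thm:gen-local} gives
\[
|\KK_{F(\delta\sqrt L)}(\xx,\yy)-\KK_{F(\delta\sqrt L)}(\xx,\yy')|\le c_{\kk'}\sum_{i=1,2}\Big(\frac{\rho_i(y_i,y_i')}{\delta}\Big)^{\alpha_i}\DD_{\delta,k}(\xx,\yy)
\]
whenever $\rho_i(y_i,y_i')\le\delta$ for $i=1,2$, that is, when $\rho(\yy,\yy')\le\delta$. Since $\rho_i(y_i,y_i')\le\rho(\yy,\yy')\le\delta$, each ratio $\rho_i(y_i,y_i')/\delta$ lies in $[0,1]$, so raising to the larger exponent only decreases the value, giving $(\rho_i(y_i,y_i')/\delta)^{\alpha_i}\le(\rho(\yy,\yy')/\delta)^{\alpha_1\wedge\alpha_2}$. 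Summing the two terms produces a factor at most $2(\rho(\yy,\yy')/\delta)^{\alpha_1\wedge\alpha_2}$, which is absorbed into the constant $c_k'$, yielding \eqref{K-lip-r}. Since every step is a straightforward specialization, no genuine obstacle arises; the only thing to check carefully is the bookkeeping of how the vector quantities $(\bdelta,\kk,\bsigma)$ collapse to their scalar counterparts and how the two-term Hölder sum collapses to a single term with exponent $\alpha_1\wedge\alpha_2$.
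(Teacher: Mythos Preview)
Your proposal is correct and is exactly the approach the paper takes: the paper states that Theorem~\ref{thm:gen-local-r} is an immediate consequence of Theorem~\ref{thm:gen-local}, and you have simply spelled out the details of this specialization with $\bdelta=(\delta,\delta)$, $\kk=(k,k)$. The bookkeeping you carry out---verifying the hypotheses match and collapsing the two-term H\"older sum to a single term with exponent $\alpha_1\wedge\alpha_2$---is precisely what is needed.
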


This theorem makes it possible to develop the theory of ordinary Besov and Triebel-Lizorkin spaces
in the setting of this article.

\smallskip

\noindent
{\bf Single parameter functional calculus.}
Clearly, (see Proposition~\ref{prop:L1xL2}) the operator
$L_1\otimes I_2+I_1\otimes L_2$ is essentially self-adjoint and non-negative.
We denote its closure again by $L_1\otimes I_2+I_1\otimes L_2$.
Denote by $\sE_\lambda$, $\lambda\ge 0$, the spectral resolution associated to
$L_1\otimes I_2+I_1\otimes L_2$ on $L^2(\XX, d\mu)$.
Then for any bounded measurable function $F$ on $[0, \infty)$ the operator $F(L_1\otimes I_2+I_1\otimes L_2)$
is defined by
\begin{equation*}
F(L_1\otimes I_2+I_1\otimes L_2)=\int_0^\infty F(\lambda) d\sE_\lambda.
\end{equation*}
The semigroup associated to $L_1\otimes I_2+I_1\otimes L_2$ takes the form
$$
e^{-t(L_1\otimes I_2+I_1\otimes L_2)}= e^{-tL_1}\otimes e^{-tL_2}, \quad t> 0,
$$
and using the assumptions from \S\ref{sec:genset} it follows that
$e^{-t(L_1\otimes I_2+I_1\otimes L_2)}$ has Gaussian localization, H\"{o}lder continuity,
and the Markov property, see \S\ref{sec:genset} and Proposition~\ref{prop:Gauss-local}.
Also, as was alluded to above the space $(\XX, \rho, \mu)$, $\XX=\XX_1\times \XX_2$, has the doubling property.
Therefore, a smooth (one parameter) functional calculus
as in \S\ref{subsec:local-kern} or in \cite[Section 3]{KP} can be developed in full.

We next make the connection between the single parameter functional calculus
induced by $L_1\otimes I_2+I_1\otimes L_2$
and the two-parameter functional calculus associated to $L_1,L_2$ from Section~\ref{sec:func-calc}.
As above assume that $F$ is a bounded measurable function on $[0, \infty)$.
Consider the function $G(\lambda_1, \lambda_2):= F(\lambda_1^2+\lambda_2^2)$ on $[0, \infty)^2$.
Then we have
\begin{equation*}
F(L_1\otimes I_2+I_1\otimes L_2)=G(\sqrt{L_1}, \sqrt{L_2}).
\end{equation*}
As will be shown in \S\ref{subsec:B-F-spaces-r} below this leads to the conclusion that
the ordinary Besov and Triebel-Lizorkin spaces on the product domain $\XX_1\times \XX_2$
can be developed based on the functional calculus associated to $L_1\otimes I_2+I_1\otimes L_2$.

\smallskip

\noindent
{\bf Maximal operators.} %\label{subsec:max}
In the current setting the maximal operator $\cM_r$, $r>0$, is defined as usual by
\begin{equation}\label{Max1-r}
\cM_r f(\xx):= \sup_{B \ni \xx} \Big(\frac{1}{\mu(B)}\int_{B}|f(\yy)|^r d\mu(\yy)\Big)^{1/r},
\quad \xx\in \XX,
\end{equation}
where the $\sup$ is taken over all balls $B\subset \XX$
such that $\xx\in B$.

With this notation the Fefferman-Stein vector-valued maximal inequality takes
the usual form \eqref{max}.

\smallskip

\noindent
{\bf Test functions and distributions} %\label{subsec:test-dist-r}
Naturally, in the current setting the class of test functions
$\cS:= \cS(L_1, L_2)$ is defined as the set of all functions
\begin{equation*}
\phi\in \cap_{\bnu\in\bN_0^2} D(L_1^{\nu_1}\otimes L_2^{\nu_2}),
\end{equation*}
where $D(L_1^{\nu_1}\otimes L_2^{\nu_2})$ is the domain of the operator $L_1^{\nu_1}\otimes L_2^{\nu_2}$, such that
\begin{equation}\label{norm-S-r}
\cP_{m,k}(\phi)
:= \sup_{\xx\in \XX}\big(1+\rho(\xx,\xx_{0})\big)^{k} \max_{0\le \nu_i\le m} \big|(L_1^{\nu_1}\otimes L_2^{\nu_2})\phi(\xx)\big| <\infty,
\quad \forall\, k,m\ge 0.
\end{equation}
As before here the point $\xx_0\in \XX$ is selected arbitrarily and fixed once and for all.
It~is easy to see that this class of test functions coincides with the class $\cS$
from \S\ref{subsec:test-dist} and has the same topology.
Also, as in \S\ref{subsec:test-dist} the space $\cS'=\cS'(L_1, L_2)$ of distributions associated to $L_1, L_2$
is defined as the set of all
continuous linear functionals on $\cS=\cS(L_1, L_2)$.

\smallskip

\noindent
{\bf Spectral spaces.}  %\label{subsec:spectr-sp}
The spectral space $\Sigma_t$, $t>0$, is defined by
\begin{equation}\label{spec-sp-r}
\Sigma_t :=\big\{g\in \cS': \theta (\sqrt L_1, \sqrt{L_2})g = g, \text{ for all } \theta\in \cA_t\big\},
\quad \cS':=\cS'(L_1, L_2),
\end{equation}
where
$$
\cA_t:=\big\{\theta\in \cC^\infty_0(\bR^2): \theta(\pm\lambda_1,\pm\lambda_2)=\theta(\lambda_1,\lambda_2)
\text{ and } \theta\equiv 1 \text{ on } [0, t]^2\big\}.
$$
Observe that $\Sigma_t=\Sigma_{(t,t)}$, where $\Sigma_{(t,t)}$ is the spectral space
introduced in Definition~\ref{def:spec-spaces}.

The Peetre maximal inequality takes this form:
If $\gamma >0$, $\tau > 2(d_1\vee d_2)$, and $\bnu\in \bN_0^2$,
then there exists a constant $c>0$
such that
for any $g\in\Sigma_t$ and $t\ge 1$
\begin{equation}
\begin{aligned}\label{Peetre-max-r}
&t^{-2|\bnu|}\sup_{\yy\in \XX}\frac{V(\yy, t^{-1})^{\gamma}|(L_1^{\nu_1}\otimes L_2^{\nu_2}) g(\yy)|}
{\prod_{i=1,2}\big(1+t\rho(x_i, y_i)\big)^{\tau/r}}
\\
&\quad\le c\sup_{\yy\in \XX}\frac{V(\yy,t^{-1})^{\gamma}|g(\yy)|}
{\prod_{i=1,2}\big(1+t\rho(x_i, y_i)\big)^{\tau/r}}
\le c\cM_{r}\big(V(\cdot,t^{-1})^{\gamma}g(\cdot)\big)(\xx), \;\; \xx\in \XX.
\end{aligned}
\end{equation}

\smallskip

The Nikolski type inequality takes the form:
If $0<p\le q\le\infty$, $\nu\in \bN_0$, and $\gamma\in\bR$,
then there exists a constant $c>0$ such that
for any $g\in\Sigma_t$, $t\ge 1$, and $\bnu\in \bN_0^2$, we have
\begin{equation}\label{Band-1-r}
\big\|V(\cdot,t^{-1})^{\gamma} (L^{\nu_1}\otimes L^{\nu_2})g(\cdot)\big\|_q
\le c t^{2|\bnu|}\big\|V(\cdot,t^{-1})^{\gamma+\frac{1}{q}-\frac{1}{p}} g(\cdot)\big\|_p.
\end{equation}

Inequalities \eqref{Peetre-max-r}, \eqref{Band-1-r} follow readily by Theorems~\ref{thm:Peetre-max}, \ref{thm:Nik}.
%The proof of inequality \eqref{Peetre-max-r} is carried out just as the proof of Theorems~\ref{thm:Peetre-max},
%while \eqref{Band-1-r} follows readily by Theorems~\ref{thm:Nik}.

\subsection{Definition of ordinary Besov and Triebel-Lizorkin spaces}\label{subsec:B-F-spaces-r}

We now introduce ordinary Besov and Triebel-Lizorkin spaces on $\XX=\XX_1\times \XX_2$
associated with the operators $L_1, L_2$.

%As before for possible anisotropic geometries on $\XX_1, \XX_2$ 
We introduce two types of ordinary B and F-spaces:
(i) Classical ordinary B-spaces  $B_{pq}^{s}= B_{pq}^{s}(L_1, L_2)$
and F-spaces $F_{pq}^{s}=F_{pq}^{s}(L_1,L_2)$,
and
(ii) Nonclassical ordinary B-spaces $\tB_{pq}^{s}=\tB_{pq}^{s}(L_1, L_2)$
and F-spaces $\tF_{pq}^{s}=\tF_{pq}^{s}(L_1, L_2)$.
To define these spaces we introduce a pair of real-valued functions
$\varphi_0,\varphi \in \cC^\infty(\RR^2)$ satisfying the conditions:
\begin{equation}\label{cond-1-r}
\begin{aligned}
&(i)\ \varphi_0(\pm \lambda_1,\pm\lambda_2)= \varphi_0(\lambda_1,\lambda_2),
\ \varphi(\pm \lambda_1,\pm\lambda_2)= \varphi(\lambda_1,\lambda_2), \; \blambda\in\RR^2,
\\
&(ii)\ \supp\varphi_0\subset B(0,2), \
|\varphi_0(\blambda)|\ge\hat{c}>0 \;\hbox{  for  } |\blambda|\le 5/3,
\\
&(iii)\ \supp\varphi\subset B(0,2)\setminus B(0,1/2),\
|\varphi(\blambda)| \ge \hat{c}>0 \hbox{ for } 3/5\le|\blambda|\le 5/3,
\end{aligned}
\end{equation}
where $B(0,r):= \{\blambda\in\RR^2: |\blambda|<r\}$, $|\blambda|:=(\lambda_1^2+\lambda_2^2)^{1/2}$.

Denote $\varphi_j(\blambda):=\varphi(2^{-j}\blambda)$, $j\in \bN$.
Then from above it follows that
\begin{equation}\label{cond-2-r}
\sum_{j\in \bN_0} |\varphi_j(\blambda)| \ge \hat{c} >0, \quad \blambda\in\RR^2.
\end{equation}
In what follows we will use the short-hand notation
$\varphi_j(\sqrt{L}):= \varphi(2^{-j}\sqrt{L_1}, 2^{-j}\sqrt{L_2})$
and
$\sqrt{L}:=(\sqrt{L_1}, \sqrt{L_2})$.
Also, recall that here $d:=d_1+d_2$.

%%%%%%%%%%%% Definition

\begin{definition}\label{def-B-spaces-r}
Let $s \in \R$ and $0<p,q \le \infty$.

$(i)$ The ordinary Besov space  $B_{pq}^s=B_{pq}^s(L)$
is defined as the set of all $f \in \cS'$ such that
\begin{equation}\label{def-Besov-space1-r}
\|f\|_{B_{pq}^s} :=
\Big(\sum_{j\in\bN_0} \Big(2^{js}
\|\varphi_j(\sqrt{L}) f\|_{p}\Big)^q\Big)^{1/q} <\infty.
\end{equation}

$(ii)$ The ordinary Besov space  $\tB_{pq}^s= \tB_{pq}^s(L)$ is defined as the set
of all $f \in \cS'$ such that
\begin{equation}\label{def-Besov-space2-r}
\|f\|_{\tB_{pq}^s}
:= \Big(\sum_{j\in\bN_0} \Big( \big\|V(\cdot, 2^{-j})^{-s/d}
\varphi_j(\sqrt{L}) f(\cdot)\big\|_{p}\Big)^q\Big)^{1/q} <\infty.
\end{equation}
Above the $\ell^q$-norm is replaced by the sup-norm if $q=\infty$.
\end{definition}

\begin{definition}\label{def-F-spaces-r}
Let $s \in \RR$, $0<p< \infty$ and $0<q \le \infty$.

$(i)$ The ordinary Triebel-Lizorkin space  $F_{pq}^s=F_{pq}^s(L)$
is defined as the set of all $f \in \cS'$ such that
\begin{equation}\label{def-F-space1-r}
\|f\|_{F_{pq}^s} :=\Big\|\Big(\sum_{j\in \bN_0}
\Big(2^{js}|\varphi_j(\sqrt L) f(\cdot)|\Big)^q\Big)^{1/q}\Big\|_{p} <\infty.
\end{equation}

$(ii)$  The ordinary Triebel-Lizorkin space  $\tF_{pq}^s= \tF_{pq}^s(L)$ is defined as the set of all $f \in \cS'$
such that
\begin{equation}\label{def-F-space2-r}
\|f\|_{\tF_{pq}^s} :=
\Big\|\Big(\sum_{j\in \bN_0} \Big(V(\cdot, 2^{-j})^{-s/d}
|\varphi_j(\sqrt L) f(\cdot)|\Big)^q\Big)^{1/q}\Big\|_{p} <\infty.
\end{equation}
%As before above the $\ell^q$-norm is replaced by the sup-norm if $q=\infty$.

\end{definition}

A straightforward adaptation of the proof of Proposition~\ref{prop:independent}
shows that the definition of the ordinary Besov and Triebel-Lizorkin spaces from above
is independent of the specific selection of the functions $\varphi_0,\varphi\in\cC^\infty(\RR^2)$
satisfying conditions \eqref{cond-1-r}.

We next show that the ordinary Besov and Triebel-Lizorkin spaces defined above
essentially depend only on the operator $L_1\otimes I_2+I_1\otimes L_2$.
Indeed, let
$\psi_0(\blambda) = \Phi_0(|\blambda|^2)$ and $\psi(\blambda) = \Phi(|\blambda|^2)$,
where $\Phi_0, \Phi\in \cC^\infty(\RR)$ are real-valued even functions such that
\begin{equation}\label{cond-3-r}
\begin{aligned}
&(i)\ \supp\Phi_0\subset [-4,4], \
|\Phi_0(t)|\ge\hat{c}>0 \;\hbox{  for  } |t|\le 25/9,
\\
&(ii)\ \supp\Phi\cap[0,\infty)\subset [1/4,4],\
|\Phi(t)| \ge \hat{c}>0 \;\hbox{ for } 9/25\le|t|\le 25/9.
\end{aligned}
\end{equation}

It is easily seen that the functions $\psi_0,\psi\in \cC^\infty(\RR^2)$ and satisfy conditions \eqref{cond-1-r}.
Therefore, in the definition of the spaces $B_{pq}^s$, $\tB_{pq}^s$, $F_{pq}^s$, and $\tF_{pq}^s$ above
the functions $\varphi_0, \varphi$ can be replaced by $\psi_0, \psi$.
Observe now that for $j\ge 1$
\begin{equation*}
\psi_j(\sqrt{L})= \Phi(2^{-2j}(L_1\otimes I_2+I_1\otimes L_2))
\end{equation*}
and similarly
$\psi_0(\sqrt{L})= \Phi_0(L_1\otimes I_2+I_1\otimes L_2)$.
Furthermore, as was explained above. % in \S\ref{subsec:int-oper}
the smooth functional calculus associated to the operator $L_1\otimes I_2+I_1\otimes L_2$ can be developed
as a single parameter functional calculus.
Therefore, the theory of ordinary Besov and Triebel-Lizorkin spaces defined above
can be developed based on the single parameter calculus associated to the operator $L_1\otimes I_2+I_1\otimes L_2$.

\smallskip

\noindent
{\bf Embeddings.}
The embeddings of the ordinary Besov and Triebel-Lizorkin spaces in the setting of this article
that we present next are quite similar to the embedings of the mixed-smoothness Besov and Triebel-Lizorkin spaces.

The embeddings between ordinary B and F-spaces and test functions or distributions
take the form:
Let $s\in\bR$ and $0<q\le \infty$. If $0<p\le \infty$ then
\begin{equation*}
\cS\hookrightarrow B^s_{pq},\;\tB^s_{pq}\hookrightarrow\cS',
\end{equation*}
and, if $0<p< \infty$, then
\begin{equation*}
\cS\hookrightarrow \; F^s_{pq},\;\tF^s_{pq}\hookrightarrow\cS'.
\end{equation*}
The proof of this result is carried out along the lines of the proof of Theorem~\ref{thm:embed}.

\smallskip

The following embedding result between ordinary B-spaces with different parameters
follows readily by the Nikolski inequality \eqref{Band-1-r}:
If $s, s'\in\bR$, $0<p\le r\le\infty$, $0<q\le\tau\le\infty$, and
\begin{equation*}
\frac{s}{d}-\frac{1}{p}=\frac{s'}{d}-\frac{1}{r},
\end{equation*}
then $\tB^{s}_{pq}\hookrightarrow \tB^{s'}_{r\tau}$.
If in addition the non-collapsing condition $\eqref{non-collapsing}$ holds for the spaces $\XX_1$ and $\XX_2$,
then $B^{s}_{pq}\hookrightarrow B^{s'}_{r\tau}$.

Also, just as in Theorem~\ref{prop:embed-B-L} we have:
If $s>0$ and $0<p,q\le\infty$, then $B^{s}_{pq}\hookrightarrow L^p$.

\subsection{Spectral multipliers}\label{subsec:multipl-r}

Here we consider spectral multipliers that are compatible with the ordinary
Besov and Triebel-Lizorkin defined in \S\ref{subsec:B-F-spaces-r}.

\begin{definition}\label{def:mS-r}
A function $\m \in \cC^{\infty}(\R^2)$ is called {\em admissible}
if it is real-valued,
$\m(\pm\lambda_1,\pm\lambda_2)=\m(\lambda_1,\lambda_2)$ for $(\lambda_1,\lambda_2)\in \bR^2$,
and all its partial derivatives have at most polynomial growth,
i.e. for any $\bbeta\in\bN_0^2$ there exist constants $c_{\bbeta}>0$ and $N_{\bbeta}\in\bN_0$ such that
\begin{equation*} %\label{pg-r}
\big|\partial^{\bbeta}\m(\blambda)\big|\le c_{\bbeta}(1+|\blambda|)^{N_{\bbeta}}.
\end{equation*}

Let $\m \in \cC^{\infty}(\R^2)$ be admissible and
consider a pair of real-valued functions
$\varphi_0,\varphi \in \cC^\infty(\RR^2)$ satisfying the conditions:
\begin{equation*} %\label{cond-1-rr}
\begin{aligned}
&(i)\ \varphi_0(\pm \lambda_1,\pm\lambda_2)= \varphi_0(\lambda_1,\lambda_2),
\ \varphi(\pm \lambda_1,\pm\lambda_2)= \varphi(\lambda_1,\lambda_2), \; \blambda\in\RR^2,
\\
&(ii)\ \supp\varphi_0\subset B(0,2), \
\supp\varphi\subset B(0,2)\setminus B(0,1/2),
\end{aligned}
\end{equation*}
where $B(0,r):= \{\blambda\in\RR^2: |\blambda|<r\}$,
and
\begin{equation*} %\label{Cald-2d-r}
\varphi_0(\blambda)+\sum_{j\in \bN}\varphi(2^{-j}\blambda)=1, \quad \forall \blambda \in \RR^2.
\end{equation*}
Denote $\varphi_j(\blambda):=\varphi(2^{-j}\blambda)$, $j\in \bN$.
Then from above it follows that
$\sum_{j\in\bN_0} \varphi_j(\blambda) = 1$ for $\blambda \in \RR^2$.
Set $\m_j(\blambda):=\m(\blambda)\varphi_j(\blambda)$.
Then obviously
\begin{equation*} %\label{m-lam-r}
\m(\blambda)=\sum_{j\in \bN_0}\m(\blambda)\varphi_j(\blambda)=\sum_{j\in \bN_0}\m_j(\blambda).
\end{equation*}

We define the multiplier operator
$\m(\sqrt{L})=\m(\sqrt{L_1},\sqrt{L_2})$ on $\cS=\cS(L_1,L_2)$
by
\begin{equation*} %\label{def:mL-r}
\m(\sqrt{L})\phi
:=\sum_{j\in \bN_0}\m_j(\sqrt{L})\phi,
\quad \phi\in \cS,
\end{equation*}
and on $\cS'=\cS'(L_1,L_2)$ by
\begin{equation*} %\label{def:mL-1-r}
\m(\sqrt{L})f
:=\sum_{j\in \bN_0}\m_j(\sqrt{L})f,
\quad f\in \cS',
\end{equation*}
where the convergence is in $\cS$ and $\cS'$.

\end{definition}

An easy adaptation of the proof of Theorem~\ref{thm:mS}
leads to the following claim:
If~$\m \in \cC^{\infty}(\R^2)$ is admissible $($Defintion~\ref{def:mS-r}$)$,
then the multiplier operator $\m(\sqrt{L})$ is well defined and continuous on $\cS$ and $\cS'$.

\smallskip

With the next definition we introduce a class of multipliers that
will act on ordinary B and F-spaces.

\begin{definition}\label{def:multipl-BF-r}
Assume $\tau\in\bR$, $\kappa\in\bN$
and let the real-valued function $\m\in \cC^{\kappa}(\RR^2)$ satisfy
$\m(\pm \lambda_1, \pm\lambda_2)=\m(\lambda_1,\lambda_2)$ for $(\lambda_1,\lambda_2)\in \bR^2$.
We say that $\m$ belongs to the class $\bM(\tau,\kappa)$ if
for any $\bbeta=(\beta_1,\beta_2)$ with $\beta_i\le\kappa$
there exists a constant $c_{\bbeta}>0$ such that
\begin{equation}\label{multibounds-r}
|\partial^\bbeta\m(\blambda)|
\le c_{\bbeta}\big(1+\lambda_1\big)^{\tau-\beta_1}(1+\lambda_2\big)^{\tau-\beta_2},
\quad \forall \; \blambda\ge \zero. %\lambda_1,\lambda_2\ge 0.
\end{equation}
\end{definition}

The following two theorems are analogues of Theorems~\ref{th:multiBF}, \ref{th:multitBtF}.

\begin{theorem}
\label{th:multiBF-r}
Let $s, \tau\in\bR$, $0<q\le\infty$, $\kappa\in\bN$ and $\m\in\bM(\tau,\kappa)$.

$(i)$ If $0<p\le\infty$ and $\kappa>\frac{2d}{p}+\frac{3d}{2}$,
then the spectral multiplier $\m(\sqrt{L})$ is bounded from $B^{s+2\tau}_{pq}$ into $B^s_{pq}$.

$(ii)$ If $0<p<\infty$ and $\kappa>\frac{2d}{\min\{p,q\}}+\frac{3d}{2}$,
then the spectral multiplier $\m(\sqrt{L})$ is bounded from $F^{s+2\tau}_{pq}$ into $F^s_{pq}$.
\end{theorem}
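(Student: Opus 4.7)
The plan is to adapt the proof of Theorem~\ref{th:multiBF}(ii) to the single-parameter ordinary setting, reducing the vector index $\jj$ to the scalar index $j$ throughout. I treat only part (ii); part (i) follows by an analogous (and simpler) argument. Given $f\in F^{s+2\tau}_{pq}$, I first use the ordinary analog of Corollary~\ref{cor:Calderon} to write $f=\sum_{j\ge 0}\varphi_j(\sqrt L)f$ in $\cS'$, and then invoke the continuity of $\m(\sqrt L)$ on $\cS'$ (which follows from adapting the proof of Theorem~\ref{thm:mS} in the obvious way) to obtain $\m(\sqrt L)f=\sum_{j\ge 0}\m(\sqrt L)\varphi_j(\sqrt L)f$ in $\cS'$. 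The support condition $\varphi_j\varphi_k\equiv 0$ for $|j-k|\ge 2$ then yields
$\varphi_k(\sqrt L)\m(\sqrt L)f=\sum_{|j-k|\le 1}\m_k(2^{-k}\sqrt L)\varphi_j(\sqrt L)f$,
where $\m_k(\blambda):=\varphi(\blambda)\m(2^k\blambda)$ has $\supp\m_k\subset[-2,2]^2$.

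The main technical step is to establish the kernel bound
\[
\bigl|\KK_{\m_k(2^{-k}\sqrt L)}(\xx,\yy)\bigr|\le c\,2^{2k\tau}\,V(\xx,2^{-k})^{-1}\,\DD^{\ast}_{2^{-k},\kappa-d/2}(\xx,\yy).
\]
I apply Theorem~\ref{th:boxsupport} to $\m_k$ with $R=2$ and $\kk=(\kappa,\kappa)$, which reduces the matter to verifying $\|\partial^{\bbeta}\m_k\|_{\infty}\le c\,2^{2k\tau}$ for all $\bbeta\le(\kappa,\kappa)$. This is where the ordinary setting genuinely differs from the mixed-smoothness one, and is the main obstacle in the proof: here $\supp\varphi\subset\{|\blambda|\in[1/2,2]\}$ is a two-dimensional annulus rather than a product of $1$D annuli, so one of $|\lambda_1|,|\lambda_2|$ may be arbitrarily small on $\supp\varphi$, and the direct Leibniz-rule computation of Theorem~\ref{th:multiBF} does not immediately yield the desired $2^{2k\tau}$ bound. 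To handle this, I fix a smooth partition of unity on $\supp\varphi\cap[0,\infty)^2$ into finitely many pieces of product form, each supported where both $|\lambda_1|$ and $|\lambda_2|$ are bounded below by a positive absolute constant. On each such piece the Leibniz argument of Theorem~\ref{th:multiBF} applies with $k_1=k_2=k$, producing the uniform bound $c\,2^{2k\tau}$, and summing over the finitely many pieces gives the required estimate.

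Given the kernel bound, I proceed exactly as in the derivation around~\eqref{eq:Fbounded6}. Choose $0<r<\min\{p,q\}$ and $\eta>2d$ with $\kappa>\eta/r+3d/2$ (possible by the hypothesis $\kappa>2d/\min(p,q)+3d/2$), split $\DD^{\ast}_{2^{-k},\kappa-d/2}=\DD^{\ast}_{2^{-k},\eta/r}\cdot\DD^{\ast}_{2^{-k},\kappa-d/2-\eta/r}$, absorb the first factor using the Peetre maximal inequality~\eqref{Peetre-max-r} applied to $\varphi_j(\sqrt L)f\in\Sigma_{2^{j+1}}$, and integrate the second factor by means of~\eqref{tech-1}. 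This yields
\[
|\varphi_k(\sqrt L)\m(\sqrt L)f(\xx)|\le c\,2^{2k\tau}\sum_{|j-k|\le 1}\cM_r\bigl(\varphi_j(\sqrt L)f\bigr)(\xx).
\]
Multiplying by $2^{ks}$, taking the $\ell^q$-quasinorm in $k$, then the $L^p$-quasinorm, shifting indices to absorb $\sum_{|j-k|\le 1}$, and applying the Fefferman--Stein vector-valued maximal inequality~\eqref{max} (valid because $r<\min\{p,q\}$) gives $\|\m(\sqrt L)f\|_{F^s_{pq}}\le c\|f\|_{F^{s+2\tau}_{pq}}$, as claimed.
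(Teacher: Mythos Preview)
You correctly identify that in the ordinary setting $\supp\varphi$ is a two-dimensional annulus rather than a product of one-dimensional annuli, and that this is the place where the adaptation of Theorem~\ref{th:multiBF} requires care. However, your proposed remedy is impossible: the set $\supp\varphi\cap[0,\infty)^2$ contains points such as $(1,0)$ and $(0,1)$ where one coordinate vanishes, so no finite smooth partition of unity on this set can consist of pieces on each of which \emph{both} $|\lambda_1|$ and $|\lambda_2|$ are bounded below by a fixed positive constant. Any piece covering a neighbourhood of $(1,0)$ necessarily contains points with $\lambda_2$ arbitrarily close to $0$.

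In fact the target estimate $\|\partial^{\bbeta}\m_k\|_\infty\le c\,2^{2k\tau}$ (uniformly in $k$) that you are aiming for is itself false for the class $\bM(\tau,\kappa)$ of Definition~\ref{def:multipl-BF-r}. Take $\tau=0$ and $\m(\blambda)=(1+\lambda_1^2)^{-1}$; this function is even in each variable and one checks that $\m\in\bM(0,\kappa)$ for every $\kappa\in\bN$. Yet
\[
\partial_1^2\big[(1+2^{2k}\lambda_1^2)^{-1}\big]\big|_{\lambda_1=0}=-2\cdot 2^{2k},
\]
and since $|\varphi(0,1)|\ge\hat c>0$ by condition~\eqref{cond-1-r}(iii), the Leibniz rule gives $|\partial_1^2\m_k(0,1)|\sim 2^{2k}\to\infty$, whereas $2^{2k\tau}=1$. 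Hence a direct application of Theorem~\ref{th:boxsupport} cannot yield the kernel bound $|\KK_{\m_k(2^{-k}\sqrt L)}(\xx,\yy)|\le c\,2^{2k\tau}V(\xx,2^{-k})^{-1}\DD^*_{2^{-k},\kappa-d/2}(\xx,\yy)$ with the constant you need. The paper omits the proof and simply refers to Theorem~\ref{th:multiBF}; bridging the gap genuinely requires a different argument near the axes (for instance a separate single-variable treatment on the regions $\{\lambda_1\ \text{small}\}$ and $\{\lambda_2\ \text{small}\}$, or replacing the product-type condition in Definition~\ref{def:multipl-BF-r} by the isotropic condition $|\partial^\bbeta\m(\blambda)|\le c(1+|\blambda|)^{\tau-|\bbeta|}$, under which $\|\partial^\bbeta\m_k\|_\infty\le c\,2^{k\tau}$ follows immediately and one lands in $B^{s+\tau}_{pq}\to B^s_{pq}$).
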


\begin{theorem}\label{th:multitBtF-r}
Let $s\in\bR$, $0<q\le\infty$, $\kappa\in\bN$ and $\m\in\bM(0,\kappa)$.

$(i)$ If $0<p\le\infty$ and $\kappa>|s|+\frac{2d}{p}+\frac{3d}{2}$,
then the spectral multiplier $\m(\sqrt{L})$ is bounded on $\tB^{s}_{pq}$.

$(ii)$ If $0<p<\infty$ and $\kappa>|s|+\frac{2d}{\min(p,q)}+\frac{3d}{2}$,
then the spectral multiplier $\m(\sqrt{L})$ is bounded on $\tF^s_{pq}$.
\end{theorem}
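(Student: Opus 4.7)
The plan is to transcribe, with minor adaptations, the proof of Theorem~\ref{th:multitBtF} for the nonclassical mixed-smoothness spaces into the ordinary (scalar) setting, since each of its ingredients has a one-parameter counterpart available in the present section: a Calder\'on reproducing decomposition into annular Littlewood--Paley pieces $\varphi_j(\sqrt L)$ built from \eqref{cond-1-r}, the scalar kernel localization Theorem~\ref{thm:gen-local-r}, the scalar Peetre-type maximal inequality \eqref{Peetre-max-r}, and the Fefferman--Stein vector-valued maximal inequality \eqref{max}. Following the pattern of Theorem~\ref{th:multitBtF}, I will detail part (ii) for $\tF^s_{pq}$; part (i) for $\tB^s_{pq}$ will follow from the same pointwise estimates by reversing the order of the $\ell^q$ and $L^p$ norms and using only the scalar $L^p$-boundedness of $\cM_r$.

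Fix $f\in \tF^s_{pq}$. The scalar Calder\'on formula gives $f=\sum_{j\ge 0}\varphi_j(\sqrt L)f$ in $\cS'$, and because $\m(\sqrt L)$ is continuous on $\cS'$ (the scalar analogue of Theorem~\ref{thm:mS} stated just after Definition~\ref{def:mS-r}), one has $\m(\sqrt L)f=\sum_{j\ge 0}\m(\sqrt L)\varphi_j(\sqrt L)f$ in $\cS'$. The overlap of the annular supports forces $\supp\varphi_j\cap\supp\varphi_k\ne\emptyset$ only when $|j-k|\le 1$, and so the $k$-th Littlewood--Paley block reduces to
\begin{equation*}
\varphi_k(\sqrt L)\m(\sqrt L)f=\sum_{j=k-1}^{k+1}\m_k(2^{-k}\sqrt L)\,\varphi_j(\sqrt L)f,\qquad \m_k(\blambda):=\m(2^k\blambda)\varphi_k(2^k\blambda),
\end{equation*}
where $\m_k$ has compact support in $\R^2$. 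The heart of the proof is a $k$-uniform kernel bound
\begin{equation*}
|\KK_{\m_k(2^{-k}\sqrt L)}(\xx,\yy)|\le c\,V(\xx,2^{-k})^{-1}\prod_{i=1,2}\bigl(1+2^k\rho_i(x_i,y_i)\bigr)^{-\sigma}
\end{equation*}
with $\sigma$ large enough to feed the subsequent estimates. Granted this bound, the rest of the argument is identical to the reduction in the proof of Theorem~\ref{th:multitBtF}: one inserts the weight $V(\xx,2^{-k})^{-s/d}$, uses \eqref{V-gamma-xy} together with $|j-k|\le 1$ to pass it to $V(\yy,2^{-j})^{-s/d}$, applies the Peetre maximal inequality \eqref{Peetre-max-r} to $\varphi_j(\sqrt L)f\in\Sigma_{2^{j+1}}$ with some $0<r<\min(p,q)$ and auxiliary exponent $\eta>2(d_1\vee d_2)$, and closes with \eqref{tech-1}. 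The outcome is the pointwise bound
\begin{equation*}
V(\xx,2^{-k})^{-s/d}|\varphi_k(\sqrt L)\m(\sqrt L)f(\xx)|\le c\sum_{|j-k|\le 1}\cM_r\bigl(V(\cdot,2^{-j})^{-s/d}\varphi_j(\sqrt L)f\bigr)(\xx),
\end{equation*}
and taking $\ell^q$- and $L^p$-norms and invoking \eqref{max} completes part (ii).

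The main obstacle is producing the $k$-uniform kernel bound. In Theorem~\ref{th:multitBtF} the support of $\varphi_\kk$ is rectangular, so the product-type condition \eqref{multibounds-r} translates via one Leibniz computation into uniform $C^{\kappa_1+\kappa_2}$-bounds on $\m_\kk$, and Theorem~\ref{th:boxsupport} delivers \eqref{K-mk} directly. Here, by contrast, $\supp\varphi_k\subset\R^2$ is an annulus that can approach a coordinate axis; a naive Leibniz computation on $\m(2^k\cdot)\varphi$ loses a factor $2^{k\max(\beta_1,\beta_2)}$ from the region where one coordinate is as small as $2^{-k}$, so direct application of the scalar Theorem~\ref{thm:gen-local-r} is insufficient. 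To bypass this, I plan to partition $\varphi$ smoothly into $O(k)$ rectangular dyadic cells $\{\lambda_1\sim 2^{j_1},\lambda_2\sim 2^{j_2}\}$ with $\max(j_1,j_2)\in\{k-1,k,k+1\}$ that cover the annulus, apply the mixed-smoothness localization Theorem~\ref{th:boxsupport} on each cell (where after rescaling to unit size both coordinates are comparable, so \eqref{multibounds-r} yields cell-uniform $C^{(\kappa,\kappa)}$-bounds on the rescaled symbol), and reassemble the resulting mixed $\DD^*$-decay bounds into the required single-scale $\prod_i(1+2^k\rho_i)^{-\sigma}$ decay by exploiting $\max(j_1,j_2)\sim k$ together with the integral identities \eqref{tech-1}--\eqref{tech-2}. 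The hypothesis $\kappa>|s|+2d/\min(p,q)+3d/2$, $d=d_1+d_2$, is precisely calibrated so that this reassembly and the exponent choices in the Peetre and maximal-function steps are simultaneously compatible.
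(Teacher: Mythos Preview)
The paper omits the proof entirely, saying only that it ``is carried out along the lines of the proofs of Theorems~\ref{th:multiBF}, \ref{th:multitBtF}.'' Your overall scheme---Calder\'on decomposition, support overlap reduction to $|j-k|\le 1$, kernel localization, weight transfer via \eqref{V-gamma-xy}, Peetre inequality \eqref{Peetre-max-r}, and the Fefferman--Stein inequality \eqref{max}---is exactly that template, and you go further than the paper by flagging a genuine obstruction: after rescaling, the annular support of $\varphi$ meets the coordinate axes, so the product-Mikhlin bound \eqref{multibounds-r} only yields $|\partial^\bgamma(\m(2^k\cdot))(\mu)|\le c\,2^{k\gamma_2}$ near $\mu_2=0$, and one cannot feed $\m_k$ directly into Theorem~\ref{th:boxsupport} or Theorem~\ref{thm:gen-local-r} with $k$-uniform constants. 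This point is real, and the paper's omitted proof does not address it.

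However, your proposed fix has a gap at the reassembly step. Partitioning the annulus $\{|\blambda|\sim 2^k\}$ into $O(k)$ dyadic rectangles with $\max(j_1,j_2)\sim k$, say the cell with $j_1\sim k$ and $j_2=\ell\le k$, does give a kernel bound at the \emph{mixed} scale $(2^{-k},2^{-\ell})$, namely decay $(1+2^k\rho_1)^{-\kappa'}(1+2^\ell\rho_2)^{-\kappa'}$. But converting the $\rho_2$-factor to the single scale $2^k$ costs $(1+2^k\rho_2)^{\sigma}\le 2^{(k-\ell)\sigma}(1+2^\ell\rho_2)^{\sigma}$, so you pick up $2^{(k-\ell)\sigma}$, and summing over $\ell=0,\dots,k$ diverges like $2^{k\sigma}$ regardless of how large $\kappa$ is. Attempting instead to keep the mixed-scale decay and apply the Peetre inequality cell by cell fails for a dual reason: $\varphi_j(\sqrt L)f$ lies in $\Sigma_{(2^{j+1},2^{j+1})}$, so Theorem~\ref{thm:Peetre-max} is available only at scales $\tt\ge(2^{j},2^{j})$, not at the coarser second scale $2^\ell$ that the cell kernel supplies. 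Your appeal to \eqref{tech-1}--\eqref{tech-2} does not bridge this mismatch, and the claim that the hypothesis on $\kappa$ is ``precisely calibrated'' for the reassembly is unsubstantiated: the loss $2^{(k-\ell)\sigma}$ depends on the \emph{target} decay exponent $\sigma$, not on the available smoothness $\kappa$. As written, the last paragraph of your plan is a sketch of an idea rather than an argument, and the obstacle you correctly identified is not yet overcome.
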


The proofs of these two theorems are carried out along the lines of the proofs of Theorems~\ref{th:multiBF}, \ref{th:multitBtF}.
We omit them.

\section{Appendix}\label{sec:appendix}

\subsection{Examples of coordinate spaces covered by our theory}\label{sec:examples}

In this section we provide several examples
of metric measure spaces $(\XX,\rho,\mu)$ associated with operators $L$
satisfying the conditions on the coordinate spaces from our general setting in \S\ref{sec:genset}.

\begin{example}
The classical setup of $\XX=\RR^n$ equipped with the Euclidean distance and
Lebesgue measure, combined with the Laplace operator $L=-\Delta$
is our principle example of a coordinate space covered by our general setting in \S\ref{sec:genset}.

Another classical example is the torus
$\XX= \RR^n/2\pi\ZZ^n$
equipped with the Euclidean distance and
Lebesgue measure, combined with the Laplace operator $L=-\Delta$.
\end{example}

%% Sphere

\begin{example}
The unit sphere $\XX=\bS^{n-1}$ in $\RR^n$ equipped with the standard (geodesic) distance and surface measure
associated with the Laplace-Beltrami operator
is also a typical example of a coordinate space in the sense of our general setting from \S\ref{sec:genset}.
\end{example}

%% Jacobi

\begin{example}
Consider $\XX =[-1, 1]$ equipped with the weighted measure
\begin{equation*}
d\mu(x) := w(x) dx = (1-x)^{\alpha}(1+x)^{\beta} dx, \quad \alpha, \beta>-1,
\end{equation*}
and the distance
$\rho(x,y) := |\arccos x - \arccos y|$.
%Naturally, the balls are defined by $B(x, r):=\{y\in [-1,1]: \rho(x, y)<r\}$.
It is easy to see that $($e.g. \cite{CKP}$)$
%\begin{equation*}
%\mu(B(x, r))\sim r(1-x+r^2)^{\alpha+1/2}(1+x+r^2)^{\beta+1/2}, \quad x\in [-1, 1], \; 0<r\le \pi.
%\end{equation*}
\begin{equation*}
V(x, r)\sim r(1-x+r^2)^{\alpha+1/2}(1+x+r^2)^{\beta+1/2}, \quad x\in [-1, 1], \; 0<r\le \pi.
\end{equation*}
Therefore, we have a doubling metric measure space. % with homogeneous dimension $d=1+(2\alpha+1)_+ \vee (2\beta+1)_+$.
Let $L$ be the classical Jacobi operator defined by
\begin{equation*}
Lf(x):=-\frac{\big[w(x)(1-x^2)f'(x)\big]'}{w(x)}.
\end{equation*}
As is shown in \cite{CKP} $($see also \cite{KPX}$)$
the associated heat kernel has Gaussian localization, H\"{o}lder continuity, and the Markov property.
%Therefore, this can be one of the coordinate spaces from our setting in \S\ref{sec:genset}.
\end{example}

%% Ball

\begin{example}
Consider the \textit{unit ball} $\bB^n:=\big\{x\in\mathbb{R}^n: \|x\|<1\big\}$
equipped with measure
\begin{equation*}
d\mu := (1-\| x\|^2)^{\gamma-1/2} dx, \quad \gamma >-1,
\end{equation*}
and metric
\begin{equation*}
\rho(x,y) := \arccos \big(\langle x, y\rangle + \sqrt{1-\| x\|^2}\sqrt{1-\| y\|^2}\big),
\end{equation*}
where $\langle x, y\rangle$ is the inner product of $x, y\in \R^n$
and $\|x\|:= \sqrt{\langle x, x\rangle}$.
It is easy to see that $(\bB^n,\rho,\mu)$ is a doubling metric measure space, since for the ball centered at $x\in \bB^n$ of radius $r$ one has
\begin{equation*}
V(x, r) \sim r^n(1-\|x\|^2+r^2)^\gamma.
\end{equation*}%Hence, 
%with homogeneous dimension $d=n+2\gamma_+$.
%
As is well known $($e.g. \cite{KPX}$)$ the operator
\begin{equation}
L:= -\sum_{i=1}^n (1-x_i^2)\partial^2_i + 2\sum_{1\le i < j
\le n}x_i x_j\partial_i\partial_j + (n+2 \gamma)\sum_{i=1}^n x_i \partial_i,
\end{equation}
acting on smooth functions on $\bB^n$
is non-negative and self-adjoint with polynomial eigenfunctions.
More importantly, its heat kernel
has Gaussian localization, H\"{o}lder continuity, and the Markov property
$($see \cite{KPX, KPX2}$)$.

\end{example}

%% Simplex

\begin{example}
Consider the simplex
\begin{equation*}
\bT^n:=\Big\{x \in \RR^n: x_1 > 0,\dots, x_n>0,\; |x| < 1 \Big\},
\quad |x|:= x_1+\cdots+x_n,
\end{equation*}
with measure
\begin{equation*}
d\mu(x) = \prod_{i=1}^{n} x_{i}^{\kappa_i-1/2}(1-|x|)^{\kappa_{n+1}-1/2}dx,
\quad  \kappa_i >-1/2,
\end{equation*}
and distance
\begin{equation*}
\rho(x,y) = \arccos \Big(\sum_{i=1}^n \sqrt{x_i y_i} + \sqrt{1-|x|}\sqrt{1-|y|}\Big).
\end{equation*}
The volume of the balls here $($see e.g. \cite{KPX}$)$ behave as
\begin{equation*}
V(x, r) \sim r^n (1-|x|+r^2)^{\kappa_{n+1}}\prod_{i=1}^n(x_i+r^2)^{\kappa_i}
\end{equation*}
and hence
$(\bT^n,\rho,\mu)$ is a doubling metric measure space.
The associated operator $($see \cite{KPX}$)$
\begin{equation*}
L := -\sum_{i=1}^n x_i\partial_i^2 + \sum_{i=1}^n\sum_{j=1}^n x_ix_j \partial_i\partial_j
- \sum_{i=1}^n \big(\kappa_i + \tfrac12 -(|\kappa|+ \tfrac{n+1}{2}) x_i\big) \partial_i
\end{equation*}
with $|\kappa|:=\kappa_1+\dots+\kappa_{n+1}$
is non-negative and self-adjoint with polynomial eigenfunctions.
Furthermore, its heat kernel
has Gaussian localization, H\"{o}lder continuity, and the Markov property
$($see \cite{KPX, KPX2}$)$.

\end{example}

\begin{example} As three generic examples of coordinate spaces in the sense of the setting from \S\ref{sec:genset} we list the followings:

\begin{itemize}
\item The Euclidean space $\R^n$ associated with uniformly elliptic divergence form operators.

\item Lie groups or homogeneous spaces with polynomial volume growth, associated with sub-Laplacians.

\item Complete Riemannian manifolds with Ricci curvature bounded from below associated with the Laplace-Beltrami operator.
\end{itemize}

\end{example}

There are various other examples of spaces that can serve as coordinate spaces and obey the conditions from \S\ref{sec:genset}.
For more details see the examples presented in \cite{CKP, KP}.

\subsection{Proof of Proposition~\ref{prop:prealgebra}}

%\noindent
%{\bf Proof of Proposition~\ref{prop:prealgebra}.}
%\begin{proof}[Proof of Proposition~\ref{prop:prealgebra}]
To show that $\overline{E}_0(R)$ is well defined we first extend the operator $E_0$ to $\BB_0(\R^2)$.
For any $R\in \BB_0(\R^2)$ represented as in \eqref{R-JJ} we define
\begin{equation}\label{def:E0ext}
E_0(R) := \sum_{j=1}^N E_0(J_{1j}\times J_{2j})
\quad\hbox{on}\quad \sL^2(\XX,d\mu).
\end{equation}
We next show that the operator $E_0(R)$ is well defined.
To this end we need to show that the definition of $E_0(R)$ in \eqref{def:E0ext}
is independent of the selection of $\{J_{1j}\times J_{2j}\}$.

%%%%%%%%%%%

We first show that if
\begin{equation*}
R = (a_1, b_1]\times (a_2, b_2]=: \tilde{J}_1 \times \tilde{J}_2
\end{equation*}
is represented as in \eqref{R-JJ}, i.e.
$
R = \tilde{J}_1 \times \tilde{J}_2=\cup_{j=1}^N J_{1j}\times J_{2j},
$
then
\begin{equation}\label{E0-E0}
E_0(\tilde{J}_1 \times \tilde{J}_2) := \sum_{j=1}^N E_0(J_{1j}\times J_{2j}).
\end{equation}
Indeed, let us project the end points of the intervals
$\{J_{1j}\}$ onto $(a_1, b_1]$ and the end points of the intervals $\{J_{2j}\}$ onto $(a_2, b_2]$.
Denote these points by $\{\xi_j\}$ and $\{\eta_k\}$ and
assume that they are ordered as follows
\begin{equation}\label{xi-eta}
a_1=\xi_0 < \xi_1 < \cdots <\xi_n=b_1
\quad\hbox{and}\quad
a_2=\eta_0 < \eta_1 < \cdots <\eta_m=b_2.
\end{equation}
Denote $I_{1j}:=(\xi_{j-1}, \xi_j]$ and $I_{2k}:=(\eta_{k-1}, \eta_k]$.
Then
$
\tilde{J}_1 = \cup_{j=1}^n I_{1j}
$
and
$
\tilde{J}_2 = \cup_{k=1}^n I_{2k}.
$
From \eqref{additive} it follows that
\begin{equation*}
E_1(\tilde{J}_1) = \sum_{j=1}^n E_1(I_{1j}),
\quad
E_2(\tilde{J}_2) = \sum_{k=1}^m E_2(I_{2k})
\end{equation*}
and using \eqref{E0-J1-J2} we obtain
\begin{equation}\label{E0-lk}
E_0(\tilde{J}_1 \times \tilde{J}_2) = \sum_{\ell=1}^n\sum_{k=1}^m E_0(I_{1\ell}\times I_{2k}).
\end{equation}
On the other hand, clearly each rectangle $J_{1j}\times J_{2j}$ either coincides with one of the rectangles
$I_{1\ell}\times I_{2k}$
or
\begin{equation*}
J_{1j}=\cup_{\ell=\ell_1}^{\ell_2} I_{1\ell}
\quad\hbox{and}\quad
J_{2j}=\cup_{k=k_1}^{k_2} I_{2k}
\end{equation*}
and hence
$
J_{1j}\times J_{2j} = \cup_{\ell=\ell_1}^{\ell_2}\cup_{k=k_1}^{k_2} I_{1\ell}\times I_{2k}.
$
Hence, just as above
\begin{equation*}
E_0(J_{1j}\times J_{2j}) = \sum_{\ell=\ell_1}^{\ell_2}\sum_{k=k_1}^{k_2} E_0(I_{1\ell}\times I_{2k}).
\end{equation*}
Combining this with \eqref{E0-lk} implies \eqref{E0-E0}.

%%%%%%%%%%%%

\smallskip

Assume now that for $R\in \BB_0(\R^2)$ we have
\begin{equation}\label{R-JJ-II}
R = \cup_{j=1}^N J_{1j}\times J_{2j} = \cup_{k=1}^K I_{1k}\times I_{2k},
\quad J_{ij}, I_{ik}\in \cJ,
\end{equation}
where the rectangles $\{J_{1j}\times J_{2j}\}$ are disjoint and
$\{I_{1k}\times I_{2k}\}$ are also disjoint.
To find a common refinement of these to sets of rectangles
we proceed similarly as above.
We project the end points of the intervals
$\{J_{1j}\}$ onto the $x_1$-axis and the end points of the intervals $\{J_{2j}\}$ onto the $x_2$-axis.
Denote these points by $\{\xi_j\}$ and $\{\eta_k\}$ and
assume that they are ordered just as in \eqref{xi-eta}, i.e.
\begin{equation*}%\label{xi-eta-2}
\xi_0 < \xi_1 < \cdots <\xi_n
\quad\hbox{and}\quad
\eta_0 < \eta_1 < \cdots <\eta_m.
\end{equation*}
Denote
$Q_{1\ell}:=(\xi_{\ell-1}, \xi_\ell]$ and $Q_{2\nu}:=(\eta_{\nu-1}, \eta_\nu]$.
Also, denote by $\cI_1$ the set of all indices $\ell$ such that $Q_{1\ell}$ is contained
in some intervals $J_{1j}$ and $I_{1k}$,
and by $\cI_2$ the set of all indices $\nu$ such that $Q_{2\nu}$ is contained
in some intervals $J_{2j}$ and $I_{2k}$.

Clearly, each rectangle $J_{1j}\times J_{2j}$ either coincides with one of the rectangles
$Q_{1\ell}\times Q_{2\nu}$
or
\begin{equation*}
J_{1j}=\cup_{\ell=\ell_1}^{\ell_2} Q_{1\ell}
\quad\hbox{and}\quad
J_{2j}=\cup_{\nu=\nu_1}^{\nu_2} Q_{2\nu},
\end{equation*}
and hence
$
J_{1j}\times J_{2j} = \cup_{\ell=\ell_1}^{\ell_2}\cup_{\nu=\nu_1}^{\nu_2} Q_{1\ell}\times Q_{2\nu}.
$
Similarly, each rectangle $I_{1k}\times I_{2k}$ is covered by some of the rectangles $\{Q_{1\ell}\times Q_{2\nu}\}$.
Thus,
$\{Q_{1\ell}\times Q_{2\nu}\}_{\ell\in\cI_1, \nu\in\cI_2}$
is a common refinement of
the rectangles $\{J_{1j}\times J_{2j}\}$ and $\{I_{1k}\times I_{2k}\}$.

Using \eqref{E0-E0} we obtain
\begin{equation*}
\cup_{j=1}^N E_0(J_{1j}\times J_{2j})
= \sum_{\ell\in\cI_1} \sum_{\nu\in\cI_2}E(Q_{1\ell}\times Q_{2\nu})
=\cup_{k=1}^K E_0(I_{1k}\times I_{2k}).
\end{equation*}
Therefore, the operator $E_0(R)$ is well defined by \eqref{def:E0ext}.
Since $\overline{E}_0(R)$ is the closure of $E_0(R)$, then $\overline{E}_0(R)$ is also well defined by \eqref{def:E0bar}.

The identity $\overline{E}_0(\R^2)=I$ follows readily by the obvious fact that $E_0(\R^2)=I$ on $\sL^2(\XX, d\mu)$.

Our next step is to show that if
\begin{equation}\label{JJ}
\tilde{J}_1 \times \tilde{J}_2
= (a_1, b_1]\times (a_2, b_2]
= \cup_{j=1}^\infty J_{1j}\times J_{2j},
\quad J_{ij}\in \cJ(\R),
\end{equation}
where
$\{J_{1j}\times J_{2j}\}$ are disjoint, then
\begin{equation}\label{E0-bar-infty}
\overline{E}_0(\tilde{J}_1 \times \tilde{J}_2) = \sum_{j=1}^\infty \overline{E}_0(J_{1j}\times J_{2j}),
\end{equation}
where the convergence is unconditional in the strong sense.

{\em Case 1.}
Consider first the case when there exist two countable collections of disjoint intervals
$\{I_{1k}\}_{k=1}^\infty$,
$\{I_{1\ell}\}_{\ell=1}^\infty$ in $\cJ(\R)$
such that
\begin{equation*}
\tilde{J_1} = \cup_{k=1}^\infty I_{1k},
\quad
\tilde{J_2} = \cup_{\ell=1}^\infty I_{2\ell},
\quad\hbox{and}\quad
\{J_{1j}\times J_{2j}\}_{j=1}^\infty = \{I_{1k}\times I_{2\ell}\}_{k,\ell\ge 1}.
\end{equation*}
In this particular case \eqref{E0-bar-infty} takes the form
\begin{equation}\label{E0-prod-infty}
\overline{E}_0(\tilde{J}_1 \times \tilde{J}_2) = \sum_{k=1}^\infty \sum_{\ell=1}^\infty \overline{E}_0(I_{1k}\times I_{2\ell}).
\end{equation}
To prove this we first show that
\begin{equation}\label{E0-infty}
E_0(\tilde{J}_1 \times \tilde{J}_2) = \sum_{k=1}^\infty \sum_{\ell=1}^\infty E_0(I_{1k}\times I_{2\ell})
\quad\hbox{on}\quad \sL^2(\XX, d\mu).
\end{equation}
Indeed, since $E_1$, $E_2$ are spectral measures we have for any $f\in L^2(\XX_1, d\mu_1)$, $g\in L^2(\XX_2, d\mu_2)$
\begin{equation}\label{E1-E2-conv}
E_1(\tilde{J}_1)f=\lim_{K\to\infty} \sum_{k=1}^K E_1(I_{1k})f
\quad\hbox{and}\quad
E_2(\tilde{J}_2)f=\lim_{N\to\infty} \sum_{\ell=1}^N E_2(I_{2\ell})g,
\end{equation}
where the convergence is unconditional in the respective $L^2$-norms.
Therefore,
\begin{align*}
E_0(\tilde{J}_1 \times \tilde{J}_2)f\otimes g
&= \lim_{K, N\to\infty}\Big(\sum_{k=1}^K E_1(I_{1k})f\Big)\otimes \Big(\sum_{\ell=1}^N E_2(I_{2\ell})g\Big)
\\
& = \lim_{K, N\to\infty} \sum_{k=1}^K \sum_{\ell=1}^N E_1(I_{1k})f\otimes E_2(I_{2\ell})g
\\
& = \lim_{K, N\to\infty} \sum_{k=1}^K \sum_{\ell=1}^N E_0(I_{1k}\times I_{2\ell})f\otimes g
\\
& = \sum_{k=1}^\infty \sum_{\ell=1}^\infty E_0(I_{1k}\times I_{2\ell})f\otimes g,
\end{align*}
where the convergence is in the $L^2(\XX, d\mu)$-norm.
Here we used the obvious fact that
if $\lim_{n\to\infty}\|F_n-F\|_{L^2(\XX_1)}=0$ and $\lim_{m\to\infty}\|G_m-G\|_{L^2(\XX_2)}=0$,
then
$\lim_{n,m\to\infty}\|F_n\otimes G_m-F\otimes G\|_{L^2(\XX)}=0$.
Clearly, \eqref{E0-infty} follows from the above.

By Proposition~\ref{prop:E0} we know that for any $I_1, I_2\in \cJ(\R)$
the operator $\overline{E}_0(I_1 \times I_2)$ is a projector
and using \eqref{E0E0} we conclude that for any $K, N\ge 1$
the operator
$\sum_{k=1}^K \sum_{\ell=1}^N \overline{E}_0(I_{1k}\times I_{2\ell})$
is a projector in $L^2(\XX, d\mu)$ (see e.g. \cite[Theorem 3.7]{Prugov}).
Therefore, for any $h \in L^2(\XX, d\mu)$ we have
\begin{align*}
\sum_{k=1}^K \sum_{\ell=1}^N \|\overline{E}_0(I_{1k}\times I_{2\ell})h\|^2
= \Big\|\Big(\sum_{k=1}^K \sum_{\ell=1}^N \overline{E}_0(I_{1k}\times I_{2\ell})\Big)h\Big\|^2
\le \|h\|^2
\end{align*}
and hence
\begin{equation*}
\sum_{k=1}^\infty \sum_{\ell=1}^\infty \|\overline{E}_0(I_{1k}\times I_{2\ell})h\|^2 \le \|h\|^2.
\end{equation*}
Consequently, the series
$\sum_{k=1}^\infty \sum_{\ell=1}^\infty \overline{E}_0(I_{1k}\times I_{2\ell})h$
converges unconditionally in the $L^2(\XX, d\mu)$-norm for each $h\in L^2(\XX, d\mu)$.
To show that
\begin{equation}\label{E0-h}
\overline{E}_0(\tilde{J}_1 \times \tilde{J}_2)h=\sum_{k=1}^\infty \sum_{\ell=1}^\infty \overline{E}_0(I_{1k}\times I_{2\ell})h
\end{equation}
we take any function $w\in \sL^2(\XX, d\mu)$.
Then
\begin{align*}
\Big\langle \sum_{k=1}^\infty \sum_{\ell=1}^\infty& \overline{E}_0(I_{1k}\times I_{2\ell})h, w\Big\rangle
= \sum_{k=1}^\infty \sum_{\ell=1}^\infty \langle \overline{E}_0(I_{1k}\times I_{2\ell})h, w\rangle
\\
&= \sum_{k=1}^\infty \sum_{\ell=1}^\infty \langle h, \overline{E}_0(I_{1k}\times I_{2\ell})w\rangle
= \sum_{k=1}^\infty \sum_{\ell=1}^\infty \langle h, E_0(I_{1k}\times I_{2\ell})w\rangle
\\
&= \Big\langle h, \sum_{k=1}^\infty \sum_{\ell=1}^\infty E_0(I_{1k}\times I_{2\ell})w\Big\rangle
= \langle h, E_0(\tilde{J}_1 \times \tilde{J}_2)w\rangle
\\
&= \langle h, \overline{E}_0(\tilde{J}_1 \times \tilde{J}_2)w\rangle
= \langle\overline{E}_0(\tilde{J}_1 \times \tilde{J}_2)h, w\rangle.
\end{align*}
Above we used \eqref{E0-infty} and that the operator $\overline{E}_0(I_{1k}\times I_{2\ell})$ is self-adjoint
and is the unique extension of the operator $E_0(I_{1k}\times I_{2\ell})$ from $\sL^2(\XX, d\mu)$ to $L^2(\XX, d\mu)$.
Since the above identities are valid for each $w\in \sL^2(\XX, d\mu)$ and $\sL^2(\XX, d\mu)$ is dense in $L^2(\XX, d\mu)$
they imply \eqref{E0-prod-infty}.

{\em Case 2.} We now establish \eqref{E0-bar-infty} in the general case.
Assume that the cover \eqref{JJ} of $\tilde{J}_{1}\times \tilde{J}_{2}$ is valid,
where $\{J_{1j}\times J_{2j}\}$ are disjoint.
We project the end points of all intervals $\{J_{1j}\}$ onto $(a_1, b_1]$
to obtain a countable set of points $\{\xi_j\}_{j\ge 1}$.
Let $\{I_{1k}\}_{k\ge 1}$ be the disjoint collection of all interval with end points $\{\xi_j\}_{j\ge 1}$,
where neither interval $I_{1k}$ contains a point from $\{\xi_j\}_{j\ge 1}$ in its interior.
Thus we have the representation
$(a_1, b_1]= \cup_{k\ge 1}I_{1k}$.
We similarly construct a disjoint collection of intervals $\{I_{2\ell}\}_{\ell\ge 1}$
so that $(a_2, b_2]= \cup_{\ell\ge 1}I_{2\ell}$.
As a result we have
\begin{equation}\label{J1-J2}
\tilde{J}_1 \times \tilde{J}_2
= (a_1, b_1]\times (a_2, b_2]
= \cup_{k=1}^\infty\cup_{\ell=1}^\infty I_{1k}\times I_{2\ell}
\end{equation}
and each rectangle $J_{1j}\times J_{2j}$ is a finite or countable disjoint union of rectangles $\{I_{1k}\times I_{2\ell}\}$
of the form
\begin{equation*}
J_{1j}\times J_{2j} = \cup_{k\in \cK_j}\cup_{\ell\in\cL_j} I_{1k}\times I_{2\ell},
\end{equation*}
where
$J_{1j}=\cup_{k\in \cK_j} I_{1k}$ and $J_{2j}=\cup_{\ell\in \cL_j} I_{2\ell}$.
Clearly, we have the disjoint unions
\begin{equation*}
\tilde{J}_1 \times \tilde{J}_2
= \cup_{k=1}^\infty\cup_{\ell=1}^\infty I_{1k}\times I_{2\ell}
= \cup_{j=1}^\infty \cup_{k\in \cK_j}\cup_{\ell\in\cL_j} I_{1k}\times I_{2\ell}.
\end{equation*}

By the result from Case 1 (see \eqref{E0-prod-infty}) for any $h\in L^2(\XX, d\mu)$
\begin{equation*}
\overline{E}_0(\tilde{J}_1 \times \tilde{J}_2)h = \sum_{k=1}^\infty \sum_{\ell=1}^\infty \overline{E}_0(I_{1k}\times I_{2\ell})h,
\end{equation*}
where the convergence is unconditional in the $L^2(\XX, d\mu)$-norm.
Since the convergence above is unconditional we can rearrange the terms above as we wish
and hence
\begin{equation}\label{E0-mix}
\overline{E}_0(\tilde{J}_1 \times \tilde{J}_2)h
= \sum_{k=1}^\infty \sum_{\ell=1}^\infty \overline{E}_0(I_{1k}\times I_{2\ell})h
= \sum_{j=1}^\infty \sum_{k\in \cK_j}\sum_{\ell\in\cL_j} \overline{E}_0(I_{1k}\times I_{2\ell})h.
\end{equation}
Applying again the result from Case 1 and \eqref{J1-J2} we obtain for $j=1, 2, \dots$
\begin{equation*}
\overline{E}_0(J_{1j}\times J_{2j})h = \sum_{k\in \cK_j}\sum_{\ell\in\cL_j} \overline{E}_0((I_{1k}\times I_{2\ell})h.
\end{equation*}
This coupled with \eqref{E0-mix} leads to
\begin{equation*}
\overline{E}_0(\tilde{J}_1 \times \tilde{J}_2)h = \sum_{j=1}^\infty \overline{E}_0(J_{1j}\times J_{2j})h,
\quad \forall h\in L^2(\XX, d\mu),
\end{equation*}
which implies \eqref{E0-bar-infty}.

From the definition in \eqref{def:E0bar} it readily follows that
to prove \eqref{additive-B0} it suffices to show that
\begin{equation}\label{rep-ER}
\overline{E}_0(R) = \sum_{j=1}^\infty \overline{E}_0(J_{1j}\times J_{2j})
\end{equation}
for any $R\in \sB_0(\R^2)$ that is a disjoint union of the form
\begin{equation}\label{rep-R}
R = \cup_{j=1}^\infty J_{1j}\times J_{2j},
\quad J_{ij} \in \cJ(\R).
\end{equation}
Assume that $R\in \sB_0(\R^2)$ has this representation.
But, because $R\in \sB_0(\R^2)$ from \eqref{R-JJ} it follows that
$R$ can also be represented as a disjoint union of finitely many rectangles, say,
\begin{equation}\label{rep-R-2}
R = \cup_{k=1}^K I_{1k}\times I_{2k},
\quad I_{ik} \in \cJ(\R).
\end{equation}
Then by the definition in \eqref{def:E0bar} we have
\begin{equation}\label{E0-R-I}
\overline{E}_0(R) = \cup_{k=1}^K \overline{E}_0(I_{1k}\times I_{2k}).
\end{equation}
Evidently, each rectangle $J_{1j}\times J_{2j}$ from \eqref{rep-R} is
contained either in a single rectangle $I_{1k}\times I_{2k}$ from \eqref{rep-R-2}
or in the union of several rectangles $\{I_{1k}\times I_{2k}\}$.
Denote by
\begin{equation*}
\{Q_{1j}^k\times Q_{2j}^k\}_{k\in\cK_j}, \quad Q_{ij}^k\in \cJ(\R),\; \cK_j \le K,
\end{equation*}
the set of all rectangles obtained by intersecting
$J_{1j}\times J_{2j}$ with rectangles
$\{I_{1k}\times I_{2k}\}$.
We order the rectangles
$\{Q_{1j}^k\times Q_{2j}^k: k\in\cK_j, j\in \bN\}$
in a sequence that we denote by
$\{\tilde{Q}_{1\nu}\times \tilde{Q}_{2\nu}\}_{\nu\ge 1}$.

Clearly,
\begin{equation*}
J_{1j}\times J_{2j} = \cup_{k\in\cK_j} Q_{1j}^k\times Q_{2j}^k
\end{equation*}
and hence
\begin{equation*}
\overline{E}_0(J_{1j}\times J_{2j}) = \sum_{k\in\cK_j} \overline{E}_0(Q_{1j}^k\times Q_{2j}^k).
\end{equation*}
Summing up we get
\begin{equation}\label{sum-E0}
\sum_{j=1}^\infty\overline{E}_0(J_{1j}\times J_{2j})
= \sum_{j=1}^\infty\sum_{k\in\cK_j} \overline{E}_0(Q_{1j}^k\times Q_{2j}^k)
=\sum_{\nu=1}^\infty \overline{E}_0(\tilde{Q}_{1\nu}\times \tilde{Q}_{2\nu}).
\end{equation}

On the other hand,
from \eqref{rep-R}-\eqref{rep-R-2} and the construction of the rectangles $\{\tilde{Q}_{1\nu}\times \tilde{Q}_{2\nu}\}$
it readily follows that each rectangle $I_{1k}\times I_{2k}$  is covered by the rectangles
$\{\tilde{Q}_{1\nu}\times \tilde{Q}_{2\nu}\}_{\nu\ge 1}$.
More precisely, denoting by $\cN_k$ the indices of the rectangles $\tilde{Q}_{1\nu}\times \tilde{Q}_{2\nu}$
contained in $I_{1k}\times I_{2k}$ we have
\begin{equation*}
I_{1k}\times I_{2k} = \cup_{\nu\in\cN_k} \tilde{Q}_{1\nu}\times \tilde{Q}_{2\nu}.
\end{equation*}
Applying now identity \eqref{E0-bar-infty} we obtain
\begin{equation*}
\overline{E}_0(I_{1k}\times I_{2k}) = \sum_{\nu\in\cN_k} \overline{E}_0(\tilde{Q}_{1\nu}\times \tilde{Q}_{2\nu})
\end{equation*}
and summing up we arrive at
\begin{equation}\label{sum-E0-2}
\sum_{k=1}^K\overline{E}_0(I_{1k}\times I_{2k})
= \sum_{k=1}^K\sum_{\nu\in\cN_k} \overline{E}_0(\tilde{Q}_{1\nu}\times \tilde{Q}_{2\nu})
=\sum_{\nu=1}^\infty \overline{E}_0(\tilde{Q}_{1\nu}\times \tilde{Q}_{2\nu}).
\end{equation}
Clearly, identity \eqref{rep-ER} follows from \eqref{E0-R-I}, \eqref{sum-E0}, and \eqref{sum-E0-2}.
This completes the proof of Proposition~\ref{prop:prealgebra}.
\qed


\begin{thebibliography}{99}

\bibitem{Bow}
    M. Bownik,
    Anisotropic Hardy spaces and wavelets,
    Mem. Amer. Math. Soc. 164 (2003), no. 781, 122 pp.

\bibitem{BBD}
    H. Q. Bui, T. A. Bui, and X. T. Duong,
    Weighted Besov and Triebel-Lizorkin spaces associated with operators and applications,
    Forum Math. Sigma 8 (2020), Paper no. e11, 95~pp.

\bibitem{BD}
    T. A. Bui, X. T. Duong, 
    Spectral multipliers of self-adjoint operators on Besov and Triebel–Lizorkin spaces associated to operators, 
    Internat. Math. Res. Notices, 2021 (2021), no. 23, 18181--18224,

\bibitem{LBY}
    B. Li, M. Bownik, and D. Yang,
    Littlewood-Paley characterization and duality of weighted anisotropic product Hardy spaces,
    J. Funct. Anal.  266  (2014),  no. 5, 2611--2661.

\bibitem{BLYZ}
    M. Bownik, Marcin, B. Li, D. Yang, and Y. Zhou,
    Weighted anisotropic product Hardy spaces and boundedness of sublinear operators,
    Math. Nachr.  283  (2010),  no. 3, 392--442.


\bibitem{CF}
    S.-Y. A. Chang and R. Fefferman,
    A continuous version of the duality of $H^1$ with BMO on the bi-disc,
    Ann. Math. 112 (1980), 179-201.

\bibitem{CF2}
    S.-Y. A. Chang and R. Fefferman,
    The Calder\'{o}n-Zygmund decomposition on product domains,
    Amer. J. Math. 104 (1982), 445-468.

\bibitem{CF3}
    S.-Y. A. Chang, R. Fefferman,
    Some recent developments in Fourier analysis and $H^p$-theory on product domains,
    Bull. Amer. Math. Soc. 12 (1985), 1--43.

\bibitem{CDLWY}
    P. Chen, X. T. Duong, J. Li, L. A. Ward, L. Yan,
    Product Hardy spaces associated to operators with heat kernel bounds on spaces of homogeneous type,
    Math. Z. (3-4) 282 (2016), 1033-1065.

\bibitem{Coifman}
    R. Coifman, A real variable characterization of $H^p$, Studia Math. 51 (1974), 269–-274.

\bibitem{CW}
    R. Coifman and G. Weiss,
    Extensions of Hardy spaces and their use in analysis,
    Bull. Amer. Math. Soc. 83(4)(1977), 569--645.

\bibitem{CKP}
    T. Coulhon, G. Kerkyacharian, and P. Petrushev,
    Heat kernel generated frames in the setting of Dirichlet spaces, 	
    J. Fourier Anal. Appl. 18 (2012), no. 5, 995--1066.

\bibitem{CS}
    T. Coulhon, A. Sikora,
    Gaussian heat kernel upper bounds via the Phragm\'en-Lindel\"of theorem.
    Proc. Lond. Math. Soc. 96 (2008), 507--544.

\bibitem{DKKP}
    S. Dekel, G. Kerkyacharian, G. Kyriazis, and P. Petrushev,
    Compactly supported frames for spaces of distributions associated with nonnegative self-adjoint operators,
     Studia Math. 225 (2014), no. 2, 115--163.

\bibitem{DKKP2}
    S. Dekel, G. Kerkyacharian, G. Kyriazis, and P. Petrushev,
    Hardy spaces associated with non-negative self-adjoint operators,
    Studia Math. 239 (2017), no. 1, 17--54.

\bibitem{DKT}
    R. DeVore, S. Konyagin, and V. Temlyakov,
    Hyperbolic wavelet approximation.
    Constr. Approx. 14 (1998), no. 1, 1--26.

\bibitem{DS}
    N. Dunford, J. Schwartz,
    Linear Operators, Part I: General Theory, Wiley-Interscience, 1958.

\bibitem{DY}
    X. T. Duong, Lixin Yan, 
    Duality of Hardy and BMO spaces associated with operators with heat kernel bounds,
    J. Am. Math. Soc. 18(2005),  No. 4, 943--973. 

\bibitem{FS}
    C. Fefferman, E. Stein,
    $H^p$-spaces of several variables,
    Acta Math. 129 (1972), 137--193. 
    
\bibitem{F1}
    R. Fefferman,
    Harmonic analysis on product spaces,
    Ann. of Math. (2) 126 (1987), no. 1, 109--130.

\bibitem{F2}
    R. Fefferman,
    Calder\'{o}on-Zygmund theory for product domains: $H^p$ spaces,
    Proc. Nat. Acad. Sci. U.S.A. 83 (1986), no. 4, 840--843.

\bibitem{F3}
    R. Fefferman,
    Multiparameter Fourier analysis.
    Beijing Lectures in Harmonic Analysis (Beijing, 1984),
    Ann. of Math. Stud. 112, edited by E.M. Stein, Princeton Univ. Press, Princeton, NJ, 1986, 47--130.

\bibitem{RobFS}
    R. Fefferman, E.M. Stein,
    Singular integrals on product spaces,
    Adv. Math. 45 (1982), 117--143.

\bibitem{Folland}
    G. Folland,
    Real analysis. Modern techniques and their applications. Second edition.
    Pure and Applied Mathematics (New York). A Wiley-Interscience Publication. John Wiley \& Sons, Inc., New York, 1999.

\bibitem{FJ1}
    M. Frazier, B. Jawerth,
    Decomposition of  Besov Spaces,
    Indiana Univ. Math. J. 34 (1985), 777--799.

\bibitem{FJ2}
    M. Frazier, B. Jawerth,
    A discrete transform and decompositions of distribution,
    J. of Funct. Anal. 93 (1990), 34--170.

\bibitem{FJW}
    M. Frazier, B. Jawerth, and G. Weiss,
    Littlewood-Paley Theory and the Study of Function  Spaces,
    CBMS 79 (1991), AMS.

\bibitem{GS}    
    R. Gundy, E. M. Stein, 
    $H^p$-theory for the polydisk, 
    Proc. Nat. Acad. Sci. U.S.A. 76 (1979), 1026--1029.

\bibitem{GKKP}
    A. Georgiadis, G. Kerkyacharian, G. Kyriazis, and P. Petrushev,
    Homogeneous Besov and Triebel-Lizorkin spaces associated to non-negative self-adjoint operators.
    J. Math. Anal. Appl. 449 (2017), no. 2, 1382--1412.

\bibitem{GKKP2}
    A. Georgiadis, G. Kerkyacharian, G. Kyriazis, and P. Petrushev,
    Atomic and molecular decomposition of homogeneous spaces of distributions associated to non-negative self-adjoint operators,
    J. Fourier Anal. Appl. 25 (2019), no. 6, 3259--3309.

\bibitem{GKP}
    A. Georgiadis, G. Kyriazis, and P. Petrushev,
    Product Besov and Triebel-Lizorkin spaces with application to nonlinear approximation,
    Constr. Approx. 53 (2021), no. 1, 39--83

\bibitem{GN}
    A. Georgiadis, M. Nielsen,
    Spectral multipliers on spaces of distributions associated with non-negative self-adjoint operators.
    J. Approx. Theory, 234 (2018), 1--19.

\bibitem{GN2}
    A. Georgiadis, M. Nielsen,
    Pseudodifferential operators on spaces of distributions associated with non-negative self-adjoint operators,
    J. Fourier Anal. Appl., 23(2017), no. 2, 344--378.

\bibitem{GLY}
    L. Grafakos, L. Liu, and D. Yang,
    Vector-valued singular integrals and maximal functions on spaces of homogeneous type,
    Math. Scand. 104 (2009), no. 2, 296--310.

\bibitem{HLMMY}
    S. Hofmann, G. Lu, D. Mitrea, M. Mitrea, and L. Yan,
    Hardy spaces associated to non-negative self-adjoint operators satisfying Davies-Gaffney estimates,
    Mem. Amer. Math. Soc. 214 (2011) no. 1007.

\bibitem{HH}    
    Q. Hong, G. Hu,
    Continuous characterizations of inhomogeneous Besov and Triebel-Lizorkin spaces associated to non-negative self-adjoint operators,
    Manuscripta Math. 170 (2023), 243--281. 

\bibitem{IPX}
    K. Ivanov, P. Petrushev, and Y. Xu.
    Decomposition of spaces of distributions induced by tensor product bases,
    J. Funct. Anal. 263 (2012), no. 5, 1147--1197.
    
\bibitem{J} 
    J.-L. Journ\'{e}, 
    Calder\'{o}n-Zygmund operators on product spaces, 
    Rev. Mat. Iberoamericana, 1 (1985), 55--91.

\bibitem{KP}
    G. Kerkyacharian, P. Petrushev,
    Heat kernel based decomposition of spaces of distributions in the framework of Dirichlet spaces,
    Trans. Amer. Math. Soc. 367 (2015), no. 1, 121--189.

\bibitem{KPX}
    G. Kerkyacharian, P. Petrushev, and Y. Xu,
    Gaussian bounds for the weighted heat kernels on the interval, ball, and simplex,
    Constr. Approx. 51 (2020), no. 1, 73--122.

\bibitem{KPX2}
    G. Kerkyacharian, P. Petrushev, and Yuan Xu,
    Gaussian bounds for the heat kernels on the ball and simplex: Classical approach,
    Studia Math. 250 (2020), no. 3, 235--252.

\bibitem{LYY}
    L. Liu, D. Yang, and W. Yuan,
    Besov-type and Triebel-Lizorkin-type spaces associated with heat kernels.
    Collect. Math. 67 (2016), no. 2, 247--310.

\bibitem{Latter}
    R. Latter, A characterization of $H^p(\R^n)$ in terms of atoms, Studia Math. 62 (1978), 93–-101.

\bibitem{Peetre}
    J. Peetre, New thoughts on Besov spaces, Duke Univ. Math. Series, Durham, NC, 1976.

\bibitem{Prugov}
    E. Prugove\v{c}ki,
    Quantum mechanics in Hilbert space. Second edition. Pure and Applied Mathematics, 92.
    Academic Press, Inc., New York -- London, 1981.

\bibitem{RS}
    M. Reed, B. Simon,
    Methods of modern mathematical physiscs I: Functional analysis,
    Academic Press, New York, 1980.

\bibitem{Stein}
    E. Stein, Harmonic analysis: real-variable methods, orthogonality, and oscillatory integrals,
    Princeton University Press, Princeton, NJ, 1993.

\bibitem{Schmeisser}
    H.-J. Schmeisser,
    Recent developments in the theory of function spaces with dominating mixed smoothness,
    in: J. Rakosnik (ed.), Proc. Conf. NAFSA-8 in Prague 2006, Inst. of Math. Acad. Sci.,
    Czech Republic, pp. 145--204, Prague, 2007.

\bibitem{ST}
    H.-J. Schmeisser, H. Triebel,
    Topics in Fourier analysis and function spaces, Wiley, 1987.

\bibitem{Schmudgen}
    K. Schm\"udgen,
    Unbounded self-adjoint operators on Hilbert space,
    Graduate Texts in Mathematics, 265. Springer, Dordrecht, 2012.

\bibitem{Triebel-0}
    H. Triebel,
    Fourier analysis and function spaces,
    Teubner-Texte Math. 7, Leipzig: Teubner, 1977.

\bibitem{Triebel-1}
    H. Triebel,
    Theory of function spaces, Monogr. Math. vol. 78, Birkh\"{a}user, Basel, 1983.

\bibitem{Triebel-2}
    H. Triebel,
    Theory of function spaces II, Monogr. Math. vol. 84, Birkh\"{a}user, Basel, 1992.

\bibitem{V}
    J. Vybiral,
    Function spaces with dominating mixed smoothness,
    Dissertationes Math. 436 (2006), 1--73.

\bibitem{Yosida}
    K. Yosida, Functional Analysis, Sixth Ed. Springer, Berlin, 1980.

\bibitem{ZY}    
    C. Zhuo, D. Yang, 
    Variable Besov Spaces Associated with Heat Kernels, 
    Constr Approx 52 (2020), 479--523.

\end{thebibliography}
\end{document}